\documentclass[10pt,a4paper,twoside]{book}

   \usepackage[cp1250]{inputenc}
   \usepackage{url}
   \usepackage{fancyheadings}
   \usepackage{amssymb}
   \usepackage{amsmath}
   \usepackage{mathrsfs}
   \usepackage{stmaryrd}
   \usepackage{theorem}
   \usepackage{ifthen}
   \usepackage{color}
   \usepackage[all]{xypic}
   \xyoption{2cell}
   \UseAllTwocells


   {
     \theorembodyfont{\itshape}

     \newtheorem{theorem}{Theorem}[chapter]
     \newtheorem{lemma}[theorem]{Lemma}
     \newtheorem{proposition}[theorem]{Proposition}
     \newtheorem{corollary}[theorem]{Corollary}
     
     \newtheorem{izrek}{Izrek}
     \newtheorem{lema}[izrek]{Lema}
     \newtheorem{trditev}[izrek]{Trditev}
     \newtheorem{posledica}[izrek]{Posledica}
   }

   {
     \theorembodyfont{\rmfamily}
     
     \newtheorem{definition}[theorem]{Definition}

     \newtheorem{definicija}[izrek]{Definicija}
     
   }

   \newenvironment{proof}{
     \goodbreak\par
     \textit{Proof.}%
   }{%
     \nopagebreak
     \hfill{\vrule width 1ex height 1ex depth 0ex}
     \medskip
     \goodbreak
   }


	\newcommand{\setimpldiagram}{\mathop{=\hspace{-0.5em}=\hspace{-0.2em}\raisebox{0.05ex}{\text{\scriptsize$)$}}}}
	\newcommand{\setimpl}[1][]{\mathop{=\hspace{-0.5em}=\hspace{-0.5em}\raisebox{0.22ex}{\text{\scriptsize$)$}}\hspace{-0.15em}_{#1}}}
   \newcommand{\sizedescriptor}[2]
	{
		\ifthenelse{\equal{#1}{0}}{}{
		\ifthenelse{\equal{#1}{1}}{\big}{
		\ifthenelse{\equal{#1}{2}}{\Big}{
		\ifthenelse{\equal{#1}{3}}{\bigg}{
		\ifthenelse{\equal{#1}{4}}{\Bigg}{
		#2}}}}}
	}
	\newcommand{\longer}{-\hspace{-1ex}}
   \newcommand{\nc}[1][\land]{~{#1}~}
   \newcommand{\nl}[1][\land]{\nc[#1]\\&\nc[#1]}

   \newcommand{\ie}[1][~]{i.e.{#1}}
   
   \newcommand{\eg}[1][~]{e.g.{#1}}
   \newcommand{\cf}[1][~]{cf.{#1}}
   \newcommand{\etc}[1][~]{etc.{#1}}
   \newcommand{\Sier}{Sierpiński~}
   \newcommand{\Esc}[1][~]{Escardó{#1}}
   \newcommand{\Martin}{Martín}
   \newcommand{\MEsc}[1][~]{\Martin\ \Esc[#1]}

   \newcommand{\intermission}{\bigskip\medskip}
   \newcommand{\df}[1]{\emph{#1}}
   \newcommand{\ism}{\cong}
   \newcommand{\equ}{\sim}
   \newcommand{\dfeq}{:=}
   
   \newcommand{\sepeq}{\ =\ }
   \newcommand{\apart}{\mathrel{\#}}
   \newcommand{\id}[1][]{\textrm{Id}_{#1}}
   \newcommand{\insarg}{\text{---}}
   \newcommand{\cs}[1][\RR]{\text{cs}_{#1}}
   \newcommand{\verylongrightarrow}{\longer\longer\longer\longer\longer\longer\longer\longer\longrightarrow}
   \newcommand{\impl}{\Rightarrow}

   \newcommand{\all}[3]{\forall\, #1 \,{\in}\, #2\,.\left(#3\right)}
   \newcommand{\some}[3]{\exists\, #1 \,{\in}\, #2\,.\left(#3\right)}

   \newcommand{\xall}[3]{\forall\, #1 \,{\in}\, #2\,.\,#3}
   \newcommand{\xsome}[3]{\exists\, #1 \,{\in}\, #2\,.\,#3}
   \newcommand{\xexactlyone}[3]{\exists!\, #1 \,{\in}\, #2\,.\,#3}

   \newcommand{\st}[3][auto]{\sizedescriptor{#1}{\left}\{#2\;\sizedescriptor{#1}{\middle}|\;#3\sizedescriptor{#1}{\right}\}}
   \newcommand{\mlst}[5][\Big]{\begin{align*} #2 #1\{ #3 \ #1|& \ #4 #1\}#5 \end{align*}}
   \newcommand{\vsubset}{\Mapstochar\cap}
   \newcommand{\tp}{\mathcal{O}}
   \newcommand{\ctp}{\mathcal{Z}}
   \newcommand{\C}{\mathscr{C}}

   \newcommand{\NN}{\mathbb{N}}
   \newcommand{\ZZ}{\mathbb{Z}}
   \newcommand{\QQ}{\mathbb{Q}}
   \newcommand{\RR}{\mathbb{R}}

   \newcommand{\II}{\mathbb{I}}
   \newcommand{\oirs}{\mathbb{S}}
   \newcommand{\lrs}{\mathbb{D}}
   \newcommand{\intoo}[3][\RR]{{#1}_{(#2, #3)}}
   \newcommand{\intcc}[3][\RR]{{#1}_{[#2, #3]}}
   \newcommand{\intoc}[3][\RR]{{#1}_{(#2, #3]}}
   \newcommand{\intco}[3][\RR]{{#1}_{[#2, #3)}}

   \newcommand{\trivtwo}{\mathbf{T}}
   \newcommand{\sier}{\mathbf{S}}
   \newcommand{\twsier}{\tilde{\sier}}

   \newcommand{\kat}[1]{\mathbf{\underline{#1}}}
   \newcommand{\op}{^{\mathrm{op}}}
   \newcommand{\mrph}[1]{\text{Morph}\left(#1\right)}
   \newcommand{\dom}{\text{dom}}
   \newcommand{\cod}{\text{cod}}
   \newcommand{\one}{\mathbf{1}}
   \newcommand{\unit}{*}
   \newcommand{\two}{\mathbf{2}}
   \newcommand{\trm}[1][]{\mathord{!}_{#1}}
   \newcommand{\limit}[1]{\underleftarrow{\lim}{#1}}
   \newcommand{\colimit}[1]{\underrightarrow{\lim}{#1}}
   \newcommand{\ppair}[2]{\ar@<0.3ex>[r]^{#1} \ar@<-0.3ex>[r]_{#2}}

	\newcommand{\pbcorner}[1][dr]{\save*!/#1-1.2pc/#1:(-1,1)@^{|-}\restore}
	%
	\newcommand{\pocorner}[1][dr]{\save*!/#1+1.2pc/#1:(1,-1)@^{|-}\restore}
	\newdir{ >}{{}*!/-8pt/@{>}}
	\newdir{ (}{{}*!/-5pt/@^{(}}
	\newdir{|>}{!/5pt/@{|}*:(1,-.2)@{>}*:(1,+.2)@_{>}}

   \newcommand{\cat}{\kat{C}}
   \newcommand{\topos}{\kat{\mathcal{E}}}
   \newcommand{\Set}{\kat{Set}}
   \newcommand{\Top}{\kat{Top}}

	\newcommand{\clsc}[1][]{CLASS{#1}}
	\newcommand{\russ}[1][]{RUSS{#1}}
	\newcommand{\bint}[1][]{INT${}^{+}${#1}}
	\newcommand{\grtp}[1][]{GROS{#1}}


\newcommand{\upward}[1]{\mathord{\uparrow} #1}

\newcommand{\postulate}[1]{\begin{quote}\textit{#1}\end{quote}}
\newcommand{\nnst}[1][\soc]{{#1}_{\lnot\lnot}}
\newcommand{\tst}{\mathrm{T}}
\newcommand{\opn}{\Sigma}
\newcommand{\cld}{\mathrm{Z}}
\newcommand{\soc}{\Omega}
\newcommand{\tetp}{\mathcal{T}}
\newcommand{\optp}{\mathcal{O}}
\newcommand{\cltp}{\mathcal{Z}}
\newcommand{\pst}{\mathcal{P}}

\newcommand{\fin}{\mathcal{F}}
\newcommand{\inhfin}{\fin^{+}}
\newcommand{\ov}{\mathrm{Overt}}
\newcommand{\inhov}{\ov^+}

\newcommand{\cp}{\mathrm{Comp}}
\newcommand{\itemimpl}[2]{$({#1}\Rightarrow{#2})$\vspace{0.2em}\\}
\newcommand{\lowercuts}[1][\opn]{\mathcal{L}_{#1}}
\newcommand{\uppercuts}[1][\opn]{\mathcal{U}_{#1}}
\newcommand{\Dedcuts}[1][\opn]{\mathcal{D}_{#1}}
\newcommand{\ball}[3][]{B_{#1}\left(#2, #3\right)}
\newcommand{\bball}[3][]{\overline{B}_{#1}\left(#2, #3\right)}

\newcommand{\rstr}[1]{\left.{#1}\right|}
\newcommand{\cmpl}[1]{\widehat{#1}}
\newcommand{\ed}{$\epsilon$-$\delta$}
\newcommand{\mtr}[1]{\mathbf{#1}}
\newcommand{\cmtr}[1]{\cmpl{\mathbf{#1}}}
\newcommand{\proven}[1]{\underline{#1}\vspace{0.2em}\\}
\newcommand{\Cauchy}{\textrm{Cauchy}}
\newcommand{\Cf}{\textrm{CF}}
\newcommand{\ac}{\text{\textsf{AC}}}
\newcommand{\AC}[2][]{\ac_{#1}(#2)}
\newcommand{\ACopn}{\AC[\opn]{\NN, \NN}}
\newcommand{\Kq}[1]{\textrm{Kq}(#1)}
\newcommand{\Kqm}[1][d]{\widetilde{#1}}
\newcommand{\Ccmpl}{\textrm{C}}
\newcommand{\lvl}{\mathbb{L}}
\newcommand{\Rcmpl}{\textrm{R}}
\newcommand{\loc}{\mathscr{L}}
\newcommand{\mtp}{\mathcal{M}}
\newcommand{\Ros}{\opn^0_1}
\newcommand{\ACRos}{\AC{\NN, \two^\NN \twoheadrightarrow \Ros}}
\newcommand{\Baire}{\mathbb{B}}
\newcommand{\Cantor}{\mathbb{C}}
\newcommand{\opcN}{\NN^\bullet}
\newcommand{\cms}[1][~]{\textbf{CMS}{#1}}
\newcommand{\ctb}[1][~]{\textbf{CTB}{#1}}
\newcommand{\zfc}[1][~]{\textbf{ZFC}{#1}}
\newcommand{\wso}[1][~]{\textbf{WSO}{#1}}
\newcommand{\lpo}[1][~]{\textbf{LPO}{#1}}
\newcommand{\lem}[1][~]{\textbf{LEM}{#1}}
\newcommand{\wlem}[1][~]{\textbf{WLEM}{#1}}
\newcommand{\principle}[2]{\begin{quote} ({#1}) \quad \textit{#2} \end{quote}}
\newcommand{\finseq}[1]{{#1}^*}
\newcommand{\fbs}{\finseq{\two}}

\newcommand{\prefix}[2]{\overline{#1}(#2)}
\newcommand{\lex}{\sqsubseteq}
\newcommand{\cnct}{{{:}{:}}}
\newcommand{\concat}[3]{#2 {\restriction_#1} #3}
\newcommand{\scom}[1]{{{#1}^C}}
\newcommand{\shift}[1]{S_{#1}}
\newcommand{\succm}{\text{succ}}
\newcommand{\predm}{\text{pred}}

\newcommand{\Ury}{\mathbb{U}}
\newcommand{\Hilbert}{\mathcal{H}}
\newcommand{\diam}{\text{diam}}
\newcommand{\lUt}{\text{lgt}}
\newcommand{\Utuple}[4][auto]{\left({#3}_{#2}, {#4}_{#2}\right)_{#2 \in \NN_{< \ifthenelse{\equal{#1}{auto}}{\lUt(#3)}{#1}}}}
\newcommand{\eUt}{()}
\newcommand{\aUt}{\text{age}}

\newcommand{\parto}{\mathrel{\rightharpoonup}}
\newcommand{\Tot}{\mathsf{Tot}}
\newcommand{\diag}[1][]{\Delta_{#1}}
\newcommand{\basis}{\mathscr{B}}

\newcommand{\site}{\mathfrak{T}}
\newcommand{\y}[1][]{\mathsf{y}{#1}}
\newcommand{\nat}{\text{Nat}}
\newcommand{\ev}{\epsilon}
\newcommand{\Sh}[1][\site]{\mathsf{Sh}(#1)}
\newcommand{\obj}[1]{\text{Obj}(#1)}
\newcommand{\ms}[1]{\cod^{-1}(#1)}
\newcommand{\sieve}[1][S]{\mathcal{#1}}
\newcommand{\im}{\text{im}}
\newcommand{\interior}{\text{Int}}



\setlength{\textwidth}{15cm}
\setlength{\textheight}{224mm}

\setlength{\topmargin}{0cm}
\setlength{\evensidemargin}{0cm}
\setlength{\oddsidemargin}{\paperwidth}
\addtolength{\oddsidemargin}{-\textwidth}
\addtolength{\oddsidemargin}{-2in}


\pagestyle{fancyplain}

\setlength{\headrulewidth}{0.2pt}
\addtolength{\headheight}{2pt}

\lhead[\fancyplain{}{{\thepage}}]%
      {\fancyplain{}{{\rightmark}}}
\rhead[\fancyplain{}{{\leftmark}}]%
      {\fancyplain{}{\thepage}}
\cfoot{}
\lfoot[]{}
\rfoot[]{}

\makeatletter
\def\url@leostyle{%
  \@ifundefined{selectfont}{\def\UrlFont{\sf}}{\def\UrlFont{\small\ttfamily}}}
\makeatother
\urlstyle{leo}


\newcommand{\auth}{Davorin~Lešnik}
\newcommand{\advisor}{izr.~prof.~dr.~Andrej~Bauer}
\newcommand{\tit}{Synthetic Topology and Constructive Metric Spaces}
\newcommand{\sltit}{Sintetična topologija in konstruktivni metrični prostori}
\newcommand{\fac}{University of Ljubljana \par Faculty of Mathematics and Physics \par Department of Mathematics}
\newcommand{\slfac}{Univerza v Ljubljani \par Fakulteta za matematiko in fiziko \par Oddelek za matematiko}

\author{\auth}
\title{\tit}

\renewcommand{\titlepage}[4]{
   \thispagestyle{empty}
   \begin{center}
      \vspace{\stretch{1}}
      
         \Large \textsc{#1}
      
      \vspace{\stretch{5}}
      
         \LARGE \auth

      \vspace{\stretch{1}}

         \huge {\bfseries #2}

      \vspace{\stretch{1}}

         \Large #3
     
      \vspace{\stretch{8}}
      
         \Large \textsc{#4:} \advisor
      
      \vspace{\stretch{5}}
      
         \Large Ljubljana, 2010
      
      \vspace{\stretch{2}}
   \end{center}
}

\begin{document}

   \pagestyle{empty}

   
   \titlepage{\fac}{\tit}{Doctoral~thesis}{Advisor}
   \cleardoublepage
   
   \titlepage{\slfac}{\sltit}{Doktorska~disertacija}{Mentor}
   \cleardoublepage


   \chapter*{Abstract}

      {
      The thesis presents the subject of synthetic topology, especially with relation to metric spaces. A model of synthetic topology is a categorical model in which objects possess an intrinsic topology in a suitable sense, and all morphisms are continuous with regard to it. We redefine synthetic topology in order to incorporate closed sets, and several generalizations are made. Real numbers are reconstructed (to suit the new background) as open Dedekind cuts. An extensive theory is developed when metric and intrinsic topology match. In the end the results are examined in four specific models.}

      \bigskip\bigskip\bigskip

      \noindent
      \textbf{Math.~Subj.~Class.~(MSC 2010):}
      \texttt{03F60},	
      \texttt{18C50}		

      \medskip

      \noindent
      \textbf{Keywords:}\\
      synthetic, topology, metric spaces, real numbers, constructive

   \chapter*{Povzetek}

      {
      Disertacija predstavi področje sintetične topologije, zlasti njeno povezavo z metričnimi prostori. Model sintetične topologije je kategorični model, v katerem objekti posedujejo intrinzično topologijo v ustreznem smislu, glede na katero so vsi morfizmi zvezni. Definicije in izreki sintetične topologije so posplošeni, med drugim tako, da vključujejo zaprte množice. Realna števila so (zaradi sintetičnega ozadja) rekonstruirana kot odprti Dedekindovi rezi. Obširna teorija je razvita, kdaj se metrična in intrinzična topologija ujemata. Na koncu proučimo dobljene rezultate v štirih izbranih modelih.
      }

      \bigskip\bigskip\bigskip

      \noindent
      \textbf{Math.~Subj.~Class.~(MSC 2010):}
      \texttt{03F60},	
      \texttt{18C50}		

      \medskip

      \noindent
      \textbf{Ključne besede:}\\
      sintetičen, topologija, metrični prostori, realna števila, konstruktiven

   \pagestyle{fancyplain}

   {
   \renewcommand{\markboth}[2]{}
   \tableofcontents
   }



   \chapter*{Introduction}\addcontentsline{toc}{chapter}{\numberline{}Introduction}

	\section{Acknowledgements}
	
		I would like to thank my parents for all the support over the years. This gratitude extends to my other relatives and all my friends, too numerous to mention.
		
		I am grateful to the government of the Republic of Slovenia for funding my doctoral study, and to the staff at the Institute of mathematics, physics and mechanics for the employment and all the cooperation over the years.
		
		I thank my roommates Sara, Samo and Vesna (not to mention the two cats Odi and Edi) for being closest to me while I was writing this dissertation (as well as having to endure my crankiness during this time).
		
		Regarding the mathematical work for the thesis, I am grateful for insights and discussions with Paul Taylor, Matija Pretnar and \MEsc[.] Matija, my former schoolmate, was the one who introduced me to synthetic topology, and has been a good friend and collaborator for years. \Martin\ was the first to explicitly define synthetic topology, and did good work on the subject; I am thankful for his cooperation and for hosting me in Birmingham during the finishing stage of the thesis.
		
		Most of all I would like to thank my advisor Andrej Bauer who singlehandedly taught me category theory, constructive mathematics and computability theory, has been extremely friendly, supportive and helpful, has taken considerable amount of time for discussions with me, introduced me to professional mathematics and professional mathematicians, and for being so patient with me, a trait which was no doubt tested numerous times. I am especially grateful for his trust in me, and for the freedom he has given me that I needed to finish my work. In his own thesis he wrote that he hoped to one day be as good an advisor to his students as his had been to him; I daresay he succeeded.

	\section{Overview}
	
		In this introductory section we explain the title and give an overview of the thesis.
		
		We start by explaining the first word in the title. The `synthetic' approach to mathematics started some three decades ago with synthetic differential geometry~\cite{Kock_A_2006:_synthetic_differential_geometry, Lawvere_FW_1998:_outline_of_synthetic_differential_geometry, Shulman_M_2006:_synthetic_differential_geometry}. The differentials were originally thought of as ``infinitely small'' quantities (infinitesimals) until it was shown that no such real numbers (aside from the trivial case of $0$) exist. Consequently a differential (form) nowadays means a section of a cotangent bundle, a distinctly more complicated object. In spite of this, reasoning with differentials as infinitesimal quantities still yields correct theorems and formulae, physicists get numerically correct results, and even mathematicians prefer this heuristic when dealing with differential equations (\eg when solving them via separation of variables). Is there a way then to explain this, and develop mathematics of infinitely small quantities formally?
		
		One way to do it is to change the notion of real numbers; see \eg Robinson's non-standard analysis~\cite{Robinson_A_1996:_nonstandard_analysis}. This method works, but has an unfortunate consequence that even though differentials by themselves become simpler (compared to the classical approach), the entire analysis becomes more complicated since the real numbers are now more complicated objects.
		
		An ideal solution would be if the real numbers stayed what they are (or at least altered in a way small enough as to not make them more complicated), and meaningful differentials still existed. As mentioned, their non-existence can in that case be proven, but the crucial assumption is the \df{decidable equality} of the reals, \ie for all real numbers $x, y \in \RR$, either $x = y$ or $x \neq y$. This is a special instance of the more general \df{law of excluded middle} \lem which states that for all propositions (more precisely, their truth values) $p$ either $p$ or the negation $\lnot{p}$ holds (``every statement is either true or false''). The corollary is that if we want non-trivial infinitesimals to be real numbers, we need to relinquish the use of decidability of equality on the reals, and consequently the use of the full \lem as well, leading us into the realm of constructive mathematics.
		
		Models of a theory in which we have the reals with infinitesimals can be constructed, showing that this theory is consistent. These models are complicated though (\eg certain kind of sheaves over differential manifolds); what makes this approach useful is that we can forget about specific models, reasoning instead with a different kind of axioms and logical rules of inference, but within those working just like in the usual set theory (the new axioms essentially capture the existence and uniqueness of a derivative of every map from reals to reals).
		
		The point is that when studying a certain structure (smoothness in this case), the classical approach is to equip sets with this additional structure and identify maps which preserve it, \eg groups and group homomorphisms, vector spaces and linear maps, partial order and monotone maps, topological spaces and continuous maps, measure spaces and measurable maps, smooth manifolds and smooth maps etc. The fact that the structure is an addition onto a set, not a part of its intrinsic structure, brings with itself a lot of baggage --- it is not sufficient to construct a space, one must also equip it with smooth structure (consider for example what it takes to fully define the tangent bundle), and after constructing a map, one has to verify that it is smooth (or if not, ``smoothen'' it, \ie suitably approximate it with a smooth map). However, within a model of synthetic differential geometry every object (satisfying a certain \df{microlinearity} property) automatically has a unique smooth structure, and every map is automatically differentiable.
		
		Because the structure we study is ingrained in the sets and maps themselves, constructions and proofs simplify significantly.\footnote{For example, if $\Delta \subseteq \RR$ denotes the set of differentials, then the tangent bundle on a manifold $M$ is just the set of maps from $\Delta$ to $M$, and the projection is the evaluation at $0$.} Still, by itself this merely means that it is easier to prove a statement \emph{within} a synthetic model, but we don't know yet whether that implies anything for the classical mathematics, so part of the synthetic approach is to also construct a bridge between them over which synthetic proofs translate to classical ones. In practice this usually means that a classical category is embedded into a synthetic model in such a way that content of logical statements is (sufficiently) preserved, and thus a synthetic theorem yields a corresponding classical one. For example, it is very easy to prove synthetically the usual formulae for length of curves, areas between curves etc., and this theory ensures that these proofs are good enough.
		
		This ``synthetic approach'' proves fruitful in contexts other than differential geometry as well. Generally it consists of the following.
		\begin{itemize}
			\item\proven{Identify what axioms and logic are suitable for study of a particular structure.}
				This is why this is called a \df{synthetic} approach: we create an artificial framework which is specifically tailored to the studied field. The logic in these systems is typically constructive (see Chapter~\ref{Chapter: constructivism}), either because assuming excluded middle yields a trivial special case (as for example in synthetic topology) or because the chosen axioms falsify \lem outright (as in the case of synthetic differential geometry).
			\item\proven{Reason within these axioms and logic to develop the theory.}
				The classical definitions must be reinterpreted within the synthetic setting; in particular, a notion might have several definitions which are classically, but not constructively, equivalent, so it must be determined which one (if any) is suitable synthetically. After the definitions the development of the theory usually starts by reproving classical theorems synthetically. Naturality and simplicity of definitions and proofs is a good indication of how good a choice of axioms we made. If we did well, we get an immediate payoff in simpler proofs, but it is better if the theory gives us new insight to prove new theorems as well.
			\item\proven{Construct models of the theory.}
				Since G\"odel onward we know that we can expect to have many non-isomorphic models satisfying prescribed axioms, as long as they are of any reasonable complexity (with apologies to ultrafinitists, natural numbers are practically a must). While inability to have an (up to isomorphism) unique (and therefore the ``right one'') model was seen detrimental to some mathematicians, this diversity is appreciated in the synthetic approach. At the very basic level constructing models serves to prove consistency of our chosen axioms, but it is otherwise useful --- and often insightful --- to see what general theorems mean in a specific model. Furthermore, specific models are a source of (counter)examples as well.
			\item\proven{Connect classical theory with synthetic one.}
				Validity of theorems in some odd model tells us little about the validity of these theorems in mathematics, done by a working mathematician (usually the classical one). Thus a method of translating proofs and validity from synthetic to classical setting is required. Once done, the outcome is that we can prove full classical results with simpler synthetic proofs.
		\end{itemize}
		
		\intermission
		
		The purpose of this thesis is to present the reinterpretation of the field of \emph{topology} in a synthetic way, and demonstrate synthetic methods specifically in the case of metric spaces. The last point --- the transfer of synthetic theorems to classical ones --- has been done~\cite{Escardo_M_2004:_synthetic_topology_of_data_types_and_classical_spaces, Taylor_P_2006:_computably_based_locally_compact_spaces, Pretnar_M_2005:_sinteticna_topologija}; hints of it may also be found in Section~\ref{Section: gros_topos_model} in this thesis, though we won't go into detail. Instead we focus on the other three points. Here is the breakdown of the thesis, together with notes which results are new.
		
		\begin{itemize}
			\item
				Chapter~\ref{Chapter: constructivism} recalls the notions and methods of constructivism which serves as background of our theory. Specifically we consider (higher-order) intuitionistic logic, as well as its categorical models, the topoi. We recall how number sets (natural numbers, integers, rationals, reals) are defined and constructed within this background.
				
				The chapter is introductory in nature (for classical mathematicians to constructivism), and contains results which are (for constructivists) already well-known.
			\item
				Chapter~\ref{Chapter: synthetic_topology} contains basic results in general synthetic topology.
				
				Section~\ref{Section: to_synthetic_topology} recaps basic definitions and known results, together with motivation for them from the point of view of a classical topologist.
				
				Section~\ref{Section: redefinition_of_synthetic_topology} presents my suggestion for the redefinition of synthetic topology (in order to accommodate closed sets), and develops this theory. The introduction of test truth values, the derivation of open and closed subsets from them, the redefinitions, the generalization of compactness (and overtness) to maps, the notion of condenseness, the codominance axiom, continuity of maps with regard to synthetically closed subsets, and the analysis of topology when open truth values are stable and form a bounded lattice, are new.
				
				Section~\ref{Section: (co)limits_and_bases} examines the topology of limits and colimits, and introduces the notion of a basis for synthetic topology. Aside from the obvious result that the topology of a colimit is the limit of topologies, the rest is new.
				
				Throughout the thesis models of synthetic topology are taken to be topoi. Section~\ref{Section: predicative_version} suggests how to develop synthetic topology in a general category (but does not go in depth). It is new.
			\item
				Chapter~\ref{Chapter: real_numbers_metric_spaces} develops real numbers, metric spaces and their generalizations, suitable for synthetic topology. It serves as the preparation for the next chapter.
				
				Section~\ref{Section: real_numbers} is an in-depth construction of real numbers, satisfying the (for us) crucial condition that the strict order is open in synthetic topology. The ideas for the construction of real numbers are of course not new, but the addition of the topological part (at least the way we do it) is. Also, the introduction of the structure of a \df{streak}, and using it for the construction of the reals (and probably the fact that the whole detailed construction of the reals is actually written down in one place) is new. The set of real numbers is defined as the terminal streak, and we show that open Dedekind cuts are its model.
				
				Section~\ref{Section: metric_spaces} introduces metric, pseudometric and protometric spaces, as well as the standard related definitions (such as metric separability). As such, most of the content is already familiar to mathematicians. The definition of total boundedness in the choiceless environment is new. As far as I know, usage of protometric spaces (here they are just defined; they are actually used in Section~\ref{Section: metrization_via_embeddings}) is new as well.
				
				Section~\ref{Section: completions} deals with completions of the spaces, defined in the previous section. The standard constructions of completions are well known, but we adopt them to synthetic setting. Also, the actual definition of completeness (the terminal object in the coslice category of (pseudo)metric spaces and dense isometries) is, as far as I know, new.
			\item
				Chapter~\ref{Chapter: metrization_theorems} examines the connection between the synthetic and metric topology, giving sufficient (and necessary) conditions for metrization (= match between them) for complete separable (or at least complete totally bounded) metric spaces. It is entirely new.
				
				Section~\ref{Section: wso} defines \wso[,] a principle closely connected to metrization, and explores its consequences. Sections~\ref{Section: metrization_via_quotients} and~\ref{Section: metrization_via_embeddings} prove the transfer of metrization via sufficiently nice maps, the former via quotients, and the latter via embeddings. As a bonus, Section~\ref{Section: metrization_via_embeddings} presents a constructive version of the Urysohn space.
			\item
				Chapter~\ref{Chapter: models} examines the results, especially those in the previous chapter, in specific models of synthetic topology: the classical set theory in Section~\ref{Section: classical_mathematics_model}, Type I computability in Section~\ref{Section: russian_constructivism_model}, Type II computability in Section~\ref{Section: Brouwer's_intuitionism_model}, and gros topos in Section~\ref{Section: gros_topos_model}. It is new.
		\end{itemize}
		Of the new parts of the thesis, Sections~\ref{Section: wso},~\ref{Section: metrization_via_quotients} and~\ref{Section: russian_constructivism_model} are joint work with my advisor Andrej Bauer, and Section~\ref{Section: Brouwer's_intuitionism_model} is entirely his work. The rest is mine.
		
		\intermission
		
		The text is written very linearly, and should be read in order, as nearly every section requires the ones before. The few exceptions to this rule are the following.
		\begin{itemize}
			\item
				Chapter~\ref{Chapter: constructivism} can be skipped by readers already familiar with constructivism.
			\item
				Section~\ref{Section: predicative_version} is a detour from the main thread, and is not required later.
			\item
				Section~\ref{Section: gros_topos_model} is independent from Sections~\ref{Section: russian_constructivism_model} and~\ref{Section: Brouwer's_intuitionism_model}, and can be read before or after them.
		\end{itemize}
		
		The required foreknowledge for the thesis is solid background in general topology and category theory (though a good portion of the thesis can be understood without the latter). Familiarity with logic and constructive (reverse) mathematics is welcome.

	\section{Terminology and Notation}
	
		Most of the terminology and notation is standard. We present possible exceptions and borderline cases here.
		
		\smallskip
		
		\begin{itemize}
			\item
				For a category $\cat$ we denote the family of its objects by $\obj{\cat}$, and the family of its morphisms by $\mrph{\cat}$. The domain and the codomain of a morphism $f \in \mrph{\cat}$ are denoted by $\dom(f)$ and $\cod(f)$, respectively.
			\item
				The category of sets and maps is denoted by $\Set$. The category of (classical) topological spaces and continuous maps is denoted by $\Top$.
			\item
				Number sets are denoted by $\NN$ (natural numbers), $\ZZ$ (integers), $\QQ$ (rationals), and $\RR$ (reals). Zero is taken as a natural number (so $\NN = \{0, 1, 2, 3,\ldots\}$).
			\item
				Subsets of number sets, obtained by comparison with a certain number, are denoted by the suitable order sign and that number in the index. For example, $\NN_{< 42}$ denotes the set $\st{n \in \NN}{n < 42} = \{0, 1, \ldots, 41\}$ of all natural numbers smaller than $42$, and $\RR_{\geq 0}$ denotes the set $\st{x \in \RR}{x \geq 0}$ of non-negative real numbers.
			\item
				Intervals between two numbers are denoted by these two numbers in brackets and in the index. Round, or open, brackets $(\ )$ denote the absence of the boundary in the set, and square, or closed, brackets $[\ ]$ its presence; for example $\intco[\NN]{5}{10} = \st{n \in \NN}{5 \leq n < 10} = \{5, 6, 7, 8, 9\}$. We specifically denote the closed unit real interval by $\II \dfeq \intcc{0}{1} = \st{x \in \RR}{0 \leq x \leq 1}$.
			\item
				The difference of sets $A$ and $B$ is denoted and defined as $A \setminus B \dfeq \st{x \in A}{x \notin B}$. If a set $X$ is fixed, then a complement of its subset $A$ is denoted and defined as $A^C \dfeq X \setminus A$.
			\item
				The statement $A \between B$ denotes that the sets $A$ and $B$ intersect (in constructive terms, that their intersection is inhabited).
			\item
				The set of maps from $A$ to $B$ is written as the exponential $B^A$.
			\item
				The set of finite sequences of elements in $A$ is denoted by $\finseq{A}$.
			\item
				Concatenation of sequences $a$ and $b$ is $a \cnct b$.
			\item
				Given sets $A \subseteq X$, $B \subseteq Y$ and a map $f\colon X \to Y$ with the image $\im(f) \subseteq B$, the restriction of $f$ to $A$ and $B$ is denoted by $\rstr{f}_A^B$. When we restrict only the domain or only the codomain, we write $\rstr{f}_A$ and $\rstr{f}^B$, respectively.
			\item
				The empty set (categorically, the initial object) is denoted by $\emptyset$, a singleton (the terminal object) by $\one$ (and its sole element by $*$), and a two-element set (binary coproduct $\one + \one$) by $\two$. In logical context we denote its elements by $\two = \{\top, \bot\}$, and in the context of number sets by $\two = \{0, 1\}$.
			\item
				The onto maps are called surjective, and the one-to-one maps injective. This should not be confused with injectivity in the categorical sense which is used nowhere in the thesis.
			\item
				The quotient of a set $X$ by an equivalence relation $\equ$ is denoted by $X/_\equ$. Its elements --- the equivalence classes --- are denoted by $[x]$ where $x \in X$ (\ie if $q\colon X \to X/_\equ$ is the quotient map, then $[x] \dfeq q(x)$).
			\item
				Maps $f\colon X \to Y$ and $g\colon Y \to X$ with the property $f \circ g = \id[Y]$ are called \df{split} (by one another). In that case $f$ is called a \df{retraction}, and $g$ its \df{section}.
			\item
				The topology (the set of open subsets) of $X$ is denoted by $\optp(X)$, and the set of closed subsets by $\cltp(X)$.
			\item
				Classically various definitions of finiteness are equivalent, but constructively this is not so. \df{Finite} in this thesis means \df{Kuratowski finite}, \ie nullary or unary or (recursively) a binary union of finite. See also the (equivalent) definition of finiteness in Section~\ref{Section: intuitionism} after assuming that we have the set of natural numbers. That said, we adopt the following terminology for (co)products: by a binary (co)product we mean one indexed by $\two$, a finite (co)product means indexed by $\NN_{< n}$ for some $n \in \NN$, and a countable or countably infinite (co)product means it is indexed by $\NN$.
			\item
				A lattice is called \df{bounded} when it has the smallest and the largest element (which are the nullary join and nullary meet respectively, so in other words, a bounded lattice is a partial order with all finite meets and joins). Similarly, a bounded meet-semilattice is the one which has the largest element (and thus all finite meets), and a bounded join-semillatice is the one which has the smallest element (has all finite joins).
			\item
				For notation of variants of the axiom of choice see Definition~\ref{Definition: ac}.
		\end{itemize}
		
		\intermission
		
		The following symbols are introduced later in the text (we present them here for reference):
		\begin{itemize}
			\item
				$\soc$: the set of truth values (Section~\ref{Section: intuitionism}), or the subobject classifier in a topos (Section~\ref{Section: topoi}),
			\item
				$\trivtwo$: a two-elements set, equipped with trivial topology (Section~\ref{Section: to_synthetic_topology}),
			\item
				$\sier$: the \Sier topological space (Section~\ref{Section: to_synthetic_topology}),
			\item
				$\opn$: the \Sier object, or the set of open truth values (Sections~\ref{Section: to_synthetic_topology} and~\ref{Section: redefinition_of_synthetic_topology}),
			\item
				$\tst$: the set of test truth values (Section~\ref{Section: redefinition_of_synthetic_topology}),
			\item
				$\cld$: the set of closed truth values (Section~\ref{Section: redefinition_of_synthetic_topology}),
			\item
				$\oirs$: a (fixed) overt interpolating ring streak, used to construct the real numbers from (Section~\ref{Section: real_numbers}),
			\item
				$\lrs$: a (fixed) lattice ring streak which serves as the set of possible distances in a metric space (Sections~\ref{Section: metric_spaces} and~\ref{Section: metrization_via_embeddings}),
			\item
				$d_E$: the Euclidean metric (Section~\ref{Section: metric_spaces}),
			\item
				$d_D$: the discrete metric (Section~\ref{Section: metric_spaces}),
			\item
				$\Baire$: the Baire space, \ie the metric space $\NN^\NN$ with the comparison metric $d_C$ (Chapter~\ref{Chapter: metrization_theorems}),
			\item
				$\Cantor$: the Cantor space, \ie the metric space $\two^\NN$ with (the restriction of) the comparison metric (Chapter~\ref{Chapter: metrization_theorems}),
			\item
				$\opcN$: the ``one-point compactification'' of $\NN$, the metric space of decreasing binary sequences, equipped with (the restriction of) the comparison metric (Chapter~\ref{Chapter: metrization_theorems}),
			\item
				$\succm$, $\predm$: the successor and predecessor map on $\opcN$ (Chapter~\ref{Chapter: metrization_theorems}),
			\item
				$\site$: the site of topological spaces over which gros topos is made (Section~\ref{Section: gros_topos_model}).
		\end{itemize}
		
		\intermission
		
		A few more words about style.
		\begin{itemize}
			\item
				Most of the thesis (when the writer is the subject) is written in first person plural (as is usual in mathematical texts), but occasionally I use first person singular. The former is used more formally (and when I expect the reader to go along with and except what is written), whereas the latter is used when I want to express my personal style, opinion, preference or suggestion.
			\item
				A crossed-out relation always denotes just the negation of that relation, as opposed to some constructively stronger notion. For example, $x \neq y$ means $\lnot(x = y)$ while the stronger apartness relation is denoted by $x \apart y$.
			\item
				Given a map $a\colon N \to A$ where $N$ is a subset of natural numbers, we often write simply $a_k$ instead of $a(k)$ for the value of $a$ at $k \in N$.
			\item
				In mathematical practice definitions typically follow the pattern that a term is defined to be equivalent to something having some property, so mathematically the phrase ``if and only if'' or similar should be used in definitions. It is tedious to always write this, and often doesn't sound natural, so in practice mathematicians usually write simply ``if''. This means equivalence in definitions, but merely an implication everywhere else, and this discrepancy bothers me. Therefore I use the word ``when'' in definitions; it's a single word, and has the connotation of equivalence.
			\item
				A statement is often identified with its truth value. Consequently I use the equivalence $\iff$ and equality $=$ between statements/truth values interchangeably.
			\item
				I often speak about \emph{the} (co)limit (for example, \emph{the} terminal object) and other categorical constructions, though this is not strictly correct since uniqueness of these constructions is only up to isomorphism, and that is how this definite article should be understood (the same way as we say \emph{the} set of natural numbers, and so on); but see also the explanation in Section~\ref{Section: topoi} about these constructions being functors.
			\item
				I don't always go into excruciating detail about foundational issues, and am willing to write a statement which for example quantifies over all sets\footnote{Technically one can do this if using classes, but definitely not in an internal language in a topos.}, especially if this makes the subject conceptually clearer. It is assumed that the reader possesses enough mathematical experience to translate such a statement to an unobjectionable formal mathematical form. For example, Theorem~\ref{Theorem: characterization_of_classical_compactness} states that $X$ is compact if and only if every projection $X \times Y \to Y$ is closed. While this looks like quantification over all topological spaces $Y$, one implication should be understood in a schematic sense (we have one theorem for every $Y$), and from the proof it is clear that we only require a \emph{set} of topological spaces for the other implication.
			\item
				In the Elephant~\cite{Johnstone_PT_2002:_sketches_of_an_elephant_a_topos_theory_compendium}, the ``Bible of topos theory'', Peter Johnstone mentions (only half-jokingly, I think) that one of the most difficult issues in topos theory is to decide on the plural of `topos'. One possibility is the form `topoi', following the Greek etymology, and the other is the anglicized version `toposes'. Johnstone himself argues for the latter option, saying, among other things, that ``when going for walks in the country you don't carry hot refreshments in `thermoi'{}''. I confess I have yet to hear someone say `thermoi' myself, but then again I haven't heard anyone say `thermoses' either. Regardless, for both patterns of plural there are other words in English which follow one or the other, so I would say that neither is incorrect as such, and that this is more a matter of preference and style. Personally I prefer the form `topoi', for the simple reason that I find it more euphonic. (This is, of course, just the English problem; for example in my (Slovenian) language there is (mercifully) no such issue.)
		\end{itemize}


   \chapter{Constructive Setting}\label{Chapter: constructivism}

	In this chapter we review the notions of constructivism, intuitionistic logic and topoi which will serve as background theory in the thesis. This is meant to be an introduction to (and motivation for) constructivism for a classical mathematician; a constructive mathematician can skip the chapter entirely. We focus on themes which we actually require; for a deeper (and more formal) exposition on the subject see~\cite{Troelstra_AS_Dalen_D_1988:_constructivism_in_mathematics_volume_1, Troelstra_AS_Dalen_D_1988:_constructivism_in_mathematics_volume_2}.
	
	Constructive mathematics is, very vaguely put, the method of doing mathematics in such a way that to prove the existence of something, it is actually constructed rather than merely proving the impossibility of non-existence. A classical mathematician considers the latter to be good enough due to the law of excluded middle (abbrev.~\lem[]) which states that every proposition is either true or false; thus if it cannot be false, it must be true. However, this does not actually specify an instance of the object in question, and so might not be good enough in practice (for example, a physicist might find it useless to know that a solution of his differential equation exists if he does not also have a way to construct it).
	
	Constructivist movement originated from philosophical issues (most notably by Brouwer), but has since developed into a full-fledged mathematical theory. Generally a mathematical theory consists of specifying the language, logical inference rules, and axioms. Most mathematicians (at the time of this writing, at least) work within the classical set theory, with \zfc axioms~\cite{Jech_T_2002:_set_theory}. Constructively some of the axioms and inference rules are omitted (definitely \lem[,] though it need not be the only one), and possibly some others added. Which ones those are depends on the type of constructive mathematics one does; there are many different versions of constructivism~\cite{Bridges_DS_Richman_F_1987:_varieties_of_constructive_mathematics}. For our purposes intuitionistic logic will serve.
	
	Intuitionism is a version of constructive mathematics which has the same language and inference rules as classical mathematics, with the exception of law of excluded middle; this logic is called \df{intuitionistic}. The axioms are mostly the same, the main difference is the lack of the axiom of choice since it implies \lem[.]

	\section{Intuitionistic Logic}\label{Section: intuitionism}
	
		Let $\soc$ denote the set of truth values. Classically there are exactly two truth values, \df{true} $\top$ and \df{false} $\bot$, so $\soc = \two = \{\top, \bot\}$. Indeed, this is precisely the law of excluded middle: every truth value is either true or false.
		
		Intuitionistically we still have truth and falsehood, so $\two \subseteq \soc$, but we do not require this inclusion to be surjective (in full generality we do not require it not to be surjective either, so classical logic is a special case of intuitionistic one).
		
		Classically if a map is not surjective, there is a point outside its image which in our case would mean that intuitionistically there is a third truth value, different from $\top$ and $\bot$. But this is provably false: there is no such value. Rather, one should view all truth values in intuitionistic $\soc$ to be to some extent true (\ie equal to $\top$), and to some extent false (equal to $\bot$), but none of them completely different from both $\top$ and $\bot$.
		
		Models of such a $\soc$ are complete Heyting lattices. A \df{Heyting lattice} is a bounded distributive lattice (the largest element is $\top$, the smallest is $\bot$, the infimum is conjunction $\land$, the supremum disjunction $\lor$) with implication $\impl$, defined to be the right adjoint to conjuntion in the partial order $\leq$ of the Heyting lattice, viewed as a category. Because the only isomorphisms in a partial order category are identities, this condition determines the implication uniquely; spelling it out, for $p, q \in \soc$ the implication $p \impl q$ is the unique element of $\soc$ which satisfies the condition that $x \leq p \impl q$ precisely when $x \land p \leq q$. Dropping the categorical jargon for a moment, in a (shall we say) normal language this means that $p \impl q$ is the largest truth value for which the condition\footnote{This is of course the well-known \df{modus ponens}.} $p \land (p \impl q) \leq q$ holds. Consequently a complete distributive lattice is automatically a (complete) Heyting lattice: $\top$ and $\bot$ are the nullary infimum and supremum respectively, and the implication is
		$$p \impl q \sepeq \bigvee\st{x \in \soc}{p \land x \leq q}.$$
		
		In any Heyting lattice the negation of $p \in \soc$ can be defined as $p$ being false. However since clearly always $\bot \impl p$, the negation can be given as $\lnot{p} \dfeq p \impl \bot$. Consequently $p \land \lnot{p}$ is false (a contradiction) just as in classical logic, and indeed $\lnot{p}$ is the largest such truth value, but $\lnot{p}$ is still not a complement of $p$ in a classical sense since $p \lor \lnot{p}$ need not hold (again, this is just \lem for $p$). When it does, $p$ is called \df{decidable} (since $p \lor \lnot{p}$ means precisely that we can decide whether $p$ is true or false).
		
		In the same vein, the double negation of a truth value need not imply that truth value (if something is not false, it need not automatically be true); those $p \in \soc$ for which it does, \ie $\lnot\lnot{p} \impl p$, are called \df{$\lnot\lnot$-stable} (or just \df{stable}). The existence statements are often not stable; the impossibility of non-existence of an object with some property does not mean we can actually find such an object. The converse, $p \impl \lnot\lnot{p}$ still always holds though; if something is true, it cannot be false.
		
		Decidable truth values are also stable. The converse need not hold; it is equivalent to the \df{weak law of excluded middle} \wlem which states $\lnot\lnot{p} \lor \lnot{p}$ for all $p \in \soc$. However, all truth values are stable if and only if all of them are decidable; in that case $\soc$ is a (classically familiar) \df{Boolean lattice}.
		
		A short intermezzo to clarify a point. Constructing an intuitionistic (or any other, for that matter) model means constructing it in some ambient theory (``meta-theory''), typically set theory. Reasoning within it is called \df{external} since it is outside the constructed model. But this model has a logic on its own, and reasoning within it is called \df{internal}. Validity of (syntactically) the same propositions may differ in the two interpretations. For example, any complete Heyting lattice is the set of truth values of some intuitionistic model, viewed externally, but most Heyting lattices have at least three distinct elements. However, the point is that internally this is not the case for $\soc$ since the truth value of equality can be measured by all elements in $\soc$. This is even more pronounced in case $\soc$ is a Boolean lattice: internally it then has precisely two elements.
		
		Just as classically one can reason intuitionistically by natural deduction. It is an exercise in intuitionistic logic to show that the following holds for all $p, q, r \in \soc$ and $\phi \in \soc^X$. Where there is only an implication between the left and the right side, the converse does not hold in general (as opposed to classical logic).
		$$p \impl (q \impl r) \sepeq (p \land q) \impl r \qquad\qquad \lnot\lnot\lnot{p} \sepeq \lnot{p} \qquad\qquad \lnot\lnot(p \land q) \sepeq \lnot\lnot{p} \land \lnot\lnot{q}$$
		$$\lnot{p} \land \lnot{q} \sepeq \lnot(p \lor q) \qquad\qquad \lnot{p} \lor \lnot{q} \implies \lnot(p \land q)$$
		$$\xall{x}{X}{\lnot\phi(x)} \sepeq \lnot\xsome{x}{X}{\phi(x)} \qquad\qquad \xsome{x}{X}{\lnot\phi(x)} \implies \lnot\xall{x}{X}{\phi(x)}$$
		$$\lnot(p \impl q) \sepeq \lnot(\lnot p \lor q) \sepeq \lnot\lnot{p} \land \lnot{q}$$
		$$\lnot\lnot(p \impl q) \sepeq p \impl \lnot\lnot{q} \sepeq \lnot\lnot{p} \impl \lnot\lnot{q} \sepeq \lnot{q} \impl \lnot{p}$$
		Let $\nnst \dfeq \st{p \in \soc}{\lnot\lnot{p} = p}$ be the set of stable truth values, and let $\lnot{A} \dfeq \st{\lnot{p}}{p \in A}$ for $A \subseteq \soc$. We see that
		$$\two \subseteq \nnst = \lnot\soc = \lnot\lnot\soc \subseteq \soc.$$
		
		Suppose we have a subset $A \subseteq X$. In classical mathematics (where $\soc = \{\top, \bot\}$) we can define its \df{characteristic map} $\chi_A\colon X \to \soc$ by
		$$\chi_A(x) \dfeq \begin{cases} \top & \text{ if } x \in A,\\ \bot & \text{ if } x \notin A. \end{cases}$$
		But this just means that $x \in X$ maps to the truth value of its membership in $A$, so we can define the characteristic map constructively as well by $\chi_A(x) \dfeq (x \in A)$. In fact, any map $\chi\colon X \to \soc$ is a characteristic map of some subset of $X$, namely $\chi^{-1}(\top)$. This defines a bijective correspondence between the powerset $\pst(X)$ and the set of characteristic maps (or predicates, if you will) $\soc^X$.
		
		An interesting special case is obtained by taking $X = \one$ since $\soc^\one \ism \soc$. This means that truth values are in bijective correspondence with subsets of a singleton (a truth value being the measure to which extent the unique element $* \in \one$ belongs to a subset). Explicitly, the bijections from one to the other are
		\begin{align*}
			\soc &\to \pst(\one)  &  \pst(\one) &\to \soc\\
			p &\mapsto \st{* \in \one}{p},  &  A &\mapsto * \in A.
		\end{align*}
		We will often identify truth values and subsets of $\one$ via this bijection.
		
		The Heyting lattice structure of $\soc$ induces the Heyting lattice structure on $\soc^X$ for all $X$ simply by exponentiation. On the level of powersets, conjunction $\land$ induces the intersection $\cap$, disjunction $\lor$ induces the union $\cup$, and negation $\lnot$ induces the complementation $\insarg^C$. We will also require the implication $\impl$ on powersets; in analogy with curving the tip in the case of conjunction and disjunction to obtain the induced operations, we'll use the symbol $\setimpl$ for the implication on powersets.
		
		Technically every powerset $\pst(X)$ has its own intersection and union, so it would be more precise to write something like $\cap_X$, $\cup_X$. However, this is not the mathematical practice since the intersection and union are ``independent'' of the superset in which we calculate it, in the sense that if $X \subseteq Y$, then $\pst(X) \subseteq \pst(Y)$, and the intersection and union of $\pst(Y)$ restrict to these operations on $\pst(X)$. This is not so in the case of the implication, so we diligently add the superset in the index. Explicitly, for $A, B \subseteq X$ we define
		$$A \setimpl[X] B \dfeq \st{x \in X}{x \in A \implies x \in B}.$$
		
		Because negative statements are $\lnot\lnot$-stable, they behave much like classical statements; ``they have the same content'' classically and constructively. For example, the definition (and meaning) of a subset being empty is the same in both interpretations. Positive (more precisely, the existence) statements have different content, though. For example, being non-empty is not the same as actually possessing an element. Constructively if a (sub)set contains an element, it is called \df{inhabited}. For example, non-empty subsets of $\one$ are all those, represented by truth values $p \in \soc$ for which $\lnot\lnot{p}$ holds, but the only inhabited subset of $\one$ is $\one$ (represented by $\top$). The two cases match if and only if $\lem$ holds.
		
		The equality on a set $X$ is a map of the form $=\colon X \times X \to \soc$. When its image is contained in $\two$ (\ie for every $x, y \in X$ the statement $x = y$ is decidable), we say that $X$ \df{has decidable equality}. Similarly, when its image is contained in $\nnst$, we say that $X$ \df{has stable equality}. Clearly if a set has decidable/stable equality, so does its every subset. In a binary coproduct membership to an individual summand is by definition decidable, so if summands have decidable/stable equality, so does the binary coproduct. Obviously $\one$ has decidable equality (it is always true), and then so do $\emptyset$ and $\two$.
		
		\intermission
		
		As classically, so also constructively we call a set $\NN$ the \df{set of natural numbers} when it satisfies Peano axioms. It can be proven by induction that the relations $=$, $\leq$, $<$ on $\NN$ are decidable.
		
		We call a set $X$ \df{finite} when there exists a surjective map $\NN_{< n} \to X$ for some $n \in \NN$, \ie we can enumerate the elements of $X$ with the first few natural numbers. Note that the empty set $\emptyset$ is finite by this definition since we can take $n = 0$. In fact, any finite set is either empty or inhabited; consider any surjection $\NN_{< n} \to X$, and decide whether $n$ equals or is greater than $0$.
         
      If we fix a surjection $a\colon \NN_{< n} \to X$, we can write a finite set as $X = \{a_0, a_1, \ldots, a_{n-1}\}$ . However, in this list some elements can potentially repeat since we only require $a$ to be a surjection, not a bijection. Therefore, contrary to the classical intuition, for a finite set $X$ there need not exist $n \in \NN$ such that $X$ would have exactly $n$ elements (in the sense that there is a bijection between $X$ and $\NN_{< n}$). In fact, this happens precisely when $X$ has decidable equality (since in that case we can remove the repetitions of elements in the list).\footnote{Some authors reserve the word `finite' only for sets in bijection with $\NN_{< n}$ while what we call finite they term \df{finitely enumerated}. Our definition of finiteness is equivalent to Kuratowski finiteness.}
      
      Next, we call a set $X$ \df{infinite} when there exists an injective map $\NN \to X$. Clearly if a set is infinite, it is also inhabited, and if it contains an infinite subset, it is itself infinite. The dual notion, the existence of a surjection $\NN \to X$, means we have an enumeration of elements of $A$ by natural numbers. Thus we might be tempted to call it countability, but it turns out to be more convenient to include into definition not necessarily inhabited sets (in particular, we want the empty set $\emptyset$ to be countable). The general definition is: a set $X$ is \df{countable} when there exists a surjective map $\NN \to \one + X$ (equivalently, when $X$ is an image of a decidable subset of natural numbers). However, inhabitedness of $X$ is the only issue here; there is a surjection $\NN \to X$ if any only if $X$ is both inhabited and countable. In fact, given a surjection $s\colon \NN \to \one + X$ for such an $X$, there is a canonical way of producing a surjection $\NN \to X$: compose $s$ with the identity on $X$, plus map $* \in \one$ to the first element from $X$ which $s$ enumerates.
      
      Any finite set is countable; given a surjection $a\colon \NN_{< n} \to X$, we can produce the surjection $s\colon \NN \to \one + X$ by
      $$s_k \dfeq \begin{cases} a_k & \text{ if } k < n,\\ * & \text{ if } k \geq n. \end{cases}$$
      In the same vein, any decidable subset $D$ of a countable set $X$ is countable since we can adjust the surjection $f\colon \NN \to \one + X$ to obtain the surjection $g\colon \NN \to \one + D$, defined by
      $$g_n \dfeq \begin{cases} f_n & \text{ if } f_n \in D, \\ * & \text{ if } f_n \notin D. \end{cases}$$
      
      Classically, if we have both an injection and a surjection from one set to another, they must be in bijective correspondence. Constructively this does not hold in general, but one special case in which it does is rather useful.
      
      \begin{lemma}\label{Lemma: in_bijection_with_N}
         If an infinite countable set has decidable equality, it is in bijection with $\NN$ (the converse obviously also holds). More precisely, if $X$ is an infinite set with decidable equality, and a surjection $s\colon \NN \to \one + X$ is given, then there exists a canonical choice of a bijection $\NN \to X$.
      \end{lemma}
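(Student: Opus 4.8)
The plan is to carry out the obvious ``enumerate $X$ without repetitions'' procedure, using infiniteness precisely to guarantee that the enumeration never runs dry and decidable equality to make the relevant membership tests decidable. Fix an injection $i\colon \NN \to X$ witnessing that $X$ is infinite; it will be used only inside existence proofs, not in the actual values of the bijection, which is what will make the result canonical. I would define $b\colon \NN \to X$ by recursion on finite repetition-free lists: supposing $b_0, \ldots, b_{n-1} \in X$ have been produced and are pairwise distinct, first observe that $X \setminus \{b_0, \ldots, b_{n-1}\}$ is inhabited. Indeed, since $X$ has decidable equality this is a decidable subset, so ``$\some{j}{\NN_{< n+1}}{i_j \notin \{b_0, \ldots, b_{n-1}\}}$'' is a decidable proposition (a bounded search over decidable instances); if it were false, the map $j \mapsto \mu m.\,(b_m = i_j)$ would be an injection $\NN_{< n+1} \to \NN_{< n}$ (injective because $i$ is and $i_j$ equals the listed element $b_{\mu m.(b_m = i_j)}$), contradicting the finite pigeonhole principle, which is constructively valid. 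Hence such a $j$ exists, and in particular $X \setminus \{b_0, \ldots, b_{n-1}\}$ is inhabited. Then, since $s$ is onto $\one + X$ and membership in the $X$-summand is decidable, there is an index $k$ with $s_k$ in the $X$-summand representing an element outside $\{b_0, \ldots, b_{n-1}\}$; let $k_n$ be the least such $k$ and set $b_n \dfeq s_{k_n}$. This makes the successor step total, so the recursion is legitimate, and by construction $b_n \notin \{b_0, \ldots, b_{n-1}\}$, so the list stays repetition-free.

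Injectivity of $b$ is then immediate: for $m \neq n$, say $m < n$, we have $b_n \notin \{b_0, \ldots, b_{n-1}\} \ni b_m$, hence $b_m \neq b_n$, which (using decidability of equality on $\NN$) is exactly injectivity. For surjectivity I would first note that the chosen indices strictly increase: at stage $n+1$ every $k \le k_n$ fails the selection condition, since for $k < k_n$ it already failed at stage $n$ and $\{b_0, \ldots, b_{n-1}\} \subseteq \{b_0, \ldots, b_n\}$, while $s_{k_n} = b_n \in \{b_0, \ldots, b_n\}$; hence $k_{n+1} > k_n$ and so $k_n \ge n$. Now take any $x \in X$. By surjectivity of $s$ there is $m$ with $s_m$ in the $X$-summand and equal to $x$. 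Since $k_{m+1} \ge m+1 > m$, minimality of $k_{m+1}$ yields $\lnot(s_m \text{ in the } X\text{-summand} \land s_m \notin \{b_0, \ldots, b_m\})$; as the first conjunct holds and membership in the finite list $\{b_0, \ldots, b_m\}$ is decidable, we get $x = s_m = b_j$ for some $j \le m$. Thus $b$ is a bijection $\NN \to X$.

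Finally, ``canonical'' means that every choice above is a least-witness choice, determined by $s$ and the (unique) decidable-equality structure alone; the injection $i$ enters only in the decidable inhabitedness claim at each recursion step, and the value $b_n$ does not depend on which $i$ was picked, so $b$ descends to a genuine function of the datum $s$. The main obstacle is exactly the inhabitedness of $X \setminus \{b_0, \ldots, b_{n-1}\}$ at each stage: without decidable equality one cannot even decide membership in the finite initial segment, and without infiniteness the enumeration could stall on a finite list, so it is precisely here that both hypotheses are consumed (via the finite pigeonhole argument). Everything else — the recursion setup, injectivity, and the bookkeeping about least indices for surjectivity — is routine.
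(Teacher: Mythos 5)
Your proof is correct and follows essentially the same construction as the paper's: define $b(n)$ to be the first element in the enumeration $s_0, s_1, \ldots$ lying in the $X$-summand and differing from $b(0), \ldots, b(n-1)$, using decidable equality to make the search effective and infiniteness (via a pigeonhole argument on the injection $\NN \to X$) to guarantee a candidate always exists. The paper only sketches these points, so your filled-in details on the pigeonhole step, the monotonicity $k_n \ge n$ for surjectivity, and the independence of the least-witness choice from the injection $i$ are welcome but not a different argument.
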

      \begin{proof}
         Define $b$ inductively as follows: $b(n)$ is the first element of $X$ in the list $s_0, s_1, s_2, \ldots$ which differs from all elements $b(k)$ for $k \in \NN_{< n}$. Decidable equality enables us to actually decide whether the elements differ (and of course we use decidability of summands $\one$ and $X$ to see whether an element is in $X$ in the first place). Infiniteness of $X$ ensures that we can always find a candidate for $b(n)$.
      \end{proof}
      
      We specifically emphasize the existence of a bijective correspondence between $\NN$ and $\NN \times \NN$. A possible bijection $\NN \times \NN \to \NN$ is $(n, m) \mapsto 2^n \cdot (2 m + 1) - 1$. By induction $\NN$ is in bijective correspondence with $\NN^n$ for any $n \in \NN_{\geq 1}$.
      
      \intermission
      
		Intuitionistically the axiom of choice $\ac$ is not assumed since it implies the excluded middle. To see this, note that $\ac$ implies that every surjection is split (indeed, $\ac$ is equivalent to this), \ie has a section (and the surjection is a retraction). For an arbitrary $p \in \soc$ define a surjection $f\colon \two \to \{\top, p\}$ by $f(\top) \dfeq \top$ and $f(\bot) \dfeq p$ (in short, $f(x) \dfeq p \lor x$). If $f$ has a section $s\colon \{\top, p\} \to \two$ which is necessarily injective, then $\{\top, p\}$ has decidable equality because $\two$ does. This means that $p$ is either equal to true or not. Since $p$ was arbitrary, we infer \lem[.]
		
		That said, weaker versions of the axiom of choice might still hold in an intuitionistic model (even if it is non-classical). We introduce the following notation.
		\begin{definition}\label{Definition: ac}
			\
			\begin{itemize}
				\item
					For a set $A$ and a map $g\colon X \to Y$ the axiom $\AC{A, g}$ means that every map $f\colon A \to Y$ lifts along $g$ to some map $\bar{f}\colon A \to X$ (meaning $g \circ \bar{f} = f$).
				\item
					For sets $X$, $Y$ the axiom $\AC{X, Y}$ means that every total relation between elements of $X$ and elements of $Y$ has a choice function; spelling it out,
					$$\xall{R}{\pst(X \times Y)}{\Big(\xall{x}{X}\xsome{y}{Y}{(x, y) \in R} \implies \xsome{f}{Y^X}\xall{x}{X}{(x, f(x)) \in R}\Big)}.$$
					Denoting by $p\colon X \times Y \to X$ the projection, this is equivalent to $\rstr{p}_R\colon R \to X$ being split for every total relation $R \subseteq X \times Y$ which is moreover equivalent to $\AC{X, \rstr{p}_R\colon R \to X}$ for every total relation $R \subseteq X \times Y$.
				\item
					For a set $X$ the axiom schema $\AC{X}$ means that $\AC{X, Y}$ holds for all sets $Y$.
				\item
					The full axiom of choice $\ac$ (more precisely, the axiom schema) means that $\AC{X}$ holds for all sets $X$.
			\end{itemize}
			We also use a restricted version of these axioms. For $\opn \subseteq \soc$ we define the axiom $\AC[\opn]{X, Y}$ to mean that every total relation $R \subseteq X \times Y$, classified by $\opn$ (meaning that the characteristic map of $R$ has the image in $\opn$), has a choice function.\footnote{Later $\opn$ will denote the set of open truth values, so this axiom will ensure choice functions for open relations.} In particular $\AC{X, Y} = \AC[\soc]{X, Y}$. The schemata $\AC[\opn]{X}$ and $\ac_\opn$ are derived as above.
		\end{definition}
		
		Clearly in any of these axioms we can replace a set with one in a bijective correspondence, and still obtain an equivalent axiom. More generally, the following proposition holds.
		\begin{proposition}
			Suppose $\AC[\opn]{X, Y}$ holds for some $\opn \subseteq \soc$.
			\begin{enumerate}
				\item
					If $f\colon Y \to Y'$ is a surjection, then $\AC[\opn]{X, Y'}$ also holds.
				\item
					If $f\colon X \to X'$ is a retraction (split by a section $s\colon X' \to X$), then $\AC[\opn]{X', Y}$ also holds.
			\end{enumerate}
		\end{proposition}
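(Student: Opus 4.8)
The plan is to treat both items uniformly by \emph{pulling back} the given total relation along $f$ (suitably paired with an identity), applying the hypothesis $\AC[\opn]{X, Y}$ to the pullback, and then composing the resulting choice function with $f$ (item~1) or with the section $s$ (item~2). The two facts to verify along the way are that the pulled-back relation is again total and again classified by $\opn$; both are routine, the second simply because the characteristic map of the pullback is the composite of the characteristic map of the original relation with a map into the appropriate product, so its image stays inside $\opn$ without any closure assumption on $\opn$.

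For item~1, I would take a total relation $R' \subseteq X \times Y'$ with $\im(\chi_{R'}) \subseteq \opn$ and set $R \dfeq \st{(x, y) \in X \times Y}{(x, f(y)) \in R'}$, so that $\chi_R = \chi_{R'} \circ (\id[X] \times f)$ and hence $\im(\chi_R) \subseteq \opn$. Totality of $R$ follows by chaining existentials: for $x \in X$ there is $y' \in Y'$ with $(x, y') \in R'$, and by surjectivity of $f$ there is $y \in Y$ with $f(y) = y'$, whence $(x, y) \in R$. Then $\AC[\opn]{X, Y}$ gives $g\colon X \to Y$ with $(x, g(x)) \in R$ for all $x$, and $f \circ g\colon X \to Y'$ satisfies $(x, f(g(x))) \in R'$, so it is the desired choice function for $R'$.

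For item~2, given a total $R \subseteq X' \times Y$ with $\im(\chi_R) \subseteq \opn$, I would form $\tilde R \dfeq \st{(x, y) \in X \times Y}{(f(x), y) \in R}$, again classified by $\opn$ since $\chi_{\tilde R} = \chi_R \circ (f \times \id[Y])$, and total because $f(x) \in X'$ for every $x \in X$. Applying $\AC[\opn]{X, Y}$ yields $g\colon X \to Y$ with $(f(x), g(x)) \in R$ for all $x \in X$, and then $g \circ s\colon X' \to Y$ works: for $x' \in X'$ we get $(f(s(x')), g(s(x'))) \in R$, and $f \circ s = \id[X']$ collapses $f(s(x'))$ to $x'$, so $(x', g(s(x'))) \in R$.

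I do not expect a genuine obstacle here: this is transport of the choice principle along a surjection on the ``output'' side and along a (co)retraction on the ``input'' side. The only place deserving a moment's care is the totality step in item~1 --- surjectivity of $f$ does not supply a section, but since totality of $R$ is itself merely an existential statement, chaining the two $\forall\exists$ statements is legitimate intuitionistically and no choice is smuggled in.
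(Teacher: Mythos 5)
Your proof is correct and takes essentially the same route as the paper: pull the relation back along $\id[X] \times f$ (resp.\ $f \times \id[Y]$), note that totality and $\opn$-classification are preserved, apply $\AC[\opn]{X, Y}$, and post-compose with $f$ (resp.\ pre-compose with $s$). The extra care you take with the totality step in item~1 is exactly the right point to flag, and your argument there is sound.
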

		\begin{proof}
			\begin{enumerate}
				\item
					If $R \subseteq X \times Y'$ is a total relation, classified by $\opn$, then $(\id[X] \times f)^{-1}(R)$ is also a total relation (since $f$ is surjective), classified by $\opn$ (its characteristic map is the characteristic map of $R$, precomposed by $\id[X] \times f$). If $g\colon X \to Y$ is a choice function for $(\id[X] \times f)^{-1}(R)$, then $f \circ g\colon X \to Y'$ is a choice function for $R$.
				\item
					Similarly as in the previous item, if $R \subseteq X \times Y'$ is a total relation, classified by $\opn$, then $(f \times \id[Y])^{-1}(R)$ is also a total relation, classified by $\opn$. From a choice function $g\colon X \to Y$ of $(f \times \id[Y])^{-1}(R)$ we obtain then a choice function for $R$ as $g \circ s\colon X' \to Y$.
			\end{enumerate}
		\end{proof}
		
		It can be proven by induction that $\AC{\NN_{< n}}$ holds for all $n \in \NN$; this is called \df{finite choice} (this does not mean that $\AC{X}$ holds for all finite sets $X$; indeed that would imply \lem[,] as above). Many constructivists assume \df{countable choice} $\AC{\NN}$, or at least \df{number-number choice} $\AC{\NN, \NN}$. However we want to be as general as possible, so whenever we assume any choice that does not automatically\footnote{To be precise, we accept choice principles which automatically hold in a \emph{topos}. Beside finite choice we thus also silently assume the \df{axiom of unique choice} (stating that for any $X$, $Y$, any relation $R \subseteq X \times Y$ which satisfies $\xall{x}{X}\xexactlyone{y}{Y}{(x, y) \in R}$ is a graph of some map), something which some constructivists don't.} hold, we state this explicitly.
		
		We also mention a stronger version of countable choice, called \df{dependent choice}. It states that we can obtain a sequence of elements by choosing one from every set in a sequence of inhabited sets, even when each set (and the choice of an element from it) depends on the previously chosen elements.
		
		\intermission
		
		An important role in this thesis will be played by \df{semidecidable} truth values, \ie countable joins of decidable ones.\footnote{Here we adopt the definition of semidecidability for our purposes. In general, semidecidable statements are those for which there exists an algorithm which returns `true' when the statement holds, and runs forever when it doesn't. In (for example) Type I computability the truth values of such statements are precisely countable joins of decidable truth values. It is this property that will be important for us, hence the change in the definition.} More precisely, we denote and define the set of semidecidable truth values as\footnote{In the context of number sets we denote $\two = \{0, 1\}$ instead of $\two = \{\top, \bot\}$, and then $\Ros = \st{p \in \soc}{\some{\alpha}{\two^\NN}{p \iff \xsome{n}{\NN}{\alpha_n = 0}}}$. Depending on style some switch $0$ with $1$ in this definition.}
		$$\Ros \dfeq \st{p \in \soc}{\some{\alpha}{\two^\NN}{p \iff \xsome{n}{\NN}{\alpha_n}}}.$$
		The notation $\Ros$ comes from logic and recursion theory, as part of the arithmetical hierarchy.
		
		A subset is \df{semidecidable} when it is classified by (a characteristic map with its image in) $\Ros$. It is obvious that countable and semidecidable subsets of $\one$ match. Because of finite choice this is true for all sets $\NN_{< n}$ as well. As for $\NN$, its any countable subset is also semidecidable; if $A \subseteq \NN$ and $s\colon \NN \to \one + A$ is a surjection, then for any $m \in \NN$ we have
		$$m \in A \iff \xsome{n}{\NN}{s_n = m}$$
		(this works since the equality on $\one + \NN$ is decidable). The converse requires some amount of countable choice, though. Let $\two^\NN \twoheadrightarrow \Ros$ denote the standard surjection $\alpha \mapsto \xsome{n}{\NN}{\alpha_n}$.
		\begin{proposition}\label{Proposition: semidecidable_and_countable}
			The following statements are equivalent.
			\begin{enumerate}
				\item
					Every semidecidable subset of $\NN$ is countable (so semidecidable and countable subsets of $\NN$ match).
				\item
					$\ACRos$ holds.
			\end{enumerate}
		\end{proposition}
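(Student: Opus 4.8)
Both statements are, in essence, the assertion that a pointwise witness for semidecidability can be chosen uniformly, and the plan is to make the dictionary between them completely explicit, letting the canonical bijections $\NN \ism \NN \times \NN$ (hence $\two^\NN \ism \two^{\NN \times \NN}$) do all the bookkeeping. Recall that a subset $A \subseteq \NN$ is semidecidable exactly when its characteristic map corestricts to a map $f \colon \NN \to \Ros$, $f(m) = (m \in A)$; and, unwinding Definition~\ref{Definition: ac}, $\ACRos$ is precisely the statement that every map $f \colon \NN \to \Ros$ lifts along the standard surjection $e \colon \two^\NN \twoheadrightarrow \Ros$, $e(\alpha) = \xsome{n}{\NN}{\alpha_n}$, to some $\bar f \colon \NN \to \two^\NN$ with $e \circ \bar f = f$. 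Observe that currying turns such an $\bar f$ into a decidable relation $G \subseteq \NN \times \NN$ for which $\xsome{n}{\NN}{(m,n) \in G}$ is equivalent to $f(m)$ for every $m$; so, modulo $\NN \times \NN \ism \NN$, having a lift of $f$ and being able to present $\st{m}{f(m)}$ as the image of a decidable subset of $\NN$ are one and the same thing. That last reformulation is exactly the characterization of countability recalled in the notation section.

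For $(2 \Rightarrow 1)$: let $A \subseteq \NN$ be semidecidable and let $f \colon \NN \to \Ros$ be its corestricted characteristic map. By $\ACRos$ there is $\bar f \colon \NN \to \two^\NN$ with $m \in A \iff \xsome{n}{\NN}{\bar f(m)_n}$ for all $m$. Put $G \dfeq \st{(m,n) \in \NN \times \NN}{\bar f(m)_n}$; this is decidable, since membership in a fixed element of $\two^\NN$ is. The first projection $G \to \NN$, $(m,n) \mapsto m$, lands in $A$ (if $\bar f(m)_n$ then $m \in A$) and is onto $A$ (if $m \in A$ then some $n$ has $\bar f(m)_n$). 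Via $\NN \times \NN \ism \NN$ this exhibits $A$ as the image of a decidable subset of $\NN$, hence $A$ is countable.

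For $(1 \Rightarrow 2)$: let $f \colon \NN \to \Ros$ be given and set $A \dfeq \st{m \in \NN}{f(m)}$, a semidecidable subset of $\NN$ (its characteristic map is $f$ composed with $\Ros \hookrightarrow \soc$). By hypothesis $A$ is countable, so fix a surjection $s \colon \NN \to \one + A$. Define $\bar f \colon \NN \to \two^\NN$ by letting $\bar f(m)$ be the characteristic map of the subset $\st{n \in \NN}{s_n = m} \subseteq \NN$, which is decidable because $s_n$ and $m$ both live in $\one + \NN$ and the latter has decidable equality; the assignment $m \mapsto \bar f(m)$ is then a genuine map $\NN \to \two^\NN$, being the uncurrying of a decidable relation on $\NN \times \NN$. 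Now $e(\bar f(m)) = \xsome{n}{\NN}{s_n = m}$, and this truth value equals $m \in A$ with no case split on $f(m)$ (forwards: a witnessing $s_n$ then lies in the summand $A$ and equals $m$; backwards: $s$ is onto $\one + A$, which contains $m$), \ie it equals $f(m)$. Hence $e \circ \bar f = f$ and $\ACRos$ holds.

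Neither implication is technically hard; the one spot calling for a choice of formulation rather than mere unwinding is in $(1 \Rightarrow 2)$, where one must resist trying to preserve witness data and instead pass through the bare semidecidable set $\st{m}{f(m)}$, trusting that its countability hands back exactly enough to reconstruct uniform witnesses for $f$. Everything else is routine currying/uncurrying, decidability bookkeeping, and the harmless reconciliation of the $\two = \{0,1\}$ versus $\{\top,\bot\}$ conventions in the definition of $e$. In particular no choice principle beyond what holds in any topos — notably unique choice, used to pass between decidable relations and their characteristic maps — enters the argument.
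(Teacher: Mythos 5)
Your proposal is correct and follows essentially the same route as the paper: the $(1\Rightarrow 2)$ direction is verbatim the paper's construction ($\bar f(m)_n \dfeq (s_n = m)$ from a surjection $\NN \to \one + f^{-1}(\top)$), and your $(2\Rightarrow 1)$ direction is the paper's construction read through the stated equivalence ``countable $=$ image of a decidable subset of $\NN$'' rather than writing out the surjection $s\colon \NN \to \one + A$ explicitly via the bijection $b\colon \NN \ism \NN\times\NN$. No gaps.
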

		\begin{proof}
			\begin{itemize}
				\item\proven{$(1 \impl 2)$}
					Take any map $f\colon \NN \to \Ros$. This is then a characteristic map of a semidecidable, hence countable, subset $f^{-1}(\top) \subseteq \NN$, so there exists a surjection $s\colon \NN \to \one + f^{-1}(\top)$. Define the lifting $\bar{f}\colon \NN \to \two^\NN$ as
					$$\bar{f}(m)_n \dfeq (s_n = m).$$
				\item\proven{$(2 \impl 1)$}
					Let $A \subseteq \NN$ be semidecidable. By $\ACRos$ its characteristic map $\chi_A\colon \NN \to \Ros$ lifts to a map $\bar{\chi}_A\colon \NN \to \two^\NN$. Let $f\colon \NN \times \NN \to \two$ be its transposition, and $b = (b_0, b_1)\colon \NN \to \NN \times \NN$ a bijection. A surjection $s\colon \NN \to \one + A$ can be defined as
					$$s(n) \dfeq \begin{cases} b_0(n-1) & \text{ if } n \geq 1 \text{ and } f(b(n-1)),\\ * & \text{ otherwise.} \end{cases}$$
			\end{itemize}
		\end{proof}
		
		In general in constructive mathematics not all semidecidable truth values are decidable; the statement that they are is called the \df{lesser principle of omniscience} (abbr.~\lpo[]). Explicitly, \lpo is defined as
		$$\xall{\alpha}{\two^\NN}{\big(\xsome{n}{\NN}{\alpha_n} \lor \xall{n}{\NN}{\lnot\alpha_n}\big)}.$$
		It is an instance of excluded middle, equivalent to the statement that the set of binary sequences $\two^\NN$ has decidable equality.
		
		\intermission
		
		We continue the discussion about number sets. From natural numbers we can construct the integers $\ZZ$ as formal differences of natural numbers,
		$$\ZZ = \NN \times \NN/_\equ \qquad \text{where} \quad (a, b) \equ (c, d) \iff a + d = b + c,$$
		and rationals $\QQ$ as formal quotients of integers,
		$$\QQ = \ZZ \times (\ZZ \setminus \{0\})/_\equ \qquad \text{where} \quad (a, b) \equ (c, d) \iff a \cdot d = b \cdot c,$$
		as usual. Since $\NN \times \NN$ is countable, so are $\ZZ$ and $\QQ$, and clearly they are infinite with decidable equality, so they are in bijection with $\NN$ by Lemma~\ref{Lemma: in_bijection_with_N}.
		
		The real numbers are more complicated. Classically there are many possible definitions and constructions which are not constructively equivalent, so the question arises which one to take. The two most common are the \df{Cauchy reals} $\RR_c$ and the \df{Dedekind reals} $\RR_d$.
		
		The Cauchy reals are defined as the set of all Cauchy sequences of rational numbers, quotiented out by the usual equivalence relation of ``approaching the same point''. Constructively, a Cauchy sequence is more that just the sequence with the Cauchy property; we need to also supply the \df{modulus of convergence} which is a rule how quickly the terms of the sequence get close together. The Cauchy reals are popular among constructivists who assume countable choice $\AC{\NN}$ since then they behave ``the same'' as the classical reals do; in fact even the modulus of convergence can be extracted with countable choice from the Cauchy condition (though in practice constructivists often prescribe the modulus of convergence in advance, and take only those sequences which satisfy it). Without countable choice however the Cauchy reals need not even be Cauchy complete (\ie not all Cauchy sequences of reals necessarily have a limit in reals)~\cite{Lubarsky:2007:CCC:1224238.1224293}.
		
		In a choiceless environment the Dedekind reals $\RR_d$ prove better behaved. They are defined as the set of all Dedekind cuts of rationals. A pair $(L, U) \in \pst(\QQ) \times \pst(\QQ)$ is a \df{Dedekind cut} when $L$ is a \df{lower cut}, $U$ is an \df{upper cut} (which means $\all{q}{\QQ}{q \in L \iff \xsome{x}{\QQ_{> q}}{x \in L}}$ and $\all{r}{\QQ}{r \in U \iff \xsome{x}{\QQ_{< r}}{x \in U}}$, respectively), and $L$, $U$ are inhabited, disjoint and \df{located} which means $\all{q, r}{\QQ}{q < r \implies q \in L \lor r \in U}$. The idea is that $(L, U)$ represents the unique real number such that $L$ is the set of all smaller rationals, and $U$ is the set of all larger rationals.
		
		In general $\RR_c \subseteq \RR_d$; in the presence of $\AC{\NN}$ the two notions match, though.
		
		We return to (and go more in depth into) this subject in Section~\ref{Section: real_numbers}.

	\section{Topoi}\label{Section: topoi}
	
		There are many equivalent definitions of topoi, the most concise one probably being that they are categories with finite limits and powerobjects. We opt for just a slightly longer definition.
		\begin{definition}
			A \df{topos} is a category which has:
			\begin{itemize}
				\item
					all finite limits,
				\item
					all exponentials,\footnote{A category with finite limits and exponentials is called \df{cartesian closed}, so we can also say that a topos is a cartesian closed category with a subobject classifier.}
				\item
					a subobject classifier.
			\end{itemize}
		\end{definition}
		We'll assume familiarity with limits and exponentials\footnote{Just the intuition: the exponential $B^A$ is an object of the category which represents (all) morphisms from the object $A$ to the object $B$.}, but we'll say something about the subobject classifier. In a category with finite limits the object $\soc$, together with the so-called \df{truth map} $\top\colon \one \to \soc$, is a \df{subobject classifier} when for every monomorphism $f\colon A \to X$ in the category there exists a unique morphism (the \df{characteristic map} of $f$) $\chi\colon X \to \soc$ such that the following diagram is a pullback (\ie $f$ is a pullback of the truth map along $\chi$).
		$$\xymatrix@+1em{
			A \pbcorner \ar[r]^{\trm[A]} \ar[d]_f  &  \one \ar[d]^{\top}  \\
			X \ar[r]_\chi  &  \soc
		}$$
		This means that we have bijective correspondence between subobjects\footnote{Recall that categorically a subobject of $X$ is an equivalence class of monomorphisms with codomain $X$ where two such monos $f\colon A \to X$, $g\colon B \to X$ are equivalent when they factor through each other via isomorphism, \ie there exists an isomorphism $h\colon A \to B$ such that $f = g \circ h$.} of $X$ and maps from $X$ to $\soc$ (one direction is the definition of the subobject classifier --- hence it gets its name --- and the other is the existence of pullbacks), so we can view $\soc$ as the ``object of truth values'', and maps $X \to \soc$ as characteristic maps of subobjects of $X$, or predicates on $X$. Furthermore, the exponential $\soc^X$, representing predicates on $X$, is the categorical powerobject of $X$ (intuitively, it represents the set of subsets of $X$).
		
		The categorical constructions, as is usually the case, are determined up to isomorphism. However, in practice we require more than just their existence; we need a canonical way of constructing them, so that they become functors (for example, binary product in a category $\cat$ is a functor $\times\colon \cat \times \cat \to \cat$). In case our background theory is classical with global choice (the axiom of choice for classes), there is no problem, but constructively being functors is a stronger assumption (see also~\cite{Johnstone_PT_2002:_sketches_of_an_elephant_a_topos_theory_compendium}). For our purposes whenever we say that a category has (a certain type of) limits, colimits, exponentials etc., we mean it in the strong sense, that we have actual functors (and consequently, we can also speak about \emph{the} product, and so on).
		
		Beside the constructions from the definition, a topos also has all finite colimits. The rigorous proof can be found in~\cite{Mac_Lane_S_Moerdijk_I_1992:_sheaves_in_geometry_and_logic, Johnstone_PT_2002:_sketches_of_an_elephant_a_topos_theory_compendium}, but the intuition is simple. In any cartesian closed category $\cat$, for any object $A \in \obj{\cat}$ the exponentiation functor $A^\insarg\colon \cat \to \cat\op$ is left adjoint to its opposite functor $A^\insarg\colon \cat\op \to \cat$, and as such preserves colimits. In other words, viewing it as a contravariant functor, it maps colimits to limits. For $A = \soc$ this means that a powerset of a colimit is a limit of powersets, and if we can construct what ought to be the powerset of an object in question, we should be able to reconstruct the object in question up to isomorphism (it is the subset of singletons in the powerset).
		
		Since the domain of the truth map is the terminal object, it is a split mono, and hence a regular mono; the following is an equalizer diagram.
		$$\xymatrix@+1em{
			\one \ar[r]^\top  &  \soc \ppair{\top \circ \trm[\soc]}{\id[\soc]}  &  \soc
		}$$
		A pullback of a regular mono is a regular mono, so in a category with a subobject classifier (in particular in a topos) every mono is regular.\footnote{Even if in a category not all monos are regular, it might still have $\soc$ (and a truth map) which classifies all regular monos; this is then called a \df{regular subobject classifier}.} Moreover, it turns out that every epimorphism is regular as well, and that every morphism has a unique epi-mono factorization. Explicitly, for $f\colon X \to Y$, first form a pullback
		$$\xymatrix@+1em{
			P \pbcorner \ar[r]^{p_0} \ar[d]_{p_1}  &  X \ar[d]^f  \\
			X \ar[r]_f  &  Y,
		}$$
		then a pushout
		$$\xymatrix@+2em{
			P \ar[r]^{p_0} \ar[d]_{p_1} \ar@{->>}[rd]^q  &  X \ar[d]_{f_1} \ar[rdd]^f  &\\
			X \ar[r]^{f_0} \ar[rrd]_f  &  Q \pocorner \ar@{ >->}[rd]^i  &\\
			&&  Y.
		}$$
		The diagonal pushout map $q$ turns out to be epi, and the map $i$, induced by the universal property of a pushout, is mono; it represents the inclusion of the image of $f$ into the codomain.
		
		\intermission
		
		Topoi have many different viewpoints~\cite{Johnstone_PT_2002:_sketches_of_an_elephant_a_topos_theory_compendium}; for us they are important because they are precisely categorical models of (higher-order) intuitionistic logic which is to say that standard statements involving sets, relations, maps and logical connectives all have the interpretation in the internal language of a topos~\cite{Mac_Lane_S_Moerdijk_I_1992:_sheaves_in_geometry_and_logic, Johnstone_PT_2002:_sketches_of_an_elephant_a_topos_theory_compendium}, and they satisfy expected conditions. Thus working inside a topos and doing intuitionistic mathematics amounts to the same thing. Later models of synthetic topology will be topoi, but we'll avoid heavy-duty category theory by simply reasoning within intuitionistic set theory\footnote{\emph{Bounded} set theory, to be precise --- \ie no global $\in$, and all quantifiers are bounded.} (in particular, objects of the topos will be simply called sets, and so on).
		
		Here we recall only which morphisms in a topos represent logical connectives and (unparameterized) quantifiers.
		\begin{itemize}
			\item
				Truth is represented, not surprisingly, by the truth map $\top\colon \one \to \soc$; equivalently, it is the characteristic map of $\id[\one]$.
			\item
				Falsehood $\bot\colon \one \to \soc$ is the characteristic map of $\emptyset \to \one$.
			\item
				Conjunction $\land\colon \soc \times \soc \to \soc$ is the characteristic map of $(\top, \top)\colon \one \to \soc \times \soc$.
			\item
				Make a pushout diagram of the truth map with itself.
				$$\xymatrix@+2em{
					\one \ar[r]^\top \ar[d]_\top  &  \soc \ar[d] \ar[rdd]^{(\top, \id[\soc])}  &\\
					\soc \ar[r] \ar[rrd]_{(\id[\soc], \top)}  &  Q \pocorner \ar@{ >-->}[rd]^i  &\\
					&&  \soc \times \soc
				}$$
				The inclined morphisms are to be understood in the sense $\one \times \soc \ism \soc \ism \soc \times \one$. The induced map $i$ turns out to be mono, and disjunction $\lor\colon \soc \times \soc \to \soc$ is its characteristic map.
			\item
				Let $p_0\colon \soc \times \soc \to \soc$ be the projection onto the first factor, and make the following equalizer.
				$$\xymatrix@+1em{
					E \ar[r]^e  &  \soc \times \soc \ppair{\land}{p_0}  &  \soc
				}$$
				Then the implication $\impl\colon \soc \times \soc \to \soc$ is the characteristic map of $e$.
			\item
				Negation $\lnot\colon \soc \to \soc$ can in principle be calculated from $\impl$ and $\bot$, but a simpler way to obtain it is as the characteristic map of the falsehood map $\bot\colon \one \to \soc$.
			\item
				Equivalence is, of course, just the equality $=\colon \soc \times \soc \to \soc$ (which is the characteristic map of the diagonal $\diag[\soc] = (\id[\soc], \id[\soc])\colon \soc \to \soc \times \soc$).
			\item
				Universal quantification on an object $X$ takes a predicate on $X$ and returns a truth value which measures the extent to which the predicate holds overall; thus it is a map $\forall_X\colon \soc^X \to \soc$. By definition\footnote{The more general definition allows parameters in which case a quantifier is a map $\soc^{X \times Y} \to \soc^Y$, defined similarly, but we won't need its explicit construction.} it is the right adjoint to the \df{weakening} $\soc \to \soc^X$, defined as the transpose of the projection $\soc \times X \to \soc$. It can be constructed as the characteristic map of the transpose $\one \to \soc^X$ of the composition $\one \times X \ism X \stackrel{\trm[X]}{\longrightarrow} \one \stackrel{\top}{\longrightarrow} \soc$.
			\item
				Existential quantification $\exists_X\colon \soc^X \to \soc$ is by definition the left adjoint to the weakening. To construct it, let $\in\colon \soc^X \times X \to \soc$ be the transpose of $\id[\soc^X]$, \ie the evaluation map (which is in this case the \df{membership predicate} --- imagine $\soc^X$ as the powerset of $X$). Let $P$ be the pullback of the truth map along it.
				$$\xymatrix@+1em{
					P \pbcorner \ar[r]^{\trm[P]} \ar[d]_p  &  \one \ar[d]^\top  \\
					\soc^X \times X \ar[r]_\in  &  \soc
				}$$
				Make the epi-mono factorization of $p$, composed with the projection $q\colon \soc^X \times X \to \soc^X$.
				$$\xymatrix@+2em{
					P \ar@{->>}[r] \ar@/_3ex/[rr]_{q \circ p}  &  E \ar@{ >->}[r]^i  &  \soc^X
				}$$
				Then $\exists_X$ is the characteristic map of $i$.\footnote{However complicated this construction may seem, the intuition is simple. $P$ is the set of pairs $(A, x)$ where $x \in A \subseteq X$, so the image of its projection are precisely inhabited subsets of $X$.}
		\end{itemize}
		
		We already mentioned that topoi have finite limits and colimits, but in general they need not be complete or cocomplete, \ie have arbitrary set-indexed (that is to say, external) (co)limits. However, they always have all internal ones~\cite{Johnstone_PT_2002:_sketches_of_an_elephant_a_topos_theory_compendium}. Here is a demonstration just for products and coproducts. Suppose $(X_i)_{i \in I}$ is an $I$-indexed family of objects where $I$ is itself an object of the topos. This means that this family is given by a map $p\colon X \to I$ where $X_i$s are its fibers. Then the internal coproduct of this family is just $X$ itself while the product is given internally as $\st{f \in X^I}{p \circ f = \id[I]}$, or diagrammatically as the equalizer
		$$\xymatrix@+1em{
			\prod_{i \in I} X_i \ar[r]  &  X^I \ppair{p^I}{\widehat{\id[I]} \circ \trm[X^I]}  &  I^I
		}$$
		where $\widehat{\id[I]}\colon \one \to I^I$ is the transpose of $\one \times I \ism I \stackrel{\id[I]}{\longrightarrow} I$.
		
		\intermission
		
		A topos need not possess an object of natural numbers (for example, finite sets and maps between them form a topos). A \df{natural numbers object} is concisely defined as the initial algebra for the functor $\insarg + \one$; with more words (and not requiring anything more from the ambient category than a terminal object), an object $\NN$, together with maps $z\colon \one \to \NN$, $s\colon \NN \to \NN$, is a natural numbers object when it is initial among the diagrams of the form $\one \stackrel{q}{\longrightarrow} A \stackrel{f}{\longrightarrow} A$ in the sense that there exists a unique $u\colon \NN \to A$ such that the diagram
		$$\xymatrix@+1em{
			\one \ar[r]^z \ar[d]_{\id[\one]}  &  \NN \ar[r]^s \ar[d]^u  &  \NN \ar[d]^u  \\
			\one \ar[r]_q  &  A \ar[r]_f  &  A
		}$$
		commutes. The map $z$ represents the element zero while $s$ represents the successor map. It can be seen that this universal property precisely captures the Peano axioms (and determines natural numbers up to isomorphism). Thus in a topos with natural numbers object other number sets can be constructed as well, as per previous section. We also note that the object $\finseq{A}$, representing the set of finite sequences of $A$, \ie the internal coproduct $\coprod_{n \in \NN} A^n$, then exists (it can be constructed for example as $\finseq{A} = \st{\alpha \in (\one + A)^\NN}{\xsome{n}{\NN}\all{k}{\NN}{\alpha_k \in A \iff k < n}}$).

   \chapter{Synthetic Topology}\label{Chapter: synthetic_topology}

	In this chapter we start with the actual subject of synthetic topology. We redefine basic topological notions in synthetic and constructive manner, and derive elementary results. Synthetic topology is given axiomatically, but we defer this approach to the second part of the first section, preferring to start with an introduction from the point of view of a classical topologist.

	\section{From Point-Set to Synthetic Topology}\label{Section: to_synthetic_topology}
   
      Recall that in classical set theory, the subsets of a set $X$ correspond to the characteristic maps on $X$, \ie maps $X \to \two$ where $\two = \{\top, \bot\}$ is the set of truth values. Consider what characteristic maps classify in the category of topological spaces. There are four possible topologies on $\two$:
      \begin{itemize}
         \item
            The topology $\{\emptyset, \two\}$. This is the trivial (= indiscrete) topology, and we will use the symbol $\trivtwo$ for $\two$ equipped with it. Every map $X \to \trivtwo$ (where $X$ is a topological space) is continuous, so maps into $\trivtwo$ classify subspaces\footnote{This should not be read that $\trivtwo$ is the subobject classifier in $\Top$. Subspace inclusions are a rather special kind of injective continuous maps, namely the ones with the minimal possible topology on the domain, such that the underlying map is still continuous (this property can be translated into categorical terms, obtaining the general notion of a \df{minimal morphism}, used also outside topology). In fact, $\Top$ cannot possibly have a subobject classifier, as not all its monomorphism are regular. However, since regular monomorphisms in $\Top$ are precisely subspace inclusions, $\trivtwo$ is a regular subobject classifier.} of $X$.
         \item
            The topology $\{\emptyset, \{\top\}, \two\}$. The set $\two$ with this topology is called the \df{\Sier space}, and we denote it by $\sier$. A map $f\colon X \to \sier$ is continuous if and only if $f^{-1}(\top)$ is open in $X$. As such, maps into $\sier$ classify open subspaces. Let $\tp(X)$ denote the topology on $X$, and $\C(X, Y)$ the set of continuous maps between spaces $X$ and $Y$. Then we have a bijective correspondence $\optp(X) \ism \C(X, \sier)$ for every topological space $X$.
         \item
            The topology $\{\emptyset, \{\bot\}, \two\}$. We will call this \df{inverted \Sier space}, and denote it by $\twsier$. Observe that continuous maps into $\twsier$ classify the complements of open subspaces, \ie closed subspaces. If we denote by $\ctp(X)$ the set of closed subspaces of a topological space $X$, we then have the bijection $\cltp(X) \ism \C(X, \twsier)$.
         \item
            The topology $\{\emptyset, \{\top\}, \{\bot\}, \two\}$. This is the discrete topology, and we shall denote $\two$ equipped with it again by $\two$ since this is the coproduct $\one + \one$ in $\Top$. The continuous maps into $\two$ classify \df{clopen} (= both closed and open) subsets. Alternatively, maps $\C(X, \two)$ represent \df{separations} of the space $X$, \ie pairs of open subsets $(U, V)$ of $X$ such that $U \cap V = \emptyset$ and $U \cup V = X$ (a continuous map $f\colon X \to \two$ induces the separation $(f^{-1}(\top), f^{-1}(\bot))$). Recall that a space is connected when it has no non-trivial separations (no separations in which $U$ and $V$ are both inhabited).
      \end{itemize}
      However, note that the spaces $\sier$ and $\twsier$ are homeomorphic via the \df{twist map} which interchanges $\top$ and $\bot$, so up to homeomorphism there are three possible topologies on any two-element set. The most interesting is the non-extremal one --- the one homeomorphic to $\sier$. The importance of the \Sier space is that it allows us to redefine topological notions in terms of spaces (read: objects) and continuous maps (read: morphisms), as opposed to spaces and open subsets. This makes us view topology through categorical glasses, and beside the whole subject of synthetic topology which emerges, we also get an immediate pay-off in classical topology since many standard theorems acquire a simpler and more elegant proof. One can find more exposition and details in~\cite{Escardo_M_2004:_synthetic_topology_of_data_types_and_classical_spaces} and (for those who understand Slovene)~\cite{Pretnar_M_2005:_sinteticna_topologija}; here we give an overview.
      
      We identify open subsets in a topological space $X$ with continuous maps $X \to \sier$ which we call ``open predicates'' on $X$ because we view $\sier$ as a ``topological space of open truth values'' in this context. Similarly, closed subsets in $X$ are identified with ``closed predicates'' $X \to \twsier$.
      
      Topological properties are also reinterpreted. Recall that classically a topological space $X$ is Hausdorff if and only if the diagonal $\diag[X] \subseteq X \times X$ is a closed subset of the topological product $X \times X$. Since elements on the diagonal are precisely those with equal components, we can say that $X$ is Hausdorff when its equality relation is a closed predicate.
      
      Consider the dual version, the diagonal being open in the product. It is easily seen that this is the case precisely for discrete topological spaces, so we interpret discreteness as the equality being an open predicate.
      
      One of the most important topological properties is compactness. Interestingly, its reinterpretation can be made \emph{simpler} than the classical ``every open cover has a finite subcover''. We need a little preparation, though.
      
      Topologists, and in particular analysts, have learnt from practice that the most useful topology on the set $\C(X, Y)$ is compact-open topology, \ie the one generated by the subbase
      $$\st[1]{V(K, U)}{K \text{ compact} \subseteq X,\ U \text{ open} \subseteq Y}$$
      where
      $$V(K, U) \dfeq \st{f \in \C(X, Y)}{f(K) \subseteq U}.$$
      Category theory provides a theoretical reason for this: if $X$ is Hausdorff and the exponential of $X$ and $Y$ in $\Top$ exists, it is the set $\C(X, Y)$, equipped with compact-open topology, and even when the exponential does not exist, this is its closest approximation.\footnote{For general topological spaces, a better approximation is the \df{Isbell topology}, and the best is the so-called \df{natural topology}. Both match the exponential topology when it exists (and in the case of Hausdorff spaces, they match the compact-open topology). For more on the subject see~\cite{Escardo_M_Lawson_JD_Simpson_A_2004:_comparing_cartesian_closed_categories_of_core_compactly_generated_spaces, Gierz_G_Hoffmann_KH_Keimel_K_Lawson_JD_Mislove_MW_Scott_DS_2003:_continuous_lattices_and_domains, Lowen_Colebunders_E_Richter_G_2001:_an_elementary_approach_to_exponential_spaces}, but since this section is meant to be merely an introduction, overview and motivation for synthetic topology for classical topologists, we do not go into further details, and stick to the far better known compact-open topology.}
      
      In particular, this enables us to put a natural topology on topology itself via the bijection $\optp(X) \ism \C(X, \sier)$. Similarly, we equip the set of closed subsets with topological structure via $\cltp(X) \ism \C(X, \twsier)$. Also note that the twist homeomorphism between $\sier$ and $\twsier$ induces a homeomorphism $\C(X, \sier) \ism \C(X, \twsier)$, namely the postcomposition with the twist map. On the level of open and closed subsets, this homeomorphism is (in both directions) just the complementation
      $$\xymatrix{
         \optp(X) \ar@/^1.5ex/[rr]^{U \mapsto U^C}  &  {\ism}  &  \cltp(X) \ar@/^1.5ex/[ll]^{F^C \mapsfrom F}
      }.$$
      
      Let us examine the topologies on $\optp(X)$ and $\cltp(X)$ closer.
      \begin{lemma}\label{Lemma: classical_compact-open_basis_for_topology}
         Let $X$ be a topological space.
         \begin{enumerate}
            \item
               The family $\st[1]{V(K, \{\top\})}{K \text{ compact} \subseteq X}$ is a basis for $\C(X, \sier)$. Transferring this to $\optp(X)$, its base is $\st[1]{\upward{K}}{K \text{ compact} \subseteq X}$ where $\upward{K} \dfeq \st{U \in \optp(X)}{K \subseteq U}$.
            \item
               Similarly, the topology on $\C(X, \twsier)$ is generated by the base $\st{V(K, \{\bot\})}{K \text{ compact} \subseteq X}$ which yields a base $\st[1]{\st{F \in \cltp(X)}{K \cap F = \emptyset}}{K \text{ compact} \subseteq X}$ for $\cltp(X)$.
         \end{enumerate}
      \end{lemma}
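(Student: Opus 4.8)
The plan is to cut the general compact-open subbasis down to the claimed family by inspecting the three open subsets of $\sier$, and then to notice that the resulting family is already closed under finite intersections, so it is a \emph{basis}, not merely a subbasis. Concretely, $\optp(\sier) = \{\emptyset, \{\top\}, \two\}$, and for a compact $K \subseteq X$ we have $V(K, \two) = \C(X, \sier)$, while $V(K, \emptyset) = \emptyset$ when $K$ is inhabited and $V(\emptyset, \emptyset) = \C(X, \sier)$, and finally $V(K, \{\top\}) = \st{f \in \C(X, \sier)}{K \subseteq f^{-1}(\top)}$. Since throwing $\emptyset$ and the whole set into a subbasis does not change the topology it generates, the compact-open topology on $\C(X, \sier)$ is generated by $\mathscr{B} \dfeq \st{V(K, \{\top\})}{K \text{ compact} \subseteq X}$.

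Next I would verify that $\mathscr{B}$ is closed under finite intersections. A continuous map carries each of $K_1, \ldots, K_n$ into $\{\top\}$ exactly when it carries $K_1 \cup \cdots \cup K_n$ into $\{\top\}$, so $V(K_1, \{\top\}) \cap \cdots \cap V(K_n, \{\top\}) = V(K_1 \cup \cdots \cup K_n, \{\top\})$, and a finite union of compact subspaces is compact; the empty intersection is $\C(X, \sier) = V(\emptyset, \{\top\}) \in \mathscr{B}$. A family of open sets that contains the whole space and is closed under finite intersections is a basis for the topology it generates, so $\mathscr{B}$ is a basis for $\C(X, \sier)$. Transporting along the bijection $\optp(X) \ism \C(X, \sier)$ which sends an open $U$ to its characteristic map $\chi_U$ with $\chi_U^{-1}(\top) = U$, the set $V(K, \{\top\})$ corresponds to $\st{U \in \optp(X)}{K \subseteq U} = \upward{K}$, which is exactly the claimed base for $\optp(X)$.

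For part (2) I would either rerun the identical argument with $\twsier$ in place of $\sier$ (its opens are $\emptyset, \{\bot\}, \two$, and $V(K, \{\bot\}) = \st{f}{K \cap f^{-1}(\top) = \emptyset}$, which under $\cltp(X) \ism \C(X, \twsier)$ corresponds to $\st{F \in \cltp(X)}{K \cap F = \emptyset}$), or, more economically, push the base from part (1) across the complementation homeomorphism $\optp(X) \ism \cltp(X)$, $U \mapsto U^C$, recalled just above in the excerpt: it sends $\upward{K} = \st{U}{K \subseteq U}$ to $\st{F \in \cltp(X)}{K \cap F = \emptyset}$, since $K \subseteq F^C \iff K \cap F = \emptyset$.

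The only genuine content is the closure-under-finite-intersection step of the second paragraph, i.e.\ recognizing that intersecting the sets $V(K, \{\top\})$ simply unions the compacts, together with the standard fact that a finite union of compact subspaces is compact; I expect no obstacle there. Everything else is unwinding the definition of $V(K, U)$, the topologies on $\sier$ and $\twsier$, and the two classifying bijections $\optp(X) \ism \C(X, \sier)$ and $\cltp(X) \ism \C(X, \twsier)$.
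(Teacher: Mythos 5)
Your proposal is correct and follows essentially the same route as the paper: discard the subbasic sets $V(K,\emptyset)$ and $V(K,\two)$ as being either empty or all of $\C(X,\sier)$ (the latter also realizable as $V(\emptyset,\{\top\})$), observe that $V(K,\{\top\}) \cap V(L,\{\top\}) = V(K \cup L,\{\top\})$ with finite unions of compacts compact so the family is closed under finite intersections and hence a basis, then transport along $\optp(X) \ism \C(X,\sier)$ and handle the closed case via the twist/complementation homeomorphism. No gaps.
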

      \begin{proof}
      	\begin{enumerate}
      		\item
      			Denote $\basis = \st{V(K, \{\top\})}{K \text{ compact} \subseteq X}$. To see that $\basis$ is a subbasis for $\C(X, \sier)$, we still have to take care of subsets of the form $V(K, \emptyset)$ and $V(K, \sier)$, but these are either $\emptyset$ which we can drop from the (sub)basis, or the whole $\C(X, \sier)$ which can also be given as $V(\emptyset, \{\top\})$. To see that $\basis$ is in fact a basis, it is sufficient to verify that binary intersections of elements in $\basis$ are again in $\basis$ which is clear since $V(K, \{\top\}) \cap V(L, \{\top\}) = V(K \cup L, \{\top\})$ (and binary unions of compact sets are compact).
      			
      			As for the basis on $\optp(X)$, just recall the bijection $\C(X, \sier) \ism \optp(X)$.
      		\item
      			Use the previous item, as well as the fact that $\C(X, \sier)$ and $\C(X, \twsier)$ are homeomorphic via composition with the twist map, and $\optp(X)$ and $\cltp(X)$ are homeomorphic via complementation.
      	\end{enumerate}
      \end{proof}
      
      This now allows us to classify compact spaces; see~\cite{Escardo_M_2004:_notes_on_synthetic_topology},~\cite{Escardo_M_2004:_synthetic_topology_of_data_types_and_classical_spaces},~\cite{Escardo_M_2009:_intersections_of_compactly_many_open_sets_are_open}.
      
      \begin{theorem}\label{Theorem: characterization_of_classical_compactness}
         The following statements are equivalent for any topological space $X$.
         \begin{enumerate}
            \item $X$ is compact.
            \item The subset $\{X\}$ is open in $\optp(X)$.
            \item The subset $\st[1]{F \in \cltp(X)}{\xsome{x}{X}{x \in F}}$ of inhabited closed subsets of $X$ is closed in $\cltp(X)$.
            \item For any topological space $Y$ the projection $p\colon X \times Y \to Y$ is a closed map\footnote{Recall that a map is \df{closed} when its image of every closed subset of the domain is a closed subset of the codomain.}.
         \end{enumerate}
      \end{theorem}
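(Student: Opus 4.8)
The plan is to establish the equivalences by the chain $(1)\Leftrightarrow(2)\Leftrightarrow(3)$ together with $(1)\Rightarrow(4)\Rightarrow(1)$. The first three should fall out quickly from Lemma~\ref{Lemma: classical_compact-open_basis_for_topology} and the complementation homeomorphism $\optp(X)\ism\cltp(X)$; the fourth is the classical Kuratowski--Mr\'owka theorem, whose harder half is the obstacle.

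For $(1)\Leftrightarrow(2)$ and $(2)\Leftrightarrow(3)$: if $X$ is compact then $X$ is a compact subset of itself, so by Lemma~\ref{Lemma: classical_compact-open_basis_for_topology}(1) the set $\upward{X}=\st{U\in\optp(X)}{X\subseteq U}=\{X\}$ is a basic open of $\optp(X)$, giving (2); conversely, if $\{X\}$ is open, then since it contains $X$ there is, by the same lemma, a compact $K\subseteq X$ with $\upward{K}\subseteq\{X\}$, which says that the only open superset of $K$ is $X$ itself --- so given any open cover of $X$, use compactness of $K$ to extract finitely many members covering $K$, whose union is an open superset of $K$, hence $X$, so those finitely many members cover $X$. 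For $(2)\Leftrightarrow(3)$ I would simply transport (2) along the homeomorphism $\optp(X)\ism\cltp(X)$, $U\mapsto U^C$: it sends $X$ to $\emptyset$, hence $\{X\}\subseteq\optp(X)$ to $\{\emptyset\}\subseteq\cltp(X)$, so $\{X\}$ is open in $\optp(X)$ iff $\{\emptyset\}$ is open in $\cltp(X)$ iff its complement $\cltp(X)\setminus\{\emptyset\}$, which (classically) is exactly $\st{F\in\cltp(X)}{\xsome{x}{X}{x\in F}}$, is closed.

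For $(1)\Rightarrow(4)$ I would run the tube lemma. Given a closed $C\subseteq X\times Y$ and $y\in Y\setminus p(C)$, for each $x\in X$ choose basic opens $U_x\ni x$ and $V_x\ni y$ with $(U_x\times V_x)\cap C=\emptyset$; by compactness of $X$ finitely many $U_{x_1},\dots,U_{x_n}$ already cover $X$, and then $V\dfeq V_{x_1}\cap\dots\cap V_{x_n}$ is an open neighbourhood of $y$ with $(X\times V)\cap C=\emptyset$, so $V\subseteq Y\setminus p(C)$; hence $p(C)$ is closed.

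The main obstacle is $(4)\Rightarrow(1)$, which I would prove contrapositively. Assuming $X$ non-compact, fix a family of closed subsets with the finite intersection property and empty intersection, i.e.\ a proper filter $\mathscr{F}$ on $X$ with no cluster point, and build a one-point extension $Y=X\cup\{\infty\}$ whose open sets are the opens of $X$ (not containing $\infty$) together with the sets containing $\infty$ whose trace on $X$ is an open superset of some member of $\mathscr{F}$; in $Y$ the subspace $X$ retains its topology and is dense. Applying (4) to the closure $\overline{\Delta}$ of $\Delta=\st{(x,x)}{x\in X}\subseteq X\times Y$ (with the second coordinate read in $X\subseteq Y$), one gets that $p(\overline{\Delta})$ is closed and contains $p(\Delta)=X$, hence equals $Y$, so $\infty\in p(\overline{\Delta})$; unravelling the condition that the witnessing point $(x^{*},\infty)$ lies in $\overline{\Delta}$ should say that every open neighbourhood of $x^{*}$ meets every open superset of every member of $\mathscr{F}$, forcing $x^{*}$ to be a cluster point of $\mathscr{F}$ --- a contradiction. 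The delicate points are verifying that $Y$ is genuinely a topological space and that the last step produces an actual cluster point rather than only an approximate one; this may require choosing $\mathscr{F}$ (or the neighbourhood filter of $\infty$) with some care, and this bookkeeping is where I expect the real effort to lie.
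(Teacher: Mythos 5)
Your treatment of $(1)\Leftrightarrow(2)$, $(2)\Leftrightarrow(3)$ and $(1)\Rightarrow(4)$ coincides with the paper's: $\{X\}=\upward{X}$, basicness of open singletons plus compactness of the witnessing $K$, transport along the complementation homeomorphism $\optp(X)\ism\cltp(X)$, and the tube lemma. For $(4)\Rightarrow(1)$ you take a genuinely different route: the paper follows \Esc['s] argument, which fixes an open cover $\mathcal{C}$, equips $\tp(X)$ with an ad hoc Scott-like topology tailored to $\mathcal{C}$, and applies closedness of $X\times\tp(X)\to\tp(X)$ to the membership relation, thereby avoiding filters and auxiliary point-set constructions altogether; you instead run the classical Kuratowski--Mr\'owka argument with a filter $\mathscr{F}$ without cluster points and a one-point extension $Y=X\cup\{\infty\}$.

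The difficulty you anticipated in that last step is a genuine gap, and it is not mere bookkeeping: by insisting that $X$ be a \emph{subspace} of $Y$, you force the neighbourhoods of $\infty$ to trace to \emph{open} supersets of members of $\mathscr{F}$. Then $(x^{*},\infty)\in\overline{\Delta}$ only tells you that every open $U\ni x^{*}$ meets every open $V\supseteq F$, and the negation of that (``there is an open $V$ with $F\subseteq V\subseteq X\setminus U$'') is equivalent to $F\cap\overline{U}=\emptyset$; so what you actually obtain is that every \emph{closed} neighbourhood $\overline{U}$ of $x^{*}$ meets every $F\in\mathscr{F}$, which is strictly weaker than $x^{*}$ being a cluster point unless $X$ is regular --- and the theorem is asserted for arbitrary spaces. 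The repair is to abandon the subspace requirement (the statement only quantifies over \emph{some} space $Y$; the first factor of $X\times Y$ keeps the original topology of $X$): declare every subset of $X$ open in $Y$ and take $\{\infty\}\cup F$, $F\in\mathscr{F}$, as a neighbourhood base at $\infty$. Then $X$ is still dense in $Y$ (as $\mathscr{F}$ is proper), the closure argument gives $U\cap F\neq\emptyset$ directly for every open $U\ni x^{*}$ and every $F\in\mathscr{F}$, and $x^{*}$ is an honest cluster point, yielding the contradiction. With that modification your proof is correct.
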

      \begin{proof}
         In the following items keep in mind the results of Lemma~\ref{Lemma: classical_compact-open_basis_for_topology}.
         \begin{itemize}
            \item\proven{$(1 \Rightarrow 2)$}
            	Since $\{X\} = \upward{X}$.
            \item\proven{$(2 \Rightarrow 1)$}
            	Because $\{X\}$ is an open singleton, it must be a basic set, so there exists a compact $K \subseteq X$ such that $\upward{K} = \{X\}$. This means that any open subset of $X$ which contains $K$ must also contain $X$ (\ie be equal to it). Take any open cover of $X$. Then it is in particular an open cover of $K$ for which there exists a finite subcover. Its (open) union covers $K$, and therefore $X$ as well.
            \item\proven{$(2 \Leftrightarrow 3)$}
            	Use the homeomorphism $\optp(X) \ism \cltp(X)$, given by complementation.
            \item\proven{$(1 \Rightarrow 4)$}
            	Take any closed set $F \in \cltp(X \times Y)$, and any $y \in p(F)^C \subseteq Y$. For each $x \in X$ there exists a product basic open neighbourhood of $(x, y)$ in  $F^C$, \ie there are open subsets $U \subseteq X$ and $V \subseteq Y$ such that $(x, y) \in U \times V \subseteq F^C$. The $U$s then form an open cover of $X$ which is compact, so there exist $(U_0, V_0), \ldots, (U_{n-1}, V_{n-1})$ such that $X = \bigcup_{k \in \NN_{< n}} U_k$. Then $\bigcap_{k \in \NN_{< n}} V_k$ is an open neighbourhood of $y$ in $p(F)^C$. Since $y \in p(F)^C$ was arbitrary, $p(F)^C$ is open, therefore $p(F)$ is closed.
            \item\proven{$(4 \Rightarrow 1)$}
            	This proof follows the one in~\cite{Escardo_M_2009:_intersections_of_compactly_many_open_sets_are_open}. Let $\mathcal{C} \subseteq \tp(X)$ be an open cover of $X$. We define the following topology on $\tp(X)$: a subset $\mathcal{U} \subseteq \tp(X)$ is open when it is an upper set (if $U \in \mathcal{U}$ and $U \subseteq V \in \tp(X)$, then $V \in \mathcal{U}$) and when for every $\mathcal{A} \subseteq \mathcal{C}$ with properties
            	$$\all{U}{\mathcal{C}}{U \subseteq \bigcup\mathcal{A} \implies U \in \mathcal{A}} \qquad \text{and} \qquad \bigcup\mathcal{A} \in \mathcal{U}$$
            	there exists a finite subset $\mathcal{F} \subseteq \mathcal{A}$ such that $\bigcup\mathcal{F} \in \mathcal{U}$. It is easy to see that this is indeed a topology on $\tp(X)$.\footnote{Compare it with the Scott topology~\cite{Gierz_G_Hoffmann_KH_Keimel_K_Lawson_JD_Mislove_MW_Scott_DS_2003:_continuous_lattices_and_domains}.} Notice that for every $U \in \mathcal{C}$ the set
            	$$\upward{U} \dfeq \st{V \in \tp(X)}{U \subseteq V}$$
            	is open in $\tp(X)$.
            	
            	Define
            	$$\mathcal{V} \dfeq \st{(x, U) \in X \times \tp(X)}{U \in \mathcal{C} \land x \in U}.$$
            	This is an open subset of $X \times \tp(X)$ since every $(x, U) \in \mathcal{V}$ has a basic product open neighbourhood $U \times \upward{U}$ which is contained in $\mathcal{V}$. By assumption the projection $p\colon X \times \tp(X) \to \tp(X)$ is a closed map, so $p(\mathcal{V}^C)^C$ is open in $\tp(X)$. Since $\mathcal{C}$ is a cover, we have $p(\mathcal{V}^C)^C = \{X\}$, openness of which implies compactness of $X$ (by taking $\mathcal{A} = \mathcal{C}$).
         \end{itemize}
      \end{proof}
      
      \begin{corollary}\label{Corollary: compactness_with_quantifiers}
      	The following statements are equivalent for a topological space $X$.
      	\begin{enumerate}
      		\item
      			$X$ is compact.
      		\item
      			The map $\forall_X\colon \optp(X) \to \sier$, given by the following equivalent formulations
      			$$\forall_X(U) \dfeq X \subseteq U \sepeq \xall{x}{X}{x \in U} \sepeq \begin{cases} \top & \text{if } \xall{x}{X}{x \in U},\\ \bot & \text{otherwise}, \end{cases}$$
      			is continuous.
      		\item
      			The map $\exists_X\colon \cltp(X) \to \twsier$, given by
      			$$\exists_X(F) \dfeq X \between F \sepeq \xsome{x}{X}{x \in F} \sepeq \begin{cases} \top & \text{if } \xsome{x}{X}{x \in F},\\ \bot & \text{otherwise}, \end{cases}$$
      			is continuous.
      	\end{enumerate}
      \end{corollary}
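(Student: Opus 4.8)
The plan is to reduce all three equivalences to Theorem~\ref{Theorem: characterization_of_classical_compactness} by recognizing $\forall_X$ and $\exists_X$ as the classifying maps of the distinguished subsets appearing in that theorem. The only input I need beyond the theorem is the classification of continuous maps into the two-point spaces: a map $f\colon Y \to \sier$ is continuous precisely when $f^{-1}(\top)$ is open in $Y$, and dually (transporting along the twist homeomorphism $\sier \cong \twsier$) a map $g\colon Y \to \twsier$ is continuous precisely when $g^{-1}(\top)$ is closed in $Y$, equivalently $g^{-1}(\bot)$ is open.

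First I would dispose of the ``equivalent formulations'' clauses, which carry no real content: $X \subseteq U$ unfolds by definition to $\xall{x}{X}{x \in U}$, and the case-split formula merely records that the codomain is $\{\top, \bot\}$; no argument is needed. Next, for $\forall_X$: since $U \subseteq X$ holds for every $U \in \optp(X)$, the condition $X \subseteq U$ is equivalent to $U = X$, so $\forall_X^{-1}(\top) = \{X\}$. Hence $\forall_X$ is continuous iff $\{X\}$ is open in $\optp(X)$, which by the equivalence $(1 \Leftrightarrow 2)$ of Theorem~\ref{Theorem: characterization_of_classical_compactness} is equivalent to compactness of $X$. This gives $(1 \Leftrightarrow 2)$ of the corollary.

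For $\exists_X$, by definition $\exists_X^{-1}(\top) = \st{F \in \cltp(X)}{\xsome{x}{X}{x \in F}}$, the set of inhabited closed subsets; so $\exists_X$ is continuous iff this set is closed in $\cltp(X)$, which by the equivalence $(1 \Leftrightarrow 3)$ of Theorem~\ref{Theorem: characterization_of_classical_compactness} again characterizes compactness. (Alternatively one can avoid invoking item 3 of the theorem directly: under the complementation homeomorphism $\optp(X) \cong \cltp(X)$ together with the twist homeomorphism $\sier \cong \twsier$, the map $\exists_X$ corresponds to $\forall_X$, so continuity of one is equivalent to continuity of the other; combining with the previous paragraph closes the cycle.)

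There is no genuinely hard step here — the corollary is essentially a change of notation. The only point requiring care, and effectively the whole content, is matching the correct preimage condition to the correct codomain: continuity into $\sier$ is openness of the preimage of $\top$, whereas continuity into $\twsier$ is closedness of the preimage of $\top$. Once that bookkeeping is fixed, the statement follows immediately from the theorem, and I would present it exactly in the three short bullets above.
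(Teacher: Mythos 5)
Your proof is correct and is exactly the argument the paper intends by its one-line remark ``Easy from Theorem~\ref{Theorem: characterization_of_classical_compactness}'': identifying $\forall_X^{-1}(\top)=\{X\}$ and $\exists_X^{-1}(\top)$ with the distinguished subsets of items 2 and 3 of that theorem, and matching continuity into $\sier$ (resp.\ $\twsier$) with openness (resp.\ closedness) of the preimage of $\top$. No gaps; you have simply written out the bookkeeping the paper leaves implicit.
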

      \begin{proof}
      	Easy from Theorem~\ref{Theorem: characterization_of_classical_compactness}.
      \end{proof}
		
		We can interpret this corollary in the following way. The identity maps $\sier \to \trivtwo$ and $\twsier \to \trivtwo$ are injective and continuous, and so represent subobjects of $\trivtwo$ in the category of topological spaces. The maps $\forall_X$ and $\exists_X$ from the corollary can be extended to the whole powerset of $X$ which is equipped with the trivial topology via $\pst(X) \ism \C(X, \trivtwo)$. Replace $\sier$ and $\twsier$ with $\trivtwo$ in the codomain of $\forall_X$ and $\exists_X$ as well, to ensure that the quantifiers are continuous. We can then characterize compactness of $X$ as the ability to restrict these maps (in the category of topological spaces) in either of the following diagrams.
		$$\xymatrix@+3em{
			\pst(X) \ar[r]^{\forall_X}  &  \trivtwo  &&  \pst(X) \ar[r]^{\exists_X}  &  \trivtwo  \\
			\optp(X) \ar@{-->}[r] \ar@{ >->}[u]  &  \sier \ar@{ >->}[u]  &&  \cltp(X) \ar@{-->}[r] \ar@{ >->}[u]  &  \twsier \ar@{ >->}[u]
		}$$
		
		With this preparation we give the definition of synthetic topology, following~\cite{Escardo_M_2004:_notes_on_synthetic_topology}. We want a model in which sufficient amount of logic can be interpreted, and in which we have a way of defining for every object what its topological structure is.
		
		\begin{definition}
			A model of \df{synthetic topology} is a topos $\topos$ with a chosen subobject $\opn \subseteq \soc$, called the \df{\Sier object}. A subobject $U \subseteq X$ in $\topos$ is (\df{intrinsically}) \df{open} when its characteristic map factors through (\ie its image is contained in) $\opn$. We define (in the internal language of the topos) the set of open subsets of $X$
			$$\tp(X) \dfeq \st{U \in \pst(X)}{U \text{ intrinsically open in } X}$$
			which we call the \df{intrinsic topology} of $X$.
		\end{definition}
		We do not emphasize topoi as categories, however, but rather view them as models of constructive mathematics, and develop our theory in this sense.
		
		The idea of the \Sier object $\opn$ is, of course, that it should play the same role in $\topos$ as the \Sier space $\sier$ does in the case of topological spaces, in the sense that open subsets of $X$ are classified by maps $X \to \opn$. It follows that the intrinsic topology $\tp(X)$ is isomorphic to the exponent $\opn^X$ (in the same way as $\pst(X)$ is isomorphic to $\soc^X$). The special case $X = \one$ provides an interesting insight: since $\opn \ism \opn^\one$, we can interpret $\opn$ as the topology on a one-element set. Thus, in words of \MEsc[], to equip all objects with the topology, it is sufficient to prescribe the topology only on $\one$.\footnote{This clearly shows the importance of using constructive logic in the case of synthetic topology, as classically the only subsets of $\one$ are the empty set and the whole set. Constructively, the subsets of $\one$ can be a lot more diverse and interesting.}
		
		Because $\opn$ is a subset of $\soc$, we can describe openness also by truth values. The elements of $\opn$ are called \df{open truth values}. Since the characteristic map $\chi_U\colon X \to \soc$ of a subset $U \subseteq X$ is given by $\chi_U(x) = x \in U$, we have that $U$ is open in $X$ if and only if for every $x \in X$ the truth value of the statement $x \in U$ is open.
		
		Using logical description of openness, one can often prove topological theorems in a strikingly simple way. We demonstrate this here with examples, and provide synthetic definitions of familiar topological properties along the way.
		
		For start, note that in any model of synthetic topology every map $f\colon X \to Y$ is continuous, by which we mean the usual thing, namely that $f$-preimages of open subsets of $Y$ are open subsets of $X$. For the proof we need only to write the definition of the preimage,
		$$x \in f^{-1}(U) \iff f(x) \in U$$
		for all $x \in X$, $U \in \tp(Y)$. One should understand this proof as follows: to verify that $f^{-1}(U)$ is open in $X$, we have to see that for every $x \in X$ the truth value of $x \in f^{-1}(U)$ is open. But this is equivalent to $f(x) \in U$ which is an open statement since by assumption $U$ is open in $Y$, therefore for every $y \in Y$, in particular $y = f(x)$, the statement $y \in U$ is open. This is the general strategy of synthetic topological proofs: write a statement for which we have to verify openness, and find an equivalent statement of which openness follows from assumptions.
		
		Discussion about compactness above suggests what the synthetic definition of compactness should be.
		\begin{definition}
			A set $X$ is \df{compact} when its universal quantifier $\forall_X\colon \pst(X) \to \soc$ restricts to a map $\tp(X) \to \opn$.\footnote{This is almost a transcription of the characterization of compactness below Corollary~\ref{Corollary: compactness_with_quantifiers}. We note however that there is actually a difference in detail: $\pst(X)$ for a topological space means the powerset of its underlying set while for an object $X \in \obj{\topos}$ it means the set of all subobjects of $X$. The category of topological spaces $\Top$ is not a topos, but the point is that subobjects in $\Top$ are represented by arbitrary continuous injections, rather than just subspace embeddings. This difference is nicely seen in Section~\ref{Section: gros_topos_model} where (a small sub-)category of topological spaces is embedded into a topos, and there the subobjects of a representable object, represented by subspace inclusions, are just the stable ones.} The logical formulation of this condition is that for every $U \in \tp(X)$ the truth value of the statement $\xall{x}{X}{x \in U}$ is open.
		\end{definition}
		
		It is often convenient to replace the open subsets by their characteristic maps, \ie open predicates. An \df{open predicate} on $X$ is a map $X \to \opn$ (or, if you will, it is a predicate on $X$ which restricts to such a map --- its range is contained among open truth values). The open subset, given by such a predicate $\phi$, is of course reconstructed as $\phi^{-1}(\top)$. We see that compactness can be equivalently reformulated as follows: $X$ is compact when for every open predicate $\phi$ on $X$ the truth value of $\xall{x}{X}{\phi(x)}$ is open.
		
		With this it is easy to prove an analogue of the classical theorem that finite products of compact topological spaces are compact.\footnote{In general, arbitrary products of compact sets need not be compact, however (\eg in Section~\ref{Section: russian_constructivism_model} we see that $\two^\NN$ is not compact in the effective topos). This should not come as a surprise, as even classically the Tychonoff theorem~\cite{Dugundji_J_1974:_topology} is equivalent to the axiom of choice.}
		\begin{proposition}
			Finite products of compact sets are compact.
		\end{proposition}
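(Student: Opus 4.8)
The plan is to reduce the finite case to two special cases --- the empty product and the binary product --- and then use induction. Since a finite product is by definition indexed by some $\NN_{<n}$ with $n \in \NN$, and since $\NN_{<0} = \emptyset$ while $\NN_{<n+1} \ism \NN_{<n} + \one$, we have canonical isomorphisms $\prod_{k \in \NN_{<0}} X_k \ism \one$ and $\prod_{k \in \NN_{<n+1}} X_k \ism \bigl(\prod_{k \in \NN_{<n}} X_k\bigr) \times X_n$, and compactness is plainly invariant under isomorphism. Hence it suffices to prove that (i) $\one$ is compact, and (ii) $X \times Y$ is compact whenever $X$ and $Y$ are compact. The proposition then follows by induction on $n$: the base case $n = 0$ is (i), and in the successor step $\prod_{k \in \NN_{<n}} X_k$ is compact by the induction hypothesis, $X_n$ is compact by assumption, so $\prod_{k \in \NN_{<n+1}} X_k$ is compact by (ii).

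Claim (i) is immediate: an open predicate on $\one$ is a map $\phi\colon \one \to \opn$, and the truth value of $\xall{x}{\one}{\phi(x)}$ equals $\phi(\unit)$, which is open. For (ii), let $\phi\colon X \times Y \to \opn$ be an open predicate on $X \times Y$. For each fixed $x \in X$ the assignment $y \mapsto \phi(x, y)$ is an open predicate on $Y$, being the composite of $\phi$ with the map $y \mapsto (x, y)$; hence, by compactness of $Y$, the truth value $\psi(x) \dfeq \xall{y}{Y}{\phi(x, y)}$ is open. Therefore $\psi$ is a map $X \to \opn$, that is, an open predicate on $X$, and by compactness of $X$ the truth value of $\xall{x}{X}{\psi(x)}$ is open. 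Finally, the intuitionistically valid equivalence $\bigl(\xall{x}{X}\xall{y}{Y}{\phi(x, y)}\bigr) \iff \xall{z}{X \times Y}{\phi(z)}$ shows that $\xall{z}{X \times Y}{\phi(z)}$ is open, so $X \times Y$ is compact.

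No step here is genuinely hard --- this is essentially a transcription of the classical product argument into the synthetic idiom, exactly as the section's examples suggest. The only points worth a second glance are that the pointwise statement (that $\psi(x)$ is open for every $x$) really does make $\psi$ an open predicate --- internally, a function all of whose values lie in $\opn$ corestricts to a map into $\opn$ --- and the curry/uncurry (Fubini-type) step $\xall{z}{X \times Y}{\phi(z)} \iff \xall{x}{X}\xall{y}{Y}{\phi(x, y)}$, which is elementary intuitionistic logic. The bookkeeping of the induction and the identification of a finite product with iterated binary products are routine.
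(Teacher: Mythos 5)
Your proof is correct and follows essentially the same route as the paper's: reduce to the nullary and binary cases, handle $\one$ by evaluating at $\unit$, and handle $X \times Y$ via the equivalence $\xall{a}{X \times Y}{\phi(a)} \iff \xall{x}{X}\xall{y}{Y}{\phi(x, y)}$, applying compactness of $Y$ and then of $X$. You merely spell out the currying step (that $x \mapsto \xall{y}{Y}{\phi(x,y)}$ is an open predicate) which the paper leaves implicit.
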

		\begin{proof}
			It is sufficient to prove the preservation of compactness by nullary and binary products.
			\begin{itemize}
				\item\proven{$\one$ is compact}
					Let $\phi\colon \one \to \opn$ be an open predicate on $\one$. Then
					$$\xall{x}{\one}{\phi(x)} \iff \phi(*).$$
				\item\proven{if $X$ and $Y$ are compact, then so is $X \times Y$}
					Let $\phi \in \opn^{X \times Y}$. Then
					$$\xall{a}{X \times Y}{\phi(a)} \iff \xall{x}{X}\xall{y}{Y}{\phi(x, y)}.$$
			\end{itemize}
		\end{proof}
		
		Here is another familiar proposition from classical topology, translated to synthetic setting.
		\begin{proposition}
			An image of a compact set is compact.
		\end{proposition}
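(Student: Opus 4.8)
The plan is to use the epi--mono factorization available in any topos to reduce the statement to a short logical computation. Let $f\colon X \to Y$ be the map in question, with $X$ compact, and factor it as $X \stackrel{e}{\twoheadrightarrow} \im(f) \stackrel{m}{\hookrightarrow} Y$, so that $e$ is a surjection of $X$ onto its image. It then suffices to prove that $\im(f)$ is compact, \ie that for every open predicate $\psi \in \opn^{\im(f)}$ the truth value of $\xall{z}{\im(f)}{\psi(z)}$ is open.

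First I would pull $\psi$ back along $e$: the composite $\psi \circ e\colon X \to \opn$ is a map into $\opn$, hence an open predicate on $X$, so compactness of $X$ yields that $\xall{x}{X}{\psi(e(x))}$ is an open truth value. The whole proof then rests on the equivalence
$$\big(\xall{x}{X}{\psi(e(x))}\big) \iff \big(\xall{z}{\im(f)}{\psi(z)}\big),$$
from which openness of the right-hand side follows immediately, giving compactness of $\im(f)$.

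The one step worth a moment's care is the left-to-right direction of this equivalence, where surjectivity of $e$ is used: internally $e$ is an epimorphism, so $\xall{z}{\im(f)}\xsome{x}{X}{e(x) = z}$, and for a fixed $z$ the intuitionistically valid rule turns $\big(\xsome{x}{X}{e(x) = z}\big) \impl \psi(z)$ into $\xall{x}{X}{\big(e(x) = z \impl \psi(z)\big)}$, which is a substitution instance of $\xall{x}{X}{\psi(e(x))}$. The converse direction is trivial, since $e(x) \in \im(f)$ for every $x \in X$. This is just the natural-deduction counterpart of the classical observation that a surjection transports a cover of the image back to a cover of the domain, so I expect no serious obstacle here.
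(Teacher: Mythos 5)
Your proof is correct and follows essentially the same route as the paper: the paper simply takes $f$ surjective onto $Y$ (implicitly the corestriction to the image) and records the same key equivalence $\xall{y}{Y}{\phi(y)} \iff \xall{x}{X}{\phi(f(x))}$, which is exactly your equivalence after the epi--mono factorization. Your extra justification of the left-to-right direction via internal surjectivity is just the spelled-out version of what the paper leaves implicit.
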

		\begin{proof}
			Let $X$ be compact, and $f\colon X \to Y$ surjective. Then for $\phi \in \opn^Y$ we have
			$$\xall{y}{Y}{\phi(y)} \iff \xall{x}{X}{\phi(f(x))}.$$
		\end{proof}
		
		Recall from the discussion above that classically a topological space $X$ is Hausdorff if and only if the diagonal in $X \times X$ is closed, and is discrete when this diagonal is open. We defer the discussion of synthetic Hausdorffness until the next section where we introduce closed subsets, but we can say something about discreteness. Synthetically, a set $X$ is called \df{discrete} when the diagonal $\diag[X]$ is an open subset of $X \times X$, or equivalently, when its characteristic map, the equality, is an open predicate. In logical formulation this means that $X$ is discrete when for every $x, y \in X$ the truth value of the statement $x = y$ is open.
		
		Classically, we are more used to discreteness to mean that singletons are open, but it is all the same.
		\begin{proposition}
			A set is discrete if and only if all its singletons are open.
		\end{proposition}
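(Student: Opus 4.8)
The plan is to unwind both ``$X$ is discrete'' and ``all singletons of $X$ are open'' into statements about which truth values lie in $\opn$, and then observe that the two resulting formulas are the same up to a harmless reordering of quantifiers and symmetry of $=$. Recall that the singleton determined by $a \in X$ is $\st{x \in X}{x = a}$, whose characteristic map is $x \mapsto (x = a)$; hence ``$\st{x \in X}{x = a}$ is open in $X$'' unfolds to $\xall{x}{X}{(x = a) \in \opn}$. On the other hand, $X$ is discrete precisely when the characteristic map of $\diag[X] \subseteq X \times X$, namely $(x, y) \mapsto (x = y)$, has image in $\opn$, i.e. when $\xall{x}{X}\xall{y}{X}{(x = y) \in \opn}$.

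For the forward implication I would fix $a \in X$ and instantiate the discreteness hypothesis with the second variable set to $a$: for every $x \in X$ the truth value $x = a$ is open, which is exactly the assertion that $\st{x \in X}{x = a}$ is open. For the converse I would fix $x, y \in X$, invoke the hypothesis that the singleton with centre $y$ is open (this is the point at which we use that \emph{every} singleton is open, one for each potential centre), and then instantiate the resulting open predicate at the point $x$: since $z = y$ is open for all $z \in X$, in particular $x = y$ is open. As $x, y$ were arbitrary, the equality predicate on $X$ is open, so $X$ is discrete.

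I do not anticipate a genuine obstacle. The only thing needing a little care is the bookkeeping of quantifiers in the internal language: ``every singleton is open'' must be read as $\xall{a}{X}\xall{x}{X}{(x = a) \in \opn}$, and the content of the proposition is just that this is equivalent, by swapping the two outermost quantifiers (and using symmetry of $=$), to $\xall{x}{X}\xall{y}{X}{(x = y) \in \opn}$. No choice principle, no excluded middle, and no property of $\opn$ beyond its being a fixed subobject of $\soc$ is required.
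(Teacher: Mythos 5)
Your proof is correct and is essentially the paper's own one-line argument — the paper simply notes that $x = y \iff x \in \{y\}$ for all $x, y \in X$, which is exactly the identification of truth values you spell out before swapping quantifiers. Your more explicit bookkeeping adds nothing problematic and nothing new.
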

		\begin{proof}
			Because for any $x, y \in X$
			$$x = y \iff x \in \{y\}.$$
		\end{proof}
		
		Let us connect the two properties we have thus far.
		\begin{proposition}
			A compact product of discrete sets is discrete.
		\end{proposition}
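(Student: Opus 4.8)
The plan is to unwind the definition of discreteness and reduce everything to compactness of the index set. I read ``compact product of discrete sets'' the same way as ``finite products of compact sets'' above: ``compact'' describes how many factors there are, not the factors themselves. So the claim is: given a family $(X_i)_{i \in I}$ of discrete sets with $I$ compact, the product $\prod_{i \in I} X_i$ is discrete. By the definition of discreteness it is enough to fix arbitrary $x, y \in \prod_{i \in I} X_i$ and show that the truth value of $x = y$ is open.

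The argument then follows the usual synthetic pattern: write the statement whose openness we want, and replace it by an equivalent one whose openness is forced by the hypotheses. By extensionality of elements of a product (two elements of a product agree exactly when they agree in every component), $x = y \iff \xall{i}{I}{x_i = y_i}$. Define $\phi \colon I \to \soc$ by $\phi(i) \dfeq (x_i = y_i)$. For each $i$ the factor $X_i$ is discrete, so $x_i = y_i$ is an open truth value; hence every value of $\phi$ lies in $\opn$, i.e.\ $\phi$ is an open predicate on $I$. Since $I$ is compact, $\xall{i}{I}{\phi(i)}$ is an open truth value, and by the displayed equivalence this is exactly $x = y$. As $x, y$ were arbitrary, $\prod_{i \in I} X_i$ is discrete.

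There is essentially no real obstacle here — the only two things needing care are both interpretive. First, one must take ``compact product'' to mean ``product over a compact index set'', since the alternative reading (the product itself being compact) is simply false: $\two^\NN$ is compact in models validating a fan-type principle yet is not discrete unless \lpo holds. Second, one should make sure the assignment $i \mapsto (x_i = y_i)$ genuinely lands in $\opn$; this is immediate from the hypothesis that the \emph{family} consists of discrete sets, i.e.\ internally $\xall{i}{I}{X_i \text{ is discrete}}$, so that for the fixed $x, y$ the value $x_i = y_i$ is open for every $i$. With those points settled, the proof is a two-line application of the definitions, exactly parallel to the earlier propositions on products and images of compact sets.
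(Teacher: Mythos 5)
Your proof is correct and is essentially identical to the paper's: both read ``compact product'' as ``product indexed by a compact set'' and reduce $x = y$ to the open statement $\xall{i}{K}{x_i = y_i}$, which is open because each component equality is open and $K$ is compact. Your interpretive remarks (why the other reading of ``compact product'' fails, and why the family assumption gives openness of each $x_i = y_i$) are sound and match the paper's own discussion following the proposition.
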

		\begin{proof}
			Let $(X_i)_{i \in K}$ be a family of discrete spaces, indexed by a compact set $K$. Then for any $x = (x_i)_{i \in K}, y = (y_i)_{i \in K} \in \prod_{i \in K} X_i$
			$$x = y \iff \xall{i}{K}{x_i = y_i}.$$
		\end{proof}
		This can be seen as a generalization of the classical fact that finite products of discrete spaces are discrete. Classically it is difficult to say what a ``compactly indexed family'' of spaces is (but see~\cite{Escardo_M_2009:_intersections_of_compactly_many_open_sets_are_open}), however synthetically there is no problem, as all sets come with topology included. A hint that this has meaning in classical topology as well is a special case of this proposition when all factors are the same, and the product is actually an exponential. Indeed, recall that if $K$ is compact and $X$ is discrete, then $\C(K, X)$, equipped with the compact-open topology, is a discrete space.
		
		Actually, this proposition is just a special case of the following one since a singleton in a product is the intersection of preimages of singletons in individual factors via projections.
		\begin{proposition}
			\
			\begin{enumerate}
				\item
					Let $K$ be compact and $X$ arbitrary. Then any $K$-indexed intersection of open subsets of $X$ is open.
				\item
					Conversely, let $K$ be such that any $K$-indexed intersection of open subsets of $\one$ is again open. Then $K$ is compact.
			\end{enumerate}
		\end{proposition}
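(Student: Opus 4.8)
The plan is to unwind the definitions in both directions; each implication collapses, via the identifications $\tp(X) \ism \opn^X$ and $\soc \ism \pst(\one)$, to the very definition of compactness. First I set up the bookkeeping: a $K$-indexed family of open subsets of $X$ is by definition a map $K \to \tp(X)$; transposing through $\tp(X) \ism \opn^X$ and then through the exponential adjunction, this datum is the same as an open predicate $\phi\colon K \times X \to \opn$, where $\phi(i, x)$ is the truth value of $x \in U_i$. In particular, for each fixed $x \in X$ the map $\phi(\insarg, x)\colon K \to \opn$ is again an open predicate, this time on $K$.

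For part (1), I then compute, for arbitrary $x \in X$,
$$x \in \bigcap_{i \in K} U_i \iff \xall{i}{K}{x \in U_i} \iff \xall{i}{K}{\phi(i, x)},$$
and since $K$ is compact, the last truth value is open. As $x$ was arbitrary, the characteristic map of $\bigcap_{i \in K} U_i$ has its image in $\opn$, so this intersection is open in $X$.

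For part (2), I run the argument backwards. Let $\phi\colon K \to \opn$ be an open predicate on $K$; I must show that $\xall{i}{K}{\phi(i)}$ is an open truth value. Using $\opn \ism \tp(\one)$ (an open truth value $p$ corresponds to the open subset $\st{\unit \in \one}{p}$ of $\one$), $\phi$ determines a $K$-indexed family $(V_i)_{i \in K}$ of open subsets of $\one$ with $V_i = \st{\unit \in \one}{\phi(i)}$. Then
$$\bigcap_{i \in K} V_i \sepeq \st{\unit \in \one}{\xall{i}{K}{\phi(i)}},$$
which under $\pst(\one) \ism \soc$ is precisely the truth value $\xall{i}{K}{\phi(i)}$. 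By hypothesis this intersection is open in $\one$, hence that truth value is open, and so $K$ is compact.

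I do not expect a real obstacle: the entire content is these two identifications together with the definition of compactness, all recalled earlier in the section. The only point needing a little care is to interpret ``$K$-indexed family'' as an honest internal map out of $K$, so that its transpose genuinely is an open predicate on the product $K \times X$ (respectively on $K$ when $X = \one$); once that is granted, both implications are immediate.
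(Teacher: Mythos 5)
Your proof is correct and follows the same route as the paper: part (1) is the one-line equivalence $x \in \bigcap_{i \in K} U_i \iff \xall{i}{K}{x \in U_i}$ combined with compactness of $K$, and part (2) is the tautological observation that intersections of subsets of $\one$ correspond to the universal quantifier under $\pst(\one) \ism \soc$. Your extra bookkeeping about transposing the family into an open predicate on $K \times X$ is a harmless elaboration of what the paper leaves implicit.
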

		\begin{proof}
			\begin{enumerate}
				\item
					Let $(U_i)_{i \in K}$ be a family of open subsets of $X$. Then for any $x \in X$
					$$x \in \bigcap_{i \in K} U_i \iff \xall{i}{K}{x \in U_i}.$$
				\item
					Tautological, as the intersection of subsets of $\one$ matches the universal quantifier (via identification $\pst(\one) \ism \soc$).
			\end{enumerate}
		\end{proof}
		What this proposition tells us is that a set $K$ is compact if and only if all $K$-indexed intersections of opens are open, and that it is sufficient to check this condition already on the singleton $\one$.
		
		In classical topology finite intersections and arbitrary unions of opens are open. We saw that synthetically the first part generalizes to the fact that compact intersections of opens are open. What happens with unions, synthetically? It quickly turns out that requiring arbitrary unions of opens to be open is a far too stringent condition; indeed $\top \in \opn$ (which amounts to saying that a set is always open in itself, a very reasonable condition) would then imply $\opn = \soc$, \ie arbitrary subset is open, as any subset $A \subseteq \one$ can be written as a union $A = \bigcup_{* \in A} \one$.
		
		This suggests that the dual notion of compactness, one that captures the property which unions of opens are open, should be scrutinized.
		\begin{definition}
			A set $X$ is \df{overt} when any of the following equivalent statements hold.
			\begin{itemize}
				\item
					The existential quantifier $\exists_X\colon \pst(X) \to \soc$ restricts to $\tp(X) \to \opn$.
				\item
					For every open subset $U \in \tp(X)$ the statement $\xsome{x}{X}{x \in U}$ (\ie the statement ``$U$ is inhabited'') is open.
				\item
					For every open predicate $\phi \in \opn^X$ the statement $\xsome{x}{X}{\phi(x)}$ (``$\phi$ attains truth'') is open.
			\end{itemize}
		\end{definition}
		
		Just as for compactness we can prove that finite products and images of overt sets are overt. As for two further example of the duality between compactness and overtness, recall first that classically a compact subset of a Hausdorff space is closed, and second that a space is compact precisely when projections along it are closed maps (Theorem~\ref{Theorem: characterization_of_classical_compactness}).
		\begin{proposition}\label{Proposition: overt_subset_of_discrete}
			An overt subset of a discrete set is open.
		\end{proposition}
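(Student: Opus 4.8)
The plan is to unwind all the definitions and reduce the statement to a single application of overtness. Let $X$ be discrete and let $S \subseteq X$ be an overt subset; I must show that $S$ is open in $X$, which by the logical description of openness means: for every $x \in X$ the truth value of $x \in S$ is open.

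First I would fix $x \in X$ and consider the predicate $\phi$ on $S$ given by $\phi(s) \dfeq (x = s)$. Since $X$ is discrete, its equality map $X \times X \to \soc$ has image contained in $\opn$, so $\phi$ is an open predicate on $S$: it is the composite of the inclusion $S \hookrightarrow X$ (which is continuous, as every map is in a model of synthetic topology) with the map $X \to \soc$, $y \mapsto (x = y)$, and precomposition with a continuous map preserves openness. Then overtness of $S$, applied to $\phi$, tells us that the truth value of $\xsome{s}{S}{x = s}$ is open.

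It remains only to observe that this truth value equals $x \in S$: if $x \in S$ then $s \dfeq x$ witnesses the existential, and conversely any $s \in S$ with $x = s$ yields $x \in S$ by substitution. Hence $x \in S$ is open for every $x \in X$, so $S \in \tp(X)$.

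I do not expect a genuine obstacle here; the only point requiring care is that the ``equality restricted to $S$'' is indeed an open predicate \emph{on $S$}, which is just the remark that the subset inclusion $S \hookrightarrow X$ is continuous together with discreteness of $X$. This proof is also a clean illustration of the duality mentioned in the surrounding text, being the synthetic counterpart of the classical fact that a compact subset of a Hausdorff space is closed.
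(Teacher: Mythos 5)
Your proof is correct and follows exactly the paper's argument: for each $x \in X$ one writes $x \in S \iff \xsome{s}{S}{x = s}$, notes that equality is an open predicate by discreteness, and applies overtness of $S$ to conclude that the truth value is open. The extra care you take in checking that the restricted equality is an open predicate on $S$ is a fair elaboration of what the paper leaves implicit.
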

		\begin{proof}
			Let $X$ be discrete and $U \subseteq X$ overt. Then for any $x \in X$
			$$x \in U \iff \xsome{y}{U}{x = y}.$$
			Alternatively, simply view $U$ as an overt union of its (open) singletons.
		\end{proof}
		
		\begin{proposition}\label{Proposition: overt_iff_projections_open}
			A set $X$ is overt if and only if for every $Y$ the projection $p\colon X \times Y \to Y$ is an open map.
		\end{proposition}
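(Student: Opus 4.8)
The plan is to mirror the proof of Theorem~\ref{Theorem: characterization_of_classical_compactness}, exploiting the duality between overtness and compactness (and between open and closed): there the relevant projections are \emph{closed} maps, here they should be \emph{open} maps, with $\exists_X$ playing the role that $\forall_X$ played before.

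For the forward implication, assume $X$ is overt, fix an arbitrary set $Y$, and let $p\colon X \times Y \to Y$ be the projection. Given an open subset $W \in \tp(X \times Y)$, I would simply unwind the definition of the image: for every $y \in Y$,
$$y \in p(W) \iff \xsome{x}{X}{(x, y) \in W}.$$
For each fixed $y$, the slice $W_y \dfeq \st{x \in X}{(x, y) \in W}$ is the preimage of $W$ under the map $x \mapsto (x, y)$, which is continuous since in a model of synthetic topology every map is continuous; hence $W_y \in \tp(X)$. Overtness of $X$ then gives that $\xsome{x}{X}{x \in W_y}$ is an open truth value, so $y \in p(W)$ is open for every $y \in Y$, i.e. $p(W) \in \tp(Y)$. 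Thus $p$ is an open map.

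For the converse it suffices to invoke the hypothesis for $Y = \one$. Under the canonical isomorphisms $X \times \one \ism X$ and $\pst(\one) \ism \soc$, the projection $p\colon X \times \one \to \one$ sends an open subset $U \in \tp(X)$ to the truth value $\xsome{x}{X}{x \in U}$. By assumption this image is open in $\one$, that is, this truth value is open; since $U$ ranged over all of $\tp(X)$, this is exactly the defining condition for $X$ to be overt.

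I do not expect a genuine obstacle here; the only two points needing a word of care are (i) that the slice $W_y$ is really open, which is immediate from continuity of all maps, and (ii) the observation that, just as for compactness, testing the condition on $Y = \one$ already suffices, so the reverse direction needs only the single instance $Y = \one$ while the forward direction in fact delivers the statement for all $Y$.
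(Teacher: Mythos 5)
Your proof is correct. The forward direction coincides with the paper's: unwind $y \in p(W) \iff \xsome{x}{X}{(x,y) \in W}$ and use overtness on the open slice. For the converse, however, you take a genuinely different (and more economical) route. The paper instantiates the hypothesis at $Y = \tp(X)$ and applies the projection to the ``generic'' open set $V = \st{(x,U) \in X \times \tp(X)}{x \in U}$, so that openness of $p(V)$ says at once that $\xsome{x}{X}{x \in U}$ is open for every $U$; this mirrors the classical argument in Theorem~\ref{Theorem: characterization_of_classical_compactness}, where a single cleverly chosen $Y$ encodes all covers simultaneously. You instead instantiate at $Y = \one$, identifying $X \times \one \ism X$ and $\tp(\one) \ism \opn$, so that the image of $U$ under the projection is literally the truth value $\xsome{x}{X}{x \in U}$ viewed as a subset of $\one$. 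This is exactly the content of the proposition that immediately follows in the paper (overtness of $X$ is equivalent to $\trm[X]\colon X \to \one$ being an open map), and it exploits the distinctly synthetic fact that the topology of $\one$ already determines everything; classically the $Y=\one$ instance would be vacuous, which is why the paper's version is the one that generalizes the classical proof pattern. Both arguments are valid; yours needs only the single instance $Y=\one$, while the paper's makes visible the generic witness that one would need in settings where $\one$ carries no information.
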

		\begin{proof}
			\begin{itemize}
				\item\proven{$(\Rightarrow)$}
					Let $X$ be overt and $U \subseteq X \times Y$ open. Then for any $y \in Y$
					$$y \in p(U) \iff \xsome{x}{X}{(x, y) \in U}.$$
				\item\proven{$(\Leftarrow)$}
					Define
					$$V \dfeq \st{(x, U) \in X \times \tp(X)}{x \in U};$$
					since $x \in U$ is an open statement, $V$ is open in $X \times \tp(X)$. By assumption $p(V)$ is open in $\tp(X)$, so for every $U \in \tp(X)$ the statement $U \in p(V)$, or equivalently $\xsome{x}{X}{x \in U}$, is open.
			\end{itemize}
		\end{proof}
		
		The notion of overtness comes from locale theory. Locales are, roughly speaking, spaces, given by their lattice of open subsets, rather than as a set of points with additional structure~\cite{Johnstone_PT_2002:_stone_spaces}. The point-free view of topology lends itself far better to constructive mathematics and algebra whereas for classical topology one relies on the excluded middle and the axiom of choice.\footnote{For example, the Tychonoff theorem~\cite{Johnstone_PT_2002:_stone_spaces} and the Gelfand duality~\cite{Coquand_T_Spitters_B_2009:_constructive_gelfand_duality_for_c_star_algebras} can be proven for locales without using the axiom of choice.} It was observed by Joyal, Tierney, Johnstone, Sambin, Martin-L\"of and others~\cite{Joyal_A_Tierney_M_1984:_an_extension_of_the_galois_theory_of_grothendieck, Johnstone_PT_1984:_open_locales_and_exponentiation, Sambin_G_1987:_intuitionistic_formal_spaces_a_first_communication} that stating inhabitedness (``non-emptiness in a positive way'') of a locale is a non-trivial property, and the locales for which this could be done in a suitable way were called \df{open} because this property is equivalent to the unique map into $\one$ being an open map. Needless to say, with openness being an overloaded term, confusion arose, and Paul Taylor renamed the notion to `overt' (and developed it in the framework of his ASD\footnote{Actually there is a certain difference between his notion of overtness and ours. We define the existential quantifier as a map $\soc^X \to \soc$ (if we use the predicate version, as Taylor does) which is characterized as the left adjoint of the transpose of the projection $\soc \times X \to \soc$, and we call $X$ overt when this map restricts to $\opn^X \to \opn$ where it is again the left adjoint to the transpose of the projection. However Taylor works in a framework without powersets, and so defines that $X$ is overt when the left adjoint $\opn^X \to \opn$ exists. It can happen that such an adjoint on a subset of $\soc$ exists, but is not the restriction of the usual existential quantifier; for example, $\nnst$ is a complete Boolean lattice, but in general not a sublattice of $\soc$. Taylor assumes some further axioms in ASD though, and it is a matter of further research whether they ensure that both notions of overtness coincide in cases which could be reasonably called models of both ASD and synthetic topology.}~\cite{Taylor_P_2000:_geometric_and_higher_order_logic_in_terms_of_abstract_stone_duality}). 
		\begin{proposition}
			A set $X$ is overt if and only if $\trm[X]\colon X \to \one$ is an open map.
		\end{proposition}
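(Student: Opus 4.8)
The plan is to unwind the definition of ``open map'' in the degenerate case of $\trm[X]$. Identifying $X \times \one$ with $X$, the map $\trm[X]$ is, up to isomorphism, the projection $X \times \one \to \one$, so one implication is just the instance $Y = \one$ of Proposition~\ref{Proposition: overt_iff_projections_open}. In fact both implications fall out of a single computation: for an open subset $U \in \tp(X)$ its image under $\trm[X]$ is
$$\trm[X](U) \sepeq \st{* \in \one}{\xsome{x}{X}{x \in U}},$$
which under the bijection $\pst(\one) \ism \soc$ corresponds precisely to the truth value $\xsome{x}{X}{x \in U}$ (i.e.\ to the statement that $U$ is inhabited).

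Given this, the argument is a chain of equivalences: $\trm[X]$ is an open map $\iff$ for every $U \in \tp(X)$ the image $\trm[X](U)$ is open in $\one$ $\iff$ for every $U \in \tp(X)$ the truth value $\xsome{x}{X}{x \in U}$ lies in $\opn$ $\iff$ $X$ is overt, the last step being exactly the second characterization of overtness recorded in the definition above (``for every open $U \subseteq X$ the statement that $U$ is inhabited is open'').

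I expect no real obstacle here. The only point needing a word of care is the identification of the image $\trm[X](U)$ with the existential quantifier applied to $U$, which rests on the previously established correspondence $\pst(\one) \ism \soc$ under which unions of subsets of $\one$ become the existential quantifier; everything else is immediate from the definitions.
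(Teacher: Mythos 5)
Your proof is correct and is essentially the same as the paper's: both compute $\trm[X](U) = \st{* \in \one}{\xsome{x}{X}{x \in U}}$ and read off the equivalence with overtness via $\pst(\one) \ism \soc$, with the same side remark that one direction is the $Y = \one$ case of Proposition~\ref{Proposition: overt_iff_projections_open}.
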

		\begin{proof}
			Since for any $U \in \tp(X)$ we have $\trm[X](U) = \st{* \in \one}{\xsome{x}{X}{x \in U}}$. (Compare also with the previous proposition.)
		\end{proof}
		Compactness is one of the most important topological properties while overtness is unfamiliar to all but a few classical topologists. The reason for this is because overtness, in whatever form we choose, is satisfied by all topological spaces. Classically, arbitrary unions of open subsets are open. Every map into $\one$ is open since all subsets of $\one$ are open. For every $X$ the set $\st{U \in \tp(X)}{\xsome{x}{X}{x \in U}}$ is open in $\tp(X)$ (recall Lemma~\ref{Lemma: classical_compact-open_basis_for_topology}, and consider the union $\bigcup_{x \in X} \upward{\{x\}}$). A projection from a product to one of its factors is always an open map.
		
		Even for compactness there is an issue which is classically invisible. For a topological space $X$, when saying that every open cover has a finite subcover, it does not matter whether the cover is open in $X$, or in some larger space. This is because for a subset of a topological space it is automatically assumed that it has the subspace (also called the relative, the inherited, or the induced) topology, in the sense that if $X \subseteq Y$ and $U \in \tp(X)$, then there exists an open subset $V \in \tp(Y)$ such that $U = X \cap V$. In other words, there is no difference in saying that a space is compact in some larger space, or compact on its own. Synthetically such difference exists, so we introduce new definitions.
		\begin{definition}
			A subset $X \subseteq Y$ is called
			\begin{itemize}
				\item
					\df{subcompact}\footnote{Usually it is simply called ``compact in $Y$'' while if $X$ is compact in the preceding sense, it is said that it is ``compact in itself'', or ``on its own''. This terminology feels unwieldy to me, and as one quickly starts replacing ``open in this and this'' simply with ``open'', this can lead to confusion in the case of compactness. For this reason I suggest the term `subcompact' (to mean a ``\textbf{sub}set, \textbf{compact} in a given larger set'').} in $Y$ when for every $U \in \tp(Y)$ the statement $\xall{x}{X}{x \in U}$ (\ie $X \subseteq U$) is open (equivalently, when for every open predicate $\phi\colon Y \to \opn$ the statement $\xall{x}{X}{\phi(x)}$ is open);
				\item
					\df{subovert} in $Y$ when for every $U \in \tp(Y)$ the statement $\xsome{x}{X}{x \in U}$ (\ie $X \between U$) is open (equivalently, when for every open predicate $\phi\colon Y \to \opn$ the statement $\xsome{x}{X}{\phi(x)}$ is open).
			\end{itemize}
			We denote the set of subcompact subsets of $X$ by $\cp(X)$, and the set of subovert subsets of $X$ by $\ov(X)$.
		\end{definition}
		
		As we will promptly see, subcompactness and subovertness are weaker properties than compactness and overtness, but they usually suffice, so it makes sense to use them to obtain more general propositions. To give an example how this works in practice, we generalize Proposition~\ref{Proposition: overt_subset_of_discrete}.
		\begin{proposition}\label{Proposition: subovert_subset_of_discrete}
			A subovert subset of a discrete set is open.
		\end{proposition}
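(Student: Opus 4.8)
The plan is to imitate the proof of Proposition~\ref{Proposition: overt_subset_of_discrete}, the only change being that we feed subovertness an open predicate living on the ambient discrete set rather than on the subset itself. So let $X$ be discrete and let $A \subseteq X$ be subovert in $X$. To show that $A$ is open in $X$ it suffices, by the logical description of openness, to check that for each $x \in X$ the truth value of the statement $x \in A$ is open.

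First I would fix $x \in X$ and consider the predicate $\phi_x \colon X \to \soc$ given by $\phi_x(y) \dfeq (x = y)$. Since $X$ is discrete, every value $\phi_x(y)$ is an open truth value, so $\phi_x$ is in fact an open predicate $X \to \opn$. Now I would apply the defining property of subovertness of $A$ in $X$ to $\phi_x$: it says precisely that the statement $\xsome{y}{A}{\phi_x(y)}$, that is, $\xsome{y}{A}{x = y}$, is open.

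It then remains only to observe the equivalence $\xsome{y}{A}{x = y} \iff x \in A$: the left-to-right direction uses substitutivity of equality (if $y \in A$ and $x = y$ then $x \in A$), and the right-to-left direction is witnessed by $y \dfeq x$. Hence $x \in A$ is an open truth value for every $x \in X$, which is exactly the assertion that $A$ is open in $X$.

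There is essentially no genuine obstacle here; the single point worth care is that subovertness of $A$ quantifies over open predicates on the \emph{ambient} set $X$, so it is the discreteness of $X$ — not of $A$ — that is actually used. This is why the hypothesis can be weakened from overtness (as in Proposition~\ref{Proposition: overt_subset_of_discrete}, where one relies on the equality of $A$ itself being open) to subovertness without imposing any further condition on $A$.
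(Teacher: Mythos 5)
Your proof is correct and is essentially the paper's own argument: both apply subovertness of $A$ to the open predicate $y \mapsto (x = y)$ on the ambient discrete set $X$ (the paper writes it as $x \in \{y\}$ to stress that the quantified predicate is open in $X$, not just in $A$) and then use the equivalence $x \in A \iff \xsome{y}{A}{x = y}$. Your closing remark about why the hypothesis weakens from overtness to subovertness matches the paper's own commentary.
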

		\begin{proof}
			Let $X$ be discrete and $U \subseteq X$ subovert. Take any $x \in X$. Then
			$$x \in U \iff \xsome{y}{U}{x \in \{y\}}.$$
			We see that the proof is exactly the same as in Proposition~\ref{Proposition: overt_subset_of_discrete}, we just had to write it in such a way that it is clear that quantification goes over $x$ belonging to a set which is open in $X$, not just in $U$.
		\end{proof}
		
		Subovert and subcompact subsets have analogous properties to the ones above shown for overt and compact sets.
		\begin{proposition}\label{Proposition: images_and_finite_products_of_subovert/subcompact_subsets}
			\
			\begin{enumerate}
				\item
					The image of a subovert/subcompact subset is subovert/subcompact.
				\item
					Finite product of subovert/subcompact subsets is subovert/subcompact.
			\end{enumerate}
		\end{proposition}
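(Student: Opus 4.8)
The plan is to follow the synthetic-topology proof strategy used repeatedly in this section: to verify that a subset is subovert (resp.\ subcompact) in an ambient set, take an arbitrary open predicate on that ambient set, rewrite the relevant existential (resp.\ universal) statement as a logically equivalent one assembled from the hypotheses, and read off openness. The two cases are perfectly dual --- replace $\exists$ by $\forall$ throughout --- so I would prove them in parallel.

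For part (1), suppose $X \subseteq Y$ is subovert in $Y$ and $f \colon Y \to Z$ is an arbitrary map; I claim $f(X) \subseteq Z$ is subovert in $Z$. Given an open predicate $\psi \colon Z \to \opn$, the composite $\psi \circ f \colon Y \to \opn$ is an open predicate on $Y$, and
$$\xsome{z}{f(X)}{\psi(z)} \iff \xsome{x}{X}{\psi(f(x))},$$
the right-hand side being open by subovertness of $X$ in $Y$. The subcompact case is identical with $\forall$ in place of $\exists$.

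For part (2), as in the earlier product propositions it suffices to treat the nullary and binary cases. The nullary product is $\one \subseteq \one$, which is subovert (resp.\ subcompact) in itself because for any open predicate $\phi \colon \one \to \opn$ we have $\xsome{x}{\one}{\phi(x)} \iff \phi(*)$ (resp.\ the same with $\forall$), and $\phi(*)$ is open. For the binary case, suppose $X_1 \subseteq Y_1$ and $X_2 \subseteq Y_2$ are subovert in their respective ambient sets, and let $\phi \colon Y_1 \times Y_2 \to \opn$ be an open predicate. Fixing $x_1 \in Y_1$, the map $y_2 \mapsto \phi(x_1, y_2)$ is an open predicate on $Y_2$, so $\psi(x_1) \dfeq \xsome{x_2}{X_2}{\phi(x_1, x_2)}$ is open; hence $\psi \colon Y_1 \to \opn$ is itself an open predicate, and
$$\xsome{a}{X_1 \times X_2}{\phi(a)} \iff \xsome{x_1}{X_1}{\psi(x_1)}$$
is open by subovertness of $X_1$ in $Y_1$. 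Again the subcompact case only changes $\exists$ to $\forall$.

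There is no genuine obstacle here; the one point requiring a little attention is the binary step, where one must check that peeling off the quantifiers one variable at a time keeps each intermediate predicate honestly \emph{open}, so that the subovertness/subcompactness hypotheses actually apply at each stage. If one prefers to derive the general finite case by induction on $n$ rather than directly from the binary case, this is immediate since a product indexed by $\NN_{<n}$ is built up from $\one$ and binary products, exactly the two cases treated.
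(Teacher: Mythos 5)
Your proof is correct and follows essentially the same route as the paper's: item (1) by composing the open predicate with $f$ and rewriting the quantifier over the image as a quantifier over the domain, and item (2) by reducing to the nullary and binary cases and peeling off the quantifiers one factor at a time. The only difference is that you make explicit the intermediate check that $x_1 \mapsto \xsome{x_2}{X_2}{\phi(x_1, x_2)}$ is itself an open predicate on $Y_1$, which the paper leaves implicit in its nested-quantifier equivalence.
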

		\begin{proof}
			\begin{enumerate}
				\item
					Let $f\colon X \to Y$ be a map, and $A \subseteq X$ subovert/subcompact. Take any $\phi \in \opn^Y$. We have
					$$\xsome{y}{f(X)}{\phi(y)} \iff \xsome{x}{X}{\phi(f(x))},$$
					and similarly,
					$$\xall{y}{f(X)}{\phi(y)} \iff \xall{x}{X}{\phi(f(x))}.$$
				\item
					We have to verify the condition for nullary and binary products. Let $\phi$ be an open predicate on $\one$.
					Then
					$$\xsome{x}{\one}{\phi(x)} \iff \phi(*) \iff \xall{x}{\one}{\phi(x)}.$$
					Now let $\phi$ be an open predicate on $X \times Y$, and let $A \subseteq X$, $B \subseteq Y$ be subovert/subcompact subsets. Then
					$$\xsome{a}{A \times B}{\phi(a)} \iff \xsome{x}{A}\xsome{y}{B}{\phi(x, y)},$$
					and likewise
					$$\xall{a}{A \times B}{\phi(a)} \iff \xall{x}{A}\xall{y}{B}{\phi(x, y)}.$$
			\end{enumerate}
		\end{proof}
		The consequence of the first item is that an overt/compact set is subovert/subcompact in any larger set (as it is the image of its inclusion). On the other hand, it is clear from the definition that a set is overt/compact if and only if it is subovert/subcompact in itself. In these senses subovertness/subcompactness generalize overtness/compactness.
		
		For overtness the (sub)overtness of a dense subset is sufficient.
		\begin{definition}
			A subset $D \subseteq X$ is \df{dense} in $X$ when every inhabited $U \in \tp(X)$ intersects it.
		\end{definition}
		\begin{proposition}\label{Proposition: testing_overtness_on_dense_subset}
			Suppose $D \subseteq X$ is a subovert dense subset in $X$. Then $X$ is overt.
		\end{proposition}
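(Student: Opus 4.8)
The plan is to reduce overtness of $X$ to the subovertness hypothesis on $D$ by showing that, for an \emph{open} subset, inhabitedness can be tested on $D$ alone. So I would start by fixing an arbitrary $U \in \tp(X)$; the goal is to prove that the truth value of $\xsome{x}{X}{x \in U}$ is open.

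The key step is to establish the equivalence
$$\xsome{x}{X}{x \in U} \iff \xsome{x}{D}{x \in U}.$$
The right-to-left implication is immediate from $D \subseteq X$. For the left-to-right implication, assume $\xsome{x}{X}{x \in U}$, i.e.\ that $U$ is inhabited; since $U$ is moreover open, density of $D$ in $X$ says precisely that $U$ intersects $D$, which is the statement $\xsome{x}{D}{x \in U}$. (Note that the definition of density is phrased exactly for \emph{inhabited open} subsets, which is why it applies here without friction.)

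Finally, since $D$ is subovert in $X$, the truth value of $\xsome{x}{D}{x \in U}$ lies in $\opn$; being equal (equivalent) to it, the truth value of $\xsome{x}{X}{x \in U}$ is then also open. As $U \in \tp(X)$ was arbitrary, $X$ is overt. There is no real obstacle here beyond keeping track that density is only invoked on the open set $U$ and that an equivalence of statements is an equality of truth values, so openness transfers across it; the argument is a one-line translation once the equivalence above is written down.
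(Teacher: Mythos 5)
Your proof is correct and follows exactly the same route as the paper's: establish the equivalence $\xsome{x}{X}{x \in U} \iff \xsome{x}{D}{x \in U}$ via density, then invoke subovertness of $D$ to conclude the truth value is open. The paper states this in one line; your version just spells out the same argument in more detail.
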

		\begin{proof}
			By definition
			$$\xsome{x}{X}{x \in U} \iff \xsome{x}{D}{x \in U}$$
			for every $U \in \tp(X)$.
		\end{proof}
		
		So far we have not made any assumptions on $\opn$; in principle it can be an arbitrary subset of $\soc$. In practice we need more information than that. One of the basic assumptions on $\opn$ is that it is a bounded $\land$-subsemilattice of $\soc$, \ie that it is closed under finite meets (and consequently, that any topology $\tp(X)$ is closed under finite intersections). This is just saying that finite sets are compact.
		
		\begin{proposition}\label{Proposition: intersection_of_subovert_and_open_is_open_(original)}
			Assume that $\opn$ is closed under binary meets. Then the intersection of a subovert subset and an open subset is subovert.
		\end{proposition}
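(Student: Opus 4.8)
The plan is to use the standard synthetic strategy exhibited throughout this section: rewrite the truth value whose openness we must establish into an equivalent form whose openness follows from the hypotheses. Let $A \subseteq X$ be subovert in $X$ and let $U \in \tp(X)$. To show that $A \cap U$ is subovert in $X$, we must check that for every open predicate $\phi\colon X \to \opn$ the truth value of $\xsome{x}{A \cap U}{\phi(x)}$ is open.

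First I would rewrite
$$\xsome{x}{A \cap U}{\phi(x)} \iff \xsome{x}{A}{x \in U \land \phi(x)},$$
so that the quantifier now ranges over $A$, with membership in $U$ absorbed into the body.

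Next, observe that the map $\psi\colon X \to \soc$ given by $\psi(x) \dfeq (x \in U) \land \phi(x)$ is an open predicate on $X$: the value $x \in U$ is open because $U \in \tp(X)$, the value $\phi(x)$ is open by assumption on $\phi$, and $\opn$ is closed under binary meets by hypothesis, so $\psi(x) \in \opn$ for every $x \in X$.

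Finally, since $A$ is subovert in $X$ and $\psi$ is an open predicate on $X$, the truth value $\xsome{x}{A}{\psi(x)}$ is open, which by the equivalence above is exactly what we wanted. I do not expect any real obstacle here; the only point requiring a little care is, just as in Proposition~\ref{Proposition: subovert_subset_of_discrete}, to phrase the quantified body so that it is visibly an open predicate on all of $X$ (and not merely on $A$ or on $A \cap U$), so that subovertness of $A$ in $X$ is applicable.
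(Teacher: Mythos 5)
Your proof is correct and follows exactly the paper's argument: the paper likewise rewrites $\xsome{x}{A \cap U}{x \in V}$ as $\xsome{x}{A}{x \in U \cap V}$ and appeals to closure of $\opn$ under binary meets together with subovertness of $A$. Your predicate formulation $\psi(x) = (x \in U) \land \phi(x)$ is just the characteristic-map version of the paper's $U \cap V$.
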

		\begin{proof}
			Let $A \subseteq X$ be subovert, and $U, V \subseteq X$ open. Then
			$$\xsome{x}{A \cap U}{x \in V} \iff \xsome{x}{A}{x \in U \cap V}.$$
		\end{proof}
		
		\begin{corollary}\label{Corollary: open_subset_of_overt_is_subovert_(original)}
			If $\opn$ is closed under binary meets, then an open subset of an overt set is subovert.
		\end{corollary}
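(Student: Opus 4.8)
The plan is to obtain this as an immediate consequence of Proposition~\ref{Proposition: intersection_of_subovert_and_open_is_open_(original)}. Recall from the remark following Proposition~\ref{Proposition: images_and_finite_products_of_subovert/subcompact_subsets} that a set is overt precisely when it is subovert in itself. So if $X$ is overt and $U \subseteq X$ is open, then $X$ counts as a subovert subset of $X$, and the identity $U = X \cap U$ exhibits $U$ as the intersection of a subovert subset and an open subset of $X$. Since the standing hypothesis here --- closure of $\opn$ under binary meets --- is exactly the hypothesis of Proposition~\ref{Proposition: intersection_of_subovert_and_open_is_open_(original)}, that proposition applies with $A = X$ and yields that $U$ is subovert.

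Unwinding the reference, this just amounts to observing that for any open $V \subseteq X$ one has
$$\xsome{x}{U}{x \in V} \iff \xsome{x}{X}{x \in U \cap V},$$
whose right-hand side is an open truth value: $U \cap V$ is open because $\opn$ is closed under binary meets, and $X$ is overt, so the existential statement over $X$ of an open predicate is open.

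There is essentially no obstacle to overcome; the only point worth flagging is the identification of overtness with subovertness-in-itself, which is what licenses applying Proposition~\ref{Proposition: intersection_of_subovert_and_open_is_open_(original)} with $A$ taken to be the whole set $X$.
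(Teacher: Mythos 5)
Your proof is correct and follows exactly the paper's route: the paper likewise derives the corollary by taking $A = X$ in Proposition~\ref{Proposition: intersection_of_subovert_and_open_is_open_(original)}, using the fact that an overt set is subovert in itself. The unwound equivalence you give is precisely the one in the paper's proof of that proposition.
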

		\begin{proof}
			In (the proof of) the previous proposition take $A = X$.
		\end{proof}
		
		This corollary can be seen as the dual statement to the classical proposition that a closed subset of a compact space is compact. The proof of this proposition actually resorts to the fact that classically `compactness' and `subcompactness' are the same (so a more precise statement would in fact be that a closed subset of a compact space is subcompact). We already mentioned that this is because we assume subsets of a topological space to be equipped with the subspace topology, something that synthetically we generally don't have. We consider this issue now.
		
		First of all recall that all maps are continuous. Consider the following special case: let $A \subseteq X$, and denote its inclusion $i\colon A \hookrightarrow X$. Continuity of $i$ means that for every $U \in \tp(X)$ the set $i^{-1}(U)$, \ie $A \cap U$, is open in $A$. The subset $A$ is called a subspace in $X$ when all its opens are obtained in this way. Here is the precise definition.
		
		\begin{definition}
			Let $A \subseteq X$. We say that
			\begin{itemize}
				\item
					$A$ is a \df{subspace} in $X$ (or \df{has the subspace topology}, inherited from $X$) when for every $U \in \tp(A)$ there exists $V \in \tp(X)$ such that $U = A \cap V$;
				\item
					$A$ is a \df{well-embedded subspace} in $X$ when there exists a map $e\colon \tp(A) \to \tp(X)$ such that $U = A \cap e(U)$ for all $U \in \tp(A)$.\footnote{This terminology is taken from \Esc['s]~\cite{Escardo_M_2004:_notes_on_synthetic_topology}. \Esc also considers two stronger notions, with $e$ being a left or a right adjoint to $i^{-1}\colon \tp(X) \to \tp(A)$, but we won't need these conditions.}
			\end{itemize}
		\end{definition}
		Clearly a well-embedded subspace is also a subspace (for the converse we would in general require some choice). Examples of (well-embedded) subspaces are retracts: if $r\colon X \to A$ is a retraction and $U \in \tp(A)$, then $r^{-1}(U) \in \tp(X)$ and $U = A \cap r^{-1}(U)$. In particular, singletons are (well-embedded) subspaces. See Chapter~\ref{Chapter: models} for examples of subsets which are not subspaces (\eg the reals $\RR$ having the Euclidean topology, and the rationals $\QQ$ the discrete topology).
		
		\begin{proposition}\label{Proposition: subovert/subcompact_to_overt/compact}
			A subovert/subcompact well-embedded subspace is overt/compact.
		\end{proposition}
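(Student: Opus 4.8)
The plan is to use the transport map $e\colon \tp(A) \to \tp(X)$ supplied by the definition of a well-embedded subspace to push every open subset of $A$ up to an open subset of $X$, apply the subovertness (resp.\ subcompactness) hypothesis there, and then note that the quantifier over $A$ is insensitive to this transport. In effect this is the same one-line ``rewrite the statement into an equivalent one whose openness follows from the assumptions'' strategy used throughout this section.

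Concretely, I would first fix an arbitrary $U \in \tp(A)$. By well-embeddedness there is $e(U) \in \tp(X)$ with $U = A \cap e(U)$, so for every $x \in A$ we have $x \in U \iff x \in e(U)$. Since the quantifiers in question range precisely over elements of $A$, this yields
$$\xsome{x}{A}{x \in U} \iff \xsome{x}{A}{x \in e(U)} \qquad \text{and} \qquad \xall{x}{A}{x \in U} \iff \xall{x}{A}{x \in e(U)}.$$

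Now for the overt case: if $A$ is subovert in $X$, then because $e(U) \in \tp(X)$ the statement $\xsome{x}{A}{x \in e(U)}$ is open, hence so is $\xsome{x}{A}{x \in U}$; as $U \in \tp(A)$ was arbitrary, $A$ is overt. Dually, if $A$ is subcompact in $X$, then $\xall{x}{A}{x \in e(U)}$ is open, hence so is $\xall{x}{A}{x \in U}$, and $A$ is compact.

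I do not expect any genuine obstacle: the only point that needs care is that the identity $x \in U \iff x \in e(U)$ is only claimed for $x \in A$, which is exactly the range of the quantifiers, so nothing more is required. Note in particular that we use only the defining equation $U = A \cap e(U)$, not any adjointness of $e$ with $i^{-1}$, and that the argument for subovert and for subcompact is literally the same up to replacing $\exists$ by $\forall$.
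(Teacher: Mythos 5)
Your proof is correct and is essentially identical to the paper's: both transport $U$ to $e(U)\in\tp(X)$ via the well-embedding and observe that the quantifiers over $A$ cannot distinguish $U$ from $e(U)$. No gap here.
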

		\begin{proof}
			Let $A \subseteq X$, and let $e\colon \tp(A) \to \tp(X)$ witness that $A$ is well-embedded in $X$. Take any $U \in \tp(A)$. The proposition follows from equivalences
			$$\xsome{x}{A}{x \in U} \iff \xsome{x}{A}{x \in e(U)} \quad \text{and} \quad \xall{x}{A}{x \in U} \iff \xall{x}{A}{x \in e(U)}.$$
		\end{proof}
		
		From this analysis we see that if we want an open subset of an overt set to be overt, it makes sense to require that open subsets are well-embedded subspaces. The following notion is due to Rosolini~\cite{Rosolini_G_1986:_continuity_and_effectiveness_in_topoi}.
		\begin{definition}
			$\opn$ is a \df{dominance} when $\top \in \opn$ and the following \df{dominance axiom} holds:
			$$\xall{u}{\opn}\xall{p}{\soc}{\Big(\big(u \implies (p \in \opn)\big) \implies (u \land p) \in \opn\Big)}.$$
		\end{definition}
		This might seem a complicated condition, but what it says is that if the part where $p$ meets $u$ is open in $u$, then it is open; in other words, it captures the notion of \df{transitivity of openness} (subset, open in an open subset, is open in the whole space), restricted to truth values, or subsets of $\one$ if you will. Since they determine the topology on all sets, we may wager that it is enough. Indeed, we have the following proposition.
		
		\begin{proposition}\label{Proposition: dominance}
			The following statements are equivalent.
			\begin{enumerate}
				\item
					$\opn$ is a dominance.
				\item
					$\top \in \opn$ and openness is transitive.
				\item
					Finite sets are compact and open subsets are well-embedded subspaces.
				\item
					$\opn$ is a closed under finite meets and open subsets are subspaces.
			\end{enumerate}
		\end{proposition}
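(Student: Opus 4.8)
The plan is to run a cycle of implications $(1)\Rightarrow(2)\Rightarrow(3)\Rightarrow(4)\Rightarrow(1)$, using throughout the guiding principle that the topology of any set is already pinned down on $\one$, so that statements about open truth values and statements about open subsets of $\one$ are interchangeable. For $(1)\Rightarrow(2)$, the point $\top\in\opn$ is part of the definition of a dominance, and transitivity of openness is the dominance axiom unwound: if $A\subseteq U\subseteq X$ with $A$ open in $U$ and $U$ open in $X$, then for a fixed $x\in X$ we have $x\in A \iff (x\in U)\land(x\in A)$ since $A\subseteq U$, and applying the dominance axiom to $u\dfeq(x\in U)\in\opn$ and $p\dfeq(x\in A)\in\soc$ --- whose side hypothesis $u\impl(p\in\opn)$ holds exactly because $A$ is open in $U$ --- yields $(x\in A)\in\opn$.

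For $(2)\Rightarrow(3)$ I would first extract closure under binary meets from transitivity: for $u,v\in\opn$, view them as open subsets $U,V\subseteq\one$; then $U\cap V$ is open in $U$ (at $*\in U$ the truth value of $*\in U\cap V$ is $v\in\opn$), hence open in $\one$ by transitivity, \ie $u\land v\in\opn$. With $\top\in\opn$ this is exactly ``$\opn$ closed under finite meets'', which is the statement that finite sets are compact, as recalled just before the proposition. For the remaining clause, transitivity tells us that if $U\subseteq X$ is open then every $W\in\tp(U)$ already belongs to $\tp(X)$, so the inclusion $\tp(U)\hookrightarrow\tp(X)$ is a legitimate map and serves as the witness $e$, since $U\cap e(W)=U\cap W=W$. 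The step $(3)\Rightarrow(4)$ is then bookkeeping: ``finite sets compact'' forces ``$\opn$ closed under finite meets'' (test compactness on $\emptyset$ to get $\top\in\opn$, and on $\two=\one+\one$ to get closure under binary meets), and a well-embedded subspace is in particular a subspace, as already noted in the text.

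The informative step --- and the one I expect to be the main obstacle --- is $(4)\Rightarrow(1)$, since the ``subspace'' hypothesis only asserts the bare existence of an extending open, with no chosen function, yet the full dominance axiom must be recovered from it. The resolution is that it suffices to apply the subspace condition to $X=\one$. Given $u\in\opn$ and $p\in\soc$ with $u\impl(p\in\opn)$, set $U\dfeq\st{*\in\one}{u}\in\tp(\one)$ and $P\dfeq\st{*\in\one}{p}$. Then $U\cap P\in\tp(U)$, because at $*\in U$ --- that is, under the assumption $u$ --- the truth value of $*\in U\cap P$ is $p$, which is open by the side hypothesis. Subspacehood of $U$ in $\one$ produces some $V\in\tp(\one)$ with $U\cap P=U\cap V$; writing $v\dfeq(*\in V)\in\opn$, this reads $u\land p=u\land v$, and $u\land v\in\opn$ because $\opn$ is closed under binary meets. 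Since the conclusion $(u\land p)\in\opn$ does not mention $V$, the existential is harmlessly discharged, and $\top\in\opn$ is just the nullary meet; this closes the cycle.
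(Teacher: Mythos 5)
Your proof is correct and follows essentially the same route as the paper's: the same cycle $(1)\Rightarrow(2)\Rightarrow(3)\Rightarrow(4)\Rightarrow(1)$, with transitivity extracted from the dominance axiom pointwise, binary meets obtained by viewing open truth values as open subsets of $\one$, the identity inclusion $\tp(U)\hookrightarrow\tp(X)$ as the well-embedding witness, and the subspace condition applied to subsets of $\one$ to recover the dominance axiom. Your remark that the existential over $V$ is harmlessly discharged because the conclusion $(u\land p)\in\opn$ does not mention $V$ is a nice explicit touch that the paper leaves implicit, but the substance is identical.
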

		\begin{proof}
			\begin{itemize}
				\item\proven{$(1 \impl 2)$}
					We have $\top \in \opn$ by assumption. Let $V \subseteq U \subseteq X$ so that $U$ is open in $X$ and $V$ is open in $U$. Take any $x \in X$. Then the statement $x \in U$ is open, and it implies openness of $x \in V$, so by the dominance axiom the statement $(x \in U) \land (x \in V)$ --- which is just $x \in V$ --- is open.
				\item\proven{$(2 \impl 3)$}
					We know that compactness of finite sets means that $\opn$ is a bounded $\land$-subsemilattice of $\soc$. The assumption $\top \in \opn$ takes care of nullary meets. As for binary meets, take any $u, v \in \opn$, and view them as open subsets of $\one$. Recall that continuity of inclusion $u \hookrightarrow \one$ implies that $u \land v$ is open in $u$, and hence by transitivity also open in $\one$.
					
					Let now $U \subseteq X$ be an open subset. Transitivity of openness implies that the map $e\colon \tp(U) \to \tp(X)$, defined by $e(V) \dfeq V$, witnesses that $U$ is a well-embedded subspace.
				\item\proven{$(3 \impl 4)$}
					Immediate.
				\item\proven{$(4 \impl 1)$}
					We obtain $\top \in \opn$ because $\opn$ is closed under nullary meets. Take now any $u \in \opn$ and $p \in \soc$ such that $u$ implies openness of $p$. If we interpret truth values as subsets of $\opn$, this amounts to saying that $u \land p$ is an open subset of $u$, so by assumption there exists $v \in \opn$ such that $u \land p = u \land v$. Because $\opn$ is closed under binary meets, $u \land v$ is open, and then so is $u \land p$.
			\end{itemize}
		\end{proof}
		
		\begin{corollary}\label{Corollary: if_dominance_then_open_subset_of_overt_is_overt_(original)}
			If $\opn$ is a dominance, then open subsets of overt sets are overt.
		\end{corollary}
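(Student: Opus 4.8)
The plan is to obtain this as a straightforward combination of the three results immediately above it, the point being that a dominance supplies exactly the two hypotheses that are needed. So I would fix an overt set $X$ together with an open subset $U \subseteq X$, and show first that $U$ is subovert in $X$: since $\opn$ is a dominance it is in particular closed under binary meets (this is part of the equivalence recorded in Proposition~\ref{Proposition: dominance}), so Corollary~\ref{Corollary: open_subset_of_overt_is_subovert_(original)} applies verbatim and tells us that the open subset $U$ of the overt set $X$ is subovert in $X$.

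Next I would note that $U$ is a well-embedded subspace of $X$: again by Proposition~\ref{Proposition: dominance}, being a dominance is equivalent to ``finite sets are compact and open subsets are well-embedded subspaces'', so every open subset of every set --- in particular $U \subseteq X$ --- is well-embedded. Finally, a subovert well-embedded subspace is overt by Proposition~\ref{Proposition: subovert/subcompact_to_overt/compact}, and applying this to $U \subseteq X$ yields that $U$ is overt, as desired.

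There is no real obstacle here: each ingredient is already in hand, and the only thing to observe is that a dominance simultaneously guarantees closure of $\opn$ under binary meets (needed to pass from open to subovert) and well-embeddedness of open subspaces (needed to upgrade subovert to overt). If one prefers a self-contained argument, one can unfold it directly: for $W \in \tp(U)$, transitivity of openness (Proposition~\ref{Proposition: dominance}) gives $\tilde W \in \tp(X)$ with $W = U \cap \tilde W$, hence $\xsome{x}{U}{x \in W} \iff \xsome{x}{X}{x \in U \cap \tilde W}$, and the right-hand side is open because $U \cap \tilde W$ is open in $X$ and $X$ is overt; but routing the statement through the cited propositions is the cleaner presentation.
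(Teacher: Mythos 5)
Your argument is correct and is essentially identical to the paper's own proof: both extract from Proposition~\ref{Proposition: dominance} that $\opn$ is closed under binary meets and that open subsets are well-embedded subspaces, then combine Corollary~\ref{Corollary: open_subset_of_overt_is_subovert_(original)} with Proposition~\ref{Proposition: subovert/subcompact_to_overt/compact}. Nothing to add.
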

		\begin{proof}
			By the previous proposition $\opn$ is closed under binary meets, and open subsets are well-embedded subspaces. Now use Corollary~\ref{Corollary: open_subset_of_overt_is_subovert_(original)} and Proposition~\ref{Proposition: subovert/subcompact_to_overt/compact}.
		\end{proof}
		
		Results such as this corollary put a certain issue into perspective. There are statements which hold in all models of synthetic topology (such as Proposition~\ref{Proposition: images_and_finite_products_of_subovert/subcompact_subsets}). Generally though we deal with statements which hold only with some additional assumptions. One way to cope with this is to consider a specific model of synthetic topology (we do this in Chapter~\ref{Chapter: models}), but when a property is common enough, it is better to identify suitable sufficient conditions under which we can prove it, the benefit being that this holds for a larger class of models, and gives us a deeper insight. Such conditions are given as \emph{principles}, a common occurrence in constructive mathematics. The \Sier object $\opn$ being a dominance is one example, and there are others scattered throughout the thesis. Here we recall just one more (and some of its consequences); we won't need it later, but it deserves a mention in its own right.
		
		\begin{definition}
			A model of synthetic topology satisfies \df{Phoa's principle}~\cite{Phoa_W_1990:_domain_theory_in_realizability_toposes} when $\opn$ is a bounded sublattice of $\soc$, and for every map $f\colon \opn \to \opn$ and $u \in \opn$ the condition
			$$f(u) = (f(\top) \land u) \lor f(\bot), \quad \text{or equivalently,} \quad f(u) = (f(\bot) \lor u) \land f(\top),$$
			holds.
		\end{definition}
		
		The \Sier set $\opn \subseteq \soc$ can in general be arbitrarily bizarre; Phoa's principle states however, that it actually behaves the way the \Sier space $\sier$ does (examine the continuous maps $\sier \to \sier$).
		
		\begin{proposition}
			If Phoa's principle holds, then every map $\opn \to \opn$ is monotone.
		\end{proposition}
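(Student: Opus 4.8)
The plan is to read monotonicity straight off the explicit formula that Phoa's principle provides, using nothing beyond the fact that meet and join are order-preserving in any lattice. First I would fix $u, v \in \opn$ with $u \leq v$, with the goal of showing $f(u) \leq f(v)$. Applying Phoa's principle to the given map $f$ at the two arguments $u$ and $v$ yields
$$f(u) = (f(\top) \land u) \lor f(\bot) \qquad \text{and} \qquad f(v) = (f(\top) \land v) \lor f(\bot).$$

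Next I would note that, since by hypothesis $\opn$ is a bounded sublattice of $\soc$, the meets and joins above may be computed in $\soc$, where $\land$ and $\lor$ are monotone in each argument and the order on $\opn$ is just the restriction of that on $\soc$. From $u \leq v$ we obtain $f(\top) \land u \leq f(\top) \land v$, and joining both sides with $f(\bot)$ preserves the inequality, so $(f(\top) \land u) \lor f(\bot) \leq (f(\top) \land v) \lor f(\bot)$. Combining this with the two displayed equalities gives $f(u) \leq f(v)$, which is precisely monotonicity of $f$. The same argument can be run through the alternative form $f(u) = (f(\bot) \lor u) \land f(\top)$ with the roles of meet and join exchanged, should one prefer.

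There is essentially no obstacle here: all the content of the statement is already packed into Phoa's principle, and what is left is the elementary observation that the lattice operations on $\opn$ are order-preserving. The only point deserving a moment's attention is to confirm that monotonicity of $\land$ and $\lor$ on $\opn$ genuinely applies, which is immediate because $\opn$ is a sublattice of $\soc$, so these operations and the order agree with their counterparts in $\soc$.
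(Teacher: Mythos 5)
Your proof is correct and takes essentially the same route as the paper: both plug $u$ and $v$ into the explicit formula given by Phoa's principle and then deduce $f(u)\impl f(v)$ from elementary lattice facts (the paper writes $u\impl v$ as $u\lor v=v$ and verifies $f(u)\lor f(v)=f(v)$ by an absorption/distributivity computation, whereas you invoke monotonicity of $\land$ and $\lor$ directly — a cosmetic difference only).
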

		\begin{proof}
			Take any $f\colon \opn \to \opn$ and any $u, v \in \opn$ such that $u \impl v$ which is equivalent to $u \lor v = v$. By Phoa's principle
			$$f(u) \lor f(v) = (f(\top) \land u) \lor f(\bot) \lor (f(\top) \land v) \lor f(\bot) =$$
			$$= \big(f(\top) \land (u \lor v)\big) \lor f(\bot) = (f(\top) \land v) \lor f(\bot) = f(v),$$
			so $f(u) \impl f(v)$.
		\end{proof}
		We used the $f(u) = (f(\top) \land u) \lor f(\bot)$ version of Phoa's principle, but the proof from $f(u) = (f(\bot) \lor u) \land f(\top)$ is analogous. This enables us to see that the two versions are indeed equivalent since
		$$(f(\top) \land u) \lor f(\bot) = \big(f(\top) \lor f(\bot)\big) \land \big(u \lor f(\bot)\big) = (f(\bot) \lor u) \land f(\top)$$
		because $f(\bot) \impl f(\top)$.
		
		\begin{proposition}
			If Phoa's principle holds, then for any $X$ the set $\tp(X) \ism \opn^X$ is overt and compact (in particular, $\opn$ itself is overt and compact).
		\end{proposition}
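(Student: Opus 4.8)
The plan is to unwind the definitions of compactness and overtness of $\opn^X$ directly, reducing in each case the quantifier over the function space to a single evaluation of the given open predicate at a constant function, using Phoa's principle in currified form. Recall first that under Phoa's principle $\opn$ is a bounded sublattice of $\soc$, so $\bot, \top \in \opn$ and $\opn$ is closed under binary meets and joins; hence the constant maps $\bot_X, \top_X\colon X \to \opn$ lie in $\opn^X$, and for any $\phi \in \opn^X$ and $u \in \opn$ the predicates $x \mapsto \phi(x) \land u$ and $x \mapsto \phi(x) \lor u$ again lie in $\opn^X$.

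For compactness, I would fix an open predicate $\Phi\colon \opn^X \to \opn$ and an arbitrary $\phi \in \opn^X$, and consider the map $f\colon \opn \to \opn$ sending $u$ to $\Phi$ evaluated at the open predicate $x \mapsto \phi(x) \land u$ (this is well defined by the remark above and since $\Phi$ is open). Phoa's principle gives $f(u) = (f(\top) \land u) \lor f(\bot)$, hence $f(\bot) \impl f(\top)$; unwinding, $f(\bot) = \Phi(\bot_X)$ and $f(\top) = \Phi(\phi)$, so $\Phi(\bot_X) \impl \Phi(\phi)$. As $\phi$ was arbitrary and $\bot_X \in \opn^X$, this yields $\xall{\phi}{\opn^X}{\Phi(\phi)} \iff \Phi(\bot_X)$ (forward by instantiating at $\bot_X$, backward since $\Phi(\bot_X)$ implies every $\Phi(\phi)$), an open truth value; so $\opn^X$ is compact. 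Overtness is dual: with $g(u) := \Phi$ of $x \mapsto \phi(x) \lor u$, Phoa gives $g(\bot) \impl g(\top)$, i.e. $\Phi(\phi) \impl \Phi(\top_X)$ for all $\phi$, whence $\xsome{\phi}{\opn^X}{\Phi(\phi)} \iff \Phi(\top_X)$, again open; so $\opn^X$ is overt. Taking $X = \one$ and $\opn^\one \ism \opn$ gives the parenthetical claim (this special case is just the observation that, by Phoa, $\xall{u}{\opn}{\Phi(u)} \iff \Phi(\bot)$ and $\xsome{u}{\opn}{\Phi(u)} \iff \Phi(\top)$ for an open $\Phi\colon \opn \to \opn$).

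The computation is routine; the only genuine idea is noticing that although Phoa's principle is stated for endomaps of $\opn$, it applies after currying — one freezes $\phi$ and varies a single auxiliary truth value $u$, turning the multi-argument predicate $\Phi$ into an honest map $\opn \to \opn$ to which Phoa can be fed. The mild points to check are that the auxiliary maps $f,g$ really land in $\opn$ (needs closure of $\opn$ under $\land$ resp. $\lor$, and openness of $\Phi$) and that $\bot_X, \top_X$ are legitimate elements of $\opn^X$ — both immediate from $\opn$ being a bounded sublattice of $\soc$. I do not anticipate any serious obstacle beyond getting this currying observation.
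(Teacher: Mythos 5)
Your proof is correct and is essentially the paper's own argument: the paper likewise freezes a predicate $\psi \in \opn^X$, forms the map $u \mapsto \chi_U(x \mapsto \psi(x) \lor u)$ (resp.\ $\land$) and uses the monotonicity consequence $f(\bot) \impl f(\top)$ of Phoa's principle to reduce inhabitedness of $U$ to the open statement $(x \mapsto \top) \in U$ and universality to $(x \mapsto \bot) \in U$. The only difference is cosmetic (subsets and characteristic maps versus predicates and truth values).
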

		\begin{proof}
			Take any $U \in \tp(\opn^X)$ (with the characteristic map $\chi_U\colon \opn^X \to \opn$).
			
			Let $\psi \in U$. Define a map $g\colon \opn \to \opn^X$ by $g(u)(x) \dfeq u \lor \psi(x)$, and let $f\colon \opn \to \opn$ be $f \dfeq \chi_U \circ g$. By the previous proposition $f(\bot) \impl f(\top)$, but $f(\bot) = \chi_U(g(\bot)) = \chi_U(\psi) = \top$ which means
			$$\top = f(\top) = \chi_U(g(\top)) = \chi_U(x \mapsto \top).$$
			Therefore $U$ is inhabited if and only if $(x \mapsto \top) \in U$ which is an open statement, thus proving overtness of $\opn^X$.
			
			Assume now that $(x \mapsto \bot) \in U$, and take arbitrary $\psi \in \opn^X$. Define $g\colon \opn \to \opn^X$ by $g(u)(x) \dfeq u \land \psi(x)$, and $f\colon \opn \to \opn$ again by $f \dfeq \chi_U \circ g$. We have $f(\bot) = \chi_U(g(\bot)) = \chi_U(x \mapsto \bot) = \top$ and $f(\bot) \impl f(\top)$, so
			$$\top = f(\top) = \chi_U(g(\top)) = \chi_U(\psi).$$
			So as soon as $U$ contains the characteristic map of the empty subset of $X$, it contains all of $\opn^X$. Since $(x \mapsto \bot) \in U$ is an open statement, $\opn^X$ is compact.
		\end{proof}
		
		\intermission
		
		We close the section by considering some examples of $\opn$. We restrict to cases when $\opn$ is closed under finite $\land$.
		\begin{itemize}
			\item\proven{$\opn = \soc$}
				This is the largest possible $\opn$, and in this case every subset (and every predicate) is open. This corresponds classically to discrete topology. We see that every set is overt and compact, and $\opn$ is trivially a dominance.
			\item\proven{$\opn = \{\top\}$}
				This is the smallest possible $\opn$ (if we require it to contain finite, in particular nullary, meets). In every set $X$ only $X$ itself is open. This corresponds classically to trivial (indiscrete) topology. Every set is compact, and precisely inhabited ones are overt. It is easily seen that this $\opn$ is a dominance.
			\item\proven{$\opn = \two$}
				In this case precisely decidable subsets are open, and at least finite sets are overt and compact. This $\opn$, and more generally every one contained in $\two$, is a dominance.
			\item\proven{$\opn = \st{p \in \soc}{\lnot{p} \lor \lnot\lnot{p}}$}
				This is a little relaxed previous case: open subsets are those of which membership satisfies weak law of excluded middle (as opposed to the previous case where it had to satisfy the full one). Consider what it means for this $\opn$ to be a dominance. In this case the dominance axiom states
				$$\xall{u}{\opn}\xall{p}{\soc}{\Big(\big(u \implies (\lnot{p} \lor \lnot\lnot{p})\big) \implies \big(\lnot(u \land p) \lor \lnot\lnot(u \land p)\big)\Big)}.$$
				It is easily seen that the condition is satisfied when $\lnot{u}$. Assume now $\lnot\lnot{u}$. Since then
				$$\lnot(u \land p) = \lnot\lnot\lnot(u \land p) = \lnot(\lnot\lnot{u} \land \lnot\lnot{p}) = \lnot\lnot\lnot{p} = \lnot{p},$$
				the dominance condition amounts to
				$$\xall{u, p}{\soc}{\Big(\big(\lnot\lnot{u} \land \big(u \implies (\lnot{p} \lor \lnot\lnot{p})\big)\big) \implies (\lnot{p} \lor \lnot\lnot{p})\Big)}.$$
				We see that this $\opn$ is a dominance if and only if the weak law of excluded middle \wlem holds (in which case $\opn = \soc$): for the left-to-right direction take $u = (\lnot{p} \lor \lnot\lnot{p})$, and the other direction is trivial.
			\item\proven{$\opn = \nnst$}
				In this case the open subsets are the stable ones. (At least) finite sets are compact, and the empty set and the singletons are overt. As for the overtness of other finite sets, it is easy to see that the following statements are equivalent:
				\begin{itemize}
					\item
						$\opn$ is a bounded sublattice of $\soc$,
					\item
						$\wlem$ holds,
					\item
						$\nnst = \two$.
				\end{itemize}
				
				We also show that this $\opn$ is a dominance. Take any $u \in \opn$ and $p \in \soc$ such that $u \implies (p \in \nnst)$, or equivalently, $u \land \lnot\lnot{p} \implies p$. Then
				$$\lnot\lnot(u \land p) = \lnot\lnot{u} \land \lnot\lnot{p} = u \land \lnot\lnot{p} = u \land p.$$
			\item\proven{$\opn = \Ros$}
				In this case the open subsets are the semidecidable ones, \ie those of which membership is in the so-called \df{Rosolini dominance}
				$$\Ros = \st{u \in \soc}{\xsome{\alpha}{\two^\NN}{\big(u \iff \xsome{n}{\NN}{\alpha_n = 0}\big)}}.$$
				As the name suggests, this is indeed a dominance which was proven by Rosolini~\cite{Rosolini_G_1986:_continuity_and_effectiveness_in_topoi}. Here is the idea of the proof. Take any $u \in \Ros$ and $p \in \soc$ such that $u \implies (p \in \Ros)$. There exists $\alpha \in \two^\NN$ for which $u \iff \xsome{n}{\NN}{\alpha_n = 0}$. We need to find $\beta \in \two^\NN$ such that $u \land p \iff \xsome{n}{\NN}{\beta_n = 0}$. Define $\beta$ to be all $1$s as long as the terms of $\alpha$ are $1$. If however we ever come across a term of $\alpha$ which is $0$, then $u$ holds, so there exists $\gamma \in \two^\NN$ such that $p \iff \xsome{n}{\NN}{\gamma_n = 0}$. We take this $\gamma$ for the rest of $\beta$.
		\end{itemize}
		
		We'll spend a few more words about the Rosolini dominance.
		
		\begin{lemma}\label{Lemma: overtness_of_countable_sets}
			For any $\opn$ if $\NN$ and $\emptyset$ are overt, then all countable sets are overt.
		\end{lemma}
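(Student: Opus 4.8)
The plan is to reduce overtness of an arbitrary countable set to overtness of $\NN$, using the surjection onto $\one + X$ that witnesses countability. First I would record a preliminary observation: since $\emptyset$ is overt, the statement $\xsome{x}{\emptyset}{\phi(x)}$ --- which is just $\bot$ --- must be open, so $\bot \in \opn$. This is the only place the overtness of $\emptyset$ enters, but it is essential, since $\opn$ is not assumed to contain $\bot$ a priori.

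Now let $X$ be countable, fix a surjection $s\colon \NN \to \one + X$, and take an arbitrary open predicate $\phi \in \opn^X$; I must show that $\xsome{x}{X}{\phi(x)}$ is open. Membership of $s(n)$ in either summand of $\one + X$ is decidable, so I can define a predicate $\psi\colon \NN \to \soc$ by a decidable case split: $\psi(n) \dfeq \phi(x)$ when $s(n) = x \in X$, and $\psi(n) \dfeq \bot$ when $s(n) = * \in \one$. Each value $\psi(n)$ lies in $\opn$ --- in the first case because $\phi$ is an open predicate, in the second by the preliminary observation --- so $\psi$ is an open predicate on $\NN$.

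Since $\NN$ is overt, $\xsome{n}{\NN}{\psi(n)}$ is open, so it suffices to establish the equivalence $\xsome{n}{\NN}{\psi(n)} \iff \xsome{x}{X}{\phi(x)}$. The forward direction is immediate: if $\psi(n)$ holds, then $s(n)$ cannot be $* \in \one$ (as $\bot$ is never true), so $s(n) = x \in X$ and $\phi(x)$ holds. For the converse, if $\phi(x)$ holds for some $x \in X$, surjectivity of $s$ applied to the image of $x$ under the coproduct injection $X \hookrightarrow \one + X$ yields $n \in \NN$ with $s(n) = x$, and then $\psi(n) = \phi(x)$ holds. Hence $\xsome{x}{X}{\phi(x)}$ is open; as $\phi$ was arbitrary, $X$ is overt.

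There is no real obstacle here --- the argument is a short unwinding of definitions. The only subtle point, easy to overlook, is that one genuinely needs $\bot \in \opn$ in order for $\psi$ to be an open predicate, and that this is exactly what overtness of $\emptyset$ supplies; without it the ``else $\bot$'' branch would not be legitimate. Equivalently, one could repackage the same proof as: a decidable subset of $\NN$ is overt whenever $\bot \in \opn$ and $\NN$ is overt, and then invoke that every countable set is the image of a decidable subset of $\NN$ together with the fact that images of overt sets are overt.
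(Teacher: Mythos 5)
Your proof is correct and is essentially the paper's own argument: both extract $\bot \in \opn$ from overtness of $\emptyset$, use the decidable case split on the summands of $\one + X$ to turn an open predicate on $X$ into an open predicate upstream, and then invoke overtness of $\NN$. The only cosmetic difference is that the paper first passes to the image $\one + X$ (overt as an image of $\NN$) and extends the predicate there, whereas you pull everything back along $s$ to $\NN$ directly; the content is identical.
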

		\begin{proof}
			Let $X$ be countable which means we have a surjection $\NN \to \one + X$. If $\NN$ is overt, then so is its image $\one + X$. Take any $U \in \tp(X)$ and an arbitrary $x \in \one + X$. If $x \in \one$, then $x \in U$ equals $\bot$ which is an open truth value because $\emptyset$ is overt. If $x \in X$, then $x \in U$ is open. We conclude that $U$ is open also in $\one + X$, and since
			$$\xsome{x}{X}{x \in U} \iff \xsome{x}{\one + X}{x \in U}$$
			and $\one + X$ is overt, the result follows.
		\end{proof}
		
		\begin{lemma}\label{Lemma: Rosolini_dominance_and_countable_joins}
			\
			\begin{enumerate}
				\item
					$\Ros$ is a bounded sublattice of $\soc$.
				\item
					If $\opn \subseteq \soc$ is closed under finite meets and countable joins, then $\Ros \subseteq \opn$.
				\item
					Assuming $\ACRos$, $\Ros$ is closed under countable joins.
			\end{enumerate}
		\end{lemma}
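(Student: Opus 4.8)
The plan is to prove the three items separately, using in each case that a bijection $\NN \ism \NN \times \NN$ lets one collapse a double countable disjunction of decidable binary sequences into a single one. For item (1): $\top$ and $\bot$ lie in $\Ros$, witnessed by the constant sequences; if $p \iff \xsome{n}{\NN}{\alpha_n}$ and $q \iff \xsome{n}{\NN}{\beta_n}$ for $\alpha, \beta \in \two^\NN$, then $p \lor q \iff \xsome{n}{\NN}{(\alpha_n \lor \beta_n)}$ with $(\alpha_n \lor \beta_n) \in \two$, and $p \land q \iff \xsome{n}{\NN}\xsome{m}{\NN}{(\alpha_n \land \beta_m)}$, which after reindexing along a bijection $\NN \times \NN \ism \NN$ is again of the required form. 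Hence $\Ros$ contains $\top$ and $\bot$ and is closed under binary meets and joins, i.e. it is a bounded sublattice of $\soc$.

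For item (2), I would first note $\two \subseteq \opn$: $\top$ is the nullary meet, and since $\emptyset$ is countable the empty join $\bot$ is a countable join, so both lie in $\opn$. Then, given $p \in \Ros$, pick a witness $\alpha \in \two^\NN$ with $p \iff \xsome{n}{\NN}{\alpha_n}$ (the goal $p \in \opn$ is a truth value, so the existential in the definition of $\Ros$ may be eliminated). Now $p$ equals the countable join $\bigvee_{n \in \NN} \alpha_n$ of elements of $\two \subseteq \opn$, so closure of $\opn$ under countable joins gives $p \in \opn$.

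For item (3), let $(p_k)_{k \in \NN}$ be a sequence in $\Ros$; the goal is $\bigvee_{k \in \NN} p_k \in \Ros$. Regard $k \mapsto p_k$ as a map $\NN \to \Ros$ and apply $\ACRos$ to the standard surjection $\two^\NN \twoheadrightarrow \Ros$, $\alpha \mapsto \xsome{n}{\NN}{\alpha_n}$: this yields a lift $\bar p\colon \NN \to \two^\NN$ with $p_k \iff \xsome{n}{\NN}{\bar p(k)_n}$ for all $k$. Consequently $\bigvee_{k} p_k \iff \xsome{k}{\NN}\xsome{n}{\NN}{\bar p(k)_n}$, and reindexing this double disjunction along a bijection $\NN \ism \NN \times \NN$ exactly as in item (1) produces a single $\gamma \in \two^\NN$ with $\bigvee_k p_k \iff \xsome{m}{\NN}{\gamma_m}$, hence $\bigvee_k p_k \in \Ros$.

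The only step that is not routine is the appeal to $\ACRos$ in item (3): without it, each $p_k$ is known to be a countable join of decidable truth values, but there is no way to choose the witnessing binary sequences coherently across all $k$, and hence no way to assemble the single sequence $\gamma$. The content of $\ACRos$ is precisely that a map into $\Ros$ lifts through $\two^\NN \twoheadrightarrow \Ros$, i.e. that such a uniform choice of witnesses always exists. Items (1) and (2) are choice-free and amount to bookkeeping with decidable sequences, the bijection $\NN \ism \NN \times \NN$, and (for (2)) locating $\top$ and $\bot$ among the nullary meets and countable joins of $\opn$.
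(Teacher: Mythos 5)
Your proof is correct and follows essentially the same route as the paper: constant sequences for $\top,\bot$, pointwise/pairwise combination with a bijection $\NN\ism\NN\times\NN$ for the lattice operations, the observation $\two\subseteq\opn$ plus closure under countable joins for item (2), and the lift along $\two^\NN\twoheadrightarrow\Ros$ via $\ACRos$ followed by collapsing the double existential for item (3). The only small omission is that in item (3) "countable joins" means joins indexed by arbitrary countable sets, not just $\NN$; the paper reduces to the $\NN$-indexed case by padding with $\bot\in\Ros$ (via Lemma~\ref{Lemma: overtness_of_countable_sets}), a routine step you should add since your item (1) already supplies $\bot\in\Ros$.
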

		\begin{proof}
			\begin{enumerate}
				\item
					The nullary meet $\top$ and join $\bot$ are represented by the constant sequences of $0$s and $1$s respectively. Now take $u, v \in \Ros$, represented by $\alpha, \beta \in \two^\NN$. Then $u \land v$ is represented by $(\sup\{\alpha_m, \beta_n\})_{m, n \in \NN}$ (precomposed by a bijection $\NN \ism \NN \times \NN$ to obtain a sequence), and to represent $u \lor v$, interleave $\alpha$ and $\beta$.
				\item
					Since $\opn$ contains nullary meet and join, $\two \subseteq \opn$. Countable joins of the decidable truth values are precisely the semidecidable ones, so $\Ros \subseteq \opn$.
				\item
					We already know $\bot \in \Ros$, so by Lemma~\ref{Lemma: overtness_of_countable_sets} it is sufficient to see that $\Ros$ is closed under $\NN$-indexed joins. Let $(u_i)_{i \in \NN}$ be a sequence of semidecidable truth values; this gives a map $u\colon \NN \to \Ros$. By $\ACRos$ this map lifts to $\tilde{u}\colon \NN \to \two^\NN$. Transpose it and precompose with $\NN \ism \NN \times \NN$ to obtain a sequence which represents $\bigvee_{i \in \NN} u_i$.
			\end{enumerate}
		\end{proof}
		
		The consequence of this lemma is that (given sufficient countable choice) $\Ros$ is the smallest subset of $\soc$ (and the smallest dominance), closed under finite meets and countable joins.

	\section{Redefinition of Basic Notions in Synthetic Topology}\label{Section: redefinition_of_synthetic_topology}
   
   	The introduction of the \Sier object $\opn$ equips all sets with an intrinsic topology, in the sense that it determines what open sets are (in a natural way, \ie all maps are continuous). However, we know from classical topology that closed sets play an important role as well, something about which $\opn$ stays silent. The question what closed subsets in synthetic topology should be was posed by \MEsc in~\cite{Escardo_M_2004:_notes_on_synthetic_topology}.
   	
   	One would naturally first try the classical definition, and if that does not work, some definitions which are classically equivalent to it. \Esc considers the following possibilities:
   	\begin{itemize}
   		\item
   			a subset is closed when it contains all its adherent points ($x \in X$ is an \df{adherent point} of $A \subseteq X$ when every open set containing $x$ intersects $A$),
   		\item
   			a subset is closed when it is the complement of an open subset,
   		\item
   			a subset is closed when its complement is open.
   	\end{itemize}
   	He argues however that none of these options seem to suit in general. Indeed, they don't satisfy expected theorems, \eg that closed subsets of compact sets are (sub)compact (in classical analogy and analogy with Corollary~\ref{Corollary: open_subset_of_overt_is_subovert_(original)}), or behave analogously to synthetically open subsets. In view of this \Esc argues for the following definition:
   	\begin{itemize}
   		\item
   			a subset $F \subseteq X$ is closed in $X$ when the set $F \setimpl[X] U = \st{x \in X}{x \in F \implies x \in U}$ is open in $X$ for every $U \subseteq F$ open in $F$.
   	\end{itemize}
   	We will call such subsets \df{\Esc closed}.
   	
   	To motivate such a definition, notice that it is exactly what we need to prove the following proposition.
   	\begin{proposition}\label{Proposition: Escardo_closed_inherits_compactness}
   		An \Esc closed subset of a compact set is compact.
   	\end{proposition}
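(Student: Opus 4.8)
The plan is to verify the definition of compactness for $F$ directly, reducing it to compactness of $X$ by means of the very property that defines \Esc closedness. Concretely, I fix an arbitrary $U \in \tp(F)$ (equivalently, an open predicate on $F$) and aim to show that the truth value of $\xall{x}{F}{x \in U}$ is open.

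The argument is a short chain of three steps. First I use that $F$ is \Esc closed in $X$: applied to this $U$, the definition says that $F \setimpl[X] U = \st{x \in X}{x \in F \implies x \in U}$ is open in $X$, i.e. the predicate $x \mapsto (x \in F \implies x \in U)$ is an open predicate on $X$. Second, I invoke compactness of $X$ on exactly this open predicate, which yields that the truth value of $\xall{x}{X}{(x \in F \implies x \in U)}$ is open. Third, I use the elementary logical equivalence
$$\xall{x}{X}{(x \in F \implies x \in U)} \iff \xall{x}{F}{x \in U},$$
the standard rewriting of a bounded universal quantifier; since we identify $\iff$ with equality of truth values and $\opn$ is a subset of $\soc$, this transports openness, so $\xall{x}{F}{x \in U}$ is open, which is precisely what compactness of $F$ demands.

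I do not expect a genuine obstacle here: this proposition is essentially the design criterion behind the definition of \Esc closed, so the proof really is just that three-link chain of equivalences. The only points that need a modicum of care are the routine translation between "open subset of $X$" and "the membership truth values are pointwise open" (\ie open predicate), and noting that no hypothesis on $\opn$ beyond what is already in force is used — indeed none is, not even closure of $\opn$ under binary meets.
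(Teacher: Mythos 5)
Your proof is correct and is essentially the paper's own argument: the paper likewise observes that $\xall{x}{F}{x \in U} \iff \xall{x}{X}{x \in (F \setimpl[X] U)}$ and applies compactness of $X$ to the open set $F \setimpl[X] U$ supplied by \Esc closedness. Your version merely spells out the three links of that chain explicitly.
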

   	\begin{proof}
   		For any \Esc closed $F \subseteq X$ and $U \in \tp(F)$ we have
   		$$\xall{x}{F}{x \in U} \iff \xall{x}{X}{x \in (F \setimpl[X] U)}.$$
   	\end{proof}
   	
   	In the previous section we recalled that classically a topological space $X$ is Hausdorff if and only if the diagonal $\diag[X] \subseteq X \times X$ is closed in the topological product $X \times X$. We prefer this for the synthetic definition of Hausdorffness (as opposed to the existence of separating open sets of distinct points). In this case we obtain yet another theorem, familiar from classical topology, and analogous to Proposition~\ref{Proposition: overt_subset_of_discrete}.
   	\begin{proposition}\label{Proposition: compact_in_Hausdorff_is_Escardo_closed}
   		A compact subset in a Hausdorff set is \Esc closed.
   	\end{proposition}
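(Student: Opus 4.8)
The plan is to unfold what it means for $K$ to be \Esc closed, and then split the work: compactness of $K$ reduces the problem to a statement about one point at a time, and Hausdorffness of $X$ supplies that statement.

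So fix an open subset $U$ of $K$, with characteristic map $\phi\colon K \to \opn$; we must show $K \setimpl[X] U = \st{x \in X}{x \in K \impl x \in U}$ is open in $X$. First I would record the identity $\big(x \in K \impl x \in U\big) \iff \xall{z}{K}{(z = x \impl z \in U)}$, which is an easy Leibniz-substitution argument: if $x \in K$ take $z = x$; conversely, $z = x$ lets us replace $z \in U$ by $x \in U$. Consequently it suffices to prove that the predicate $\psi\colon X \times K \to \soc$, $\psi(x,z) \dfeq \big((z = x) \impl \phi(z)\big)$, is \emph{open}; for then compactness of $K$ makes $x \mapsto \xall{z}{K}{\psi(x,z)}$ an open predicate on $X$, hence $K \setimpl[X] U$ open in $X$, and since $U$ was arbitrary this is exactly what it means for $K$ to be \Esc closed.

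The substantive step --- and the main obstacle --- is the openness of $\psi$. The difficulty is that $U$ is only open \emph{in} $K$, so a priori it does not come from any open subset of $X$ (or of $\diag[X]$), and cannot be fed directly to the Hausdorff hypothesis. The remedy is to argue pointwise in $z$: fixing $z \in K$, the truth value $u \dfeq \phi(z)$ lies in $\opn$, and it is enough to show $\st{x \in X}{x = z \impl u}$ is open in $X$. For this, consider the ``$u$-weighted diagonal'' $A \dfeq \st{p \in \diag[X]}{u}$; its characteristic map into $\soc$ is the constant map with value $u \in \opn$, so $A$ is open in $\diag[X]$. Hausdorffness says precisely that $\diag[X]$ is \Esc closed in $X \times X$, so $\diag[X] \setimpl[X \times X] A = \st{(a,b) \in X \times X}{a = b \impl u}$ is open in $X \times X$; pulling this set back along the (continuous) map $x \mapsto (x, z)$ yields exactly $\st{x \in X}{x = z \impl u}$, which is therefore open. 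Since $\psi$ is a composite of continuous operations and, by the above, all its values lie in $\opn$, $\psi$ is an open predicate.

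Assembling the pieces: openness of $\psi$ together with compactness of $K$ gives openness of $x \mapsto \xall{z}{K}{\psi(x,z)}$, that is, openness of $K \setimpl[X] U$, for every open $U \subseteq K$; hence $K$ is \Esc closed. The only routine points are the initial equivalence and the computation of $\diag[X] \setimpl[X\times X] A$; the one genuinely new idea is the passage to the $u$-weighted diagonal --- in effect this first shows that every singleton of a Hausdorff set is \Esc closed, and then spreads that fact over $K$ using compactness.
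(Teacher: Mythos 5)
Your proof is correct and follows essentially the same route as the paper: rewrite $x \in (K \setimpl[X] U)$ as $\all{k}{K}{x = k \implies k \in U}$, discharge the inner implication via Hausdorffness (Escard\'o closedness of the diagonal) and the outer universal quantifier via compactness of $K$. Your ``$u$-weighted diagonal'' step merely spells out, more carefully than the paper does, why the inner implication is an open predicate --- namely by feeding a constant-$u$ open subset of $\diag[X]$ (rather than a subset of $\diag[K]$ that need not be open in $\diag[X]$) into the Escard\'o-closedness of the diagonal --- which is a welcome clarification rather than a divergence.
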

   	\begin{proof}
   		Let $X$ be Hausdorff, $K \subseteq X$ compact, and $U \in \tp(K)$. Then for any $x \in X$
   		\begin{align*}
   			& x \in \big(K \setimpl[X] U\big)\\
   			\iff\quad& x \in K \implies x \in U\\
   			\iff\quad& \big(\xsome{k}{K}{x = k}\big) \implies x \in U\\
   			\iff\quad& \all{k}{K}{x = k \implies x \in U}\\
   			\iff\quad& \all{k}{K}{(x, k) \in \diag[K] \implies (x, k) \in \st{(a, a) \in \diag[K]}{a \in U}}.
   		\end{align*}
   		The last condition is open, so $K \setimpl[X] U$ is open in $X$.
   	\end{proof}
   	
   	Here is another one (compare with Theorem~\ref{Theorem: characterization_of_classical_compactness} and Proposition~\ref{Proposition: overt_iff_projections_open}).
   	\begin{proposition}
   		If $K$ is compact and $X$ any set, then the projection $p\colon K \times X \to X$ maps \Esc closed subsets of $K \times U$ to \Esc closed subsets of $X$.
   	\end{proposition}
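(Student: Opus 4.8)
The plan is to unwind the definition of \Esc closedness for the image directly and bottom out in a single application of compactness of $K$. So suppose $F \subseteq K \times X$ is \Esc closed, fix $V \in \tp(p(F))$, and aim to show that $p(F) \setimpl[X] V$ is open in $X$; that is, for each $x \in X$ the truth value of $x \in p(F) \implies x \in V$ is open.

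First I would manufacture the open subset of $F$ needed to trigger the hypothesis on $F$. The projection corestricts to a (surjective) map $q\colon F \to p(F)$, which is continuous like every map, so $U \dfeq q^{-1}(V) = \st{(k, x') \in F}{x' \in V}$ is open in $F$. By \Esc closedness of $F$, the set $F \setimpl[K \times X] U$ is therefore open in $K \times X$. Unwinding the definition of $U$, the condition $(k, x') \in \big(F \setimpl[K \times X] U\big)$, namely $(k,x') \in F \implies \big((k,x') \in F \land x' \in V\big)$, simplifies intuitionistically to $(k, x') \in F \implies x' \in V$; call $\Phi$ the resulting open predicate on $K \times X$.

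Now fix $x \in X$. The predicate $\phi_x \dfeq \Phi(\insarg, x)$ on $K$ is open, being $\Phi$ precomposed with the continuous map $k \mapsto (k, x)$ (equivalently, the preimage of the open subset $F \setimpl[K\times X] U$ along that map). Since $K$ is compact, $\xall{k}{K}{\phi_x(k)}$ is an open truth value. It remains to recognise this as the truth value in question, which is exactly the intuitionistic equivalence
$$\Big(\xsome{k}{K}{(k,x) \in F}\Big) \implies x \in V \quad\iff\quad \xall{k}{K}{\big((k,x) \in F \implies x \in V\big)},$$
whose left-hand side is precisely $x \in p(F) \implies x \in V$. Hence $p(F) \setimpl[X] V$ is open in $X$, so $p(F)$ is \Esc closed.

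I do not expect a genuine obstacle here: the content is the choice $U = q^{-1}(V)$ (so that the \Esc-closedness of $F$ can be applied) together with the $\exists$--$\forall$ rewriting, which is valid because $\exists_K$ is left adjoint to weakening. Compactness of $K$ enters exactly once, to push $\phi_x$ through $\forall_K$; in particular only compactness of $K$ — not of $K \times X$, nor any (sub)overtness of $X$ — is used, as the statement claims.
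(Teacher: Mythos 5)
Your proof is correct and follows essentially the same route as the paper's: the open subset you feed into the \Esc closedness of $F$, namely $q^{-1}(V) = p^{-1}(V) \cap F$, is exactly the set the paper uses, and the remaining steps (the intuitionistic simplification, the $\exists$--$\forall$ adjunction, and the single application of compactness of $K$ on the slice) match the paper's chain of equivalences, merely spelled out in more detail.
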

   	\begin{proof}
   		Let $F \subseteq K \times X$ be \Esc closed, and take any $U \in \tp(p(F))$. Then for $x \in X$
   		\begin{align*}
   			& x \in p(F) \implies x \in U\\
   			\iff\quad& \big(\xsome{k}{K}{(k, x) \in F}\big) \implies x \in U\\
   			\iff\quad& \all{k}{K}{(k, x) \in F \implies x \in U}\\
   			\iff\quad& \all{k}{K}{(k, x) \in F \implies (k, x) \in p^{-1}(U) \cap F}.
   		\end{align*}
   		Since $p^{-1}(U) \cap F$ is open in $F$, the result follows.
   	\end{proof}
   	
   	In view of these \Esc['s] results, it would appear that we have our candidate for closed subsets in synthetic topology. However, I feel that the following issues (the first two to be proven later, after Proposition~\ref{Proposition: classification_of_strongly_open/closed}) are still problematic.
   	\begin{itemize}
   		\item
   			All maps are continuous with regard to open subsets, but in general not with regard to \Esc closed subsets.
   		\item
   			Open subsets are classified by $\opn$, but \Esc closed ones are not classified by anything in general.
   		\item
   			\Esc closed subsets are subspaces by their definition while for open subsets we require the dominance axiom. In particular, note the difference between Proposition~\ref{Proposition: Escardo_closed_inherits_compactness} and Corollaries~\ref{Corollary: open_subset_of_overt_is_subovert_(original)} and~\ref{Corollary: if_dominance_then_open_subset_of_overt_is_overt_(original)}.
   		\item
   			Classically, open and closed subsets are dual, and they are both suitable for the definition of topology, as knowing one determines the other. Synthetically, \Esc closed subsets are determined by open ones, but not vice versa, and overall there is not much duality between them in general.
   	\end{itemize}
   	
      For these reasons we decide to redefine the basic notions of synthetic topology, and put openness and closedness on equal terms. Our starting point is the last remark. Classically open and closed sets determine each other via complementation, something that constructively we don't have. \Esc['s] direction to define closed sets seems to be the right one, but it does not seem we can force the open sets to be determined by the closed ones. If it does not seem plausible to resolve the discrepancy that openness is a basic notion, but closedness is the derived one, in such a way that both determine the other, another solution then is to have \emph{both} openness and closedness as derived notions.
      
      By \Esc[], we test closedness via implication into open subsets; replace open subsets here by something general. The idea how to test openness hides in the dominance axiom: via conjunction. Recall that for any $p \in \soc$, the rules
      $$x \mapsto (x \land p) \quad \text{and} \quad x \mapsto (x \impl p)$$
      (where $x \in \soc$) are adjoint. From these observations we proceed with the redefinition.
      
      \intermission
      
      Let $\tst$ be an arbitrary subset of the set of truth values $\soc$. Call its elements \df{test truth values}, and subsets classified by it \df{test subsets} (meaning that $A \subseteq X$ is a test subset of $X$ when for every $x \in X$ the truth value of the statement $x \in A$ is in $\tst$). For example, if $\top \in \tst$, then every set is a test subset of itself. If $\two \subseteq \tst$, then every decidable subset is test. If $\tst = \emptyset$, then the only test subset anywhere is $\emptyset$ as a subset of itself. We denote the set of test subsets of a set $X$ by $\tetp(X)$. Of course, via identification $\pst(\one) \ism \soc$, the test subsets of $\one$ are precisely the test truth values, $\tetp(\one) \ism \tst$. In general, $\tetp(X) \ism \tst^X$.
      
      Define
      $$\opn \dfeq \st{u \in \soc}{(u \land t) \in \tst \text{ for all } t \in \tst},$$
      $$\cld \dfeq \st{f \in \soc}{(f \impl t) \in \tst \text{ for all } t \in \tst}.$$
      Call elements of $\opn$ and $\cld$ \df{open} and \df{closed} \df{truth values} respectively. Call subsets, classified by $\opn$, \df{open subsets}, and those classified by $\cld$ \df{closed subsets}\footnote{It seemed fitting to denote the set of closed truth values by $\cld$ (capital Greek zeta), partially because closed subsets in some ways behave like the open ones, but in some other ways quite the opposite --- and we get $\cld$ by inverting the upper half of $\opn$ --- and partially because in my native tongue the term for ‘closed’ starts with the letter z. On a side note, $\tst$ is also meant to be a capital Greek letter (namely tau).}. Let $\optp(X)$ denote the set of open subsets of a set $X$, and $\cltp(X)$ the set of closed subsets of $X$. We have identifications $\optp(X) \ism \opn^X$, $\cltp(X) \ism \cld^X$. In the particular case of a singleton, we identify open (resp.~closed) truth values with open (resp.~closed) subsets of $\one$.
      
      Since test, open and closed subsets are classified by the corresponding truth values, every map is continuous in all the following senses: preimages of test/open/closed subsets are test/open/closed. In particular, if we consider an inclusion $i\colon A \hookrightarrow X$, we see that $A \cap B = i^{-1}(B)$ is a test/open/ closed subset in $A$ if $B \subseteq X$ was in $X$.
      
      We can test openness and closedness of subsets directly, without referring to truth values.
      \begin{lemma}\label{Lemma: open_closed_subsets_characterization}
         A subset $U \subseteq X$ is open in $X$ if and only if for every $A \in \tetp(X)$ also $(U \cap A) \in \tetp(X)$. Similarly, $F \subseteq X$ is closed in $X$ if and only if for every $A \in \tetp(X)$ also $(F \setimpl[X] A) \in \tetp(X)$.
      \end{lemma}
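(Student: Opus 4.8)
The plan is to unwind both definitions to pointwise conditions and then read off the defining clauses of $\opn$ and $\cld$. Recall that $U \in \optp(X)$ means exactly that $x \in U$ is an open truth value for every $x \in X$, that $A \in \tetp(X)$ means $x \in A$ is a test truth value for every $x$, and that membership in $U \cap A$ (resp.\ in $F \setimpl[X] A$) is computed pointwise as $(x \in U) \land (x \in A)$ (resp.\ as $(x \in F) \impl (x \in A)$). So both directions of the lemma become statements about truth values attached to individual points.

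For the forward implication of the open case I would fix $U \in \optp(X)$, an arbitrary $A \in \tetp(X)$, and a point $x \in X$; then $x \in U \in \opn$ and $x \in A \in \tst$, so by the very definition of $\opn$ the conjunction $(x \in U) \land (x \in A)$ lies in $\tst$, which is precisely what is needed for $(U \cap A) \in \tetp(X)$. The closed case is word-for-word the same with $\land$ replaced by $\impl$ and $\opn$ by $\cld$, invoking the defining clause of $\cld$ instead.

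For the converse the key move is to feed the hypothesis a well-chosen test subset. To prove $U$ is open I take an arbitrary $x_0 \in X$ and an arbitrary $t \in \tst$, and consider the constant subset $A_t \dfeq \st{x \in X}{t}$, whose characteristic map is the constant map $x \mapsto t$ and which therefore lies in $\tetp(X)$ because $t \in \tst$. Applying the hypothesis to $A_t$ gives $(U \cap A_t) \in \tetp(X)$, and evaluating at $x_0$ yields $(x_0 \in U) \land t \in \tst$; since $t$ was arbitrary this says $(x_0 \in U) \in \opn$, and since $x_0$ was arbitrary, $U \in \optp(X)$. For closedness one runs the same argument with $F \setimpl[X] A_t$ in place of $U \cap A_t$, obtaining $(x_0 \in F) \impl t \in \tst$ for all $t \in \tst$, i.e.\ $(x_0 \in F) \in \cld$.

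I do not anticipate a real obstacle. The only point that needs a moment of care is the converse direction: one must notice that testing against the constant subsets $A_t$ is exactly what turns the hypothesis into the pointwise conditions defining $\opn$ and $\cld$, and one should check that $A_t$ is a bona fide (test) subset of $X$, namely that its characteristic map is literally the constant map at $t$, not depending on the point. Everything else is routine rewriting.
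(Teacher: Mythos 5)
Your proof is correct and matches the paper's argument: the forward direction is the same pointwise computation $(x \in U \cap A) = (x \in U) \land (x \in A)$, and your constant test subset $A_t = \st{x \in X}{t}$ is exactly the paper's $\trm[X]^{-1}(t)$ used for the converse.
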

      \begin{proof}
         For the `if' direction take any $x \in X$ and $t \in \tst \ism \tetp(\one)$. Then
         $$(x \in U) \land t \sepeq (x \in U \cap \trm[X]^{-1}(t)).$$
         For the `only if' direction just note $(x \in U \cap A) \sepeq (x \in U) \land (x \in A)$. In the case of closed sets, replace $\cap$, $\land$ with $\setimpl$, $\impl$.
      \end{proof}
      
      Notice that $\opn$ and $\cld$ are bounded $\land$-subsemilattices of $\soc$, ie.~they are closed under finite infima. It is sufficient to verify the condition for nullary and binary meets:
      \begin{align*}
         \top \land t &\sepeq t, & (u \land v) \land t &\sepeq u \land (v \land t),\\
         \top \impl t &\sepeq t, & (f \land g) \impl t &\sepeq f \impl (g \impl t).
      \end{align*}
      In fact, being a bounded $\land$-subsemilattice of $\soc$ is characteristic for $\opn$, as any such is the set of open truth values for some $\tst$, namely for itself. This follows from the following observations:
      $$\opn \subseteq \tst \iff \top \in \tst \qquad \text{and} \qquad \tst \subseteq \opn \iff \text{for any $t, s \in \tst$ also $t \land s \in \tst$}.$$
      
      The $\land$-semilattice structure on $\opn$ and $\cld$ induces the $\cap$-semilattice structure on $\optp(X)$ and $\cltp(X)$ for any $X$, \ie a set is always open and closed in itself, and binary intersections of open (resp.~closed) subsets are again open (resp.~closed).
      
      So we know something of meets of open and closed sets, but not about their joins, nor about meets or joins of test sets. Examining them leads us to the notions of overtness and compactness. We define:
		\begin{itemize}
			\item
				a set $X$ is \df{strongly overt} when for all $A \in \tetp(X)$ the truth value of $\xsome{x}{X}{x \in A}$ is test (equivalently, when unions of any $X$-indexed family of test subsets of some set is again a test subset in it);
			\item
				a set $X$ is \df{overt} when for all $U \in \optp(X)$ the truth value of $\xsome{x}{X}{x \in U}$ is open ($X$-indexed unions of opens are open).
		\end{itemize}
		
		The definition of overtness is the usual one. The name `strongly overt' is appropriate: if a set $X$ is strongly overt, it is also overt by the Frobenius law
		$$(\xsome{x}{X}{x \in U}) \land t \sepeq \some{x}{X}{(x \in U) \land t}.$$
		On the other hand we can find a trivial counterexample for the reverse implication. Take $\tst = \emptyset$; then $\opn = \soc$, so all sets are overt, but the empty one is not strongly overt (otherwise $\emptyset$ as an empty union of test sets would have to be test in any set). Though obviously the reverse implication holds when $\tst = \opn$.
		
		Compactness is not so direct, however. By analogy we would say that $X$ is strongly compact when for all $A \in \tetp(X)$ the truth value of $\xall{x}{X}{x \in A}$ is test, and compact when for all $U \in \optp(X)$ the truth value of $\xall{x}{X}{x \in U}$ is open. But the statements $(\xall{x}{X}{x \in U}) \land t$ and $\all{x}{X}{(x \in U) \land t}$ are not equivalent in general (they are for inhabited $X$). What testness of $\xall{x}{X}{x \in A}$ naturally implies is that unions of closed sets are closed, as opposed to that intersections of opens are open. Judging from Theorem~\ref{Theorem: characterization_of_classical_compactness} (and Corollary~\ref{Corollary: compactness_with_quantifiers}, and the subsequent discussion), it would just as well match the classical notion of compactness to consider which unions of closed subsets are closed, and use that for the definition. Indeed, we can suspect (and will demonstrate in short order) that the definition of compactness via closedness lends better to prove synthetic versions of classical theorems connecting compact and closed sets.
		
		Regardless, I did not decide for a redefinition, and consequently an introduction of yet another notion of compactness (there are already enough of them as it is). Also, it will prove useful for us to know which intersections of opens are open (in particular in Section~\ref{Section: (co)limits_and_bases} and Chapter~\ref{Chapter: metrization_theorems}), so we keep the usual definition of compactness while for unions of closed sets we introduce a new term, but one reminiscent of compactness, to suggest that the notions are closely connected. We define:
		\begin{itemize}
			\item
				a set $X$ is \df{strongly condensed} when for all $A \in \tetp(X)$ the truth value of $\xall{x}{X}{x \in A}$ is test ($X$-indexed intersections of test subsets are test).
			\item
				a set $X$ is \df{condensed} when for all $F \in \cltp(X)$ the truth value of $\xsome{x}{X}{x \in F}$ is closed ($X$-indexed unions of closed subsets are closed),
			\item
				a set $X$ is \df{compact} when for all $U \in \optp(X)$ the truth value of $\xall{x}{X}{x \in U}$ is open ($X$-indexed intersections of open subsets are open); thus in particular finite sets are compact.
		\end{itemize}
		
		Strong condenseness implies condenseness, as promised. The proof is analogous to the one above (strongly overt implies overt):
		$$(\xsome{x}{X}{x \in F}) \impl t \sepeq \all{x}{X}{(x \in F) \impl t}.$$

      As usual, these definitions can be generalized to subsets. We say that $A \subseteq X$ is a \df{subovert} subset of $X$ when for all $U \in \optp(X)$ the truth value of $\xsome{x}{X}{x \in U}$ is open, and analogously for other variations. Clearly a set is (strongly) overt/condensed/compact if and only if it is (strongly) subovert/subcondensed/subcompact in itself.
      
      We prove that condenseness has expected properties.
      
      \begin{proposition}
         A set $X$ is condensed if and only if for every set $Y$ the projection $p\colon X \times Y \to Y$ is a closed map.
      \end{proposition}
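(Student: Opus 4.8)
The plan is to prove this as the exact ``closed'' mirror image of Proposition~\ref{Proposition: overt_iff_projections_open}: condenseness is, after all, nothing but overtness with closed subsets in place of open ones, so I expect the argument to go through almost verbatim with $\optp$ replaced by $\cltp$ and $\opn$ by $\cld$. The one structural ingredient needed is that every map is continuous with respect to closed subsets --- preimages of closed subsets are closed --- which holds because closed subsets are classified by $\cld$ (as recorded just after the definitions, or via Lemma~\ref{Lemma: open_closed_subsets_characterization}).

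For the $(\Rightarrow)$ direction I would argue as follows. Assume $X$ is condensed and take any $F \in \cltp(X \times Y)$. For each $y \in Y$ we have the equivalence $y \in p(F) \iff \xsome{x}{X}{(x, y) \in F}$. The subset $\st{x \in X}{(x, y) \in F}$ is the preimage of $F$ along the map $x \mapsto (x, y)$, hence closed in $X$; since $X$ is condensed, the displayed existential statement is therefore a closed truth value. As $y$ was arbitrary, $p(F)$ is closed in $Y$, so $p$ is a closed map.

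For the $(\Leftarrow)$ direction I would apply the hypothesis with $Y \dfeq \cltp(X)$ and the ``generic'' closed set
$$V \dfeq \st{(x, F) \in X \times \cltp(X)}{x \in F}.$$
Using the identification $\cltp(X) \ism \cld^X$, the truth value of $x \in F$ is $\chi_F(x) \in \cld$, so $V$ is closed in $X \times \cltp(X)$. By assumption the projection $p\colon X \times \cltp(X) \to \cltp(X)$ is closed, hence $p(V)$ is closed in $\cltp(X)$; that is, for every $F \in \cltp(X)$ the statement $F \in p(V)$ --- equivalently $\xsome{x}{X}{x \in F}$ --- is a closed truth value, which is precisely condenseness of $X$.

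The computation is entirely routine; the only point requiring a moment's care --- and the closest thing to an obstacle --- is checking that $V$ really is a closed subset of $X \times \cltp(X)$, which is immediate once one unwinds the identification $\cltp(X) \ism \cld^X$ and notes that the evaluation/membership predicate factors through $\cld$. Beyond that I anticipate no difficulty, since the statement is the direct dual of Proposition~\ref{Proposition: overt_iff_projections_open}.
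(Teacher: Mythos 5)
Your proof is correct and follows essentially the same route as the paper: the forward direction via $y \in p(F) \iff \xsome{x}{X}{(x, y) \in F}$ together with closedness of slices, and the converse via the generic closed set $\st{(x, F) \in X \times \cltp(X)}{x \in F}$ and the projection onto $\cltp(X)$. No issues.
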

      \begin{proof}
         The proof is completely analogous to the proof of Proposition~\ref{Proposition: overt_iff_projections_open}. Suppose $X$ is condensed, and let $F \subseteq X \times Y$ be a closed subset of the product. Then for any $y \in Y$
         $$y \in p(F) \iff \xsome{x}{X}{(x, y) \in F},$$
         so $p(F)$ is closed in $Y$.
         
         Conversely, let the projection $p\colon X \times \cltp(X) \to \cltp(X)$ be a closed map. Define
         $$A \dfeq \st{(x, F) \in X \times \cltp(X)}{x \in F}$$
         which is evidently a closed subset of $X \times \cltp(X)$. But then $p(A)$ must be closed in $\cltp(X)$, and we have
         $$F \in p(A) \iff \xsome{x}{X}{x \in F}.$$
         So the condition on the right must be closed.
      \end{proof}
      The proof of this proposition is rather tautological with condenseness instead of compactness. In fact, one can actually see the equivalence: just imagine a closed subset of the product $X \times Y$ as an $X$-indexed union of slices parallel to $Y$ (and consider their union if you identify each $\{x\} \times Y$ with $Y$).
      
      \begin{proposition}
         Let $X$ be a set, $C \subseteq X$ subcondensed in $X$, and $A \subseteq X$ a closed subset of $X$. Then $C \cap A$ is a subcondensed subset of $X$. (In particular, a closed subset of a condensed set is subcondensed).
      \end{proposition}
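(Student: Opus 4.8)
The plan is to reduce the statement directly to the definition of subcondenseness of $C$, using only the already-established fact that $\cld$ is a bounded $\land$-subsemilattice of $\soc$, so that binary intersections of closed subsets are again closed.

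First I would fix an arbitrary closed subset $F \in \cltp(X)$. Since $A$ and $F$ are both closed in $X$, their intersection $A \cap F$ is again closed in $X$. Next I would rewrite the relevant existential statement by reassociating the conjunction hidden in the membership $x \in C \cap A$:
$$\xsome{x}{C \cap A}{x \in F} \iff \some{x}{X}{x \in C \land x \in A \land x \in F} \iff \xsome{x}{C}{x \in (A \cap F)}.$$
Now I would apply the hypothesis that $C$ is subcondensed in $X$ to the closed subset $A \cap F \in \cltp(X)$: this yields that the truth value of $\xsome{x}{C}{x \in (A \cap F)}$ is closed, hence so is that of $\xsome{x}{C \cap A}{x \in F}$. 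Since $F \in \cltp(X)$ was arbitrary, $C \cap A$ is a subcondensed subset of $X$. For the parenthetical remark I would take $C = X$: a condensed set is subcondensed in itself, and $X \cap A = A$, so $A$ is subcondensed in $X$.

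I do not expect a genuine obstacle here; the argument is essentially the ``tautological'' reindexing already used for the projection characterisations of overtness and condenseness. The only points that need a moment's care are that the displayed chain is a true equivalence of truth values — which it is, because $x \in C \cap A$ unfolds literally to $x \in C \land x \in A$ — and that closedness is preserved under the binary intersection $A \cap F$, which is exactly the $\land$-subsemilattice property of $\cld$ recorded earlier in this section.
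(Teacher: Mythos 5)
Your proof is correct and follows exactly the paper's argument: the same reindexing equivalence $\xsome{x}{C \cap A}{x \in F} \iff \xsome{x}{C}{x \in A \cap F}$, the same appeal to closedness of $A \cap F$ via the $\land$-subsemilattice property of $\cld$, and the same specialisation $C = X$ for the parenthetical remark. No gaps.
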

      \begin{proof}
         For any $F \in \cltp(X)$ we have
         $$\xsome{x}{C \cap A}{x \in F} \iff \xsome{x}{C}{x \in A \cap F}.$$
         For the second part of the proposition just take $C = X$.
      \end{proof}
      
      Recall that it makes sense in synthetic topology to define that a space $X$ is Hausdorff when its diagonal $\diag[X] = \st{(x, y) \in X \times X}{x = y}$ is a closed subset of the product $X \times X$, or equivalently, that equality is a closed predicate. Because closed sets are classified by $\cld$, we obtain, similarly as in the case of discreteness, that a set is Hausdorff if and only if its singletons are closed since $(x, y) \in \diag[X] \iff x = y \iff x \in \{y\}$.
      \begin{proposition}
         Let $X$ be a Hausdorff set. If $C \subseteq X$ is subcondensed in $X$, it is closed in $X$.
      \end{proposition}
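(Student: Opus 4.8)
The plan is to run the argument dual to that of Proposition~\ref{Proposition: subovert_subset_of_discrete} (``a subovert subset of a discrete set is open''), swapping the roles of open/overt/discrete for closed/condensed/Hausdorff; it is the closed-side counterpart of Proposition~\ref{Proposition: compact_in_Hausdorff_is_Escardo_closed}. To show that $C$ is closed in $X$ it suffices, by the definition of a closed subset, to verify that for every $x \in X$ the truth value of the membership statement $x \in C$ lies in $\cld$.

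First I would invoke the characterization of Hausdorffness recorded immediately before the statement: $X$ is Hausdorff exactly when every singleton $\{x\}$ is a closed subset of $X$, since $(a,b)\in\diag[X] \iff a=b \iff a\in\{b\}$. Fixing $x\in X$, we thus have $\{x\}\in\cltp(X)$. Next I would rewrite the membership predicate through the existential quantifier ranging over $C$,
$$x\in C \iff \xsome{y}{C}{x=y} \iff \xsome{y}{C}{y\in\{x\}},$$
and then apply the hypothesis directly: $C$ is subcondensed in $X$ and $\{x\}$ is closed in $X$, so by the definition of subcondenseness the truth value of $\xsome{y}{C}{y\in\{x\}}$ is closed. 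Hence $x\in C$ is a closed truth value, and since $x\in X$ was arbitrary, $C$ is a closed subset of $X$.

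I do not expect a genuine obstacle here; the only subtlety is bookkeeping of the ambient set. The subcondenseness hypothesis tests the index $y$, ranging over $C$, for membership in a subset which must be closed in the \emph{ambient} $X$ (not merely in $C$), so one has to phrase the chain of equivalences in the form $\xsome{y}{C}{y\in\{x\}}$ rather than the superficially simpler $\xsome{y}{C}{x=y}$ — exactly the point flagged in the remark following Proposition~\ref{Proposition: subovert_subset_of_discrete}. Once the statement is written this way, closedness of the truth value follows immediately from the definitions.
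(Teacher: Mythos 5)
Your proof is correct and is essentially the paper's own argument: the paper's one-line proof reads $x \in C \iff \xsome{c}{C}{(x, c) \in \diag[X]}$, which is exactly your rewriting via closed singletons (the paper itself notes just before the proposition that $(x,c)\in\diag[X] \iff c \in \{x\}$ and that Hausdorffness means singletons are closed). Your remark about quantifying over a subset closed in the ambient $X$ rather than in $C$ is the right bookkeeping and matches the intent of the definition of subcondenseness.
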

      \begin{proof}
         Because $x \in C \iff \xsome{c}{C}{(x, c) \in \diag[X]}$.
      \end{proof}
      
      The analogous propositions for open sets of course still hold. A set is overt if and only if every projection along it is an open map. The intersection of a subovert and an open subset is subovert. A subovert subset of a discrete set is open.
      
      We repeated these notions for sets in order to add condenseness to the mix, and the next step would be to repeat Proposition~\ref{Proposition: images_and_finite_products_of_subovert/subcompact_subsets}, but we will not do so directly. From the original overtness and compactness we already got ten separate notions, and there would have been twelve if we bothered to name the sets such that intersections of closed sets, indexed by them, are closed (but we didn't since we won't need this notion in the thesis). We can cut them by half by taking on the categorical mindset, and argue that overtness/condenseness/compactness should be the properties of maps rather than objects.
      \begin{definition}
      	A map $f\colon X \to Y$ is
      	\begin{itemize}
      		\item
      			\df{overt} when the statement $\xsome{x}{X}{f(x) \in U}$ is open for every $U \in \optp(Y)$,
      		\item
      			\df{condensed} when the statement $\xsome{x}{X}{f(x) \in F}$ is closed under every $F \in \cltp(Y)$,
      		\item
      			\df{compact} when the statement $\xall{x}{X}{f(x) \in U}$ is open for every $U \in \optp(Y)$.
      	\end{itemize}
      \end{definition}
		Observe:
		\begin{itemize}
			\item
				a map $f\colon X \to Y$ is overt/condensed/compact if and only if its image is subovert/sub\-con\-densed/subcompact in $Y$,
			\item
				a set $X$ is overt/condensed/compact if and only if the identity map $\id[X]$ is an overt/condensed/ compact map,
			\item
				a subset $A \subseteq X$ is subovert/subcondensed/subcompact in $X$ if and only if the inclusion $A \hookrightarrow X$ is an overt/condensed/compact map.
		\end{itemize}
		
		Propositions~\ref{Proposition: images_and_finite_products_of_subovert/subcompact_subsets} and~\ref{Proposition: intersection_of_subovert_and_open_is_open_(original)}, item by item, generalize thusly.
		\begin{proposition}\label{Proposition: constructions_of_overt_maps}
			\
			\begin{enumerate}
				\item
					Composing an overt/condensed/compact map with any map on the left yields an overt/con\-densed/compact map.\footnote{In categorical terms, overt/condensed/compact maps form a cosieve.}
				\item
					Finite products of overt/condensed/compact maps are overt/condensed/compact.
				\item
					For any overt/condensed map $f\colon X \to Y$, and any map $g\colon A \to Y$ with an open/closed image in $Y$, the diagonal pullback map
					$$\xymatrix@+1em{
						P \ar[r] \ar[d] \ar@{-->}[rd]  &  X \ar[d]^f  \\
						A \ar[r]_g  &  Y
					}$$
					is an overt/condensed map. Also, in the case $\tst = \opn$, if $f$ is compact and $g$ has a closed image, the diagonal pullback map is compact.
			\end{enumerate}
		\end{proposition}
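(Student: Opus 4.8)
The plan is to derive all three items from the equivalence established just above, that a map $f\colon X \to Y$ is overt/condensed/compact exactly when its image $\im(f)$ is subovert/subcondensed/subcompact in $Y$, together with the fact that every map is continuous with respect to open \emph{and} closed subsets. This reduces each item to a closure property of subovert/subcondensed/subcompact subsets, almost all of which are Propositions~\ref{Proposition: images_and_finite_products_of_subovert/subcompact_subsets} and~\ref{Proposition: intersection_of_subovert_and_open_is_open_(original)} (plus the condenseness analogues already proved in this section); the handful of missing analogues are dispatched by the same one-line currying computations used throughout.

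For item~1, if $f\colon X \to Y$ is overt/condensed/compact and $g\colon Y \to Z$ is arbitrary, then $\im(g \circ f) = g\big(\im(f)\big)$, so it suffices to invoke that images of subovert/subcondensed/subcompact subsets are again such. Equivalently and just as fast: for $U \in \optp(Z)$ we have $\xsome{x}{X}{(g \circ f)(x) \in U} \iff \xsome{x}{X}{f(x) \in g^{-1}(U)}$, and $g^{-1}(U) \in \optp(Y)$ by continuity, so the truth value is open; the closed and compact cases replace $U$ by a closed subset, resp.\ $\exists$ by $\forall$. For item~2 it is enough to treat the nullary and binary product. The nullary product of maps is $\id[\one]\colon \one \to \one$, which is overt, condensed and compact since $\xsome{x}{\one}{\phi(x)} \iff \phi(\unit) \iff \xall{x}{\one}{\phi(x)}$. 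For the binary case $\im(f_1 \times f_2) = \im(f_1) \times \im(f_2)$, so the claim follows from the fact that a binary product of subovert/subcondensed/subcompact subsets (in their respective ambient sets) is subovert/subcondensed/subcompact in the product.

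Item~3 is the heart of the matter. The diagonal pullback map $h\colon P \to Y$, where $P$ is the pullback of $f$ along $g$, satisfies $\im(h) = \im(f) \cap \im(g)$, since $y \in \im(h)$ exactly when $y$ lies in both $\im(f)$ and $\im(g)$. Hence: if $f$ is overt, then $\im(f)$ is subovert and $\im(g)$ is open, so their intersection is subovert by Proposition~\ref{Proposition: intersection_of_subovert_and_open_is_open_(original)} (which applies unconditionally here, as $\opn$ is always closed under binary meets), whence $h$ is overt; if $f$ is condensed, then $\im(f) \cap \im(g)$ is the intersection of a subcondensed subset with a closed subset, hence subcondensed by the corresponding proposition above, whence $h$ is condensed. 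For the compact case assume $\tst = \opn$ and that $g$ has closed image; then for $U \in \optp(Y)$,
\[
\xall{y}{\im(f) \cap \im(g)}{y \in U} \iff \xall{y}{\im(f)}{y \in \big(\im(g) \setimpl[Y] U\big)},
\]
and since $y \in \im(g)$ is a closed truth value while $y \in U$ is open, the assumption $\tst = \opn$ forces $(y \in \im(g)) \impl (y \in U)$ to be open, because $\cld$ then consists precisely of those truth values whose implication into every open truth value is open. Thus $\im(g) \setimpl[Y] U$ is open in $Y$, the displayed truth value is open as $\im(f)$ is subcompact, $\im(f) \cap \im(g)$ is subcompact, and $h$ is compact.

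The only genuinely non-mechanical point is this last step: in general the implication of an open truth value by a closed one need not be open, so the intersection of a subcompact set with a closed set need not be subcompact — which is exactly why the compact half of item~3 needs the extra hypothesis $\tst = \opn$. Pinpointing that this is the sole place the hypothesis is used is the main obstacle; the rest is routine.
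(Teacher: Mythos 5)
Your proof is correct and follows essentially the same route as the paper's: item~1 by pulling the subset back along the outer map, item~2 by reducing to the image of a product being the product of images, and item~3 by identifying the image of the diagonal pullback map with $\im(f) \cap \im(g)$ and invoking the intersection properties of subovert/subcondensed subsets, with $\tst = \opn$ used exactly where you say it is — to make $g(A) \setimpl[Y] U$ open. No gaps.
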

		\begin{proof}
			\begin{enumerate}
				\item
					Let $f\colon X \to Y$, $g\colon Y \to Z$, and $A \subseteq Z$. Then
					$$\some{x}{X}{(g \circ f)(x) \in A} \iff \some{x}{X}{f(x) \in g^{-1}(A)},$$
					and likewise
					$$\all{x}{X}{(g \circ f)(x) \in A} \iff \all{x}{X}{f(x) \in g^{-1}(A)}.$$
				\item
					Let $f_i\colon X_i \to Y_i$ be maps for $i \in \NN_{< n}$. The image of $\prod_{i \in \NN_{< n}} f_i$ is the (finite) product of images of $f_i$s, thus subovert/subcondensed/subcompact by Proposition~\ref{Proposition: images_and_finite_products_of_subovert/subcompact_subsets}.
				\item
					We deal with overtness and condenseness by noting that for $B \subseteq Y$
					$$\some{(a, x)}{A \times X}{g(a) = f(x) \land f(x) \in B} \iff \some{x}{X}{f(x) \in B \cap g(A)}.$$
					Now take $f$ compact, $g$ with a closed image, and $U \in \tp(Y)$. We have
					$$\all{(a, x)}{A \times X}{g(a) = f(x) \implies f(x) \in U} \iff \all{x}{X}{f(x) \in g(A) \setimpl[Y] U},$$
					and under the assumption $\tst = \opn$, the set $g(A) \setimpl[Y] U$ is open in $Y$, so the whole statement is open.
			\end{enumerate}
		\end{proof}
		
		In practice we often need a combination of all these items.
		\begin{corollary}\label{Corollary: overt_combination}
			Let $n \in \NN$ be a natural number, $\big(f_k\colon X_k \to Y_k\big)_{k \in \NN_{< n}}$ a finite sequence of overt maps, $g\colon \prod_{k \in \NN_{< n}} Y_k \to Z$ any map, and $U \in \tp\Big(\prod_{k \in \NN_{< k}} X_k\Big)$ such that there exists $V \in \tp\Big(\prod_{k \in \NN_{< k}} Y_k\Big)$ for which $U = \Big(\prod_{k \in \NN_{< k}} f_k\Big)^{-1}(V)$. Define $h\colon U \to Z$ by
			$$h \dfeq g \circ \rstr{\Big(\prod_{k \in \NN_{< k}} f_k\Big)}_U.$$
			Then $h$ is an overt map.
		\end{corollary}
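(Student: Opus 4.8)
The plan is to verify overtness of $h$ directly from the definition, in the standard synthetic style. First I would fix $W \in \tp(Z)$ and aim to show that the truth value of $\xsome{x}{U}{h(x) \in W}$ is open, rewriting this existential so that it ranges over the whole product and mentions only $F \dfeq \prod_{k \in \NN_{< n}} f_k$. Since $U = F^{-1}(V)$ one has $x \in U \iff F(x) \in V$, and for $x \in U$ one has $h(x) = g(F(x))$, hence $h(x) \in W \iff F(x) \in g^{-1}(W)$; together these give the equivalence
$$\xsome{x}{U}{h(x) \in W} \iff \xsome{x}{\prod_{k \in \NN_{< n}} X_k}{F(x) \in V \cap g^{-1}(W)}.$$

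Next I would argue that the right-hand side is open. The set $g^{-1}(W)$ is open in $\prod_{k \in \NN_{< n}} Y_k$ because every map is continuous, and $V \cap g^{-1}(W)$ is then open because $\opn$ is a bounded $\land$-subsemilattice of $\soc$, so that open subsets are closed under binary intersection. Finally $F = \prod_{k \in \NN_{< n}} f_k$ is an overt map by item~2 of Proposition~\ref{Proposition: constructions_of_overt_maps} (a finite product of overt maps is overt), so applying the definition of overtness of $F$ to the open set $V \cap g^{-1}(W)$ shows that the displayed right-hand side is open. Therefore the left-hand side is open, and $h$ is overt.

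The main (and really the only) difficulty will be the routine bookkeeping behind the two identifications $x \in U \iff F(x) \in V$ and $h(x) = g(F(x))$; there is no genuine obstacle. If one prefers a purely structural argument avoiding even that, one can instead observe that $\rstr{F}_U \colon U \to \prod_{k \in \NN_{< n}} Y_k$ is exactly the diagonal pullback map of $F$ along the inclusion $V \hookrightarrow \prod_{k \in \NN_{< n}} Y_k$ (whose image $V$ is open), hence overt by item~3 of Proposition~\ref{Proposition: constructions_of_overt_maps}, and then $h = g \circ \rstr{F}_U$ is overt by item~1 (post-composition with an arbitrary map preserves overtness).
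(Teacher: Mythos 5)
Your proposal is correct. Your primary argument --- rewriting $\xsome{x}{U}{h(x)\in W}$ as $\xsome{x}{\prod_{k} X_k}{F(x)\in V\cap g^{-1}(W)}$ and then invoking overtness of the finite product $F=\prod_k f_k$ on the open set $V\cap g^{-1}(W)$ --- is a sound direct verification; beyond Proposition~\ref{Proposition: constructions_of_overt_maps}(2) it only uses that preimages of opens are open (automatic, since opens are classified by $\opn$) and that $\opn$ is closed under binary meets, both of which hold in the paper's setting. The ``purely structural argument'' you sketch at the end is in fact exactly the paper's own proof: the paper just displays the pullback square with $U$ over $V$ and $\prod_k X_k$ over $\prod_k Y_k$, observes that $\rstr{F}_U$ is the diagonal pullback map of the overt map $F$ along the inclusion $V\hookrightarrow \prod_k Y_k$ (which has open image), hence is overt by item~3, and concludes by composing with $g$ on the left via item~1. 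The direct route buys transparency about where each hypothesis is used at the cost of some bookkeeping; the structural route is a one-liner once the proposition is in place.
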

		\begin{proof}
			By the previous proposition since
			$$\xymatrix@+4em{
				U \ar@{^(->}[r] \ar[d]_{\rstr{\Big(\prod_{k \in \NN_{< k}} f_k\Big)}_U^V} \ar@{-->}[rd]|{\rstr{\Big(\prod_{k \in \NN_{< k}} f_k\Big)}_U}  &  \prod_{k \in \NN_{< k}} X_k \ar[d]^{\prod_{k \in \NN_{< k}} f_k}  \\
				V \ar@{^(->}[r]  &  \prod_{k \in \NN_{< k}} Y_k
			}$$
			is a pullback square.
		\end{proof}
      
      \intermission
      
      It still remains to reexamine the notion of dominance. But for that we first need to look at subspaces. As with most other notions, we redefine them with test subsets.
      \begin{definition}
         \
         \begin{itemize}
            \item
               A subset $A \subseteq X$ is a \df{subspace} of $X$ when for every test set $B \in \tetp(A)$ there exists a test set $C \in \tetp(X)$ such that $B = A \cap C$.
            \item
               An open subset $U \subseteq X$ which is a subspace, is called \df{strongly open}.
            \item
               A closed subset $F \subseteq X$ which is a subspace, is called \df{strongly closed}.
         \end{itemize}
      \end{definition}
      Actually, for open and closed subspaces we can extend test subsets in a canonical way.
      \begin{lemma}\label{lemma:opcl_subspaces}
         \
         \begin{enumerate}
            \item\label{lemma:opcl_subspaces_open} Let $A \subseteq U \subseteq X$. If $U$ is strongly open in $X$ and $A$ is test in $U$, then $A$ ($= U \cap A$) is test in $X$.
            \item\label{lemma:opcl_subspaces_closed} Let $A \subseteq F \subseteq X$. If $F$ is strongly closed in $X$ and $A$ is test in $F$, then $F \setimpl[X] A$ is test in $X$.
         \end{enumerate}
      \end{lemma}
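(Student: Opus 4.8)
The plan is to reduce both items to the defining closure properties of $\opn$ and $\cld$ (namely that $u \land t \in \tst$ for $u \in \opn$, $t \in \tst$, and $f \impl t \in \tst$ for $f \in \cld$, $t \in \tst$), using the subspace hypothesis as the device that transports a test subset of the sub-object up to a test subset of the ambient set $X$.

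For item~\ref{lemma:opcl_subspaces_open}: since $U$ is a subspace of $X$ and $A \in \tetp(U)$, there is some $C \in \tetp(X)$ with $A = U \cap C$. Then for every $x \in X$ we have the identity of truth values
$$(x \in A) \sepeq (x \in U) \land (x \in C),$$
and here $(x \in U) \in \opn$ because $U$ is open in $X$, while $(x \in C) \in \tst$ because $C$ is test in $X$. By the definition of $\opn$, applied with the test truth value $t \dfeq (x \in C)$, the meet $(x \in U) \land (x \in C)$ lies in $\tst$. Since $x \in X$ was arbitrary, $A$ is test in $X$.

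For item~\ref{lemma:opcl_subspaces_closed}: again, as $F$ is a subspace of $X$ and $A \in \tetp(F)$, pick $C \in \tetp(X)$ with $A = F \cap C$. For $x \in X$,
$$\big(x \in (F \setimpl[X] A)\big) \sepeq \big((x \in F) \impl (x \in A)\big) \sepeq \big((x \in F) \impl ((x \in F) \land (x \in C))\big),$$
and by the elementary Heyting-lattice identity $p \impl (p \land q) = p \impl q$ the right-hand side equals $(x \in F) \impl (x \in C)$. Now $(x \in F) \in \cld$ since $F$ is closed in $X$, and $(x \in C) \in \tst$ since $C$ is test in $X$, so by the definition of $\cld$ (with $t \dfeq (x \in C)$) this implication lies in $\tst$. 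Hence $F \setimpl[X] A$ is test in $X$.

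There is essentially no serious obstacle here; the two points to be careful about are (i) to invoke the subspace property to obtain the test witness $C$ living in $X$ \emph{before} applying the defining conditions of $\opn$ and $\cld$, and (ii) in the closed case, to recognise and use the identity $p \impl (p \land q) = p \impl q$ so that the implication to be tested is exactly of the form $f \impl t$ with $f \in \cld$ and $t \in \tst$.
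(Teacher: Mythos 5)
Your proof is correct and follows essentially the same route as the paper's: use the subspace hypothesis to lift the test witness to $X$, then apply the defining closure property of $\opn$ (resp.\ $\cld$) together with the absorption identity $F \setimpl[X] (F \cap B) = F \setimpl[X] B$, which is exactly your pointwise $p \impl (p \land q) = p \impl q$. The only cosmetic difference is that the paper packages the truth-value computation into its Lemma~\ref{Lemma: open_closed_subsets_characterization} rather than unfolding it inline.
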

      \begin{proof}
         Recall Lemma~\ref{Lemma: open_closed_subsets_characterization}.
         \begin{enumerate}
            \item
               Because $U$ is a subspace, there exists $B \in \tetp(X)$ such that $A = U \cap B$. But then $A$ is test in $X$.
            \item
               Similarly, we obtain $B \in \tetp(X)$ such that $A = F \cap B$. We then have
               $$F \setimpl[X] A \sepeq F \setimpl[X] F \cap B \sepeq F \setimpl[X] B,$$
               so $F \setimpl[X] A$ is test in $X$.
         \end{enumerate}
      \end{proof}
      
      Strongly open and strongly closed subsets also have a nice inheritance property.
      \begin{proposition}\label{Proposition: open/closed_subsets_of_strongly_open/closed_are_open/closed}
         Open/closed subsets of a strongly open/strongly closed subset are open/closed.
      \end{proposition}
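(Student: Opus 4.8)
The plan is to reduce both halves of the statement to the test-subset characterization of openness and closedness (Lemma~\ref{Lemma: open_closed_subsets_characterization}) and then to invoke the extension property of strongly open and strongly closed subspaces recorded in Lemma~\ref{lemma:opcl_subspaces}. Throughout I use that inclusions are continuous with respect to test subsets, so that $A \cap B$ is test in $A$ whenever $A \subseteq X$ and $B$ is test in $X$; this makes the two cases almost mechanical.

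\emph{Open case.} Let $U \subseteq X$ be strongly open and let $V \subseteq U$ be open in $U$; I want $V$ open in $X$. By Lemma~\ref{Lemma: open_closed_subsets_characterization} it suffices to show $(V \cap B) \in \tetp(X)$ for an arbitrary $B \in \tetp(X)$. Now $(U \cap B) \in \tetp(U)$ by continuity of the inclusion $U \hookrightarrow X$; since $V$ is open in $U$, Lemma~\ref{Lemma: open_closed_subsets_characterization} applied inside $U$ gives $(V \cap (U \cap B)) \in \tetp(U)$, and $V \subseteq U$ turns this set into $V \cap B$. Finally $V \cap B$ is a test subset of the strongly open $U$, so Lemma~\ref{lemma:opcl_subspaces}(\ref{lemma:opcl_subspaces_open}) yields $(V \cap B) \in \tetp(X)$, as needed.

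\emph{Closed case.} Let $F \subseteq X$ be strongly closed and let $G \subseteq F$ be closed in $F$; I want $(G \setimpl[X] B) \in \tetp(X)$ for every $B \in \tetp(X)$. As before $(F \cap B) \in \tetp(F)$, so closedness of $G$ in $F$ gives $(G \setimpl[F] (F \cap B)) \in \tetp(F)$. The only book-keeping is the pointwise identity $G \setimpl[F] (F \cap B) = F \cap (G \setimpl[X] B)$, valid because $G \subseteq F$ forces $x \in G$ to imply $x \in F$; hence $F \cap (G \setimpl[X] B)$ is a test subset of the strongly closed $F$, and Lemma~\ref{lemma:opcl_subspaces}(\ref{lemma:opcl_subspaces_closed}) gives $(F \setimpl[X] (F \cap (G \setimpl[X] B))) \in \tetp(X)$. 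A second use of $G \subseteq F$ identifies $F \setimpl[X] (F \cap (G \setimpl[X] B))$ with $G \setimpl[X] B$, which is therefore test in $X$.

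I do not anticipate a real obstacle: the proof is essentially a bookkeeping exercise with the operations $\cap$ and $\setimpl$. The one point requiring care is that, unlike $\cap$, the subset-implication $\setimpl$ depends on the ambient set, so one must track the subscripts carefully when moving between $F$ and $X$ in the closed case; the two identities used there are elementary once unwound in the internal logic, relying only on $G \subseteq F$.
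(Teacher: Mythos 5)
Your proof is correct and follows essentially the same route as the paper's: reduce to the test-subset characterization (Lemma~\ref{Lemma: open_closed_subsets_characterization}), use continuity of the inclusion to pull the test set down, apply openness/closedness inside the subspace, and push back up via Lemma~\ref{lemma:opcl_subspaces}. The two bookkeeping identities you isolate in the closed case are exactly the chain of $\setimpl$-rewritings the paper performs, just packaged in a slightly different order, and both are valid given $G \subseteq F$.
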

      \begin{proof}
         Use Lemma~\ref{Lemma: open_closed_subsets_characterization}. Let $V \subseteq U \subseteq X$ where $V$ is open in $U$ and $U$ is strongly open in $X$. Take any $A \in \tetp(X)$. Then $U \cap A \in \tetp(U)$, so $V \cap U \cap A = V \cap A \in \tetp(U)$. By Lemma~\ref{lemma:opcl_subspaces}(\ref{lemma:opcl_subspaces_open}) $V \cap A \in \tetp(X)$, so $V$ is open in $X$.
         
         Do similarly for closed subsets. Let $G \subseteq F \subseteq X$ where $G$ is closed in $F$ which is strongly closed in $X$. For any $A \in \tetp(X)$ we have
         $$G \setimpl[X] A \sepeq (F \cap G) \setimpl[X] A \sepeq F \setimpl[X] (G \setimpl[X] A) \sepeq$$
         $$\sepeq F \setimpl[X] \big(G \setimpl[X] A \cap F\big) \sepeq F \setimpl[X] \big(G \setimpl[F] A \cap F\big).$$
         Since $A \cap F \in \tetp(F)$ and $G$ is closed in $F$, we have $(G \setimpl[F] A \cap F) \in \tetp(F)$. By Lemma~\ref{lemma:opcl_subspaces}(\ref{lemma:opcl_subspaces_closed}), $\big(F \setimpl[X] (G \setimpl[F] A \cap F)\big) \in \tetp(X)$, so $G$ is closed in $X$.
      \end{proof}
      
      \begin{corollary}\label{Corollary: strongly open/closed in overt/condensed/compact}
      	A strongly open subset of an overt set is overt. A strongly closed subset of a condensed set is condensed. If $\tst = \opn$, then a strongly closed subset of a compact set is compact.
      \end{corollary}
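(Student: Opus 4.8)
The plan is to read each of the three statements off Proposition~\ref{Proposition: open/closed_subsets_of_strongly_open/closed_are_open/closed} (for overtness and condenseness) and, in the compact case, off Lemma~\ref{lemma:opcl_subspaces} together with the usual rewriting of a bounded universal quantifier as an implication.

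First I would do the overt case. Let $U \subseteq X$ be strongly open with $X$ overt, and take any $V \in \optp(U)$. Since $V$ is an open subset of the strongly open subset $U$, Proposition~\ref{Proposition: open/closed_subsets_of_strongly_open/closed_are_open/closed} gives $V \in \optp(X)$. As $V \subseteq U$,
$$\xsome{x}{U}{x \in V} \iff \xsome{x}{X}{x \in V},$$
and the right-hand side is open because $X$ is overt; hence $U$ is overt. The condensed case is the exact dual: for $F \subseteq X$ strongly closed with $X$ condensed and $G \in \cltp(F)$, Proposition~\ref{Proposition: open/closed_subsets_of_strongly_open/closed_are_open/closed} yields $G \in \cltp(X)$, and $G \subseteq F$ gives $\xsome{x}{F}{x \in G} \iff \xsome{x}{X}{x \in G}$, which is closed since $X$ is condensed.

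For the compact case, assume $\tst = \opn$, let $F \subseteq X$ be strongly closed, $X$ compact, and take $U \in \optp(F)$. Here Proposition~\ref{Proposition: open/closed_subsets_of_strongly_open/closed_are_open/closed} alone is not enough, since $\xall{x}{F}{x \in U}$ is not $\xall{x}{X}{x \in U}$ in general; instead I would rewrite, using $U \subseteq F$,
$$\xall{x}{F}{x \in U} \iff \xall{x}{X}{x \in (F \setimpl[X] U)}.$$
Because $\tst = \opn$, the open subset $U$ of $F$ is in particular a test subset of $F$, so Lemma~\ref{lemma:opcl_subspaces}(\ref{lemma:opcl_subspaces_closed}) makes $F \setimpl[X] U$ a test — hence, again by $\tst = \opn$, open — subset of $X$. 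Compactness of $X$ then makes the right-hand side above open, so $F$ is compact.

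The only genuine point to watch is where the hypothesis $\tst = \opn$ enters: it is precisely what upgrades ``$U$ open in $F$'' to ``$U$ test in $F$'', which is the input Lemma~\ref{lemma:opcl_subspaces}(\ref{lemma:opcl_subspaces_closed}) needs in order to conclude that $F \setimpl[X] U$ is open in $X$; without it one only knows $U$ is open in $F$, and there is no reason for $F \setimpl[X] U$ to be open. Everything else is the routine Frobenius-style rewriting already used throughout the section.
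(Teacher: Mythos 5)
Your proof is correct and follows essentially the same route as the paper's: the overt and condensed cases via Proposition~\ref{Proposition: open/closed_subsets_of_strongly_open/closed_are_open/closed} plus the trivial rewriting of the existential quantifier, and the compact case via the equivalence $\xall{x}{F}{x \in U} \iff \xall{x}{X}{x \in (F \setimpl[X] U)}$ together with Lemma~\ref{lemma:opcl_subspaces}(\ref{lemma:opcl_subspaces_closed}) and the hypothesis $\tst = \opn$. Your remark pinpointing exactly where $\tst = \opn$ enters matches the paper's (terser) argument.
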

      \begin{proof}
      	For $U \subseteq V \subseteq X$ we have $\xsome{x}{V}{x \in U} \iff \xsome{x}{X}{x \in U}$, and by the previous lemma, if $V$ is strongly open in $X$ and $U$ open in $V$, then $U$ is also open in $X$. Same for condenseness and closed sets.
      	
      	Assume $\tst = \opn$, let $X$ be compact, $F \subseteq X$ closed in $X$, and $U \subseteq F$ an open = test subset of $F$. Then $\xall{x}{F}{x \in U} \iff \all{x}{X}{x \in F \implies x \in U}$, and the latter statement is test, consequently open, by Lemma~\ref{lemma:opcl_subspaces}.
      \end{proof}
      
      Consider strongly open (resp.~closed) subsets of $\one$; call the corresponding truth values (via $\pst(\one) \ism \soc$) \df{strongly open} (resp.~\df{closed}). Denote the set of strongly open (resp.~closed) truth values by $\overline{\opn}$ (resp.~$\overline{\cld}$). Of course $\overline{\opn} \subseteq \opn$, $\overline{\cld} \subseteq \cld$.
      
      Here is an explicit characterization for strongly open and closed truth values.
      \begin{proposition}\label{Proposition: characterization_of_strongly_open/closed_truth_values}
      	A truth value $u \in \soc$ is strongly open if and only if the condition
      	\begin{equation}\label{DO}\tag{DO}
      		\all{p}{\soc}{(u \impl (p \in \tst)) \implies (u \land p) \in \tst}
      	\end{equation}
      	holds. A truth value $f \in \soc$ is strongly closed if and only if the condition
      	\begin{equation}\label{DC}\tag{DC}
      		\all{p}{\soc}{(f \impl (p \in \tst)) \implies (f \impl p) \in \tst}
      	\end{equation}
      	holds.
      \end{proposition}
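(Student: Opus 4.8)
The plan is to unwind "strongly open'' and "strongly closed'' at the level of the subsingleton $\st{\unit \in \one}{u} \subseteq \one$ and reduce everything to truth-value computations. Recall that a strongly open truth value is, by definition, one whose corresponding subset $U \dfeq \st{\unit \in \one}{u}$ is both open (\ie $u \in \opn$) and a subspace of $\one$, and likewise a strongly closed truth value $f$ is one for which $f \in \cld$ and $F \dfeq \st{\unit \in \one}{f}$ is a subspace of $\one$. So it suffices to show that (DO) is equivalent to the conjunction "$u \in \opn$ and $U$ is a subspace of $\one$'', and dually that (DC) is equivalent to "$f \in \cld$ and $F$ is a subspace of $\one$''.

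First I would record the easy but pivotal description of test subsets of a subsingleton. For $U = \st{\unit \in \one}{u}$ a subset $B \subseteq U$ is given by a truth value $b$ with $b \impl u$, and $B \in \tetp(U)$ exactly when $u \impl (b \in \tst)$, since the only potential element $\unit$ of $U$ belongs to $U$ precisely when $u$ holds. Moreover $B = U \cap C$ for some $C \in \tetp(\one) \ism \tst$ means $b = u \land c$ with $c \in \tst$. Hence $U$ is a subspace of $\one$ if and only if every $b$ with $b \impl u$ and $u \impl (b \in \tst)$ is of the form $b = u \land c$ for some $c \in \tst$. The same computation (the subspace definition uses $\cap$, not $\setimpl$) gives the identical statement for $F$ with $f$ in place of $u$.

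Next I would run the two directions in the open case. From (DO), taking $p \dfeq t \in \tst$ (the hypothesis $u \impl (t \in \tst)$ being automatic) gives $u \land t \in \tst$, so $u \in \opn$; and for $b$ with $b \impl u$ and $u \impl (b \in \tst)$, (DO) with $p \dfeq b$ yields $u \land b = b \in \tst$, so $c \dfeq b$ witnesses the subspace condition. Conversely, given "$u \in \opn$ and $U$ a subspace'' and any $p$ with $u \impl (p \in \tst)$, put $b \dfeq u \land p$; then $b \impl u$ and $u \impl (b \in \tst)$ (argue under the assumption $u$, where $b$ collapses to $p$), so by the subspace property $b = u \land c$ with $c \in \tst$, whence $u \land p = u \land c \in \tst$ because $u \in \opn$. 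The closed case is parallel, using the identity $(f \impl p) = \big(f \impl (f \land p)\big)$ and, for $b \impl f$, the identity $f \land (f \impl b) = b$: from (DC) one gets $f \in \cld$ (take $p \dfeq t$) and, for $b$ a test subset of $F$, $(f \impl b) \in \tst$, so $c \dfeq f \impl b$ witnesses the subspace condition; conversely, for $p$ with $f \impl (p \in \tst)$, set $b \dfeq f \land p$, obtain $b = f \land c$ with $c \in \tst$ from the subspace property, and conclude $(f \impl p) = (f \impl b) = (f \impl c) \in \tst$ since $f \in \cld$.

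The main obstacle is not any single deep step but keeping the truth-value bookkeeping honest: verifying "$u \impl (b \in \tst)$'' for the auxiliary choice $b = u \land p$, and spotting the right witnesses $c$ ($c = b$ in the open case, $c = f \impl b$ in the closed case) so that the subspace condition lines up with (DO)/(DC) exactly. I would therefore state the identification $\tetp(U) \ism \st{b \in \soc}{b \impl u \ \text{and}\ u \impl (b \in \tst)}$ cleanly as a preliminary, since the whole proof hinges on it.
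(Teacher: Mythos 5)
Your proof is correct and follows essentially the same route as the paper: identify $\tetp(U)$ for $U=\st{*\in\one}{u}$ with $\st{b\in\soc}{b\impl u\ \text{and}\ u\impl(b\in\tst)}$, then match the subspace condition against (DO)/(DC) via the $\land\dashv\impl$ bookkeeping. The only difference is that where the paper cites Lemma~\ref{lemma:opcl_subspaces} for the direction from strong openness/closedness to (DO)/(DC), you reprove its content inline for the case $X=\one$ (and you make the trivial $u\in\opn$, $f\in\cld$ checks explicit, which the paper leaves tacit).
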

      \begin{proof}
      	Suppose $u \in \overline{\opn}$. Take any $p \in \soc$, and assume $u \impl (p \in \tst)$. Viewing truth values as subsets of $\one$, notice that $u \land p$ is a test subset of $u$. This is because for any $x \in u$ we have $x = *$ and $u$ holds, so $p \in \tst \ism \tetp(\one)$, and therefore $x \in u \land p$, or equivalently $* \in p$, is a test statement. Now use Lemma~\ref{lemma:opcl_subspaces} to conclude that $u \land p$ is a test subset of $\one$, ie.~a test truth value.
      	
      	Conversely, suppose~(\ref{DO}) holds for $u$, and take any $t \in \tetp(u)$. The antecedent in~(\ref{DO}) is satisfied for $p = t$, and so $u \land t = t \in \tst$.
      	
      	Similarly for closed truth values.
      \end{proof}
      
      Notice that here we took a different strategy than before: we first defined the subsets with the property that interests us, and then the truth values. Actually, as Lemma~\ref{Lemma: open_closed_subsets_characterization} tells us, we could have done the same with open and closed subsets. But we do not have a choice in this case, as strongly open/closed subsets are not classified by $\overline{\opn}$ and $\overline{\cld}$ in general (see Proposition~\ref{Proposition: classification_of_strongly_open/closed} below). Still, we have one direction.
      \begin{lemma}\label{Lemma: strongly_open/closed_one-way_classified}
         Let $U \subseteq X$. If for every $x \in X$ the truth value of $x \in U$ is strongly open, then $U$ is a strongly open subset of $X$. Analogously for closedness.
      \end{lemma}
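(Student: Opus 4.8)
The plan is to unwind the definition of \emph{strongly open subset} — which by definition means ``open \emph{and} a subspace'' — and verify the two conjuncts separately. Openness of $U$ is immediate and needs no work: by hypothesis each truth value $x \in U$ lies in $\opn$, and that is precisely what it means for $U$ to be an open subset of $X$. So the real content is showing that $U$ is a subspace of $X$, i.e. that every $B \in \tetp(U)$ can be written as $U \cap C$ for some $C \in \tetp(X)$.

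For the subspace part I would take the obvious candidate $C \dfeq B$ itself, regarded as a subset of $X$ (legitimate since $B \subseteq U \subseteq X$); then $U \cap C = U \cap B = B$ holds automatically because $B \subseteq U$, so the whole question reduces to checking that $B$ is a \emph{test} subset of $X$, not merely of $U$. Fix $x \in X$. Since $B \subseteq U$ we have $(x \in B) = (x \in U) \land (x \in B)$, and since $B \in \tetp(U)$ we have the implication $(x \in U) \implies \big((x \in B) \in \tst\big)$. Now invoke Proposition~\ref{Proposition: characterization_of_strongly_open/closed_truth_values}: because $x \in U$ is strongly open, condition~(DO) applies with $p \dfeq (x \in B)$, and its antecedent is exactly the implication just noted; it therefore yields $(x \in U) \land (x \in B) \in \tst$, i.e. $(x \in B) \in \tst$. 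As $x$ was arbitrary, $B \in \tetp(X)$, so $U$ is a subspace, hence strongly open.

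The closed case is dual. Write $F$ for $U$; each truth value $x \in F$ is in particular closed, so $F$ is a closed subset of $X$. For the subspace condition, given $B \in \tetp(F)$ the candidate is $C \dfeq F \setimpl[X] B$, for which $F \cap C = F \cap B = B$ (using $B \subseteq F$). Checking $C \in \tetp(X)$ amounts, for each $x \in X$, to showing that $(x \in F) \impl (x \in B)$ is a test truth value, and this follows from condition~(DC) of Proposition~\ref{Proposition: characterization_of_strongly_open/closed_truth_values} applied to the strongly closed truth value $x \in F$ with $p \dfeq (x \in B)$, its antecedent being the implication $(x \in F) \implies \big((x \in B) \in \tst\big)$ that comes from $B \in \tetp(F)$. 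I do not anticipate a genuine obstacle; the only points to be careful about are remembering that ``strongly open'' carries the openness requirement alongside the subspace requirement, and phrasing the test-subset verification through the pointwise conditions~(DO)/(DC) rather than re-deriving them (one could instead appeal to Lemma~\ref{lemma:opcl_subspaces}, but since that lemma already presupposes strong openness/closedness, arguing directly from the truth-value characterization is cleaner).
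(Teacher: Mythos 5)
Your proof is correct and follows essentially the same route as the paper's: both reduce the subspace condition pointwise to a statement about the truth values $x\in B$, the only cosmetic difference being that you invoke conditions~(DO)/(DC) of Proposition~\ref{Proposition: characterization_of_strongly_open/closed_truth_values} directly, whereas the paper applies Lemma~\ref{lemma:opcl_subspaces} to truth values viewed as subsets of $\one$ (which is legitimate there, since the hypothesis makes $(x\in U)$ a strongly open subset of $\one$). Your explicit choice of witnesses $C=B$ resp.\ $C=F\setimpl[X]B$ is exactly what the paper leaves implicit.
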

      \begin{proof}
         Of course $U$ is open. Take any $A \in \tetp(U)$ and any $x \in X$. The inclusion $A \subseteq U$ implies $x \in A \implies x \in U$ which when truth values are interpreted as subsets of $\one$ means $(x \in A) \subseteq (x \in U)$. Use Lemma~\ref{lemma:opcl_subspaces}(\ref{lemma:opcl_subspaces_open}) to conclude that $(x \in A) \in \tetp(\one)$, ie.~$x \in A$ is a test truth value. Since $x \in X$ was arbitrary, $A$ is a test subset of $X$ which proves the  first part of the lemma. Using Lemma~\ref{lemma:opcl_subspaces}(\ref{lemma:opcl_subspaces_closed}) we can write an analogous proof for strongly closed subsets.
      \end{proof}
      
      Consider what this all means for decidable sets and truth values.
      
      \begin{lemma}\label{Lemma: decidable_subsets_open/closed_subspaces}
      	If $\bot \in \tst$, decidable subsets are strongly open. If $\top \in \tst$, decidable subsets are strongly closed.
      \end{lemma}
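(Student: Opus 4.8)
The plan is to reduce the statement about subsets to the corresponding statement about truth values, and then to verify the latter by a case split coming from decidability. First I would recall Lemma~\ref{Lemma: strongly_open/closed_one-way_classified}: if for every $x \in X$ the truth value of $x \in U$ is strongly open (resp.\ strongly closed), then $U$ is a strongly open (resp.\ strongly closed) subset of $X$. Since a decidable subset $U \subseteq X$ is by definition one for which $x \in U$ is a decidable truth value for every $x$, it therefore suffices to prove that every decidable truth value is strongly open when $\bot \in \tst$, and strongly closed when $\top \in \tst$.

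For strong openness I would invoke the characterization~(\ref{DO}) of Proposition~\ref{Proposition: characterization_of_strongly_open/closed_truth_values}: a decidable $u \in \soc$ is strongly open iff for all $p \in \soc$ the implication $\big(u \impl (p \in \tst)\big) \implies (u \land p) \in \tst$ holds. Fix such a $p$ and split on $u \lor \lnot u$. If $u$ holds, then $p \in \tst$ by the hypothesis, and $u \land p = p$, so $u \land p \in \tst$. If $\lnot u$ holds, then $\lnot(u \land p)$, hence $u \land p = \bot$, which lies in $\tst$ by assumption. Thus~(\ref{DO}) holds, and $u$ is strongly open; applying Lemma~\ref{Lemma: strongly_open/closed_one-way_classified} gives that decidable subsets are strongly open.

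The case of strong closedness is dual, using~(\ref{DC}): a decidable $f \in \soc$ is strongly closed iff for all $p \in \soc$ the implication $\big(f \impl (p \in \tst)\big) \implies (f \impl p) \in \tst$ holds. Again split on $f \lor \lnot f$. If $f$ holds, then $p \in \tst$ by hypothesis, and $f \impl p = p \in \tst$. If $\lnot f$ holds, then $f \impl p$ is true, i.e.\ $f \impl p = \top$, which lies in $\tst$ by assumption. Hence~(\ref{DC}) holds, $f$ is strongly closed, and Lemma~\ref{Lemma: strongly_open/closed_one-way_classified} finishes the closed case. There is no genuine obstacle here; the only point requiring a little care is keeping the direction of the implications straight and observing that $\lnot q$ forces $q = \bot$ in $\soc$, so that the ``false'' branch of each case analysis lands exactly on the hypothesized element ($\bot$, resp.\ $\top$) of $\tst$.
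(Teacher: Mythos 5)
Your proof is correct and follows exactly the paper's route: reduce to truth values via Lemma~\ref{Lemma: strongly_open/closed_one-way_classified}, then verify conditions~(\ref{DO}) and~(\ref{DC}) for decidable truth values by case analysis (the paper states this verification as "easy" and you have simply supplied the details).
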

      \begin{proof}
      	It follows easily from Proposition~\ref{Proposition: characterization_of_strongly_open/closed_truth_values} that $\bot \in \tst$ implies $\two \subseteq \overline{\opn}$, and $\top \in \tst$ implies $\two \subseteq \overline{\cld}$ (in fact, the converses also hold). Now use Lemma~\ref{Lemma: strongly_open/closed_one-way_classified}.
      \end{proof}
      
      \begin{corollary}\label{Corollary: decidable_subsets_of_overt/compact_sets}
      	If $\two \subseteq \tst = \opn$, then decidable subsets of overt/condensed/compact sets are overt/condensed/compact.
      \end{corollary}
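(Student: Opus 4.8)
The plan is to chain the two immediately preceding results, Lemma~\ref{Lemma: decidable_subsets_open/closed_subspaces} and Corollary~\ref{Corollary: strongly open/closed in overt/condensed/compact}, so that essentially no new work is needed. First I would unpack the hypothesis $\two \subseteq \tst$: it says precisely that both $\bot \in \tst$ and $\top \in \tst$. By Lemma~\ref{Lemma: decidable_subsets_open/closed_subspaces}, the former gives that every decidable subset (anywhere) is strongly open, and the latter gives that every decidable subset is strongly closed. So a decidable subset $D \subseteq X$ is automatically a strongly open \emph{and} strongly closed subspace of $X$, regardless of any further properties of $X$.

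Next I would treat the three cases separately, each in one line. For overtness: if $X$ is overt and $D \subseteq X$ is decidable, then $D$ is strongly open in $X$, hence overt by the first clause of Corollary~\ref{Corollary: strongly open/closed in overt/condensed/compact}. For condenseness: if $X$ is condensed and $D \subseteq X$ is decidable, then $D$ is strongly closed in $X$, hence condensed by the second clause of the same corollary. For compactness: if $X$ is compact and $D \subseteq X$ is decidable, then $D$ is strongly closed in $X$, and since the standing hypothesis includes $\tst = \opn$, the third clause of Corollary~\ref{Corollary: strongly open/closed in overt/condensed/compact} applies and $D$ is compact.

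There is essentially no obstacle here — the corollary is a bookkeeping consequence of the previous two statements. The only point worth flagging explicitly is \emph{why} the full equality $\tst = \opn$ is assumed rather than just $\two \subseteq \tst$: it is needed solely for the compactness case, because the transfer ``strongly closed subset of a compact set is compact'' in Corollary~\ref{Corollary: strongly open/closed in overt/condensed/compact} itself carries the hypothesis $\tst = \opn$ (a decidable, i.e. test, subset of $X$ must be recognized as open, so that the computation $\xall{x}{F}{x \in U} \iff \all{x}{X}{x \in F \implies x \in U}$ lands an open truth value). For the overt and condensed cases the weaker $\two \subseteq \tst$ would in fact suffice, but stating the corollary under the uniform hypothesis $\two \subseteq \tst = \opn$ keeps it clean.
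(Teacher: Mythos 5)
Your proof is correct and is exactly the paper's argument: the paper's proof reads, in full, ``By Lemma~\ref{Lemma: decidable_subsets_open/closed_subspaces} and Corollary~\ref{Corollary: strongly open/closed in overt/condensed/compact}.'' Your elaboration of how $\two \subseteq \tst$ yields both strong openness and strong closedness of decidable subsets, and your remark on where $\tst = \opn$ is actually used, are accurate.
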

      \begin{proof}
      	By Lemma~\ref{Lemma: decidable_subsets_open/closed_subspaces} and Corollary~\ref{Corollary: strongly open/closed in overt/condensed/compact}.
      \end{proof}
      
      We are now ready to give the redefinition of the dominance property.
      \begin{definition}
      	\
      	\begin{itemize}
				\item The set $\opn$ is a \df{dominance} for $\tst$ when~(\ref{DO}) holds for all $u \in \opn$, ie.~when the following condition holds for $\opn$ and $\tst$.
				$$\xall{u}{\opn}\all{p}{\soc}{(u \impl (p \in \tst)) \implies (u \land p) \in \tst}$$
				\item The set $\cld$ is a \df{codominance} for $\tst$ when~(\ref{DC}) holds for all $f \in \cld$, ie.~when the following condition holds for $\cld$ and $\tst$.
				$$\xall{f}{\cld}\all{p}{\soc}{(f \impl (p \in \tst)) \implies (f \impl p) \in \tst}$$
			\end{itemize}
      \end{definition}
      We did not require $\top$ to be in $\opn$ and $\cld$ as in the original definition since we get that for free. From Proposition~\ref{Proposition: characterization_of_strongly_open/closed_truth_values} it is clear that $\opn$ is a dominance for $\tst$ precisely when $\opn = \overline{\opn}$ (compare with Proposition~\ref{Proposition: dominance}), and $\cld$ is a codominance for $\tst$ when $\cld = \overline{\cld}$.
      
      As expected, the dominance axiom implies desirable properties of synthetic topology.
      \begin{proposition}\label{Proposition: consequences_of_(co)dominance}
      	If $\opn$ is a dominance for $\tst$, then:
      	\begin{itemize}
      		\item open subsets are subspaces (ie.~open and strongly open subsets match),
      		\item openness is a transitive property (in the sense that open subset of an open set is open),
      		\item open subsets of an overt set are overt.
      	\end{itemize}
      	Similarly, if $\cld$ is a codominance for $\tst$, then closed subsets are subspaces, closedness is transitive, and closed subsets of a condensed set are condensed.
      \end{proposition}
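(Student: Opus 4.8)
The plan is to reduce everything to the equivalences recorded just above: that $\opn$ is a dominance for $\tst$ exactly when $\opn=\overline{\opn}$, and $\cld$ is a codominance for $\tst$ exactly when $\cld=\overline{\cld}$, together with the inheritance results for strongly open and strongly closed subspaces. I treat the open case in detail; the closed case is entirely dual, replacing $\land$ by $\impl$, $\opn$ by $\cld$, ``overt'' by ``condensed'', and~(\ref{DO}) by~(\ref{DC}).

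First I would observe that if $\opn$ is a dominance for $\tst$, then by definition every $u\in\opn$ satisfies~(\ref{DO}), so Proposition~\ref{Proposition: characterization_of_strongly_open/closed_truth_values} tells us that every such $u$ is strongly open, i.e.\ $\opn\subseteq\overline{\opn}$; since the reverse inclusion always holds, $\opn=\overline{\opn}$. For the first bullet I would then take an open subset $U\subseteq X$: its characteristic map lands in $\opn=\overline{\opn}$, so for every $x\in X$ the truth value of $x\in U$ is strongly open, and Lemma~\ref{Lemma: strongly_open/closed_one-way_classified} yields that $U$ is a strongly open subset of $X$. But ``strongly open subset'' is by definition an open subset that is a subspace, so open and strongly open subsets coincide, which is exactly the claim.

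For transitivity, suppose $V\subseteq U\subseteq X$ with $V$ open in $U$ and $U$ open in $X$. By the previous step $U$ is strongly open in $X$, so Proposition~\ref{Proposition: open/closed_subsets_of_strongly_open/closed_are_open/closed} applies and gives that $V$ is open in $X$. For the last bullet, let $X$ be overt and $U\subseteq X$ open; again $U$ is strongly open in $X$, so Corollary~\ref{Corollary: strongly open/closed in overt/condensed/compact} gives that $U$ is overt. The dual string of implications — $\cld$ codominance $\Rightarrow$ $\cld=\overline{\cld}$ $\Rightarrow$ closed subsets are (strongly closed, hence) subspaces, closedness transitive via Proposition~\ref{Proposition: open/closed_subsets_of_strongly_open/closed_are_open/closed}, closed subsets of condensed sets condensed via Corollary~\ref{Corollary: strongly open/closed in overt/condensed/compact} — handles the remaining claims.

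I do not expect a genuine obstacle: this proposition is a bookkeeping corollary of the apparatus assembled immediately before it. The only points to get right are the direction of the one-way classification in Lemma~\ref{Lemma: strongly_open/closed_one-way_classified} (pointwise strongly open membership values force the whole subset to be strongly open, which is the direction we need) and the bare observation that ``$A$ is a subspace'' together with ``$A$ is open'' is literally the definition of ``$A$ is strongly open'', so that ``open = strongly open'' and ``open subsets are subspaces'' are the same statement.
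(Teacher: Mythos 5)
Your proposal is correct and follows essentially the same route as the paper: dominance gives $\opn=\overline{\opn}$, so open subsets are pointwise strongly open and hence strongly open by the one-way classification lemma, after which the subspace, transitivity, and overtness claims are exactly the earlier inheritance results for strongly open subsets (and dually for $\cld$). The only difference is that you spell out the intermediate steps the paper compresses into two sentences.
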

      \begin{proof}
      	Suppose $U \subseteq X$ is an open subset of $X$. That means that for every $x \in X$, the truth value of $x \in U$ is in $\opn$, hence in $\overline{\opn}$. By Lemma~\ref{Lemma: strongly_open/closed_one-way_classified}, $U$ is strongly closed in $X$. The rest now follows from Proposition~\ref{Proposition: open/closed_subsets_of_strongly_open/closed_are_open/closed} and Corollary~\ref{Corollary: strongly open/closed in overt/condensed/compact}. The proof is the same for $\cld$.
      \end{proof}
      
      To bring the redefinitions full circle, we return to the discussion of \Esc closed subsets.
      \begin{proposition}\label{Proposition: classification_of_strongly_open/closed}
      	The following statements are equivalent.
      	\begin{enumerate}
      		\item
      			$\opn$ is a dominance.
      		\item
      			Strongly open subsets are classified by $\overline{\opn}$.
      		\item
      			Strongly open subsets are classified by a subset of $\soc$.
      		\item
      			Maps are continuous with regard to strongly open subsets.
      	\end{enumerate}
      	Analogously, the following statements are equivalent as well.
      	\begin{enumerate}
      		\item
      			$\cld$ is a codominance.
      		\item
      			Strongly closed subsets are classified by $\overline{\cld}$.
      		\item
      			Strongly closed subsets are classified by a subset of $\soc$.
      		\item
      			Maps are continuous with regard to strongly closed subsets.
      	\end{enumerate}
      \end{proposition}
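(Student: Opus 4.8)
The plan is to run the cycle $(1)\Rightarrow(2)\Rightarrow(3)\Rightarrow(4)\Rightarrow(1)$ for the open case, and to obtain the closed case by the verbatim substitution $\opn\rightsquigarrow\cld$, $\cap\rightsquigarrow\setimpl$, $\land\rightsquigarrow\impl$, (DO)$\rightsquigarrow$(DC), ``open/strongly open''$\rightsquigarrow$``closed/strongly closed'' --- this is legitimate because Lemmas~\ref{lemma:opcl_subspaces} and~\ref{Lemma: strongly_open/closed_one-way_classified}, and Proposition~\ref{Proposition: characterization_of_strongly_open/closed_truth_values}, all come in matching open/closed pairs.

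Three of the four implications are bookkeeping. For $(1)\Rightarrow(2)$: $\opn$ is a dominance exactly when $\opn=\overline{\opn}$ (as noted after the definition); a strongly open $U\subseteq X$ is open, hence $\chi_U$ takes values in $\opn=\overline{\opn}$, and conversely $\chi_U$-into-$\overline{\opn}$ forces $U$ strongly open by Lemma~\ref{Lemma: strongly_open/closed_one-way_classified}. Thus the strongly open subsets are precisely those classified by $\overline{\opn}$, and $(2)\Rightarrow(3)$ is trivial since $\overline{\opn}\subseteq\soc$. For $(3)\Rightarrow(4)$: if $\mathcal{S}\subseteq\soc$ classifies the strongly open subsets and $f\colon X\to Y$ with $U\subseteq Y$ strongly open, then $\chi_{f^{-1}(U)}=\chi_U\circ f$ still lands in $\mathcal{S}$, so $f^{-1}(U)$ is strongly open --- i.e.\ maps are continuous for strongly open subsets.

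The content is in $(4)\Rightarrow(1)$. Applying (4) to the point inclusions $\one\to X$ first shows that a strongly open $U\subseteq X$ has $(x\in U)\in\overline{\opn}$ for all $x$, so by Lemma~\ref{Lemma: strongly_open/closed_one-way_classified} the strongly open subsets are again exactly those classified by $\overline{\opn}$; hence all that is left is $\opn\subseteq\overline{\opn}$. By Proposition~\ref{Proposition: characterization_of_strongly_open/closed_truth_values} this says every $u\in\opn$ satisfies~(\ref{DO}), and, reversing the argument of that proposition, this is equivalent to: every open subset of $\one$ is a subspace of $\one$. Granting that, for $p\in\soc$ with $u\impl(p\in\tst)$ the set $u\land p$ is a test subset of $u$, hence of the form $u\land c$ for some $c\in\tst$, and $u\land c\in\tst$ since $u\in\opn$, so $u\land p\in\tst$.

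So the whole burden is: from ``every map is continuous for strongly open subsets'' deduce that each open $u\subseteq\one$ is a subspace of $\one$. The intended mechanism is to realize such a $u$ (or an arbitrary test subset $B$ of $u$, corresponding to a truth value $q$ with $q\impl u$ and $u\impl(q\in\tst)$) as the preimage of a strongly open subset $V$ of some larger set $Y$, and then let (4) together with Lemma~\ref{lemma:opcl_subspaces}(\ref{lemma:opcl_subspaces_open}) do the work. I expect the real obstacle to be producing this $V$: the ``generic'' open subset $\st{v\in\opn}{v}\subseteq\opn$ is a subspace of $\opn$ only once $\opn$ is already a dominance, so it cannot be used to bootstrap, and the only subsets one can cheaply certify as strongly open (whole sets; decidable subsets when $\bot\in\tst$) are too few in general. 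A genuine construction of $V$ --- presumably built by hand out of $u$ and $q$ inside $\soc$ and shown to be a subspace directly from the hypothesis $u\impl(q\in\tst)$ --- is where the argument must really be made; the rest, including the entire closed half, is the routine dualisation already indicated.
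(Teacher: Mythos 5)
Your cycle $(1)\Rightarrow(2)\Rightarrow(3)\Rightarrow(4)$ matches the paper's argument, and your dualisation strategy for the closed half is exactly what the paper does. The problem is $(4)\Rightarrow(1)$, which you correctly identify as the only implication with content but then leave unproved --- and, worse, you explicitly dismiss the construction that actually closes it. You write that the generic open subset $\st{v\in\opn}{v}=\{\top\}\subseteq\opn$ ``is a subspace of $\opn$ only once $\opn$ is already a dominance, so it cannot be used to bootstrap.'' That is false, and it is precisely the bootstrap the paper uses. The singleton $\{\top\}$ is a retract of $\opn$ (via the constant map $v\mapsto\top$), and retracts are always subspaces with no hypothesis on $\opn$ whatsoever: if $r\colon X\to A$ is a retraction and $B\in\tetp(A)$, then $r^{-1}(B)\in\tetp(X)$ because test subsets are classified (so all maps are continuous for them), and $B=A\cap r^{-1}(B)$. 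This is recorded in the paper right after the definition of (well-embedded) subspaces. Since $\{\top\}$ is open in $\opn$ (its characteristic map is $\id[\opn]$), it is strongly open unconditionally.

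With that in hand, $(4)\Rightarrow(1)$ is one line: any open $U\subseteq X$ is $\chi_U^{-1}(\{\top\})$ for its characteristic map $\chi_U\colon X\to\opn$, so by (4) it is strongly open; hence $\opn=\overline{\opn}$, i.e.\ $\opn$ is a dominance. Your detour through subsets of $\one$ and the reversal of Proposition~\ref{Proposition: characterization_of_strongly_open/closed_truth_values} is unnecessary once this is seen. So the gap is not a minor omission: the ``genuine construction of $V$'' you ask for is the classifying situation itself, $V=\{\top\}\subseteq\opn=Y$, and your stated reason for rejecting it is the error that prevents you from completing the proof.
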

      \begin{proof}
      	\begin{itemize}
      		\item\proven{$(1 \impl 2)$}
      			If $\opn$ is a dominance, strongly open and open subsets match, so strongly open subsets are classified by $\opn = \overline{\opn}$.
      		\item\proven{$(2 \impl 3)$}
      			Of course.
      		\item\proven{$(3 \impl 4)$}
      			Maps are continuous with regard to any family of classified subsets.
      		\item\proven{$(4 \impl 1)$}
      			The subset $\{\top\} \subseteq \opn$ is open, being classified by $\id[\opn]$. As a singleton it is a retract --- and therefore a subspace --- of $\opn$, so strongly open. Any open subset is a preimage of $\{\top\}$ (via its characteristic map), and by the assumption of continuity must therefore be strongly open.
      	\end{itemize}
      	The equivalence for closed subsets is proved exactly the same way.
      \end{proof}
      
      Note that in the case $\opn = \tst$ \Esc closed and strongly closed subsets match. This proposition then proves the existence of difficulties we lined up in the beginning of the section, as $\cld$ is not always a codominance.

		\intermission
   
   	So far we have studied the general theory of open and closed sets. However, in practical examples we often know more about $\opn$ and $\cld$. We end the section by considering typical properties which additionally hold.
   	
   	We introduced test truth values and subsets in order to put open and closed sets on equal footing. However if we restrict our attention to open sets only, as was the case in the usual models of synthetic topology up till now, we do not really need $\tst$ since, as already remarked, any $\opn$ is generated in particular by itself, \ie $\tst = \opn$. Indeed, by replacing all instances of `test' with `open' in this section, one obtains the standard definitions in synthetic topology. Another nice thing which is implied by $\tst = \opn$ is that we no longer have to differentiate between strong overtness and overtness, or strong condenseness and compactness.
   	
   	The next common property is that $\opn$ is a bounded $\lor$-subsemilattice of $\soc$ (\ie $\bot \in \opn$, and $\opn$ is closed under binary $\lor$), hence a bounded sublattice. This can be rephrased that finite sets are overt.
   	
   	Another thing that (at first glance perhaps surprisingly) often holds in practical examples is that open truth values are $\lnot\lnot$-stable. (Closed truth values typically aren't though; we denote their stable part by $\nnst[\cld] \dfeq \cld \cap \nnst$.)
   	
   	These properties taken together have several useful consequences.
   	
   	\begin{theorem}\label{Theorem: standard_special_case_of_synthetic_topology}
   		Suppose that $\tst = \opn \subseteq \nnst$, and that finite sets are overt. Then:
   		\begin{enumerate}
   			\item
   				$\cld = \st{f \in \soc}{\lnot{f} \in \opn}$ (closed sets are precisely those of which the complement is open),
   			\item
   				$\opn = \lnot\cld$ (all open sets can be expressed as the complements of the closed ones),\footnote{The first and second item together imply that open and closed subsets determine each other.}
   			\item
   				$\lnot\lnot\cld = \lnot\opn = \nnst[\cld]$ (complements of open sets are stable closed sets),
   			\item
   				strong condenseness, condenseness and compactness agree; moreover, condenseness can be tested just on stable closed subsets (in the sense that a map $f\colon X \to Y$ is condensed if and only if $\xsome{x}{X}{f(x) \in F}$ is closed under all stable closed subsets $F \subseteq Y$),
   			\item
   				$\cld$ is a bounded sublattice of $\soc$ (finite sets are condensed),
   			\item\label{thm:prac:Post}
   				$\opn \cap \cld = \two = \opn \cap \nnst[\cld]$ (clopen subsets are precisely the decidable ones),
   			\item
   				$\opn$ is a dominance if and only if openness is a transitive relation (open subsets of open subsets are open),
   			\item
   				$\cld$ is a codominance if and only if closedness is a transitive relation (closed subsets of closed subsets are closed), and similarly, $\lnot\lnot$-stable closed subsets are subspaces when stable closedness is a transitive relation.
   		\end{enumerate}
   	\end{theorem}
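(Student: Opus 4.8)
The plan is to prove item~(1) first, since every remaining item reduces to bookkeeping around it. For~(1): if $f \in \cld$ then, because $\bot \in \opn$ (finite sets are overt), $f \impl \bot = \lnot f \in \opn$. Conversely, suppose $\lnot f \in \opn$ and take any $u \in \opn$; I claim $f \impl u = \lnot f \lor u$. One implication is intuitionistically valid; for the other, from $f \impl u$ one derives $\lnot\lnot(\lnot f \lor u)$ by a one-line calculation, and since $\opn$ is a bounded sublattice (finite sets overt) we have $\lnot f \lor u \in \opn \subseteq \nnst$, so $\lnot f \lor u$ is $\lnot\lnot$-stable and the double negation may be stripped. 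Hence $f \impl u = \lnot f \lor u \in \opn$, so $f \in \cld$. The recurring device in the whole proof is exactly this: arrange for a truth value to lie in $\opn$ so that $\opn \subseteq \nnst$ lets one discard a $\lnot\lnot$.

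Items~(2), (3), (5), (6) then follow by routine manipulation of~(1). For~(2): $\lnot\cld \subseteq \opn$ is immediate from~(1), and if $u \in \opn \subseteq \nnst$ then $u = \lnot(\lnot u)$ with $\lnot u \in \cld$ by~(1) (as $\lnot\lnot u = u \in \opn$). For~(3) one uses that $\lnot$ of anything is stable together with~(1) in both directions, obtaining $\lnot\opn \subseteq \nnst[\cld]$, $\nnst[\cld] \subseteq \lnot\opn$, and $\lnot\lnot\cld = \lnot\opn$. For~(5), $\cld$ is already a bounded $\land$-subsemilattice, $\bot \in \cld$ because $\lnot\bot = \top \in \opn$, and $f \lor g \in \cld$ because $\lnot(f \lor g) = \lnot f \land \lnot g \in \opn$. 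For~(6), $\two \subseteq \opn \cap \nnst[\cld]$ is clear, and if $p \in \opn \cap \cld$ then $\lnot p \in \opn$ by~(1), so $p \lor \lnot p \in \opn \subseteq \nnst$; since $\lnot\lnot(p \lor \lnot p)$ is a theorem, stability gives $p \lor \lnot p$, i.e.\ $p \in \two$.

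For~(4): strong condenseness coincides with compactness already when $\tst = \opn$ (both say that $\xall{x}{X}{x \in U} \in \opn$ for all $U \in \optp(X)$), so only condenseness $=$ compactness must be checked. By~(1) and~(2) complementation is a bijection between $\optp(X)$ and $\cltp(X)$ at the level of subsets, and $\lnot\xsome{x}{X}{x \in F} = \xall{x}{X}{x \in F^C}$; hence $\xsome{x}{X}{x \in F} \in \cld$ iff $\xall{x}{X}{x \in F^C} \in \opn$, which is compactness. For the clause that stable closed subsets suffice, replace an arbitrary closed $F$ by $\lnot\lnot F$ (closed by~(1), stable, hence classified by $\nnst[\cld] = \lnot\lnot\cld$) and note $\lnot\xsome{x}{X}{f(x) \in F} = \xall{x}{X}{\lnot(f(x) \in F)} = \xall{x}{X}{\lnot(f(x) \in \lnot\lnot F)}$, so the two openness conditions are literally the same. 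For~(7), the redefined dominance for $\tst = \opn$ is the original dominance axiom, and since $\top \in \opn$ holds, Proposition~\ref{Proposition: dominance} gives $\opn$ dominance $\iff$ openness transitive. For~(8), the directions codominance $\Rightarrow$ closedness transitive (and its stable variant) are Proposition~\ref{Proposition: consequences_of_(co)dominance}; for the converses one dualizes the $(4 \impl 1)$ step of Proposition~\ref{Proposition: dominance}: given closedness transitive and $f \in \cld$, to show $\{\unit : f\}$ is a subspace of $\one$ take any open $B \subseteq \{\unit : f\}$; then $\{\unit : f\}\setminus B$ is closed in $\{\unit : f\}$ by~(1) applied inside that set, hence closed in $\one$ by transitivity, hence has open complement $C$ by~(1), and $\{\unit : f\} \cap C = B$, so $f \in \overline{\cld}$ and $\cld$ is a codominance; the $\lnot\lnot$-stable version is the same, using that a conjunction of stable truth values is stable, so $\{\unit : f\}\setminus B$ is stable closed when $\{\unit : f\}$ is.

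The main obstacle is item~(1): once the identity $f \impl u = \lnot f \lor u$ (for $u \in \opn$) is available, the duality between $\opn$ and $\cld$ is as clean as classically, and the rest is the classical bookkeeping performed with care about stability — the delicate point being to recognise at each step which truth values can be forced into $\opn$ so that $\opn \subseteq \nnst$ applies. A secondary difficulty is the converse in~(8), where (no dual of Proposition~\ref{Proposition: dominance} having been stated) one must build the extending open set by hand, as sketched above.
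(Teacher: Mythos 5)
Your proof is correct and follows essentially the same route as the paper's: item (1) via $f \impl u = \lnot\lnot(\lnot f \lor u) = \lnot f \lor u$ using $\bot \in \opn$, closure of $\opn$ under $\lor$, and stability, with the remaining items reduced to the same bookkeeping (for the converse in (8) the paper runs your argument with a general closed $F \subseteq X$ in place of a truth value, exhibiting the extending open set as $X \setminus (F \setminus U) = F \setimpl[X] U$). One cosmetic slip in (4): complementation is \emph{not} a bijection between $\optp(X)$ and $\cltp(X)$, since closed subsets need not be $\lnot\lnot$-stable (cf.\ your own item (3)); but your deduction only needs that $F \mapsto F^C$ maps $\cltp(X)$ onto $\optp(X)$ and that $U^C$ is closed for open $U$, both of which hold.
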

   	\begin{proof}
   		\begin{enumerate}
   			\item
   				Since $\emptyset$ is overt, $\bot \in \opn = \tst$, and so for every $f \in \cld$ the truth value of $f \impl \bot$ ($= \lnot f$) is test, hence open. Conversely, take any $f \in \soc$ such that $\lnot{f} \in \opn$. We have to verify $f \in \cld$, \ie $f \impl u$ is open for every $u \in \opn$. Calculate
   				$$f \impl u \sepeq f \impl \lnot\lnot{u} \sepeq \lnot\lnot(f \impl u) \sepeq \lnot\lnot(\lnot{f} \lor u).$$
   				Since $\opn$ is a $\lor$-semilattice, $(\lnot{f} \lor u) \in \opn$, and so $\lnot\lnot(\lnot{f} \lor u) \in \opn$.
   			\item
   				Follows from the previous item since for any $u \in \opn$ we have $u = \lnot\lnot{u}$ which means $\lnot{u} \in \cld$.
   			\item
   				Follows from previous two items.
   			\item
   				We already know that strong condenseness implies condenseness, and if $\tst = \opn$, then strong condenseness and compactness agree. Now take any map $f\colon X \to Y$, ``condensed for stable closed subsets'', and a test (= open) subset $U \subseteq Y$. Then
   				$$\xall{x}{X}{f(x) \in U} \iff \xall{x}{X}{\lnot\lnot(f(x) \in U)} \iff \lnot\xsome{x}{X}{f(x) \in U^C}.$$
   				Use previous items. The set $U^C$ is stable closed, so the truth value of the existential quantifier is closed. Therefore its negation is open, so $f$ is strongly condensed.
   			\item
   				Clearly $\bot \in \cld$ since $\bot = \lnot\top$ and $\top \in \opn$. Now take any $f, g \in \cld$. Since $\lnot(f \lor g) = (\lnot{f} \land \lnot{g}) \in \opn$, we conclude by the first item that $(f \lor g) \in \cld$. Alternatively, just recall that finite sets are compact, and then use the previous item.
   			\item
   				Clearly $\opn \cap \cld = \opn \cap \nnst[\cld]$ since $\opn \subseteq \nnst$, so it remains to verify $\opn \cap \cld = \two$. We already know that $\two \subseteq \opn$, $\two \subseteq \cld$. Conversely, take any $p \in \opn \cap \cld$. Using previous items, we see that also $\lnot{p} \in \opn \cap \cld$, and then $(p \lor \lnot{p}) \in \opn \cap \cld \subseteq \soc_{\lnot\lnot}$. Therefore $p \lor \lnot{p} \sepeq \lnot\lnot(p \lor \lnot{p}) \sepeq \lnot(\lnot{p} \land \lnot\lnot{p}) = \lnot\bot = \top$. So $p$ is decidable.
   			\item
   				We already know that one direction holds by Proposition~\ref{Proposition: consequences_of_(co)dominance}. The other follows from definitions since $\tst = \opn$.
   			\item
   				One direction is entailed in Proposition~\ref{Proposition: consequences_of_(co)dominance}. Conversely, let $X$ be a set, $F \subseteq X$ a closed subset, and $U \in \tp(F)$ a test = open subset of $F$. Then $F \setminus U$ is a closed subset of $F$, thus by transitivity of closedness also closed in $X$. Consequently, $X \setminus (F \setminus U)$ is open in $X$. For arbitrary $x \in X$ we have
   				\begin{align*}
   					& x \in X \setminus (F \setminus U)\\
   					\iff\quad& \lnot\big(x \in F \land \lnot(x \in U)\big)\\
   					\iff\quad& \lnot\lnot\lnot\big(x \in F \land \lnot(x \in U)\big)\\
   					\iff\quad& \lnot\big(\lnot\lnot(x \in F) \land \lnot(x \in U)\big)\\
   					\iff\quad& \lnot\lnot\big(x \in F \implies x \in U\big)\\
   					\iff\quad& x \in F \implies \lnot\lnot(x \in U)\\
   					\iff\quad& x \in F \implies x \in U.
   				\end{align*}
   				We explicitly prove the last equivalence. Assume $x \in F \implies \lnot\lnot(x \in U)$ and $x \in F$. Then $\lnot\lnot(x \in U)$, but $x \in F$ also implies that $x \in U$ is an open, and therefore $\lnot\lnot$-stable, statement; thus $x \in U$.
   				
   				We proved $X \setminus (F \setminus U) = F \setimpl[X] U$, and since $X \setminus (F \setminus U)$ is open, so is $F \setimpl[X] U$. The proof for stable closed subsets is the same.
   		\end{enumerate}
   	\end{proof}
   	
   	All examples of models we consider in Chapter~\ref{Chapter: models} satisfy the conditions of this theorem. It shouldn't come as a surprise that finite unions of opens are open, nor that $\tst = \opn$, given that only open subsets were considered thus far. Going a bit philosophical, we try to give some intuition for the remaining condition $\opn \subseteq \nnst$.
   	
   	A possible intuition for statements which $\lnot\lnot$-hold is that ``they hold in principle'', but we do not necessarily have a witness or reason for that. Thus, $\lnot\lnot$-stable statements would be those which insofar they hold, we do not need to be given a witness for it, we can provide one ourselves. Open subsets can be thought of as those for which membership is observable, \ie if for $U \in \tp(X)$ and $x \in X$ we consider whether $x$ belongs to $U$, then in case it does, we will sooner or later figure this out.\footnote{To illustrate this intuition, suppose you measure a physical quantity which is a real number. You can only measure it with certain precision, inexactly, meaning that you can only ever tell that it belongs to some interval. Classically, a subset $U \subseteq \RR$ is open precisely when for every $x \in U$ there is some interval around $x$ which is contained in $U$ --- which means that for any point in $U$, if we measure it with sufficient precision, we can confirm that it is in $U$ (but we couldn't do that for a point in the boundary of the subset).} So as soon as $x \in U$ ``in principle'', we can eventually confirm it.\footnote{When I discussed this with my advisor Andrej Bauer, he put it as follows. If an element is in the open set, we can make an observation that it is so. But we do not need another observation that we observed it; we simply know.}

	\section{(Co)limits and Bases}\label{Section: (co)limits_and_bases}
		
		In this section we consider how, given the intrinsic topology of a family of sets, we characterize the topology of a set constructed from the given ones. Specifically, we want to express the topology of limits and colimits.
		
		Colimits are easy. Recall that for any cartesian closed category, in particular any topos $\topos$, and any object $A$ from $\topos$, the contravariant exponentiation functor $A^\insarg\colon \topos \to \topos$ maps colimits to limits (because of adjointness; see Section~\ref{Section: topoi}). We conclude that given a diagram $D$ in $\topos$ such that its colimit $\colimit{D}$ exists, its topology is expressed by limits
		$$\tetp(\colimit{D}) \ism \tst^{\colimit{D}} \ism \limit{\tst^D} \ism \limit{\tetp(D)},$$
		$$\optp(\colimit{D}) \ism \opn^{\colimit{D}} \ism \limit{\opn^D} \ism \limit{\optp(D)}, \qquad \cltp(\colimit{D}) \ism \cld^{\colimit{D}} \ism \limit{\cld^D} \ism \limit{\cltp(D)}.$$
		
		We consider this formula in the special case of coproducts and coequalizers (recall that every colimit can be constructed from these). In the remainder of the section we restrict our attention only to open sets, in order not to triple the formulae, and because we will later need the results just for open sets anyway.
		
		If $\{X_i\}_{i \in I}$ is an $I$-indexed family of sets, then $\tp(\coprod_{i \in I} X_i) \ism \prod_{i \in I} \tp(X_i)$ by the result above. In particular $\tp(X + Y) \ism \tp(X) \times \tp(Y)$. In words, open subsets in a coproduct are given by their restrictions on individual summands. The topology is the standard coproduct topology we are used to from classical topology.
		
		In a topos, every epimorphism is regular (\ie a coequalizer of some parallel pair), or in other words, every surjective map is a quotient map. By the result above, if $f\colon X \to Y$ is surjective, then $Y$ has the \df{quotient topology} as we know it from classical topology, meaning that for any $V \subseteq Y$ the subset $V$ is open in $Y$ if and only if $f^{-1}(V)$ is open in $X$. To formally verify this, let $P \dfeq \st{(a, b) \in X \times X}{f(a) = f(b)}$, and let $p_0, p_1\colon P \to X$ be the projections. Then
		$$\xymatrix@+1em{
			P \ppair{p_0}{p_1}  &  X \ar[r]^f  &  Y
		}$$
		is a coequalizer diagram, so
		$$
		\xymatrix@+1em{
			\opn^Y \ar[r]^{\opn^f}  &  \opn^X \ppair{\opn^{p_0}}{\opn^{p_1}}  &  \opn^P,
		} \quad \text{or equivalently}, \quad
		\xymatrix@+1em{
			\tp(Y) \ar[r]^{f^{-1}}  &  \tp(X) \ppair{p_0^{-1}}{p_1^{-1}}  &  \tp(P),
		}
		$$
		is an equalizer diagram. Thus
		$$\tp(Y) \ism \st{U \in \tp(X)}{p_0^{-1}(U) = p_1^{-1}(U)} =$$
		$$= \st{U \in \tp(X)}{\all{(a, b)}{P}{a \in U \iff b \in U}} = \st{U \in \tp(X)}{U = f^{-1}(f(U))}.$$
		The sets with the property $U = f^{-1}(f(U))$ are called \df{saturated} (with respect to $f$). The isomorphism between open subsets of $Y$ and saturated open subsets of $X$ is of course $V \mapsto f^{-1}(V)$. The isomorphism in the other direction is $U \mapsto f(U)$, so the image of a saturated open set is always open. On the logical level we see this by the equivalence $[x] \in f(U) \iff x \in U$. This works since if $U$ is saturated, it does not matter which $x$ we take as a representative of the equivalence class $[x]$.
		
		While colimits are simple, the limits are quite a bit more work. There is no general rule which would ensure the topology of limits to be expected and well behaved. Since limits can be expressed by products and equalizers, it is sufficient to consider just these two cases. All monomorphisms, \ie all injective maps, are equalizers. Given an injective map $e\colon A \to X$, the expected topology on $A$ is the subspace topology relative to $X$, but we already know that this does not happen in general. We will not explore this question further than we already did (in Sections~\ref{Section: to_synthetic_topology} and~\ref{Section: redefinition_of_synthetic_topology}). We thus focus on products, for start on binary products. Actually, for them we have the isomorphism $\tp(X \times Y) \ism \opn^{X \times Y} \ism (\opn^X)^Y \ism (\tp(X))^Y$, so we can interpret open subsets of the product $X \times Y$ as $Y$-indexed families of open subsets of $X$. However, we wish to express the topology of the product in terms of the topologies $\tp(X)$ and $\tp(Y)$, as is custom in classical topology. This leads us to the notion of basis, as classically the binary product topology is given by the basis of products of open sets.
		
		Classically a basis for the topology is a subset of the topology (the open subsets in the basis are called basic open subsets) such that every open subset is expressible as a union of basic ones. Our current task is to adapt this definition to constructive and synthetic topological setting.
		
		The first alteration is that basic open subsets are understood to be indexed by some set, rather than given as a subset of the topology, so a single basic open subset can potentially be indexed several times. This point of view is natural; for example, if the empty set is open, it appears as the binary product of open sets whenever not both sets are inhabited. Later in Chapter~\ref{Chapter: metrization_theorems} we consider collections of metric balls as bases for metric spaces, and balls with different centers and/or radii can coincide.
		
		Similarly, when considering the unions of basic subsets, we also allow noninjective indexing. Thus we come to the following definition.
		\begin{definition}
			A \df{weak basis} for a set $X$ is a map $b\colon \basis \to \tp(X)$ such that for every $U \in \tp(X)$ there exists a set $I$ and a map (an \df{indexing}) $a\colon I \to \basis$ such that $U = \bigcup \st{b(a(i))}{i \in I}$.
		\end{definition}
		
		Some remarks are in order. First of all, the notation $b(a(i))$ is somewhat unintuitive. In practice we consider the indexing $I \to \basis$ to be given as $i \mapsto B_i$, and then write the union as $U = \bigcup_{i \in I} B_i$. Second, just as classically, this condition is equivalent to saying that for every $U \in X$ and $x \in U$ there exists $B \in \basis$ such that $x \in b(B) \subseteq U$. If $U$ is given as a union of weakly basic sets, the existence of $B$ is immediate; conversely, write $U$ as a union of \emph{all} weakly basic subsets contained in it.
		
		The third remark refers to the reason why we call $b$ merely a \emph{weak} basis. Classically a set is open precisely when it is a union of basic sets. This is because arbitrary unions of opens are open. In synthetic topology we can satisfy this condition only in the trivial case $\opn = \soc$. In general it makes sense to consider only overtly indexed unions.
		\begin{definition}
			A \df{basis} for a set $X$ is a map $b\colon \basis \to \tp(X)$ such that for every $U \in \tp(X)$ there exists a set $I$ and an overt map (an \df{overt indexing}) $a\colon I \to \basis$ such that $U = \bigcup \st{b(a(i))}{i \in I}$.\footnote{This is a slight generalization of the definition of basis we provided in~\cite{Bauer_A_Lesnik_D_2010:_metric_spaces_in_synthetic_topology}.}
		\end{definition}
		The corollary is that now basic subsets determine the open ones: a subset of $X$ is open if and only if it is an overtly indexed union of basic open subsets (since overtly indexed unions of open sets are open).
		
		We will mostly use this notion of a basis, but we note that we can strengthen the condition further.
		\begin{definition}
			A \df{canonical basis} for a set $X$ is a map $b\colon \basis \to \tp(X)$ such that for every $U \in \tp(X)$ there exists a canonical choice of a set $I$ and an overt map (a \df{canonical overt indexing}) $a\colon I \to \basis$ such that $U = \bigcup \st{b(a(i))}{i \in I}$.
		\end{definition}
		It makes sense to use canonical bases when otherwise we would be forced to invoke the axiom of choice. We note that there is no reason to define a canonical weak basis, as in the case of weak basis we can always make a canonical choice of the indexing map, namely we take the union of all basic subsets in a given open set.
		
		A reader occupied with foundational issues may have noticed that our definition of (weak, canonical) basis is problematic due to existential quantification over all morphisms with a fixed codomain which is generally not a set. However, this problem can easily be fixed with an equivalent definition stating that $b\colon \basis \to \tp(X)$ is a basis when for every $U \in \tp(X)$ there exists a subovert subset $A \subseteq \basis$ such that $U = \bigcup b(A)$ (in the case of weak basis we drop the condition that $A$ is subovert, and in the case of canonical basis we require a map $\tp(X) \to \ov(\basis)$ which gives the desired $A$). Thus we do not trouble ourselves with this issue.
		
		\begin{definition}
			We say that $X \times Y$ has (\df{weak}, \df{canonical}) \df{product topology} when the map $P\colon \tp(X) \times \tp(Y) \to \tp(X \times Y)$, $P(U, V) \dfeq U \times V$, is a (weak, canonical) basis for $X \times Y$.
		\end{definition}
		
		It is not necessary that binary products have the product topology.\footnote{See Example~{5.1} in~\cite{Escardo_M_Lawson_JD_Simpson_A_2004:_comparing_cartesian_closed_categories_of_core_compactly_generated_spaces} for an example of a full subcategory of $\Top$ which is cartesian closed, and in which $\NN^{\NN^\NN} \times \NN^\NN$ has the topology strictly stronger than the usual product topology. Consequently, in a gros topos (see Section~\ref{Section: gros_topos_model}) over this category this product won't have the product topology either.} It is therefore useful to have some sufficient conditions to ensure that they do.
		
		Call a set $X$ \df{locally compact} when for every $U \in \tp(X)$ and every $x \in U$ there exists an open set $V \in \tp(X)$ and a subcompact set $K \in \cp(X)$ such that $x \in V \subseteq K \subseteq U$.
		
		\begin{proposition}\label{Proposition: local_compactness_implies_weak_product_topology}
			If $X$ is locally compact, then $X \times Y$ has weak product topology.
		\end{proposition}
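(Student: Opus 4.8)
The plan is to verify the pointwise reformulation of ``weak basis'' recorded just after its definition: it suffices to show that for every $W \in \tp(X \times Y)$ and every point $(x,y) \in W$ there exist $U \in \tp(X)$ and $V \in \tp(Y)$ with $(x,y) \in U \times V \subseteq W$. So fix such a $W$ and a point $(x,y) \in W$.

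First I would slice $W$ horizontally through $y$: since the map $x' \mapsto (x',y)$ is (automatically) continuous, the set $W_y \dfeq \st{x' \in X}{(x',y) \in W}$ is open in $X$, and $x \in W_y$. Local compactness of $X$ then provides an open $U \in \tp(X)$ and a subcompact $K \in \cp(X)$ with $x \in U \subseteq K \subseteq W_y$. The reason for passing through $K$ is that its subcompactness is exactly what will let me thicken in the $Y$-direction.

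Next I would put $V \dfeq \st{y' \in Y}{K \subseteq W_{y'}}$, where $W_{y'} \dfeq \st{x' \in X}{(x',y') \in W}$ is again open in $X$ by continuity. Then $y \in V$ because $K \subseteq W_y$, so $(x,y) \in U \times V$; and if $x' \in U$ and $y' \in V$ then $x' \in U \subseteq K \subseteq W_{y'}$, i.e.\ $(x',y') \in W$, so $U \times V \subseteq W$. The only substantive point is that $V$ is open in $Y$. For this I would observe that $y' \mapsto W_{y'}$ is a genuine map $Y \to \tp(X)$ (the exponential transpose of the characteristic map of $W$), and that subcompactness of $K$ says precisely that the universal quantifier $\forall_K$ restricts to a map $\tp(X) \to \opn$, $O \mapsto \xall{x'}{K}{x' \in O}$. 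Composing these two maps yields the characteristic map $y' \mapsto \xall{x'}{K}{(x',y') \in W}$ of $V$, which therefore factors through $\opn$.

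The main (and really the only) obstacle is this last step: recognising subcompactness of $K$ in $X$ --- a condition stated one open subset at a time --- as the assertion that $\forall_K$ lands in $\opn$, so that it may be composed with the slicing map $Y \to \tp(X)$ to witness openness of $V$ uniformly in $y'$. Everything else is just unwinding the chain of inclusions $x \in U \subseteq K \subseteq W_y$. Note that we use only subcompactness of $K$ in $X$, not genuine compactness, which is precisely what local compactness of $X$ supplies.
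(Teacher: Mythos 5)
Your proof is correct and follows essentially the same route as the paper's: slice the open set through the given point, apply local compactness to get $x \in U \subseteq K \subseteq W_y$ with $K$ subcompact, and thicken in the $Y$-direction via $V = \st{y'}{\xall{x'}{K}{(x',y') \in W}}$, whose openness is exactly what subcompactness of $K$ delivers. The paper phrases the slicing via the isomorphism $X \ism X \times \{b\}$ and leaves the openness of $V$ as an assertion, but the content is identical.
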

		\begin{proof}
			Take any $U \subseteq X \times Y$ open in the product, and $(a, b) \in U$. Let $e\colon X \to X \times \{b\}$ be the isomorphism $e(x) \dfeq (x, b)$. Because $X$ is locally compact, there exist a subcompact set $K \subseteq X$ and an open set $V \subseteq X$ such that $a \in V \subseteq K \subseteq e^{-1}\big(U \cap (X \times \{b\})\big)$. Let $W \subseteq Y$ be $W \dfeq \st{y \in Y}{\xall{x}{K}{(x, y) \in U}}$ which is an open subset of $Y$. Clearly $(a, b) \in V \times W \subseteq U$.
		\end{proof}
		
		In specific examples it is often not difficult to improve this proposition to conclude that the product has the product topology (not just the weak one); for example, if the set of real numbers $\RR$ has the expected topology, \ie generated by open intervals, and is locally compact (which amounts to saying that the interval $\II$ is compact, at least when $\RR$ is Hausdorff), then it is easy to see that the product $\RR \times \RR$ has the product topology. Here is an attempt to generalize this.
		
		Call a set $X$ \df{strongly locally compact} when there is a map $b = (b_O, b_K)\colon \basis \to \tp(X) \times \cp(X)$ (\ie $b_O$ maps to open subsets, and $b_K$ maps to subcompact subsets) such that:
		\begin{itemize}
			\item
				$\basis$ is an overt set,
			\item
				$b_O(j) \subseteq b_K(j)$ for all $j \in \basis$,
			\item
				for every $U \in \tp(X)$ and every $x \in U$ there exists $j \in \basis$ such that $x \in b_O(j) \subseteq b_K(j) \subseteq U$.
		\end{itemize}
		An example are overt discrete sets (with $b\colon X \to \tp(X) \times \cp(X)$, $b(x) = (\{x\}, \{x\})$). Another example are locally compact metric spaces (for definitions, see Section~\ref{Section: metric_spaces}) in which an overt family of balls of the form $\ball{x_j}{r_j}$ generate the intrinsic topology while balls of the form $\bball{x_j}{r_j}$ are subcompact.
		
		\begin{proposition}
			\
			\begin{enumerate}
				\item
					Let $X$ be strongly locally compact, witnessed by the map $b = (b_O, b_K)\colon \basis \to \tp(X) \times \cp(X)$. Then $b_O$ is a canonical basis for $X$.
				\item
					Binary product of strongly locally compact sets is strongly locally compact, and has canonical product topology.
			\end{enumerate}
		\end{proposition}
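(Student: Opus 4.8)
The plan is to prove Part~1 first and then feed it into the ``canonical product topology'' assertion of Part~2. For Part~1, given $U \in \tp(X)$ I take as canonical index set $I_U \dfeq \st{j \in \basis}{b_K(j) \subseteq U}$, together with the inclusion $I_U \hookrightarrow \basis$ as indexing map. The key point is that $I_U$ is an \emph{open} subset of $\basis$: for each $j$ the truth value of $b_K(j) \subseteq U$, i.e.\ of $\xall{x}{b_K(j)}{x \in U}$, is open because $b_K(j)$ is subcompact in $X$ and $U$ is open in $X$, so the predicate $j \mapsto (b_K(j) \subseteq U)$ takes values in $\opn$. Since $\basis$ is overt and $\opn$ is closed under binary meets, Corollary~\ref{Corollary: open_subset_of_overt_is_subovert_(original)} makes $I_U$ subovert in $\basis$, so the inclusion $I_U \hookrightarrow \basis$ is an overt map. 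The identity $U = \bigcup_{j \in I_U} b_O(j)$ is then immediate: ``$\supseteq$'' holds because $b_O(j) \subseteq b_K(j) \subseteq U$ for $j \in I_U$, and ``$\subseteq$'' holds because for $x \in U$ the third clause of strong local compactness provides $j$ with $x \in b_O(j) \subseteq b_K(j) \subseteq U$, whence $j \in I_U$. The assignment $U \mapsto I_U$ is given by an explicit formula, hence canonical, so $b_O$ is a canonical basis.

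For Part~2, let $X$ be witnessed by $b = (b_O, b_K)\colon \basis \to \tp(X) \times \cp(X)$ and $Y$ by $c = (c_O, c_K)\colon \mathscr{C} \to \tp(Y) \times \cp(Y)$, and I propose the witness $d = (d_O, d_K)\colon \basis \times \mathscr{C} \to \tp(X \times Y) \times \cp(X \times Y)$ with $d(j,k) \dfeq \big(b_O(j) \times c_O(k),\, b_K(j) \times c_K(k)\big)$. Then $\basis \times \mathscr{C}$ is overt as a binary product of overt sets; $b_O(j) \times c_O(k)$ is open in $X \times Y$ (a finite intersection of preimages of opens under the projections); $b_K(j) \times c_K(k)$ is subcompact in $X \times Y$ by Proposition~\ref{Proposition: images_and_finite_products_of_subovert/subcompact_subsets}; and $d_O(j,k) \subseteq d_K(j,k)$ follows componentwise. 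The substantive part is the refinement clause, which I would prove by iterating the slice argument of Proposition~\ref{Proposition: local_compactness_implies_weak_product_topology}: given $W \in \tp(X \times Y)$ and $(x,y) \in W$, the slice $W_y \dfeq \st{x' \in X}{(x',y) \in W}$ is open and contains $x$, so strong local compactness of $X$ yields $j$ with $x \in b_O(j) \subseteq b_K(j) \subseteq W_y$; next, $W' \dfeq \st{y' \in Y}{b_K(j) \subseteq W_{y'}}$, where $W_{y'} \dfeq \st{x' \in X}{(x',y') \in W}$, is open in $Y$ --- this is exactly where subcompactness of the witness $b_K(j)$ is used --- and contains $y$, so strong local compactness of $Y$ yields $k$ with $y \in c_O(k) \subseteq c_K(k) \subseteq W'$, and then $(x,y) \in d_O(j,k)$ while $d_K(j,k) = b_K(j) \times c_K(k) \subseteq W$. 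Hence $X \times Y$ is strongly locally compact, witnessed by $d$. Finally $d_O = P \circ (b_O \times c_O)$ for $P(U,V) \dfeq U \times V$; by Part~1 the canonical subovert set $I_W \dfeq \st{(j,k)}{d_K(j,k) \subseteq W}$ satisfies $W = \bigcup_{(j,k) \in I_W} d_O(j,k)$, so its image $A_W \dfeq (b_O \times c_O)(I_W)$ is a subovert subset of $\tp(X) \times \tp(Y)$ (image of a subovert set, Proposition~\ref{Proposition: images_and_finite_products_of_subovert/subcompact_subsets}) with $W = \bigcup\st{P(U,V)}{(U,V) \in A_W}$ and $W \mapsto A_W$ canonical; thus $P$ is a canonical basis and $X \times Y$ has canonical product topology.

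The main obstacle I anticipate is the two-stage slice argument in the refinement clause of Part~2: it is precisely what forces strong local compactness to be defined with \emph{subcompact} witnesses $b_K(j)$ rather than merely with sets contained in $U$, since otherwise the auxiliary set $W'$ would not come out open in $Y$. The remaining work is bookkeeping: verifying that each auxiliary predicate genuinely lands in $\opn$, that subovertness is preserved under images (Proposition~\ref{Proposition: images_and_finite_products_of_subovert/subcompact_subsets}) and under passage to open subsets of overt sets (Corollary~\ref{Corollary: open_subset_of_overt_is_subovert_(original)}), and that every index set and indexing map is produced by an explicit formula so that the choices qualify as canonical.
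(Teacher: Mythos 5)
Your proof is correct and follows essentially the same route as the paper: Part~1 uses the identical index set $\st{j \in \basis}{b_K(j) \subseteq U}$, and Part~2 builds the same product witness and then runs the same two-stage slice argument (which the paper outsources to Proposition~\ref{Proposition: local_compactness_implies_weak_product_topology} rather than inlining it). The only cosmetic difference is that you realize the canonical overt indexing for $P$ as the image $(b_O \times c_O)(I_W)$ instead of the composite $I \hookrightarrow \basis' \times \basis'' \to \tp(X) \times \tp(Y)$, which matches the paper's stated equivalent formulation of a basis via subovert subsets.
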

		\begin{proof}
			\begin{enumerate}
				\item
					For any $U \in \tp(X)$ let $I \dfeq \st{i \in \basis}{b_K(i) \subseteq U}$. We have $U = \bigcup_{i \in I} b_O(i)$, and since $\basis$ is overt, and $I$ is an open --- therefore subovert --- subset of $\basis$, the indexing is overt.
				\item
					Let $X$ and $Y$ be strongly locally compact, witnessed by maps
					$$b' = (b'_O, b'_K)\colon \basis' \to \tp(X) \times \cp(X) \quad \text{and} \quad b'' = (b''_O, b''_K)\colon \basis'' \to \tp(Y) \times \cp(Y),$$
					respectively. Define $\basis \dfeq \basis' \times \basis''$, and $b = (b_O, b_K)\colon \basis \to \tp(X \times Y) \times \cp(X \times Y)$ by
					$$b_O(i, j) \dfeq b'_O(i) \times b''_O(j), \qquad b_K(i, j) \dfeq b'_K(i) \times b''_K(j).$$
					This works since the binary product of open (resp.~subcompact) subsets is an open (resp.~subcompact) subset.
					
					The set $\basis$ is overt, being the binary product of overt sets. Clearly $b_O(i, j) \subseteq b_K(i, j)$ for all $(i, j) \in \basis$. Now take $U \in \tp(X \times Y)$ and $(x, y) \in U$. By Proposition~\ref{Proposition: local_compactness_implies_weak_product_topology} there are $V \in \tp(X)$, $W \in \tp(Y)$ such that $(x, y) \in V \times W \subseteq U$. By strong local compactness we may find $i \in \basis'$ and $j \in \basis''$ such that $x \in b'_O(i) \subseteq b'_K(i) \subseteq V$ and $y \in b''_O(j) \subseteq b''_K(j) \subseteq W$, but then $(x, y) \in b_O(i, j) \subseteq b_K(i, j) \subseteq V \times W \subseteq U$.
					
					We proved that $b = (b_O, b_K)$ witnesses strong local compactness of $X \times Y$, and so by the previous item the map $b_O$ is a canonical basis. From its proof we gather that for any $U \in \tp(X \times Y)$ there exists a canonical choice of an open subset $I \subseteq \basis = \basis' \times \basis''$ such that the union, indexed by the composition of the inclusion $I \hookrightarrow \basis$ and the map $b_O$, equals $U$.
					$$\xymatrix@+2.5em{
						&  \tp(X) \times \tp(Y) \ar[rr]^P  &&  \tp(X \times Y)  \\
						I \ar@{^(->}[r]^{\text{open}}  &  \basis' \times \basis'' \ar[u]^{b'_O \times b''_O} \ar[rru]_{b_O}  &&
					}$$
					Since $I$ is open in overt $\basis$, it is subovert in $\basis$, so the composition of $I \hookrightarrow \basis$ and $b'_O \times b''_O$ is an overt map. We conclude that $X \times Y$ has canonical product topology.
			\end{enumerate}
		\end{proof}
		
		For general products we cannot take simply products of open subsets of factors as basic opens of the product --- we need not obtain an open subset; we know this already from classical topology. Recall that classically the basis for a product $\prod_{i \in I} X_i$ of topological spaces consists of all finite intersections of preimages $p_i^{-1}(U)$ where $p_i$ are projections, and $U$ an open subset of the $i$-th factor. In the spirit of synthetic topology and our definition of the basis where ``arbitrary union of opens'' was replaced by ``overtly indexed union of opens'', we replace ``finite intersection of opens'' by ``compactly indexed intersection of opens''.
		
		Thus we say that the product $\prod_{i \in I} X_i$ \df{has (weak, canonical) product topology} when the map
		$$P\colon \cp(I) \times \prod_{i \in I} \tp(X_i) \to \tp(\prod_{i \in I} X_i),$$
		defined by
		$$P\big(K, (U_i)_{i \in I}\big) \dfeq \bigcap_{i \in K} p_i^{-1}(U_i),$$
		is a (weak, canonical) basis for the product $\prod_{i \in I} X_i$. Here $P$ indeed maps into $\tp(\prod_{i \in I} X_i)$ since for any $x \in \prod_{i \in I} X_i$ we have
		$$x \in P\big(K, (U_i)_{i \in I}\big) \iff \xall{i}{K}{p_i(x) \in U_i}.$$
		Notice that since we assume $K$ to be only subcompact in $I$, we need to choose $U_i$s for all $i \in I$ even though the choice is irrelevant for $i \notin K$.
		
		\begin{proposition}\label{Proposition: intersection_of_preimages_by_projections_as_a_product}
			Let $X = \prod_{i \in I} X_i$ be an $I$-indexed product with projections $p_i\colon X \to X_i$. Suppose that at least one of the following holds: $I$ has decidable equality, or $T = \opn$ and $I$ is Hausdorff.
			\begin{enumerate}
				\item
					For every family $(U_i)_{i \in I} \in \prod_{i \in I} \tp(X_i)$, every subcompact $K \in \cp(I)$ and every $i \in I$ the set
					$$V_i \dfeq \st{x \in X_i}{i \in K \implies x \in U_i} = \st{x \in X_i}{\all{j}{K}{i = j \implies x \in U_i}}$$
					is open in $X_i$. Moreover, $\bigcap_{k \in K} p_k^{-1}(U_k) = \prod_{i \in I} V_i$.
				\item
					If $I$ is compact, then $X$ has (weak, canonical) product topology if and only if the map $P\colon \prod_{i \in I} \tp(X_i) \to \tp(\prod_{i \in I} X_i)$, defined by
					$$P\big((U_i)_{i \in I}\big) \dfeq \prod_{i \in I} U_i = \bigcap_{i \in I} p_i^{-1}(U_i),$$
					is a (weak, canonical) basis for $X$.
			\end{enumerate}
		\end{proposition}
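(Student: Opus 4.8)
The plan is to prove item~1 first and then obtain item~2 from it by a short bookkeeping argument about indexings; the only genuine work is the openness of $V_i$, everything else being formal.

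For item~1 I would fix $(U_i)_{i \in I}$, $K \in \cp(I)$ and $i \in I$, and first note that the two displayed descriptions of $V_i$ coincide, using $i \in K \iff \xsome{j}{K}{i = j}$ (valid because $K$ is a subset, so membership respects $=$): pushing this through the implication turns $i \in K \implies x \in U_i$ into $\all{j}{K}{i = j \implies x \in U_i}$. The core point is then that, for each fixed $x \in X_i$, the predicate $j \mapsto (i = j \implies x \in U_i)$ on $I$ is \emph{open}, i.e.\ takes values in $\opn$; granting this, subcompactness of $K$ in $I$ makes $\all{j}{K}{i = j \implies x \in U_i}$ open in $x$, hence $V_i$ is open. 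This is where the hypothesis on $I$ is consumed, and it should be handled carefully rather than by writing $i=j$ as a conjunction with $\top$. If $I$ has decidable equality, then for each $j$ we have $(i=j)\lor(i\neq j)$; in the first case $i = j \implies x \in U_i$ equals $x \in U_i \in \opn$, in the second it equals $\top \in \opn$, so the value lies in $\opn$ in either case. If instead $\tst = \opn$ and $I$ is Hausdorff, then singletons of $I$ are closed, so $(i = j) \in \cld$, and by the very definition of $\cld$ we get $\bigl((i = j) \implies u\bigr) \in \tst = \opn$ for $u = (x \in U_i) \in \opn$. Finally the identity $\bigcap_{k \in K} p_k^{-1}(U_k) = \prod_{i \in I} V_i$ is pure logic: tested at $x$, the left side reads $\all{k}{K}{x_k \in U_k}$, and the right side reads $\all{i}{I}{i \in K \implies x_i \in U_i}$, which is equivalent to the same thing; this identity also shows that $\prod_{i \in I} V_i$ is genuinely open, even though an unrestricted $I$-indexed product of opens need not be.

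For item~2, when $I$ is compact we have $I \in \cp(I)$, and item~1 applied with $K = I$ gives $V_i = U_i$, so the map $P$ of the product-topology definition, restricted to $K = I$, is exactly the map $P$ of this item. Hence if this $P$ is a (weak, canonical) basis, so is the general one, by always choosing $K = I$ and composing the indexing $(U_i) \mapsto (I,(U_i))$ on the left of an overt map (overt by Proposition~\ref{Proposition: constructions_of_overt_maps}). Conversely, given an open $U$ and an overt indexing $\lambda \mapsto \bigl(K_\lambda, (U_i^\lambda)_i\bigr)$ with $U = \bigcup_\lambda \bigcap_{i \in K_\lambda} p_i^{-1}(U_i^\lambda)$, apply item~1 termwise to rewrite each $\bigcap_{i \in K_\lambda} p_i^{-1}(U_i^\lambda)$ as $\prod_{i\in I} V_i^\lambda$ with each $V_i^\lambda$ open; then $U = \bigcup_\lambda P\bigl((V_i^\lambda)_i\bigr)$, and the new indexing is the old one post-composed with the map $(K, (U_i)) \mapsto \bigl(\st{x \in X_i}{i \in K \implies x \in U_i}\bigr)_i$, which is well defined by item~1, so the indexing is still overt (Proposition~\ref{Proposition: constructions_of_overt_maps}), and canonically so if the original was. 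The same argument works verbatim for the weak, plain and canonical notions.

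The main obstacle is precisely the openness of $V_i$: outside the very simple case $K = I$ one cannot expect $i \in K \implies x \in U_i$ to be open for free, and it is exactly the extra hypothesis on $I$ (decidable equality, or Hausdorffness together with $\tst = \opn$) that rescues it; once that is established, items~1 and~2 are routine.
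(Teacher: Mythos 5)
Your proof is correct and follows essentially the same route as the paper's: establish that $i = j \implies x \in U_i$ is an open predicate in $j$ (by case analysis under decidable equality, or via closedness of singletons and the definition of $\cld$ when $\tst=\opn$ and $I$ is Hausdorff), use subcompactness of $K$ to get openness of $V_i$, verify the set identity by pure logic, and deduce item~2 by taking $K=I$ in one direction and rewriting $\bigcap_{k\in K}p_k^{-1}(U_k)$ as $\prod_i V_i$ in the other. You merely spell out details the paper leaves implicit (the Hausdorff case and the overtness bookkeeping for the indexings in item~2), which is fine.
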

		\begin{proof}
			\begin{enumerate}
				\item
					We claim that for an arbitrary $j \in I$ the statement $i = j \implies x \in U_i$ is open. This clearly holds if $T = \opn$ and $I$ is Hausdorff. Assume now that $I$ has decidable equality. If $i = j$, then our statement is equivalent to $x \in U_i$, and if $i \neq j$, then it is equivalent to $\top$. In either case it is open. Since $K$ is subcompact in $I$, the set $V_i$ is open in $X_i$.
					
					The proof of the equality $\bigcap_{k \in K} p_k^{-1}(U_k) = \prod_{i \in I} V_i$ is straightforward.
				\item
					One direction is obvious since a compact set is subcompact in itself. The other direction follows from the previous item since any $\bigcap_{k \in K} p_k^{-1}(U_k)$ can be written as $\prod_{i \in I} V_i$.
			\end{enumerate}
		\end{proof}
		
		This proposition tells us that when $I$ is compact we may (under a few other conditions) simplify the definition of product topology. In particular our definitions of product topology for binary and general products are consistent.
		
		When there are many (sub)compact sets, the product topology might differ from what we would expect from classical intuition. For example, if we view classical sets as discrete (classical) topological spaces, finite products have classical product topology while products of infinitely many factors with at least two points are not discrete. On the other hand, taking $\opn = \two$ in $\Set$ (or more generally $\opn = \soc$ in any topos), all sets are discrete, compact and have decidable equality, so all (necessarily compactly indexed) products have (canonical) product topology.
		
		We can also simplify the definition of product topology in the case of exponentials. For sets $X$, $Y$, a subset $U \subseteq X \times Y$ and an element $x \in X$ let $U_x$ denote the ``slice'' of $U$ at coordinate $x$,
		$$U_x \dfeq \st{y \in Y}{(x, y) \in U}.$$
		Clearly if $U$ is open in $X \times Y$, then $U_x$ is open in $Y$ for every $x \in X$.
		
		Say that $Y^X$ \df{has (weak, canonical) exponential topology} when the map
		$$P\colon \cp(X) \times \tp(X \times Y) \to \tp(Y^X),$$
		defined by
		$$P(K, U) \dfeq \bigcap_{x \in K} \st{f \in Y^X}{f(x) \in U_x},$$
		is a (weak, canonical) basis for the exponential $Y^X$. One should see this as the analogue of the classical compact-open topology where the basis of $\C(X, Y)$ is taken to be the finite intersections of sets of the form $\st{f \in \C(X, Y)}{f(K) \subseteq V}$ where $K \subseteq X$ is compact, and $V \in \tp(Y)$. We of course replace finite intersections with compactly indexed ones, but notice that $\st{f \in \C(X, Y)}{f(K) \subseteq V} = \bigcap_{x \in K} \st{f \in \C(X, Y)}{f(x) \in V}$, so we can say that $Y^X$ has exponential topology when compactly indexed intersections of sets of the form $\st{f \in Y^X}{f(x) \in V}$ comprise a basis. This matches our definition above after we recall $\tp(X \times Y) \ism \tp(Y)^X$.
		
		Notice that $Y^X$ has the exponential topology if and only if, when interpreted as an $X$-indexed product of factors $Y$, it has the product topology.

	\section{Predicative Setting}\label{Section: predicative_version}
		
		As already mentioned, we strive in this thesis to prove theorems with as few assumptions (such as choice principles) as possible. There is one exception though: we cling to the cosiness of topoi and higher-order logic. This preference, I feel, simplifies proofs significantly (and lets us avoid hardcore category theory and endless diagrams). We spend a section, however, to demonstrate that at least basic notions of synthetic topology can be developed in general setting. The reader should find the ideas in this section reminiscent of Rosolini's dominions~\cite{Rosolini_G_1986:_continuity_and_effectiveness_in_topoi}.
		
		\begin{definition}
			Let $\cat$ be a category. We say that a family\footnote{The term `family' is used loosely in this section, and should be suitably interpreted in each specific case. For example, if our background setting is the set theory and $\cat$ is a small category, then a `family' is a set in the external sense.} $\mathcal{A}$ of morphisms in $\cat$ is a \df{continuous family} when for every $f \in \mrph{\cat}$ and $h \in \mathcal{A}$ with the common codomain there exists (some) pullback of $h$ along $f$, and any such pullback is again in $\mathcal{A}$.
		\end{definition}
		As the name suggests, a continuous family should capture the notion that all morphisms are ``continuous with regard to morphisms in it''.
		
		Recall that morphisms $f\colon X \to Y$, $f'\colon X' \to Y'$ are called \df{isomorphic} when there exist isomorphisms $X' \to X$ and $Y' \to Y$ such that the diagram
		$$\xymatrix@+1em{
			X' \ar[r]^\ism \ar[d]_{f'}  &  X \ar[d]^f  \\
			Y' \ar[r]_\ism  &  Y
		}$$
		commutes. Since this is necessarily a pullback diagram, it is clear from the definition that for any morphism $f$ in a continuous family, morphisms isomorphic to it are also in the family.
		
		To introduce synthetic topology to a category $\cat$, fix a continuous family $\tetp$ of morphisms in $\cat$, and call its members \df{test morphisms}.
		
		In analogy with the definitions and results in Section~\ref{Section: redefinition_of_synthetic_topology}, open morphisms should be those which, intersected by test morphisms, yield test morphisms. It makes sense to define the \df{intersection} $f \cap g$ of morphisms $f\colon A \to X$, $g\colon B \to X$ as the diagonal map in the pullback diagram below.
		$$\xymatrix@+2em{
			A \cap B \pbcorner \ar[r] \ar[d] \ar[rd]^{f \cap g}  &  B \ar[d]^g  \\
			A \ar[r]_f  &  X
		}$$
		We denote the domain of $f \cap g$ by $A \cap B$ (but note that it depend on the morphisms $f$, $g$, not just on their domains). Actually, a more precise definition would define the intersection on the equivalence classes of morphisms where morphisms are equivalent when they are isomorphic and have the same codomain (in analogy how we define the intersection on subobjects, as opposed to individual monomorphisms in a category), as then we could talk about \emph{the} intersection (it would be unique). This is yet another technical detail we won't trouble ourselves with, and will work simply with morphisms, not their equivalence classes (a more diligent reader is free to add square brackets everywhere, but we note that it doesn't really matter, as the families of morphisms we work with are saturated with respect to the equivalence relation, and constructions we perform on them also respect it).
		
		Clearly the intersection is commutative. We claim that it is also associative. Let $f\colon A \to X$, $g\colon B \to X$, $h\colon C \to X$ be morphisms for which all relevant intersections exist. Consider the following diagram.
		$$\xymatrix@+4em{
			(A \cap B) \cap C \ar[d] \ar@{..>}[rrdd] \ar@{..>}[rddd]^{(f \cap g) \cap h} \ar@{-->}@/^2em/[rr]  &&  A \cap (B \cap C) \ar[d] \ar@{..>}[lldd] \ar@{..>}[lddd]_{f \cap (g \cap h)} \ar@{-->}@/^2em/[ll]  \\
			A \cap B \ar[d] \ar[rd] \ar[rdd]_{f \cap g}  &&  B \cap C \ar[d] \ar[ld] \ar[ldd]^{g \cap h}  \\
			A \ar[rd]_f  &  B \ar[d]^g  &  C \ar[ld]^h  \\
			&  X  &
		}$$
		The universal property of pullbacks implies the existence of the morphisms between $(A \cap B) \cap C$ and $A \cap (B \cap C)$ in the upper row, and moreover, that their compositions are identities, proving our claim.
		
		Back to synthetic topology. Since $\tetp$ is a continuous family, there exists the intersection of any morphism with any test morphism. We call $u\colon U \to X$ an \df{open morphism} when for every test morphism $t \in \ms{X}$ the intersection $u \cap t$ is also a test morphism. We denote the family of open morphisms by $\optp$.
		
		Clearly identities (and more generally isomorphisms) are open. Because of associativity of intersections, binary intersections of open morphisms are open. Furthermore, if identities (and hence all isomorphisms) are test, then all open morphisms are test, and if intersections of test morphisms are test, then all test morphisms are open. Consequently any continuous family, closed under identities and binary intersections, is a family of open morphisms for some continuous family of test morphisms, namely for itself.
		
		The parallel with development in Section~\ref{Section: redefinition_of_synthetic_topology} stops a bit when we notice that in this generality of definition of synthetic topology open morphisms need not form a continuous family. While if a morphism is open, so is its any isomorphic morphism, we can provide an artificial example of a category in which not all pullbacks of open morphisms are open.
		$$\xymatrix@+5em{
			\bullet \ar[r]^{\text{test}} \ar[d] \ar[rd]|{\text{not test}}  &  \bullet \ar[r] \ar[d]|{\text{not open}}  &  \bullet \ar[d]^{\text{open}}  \\
			\bullet \ar[r]^{\text{test}}  &  \bullet \ar[r]  &  \bullet
		}$$
		Consider the partial order in the diagram above, as a category. This category has all pullbacks (and they are unique), the object part of which are simply the infima in the partial order; moreover, the two squares are pullbacks.
		
		Let $\tetp$ be the smallest continuous family for which the morphisms declared test in the diagram belong to it. Then the rightmost vertical morphism is (trivially) open, but the middle one is not since the diagonal morphism in the left square is not test.
		
		Continuity with regard to opens followed in a topos because opens were classified by $\opn$. We thus define for a category with a terminal object and a chosen continuous family $\tetp$ that an object $\opn$, together with the \df{truth morphism} $\top\colon \one \to \opn$, is the \df{\Sier object} when open morphisms are classified by it. More precisely, we require:
		\begin{itemize}
			\item
				all pullbacks of the truth morphism exist, and they are open morphisms,
			\item
				given any open morphisms $u\colon U \to X$, there exists a unique morphism $\chi_u\colon X \to \opn$ such that the diagram
				$$\xymatrix{
					U \pbcorner \ar[r]^{\trm[U]} \ar[d]_u  &  \one \ar[d]^\top  \\
					X \ar[r]_{\chi_u}  &  \opn
				}$$
				is a pullback.
		\end{itemize}
		Since the truth morphism is also a pullback of itself, it must be an open morphism. It is easy to see that the existence of a \Sier object implies the existence of pullbacks of open morphisms along any morphisms, and that they must be open as well; in sum, open morphisms form a continuous family.
		
		A well-developed example of generalized synthetic topology is Paul Taylor's Abstract Stone Duality~(ASD)~\cite{Taylor_P_2009:_foundations_for_computable_topology}. A model of ASD is a category $\cat$ with finite products and a \Sier object $\opn$. Moreover, it is assumed that for any object $X$ the exponential $\opn^X$ exists (in other words, for each object we also want an object which represents its topology). The final assumption, the one which gives ASD its power, is that the adjunction $\opn^\insarg\colon \cat \to \cat\op \dashv \opn^\insarg\colon \cat\op \to \cat$ is monadic. For more on the subject the reader is referred to~\cite{Taylor_P_2009:_foundations_for_computable_topology, Taylor_P_2000:_geometric_and_higher_order_logic_in_terms_of_abstract_stone_duality, Bauer_A_Taylor_P_2009:_the_dedekind_reals_in_abstract_stone_duality}.
		
		It still remains to characterize closed morphisms. Recall that $A \setimpl[X] B$ is the largest subobject of $X$ which, intersected by $A$, yields $A \cap B$. This suggests what the definition of implication of morphisms $f\colon A \to X$ and $g\colon B \to X$ (for which their intersection exists) would be. Consider all composites $A \cap B \to S \to X$ which make the diagram
		$$\xymatrix@+2em{
			A \cap B \pbcorner \ar[r] \ar[d] \ar[rd]_{f \cap g}  &  S \ar[d]  \\
			A \ar[r]_f  &  X
		}$$
		a pullback. The \df{implication} $f \setimpl g\colon A \setimpl B \to X$ (we don't need to write $f \setimpl[X] g$, as $X$ is already encoded in morphisms $f$, $g$ as their codomain) is the second morphism in the terminal such composite, \ie the diagram
		$$\xymatrix@+2em{
			A \cap B \pbcorner \ar[r] \ar[d] \ar[rd]_{f \cap g}  &  A \setimpl B \ar[d]^{f \setimpldiagram g}  \\
			A \ar[r]_f  &  X
		}$$
		is a pullback, and for any diagram
		$$\xymatrix@+3em{
			&&  S \ar[ldd] \ar@{-->}[ld]  \\
			A \cap B \ar[r] \ar[d] \ar[rd]_{f \cap g} \ar[rru]  &  A \setimpl B \ar[d]_{f \setimpldiagram g}  &\\
			A \ar[r]  &  X  &
		}$$
		in which the outer quadrilateral is a pullback there exists a unique morphism $S \to A \setimpl B$ which makes the diagram commute.
		
		The family $\cltp$ of \df{closed morphisms} is then defined to contain all $f\colon F \to X$ such that for every test morphism $t \in \ms{X}$ the implication $f \setimpl t$ exists, and is also a test morphism.
		
		To define the notions of dominance and codominance, we still need to generalize the notion of subspaces. Call a morphism $h\colon X \to Y$ \df{hereditary}\footnote{In classical topology if $X \subseteq Y$ has the subspace topology, it is said that it \df{inherits} the topology from $Y$.} when every test morphism $f \in \ms{X}$ is a pullback of some test morphism $g \in \ms{Y}$.
		$$\xymatrix@+1em{
			A \pbcorner \ar@{-->}[r] \ar[d]_f  &  B \ar@{-->}[d]^g  \\
			X \ar[r]_h  &  Y
		}$$
		
		A hereditary open (resp.~closed) morphism is called \df{strongly open} (resp.~\df{closed}). In analogy with results in Section~\ref{Section: redefinition_of_synthetic_topology} we can ``extend'' test morphisms into strongly open/closed morphisms in a canonical way. Suppose $h\colon X \to Y$ is strongly open, and $f\colon A \to X$ is test. The following diagram is a pullback.
		$$\xymatrix@+1em{
			A \pbcorner \ar[r]^{\id[A]} \ar[d]_f  &  A \ar[d]^{h \circ f}  \\
			X \ar[r]_h  &  Y
		}$$
		We claim that $h \circ f$ is test. We know that there exists a test morphism $g\colon B \to Y$ which makes the diagram
		$$\xymatrix@+1em{
			A \pbcorner \ar[r] \ar[d]_f  &  B \ar[d]^g  \\
			X \ar[r]_h  &  Y
		}$$
		a pullback. This means $h \circ f \ism h \cap g$ which is a test morphism. Similarly, if $h\colon X \to Y$ is a strongly closed morphism and $f\colon A \to X$ a test morphism, then the diagram
		$$\xymatrix@+0em{
			A \pbcorner \ar@{ (->}[rrr] \ar[dd]_f  &&&  X \setimpl[Y] A \ar[dd]^{h \setimpldiagram (h \circ f)}  \\
			&&&\\
			X \ar[rrr]_h  &&&  Y
		}$$
		is a pullback, and the morphism on the right is test.
		
		We say that open morphisms form a \df{dominance} for $\tetp$ when all open morphisms are strongly open. Analogously, closed morphisms form a \df{codominance} for $\tetp$ when all closed morphisms are strongly closed. Note that in the case when test and open morphisms match, open morphisms form a dominance precisely when they are a subcategory (we know that identities are open, and openness of compositions of opens is equivalent to the dominance condition).
		
		\intermission
		
		We got a taste of how the basics of synthetic topology might look like in a general category. This can serve as a starting point for one who wishes to develop the subject, but chooses to avoid the topoi. Commonly regarded as the most stringent condition in a topos is the existence of powersets. Eschewing them leads one to \df{predicative} version of constructive mathematics~\cite{Sambin_G_Smith_JM_1998:_twentyfive_years_of_constructive_type_theory, Aczel_P_Rathjen_M_2001:_notes_on_constructive_set_theory, Myhill_J_1975:_constructive_set_theory, Aczel_P_1978:_typetheoretic_interpretation_constructive_set_theory}. Actually, predicative mathematicians still speak of power objects, but they are considered classes, \ie larger than sets. Consequently, when dealing with topological spaces, most topologies end up being proper classes~\cite{Giovanni_C_2006:_on_the_collection_of_points_of_a_formal_space}. Thus predicative topologists usually focus on topological spaces which at least have a basis which is a set, and have developed tools how to work with such bases efficiently~\cite{Coquand_T_Sambin_G_Smith_JM_Valentini_S_2003:_inductively_generated_formal_topologies}.
		
		Synthetic topology might provide an alternative way of how to do predicative topology. Usually the existence of (sufficiently many) exponentials is assumed, and if we have a \Sier set $\opn$, then $\opn^X$ is a \emph{set} (not a proper class) which represents the topology of a space $X$. The reason topology is suddenly a set is because we no longer assume that arbitrary unions of open subsets are open, but restrict this condition to overtly indexed unions. It might be interesting to develop a predicative version of topology with this restriction, so that no problems with topologies being proper classes occur.
		
		Our path however continues in quite the other direction; we return to topoi and full higher order logic, and even add further assumptions, most notably the existence of natural numbers object, in order to examine metric spaces.

   \chapter{Real Numbers and Metric Spaces}\label{Chapter: real_numbers_metric_spaces}

	Topological language is crucial especially in analysis. In this chapter and the next we consider the implications of synthetic topology for where the analysis starts --- real numbers and metric spaces. Therefore, we now assume the existence of the natural numbers object. This enables us to construct integers, rationals and reals.
	
	In order to connect real numbers and metric spaces with our theory of synthetic topology, the principal thing we want is the strict order $<$ to be an open relation on reals which is equivalent to open balls in metric spaces being intrinsically open.
	
	In the first section of this chapter we therefore reconstruct (the Dedekind version of) the real numbers to suit our purposes; we take only open Dedekind cuts. In the second section we define metric spaces and their generalizations, and consider their basic properties. The third section is devoted to completions of metric (and more general) spaces.
	
	For this theory to work we need to assume that countable sets are overt. The general reason for this is that open sets in metric spaces typically require an infinite number of balls to be covered by (but countably many is usually enough; we are mostly interested in separable metric spaces). However, we also need it already in the construction of the real numbers, to assure that $<$ is an open relation on Dedekind cuts.
	
	We therefore postulate in this chapter and the next:
	\postulate{$\emptyset$ and $\NN$ are overt;}
	recall from Lemma~\ref{Lemma: overtness_of_countable_sets} that this is equivalent to countable sets being overt. In particular, $\opn$ is a bounded lattice.

	\section{Real Numbers}\label{Section: real_numbers}
	
		Once we have the rational numbers, we can construct the reals as some sort of completion of rationals. As mentioned in Section~\ref{Section: intuitionism}, while there is little doubt what the set of real numbers is classically, constructively different versions of completions yield not necessarily isomorphic sets. Our immediate task is to find out which is the most suitable for our needs.
		
		Recall that the two most common constructions of real numbers are the Cauchy reals $\RR_c$ which is the set of all Cauchy sequences of rational numbers (with a given modulus of convergence) in which we identify Cauchy sequences which ``approach the same point'', and the Dedekind reals which is the set of all Dedekind cuts of rational numbers. As already mentioned however, in addition to a suitable completeness, in order to link real numbers and metric spaces to synthetic topology, we also want the open balls to be intrinsically open (which amounts to saying that $<$ is an open relation on $\RR$).
		
		At this point we shall purposefully complicate the situation somewhat; rather than outright saying that open Dedekind cuts would suit our needs we shall axiomatize what would make sense to call real numbers, notice that up to isomorphism we have a unique model, and verify that open Dedekind cuts form one.
		
		Much of our complication stems from the fact that we do not want $\RR$ to be built necessarily from $\QQ$. Intuitively, any dense subset of reals would serve, and its algebraic and order structure would extend to reals. Beside the mathematical hygiene (which theoretical mathematicians would appreciate) of identifying what assumptions we actually need, and assume only those, there are also at least two practical reasons for our approach. One is that the sets suitable for the construction of reals can also be used for the possible distances in metric spaces (which will serve us in Section~\ref{Section: metrization_via_embeddings}, but is useful for anyone who wants to speak about metric spaces before they have the reals --- for example to define reals as a metric completion). The other is that in practice we often want to have greater freedom in representing reals. For example, on a computer we might want to implement reals via dyadic rationals, not general rational numbers; 
 or perhaps we might choose to construct algebraic numbers first, and then derive real numbers from them.
		
		So we want to characterize the suitable dense subsets of real numbers, without already referring to reals. We start with the order structure.
		
		\begin{definition}
			A relation $<$ on a set $X$ is a \df{strict linear order} when the following holds for all $a, b \in A$.
			\begin{itemize}
				\item $\lnot(a < b \land b < a)$ \qquad (asymmetry)
				\item $a < b \implies \all{c}{A}{a < c \lor c < b}$ \qquad (cotransitivity)
				\item $\lnot(a < b) \land \lnot(b < a) \implies a = b$ \qquad (tightness)
			\end{itemize}
			Notice that asymmetry implies ireflexivity $\lnot(a < a)$, and together with cotransitivity implies transitivity $a < b \land b < c \implies a < c$. Define also $a \leq b \dfeq \lnot(b < a)$ and $a \apart b \dfeq a < b \lor b < a$. Then $\leq$ is a partial order on $A$ while $\apart$ is a tight apartness. Also, asymmetry can be restated $a < b \implies a \leq b$, and the stronger form of transitivity holds: $a < b \land b \leq c \implies a < c$ and $a \leq b \land b < c \implies a < c$.
		\end{definition}
		
		Number sets $\NN$, $\ZZ$, $\QQ$ (and their subsets) are all strictly ordered by the usual relation $<$. For Dedekind reals, let $a = (L_a, U_a)$ and $b = (L_b, U_b)$; then
		$$a < b \dfeq U_a \between L_b.$$
		Another example is the strict lexicographic order on $\NN^\NN$; for $\alpha, \beta \in \NN^\NN$ we define
		$$\alpha < \beta \dfeq \some{N}{\NN}{\all{n}{\NN_{< N}}{\alpha_n = \beta_n} \land \alpha_N < \beta_N}.$$
		
		Recall that for any partial order $(X, \leq)$, a supremum $\sup A$ of a subset $A \subseteq X$ is an element $s \in X$ with the property
		$$\all{x}{X}{s \leq x \iff \xall{a}{A}{a \leq x}}.$$
		The left-to-right implication states $s$ is an upper bound for $A$ (it is equivalent to $\xall{a}{A}{a \leq s}$), the other implication tells it is the least such. A supremum of a set need not exist, but if it does, antisymmetry of $\leq$ implies it is unique; moreover, we have $a \leq b \iff \sup\{a, b\} = b$. We define infimum $\inf A$ analogously.
		
		In the case of a strict order we have a strengthening of the notion of supremum and infimum. We define $s \in X$ to be the \df{strict supremum} of $A \subseteq X$ when
		$$\all{x}{X}{x < s \iff \xsome{a}{A}{x < a}}$$
		(and analogously the \df{strict infimum}). The left-to-right implication is again equivalent to $s$ being an upper bound, but the other one is a genuine strengthening of the previous condition, so a strict supremum is also a supremum, but not vice versa, unless we have classical logic. To see this, let $X = \{0, 1\}$ and $A = \{0\} \cup \st{1}{p}$ where $\lnot\lnot p$ holds. We claim $\sup A = 1$. Clearly, $1$ is an upper bound of $A$. Now let $x \in X$ be an upper bound for $A$. It cannot be $x = 0$ since that would imply $\lnot(1 \in A)$, \ie $\lnot\lnot\lnot p = \lnot p = \bot$. So $x = 1$. However, if $1$ is also the strict supremum of $A$, then $0 < 1$ implies $\xsome{a}{A}{0 < a}$, meaning $p$. We obtained $\lnot\lnot p \implies p$ for an arbitrary truth value $p$.
		
		We may calculate (strict) suprema and infima per parts.
		\begin{lemma}
			Let $\{A_i\}_{i \in I}$ be a family of subsets $A_i \subseteq X$, and $<$ a strict order on $X$. Then
			$$\sup \bigcup_{i \in I} A_i = \sup \st{\sup A_i}{i \in I}$$
			if all above suprema exist. Analogous formulae hold for infima and their strict versions.
		\end{lemma}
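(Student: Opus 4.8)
The plan is to unwind the defining (universal) properties of (strict) suprema and infima, together with the trivial observation that membership in a union is an existential quantifier over the index set. No topology is involved; this is pure intuitionistic order theory, and everything hinges on equivalences already recorded in the excerpt (the characterisations of $\sup$, $\inf$ and their strict versions, plus asymmetry/irreflexivity and tightness of a strict linear order).

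First consider ordinary suprema, and set $s \dfeq \sup \bigcup_{i \in I} A_i$ and $t \dfeq \sup \st{\sup A_i}{i \in I}$, both of which exist by hypothesis. For an arbitrary $x \in X$ the defining property of the supremum gives
$$s \leq x \iff \xall{a}{\bigcup_{i \in I} A_i}{a \leq x} \iff \xall{i}{I}{\xall{a}{A_i}{a \leq x}} \iff \xall{i}{I}{\sup A_i \leq x} \iff t \leq x,$$
where the second equivalence merely rewrites $a \in \bigcup_{i \in I} A_i$ as $\xsome{i}{I}{a \in A_i}$ (so a universal over the union becomes a double universal, which is constructively harmless), the third applies the defining property of $\sup A_i$, and the last applies the defining property of $\sup \st{\sup A_i}{i \in I}$. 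Instantiating $x \dfeq t$ yields $s \leq t$, instantiating $x \dfeq s$ yields $t \leq s$, and antisymmetry of $\leq$ forces $s = t$. For the strict version, with $s$ and $t$ now the strict suprema of $\bigcup_{i \in I} A_i$ and of $\st{\text{strict sup of } A_i}{i \in I}$ respectively, the same bookkeeping with $\exists$ in place of $\forall$ gives, for every $x \in X$,
$$x < s \iff \xsome{a}{\bigcup_{i \in I} A_i}{x < a} \iff \xsome{i}{I}{\xsome{a}{A_i}{x < a}} \iff \xsome{i}{I}{x < (\text{strict sup of } A_i)} \iff x < t.$$
Taking $x \dfeq s$ and using irreflexivity of $<$ gives $\lnot(s < t)$; symmetrically $\lnot(t < s)$; tightness then yields $s = t$.

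The infimum and strict infimum cases are obtained by the order-reversing dual of the two arguments above (swap $\leq$ with $\geq$, $<$ with $>$, ``upper bound'' with ``lower bound'', and $\sup$ with $\inf$), so no separate work is needed. There is no genuine obstacle here: the only points that deserve a second of care are that the hypothesis that \emph{all} the suprema in sight exist is used essentially (just to be allowed to name $s$, $t$ and the $\sup A_i$), and that in the strict case the concluding equality must be extracted via irreflexivity and tightness rather than antisymmetry, since a strict supremum, although unique, is pinned down a priori only through the strict order and its tightness.
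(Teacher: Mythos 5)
Your proof is correct and follows essentially the same route as the paper: unwinding the defining properties of (strict) suprema over the union and over the set of partial suprema, then extracting the equality via antisymmetry (resp.\ irreflexivity and tightness). The paper writes out only the strict-supremum case, verifying directly that $\sup\st{\sup A_i}{i \in I}$ satisfies the two defining clauses of the strict supremum of the union; your biconditional chains are just a slightly more explicit packaging of the same computation.
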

		\begin{proof}
			We prove the formula only in the case of strict suprema. Denote $s \dfeq \sup\st{\sup A_i}{i \in I}$, and assume the suprema in the claim are strict. Take any $a \in \bigcup_{i \in I} A_i$. Then there is an $i \in I$, such that $a \in A_i$, so $a \leq \sup A_i \leq s$. Now take any $x \in X$, $x < s$. That means there exists $i \in I$ for which $x < \sup A_i$, hence there is $a \in A_i$ for which $x < a$. This proves the claim.
		\end{proof}
		
		In particular, this means $\sup\{a, b, c\} = \sup\{\sup\{a, b\}, c\}$ and in general for $n \in \NN_{\geq 2}$,
		$$\sup\{a_0, a_1, \ldots, a_{n-1}\} = \sup\{\sup\{\ldots \sup\{a_0, a_1\}, a_2\}, \ldots, a_{n-1}\}.$$
		Moreover, $\sup\{a\} = a$, so typically when we check something for finite suprema (and infima and their strict versions), we need to verify the condition only for the supremum of the empty set (\ie the least element) and binary suprema.
		
		Actually, in the case of finite suprema and infima, there is no difference between the usual and the strict version.
		\begin{proposition}
			Let $<$ be a strict order on $X$, $n \in \NN$ and $a_0, \ldots, a_{n-1} \in X$. Suppose $s \dfeq \sup\{a_0, \ldots, a_{n-1}\}$ exists.
			\begin{enumerate}
				\item For $n \geq 1$, it is not true that for all $i \in \NN_{<n}$, $a_i < s$.
				\item The supremum $s$ is also strict.
			\end{enumerate}
			Similarly for infima.
		\end{proposition}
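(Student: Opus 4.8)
The plan is to prove item 1 first, deduce item 2 from it using cotransitivity, and then obtain the statements for infima by the order dual (swapping $\sup$, $<$, ``upper bound'' for $\inf$, $>$, ``lower bound''). The case $n = 0$ is vacuous and must be handled separately for item 2: if $\sup \emptyset$ exists it just says $s$ is the least element, so $x < s$ is impossible, and $s$ is then trivially a strict supremum of $\emptyset$; this is exactly the case excluded from item 1.

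\textbf{Item 1.} We want $\lnot\,(\forall i \in \NN_{<n}.\ a_i < s)$ when $n \geq 1$. First I would isolate the combinatorial core as a sublemma: for $n \geq 1$ one cannot have $\forall i \in \NN_{<n}.\ \exists j \in \NN_{<n}.\ a_i < a_j$. This is proved by induction on $n$, using irreflexivity for $n = 1$, and for the step using decidability of $=$ and $<$ on $\NN$ together with transitivity of $<$: from a witness $a_n < a_{j^*}$ one may assume $j^* < n$ (otherwise $a_n < a_n$), and then any $i < n$ whose only witness is $j = n$ still satisfies $a_i < a_n < a_{j^*}$, so the hypothesis descends to $\NN_{<n}$. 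Now suppose $\forall i.\ a_i < s$. For each $i$, if $a_i$ were an upper bound of $\{a_0, \dots, a_{n-1}\}$ then $s \leq a_i$ by leastness of the supremum, contradicting $a_i < s$ via $a_i < s \leq a_i$; hence $a_i$ is not an upper bound, i.e. $\lnot\,\forall j.\ \lnot(a_i < a_j)$, which is $\lnot\lnot\,\exists j \in \NN_{<n}.\ a_i < a_j$ (using $\forall j.\,\lnot P(j) \iff \lnot\,\exists j.\,P(j)$). Since $\NN_{<n}$ is finite, $\lnot\lnot$ commutes with the conjunction over $i$ (repeated use of $\lnot\lnot p \land \lnot\lnot q \iff \lnot\lnot(p \land q)$), so $\lnot\lnot\,(\forall i.\,\exists j.\ a_i < a_j)$; by the sublemma the inner statement is false, hence $\bot$. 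This yields $\lnot\,(\forall i.\ a_i < s)$.

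\textbf{Item 2.} The inclusion $\exists i.\,(x < a_i) \implies x < s$ is immediate from $a_i \leq s$ and the strengthened transitivity $x < a_i \land a_i \leq s \implies x < s$. For the converse, given $x < s$, apply cotransitivity of $x < s$ at each $c = a_i$ to get $\forall i \in \NN_{<n}.\ (x < a_i \lor a_i < s)$. Finiteness of $\NN_{<n}$ allows the distribution $(\exists i.\ x < a_i) \lor (\forall i.\ a_i < s)$ (easy induction on $n$). The second disjunct contradicts item 1 (here $n \geq 1$; for $n = 0$ the hypothesis $x < s$ is already absurd), so $\exists i.\ x < a_i$, as required.

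\textbf{Main obstacle.} The difficulty is the standard one: both the leastness argument and the cotransitivity argument naturally produce a doubly negated existential or disjunction rather than an actual witness, and this cannot be repaired by exhibiting an upper bound strictly below $s$ (a strict linear order need not interpolate, e.g. in $\NN$). What makes the argument go through is that the index set is \emph{finite}, so $\lnot\lnot$ passes through the finite conjunction and finite distributivity of $\land$ over $\lor$ is available, and the single leftover \emph{bad} case is precisely the one ruled out by item 1. Care is needed only to keep the proof of item 1 independent of item 2 (it is), and to note that $n = 0$ and repeated $a_i$ are harmless.
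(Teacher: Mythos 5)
Your proof is correct, but it takes a genuinely different route from the paper's. The paper leans on the immediately preceding observations ($\sup\{a\}=a$ and $\sup\{a_0,\dots,a_{n-1}\}=\sup\{\sup\{\dots\},a_{n-1}\}$) to reduce everything to the case $n=2$, where item 1 becomes a two-line argument: assuming $a_0<s$ and $a_1<s$, one shows $\lnot(a_1<a_0)$ (else $a_1\leq a_0$ forces $s=a_0$), hence $a_0\leq a_1$, hence $s=a_1$, contradicting $a_1<s$; item 2 then follows by applying cotransitivity to the two elements and discarding the one impossible combination. You instead handle general $n$ in one pass, via the combinatorial sublemma that no finite family can have every member strictly dominated by another member, and then push a double negation through the finite conjunction to connect ``$a_i$ is not an upper bound'' with that sublemma. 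Your version is self-contained (it does not invoke the iterated-supremum reduction, whose inductive details the paper glosses over) at the cost of more explicit intuitionistic bookkeeping ($\lnot\lnot$ commuting with finite $\land$, finite distributivity of $\forall$ over $\lor$, decidability of the index set); the paper's version buys brevity by exploiting the binary case. Your handling of $n=0$ and of repeated elements is also correct and slightly more careful than the paper's ``obvious for $n=0,1$''.
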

		\begin{proof}
			The claim of the proposition is obvious for $n = 0, 1$. It is sufficient to prove it for $n = 2$. Assume $a_0 < s$, $a_1 < s$. Suppose $a_1 < a_0$. Then also $a_1 \leq a_0$, so $s = a_0$, a contradiction to $a_0 < s$. So $a_0 \leq a_1$, but then $s = a_1$, a contradiction to $a_1 < s$.
			
			As for the second part, take any $x \in X$, $x < s$. By cotransitivity, $x < a_0 \lor a_0 < s$ and $x < a_1 \lor a_1 < s$. It cannot be both $a_0 < s$ and $a_1 < s$, so $x < a_0 \lor x < a_1$ which proves $s$ is strict.
		\end{proof}
		
		We call $(X, <)$ a lattice if it has binary (hence inhabited finite) suprema and infima (which are then automatically strict).
		
		\begin{lemma}\label{Lemma: decidability_of_(in)equalities}
			Consider the following statements for $(X, <)$.
			\begin{enumerate}
				\item $<$ is decidable.
				\item $\apart$ is decidable.
				\item $\leq$ is decidable.
				\item $=$ is decidable.
				\item $(X, <)$ is a lattice.
			\end{enumerate}
			Then
			\begin{itemize}
				\item $(1) \iff (2) \land (5)$,
				\item $(3) \iff (4) \land (5)$,
				\item $(1) \implies (3)$,
				\item $(2) \implies (4)$.
			\end{itemize}
			We do not have $(3) \implies (1)$, $(4) \implies (2)$ in general, unless classical logic is assumed.
		\end{lemma}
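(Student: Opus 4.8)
The plan is to prove the listed implications one at a time, leaving the two non-implications for the end; throughout I freely use irreflexivity and transitivity of $<$, reflexivity, antisymmetry and transitivity of $\leq$, and the elementary intuitionistic facts from Section~\ref{Section: intuitionism}. The one-line implications come first. For $(1)\implies(3)$, $a\leq b$ is by definition $\lnot(b<a)$, the negation of a decidable statement. For $(1)\implies(2)$, use that a disjunction of two decidable propositions is decidable, applied to $a\apart b=(a<b)\lor(b<a)$. For $(2)\implies(4)$, decide $a\apart b$: in the affirmative case irreflexivity gives $\lnot(a=b)$, in the negative case tightness gives $a=b$, so $=$ is decidable. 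Likewise $(3)\implies(4)$: decide $a\leq b$ and $b\leq a$; if both hold then $a=b$ by antisymmetry, and if either fails then $a\neq b$, since equality would force it by reflexivity.

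Next, decidability forces a lattice. For $(1)\implies(5)$, deciding $a<b$ and then $b<a$ yields, by asymmetry and tightness, the trichotomy $a<b$, $a=b$, or $b<a$; hence the larger of $a,b$ is $\sup\{a,b\}$ and the smaller is $\inf\{a,b\}$, and these are automatically strict. For $(3)\implies(5)$, observe first that $\lnot(a\leq b)\land\lnot(b\leq a)$ entails $\lnot\lnot(b<a)\land\lnot\lnot(a<b)$, hence $\lnot\lnot\big((a<b)\land(b<a)\big)$, which contradicts asymmetry; so decidability of $\leq$ in fact yields $(a\leq b)\lor(b\leq a)$ with both alternatives decidable, and in either alternative the sup (resp.\ inf) is the larger (resp.\ smaller) element, the universal property following from transitivity of $\leq$. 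Conversely $(4)\land(5)\implies(3)$ is the identity $a\leq b\iff\sup\{a,b\}=b$, whose right side is decidable once $=$ is.

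The implication carrying the real content --- and the step I expect to be the main obstacle --- is $(2)\land(5)\implies(1)$. Assuming $\apart$ decidable (so $=$ is decidable by $(2)\implies(4)$) and $(X,<)$ a lattice, I would decide $a<b$ as follows: first decide $a\apart b$; if $\lnot(a\apart b)$ then $\lnot(a<b)$ holds directly; if $a\apart b$, put $s\dfeq\sup\{a,b\}$ and decide $s=b$. When $s=b$ we have $a\leq s=b$, i.e.\ $\lnot(b<a)$, which together with $a\apart b$ forces $a<b$; when $s\neq b$ then $a<b$ is impossible, since $a<b$ would give $a\leq b$ and hence $\sup\{a,b\}=b$. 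Together with the implications already proved, this closes $(1)\iff(2)\land(5)$ and $(3)\iff(4)\land(5)$.

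It remains to see that $(3)\implies(1)$ and $(4)\implies(2)$ fail without classical logic; a single example serves for both. Fix a truth value $p$ with $\lnot\lnot p$, and put $X\dfeq\two=\{0,1\}$ with its decidable equality and $<$ defined by $0<1\dfeq p$, all other strict inequalities being $\bot$. The order axioms are routine: asymmetry and cotransitivity are immediate, and tightness holds because $\lnot\lnot p$ makes $\lnot p$ equivalent to $\bot$, so the tightness hypothesis for the pair $0,1$ is false and the implication is vacuous. Here $1\leq 0=\lnot p=\bot$ while the remaining values of $\leq$ are $\top$, so $\leq$ is decidable, and $X$ is a lattice with $\sup\{0,1\}=1$ and $\inf\{0,1\}=0$; thus $(3)$ (hence $(4)$ and $(5)$) holds. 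On the other hand $(0<1)=p=(0\apart 1)$, so $(1)$ and $(2)$ both fail as soon as $p$ is not decidable. Equivalently, were $(3)\implies(1)$ to hold for all strict linear orders, this family of examples would give $\lnot\lnot p\implies p$ for every $p$, and substituting $p\dfeq q\lor\lnot q$ would yield \lem[.]
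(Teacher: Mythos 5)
Your proof is correct and follows essentially the same route as the paper's (which only sketches three hints): comparability from decidability of $\leq$ yields the lattice structure, a lattice identity relating $<$ to $\sup\{a,b\}$ closes the loop from $(2)\land(5)$ back to $(1)$ (you decide $\sup\{a,b\}=b$ where the paper uses $a<b\iff\sup\{a,b\}\apart a$, a trivial reformulation), and the counterexample is the same two-element order built from a truth value $p$ with $\lnot\lnot p$. All the filled-in details check out, including the verification of tightness in the counterexample and the concluding reduction to \lem[.]
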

		\begin{proof}
			Simple. We supply only the following few details. Decidability of $\leq$ implies, for all $a, b \in X$, $a \leq b \lor b \leq a$ or equivalently $\sup\{a, b\} = b \lor \sup\{a, b\} = a$, making $X$ a lattice. On the other hand, if $X$ is a lattice, then for all $a, b \in X$, $a < b \iff \sup\{a, b\} \apart a$. As for the latter two implications not reversible, let $X = \{\bot, p\}$ where $p \in \soc$ such that $\lnot\lnot p$, and let $a < b \dfeq \lnot a \land b$. This is a strict order on $X$ which makes it a lattice, and we have $\bot \neq p$, $\bot \leq p$, $p \nleq \bot$. However, decidability of $<$ or of $\apart$ implies $p$.
		\end{proof}
		
		If we want to define the real numbers as a ``completion'' of certain strict orders, we need a property that will ensure ``density'' of the order.
		\begin{definition}
			A strict linear order $(X, <)$ is called \df{interpolating} when for every $a, b \in X$, $a < b$, there exists $x \in X$ such that $a < x < b$.
		\end{definition}
		
		We also need additional algebraic structure. There are several reasons for that. First, to define the reals, we still need the Archimedean property. Second, whatever the reals might be in a particular category, we will automatically know what algebraic structure they possess. Third, these strict orders will serve us as distances in metric spaces, and we want to be able to express all the conditions for the metric.
		
		\begin{definition}
			If $(X, <, +, 0)$ is a strictly linearly ordered set and a commutative monoid, \ie a semigroup for $+$ and with $0$ as the unit for addition, and the condition
			$$a < b \iff a + x < b + x$$
			holds for all elements $a, b, x \in X$, we call it a \df{strictly linearly ordered commutative monoid}.
		\end{definition}
		
		\begin{proposition}
			In a strictly linearly ordered commutative monoid $(X, <, +, 0)$, the following holds for all $a, b, c, d, x \in X$.
			\begin{itemize}
				\item $a < b \land c < d \implies a + c < b + d$
				\item $a > 0 \land b > 0 \implies a + b > 0$
				\item $a + x \leq b + x \iff a \leq b$
				\item $a < b \land c \leq d \implies a + c < b + d$
				\item $a \leq b \land c \leq d \implies a + c \leq b + d$
				\item $a \geq 0 \land b \geq 0 \implies a + b \geq 0$
				\item $a + x = b + x \iff a = b$ \quad (cancellation property)
			\end{itemize}
		\end{proposition}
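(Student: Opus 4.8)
The plan is to derive the seven items essentially in the order listed, using only the translation axiom $a < b \iff a + x < b + x$, commutativity and associativity of $+$, the fact that $0$ is a unit, and the order-theoretic facts recorded right after the definition of a strict linear order: transitivity of $<$, the mixed rules $a < b \land b \leq c \implies a < c$ and $a \leq b \land b < c \implies a < c$, that $\leq$ is a partial order, and tightness $\lnot(a < b) \land \lnot(b < a) \implies a = b$. No new structure is needed beyond what the hypothesis supplies.

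First I would prove item 1: from $a < b$ the axiom gives $a + c < b + c$, and from $c < d$ together with commutativity it gives $b + c < b + d$; transitivity of $<$ then yields $a + c < b + d$. Item 2 is just the instance $a = c = 0$ of item 1 after rewriting $0 + 0 = 0$. Next, item 3: since $\leq$ is by definition the negation of the flipped strict order, $a + x \leq b + x$ unfolds to $\lnot(b + x < a + x)$, which by the axiom is equivalent to $\lnot(b < a)$, i.e.\ to $a \leq b$; no case analysis is involved, so this equivalence is constructively sound.

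With item 3 in hand, item 4 follows by combining $a + c < b + c$ (from $a < b$ via the axiom) with $b + c \leq b + d$ (from $c \leq d$ via item 3 and commutativity) using the mixed transitivity rule $a < b \land b \leq c \implies a < c$. Item 5 follows the same way but starting from $a \leq b$ instead, using transitivity of the partial order $\leq$ twice via item 3, and item 6 is the instance $a = c = 0$ of item 5. Finally, for item 7 the reverse implication is just substitutivity of equality, and for the forward one, $a + x = b + x$ gives $\lnot(a + x < b + x)$ and $\lnot(b + x < a + x)$, hence $\lnot(a < b)$ and $\lnot(b < a)$ by the axiom, hence $a = b$ by tightness.

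I do not expect any genuine obstacle here; the whole proposition is routine bookkeeping off the translation axiom. The only point demanding a little care is to stay constructive — in particular to obtain items 3 and 7 directly from the axiom and tightness rather than by deciding $a < b$ — and to remember to invoke commutativity each time the perturbation is added on the other side of the inequality.
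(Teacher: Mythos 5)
Your proof is correct and fills in exactly the routine computations the paper dismisses with ``Immediate from the definition''; every step (the translation axiom, the negation-based handling of $\leq$, and tightness for cancellation) is constructively sound and is the intended argument.
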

		\begin{proof}
			Immediate from the definition.
		\end{proof}
		
		\begin{definition}
			Let $(X, <, +, 0, \cdot, 1)$ be a strictly linearly ordered commutative monoid with an additional operation (\df{multiplication}) on positive elements $\cdot\colon X_{> 0} \times X_{> 0} \to X_{> 0}$ such that $(X_{> 0}, \cdot, 1)$ is a commutative monoid, and which is distributive over addition, \ie $(a + b) \cdot c = a \cdot c + b \cdot c$ for all positive elements $a, b, c \in X_{> 0}$ (recall that $a + b$ is then also positive). If moreover the condition
			$$a < b \implies a \cdot x < b \cdot x$$
			holds for all $a, b, x \in X_{> 0}$, we call $(X, <, +, 0, \cdot)$ a \df{strictly linearly ordered unital commutative\footnote{The terms `unital' and `commutative' refer to the multiplication, as is the usual terminology for rings. For a general protoring we would require $X_{> 0}$ to be only a semigroup, and we would also have to write distributivity and preservation of $<$ in the case of multiplication from the left. Also, notice that a `unital' protoring has at least two elements since by definition $0 < 1$, so $0 \apart 1$.} protoring}.
		\end{definition}
		
		It quickly becomes tedious to write these long names, so whenever we say `ordered' or `strictly ordered', we mean strictly linearly ordered, and protorings will always be unital and commutative. Also, when referring to a set $X$ with additional order and algebraic structure, we will typically write simply $X$, omitting the symbols for relations and operations when they are understood.
		
		In a protoring we don't in general have additive inverses, though we can still define subtraction as a partial operation. If $X$ is a protoring and $a, b \in X$, we define $b-a$ to be $x \in X$ for which $a + x = b$ (if it exists). Since for any $a, b, x, y \in X$ we have $a + x = b \land a + y = b \implies x = y$, the difference $b-a$ is uniquely determined.
		
		For any $a \in X$ we can always calculate the differences $a-a = 0$ and $a-0 = a$. Moreover, given $a, b, c, d \in X$, if the differences $b-a$ and $d-c$ exist, then the difference $(b+d)-(a+c)$ exists also, namely $(b+d)-(a+c) = (b-a) + (d-c)$; to see this, simply calculate $a+c + ((b-a) + (d-c)) = a + (b-a) + c + (d-c) = b+d$.
		
		We will mix the multiplication within a protoring with another, external one. On any monoid $(X, +, 0)$ we have the action of natural numbers on $X$, inductively defined by $0 \cdot x \dfeq 0$ and $(n+1) \cdot x \dfeq n \cdot x + x$ for any $x \in X$, $n \in \NN$; intuitively, $n \cdot x$ is just $x$ summed $n$ times, in particular $1 \cdot x = x$. Notice also $n \cdot (a+b) = n \cdot a + n \cdot b$ for any $a, b \in X$, $n \in \NN$.
		
		\begin{lemma}\label{Lemma: multiplication_by_natural_numbers_in_a_strict_order}
			Let $X$ be a strictly linearly ordered unital commutative protoring. The following holds for all $a, b \in X$ and $n \in \NN_{> 0}$.
			\begin{enumerate}
				\item $a < b \iff n \cdot a < n \cdot b$
				\item If $a > 0$, then $n \cdot a > 0$.
				\item If $a, b > 0$, then $(n \cdot a) \cdot b = n \cdot (a \cdot b)$.
			\end{enumerate}
		\end{lemma}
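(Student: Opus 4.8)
The plan is to prove all three parts by induction on $n \in \NN_{>0}$, using the monoid axiom $a < b \iff a + x < b + x$, the distributivity of multiplication over addition on positive elements, and the consequences of being a strictly linearly ordered commutative monoid collected above.

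For part (1) I would prove the equivalence $a < b \iff n \cdot a < n \cdot b$ by induction on $n$, handling both implications together. The base case $n = 1$ is immediate since $1 \cdot a = a$ and $1 \cdot b = b$. For the inductive step, assume the equivalence for $n$. The forward direction is easy: from $a < b$ we get $n \cdot a < n \cdot b$ by the induction hypothesis, and adding the inequality $a < b$ to it via $(x < y \land z < w \implies x + z < y + w)$ yields $n \cdot a + a < n \cdot b + b$, i.e. $(n+1) \cdot a < (n+1) \cdot b$. The reverse direction is the one that needs care: from $(n+1) \cdot a < (n+1) \cdot b$, i.e. $n \cdot a + a < n \cdot b + b$, I would apply cotransitivity with the test element $n \cdot a + b$, obtaining $n \cdot a + a < n \cdot a + b$ or $n \cdot a + b < n \cdot b + b$. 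In the first case, cancelling $n \cdot a$ (using $a + x < b + x \iff a < b$ together with commutativity) gives $a < b$ directly; in the second case, cancelling $b$ gives $n \cdot a < n \cdot b$, whence $a < b$ by the induction hypothesis. The point of the cotransitivity trick is that one cannot argue by the naive case split ``$a = b$ or $b < a$ or $a < b$'', since the order is not assumed decidable, and $a \leq b \land a \neq b$ does not entail $a < b$ constructively (cf. Lemma~\ref{Lemma: decidability_of_(in)equalities}).

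Part (2) is then essentially a corollary of part (1). First a one-line induction shows $n \cdot 0 = 0$ for all $n$ (using $0 + 0 = 0$). Applying (1) to $0 < a$ with this identity gives $0 = n \cdot 0 < n \cdot a$. Alternatively one can prove it by a direct induction using $a > 0 \land b > 0 \implies a + b > 0$. For part (3) I would again induct on $n \geq 1$, with base case $(1 \cdot a) \cdot b = a \cdot b = 1 \cdot (a \cdot b)$. For the step, $(n+1) \cdot a = n \cdot a + a$, and by part (2) both $n \cdot a$ and $a$ lie in $X_{> 0}$, so the product $((n+1) \cdot a) \cdot b$ is legal; distributivity over addition on positive elements gives $((n+1) \cdot a) \cdot b = (n \cdot a) \cdot b + a \cdot b$, which by the induction hypothesis equals $n \cdot (a \cdot b) + a \cdot b = (n+1) \cdot (a \cdot b)$. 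The only subtlety here is bookkeeping about which elements are positive so that every multiplication that appears is defined, and part (2) supplies exactly that.

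The main obstacle is the reverse direction of (1): without decidability of $<$, the obvious trichotomy argument fails, and it is the cotransitivity-with-an-auxiliary-point argument, carried along the induction, that makes it go through; everything else is routine induction bookkeeping.
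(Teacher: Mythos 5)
Your proposal is correct and matches the paper's proof essentially step for step: the same cotransitivity trick with the auxiliary element $n \cdot a + b$ for the reverse direction of (1), and the same routine inductions for (2) and (3). No issues.
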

		\begin{proof}
			All statements are proved by induction on $n \in \NN_{> 0}$.
			\begin{enumerate}
				\item The statement obviously holds for $n = 1$. For the `only if' direction, if $a < b$, then by induction hypothesis $n \cdot a < n \cdot b$, and then $(n+1) \cdot a = n \cdot a + a < n \cdot b + b = (n+1) \cdot b$. Conversely, assume $(n+1) \cdot a < (n+1) \cdot b$. By cotransitivity, $(n+1) \cdot a < n \cdot a + b$ or $n \cdot a + b < (n+1) \cdot b$. Rewriting the first case, we obtain $n \cdot a + a < n \cdot a + b$, so $a < b$. In the second case we have $n \cdot a + b < n \cdot b + b$; cancel $b$, then use the induction hypothesis.
				\item Immediate.
				\item Obvious for $n = 1$. For $n+1$ we calculate, using the induction hypothesis,
				$$((n+1) \cdot a) \cdot b = (n \cdot a + a) \cdot b = (n \cdot a) \cdot b + a \cdot b = n \cdot (a \cdot b) + (a \cdot b) = (n+1) \cdot (a \cdot b).$$
			\end{enumerate}
		\end{proof}
		
		We can now state the Archimedean property for a protoring $X$. However, since we don't have subtraction, we must state it in a slightly more complicated form than usual.
		\begin{definition}
			A strictly linearly ordered unital commutative protoring $(X, <, +, 0, \cdot, 1)$ is \df{Archi\-me\-dean} when the following \df{Archimedean property} holds:
			$$\all{a, b, c, d}{X}{c < d \implies \some{n}{\NN}{b + n \cdot c < a + n \cdot d}}.$$
		\end{definition}
		The usual Archimedean property is a special case: for any positive $a, b \in X_{> 0}$ there exists $n \in \NN$ such that $a < n \cdot b$, or equivalently, for any positive $a \in X_{> 0}$ there exists $n \in \NN$ such that $1 < n \cdot a$ and $a < n \cdot 1$ (clearly, such $n$ must itself be positive). In particular, in an Archimedean protoring we can, for any $a$, find $n \in \NN$ such that $a < n \cdot 1$ since we have $0 < a \lor a < 1$; in the latter case, take $n = 1$, and in the former, use the Archimedean property.
		
		Finally, we add topology to the mix. The name for the structure with all these properties is becoming cumbersome, so we introduce a new definition.
		\begin{definition}\label{Definition: streaks}
			A \df{streak} (more precisely, a \df{$\opn$-streak}) is a strictly linearly ordered unital commutative Archimedean protoring $X$ in which:
			\begin{itemize}
				\item for every $x \in X$ there exists $a \in X_{> 0}$, such that $x + a > 0$; in other words, every element is expressible as a difference of two positive elements, $x = b-a$ for some $a, b \in X_{> 0}$,
				\item $<$ is an open relation.
			\end{itemize}
			A map between streaks $f\colon X \to Y$ is a \df{streak morphism} when it preserves addition, multiplication and order, \ie[]
			\begin{itemize}
				\item $f(a + b) = f(a) + f(b)$,
				\item if $a, b > 0$, then $f(a \cdot b) = f(a) \cdot f(b)$,
				\item $a < b \implies f(a) < f(b)$,
			\end{itemize}
			for all $a, b \in X$.
		\end{definition}
		
		We were minimalistic in the definition of streak morphisms; they actually preserve all the structure of streaks.\footnote{They also preserve the strict order in the other direction, \ie $f(a) < f(b) \implies a < b$; see Proposition~\ref{Proposition: streak_morphisms} below.}
		\begin{proposition}\label{Proposition: streak_morphisms_basic}
			The following holds for any streak morphism $f\colon X \to Y$.
			\begin{enumerate}
				\item $f(0) = 0$
				\item $f(1) = 1$
				\item $a \in X_{> 0} \implies f(a) \in Y_{> 0}$ (In particular, when stating the preservation of multiplication in Definition~\ref{Definition: streaks}, the product $f(a) \cdot f(b)$ was well defined.)
				\item\label{Proposition(streak_morphisms_basic)item: difference_mapped_by_streak_morphism} For $a, b \in X$, if the difference $b-a$ exists, then so does $f(b)-f(a)$, and $f(b-a) = f(b)-f(a)$.
				\item The map $f$ is injective, even in the strong sense: $a \apart b \implies f(a) \apart f(b)$ for all $a, b \in X$.
			\end{enumerate}
		\end{proposition}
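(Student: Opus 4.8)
The plan is to prove the five items essentially in the order $1 \to 3 \to 2 \to 4 \to 5$, since each leans on the ones before it. For item~1, I would use only that $f$ preserves addition: $f(0) = f(0 + 0) = f(0) + f(0)$, and since this reads $f(0) + f(0) = 0 + f(0)$, the cancellation property of the underlying strictly linearly ordered commutative monoid $Y$ (established earlier) forces $f(0) = 0$. Item~3 is then immediate: in a streak $0 < a$ whenever $a \in X_{> 0}$, so order preservation gives $f(0) < f(a)$, i.e.\ $0 < f(a)$ by item~1, hence $f(a) \in Y_{> 0}$; this also retroactively justifies that the product $f(a) \cdot f(b)$ in Definition~\ref{Definition: streaks} is well-typed. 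For item~2, note first that $1 \in X_{> 0}$ (a unital protoring satisfies $0 < 1$ by definition), so $f(1) \in Y_{> 0}$ by item~3; preservation of multiplication on positive elements with $a = b = 1$ yields $f(1) = f(1 \cdot 1) = f(1) \cdot f(1)$, while trivially $f(1) = 1 \cdot f(1)$, and cancelling the positive element $f(1)$ gives $f(1) = 1$.

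For item~4, suppose the difference $b - a$ exists and write $c \dfeq b - a$, so $a + c = b$; applying $f$ and using additivity gives $f(a) + f(c) = f(b)$, which exhibits $f(c)$ as a witness that $f(b) - f(a)$ exists, and by the uniqueness of differences in $Y$ we conclude $f(b) - f(a) = f(c) = f(b - a)$. For item~5, unfold $a \apart b$ as $a < b \lor b < a$; in the first disjunct order preservation gives $f(a) < f(b)$ and in the second $f(b) < f(a)$, so in either case $f(a) \apart f(b)$, proving the strong form. Ordinary injectivity then follows formally: if $f(a) = f(b)$ then $f(a) \apart f(b)$ fails (apartness is irreflexive), hence $a \apart b$ fails, and tightness of $<$ turns $\lnot(a \apart b)$ into $a = b$.

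The main obstacle — really the only step that needs anything beyond unwinding definitions — is item~2, where one must first observe that multiplication on the positive cone is cancellative. This is not recorded explicitly earlier, but it is an easy consequence of the strict monotonicity $a < b \implies a \cdot x < b \cdot x$ (for $a, b, x > 0$) together with tightness: if $a \cdot x = b \cdot x$ then neither $a < b$ nor $b < a$ can hold, so $a = b$. Everything else is routine manipulation with the monoid and order axioms already proved for strictly linearly ordered commutative monoids.
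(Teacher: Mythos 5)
Your proof is correct and follows essentially the same route as the paper's: additive cancellation for item~1, order preservation plus $f(0)=0$ for item~3, additivity plus uniqueness of differences for item~4, and unfolding apartness for item~5. For item~2 the paper argues directly that $f(1)<1$ or $f(1)>1$ would force $f(1)\cdot f(1) \apart f(1)$ and concludes by tightness, which is exactly your multiplicative-cancellation observation with the lemma inlined, so the two arguments coincide in substance.
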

		\begin{proof}
			\begin{enumerate}
				\item We have $f(0) + f(0) = f(0 + 0) = f(0)$, and then $f(0) = 0$ by the cancellation property.
				\item $1 > 0$ implies $f(1) > f(0) = 0$. Hence, $f(1) < 1$ or $f(1) > 1$ would imply $f(1) \cdot f(1) < f(1)$ or $f(1) \cdot f(1) > f(1)$; however $f(1) \cdot f(1) = f(1 \cdot 1) = f(1)$. By tightness of the strict order, $f(1) = 1$.
				\item Since $a > 0$, we have $f(a) > f(0) = 0$.
				\item Let $b-a$ exist in $X$. Then $f(a) + f(b-a) = f(a + (b-a)) = f(b)$, so $f(b)-f(a) = f(b-a)$.
				\item By the definition of $\apart$, and because $f$ preserves $<$.
			\end{enumerate}
		\end{proof}
				
		Intuitively, streaks are just subprotorings of the reals, and morphisms of streaks are inclusions between them. The reals should then be the largest streak.
		
		\begin{definition}
			We define the set of real numbers $\RR$ (more precisely, $\RR_\opn$) to be the terminal streak, \ie such that for every streak $X$ there exists a unique streak morphism $X \to \RR$.
		\end{definition}
		
		The characterization of the real numbers (and later complete metric spaces) via terminality, and using special order and algebraic structure to construct them, was inspired by~\cite{Richman_F_2008:_real_numbers_and_other_completions}. The terminal streak need not exist in an arbitrary category, of course, but we can construct it under our assumptions, namely the topos with natural numbers in which countable sets are overt. We prove that the set of open Dedekind cuts on any overt interpolating ring streak (such as $\QQ$) is in fact terminal among streaks. We need a little preparation first, though.
		
		For any streak $X$ we define the relation $<$ between elements $x \in X$ and rationals $q \in \QQ$ as follows: if $q = \tfrac{b-a}{c}$ where $a, b, c \in \NN$, $c > 0$, then
		$$x < q \dfeq c \cdot x + a \cdot 1 < b \cdot 1 \qquad \text{and analogously} \qquad q < x \dfeq b \cdot 1 < c \cdot x + a \cdot 1.$$
		We claim this is well-defined --- independent of the choices for $a, b, c$. Let $q = \tfrac{b-a}{c} = \tfrac{b'-a'}{c'}$ where $a, b, c, a', b', c' \in \NN$, $c, c' > 0$. That means $c' b + c a' = c b' + c' a$, so
		$$c \cdot x + a \cdot 1 < b \cdot 1 \iff c' \cdot (c \cdot x + a \cdot 1) < c' \cdot (b \cdot 1) \iff c' c \cdot x + c' a \cdot 1 < c' b \cdot 1 \iff$$
		$$\iff c' c \cdot x + c' a \cdot 1 + c a' \cdot 1 < c' b \cdot 1 + c a' \cdot 1 \iff c' c \cdot x + c' a \cdot 1 + c a' \cdot 1 < (c' b + c a') \cdot 1 \iff$$
		$$\iff c c' \cdot x + c a' \cdot 1 + c' a \cdot 1 < (c b' + c' a) \cdot 1 \iff c c' \cdot x + c a' \cdot 1 + c' a \cdot 1 < c b' \cdot 1 + c' a \cdot 1 \iff$$
		$$\iff c c' \cdot x + c a' \cdot 1 < c b' \cdot 1 \iff c' \cdot x + a' \cdot 1 < b' \cdot 1.$$
		
		Notice also that for any $x \in X$ and $n \in \NN \subseteq \QQ$,
		$$x < n \iff x < n \cdot 1 \qquad \text{and} \qquad n < x \iff n \cdot 1 < x.$$
		Moreover, if $X = \QQ$, then this order matches the usual one on the rationals, so there will be no ambiguity when using the symbol $<$.
		
		This relation $<$ has similar properties as a strict order (not surprisingly since intuitively it is just the strict order of the reals, restricted to $X$ and $\QQ$).
		\begin{proposition}\label{Proposition: rational_streak_order_basic}
			Let $X$ be a streak. The following holds for all $x, y \in X$, $q, r \in \QQ$:
			\begin{itemize}
				\item $\lnot(x < q \land q < x)$,
				\item $x < q \implies x < y \lor y < q$, \hspace{0.5ex} $q < x \implies q < y \lor y < x$,
				\item $x < q \implies x < r \lor r < q$, \hspace{0.5ex} $q < x \implies q < r \lor r < x$,
				\item $q < r \land r < x \implies q < x$, \hspace{0.5ex} $q \leq r \land r < x \implies q < x$,
				\item $x < q \land q < r \implies x < r$, \hspace{0.5ex} $x < q \land q \leq r \implies x < r$.
			\end{itemize}
		\end{proposition}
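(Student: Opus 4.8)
The plan is to reduce every clause to the corresponding property of the strict linear order $<$ on $X$ itself, which we already know is asymmetric, cotransitive and tight, and which is compatible with $+$, $\cdot$ and the $\NN$-action. The one auxiliary fact I will use repeatedly is that for $m, n \in \NN$ we have $m \cdot 1 < n \cdot 1 \iff m < n$ (and the $\leq$ version): the backward direction is immediate from Lemma~\ref{Lemma: multiplication_by_natural_numbers_in_a_strict_order}(2) applied to $1 > 0$ (write $n = m + k$ with $k > 0$, so $n \cdot 1 = m \cdot 1 + k \cdot 1 > m \cdot 1$), and the forward direction follows because $<$ on $\NN$ is decidable (if $\lnot(m < n)$ then $n \leq m$, hence $n \cdot 1 \leq m \cdot 1$, contradicting $m \cdot 1 < n \cdot 1$). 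Secondly, given any two rationals $q$ and $r$ I will choose a common positive denominator $c$ and a common offset $a \in \NN$ large enough that $q = \tfrac{b-a}{c}$ and $r = \tfrac{b'-a}{c}$ with $a, b, b', c \in \NN$; this is always possible, and by the well-definedness computation carried out just before the statement the meaning of $x < q$, $q < x$ (etc.) does not depend on the choice. With this normalization $q < r \iff b < b'$ and $q \leq r \iff b \leq b'$.

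With these preparations the clauses become bookkeeping. For \emph{asymmetry}, $x < q \land q < x$ unfolds to $c \cdot x + a \cdot 1 < b \cdot 1$ and $b \cdot 1 < c \cdot x + a \cdot 1$, contradicting asymmetry of $<$ on $X$. For \emph{cotransitivity through $y \in X$}, from $x < q$, i.e.\ $c \cdot x + a \cdot 1 < b \cdot 1$, cotransitivity of $<$ on $X$ with the intermediate element $c \cdot y + a \cdot 1$ gives $c \cdot x + a \cdot 1 < c \cdot y + a \cdot 1$ or $c \cdot y + a \cdot 1 < b \cdot 1$; the first yields $c \cdot x < c \cdot y$ by order-compatibility of addition ($u + v < w + v \iff u < w$), hence $x < y$ by Lemma~\ref{Lemma: multiplication_by_natural_numbers_in_a_strict_order}(1) since $c > 0$, and the second is precisely $y < q$. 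The dual $q < x \implies q < y \lor y < x$ is the mirror argument. For the \emph{transitivity clauses}, with $q = \tfrac{b-a}{c}$ and $r = \tfrac{b'-a}{c}$ the hypothesis $q < r$ (resp.\ $q \leq r$) is $b < b'$ (resp.\ $b \leq b'$), so $b \cdot 1 < b' \cdot 1$ (resp.\ $b \cdot 1 \leq b' \cdot 1$) by the auxiliary fact; then $x < q \land q < r$ unfolds to $c \cdot x + a \cdot 1 < b \cdot 1 \leq b' \cdot 1$, i.e.\ $x < r$, and $q \leq r \land r < x$ unfolds to $b \cdot 1 \leq b' \cdot 1 < c \cdot x + a \cdot 1$, i.e.\ $q < x$, using the ``$\leq$ then $<$'' form of transitivity on $X$ noted after the definition of strict linear order. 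The $\leq$-variants and the $x < q \land q \leq r \implies x < r$ form are handled identically.

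For \emph{cotransitivity through $r \in \QQ$}, since $<$ on $\QQ$ is decidable we may split on $r < q$: if $r < q$ we are done, and otherwise $q \leq r$, whence $x < r$ by the transitivity clause just established. (One could equally argue directly by applying cotransitivity of $<$ on $X$ to $c \cdot x + a \cdot 1 < b \cdot 1$ with intermediate $b' \cdot 1$, reading off $x < r$ from the first alternative and $r < q$, via $b' < b$, from the second.) The dual $q < x \implies q < r \lor r < x$ is symmetric.

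I do not anticipate a genuine obstacle: the whole content lies in the reductions above, and the only mildly delicate point is keeping the chosen representations of $q$ and $r$ compatible — a common denominator and a common offset in $\NN$ — so that the unfolded inequalities live in $X$ with the \emph{same} coefficients; this is exactly what the well-definedness calculation preceding the statement licenses. Everything else is the routine protoring/order bookkeeping, together with Lemma~\ref{Lemma: multiplication_by_natural_numbers_in_a_strict_order} and decidability of the orders on $\NN$ and $\QQ$.
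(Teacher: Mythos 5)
Your proof is correct, and it fills in exactly the reductions the paper leaves implicit when it says ``Immediate from the definitions'': normalize $q,r$ to a common denominator and offset, and then transfer asymmetry, cotransitivity and (mixed) transitivity from the strict order on $X$ via the compatibility of $<$ with $+$ and the $\NN$-action. No gaps; the auxiliary fact $m\cdot 1 < n\cdot 1 \iff m<n$ and the appeal to the well-definedness computation are exactly the points that need care, and you handle both.
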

		\begin{proof}
			Immediate from the definitions.
		\end{proof}
				
		\begin{lemma}\label{Lemma: streak_is_union_of_rational_intervals}
			Let $X$ be a streak. Then for any $b \in \ZZ_{> 0}$:
			\begin{enumerate}
				\item $\intoo[X]{0}{n} = \bigcup_{a \in \intoo[\NN]{0}{n b}} \intoo[X]{\tfrac{a-1}{b}}{\tfrac{a+1}{b}}$ for all $n \in \NN$ such that $n b \geq 2$,
				\item $X_{> 0} = \bigcup_{a \in \NN_{> 0}} \intoo[X]{\tfrac{a-1}{b}}{\tfrac{a+1}{b}}$,
				\item $X = \bigcup_{a \in \ZZ} \intoo[X]{\tfrac{a-1}{b}}{\tfrac{a+1}{b}}$.
			\end{enumerate}
		\end{lemma}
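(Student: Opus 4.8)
The plan is to prove each of the three set equalities by its two inclusions, and to organise the three items so that they build on one another: item~(3) reduces to item~(1) by a translation by an integer, item~(2) reduces to~(1) by the Archimedean property, so the substance is in item~(1). In every case the inclusion ``$\supseteq$'' is immediate: e.g.\ for~(1), if $a \in \intoo[\NN]{0}{nb}$ and $\tfrac{a-1}{b} < x < \tfrac{a+1}{b}$, then $0 \le \tfrac{a-1}{b} < x$ forces $x > 0$ and $x < \tfrac{a+1}{b} \le \tfrac{nb}{b} = n$ forces $x < n$, using the transitivity clauses of Proposition~\ref{Proposition: rational_streak_order_basic}; items~(2) and~(3) are the same. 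So the work is all in the ``$\subseteq$'' directions.

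Before item~(1)~``$\subseteq$'' I would record a \emph{mixed locatedness} property: for $q, r \in \QQ$ with $q < r$ and any $x \in X$, either $q < x$ or $x < r$. Writing $q = \tfrac{b_1-a_1}{c}$ and $r = \tfrac{b_2-a_2}{c}$ over a common positive denominator $c$, the hypothesis $q < r$ gives $(b_1+a_2)\cdot 1 < (b_2+a_1)\cdot 1$ in $X$ by Lemma~\ref{Lemma: multiplication_by_natural_numbers_in_a_strict_order}; cotransitivity of $<$ on $X$ applied with the element $c\cdot x + (a_1+a_2)\cdot 1$, followed by the cancellation law for $<$, yields $q < x$ in one case and $x < r$ in the other. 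Only the specialization $r = q + \tfrac1b$ is needed below, together with the routine compatibility $q < x + m\cdot 1 \iff q - m < x$ for $m \in \NN$, which is immediate from the definition of the mixed relation.

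For item~(1)~``$\subseteq$'', suppose $0 < x < n$ with $nb \ge 2$, and set $g_k \dfeq \tfrac{k}{b}$ for $k \in \{0,\ldots,nb\}$. For each $k \in \{0,\ldots,nb-1\}$ mixed locatedness applied to $g_k < g_{k+1}$ gives $g_k < x \lor x < g_{k+1}$; by finite choice ($\AC{\NN_{< nb}}$ holds) choose $f\colon \{0,\ldots,nb-1\} \to \two$ with $f(k) = \top \impl (g_k < x)$ and $f(k) = \bot \impl (x < g_{k+1})$. Now locate the index: if $f(0) = \bot$ take $a = 1$ (then $g_0 = 0 < x$ is the hypothesis and $x < g_1 \le g_2$); if $f(nb-1) = \top$ take $a = nb-1$ (then $g_{nb-2} \le g_{nb-1} < x$ and $x < g_{nb} = n$ is the hypothesis); otherwise $0 \in S \dfeq \st{k}{f(k) = \top}$ and $nb-1 \notin S$, so $S$ is a decidable inhabited subset of a finite linear order, and for $a \dfeq (\max S) + 1 \in \{1,\ldots,nb-1\}$ we get $f(a-1) = \top$, so $g_{a-1} < x$, and $f(a) = \bot$, so $x < g_{a+1}$. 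In every case $x \in \intoo[X]{\tfrac{a-1}{b}}{\tfrac{a+1}{b}}$ with $0 < a < nb$.

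For item~(2)~``$\subseteq$'': if $x > 0$, the Archimedean property gives $n \in \NN$ (enlarged so that $nb \ge 2$) with $x < n$, and item~(1) places $x$ in one of the listed intervals with index $a$ satisfying $0 < a$, i.e.\ $a \in \NN_{>0}$. For item~(3)~``$\subseteq$'': given any $x \in X$, the streak axiom yields $w \in X_{>0}$ with $x + w > 0$; the Archimedean property applied to $w$ gives $m \in \NN$ with $w < m\cdot 1$, so $0 < x + w < x + m\cdot 1$, and choosing $N \in \NN$ with $x + m\cdot 1 < N$ and $N \ge 2$ (hence $Nb \ge 2$), item~(1) locates $x + m\cdot 1$ strictly between $g_{a_0-1}$ and $g_{a_0+1}$ for some $a_0 \in \{1,\ldots,Nb-1\}$; subtracting $m$ (legitimate by the compatibility recorded above) puts $x$ strictly between $\tfrac{a-1}{b}$ and $\tfrac{a+1}{b}$ with $a = a_0 - mb \in \ZZ$. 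The main obstacle is exactly item~(1)~``$\subseteq$'': forcing the finite search to pin $x$ down to \emph{consecutive} grid points (interval width $2/b$, not $3/b$) is what requires the small endpoint case analysis, and that argument depends on having mixed locatedness available.
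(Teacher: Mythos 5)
Your proof is correct. Items (1) and (2) follow essentially the same path as the paper: the paper also forms the finite sequence of choices from the disjunctions $\tfrac{m}{b} < x \lor x < \tfrac{m+1}{b}$ and then extracts a suitable index $a$ (it takes the position of the first ``second disjunct'' with separate endpoint cases, where you take $\max S + 1$; both selections work, and your explicit justification of the mixed cotransitivity $q < r \implies q < x \lor x < r$, which the paper uses tacitly, is a welcome addition). Item (3) is where you genuinely diverge: the paper writes $x = d - c$ with $c, d \in X_{>0}$, approximates $c$ and $d$ separately on the finer grid $\tfrac{1}{4b}$ via item (2), and then combines the two approximations with a short computation to produce $a \in \ZZ$ with $a \leq \tfrac{a''-a'+2}{4} \leq a+1$. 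You instead translate $x$ by an integer multiple of $1$ (obtained from the streak axiom $x + w > 0$ plus the Archimedean property) so as to land in $X_{>0}$, apply item (1) directly, and translate back using the compatibility $q < x + m\cdot 1 \iff q - m < x$, which is immediate from the definition of the mixed order. Your route avoids the refinement to denominator $4b$ and the averaging step, at the small cost of having to record that translation by $m \cdot 1$ commutes with the rational comparison; the paper's route keeps everything inside $X_{>0}$ and never needs that compatibility explicitly. Both are sound.
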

		\begin{proof}
			\begin{enumerate}
				\item
					For $x \in X$, $0 < x < n \cdot 1$, consider the disjunctions $\tfrac{m}{b} < x \lor x < \tfrac{m+1}{b}$ for $m \in \NN_{< n b}$. Write a finite sequence: in each of the disjunctions make a choice of a true disjunct, and write $0$ if the first one is chosen, and $1$ if the second one is. If the sequence contains no $1$s, let $a = n b - 1$; if the sequence contains no $0$s, let $a = 1$. Otherwise, let $a$ be that $m$ for which the first $1$ appears.
				\item
					By the previous item and the Archimedean property.
				\item
					Write $x = d-c$ where $c, d \in X_{> 0}$. By the previous item there are $a', a'' \in \NN_{> 0}$ such that $\tfrac{a'-1}{4 b} < c < \tfrac{a'+1}{4 b}$ and $\tfrac{a''-1}{4 b} < d < \tfrac{a''+1}{4 b}$. Let $a \in \ZZ$ be such that $a \leq \frac{a'' - a' + 2}{4} \leq a + 1$. Then
					$$(a'-1) \cdot 1 + 4 b \cdot x < 4 b \cdot c + 4 b \cdot x = 4 b \cdot d < (a''+1) \cdot 1,$$
					$$(a'+1) \cdot 1 + 4 b \cdot x > 4 b \cdot c + 4 b \cdot x = 4 b \cdot d > (a''-1) \cdot 1,$$
					so
					$$x < \frac{(a''+1) - (a'-1)}{4 b} = \frac{a'' - a' + 2}{4 b} \leq \frac{a+1}{b},$$
					$$x > \frac{(a''-1) - (a'+1)}{4 b} = \frac{a'' - a' - 2}{4 b} \geq \frac{a-1}{b}.$$
			\end{enumerate}
		\end{proof}
		
		\begin{lemma}\label{Lemma: order_comparison_between_streak_and_rational_elements}
			Let $X$ be a streak.
			\begin{enumerate}
				\item
					For an element $x \in X$ and a rational $q \in \QQ$ we have
					$$x < q \implies \xsome{r}{\QQ}{x < r < q} \qquad \text{and} \qquad q < x \implies \xsome{r}{\QQ}{q < r < x}.$$
				\item 
					For $x, y \in X$ we have
					$$x < y \iff \some{q}{\QQ}{x < q < y} \iff \some{q, r}{\QQ}{x < q < r < y}.$$
				\item
					Let $x, y \in X$. Then
					$$x = y \iff \all{q}{\QQ}{q < x \iff q < y} \iff \all{q}{\QQ}{x < q \iff y < q}.$$
			\end{enumerate}
		\end{lemma}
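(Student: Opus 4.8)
The three parts build on one another, so the plan is to establish them in the order (1), (2), (3).

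\textbf{Part (1).} First I would fix a natural-number representation $q = \tfrac{b_0 - a_0}{c_0}$ with $a_0, b_0, c_0 \in \NN$, $c_0 > 0$, and unfold $x < q$ into the inequality $c_0 \cdot x + a_0 \cdot 1 < b_0 \cdot 1$ in $X$. Applying the Archimedean property to this inequality, with parameters chosen so that the conclusion reads $c_0 \cdot 1 + n \cdot (c_0 \cdot x + a_0 \cdot 1) < 0 + n \cdot (b_0 \cdot 1)$, produces an $n \in \NN_{> 0}$ with $n c_0 \cdot x + (n a_0 + c_0) \cdot 1 < n b_0 \cdot 1$, which is exactly the unfolding of $x < q - \tfrac1n$. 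Then $r := q - \tfrac1n$ works, since $r < q$ is routine rational arithmetic; the case $q < x$ is symmetric, taking $r := q + \tfrac1n$. The only thing needing care is the bookkeeping with natural-number representations of the rationals involved.

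\textbf{Part (2).} For the implications \emph{back} to $x < y$ I would chain the mixed-order facts of Proposition~\ref{Proposition: rational_streak_order_basic}: from $x < q < y$ one obtains $q < x \lor x < y$, and $q < x$ is ruled out by $\lnot(x < q \land q < x)$, so $x < y$; the three-term version reduces to the two-term one via $q < r < y \implies q < y$. For the forward implication, given $x < y$ in $X$ I would invoke the Archimedean property again, with a deliberate additive slack (parameters yielding $N \cdot x + 2 \cdot 1 < N \cdot y$ for some $N \in \NN_{> 0}$), manufacturing a gap of width at least $2$; then Lemma~\ref{Lemma: streak_is_union_of_rational_intervals}(3) applied with $b = 1$ locates an integer $a$ with $(a-1) \cdot 1 < N \cdot x < (a+1) \cdot 1$, and a short computation gives $x < \tfrac{a+1}{N} < y$. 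To pass from one interpolating rational to two, I would then apply part (1) to $q < y$ to get $r$ with $q < r < y$, yielding $x < q < r < y$.

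\textbf{Part (3).} The forward direction (from $x = y$ to agreement of the cuts) is immediate substitution. For each converse I would argue by tightness of the strict order on $X$: it suffices to exclude $x < y$ and $y < x$. Assuming $x < y$, part (2) supplies a rational $q$ with $x < q < y$; the hypothesis that the lower (resp.~upper) cuts of $x$ and $y$ coincide then turns one of $x < q$, $q < y$ into its mirror, contradicting asymmetry of the mixed order. Hence $\lnot(x < y)$, symmetrically $\lnot(y < x)$, so $x = y$. I expect the one genuine obstacle to be the forward implication of part (2): extracting an \emph{explicit} rational strictly between $x$ and $y$ from the bare inequality $x < y$ in $X$. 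Since $\QQ$ need not be a subset of $X$ and there is no a priori modulus separating $x$ from $y$, the Archimedean property is precisely what provides the needed room, but it must be used with enough additive slack that the resulting gap is wide enough for the integer grid of Lemma~\ref{Lemma: streak_is_union_of_rational_intervals} to fit a point inside; everything else is unwinding definitions and quoting Proposition~\ref{Proposition: rational_streak_order_basic}.
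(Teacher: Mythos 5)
Your proposal is correct and follows essentially the same route as the paper: part (1) via the Archimedean property applied to the unfolded inequality to produce $r = q \mp \tfrac1n$, part (2) via Archimedean slack plus the integer grid of Lemma~\ref{Lemma: streak_is_union_of_rational_intervals}, and part (3) via tightness after excluding $x<y$ and $y<x$ with part (2). The only cosmetic difference is in part (2): the paper takes slack $1$ with denominator $3n$ and produces both interpolants $\tfrac{a+1}{3n} < \tfrac{a+2}{3n}$ at once, whereas you take slack $2$ with denominator $N$ and then invoke part (1) for the second rational — both work, provided you carry out the natural-number bookkeeping for $(a-1)\cdot 1$ when $a-1$ may be negative, exactly as you flag.
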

		\begin{proof}
			\begin{enumerate}
				\item
					Suppose $x < q$ and write $q = \tfrac{b-a}{c}$ where $a, b, c \in \NN$, $c > 0$. We then have $c \cdot x + a \cdot 1 < b \cdot 1$, so by the Archimedean property there exists $n \in \NN$ such that $c \cdot 1 + n \cdot (c \cdot x + a \cdot 1) < n b \cdot 1$. Take $r = q - \frac{1}{n} = \frac{n b - (n a + c)}{n c}$. The case $q < x$ is analogous.
				\item
					The implications $\Leftarrow$ are obvious, so it remains to prove $x < y \implies \some{q, r}{\QQ}{x < q < r < y}$. Assume $x < y$; then by the Archimedean property there is $n \in \NN$ (necessarily $>0$) with $1 + n \cdot x < n \cdot y$. By the previous lemma we have $a \in \ZZ$ such that $\tfrac{a-1}{3n} < x < \tfrac{a+1}{3n}$. We claim $x < \tfrac{a+1}{3n} < \tfrac{a+2}{3n} < y$. Only the last inequality needs proof. Write $a-1 = b - c$ where $b, c \in \NN$; then $a+2 = (b+3) - c$. We calculate
					$$(b+3) \cdot 1 = b \cdot 1 + 3 \cdot 1 < c \cdot 1 + 3n \cdot x + 3 \cdot 1 = c \cdot 1 + 3(n \cdot x + 1) < c \cdot 1 + 3n \cdot y.$$
				\item
					We prove only the first equivalence, the other one is analogous. The direction $\Rightarrow$ is obvious. Conversely, suppose $x \apart y$, \ie $x < y$ or $x > y$. In the first case we have $q \in \QQ$ such that $x < q < y$, a contradiction to the assumption and the asymmetry of $<$. We proceed similarly when $x > y$.
			\end{enumerate}
		\end{proof}
		
		\begin{proposition}\label{Proposition: streak_morphisms}
			Let $X$ and $Y$ be streaks.
			\begin{enumerate}
				\item
					For any streak morphism $f\colon X \to Y$, any $x, y \in X$ and $q \in \QQ$, we have
					$$x < q \iff f(x) < q, \qquad\qquad q < y \iff q < f(y),$$
					$$x < y \iff f(x) < f(y).$$
				\item
					There is at most one streak morphism $X \to Y$.
			\end{enumerate}
		\end{proposition}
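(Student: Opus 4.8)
The plan is to reduce everything to two facts about a streak morphism $f\colon X\to Y$: it preserves the strict order in the forward direction (this is part of the definition), and it \emph{reflects} it. For part~(1) the three forward implications are immediate. Recall from Proposition~\ref{Proposition: streak_morphisms_basic} that $f(0)=0$, $f(1)=1$, and $f$ preserves $+$; by a trivial induction $f$ therefore also preserves the $\NN$-action, $f(n\cdot z)=n\cdot f(z)$. Hence if $q=\tfrac{b-a}{c}$ with $a,b,c\in\NN$, $c>0$, the $Y$-inequality expressing $f(x)<q$, namely $c\cdot f(x)+a\cdot 1<b\cdot 1$, is exactly the $f$-image of the $X$-inequality $c\cdot x+a\cdot 1<b\cdot 1$ expressing $x<q$; since $f$ preserves $<$, we get $x<q\implies f(x)<q$, and likewise $q<x\implies q<f(x)$ and (directly from the definition) $x<y\implies f(x)<f(y)$.

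The actual content is the reflection of order, which I would establish first against rationals: $f(x)<q\implies x<q$ and $q<f(x)\implies q<x$. The key auxiliary fact is a comparison property of an arbitrary streak: for rationals $q<r$ and any $x\in X$ one has $q<x\lor x<r$. This follows from cotransitivity of $<$ on $X$: writing $q$ and $r$ over a common denominator turns $q<r$ into a strict inequality between two $\NN$-multiples of $1$, and feeding $c\cdot x$ (plus the appropriate constant multiple of $1$) as the intermediate element into cotransitivity yields, after cancellation, precisely the disjunction $q<x\lor x<r$. Granting this, suppose $f(x)<q$; by Lemma~\ref{Lemma: order_comparison_between_streak_and_rational_elements}(1) choose a rational $s$ with $f(x)<s<q$; apply the comparison property in $X$ to $s<q$ to obtain $s<x\lor x<q$; in the first case the forward preservation already established gives $s<f(x)$, contradicting $f(x)<s$ by asymmetry (Proposition~\ref{Proposition: rational_streak_order_basic}); hence $x<q$. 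The implication $q<f(x)\implies q<x$ is symmetric.

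From this the remaining equivalence of part~(1) follows: assume $f(x)<f(y)$; by Lemma~\ref{Lemma: order_comparison_between_streak_and_rational_elements}(2) applied in $Y$ pick $q\in\QQ$ with $f(x)<q<f(y)$; the rational reflection just proved gives $x<q$ and $q<y$; and then Lemma~\ref{Lemma: order_comparison_between_streak_and_rational_elements}(2) read in $X$ (the direction stating that if $x<q<y$ for some rational $q$ then $x<y$) yields $x<y$. For part~(2), let $f,g\colon X\to Y$ be streak morphisms and $x\in X$. By part~(1), for every $q\in\QQ$ we have $q<f(x)\iff q<x\iff q<g(x)$, so Lemma~\ref{Lemma: order_comparison_between_streak_and_rational_elements}(3), applied in $Y$, gives $f(x)=g(x)$; since $x$ was arbitrary, $f=g$.

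I expect the only real obstacle to be the comparison property $q<x\lor x<r$ for a streak — i.e.\ checking that plain cotransitivity suffices and that no extra locatedness-type hypothesis beyond what the definition of a streak already supplies is secretly needed. Everything after that is routine translation between the mixed order of $X$ with $\QQ$ and ordinary inequalities in $X$ and $Y$.
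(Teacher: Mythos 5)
Your proposal is correct and follows essentially the same route as the paper: forward implications from preservation of $+$, $1$ and $<$, the reflection against rationals via interpolation (Lemma~\ref{Lemma: order_comparison_between_streak_and_rational_elements}) plus the cotransitivity-derived comparison property $q<r\implies q<x\lor x<r$, and uniqueness from Lemma~\ref{Lemma: order_comparison_between_streak_and_rational_elements}(3). The only (immaterial) difference is in the third equivalence, where you reuse the rational reflection and one application of Lemma~\ref{Lemma: order_comparison_between_streak_and_rational_elements}(2), whereas the paper reruns the cotransitivity argument directly with two interpolated rationals.
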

		\begin{proof}
			\begin{enumerate}
				\item
					We can infer the implications in the direction $\Rightarrow$ immediately from the definitions, so we focus on the direction $\Leftarrow$. Suppose $f(x) < q$. By the previous lemma we have $r \in \QQ$ such that $f(x) < r < q$. Then $r < x \lor x < q$, but the first case leads to contradiction, so $x < q$. The second implication is similarly proved. As for the third, let $f(x) < f(y)$. By the previous lemma we have $q, r \in \QQ$ such that $f(x) < q < r < f(y)$. Then $q < x \lor x < \frac{q+r}{2}$ and $\frac{q+r}{2} < y \lor y < r$. The cases $q < x$ and $y < r$ lead to contradiction, so $x < \frac{q+r}{2} < y$, implying $x < y$.
				\item
					Let $f, g\colon X \to Y$ be streak morphisms, and take any $x \in X$. Using previous item we see that for all $q \in \QQ$, $q < f(x) \iff q < g(x)$ (as well as $f(x) < q \iff g(x) < q$), so $f(x) = g(x)$ by the previous lemma.
			\end{enumerate}
		\end{proof}
		
		So the category of streaks is actually a preorder, with $\RR$ as the last element.
		\begin{corollary}\label{Corollary: streak_terminality}
			Let $T$ be a streak.
			\begin{enumerate}
				\item If for every streak $X$ there is a streak morphism $X \to T$, then $T$ is a terminal streak.
				\item If $T'$ is a terminal streak and $T' \to T$ is a streak morphism, then it is an isomorphism, and $T$ is also terminal.
			\end{enumerate}
		\end{corollary}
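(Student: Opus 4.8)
The plan is to lean entirely on Proposition~\ref{Proposition: streak_morphisms}(2): between any two streaks there is \emph{at most} one streak morphism, so the category of streaks is a preorder and the whole corollary reduces to bookkeeping about existence of morphisms. The only auxiliary observations needed are that $\id[X]$ is a streak morphism for every streak $X$ (it trivially preserves $+$, $\cdot$ on positive elements, and $<$), and that a composite of streak morphisms is again a streak morphism (immediate from Definition~\ref{Definition: streaks}); in particular $\id[X]$ is the unique streak morphism $X \to X$.

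For the first part I would simply unwind the definition: $T$ is a terminal streak exactly when for every streak $X$ there is a \emph{unique} streak morphism $X \to T$. The hypothesis provides existence, and Proposition~\ref{Proposition: streak_morphisms}(2) provides uniqueness, so $T$ is terminal.

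For the second part, let $T'$ be a terminal streak and $f\colon T' \to T$ a streak morphism. Applying terminality of $T'$ to the streak $T$ yields a streak morphism $g\colon T \to T'$. Then $g \circ f\colon T' \to T'$ and $f \circ g\colon T \to T$ are streak morphisms, hence by uniqueness equal $\id[T']$ and $\id[T]$ respectively; therefore $f$ is an isomorphism with inverse $g$. Finally $T$ is terminal: for any streak $X$, compose the unique streak morphism $X \to T'$ (from terminality of $T'$) with $f$ to obtain a streak morphism $X \to T$, so $T$ satisfies the hypothesis of the first part and is thus terminal.

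There is essentially no obstacle here — the content is all in Proposition~\ref{Proposition: streak_morphisms}. The only points requiring any care are to invoke uniqueness from that proposition rather than re-deriving it, and to record explicitly that identities and composites of streak morphisms are streak morphisms so that the preorder/isomorphism argument is legitimate.
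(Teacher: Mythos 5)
Your proof is correct and follows essentially the same route as the paper: part (1) is existence-plus-uniqueness via Proposition~\ref{Proposition: streak_morphisms}, and part (2) combines the standard preorder argument (any morphism out of a terminal object is an isomorphism) with composition $X \to T' \to T$ to reduce to part (1). The extra remarks about identities and composites being streak morphisms are harmless and fill in details the paper leaves implicit.
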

		\begin{proof}
			\begin{enumerate}
				\item The existence of a streak morphism is assumed, and the uniqueness is supplied by the previous proposition.
				\item Either use the previous item and obtain $X \to T$ as the composition $X \to T' \to T$, or use category theory to verify that in a preorder category any morphism with a terminal object as a domain is an isomorphism.
			\end{enumerate}
		\end{proof}
		
		We make a short detour to fields, and then to lattices, as after Corollary~\ref{Corollary: streak_terminality}, we are in a position to prove that the real numbers, whatever they are, must be both. The stepping stone to fields, however, are the rings.
		
		\begin{definition}
			\
			\begin{itemize}
				\item
					We call the structure $(X, <, +, 0, -, \cdot, 1)$ a \df{strictly linearly ordered unital commutative ring} when $(X, <, +, 0)$ is a strictly linearly ordered commutative monoid, the subtraction $-$ is a total operation (\ie $(X, +, 0, -)$ is an abelian group), the multiplication $\cdot\colon X \times X \to X$ is a total commutative operation with unit $1$ and distributive over addition, and the condition
					$$a < b \iff a \cdot x < b \cdot x$$
					holds for all $a, b \in X$ and positive $x \in X_{> 0}$.
					
					We shorten the name of a strictly linearly ordered unital commutative ring which is also a streak to a \df{ring streak}.
				\item
					A strictly linearly ordered unital commutative ring $X$ is a \df{strictly linearly ordered field} when an element $a \in X$ has a multiplicative inverse $a^{-1}$ if and only if\footnote{Here the equivalence is assumed in order to exclude the trivial case $0 = 1$, \ie the trivial ring $\{0\}$, as that reduces the need for special cases in various theorems, similarly as we don't want to count $1$ as a prime number. In the case of streaks it makes no difference though since we have $0 \apart 1$. See Lemma~\ref{Lemma: simplify_conditions_for_field_streaks} below.} $a \apart 0$, and the condition $0 < a < b \implies 0 < b^{-1} < a^{-1}$ holds for all $a, b \in X$.
					
					We shorten the name of a strictly linearly ordered field which is also a streak to a \df{field streak}.
			\end{itemize}
		\end{definition}
		
		\begin{lemma}\label{Lemma: simplify_conditions_when_subtraction_is_total}
			Let $(X, <, +, 0, \cdot, 1)$ be a strictly linearly ordered unital commutative protoring with the following properties:
			\begin{itemize}
				\item $X$ is an abelian group for $+$ (\ie the subtraction is a total operation),
				\item $<$ is an open relation on $X$,
				\item for all $x \in X_{> 0}$ there is $n \in \NN$ such that $x < n \cdot 1$ and $1 < n \cdot x$.
			\end{itemize}
			Then $(X, <, +, 0, \cdot, 1)$ is a streak.
		\end{lemma}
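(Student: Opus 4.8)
The plan is to unwind Definition~\ref{Definition: streaks} and notice that most of what it demands is already in the hypotheses. By assumption $X$ is a strictly linearly ordered unital commutative protoring and $<$ is an open relation on it, so the only two things left to check are (i) that every element of $X$ is a difference of two positive elements, and (ii) that $X$ is Archimedean in the sense of the displayed Archimedean property. Both will follow from the two remaining hypotheses (totality of subtraction and the ``$x < n \cdot 1$ and $1 < n \cdot x$'' clause) together with cotransitivity of $<$; there is nothing else to verify.

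First I would settle (i). Given $x \in X$, form $-x$ (available since $X$ is an abelian group under $+$) and apply cotransitivity to $0 < 1$ with test element $-x$, getting $0 < -x$ or $-x < 1$. In the first case put $a \dfeq -x + 1$, which is positive as a sum of positive elements, and compute $x + a = 1 > 0$. In the second case, add $x$ to both sides of $-x < 1$ to get $0 < x + 1$, and take $a \dfeq 1$. Either way $a \in X_{>0}$ and $x + a > 0$, so $b \dfeq x + a$ exhibits $x = b - a$ with $a, b$ positive.

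Next, (ii), the Archimedean property. Using totality of subtraction I would rewrite the hypothesis $c < d$ as $0 < d - c \dfeqrev e$, and rewrite the goal $b + n \cdot c < a + n \cdot d$ (by adding $-a$ and $-(n \cdot c)$ to both sides) as $(b - a) < n \cdot (d - c) = n \cdot e$, using that the $\NN$-action distributes over $+$ and that $n \cdot (-c) = -(n \cdot c)$. So it suffices to prove: for every $f \in X$ and every $e \in X_{>0}$ there is $n \in \NN$ with $f < n \cdot e$. Apply cotransitivity to $0 < e$ with test element $f$: if $f < e$ then $n = 1$ already works since $1 \cdot e = e$; if $0 < f$, invoke the third hypothesis twice — choose $k$ with $f < k \cdot 1$ (forcing $k > 0$, since $0 < f < k \cdot 1$) and $m$ with $1 < m \cdot e$ — and then Lemma~\ref{Lemma: multiplication_by_natural_numbers_in_a_strict_order}(1) gives $k \cdot 1 < k \cdot (m \cdot e) = (k m) \cdot e$, so $n \dfeq k m$ works. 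Feeding (i) and (ii) back into Definition~\ref{Definition: streaks} finishes the proof.

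I do not expect a serious obstacle: the content lies entirely in recognising \emph{which} streak axioms still need proof and in packaging the two arguments so that the sign case-splits go through cotransitivity (constructively one cannot simply decide whether $x$, resp.\ $f$, is positive or not). The only slightly fiddly points are the elementary identities for the $\NN$-action ($n \cdot (u+v) = n \cdot u + n \cdot v$, $n \cdot (-c) = -(n \cdot c)$, and the associativity $k \cdot (m \cdot e) = (k m) \cdot e$, all immediate by induction), which are needed to massage the awkward subtraction-free form of the Archimedean property into the group setting where it can be compared directly with the given hypothesis.
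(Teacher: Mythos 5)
Your proposal is correct and follows essentially the same route as the paper's proof: both reduce the Archimedean property (via totality of subtraction) to finding $n$ with $b-a < n\cdot(d-c)$, obtained from a cotransitivity case-split together with the hypothesis $x < n\cdot 1$, $1 < n\cdot x$ for positive $x$ and Lemma~\ref{Lemma: multiplication_by_natural_numbers_in_a_strict_order}, and both produce the decomposition into a difference of positives by a similar cotransitivity argument (the paper writes $x = (n\cdot 1) - (n\cdot 1 - x)$ from $x < n\cdot 1$, which is only cosmetically different from your construction). No gaps.
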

		\begin{proof}
			We first prove the Archimedean property. Take any $a, b, c, d \in X$, $c < d$. We have $0 < b-a \lor b-a < 1$; in both cases we can find $n' \in \NN$ such that $b-a < n' \cdot 1$. Moreover, there is $n'' \in \NN$ such that $1 < n'' \cdot (d-c)$. Let $n \dfeq n' n'' \in \NN$. Then $b-a < n' \cdot 1 < n' n'' \cdot (d-c) = n \cdot (d-c)$, so $b + n \cdot c < a + n \cdot d$.
			
			To prove that $X$ is a streak, the only thing left is to write an arbitrary element as a difference of two positive ones. Take $x \in X$. We know that there is $n \in \NN_{> 0}$ such that $x < n \cdot 1$, but then $x = (n \cdot 1) - (n \cdot 1 - x)$.
		\end{proof}
				
		Recall that there is a canonical way to transform a commutative semigroup into a group, called the \df{Grothendieck construction}. The group consists of formal differences of elements of the semigroup. This makes the category of abelian groups a reflective subcategory in the category of abelian semigroups. We slightly adopt the Grothendieck construction for streaks (making formal differences of only positive elements), and verify that it turns a streak into a ring streak.
		
		Let $G(X) \dfeq (X_{> 0} \times X_{> 0})/_\equ$ where the equivalence relation $\equ$ is defined
		$$(a, b) \equ (c, d) \dfeq a + d = b + c$$
		for $a, b, c, d \in X_{> 0}$. As usual, the equivalence class $[(a, b)]$ represents the (formal) difference between $a$ and $b$. Thus, the order and the algebra on $G(X)$ is defined:
		$$[(a, b)] < [(c, d)] \dfeq a + d < b + c,$$
		$$[(a, b)] + [(c, d)] \dfeq [(a + c, b + d)], \qquad 0 = [(1, 1)], \qquad -[(a, b)] = [(b, a)],$$
		$$[(a, b)] \cdot [(c, d)] \dfeq [(a \cdot c + b \cdot d, a \cdot d + b \cdot c)], \qquad 1 = [(1+1, 1)].$$
			
		\begin{proposition}\label{Proposition: Grothendieck_construction_on_a_streak}
			For any streak $X$, its Grothendieck group $G(X)$ is again a streak, as well as a strictly ordered unital commutative ring, and the embedding $i\colon X \to G(X)$ is a streak morphism which is initial among streak morphisms from $X$ to ring streaks. The map $i$ is a streak isomorphism if and only if subtraction on $X$ is a total operation.
		\end{proposition}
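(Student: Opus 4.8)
The plan is to run the Grothendieck construction and verify, in turn, that $G(X)$ is a ring streak, that $i$ is a streak morphism, that $i$ is initial among streak morphisms from $X$ into ring streaks, and the characterisation of when $i$ is an isomorphism. The embedding is $i(x) \dfeq [(b, a)]$ for any $a, b \in X_{> 0}$ with $x = b - a$; such $a, b$ exist by the difference clause in the definition of a streak, and the value is independent of the choice by the cancellation property in $X$. It is worth recording the identity $[(a, b)] = i(a) - i(b)$ for all $a, b \in X_{> 0}$, which is used repeatedly below.

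First I would dispatch the bookkeeping: $\equ$ is an equivalence relation (transitivity uses cancellation in the commutative monoid $X_{> 0}$), and the order, $+$, $0$, $-$, $\cdot$, $1$ all descend to equivalence classes — well-definedness of $<$ uses compatibility of $<$ with $+$ in $X$, and well-definedness of $\cdot$ uses distributivity over $+$ on positive elements. That $(G(X), +, 0, -, \cdot, 1)$ is a commutative unital ring is then a routine computation. That $<$ is a strict linear order on $G(X)$ is inherited from $<$ on $X$: asymmetry and tightness are immediate, and for cotransitivity, given $[(a, b)] < [(c, d)]$, i.e.\ $a + d < b + c$, and any $[(e, f)]$, one adds $e + f$ to both sides and applies cotransitivity of $<$ in $X$ with the interpolant $b + d + e + e$, obtaining $a + f < b + e$ or $e + d < f + c$, that is, $[(a, b)] < [(e, f)]$ or $[(e, f)] < [(c, d)]$. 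Compatibility of $<$ with $+$ on $G(X)$ is direct.

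The delicate point — and the step I expect to be the main obstacle — is compatibility of $<$ with multiplication on $G(X)$, equivalently that the product of two positive elements of $G(X)$ is positive. Expanding the products, this reduces to the rearrangement inequality
\[
    u < v \ \wedge\ f < e \ \implies\ u e + v f < u f + v e
\]
for positive elements $u, v, e, f$ of $X$. This cannot be settled by the naive "$(v-u)(e-f) > 0$" because $X$ carries only a partial subtraction; instead it must be proved inside $X$ using the multiplicative order axiom $p < q \implies p z < q z$ (for positive $z$) together with cotransitivity, and — to handle the case where the gaps $v-u$ and $e-f$ are incomparably small — the Archimedean property, used to locate natural-number multipliers that rescale the two gaps compatibly. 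Granting the rearrangement inequality (hence the multiplicative order-compatibility), $G(X)$ is a strictly ordered unital commutative ring; it has total subtraction, $<$ on $G(X)$ is an open relation since the truth value of $[(a, b)] < [(c, d)]$ equals that of $a + d < b + c$ (open because $<$ on $X$ is), and the bounded Archimedean condition for $G(X)$ follows from the Archimedean property of $X$, so Lemma~\ref{Lemma: simplify_conditions_when_subtraction_is_total} shows $G(X)$ is a streak.

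With $G(X)$ a ring streak in hand, $i$ is a streak morphism by a short check — preservation of $+$ and of products of positive elements follows from the defining formulas, preservation of $<$ from compatibility of $<$ with $+$ in $X$ — and $i$ is injective by Proposition~\ref{Proposition: streak_morphisms_basic}. For initiality, given a ring streak $R$ and a streak morphism $f\colon X \to R$, set $g([(a, b)]) \dfeq f(a) - f(b)$, using totality of subtraction in $R$; this is well defined, is a streak morphism, and satisfies $g \circ i = f$ by Proposition~\ref{Proposition: streak_morphisms_basic}(\ref{Proposition(streak_morphisms_basic)item: difference_mapped_by_streak_morphism}), while uniqueness of $g$ comes for free from Proposition~\ref{Proposition: streak_morphisms}. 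Finally, if subtraction on $X$ is total then $[(a, b)] = i(a - b)$ for all $a, b \in X_{> 0}$, so $i$ is onto, hence a streak isomorphism (its inverse $[(a, b)] \mapsto a - b$ is a streak morphism); conversely, if $i$ is onto then for arbitrary $x, y \in X$, writing $x = b - a$ and $y = d - c$ with $a, b, c, d \in X_{> 0}$, surjectivity yields $z \in X$ with $i(z) = [(d + a, c + b)]$, and cancellation in $X$ gives $x + z = y$, so $z = y - x$ and subtraction on $X$ is total.
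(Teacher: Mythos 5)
Your proposal follows the same route as the paper's proof: the identical Grothendieck construction on pairs of positive elements, the same reduction of the Archimedean condition for $G(X)$ to Lemma~\ref{Lemma: simplify_conditions_when_subtraction_is_total} via $n \cdot [(c,d)] = [(n \cdot c, n \cdot d)]$, the same map $g([(a,b)]) = f(a) - f(b)$ for initiality citing the same two propositions, and the same two-way argument for the isomorphism criterion. The one step you explicitly "grant" --- the rearrangement inequality behind order-compatibility of multiplication --- is precisely the step the paper buries under ``it is easy to see,'' and the tools you name (the multiplicative order axiom, cotransitivity, and Archimedean rescaling against rational approximants) are indeed what a complete verification requires, so you are, if anything, more candid than the source about where the real work lies.
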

		\begin{proof}
			It is easy to see that the order and the operation on $G(X)$ are well defined, and that they make $G(X)$ a strictly linearly ordered unital commutative ring. The relation $<$ is obviously open. By Lemma~\ref{Lemma: simplify_conditions_when_subtraction_is_total} it remains to verify that for all $a, b, c, d \in X_{> 0}$ such that $[(c, d)] > 0$, \ie $c > d$, there is $n \in \NN$ with the property $[(a, b)] < n \cdot [(c, d)]$. It is easy to see by induction that $n \cdot [(c, d)] = [(n \cdot c, n \cdot d)]$, so the condition becomes $a + n \cdot d < b + n \cdot c$, exactly the Archimedean property in $X$.
			
			Finally, define $i\colon X \to G(X)$ as follows. For any $x \in X$ write it as $x = b-a$ where $a, b \in X_{> 0}$, and define $i(x) \dfeq [(b, a)]$. It is easy (though somewhat lengthy) to see that it is well defined, and a streak morphism. To prove $i$ is initial, let $f\colon X \to Y$ be a streak morphism into a ring streak $Y$. Define the map $g\colon G(X) \to Y$ by $g([(a, b)]) \dfeq f(a) - f(b)$. Then $g$ is a well-defined streak morphism, we have $f = g \circ i$ by Proposition~\ref{Proposition: streak_morphisms_basic}(\ref{Proposition(streak_morphisms_basic)item: difference_mapped_by_streak_morphism}), and $g$ is unique by Proposition~\ref{Proposition: streak_morphisms}.
			
			If $i$ is an isomorphism, then subtraction on $X$ is total since it is total on $G(X)$. Conversely, if $X$ has total subtraction, define the map $G(X) \to X$, $[(a, b)] \mapsto a-b$. It is easy to see that this is a streak morphism, and that it is the inverse of $i$.
		\end{proof}
		
		\begin{corollary}
			If $(X, <, +, 0, \cdot, 1)$ is a streak in which subtraction is a total operation, then the multiplication $\cdot\colon X_{> 0} \times X_{> 0} \to X_{> 0}$ uniquely extends to the whole of $X$ so that $(X, <, +, 0, -, \cdot, 1)$ is a strictly linearly ordered unital commutative ring.
		\end{corollary}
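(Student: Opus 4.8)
The plan is to transport the ring structure of the Grothendieck group $G(X)$ back to $X$. Since subtraction on $X$ is a total operation, Proposition~\ref{Proposition: Grothendieck_construction_on_a_streak} tells us that the canonical embedding $i\colon X \to G(X)$ is a streak isomorphism and that $G(X)$ is a strictly linearly ordered unital commutative ring. I would then define a multiplication on $X$ by pulling the one on $G(X)$ back along $i$, namely
$$x \cdot y \dfeq i^{-1}\big(i(x) \cdot_{G(X)} i(y)\big)$$
for all $x, y \in X$. Because $i$ is an isomorphism of the underlying strictly linearly ordered additive groups, this makes $(X, <, +, 0, -, \cdot, 1)$ a strictly linearly ordered unital commutative ring; in particular the compatibility axiom $a < b \iff a \cdot x < b \cdot x$ for positive $x$ transfers, since $i$ sends $0$ to $0$ and, being a streak morphism, preserves and (by Proposition~\ref{Proposition: streak_morphisms}) reflects the strict order, so positivity in $X$ corresponds exactly to positivity in $G(X)$.

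Next I would check that this multiplication extends the given one on positive elements. For $a, b \in X_{> 0}$ the map $i$, being a streak morphism, satisfies $i(a \cdot b) = i(a) \cdot_{G(X)} i(b)$ by definition of a streak morphism, so $i^{-1}\big(i(a) \cdot_{G(X)} i(b)\big) = a \cdot b$; hence the extended multiplication restricts to the original $\cdot\colon X_{> 0} \times X_{> 0} \to X_{> 0}$.

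Finally, for uniqueness, suppose $\ast$ is any total multiplication on $X$ for which $(X, <, +, 0, -, \ast, 1)$ is a strictly linearly ordered unital commutative ring and which agrees with $\cdot$ on $X_{> 0} \times X_{> 0}$. Every $x, y \in X$ can be written as $x = a - b$, $y = c - d$ with $a, b, c, d \in X_{> 0}$ (by totality of subtraction, or already by the streak axiom). Distributivity together with the abelian group structure then force
$$x \ast y = a \ast c + b \ast d - a \ast d - b \ast c = a \cdot c + b \cdot d - a \cdot d - b \cdot c,$$
where the right-hand side no longer mentions $\ast$ and coincides with $x \cdot y$ as computed by the formula above. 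Hence $\ast = \cdot$. The only mildly delicate point is making sure that the order-compatibility axiom really does survive the transport — i.e.\ that $x > 0$ in $X$ iff $i(x) > 0$ in $G(X)$ — but this is immediate once $i$ is known to be a streak isomorphism, so the whole argument is essentially bookkeeping around Proposition~\ref{Proposition: Grothendieck_construction_on_a_streak} and Proposition~\ref{Proposition: streak_morphisms_basic}(\ref{Proposition(streak_morphisms_basic)item: difference_mapped_by_streak_morphism}).
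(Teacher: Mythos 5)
Your proposal is correct and follows essentially the same route as the paper: transport the total multiplication from $G(X)$ back along the streak isomorphism $i$ of Proposition~\ref{Proposition: Grothendieck_construction_on_a_streak}, note it restricts to the given multiplication on positive elements, and obtain uniqueness by writing $x = a-b$, $y = c-d$ with $a,b,c,d$ positive and expanding by distributivity. No gaps.
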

		\begin{proof}
			By the previous proposition $i\colon X \to G(X)$ is a streak isomorphism via which the total multiplication on $G(X)$ can be transferred to $X$. By the properties of $i$ this is indeed the extension of the multiplication on $X_{> 0}$.
			
			Take now any $x, y \in X$, and write them $x = b - a$, $y = d - c$ where $a, b, c, d \in X_{> 0}$. If $\cdot$ is indeed an extension of the multiplication on positive elements, and satisfies the properties of the ring, then necessarily $x \cdot y = (b - a) \cdot (d - c) = (b \cdot d + a \cdot c) - (a \cdot d + b \cdot c)$. This proves uniqueness of the extension.
		\end{proof}
		Consequently, whenever we have a streak $X$ with total subtraction, we will consider the multiplication to be a total operation also, and thus the streak $X$ to be a ring streak.
		
		We saw that the conditions for $X$ to be a strictly linearly ordered unital commutative ring simplify in the case when $X$ is a streak. This also happens in the case of fields.
		\begin{lemma}\label{Lemma: simplify_conditions_for_field_streaks}
			Let $X$ be a ring streak in which the condition
			$$x \apart 0 \implies \some{y}{X}{x \cdot y = 1}$$
			holds for all $x \in X$. Then $X$ is a field streak.
		\end{lemma}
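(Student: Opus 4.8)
The plan is to verify directly the two conditions that upgrade a strictly linearly ordered unital commutative ring to a \emph{field}, since $X$ is already a ring streak (hence a streak and a strictly linearly ordered unital commutative ring). First I would record that multiplicative inverses in a commutative ring are unique when they exist ($ab = 1$ and $ac = 1$ give $b = b(ac) = (ab)c = c$), so the hypothesis genuinely defines $a^{-1}$ for every $a \apart 0$; this is already one direction of the required biconditional ``$a$ has a multiplicative inverse iff $a \apart 0$''.

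The crux is the reverse direction: if $ab = 1$ for some $b \in X$, then $a \apart 0$. I would prove by induction on $k \in \NN$ the disjunction $0 < a$ or $a \cdot (b - k\cdot 1) > 0$ (where $k\cdot 1$ is the $\NN$-action on the monoid). The base case $k = 0$ is just $ab = 1 > 0$. For the step, in the case $a \cdot (b - k\cdot 1) > 0$, cotransitivity of the strict order applied to $0 < a \cdot (b - k\cdot 1)$ with interpolant $a$ yields $0 < a$ or $a < a \cdot (b - k\cdot 1)$, and the latter gives $0 < a \cdot (b - k\cdot 1) - a = a \cdot (b - (k+1)\cdot 1)$, closing the induction. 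Now, using the Archimedean property of the streak, pick $n \in \NN_{> 0}$ with $b < n\cdot 1$, so $n\cdot 1 - b > 0$. Instantiating the induction at $k = n$ leaves, in the nontrivial disjunct, $a \cdot (b - n\cdot 1) > 0$, i.e.\ $a \cdot (n\cdot 1 - b) < 0 = 0 \cdot (n\cdot 1 - b)$; since $n\cdot 1 - b$ is positive, the ring-streak axiom $u < v \iff u \cdot t < v \cdot t$ for positive $t$ forces $a < 0$. Hence $0 < a$ or $a < 0$, that is $a \apart 0$. (Note this half needs no invertibility hypothesis, only that $1 > 0$ and $X$ is a ring streak.)

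With the biconditional established, I would finish with the order condition $0 < a < b \implies 0 < b^{-1} < a^{-1}$. Here $a, b \apart 0$, so $a^{-1}, b^{-1}$ exist; each is again apart from $0$ by the argument above, and the negative alternative is impossible (if $b^{-1} < 0$ then multiplying by the positive $b$ gives $1 = b b^{-1} < 0$), so $a^{-1}, b^{-1} > 0$. Multiplying $a < b$ by the positive element $a^{-1} b^{-1}$ and simplifying via $a a^{-1} = 1 = b b^{-1}$ gives $b^{-1} < a^{-1}$, whence $0 < b^{-1} < a^{-1}$. Therefore $X$ is a strictly linearly ordered field, and being a streak, a field streak.

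I expect the main obstacle to be precisely the forward implication ``invertible $\implies \apart 0$'': in an arbitrary commutative ring with apartness one only gets $\lnot(a = 0)$, which is weaker than $a \apart 0$ in the absence of stability, so the proof must genuinely use the Archimedean property together with cotransitivity of the linear order, as in the induction above. The remaining steps are routine manipulations in an ordered ring.
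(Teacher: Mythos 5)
Your proof is correct, but both halves take a genuinely different route from the paper's. For the implication ``invertible $\implies$ apart from $0$'', the paper bounds $\pm x^{-1}$ above by some $n \cdot 1$, invokes Lemma~\ref{Lemma: streak_is_union_of_rational_intervals} to place $x$ in some interval $\frac{a-1}{n} < x < \frac{a+1}{n}$ with $a \in \ZZ$, and kills the residual case $-\frac{1}{n} < x < \frac{1}{n}$ by multiplying out two explicit products of positive elements. Your induction on $k$ of the disjunction $0 < a \lor a\cdot(b - k\cdot 1) > 0$, closed off by an Archimedean bound $b < n\cdot 1$, reaches the same conclusion using only cotransitivity, distributivity of the total multiplication over subtraction, and the axiom $u < v \iff u\cdot t < v\cdot t$ for $t > 0$; it bypasses the interval-location lemma entirely and is self-contained, at the cost of an explicit induction where the paper does a single finite case split. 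For the monotonicity of inversion, the paper interpolates rationals $0 < x < q < r < y$ and deduces $y^{-1} \leq r^{-1} < q^{-1} \leq x^{-1}$ by contradiction, whereas you first establish $a^{-1}, b^{-1} > 0$ (correctly, by apartness plus the impossibility of a negative inverse) and then multiply $a < b$ directly by the positive element $a^{-1}b^{-1}$ --- shorter and more direct. Your parenthetical remark that the forward implication needs none of the lemma's hypothesis agrees with the paper, whose argument likewise uses only that the particular $x$ is invertible.
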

		\begin{proof}
			We have to verify two missing properties. First, assume that $x \in X$ has a multiplicative inverse $x^{-1}$, and let $n \in \NN_{> 0}$ be large enough such that $x^{-1} < n \cdot 1$ and $-x^{-1} < n \cdot 1$. Use Lemma~\ref{Lemma: streak_is_union_of_rational_intervals} to find $a\in \ZZ$ such that $\frac{a-1}{n} < x < \frac{a+1}{n}$. Clearly, if $a \geq 1$ or $a \leq -1$, then $x \apart 0$. The only case left is when $-\frac{1}{n} < x < \frac{1}{n}$, but then all of the expressions $n \cdot 1 - x^{-1}$, $n \cdot 1 + x^{-1}$, $1 - n \cdot x$, $1 + n \cdot x$ are positive, so
			$$0 < (n \cdot 1 - x^{-1}) \cdot (1 + n \cdot x) = n \cdot 1 + n^2 \cdot x - x^{-1} - n \cdot 1 = n^2 \cdot x - x^{-1},$$
			$$0 < (n \cdot 1 + x^{-1}) \cdot (1 - n \cdot x) = n \cdot 1 - n^2 \cdot x + x^{-1} - n \cdot 1 = -n^2 \cdot x + x^{-1},$$
			a contradiction.
			
			Next, take $x, y \in X$, $0 < x < y$. Use the Archimedean property to find $n \in \NN_{> 0}$ such that $y < n \cdot 1$. The statement $y^{-1} < \frac{1}{n}$ leads to contradiction since we can multiply both sides by $y$ and $n$. Thus $y^{-1} \geq \frac{1}{n} > 0$. Now use Lemma~\ref{Lemma: order_comparison_between_streak_and_rational_elements} to find the rationals $q, r \in \QQ$ such that $0 < x < q < r < y$. The statements $x^{-1} < q^{-1}$ and $r^{-1} < y^{-1}$ similarly lead to contradiction, so $y^{-1} \leq r^{-1} < q^{-1} \leq x^{-1}$.
		\end{proof}
		
		Recall that there is also a canonical way to turn a commutative ring into a field, namely its field of fractions --- at least when the ring has no nontrivial zero divisors. Constructively, we say that a ring $X$ has no nontrivial zero divisors when
		$$x \apart 0 \land y \apart 0 \implies x \cdot y \apart 0$$
		for all $x, y \in X$.
		
		We prove that the field of fractions of a ring streak is again a streak.
		
		\begin{lemma}
			A ring streak has no zero divisors.
		\end{lemma}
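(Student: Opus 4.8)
The plan is a straightforward case analysis, and it uses only the ordered-ring structure — the Archimedean and topological parts of a streak play no role here. First I would record the routine identities available in any strictly linearly ordered unital commutative ring: $0 \cdot a = 0$ (cancel $0 \cdot a$ in $0 \cdot a + 0 \cdot a = (0+0) \cdot a = 0 \cdot a$), $(-a) \cdot b = -(a \cdot b)$ (since $a \cdot b + (-a) \cdot b = (a + (-a)) \cdot b = 0$), and consequently $(-a) \cdot (-b) = a \cdot b$. I would also note, using the monoid law $a < b \iff a + c < b + c$ with $c = -a$, that $a < 0 \iff 0 < -a$.

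Next I would establish the key positivity fact: if $a > 0$ and $b > 0$ then $a \cdot b > 0$. This is immediate from the defining condition of a strictly linearly ordered ring, namely $u < v \iff u \cdot z < v \cdot z$ for positive $z$: take $u = 0$, $v = a$, $z = b$, so $0 < a$ yields $0 \cdot b < a \cdot b$, that is $0 < a \cdot b$.

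Then, given $x \apart 0$ and $y \apart 0$, I would split into the four cases coming from the disjunctions $x < 0 \lor x > 0$ and $y < 0 \lor y > 0$, which are exactly the hypotheses. If $x > 0$ and $y > 0$, then $x \cdot y > 0$ directly. In a mixed case, say $x > 0$ and $y < 0$, we have $-y > 0$, hence $x \cdot (-y) > 0$, and since $x \cdot (-y) = -(x \cdot y)$ this gives $x \cdot y < 0$; the case $x < 0$, $y > 0$ is symmetric. Finally, if $x < 0$ and $y < 0$, then $-x > 0$ and $-y > 0$, so $(-x) \cdot (-y) > 0$, and $(-x) \cdot (-y) = x \cdot y$, giving $x \cdot y > 0$. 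In every case $x \cdot y < 0$ or $x \cdot y > 0$, i.e. $x \cdot y \apart 0$. There is no real obstacle; the only point worth a moment's care is that in each branch the conclusion is drawn as one disjunct of $x \cdot y \apart 0$, so no appeal to excluded middle is made.
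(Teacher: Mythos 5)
Your proof is correct and follows essentially the same route as the paper's: a four-case split on the signs of $x$ and $y$, rewriting the negative cases via $a<0 \iff 0<-a$, and applying the order-preservation of multiplication by positive elements. You simply spell out the auxiliary ring identities that the paper leaves implicit.
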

		\begin{proof}
			Let $X$ be a ring streak, $x, y \in X$ and $x, y \apart 0$. We have four possibilities: $x, y > 0$ or $x > 0, y < 0$ or $x < 0, y > 0$ or $x, y < 0$. Rewrite $x < 0$ and $y < 0$ as $0 < -x$ and $0 < -y$. In every case it then follows from the definition of an ordered ring that $x \cdot y \apart 0$.
		\end{proof}
		
		\begin{proposition}\label{Proposition: field_of_fractions_of_a_streak}
			For any ring streak $X$, its field of fractions $XX^{-1}$ is a field streak, and the embedding $i\colon X \to XX^{-1}$ is a streak morphism which is initial among streak morphisms from $X$ to field streaks.
		\end{proposition}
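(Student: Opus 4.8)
The plan is to transcribe the classical field-of-fractions construction and then let the two "simplification" lemmas above do most of the axiom checking, exactly in the spirit of Proposition~\ref{Proposition: Grothendieck_construction_on_a_streak}. First I would define $XX^{-1}$ as the set of equivalence classes of pairs $(a, b)$ with $b \apart 0$, under $(a, b) \equ (c, d) \dfeq a \cdot d = b \cdot c$ (this makes sense by the preceding lemma that a ring streak has no zero divisors). The key organisational point is that, constructively, $b \apart 0$ decides the sign of $b$, so every class has a representative with strictly positive denominator; I would fix such representatives throughout and define, for $b, d > 0$,
$$[(a, b)] < [(c, d)] \dfeq a \cdot d < b \cdot c,$$
together with the evident $+$, $-$, $\cdot$, and $0 = [(0, 1)]$, $1 = [(1, 1)]$. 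The first (routine but lengthy) block of work is to verify that this is well defined with respect to $\equ$ and makes $XX^{-1}$ a strictly linearly ordered unital commutative ring, and that $<$ is an open relation — the latter because, on positive-denominator representatives, $[(a,b)] < [(c,d)]$ is literally the open relation $a \cdot d < b \cdot c$ on $X$.

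Next I would promote $XX^{-1}$ to a streak via Lemma~\ref{Lemma: simplify_conditions_when_subtraction_is_total}: subtraction is total (we are building a field), $<$ is open, so it only remains to check that for each positive $x = [(a, b)]$ with $a, b > 0$ there is $n \in \NN$ with $x < n \cdot 1$ and $1 < n \cdot x$; both amount to $a < n \cdot b$ and $b < n \cdot a$, which are instances of the Archimedean property of $X$. Hence $XX^{-1}$ is a ring streak. Then Lemma~\ref{Lemma: simplify_conditions_for_field_streaks} upgrades it to a field streak once we supply inverses for apart-from-zero elements: for $x = [(a, b)]$ one unwinds $x \apart 0$ to $a \apart 0$, and then $y = [(b, a)]$ satisfies $x \cdot y = 1$.

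For the universal property, the embedding $i\colon X \to XX^{-1}$, $i(x) \dfeq [(x, 1)]$, obviously preserves $+$, the multiplication of positive elements, and $<$, so it is a streak morphism. Given any streak morphism $f\colon X \to Y$ into a field streak $Y$, I would define $g\colon XX^{-1} \to Y$ by $g([(a, b)]) \dfeq f(a) \cdot f(b)^{-1}$; this is legitimate because $f$ reflects strict order (Proposition~\ref{Proposition: streak_morphisms}), hence preserves apartness (Proposition~\ref{Proposition: streak_morphisms_basic}), so $f(b) \apart 0$ and is invertible in the field streak $Y$. Then $g$ is well defined from $a \cdot d = b \cdot c \implies f(a) \cdot f(d) = f(b) \cdot f(c)$; it preserves $+$ and the multiplication of positives by direct computation; it preserves $<$ because from $a \cdot d < b \cdot c$ one multiplies by the positive $\big(f(b) \cdot f(d)\big)^{-1}$ in $Y$ to get $f(a) \cdot f(b)^{-1} < f(c) \cdot f(d)^{-1}$; and $f = g \circ i$ since $g(i(x)) = f(x) \cdot f(1)^{-1} = f(x)$. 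Uniqueness of $g$ is free from Proposition~\ref{Proposition: streak_morphisms}.

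The only genuinely delicate part, I expect, is the constructive bookkeeping of denominator signs: since we cannot treat "the nonzero elements" cavalierly, the order, the algebra, the inverse, and the map $g$ must all be phrased on positive-denominator representatives, and every well-definedness check must also absorb the sign-change moves $[(a, b)] = [(-a, -b)]$. Everything else is a faithful copy of the classical argument, with Lemmas~\ref{Lemma: simplify_conditions_when_subtraction_is_total} and~\ref{Lemma: simplify_conditions_for_field_streaks} absorbing the Archimedean and field-streak axioms.
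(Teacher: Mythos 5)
Your proposal is correct and follows essentially the same route as the paper: fractions with (normalized-to-)positive denominators, the order $[(a,b)]<[(c,d)] \dfeq a\cdot d< b\cdot c$ whose openness is inherited from $X$, the Archimedean/streak axioms absorbed by Lemma~\ref{Lemma: simplify_conditions_when_subtraction_is_total}, and the field-streak upgrade by Lemma~\ref{Lemma: simplify_conditions_for_field_streaks} with the sign-split inverse $[(b,a)]$ resp.\ $[(-b,-a)]$. The only difference is that you spell out the initiality verification via $g([(a,b)]) \dfeq f(a)\cdot f(b)^{-1}$ (with uniqueness from Proposition~\ref{Proposition: streak_morphisms}), which the paper's proof leaves implicit.
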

		\begin{proof}
			We can afford to take only positive elements as denominators, so we define the set $Y = \st{(a, b) \in X \times X}{b > 0}$, operations on it
			$$(a, b) + (c, d) \dfeq (a d + b c, b d), \qquad -(a, b) \dfeq (-a, b),$$
			$$(a, b) \cdot (c, d) \dfeq (a c, b d), \qquad (a, b)^{-1} \dfeq \begin{cases} (b, a) & \text{if } a > 0, \\ (-b, -a) & \text{if } a < 0, \end{cases}$$
			and the relation
			$$(a, b) \sim (c, d) \dfeq a d = b c.$$
			Clearly, $\sim$ is reflexive and symmetric. Now assume $(a, b) \sim (c, d)$ and $(c, d) \sim (e, f)$. Then $a d f = b c f  = b d e$, so $d (a f - b e) = 0$ whence $a f = b e$ because $d \apart 0$ and $X$ has no zero divisors by the previous lemma. This proves $\sim$ is transitive, and hence an equivalence relation. All above operations commute with $\sim$, so they induce operations on $XX^{-1} \dfeq Y/_\sim$. We verify $XX^{-1}$ is a field as usual, with $[(0, 1)]$ as the unit for addition, $[(1, 1)]$ as the unit for multiplication, and the apartness relation induced by $<$ defined as follows: $[(a, b)] < [(c, d)] \dfeq a d < b c$ (here it is convenient that we restricted the denominators to positive elements, else we would have to separate the cases). After somewhat lengthy but straightforward verification we see that $<$ is well-defined, that it is a strict order on $XX^{-1}$, and that it makes it into an ordered ring. Because $<$ is open on $X$, the strict order is an open relation also on $XX^{-1}$. For $[(a, b)], [(c, d)] \in XX^{-1}$, $[(c, d)] > 0$ (\ie $c > 0$) there exists $n \in \NN$ such that $a d < n b c$ because $X$ is Archimedean; this means $[(a, b)] < n \cdot [(c, d)]$, so $XX^{-1}$ is Archimedean by Lemma~\ref{Lemma: simplify_conditions_when_subtraction_is_total}. We conclude $XX^{-1}$ is a streak. Also, $[(0, 1)] \apart [(a, b)]$ is equivalent to $0 \apart a$, so every element apart from zero has a multiplicative inverse. By Lemma~\ref{Lemma: simplify_conditions_for_field_streaks} this suffices to prove that $XX^{-1}$ is a field streak.
			
			Recall that the embedding $i\colon X \to XX^{-1}$ is given by $i(x) = [(x, 1)]$. The fact that it is a streak morphism is immediate.
		\end{proof}
		
		This tells us something about the preorder of the streaks. For any streak $X$, the map $\NN \to X$, $n \mapsto n \cdot 1$, is a streak morphism, and unique by Proposition~\ref{Proposition: streak_morphisms}. Thus, $\NN$ is the initial streak; since streak morphisms are injective, we can also say that $\NN$ is the smallest streak, and all streaks contain it. If $X$ is a ring streak, then by Proposition~\ref{Proposition: Grothendieck_construction_on_a_streak} the map $\NN \to X$ extends to the map $\ZZ \to X$ which defines $n \cdot 1$ for all $n \in \ZZ$, and we see that $\ZZ$ is the initial ring streak. Moreover, for any $x \in X$, $a \in \ZZ$, $b \in \NN_{> 0}$, we have $x < \frac{a}{b}$ if and only if $b \cdot x < a \cdot 1$ (and similarly for $x > \frac{a}{b}$). Finally, if $X$ is a field streak, then the morphism $\ZZ \to X$ can be uniquely extended to the streak morphism $\QQ \to X$, so $\QQ$ is the initial field streak.
		
		Just as we can add formal differences and quotients to obtain the ring and the field structure, so we can add the formal suprema and infima to obtain the lattice structure on a streak.
		\begin{proposition}\label{Proposition: streak_to_lattice}
			For a streak $X$ there exists a streak $X^\lor$ which is a join-semilattice, and a streak morphism $j\colon X \to X^\lor$ such that every streak morphism $f\colon X \to Y$ where $Y$ is also a join-semilattice factors through it, \ie there is a (by Proposition~\ref{Proposition: streak_morphisms} necessarily unique) streak morphism $\bar{f}\colon X^\lor \to Y$ for which $\bar{f} \circ i = f$. Analogously, we can construct a streak morphism $m\colon X \to X^\land$ where $X^\land$ is a meet-semilattice, and $m$ has a similar universal property.
		\end{proposition}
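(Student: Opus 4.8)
The plan is to realise $X^\lor$ as the set of \emph{formal finite joins} of elements of $X$, in the same spirit as the Grothendieck construction of Proposition~\ref{Proposition: Grothendieck_construction_on_a_streak} and the field of fractions of Proposition~\ref{Proposition: field_of_fractions_of_a_streak}, and then to dualise. An element of $X^\lor$ is represented by an \emph{inhabited} family $\bar a = (a_i)_{i \in I}$ in $X$ with index set $I \subseteq \NN_{< n}$ for some $n$ (allowing $I$ to be merely a subset of a finite initial segment, rather than a whole $\NN_{<n}$, is a deliberate choice that pays off for multiplication); one should think of $\bar a$ as $\bigvee_{i \in I} a_i$. Put
$$\bar a < \bar b \quad\dfeq\quad \xsome{j}{J}\xall{i}{I}{a_i < b_j},$$
which is exactly the statement ``$\bigvee a_i < \bigvee b_j$'' read through the description of finite suprema as strict suprema in the proposition above, and declare $\bar a \equ \bar b$ when $\lnot(\bar a < \bar b) \land \lnot(\bar b < \bar a)$. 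First I would verify that $<$ descends to a strict linear order on $X^\lor \dfeq (\text{representatives})/_\equ$: asymmetry is immediate, tightness holds by the very definition of $\equ$, and cotransitivity — given $\bar a < \bar c$ with witnessing index $k$, and any $\bar b$ — follows from a finite case analysis on the cotransitivity instances $a_i < b_j \lor b_j < c_k$ (either some single $b_j$ dominates all the $a_i$, giving $\bar a < \bar b$, or else every $b_j$ satisfies $b_j < c_k$, giving $\bar b < \bar c$); cotransitivity then makes $\equ$ an equivalence relation. Openness of $<$ on $X^\lor$ is inherited from $X$, since $\bar a < \bar b$ is built from the statements $a_i < b_j$ by finitely many $\land$ and $\lor$, and by the postulate of this chapter $\opn$ is a bounded lattice.

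Next I would equip $X^\lor$ with its algebra. Addition and the join are the obvious things: $[\bar a] + [\bar b] \dfeq [(a_i + b_j)_{(i,j) \in I \times J}]$, $0 \dfeq [(0)]$, $[\bar a] \lor [\bar b] \dfeq [\bar a \cnct \bar b]$, and checking the commutative-monoid laws, compatibility of $+$ with $<$, and that $\lor$ is a genuine binary supremum are routine reductions to the corresponding facts in $X$ (using the proposition on finite suprema repeatedly). The Archimedean property and the ``difference of two positives'' clause transfer the same way, after bounding a finite family above and below by suitable integer multiples of $1$ as in Lemma~\ref{Lemma: streak_is_union_of_rational_intervals}. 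The one genuinely delicate definition — and the step I expect to be the main obstacle — is multiplication on positive elements. If $[\bar a], [\bar b] > 0$ then (unwinding the order against $[(0)]$) there are $i_0, j_0$ with $a_{i_0} > 0$ and $b_{j_0} > 0$, so the support $S \dfeq \st{(i,j) \in I \times J}{a_i > 0 \land b_j > 0}$ is an inhabited subset of $I \times J$ carved out by an open condition, and I would set $[\bar a] \cdot [\bar b] \dfeq [(a_i \cdot b_j)_{(i,j) \in S}]$ — this is precisely why the index families were allowed to be mere subsets of finite initial segments, since $S$ need not be decidable in $I \times J$. The work is then to show this is well defined on $\equ$-classes and yields an associative, commutative, unital (with $1 = [(1)]$) operation that is distributive over $+$ and order-preserving on positives; each verification must be arranged so that only the honestly positive entries enter the formal join, and one leans on the fact that over the non-negative elements of $X$ multiplication distributes over finite joins.

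Granting that $X^\lor$ is a join-semilattice streak, the embedding $j\colon X \to X^\lor$, $x \mapsto [(x)]$, is a streak morphism by Proposition~\ref{Proposition: streak_morphisms_basic}. For the universal property, given a streak morphism $f\colon X \to Y$ with $Y$ a join-semilattice streak, set $\bar f([\bar a]) \dfeq \bigvee_{i \in I} f(a_i)$, computed in $Y$. Well-definedness is the crux and is where Proposition~\ref{Proposition: streak_morphisms} does the work: $f$ both preserves and reflects the strict order, so $\bar a \equ \bar b$ in $X$ forces $\bigvee f(a_i) = \bigvee f(b_j)$ in $Y$ by the same ``$\lnot{<}$ in both directions implies equal'' reasoning, now inside the linear order of $Y$. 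That $\bar f$ is a streak morphism preserving joins, that $\bar f \circ j = f$, and that $\bar f$ is unique are then immediate, the uniqueness again from Proposition~\ref{Proposition: streak_morphisms}. Finally, the meet-semilattice $X^\land$ with its morphism $m\colon X \to X^\land$ is produced by the symmetric recipe (formal finite meets, $\bar a < \bar b \dfeq \xsome{i}{I}\xall{j}{J}{a_i < b_j}$), every verification dualising line by line; I would run this argument directly rather than trying to reverse the order of $X$, since a streak's multiplication lives on its positive cone and is not preserved by order-reversal.
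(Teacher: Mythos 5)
Your construction is essentially the paper's: it takes $X^\lor$ to be the quotient of the set $\inhfin(X)$ of inhabited finite subsets of $X$ by the same relation $A < B \dfeq \xsome{b}{B}\xall{a}{A}{a < b}$ (with the same cotransitivity argument and the same equivalence $\lnot(A<B)\land\lnot(B<A)$), equips it with $\sup\{[A],[B]\}=[A\cup B]$, $[A]+[B]=[A+B]$, and $[A]\cdot[B]=[A_{>0}\cdot B_{>0}]$ on positive elements — exactly your formal finite joins with the product restricted to the positive entries — and obtains $X^\land$ by dualising the order formula to $\xsome{a}{A}\xall{b}{B}{a<b}$ just as you propose. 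The only differences are bookkeeping (families indexed by subsets of $\NN_{<n}$ versus inhabited finite subsets) and that you make explicit the universal map $\bar f([\bar a])=\bigvee_i f(a_i)$ and the delicacy of the positive-support multiplication, both of which the paper leaves largely implicit.
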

		\begin{proof}
			Let $Y = \inhfin(X)$ be the set of inhabited finite subsets of $X$. We define a relation $<$ on $Y$ by
			$$A < B \dfeq \xsome{b}{B}\xall{a}{A}{a < b}$$
			for $A, B \in Y$. Because finite sets are both overt and compact, $<$ is open. It is asymmetric (in particular irreflexive) because the strict order on $X$ is. Now take any $A, B, C \in Y$ and assume $A < B$; then there is $b \in B$ which is larger than all elements in $A$. For all finitely many $a \in A$ and finitely many $c \in C$ we have $a < c \lor c < b$. If for some $c \in C$ we always get the first disjunct, then $A < C$. Otherwise, if for all $c \in C$ we get the second disjunct, then $C < B$. Hence, $<$ is cotransitive. Irreflexivity and cotransitivity of $<$ imply that the relation $\sim$, defined on $Y$ by
			$$A \sim B \dfeq \lnot(A < B) \land \lnot(B < A),$$
			is reflexive and transitive. Clearly, it is also symmetric, so an equivalence relation. The relation $<$ induces an open strict order on the quotient $X^\lor \dfeq Y/_\sim$. The semilattice structure is given by $\sup\{[A], [B]\} = [A \cup B]$.
			
			We define the addition on $X^\lor$ by $[A] + [B] \dfeq [A+B]$, and skip the proof that it is well-defined, and that $[\{0\}]$ is the neutral element $0$ for it.
			
			Observe that $[A] > 0 \iff (A_{> 0} \text{ inhabited})$. For $[A], [B] > 0$ we define the multiplication
			$$[A] \cdot [B] \dfeq [A_{> 0} \cdot B_{> 0}].$$
			It is well-defined, distributive over addition, and it determines a commutative monoid structure (with the unit $[\{1\}]$) on the positive elements.
			
			It remains to verify the Archimedean property. Notice that for $n \in \NN$ and $[A] \in X^\lor$ we have $n \cdot [A] = [n \cdot A]$. Take any $[A], [B], [C], [D] \in X^\lor$, $[C] < [D]$. Let $d \in D$ be such that $c < d$ for all $c \in C$, and fix some $a \in A$. For all $b \in B$, $c \in C$ find some $m \in \NN$ such that $b + m \cdot c < a + m \cdot d$, and let $n \in \NN$ be the maximum of these $m$s. Then $a + n \cdot d$ witnesses the fact that $[B] + m \cdot [C] < [A] + m \cdot [D]$.
			
			Finally, define $j\colon X \to X^\lor$ by $j(x) \dfeq [\{x\}]$. It is straightforward to verify that $j$ is a streak morphism.
			
			The construction of $X^\land$ and $m$ is the same, just with the strict order inverted: $[A] < [B] \dfeq \xsome{a}{A}\xall{b}{B}{a < b}$.
		\end{proof}
		
		\begin{corollary}
			The set of real numbers is an ordered field and a lattice.
		\end{corollary}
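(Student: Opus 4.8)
The plan is to exploit the terminality of $\RR$ together with the three ``reflective completions'' of streaks constructed above: the Grothendieck ring streak $G(-)$ of Proposition~\ref{Proposition: Grothendieck_construction_on_a_streak}, the field of fractions $(-)(-)^{-1}$ of Proposition~\ref{Proposition: field_of_fractions_of_a_streak}, and the semilattice completions $(-)^\lor$, $(-)^\land$ of Proposition~\ref{Proposition: streak_to_lattice}. The common pattern is that each of these carries a streak morphism out of the original streak, and since $\RR$ is terminal, any streak morphism with domain $\RR$ is an isomorphism by Corollary~\ref{Corollary: streak_terminality}(2); hence $\RR$ is isomorphic, as a streak, to each of its completions and inherits whatever extra structure they carry.

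First I would apply Proposition~\ref{Proposition: Grothendieck_construction_on_a_streak} to the streak $\RR$: it yields a ring streak $G(\RR)$ together with a streak morphism $i\colon \RR \to G(\RR)$. Since $\RR$ is terminal, Corollary~\ref{Corollary: streak_terminality}(2) forces $i$ to be a streak isomorphism (and $G(\RR)$ terminal as well). Transporting the total subtraction and the total multiplication of $G(\RR)$ back along $i$ makes $\RR$ a strictly linearly ordered unital commutative ring, i.e.~a ring streak.

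Next, with $\RR$ now a ring streak, Proposition~\ref{Proposition: field_of_fractions_of_a_streak} provides a field streak $\RR\RR^{-1}$ together with a streak morphism $\RR \to \RR\RR^{-1}$; again by Corollary~\ref{Corollary: streak_terminality}(2) this is an isomorphism, so $\RR$ is a strictly linearly ordered field. Finally, applying Proposition~\ref{Proposition: streak_to_lattice} twice gives streak morphisms $j\colon \RR \to \RR^\lor$ and $m\colon \RR \to \RR^\land$ into a join-semilattice streak and a meet-semilattice streak respectively; each is an isomorphism by the same argument, so $\RR$ possesses all binary suprema and all binary infima, hence is a lattice. Combining the two conclusions, $\RR$ is an ordered (strictly linearly ordered) field and a lattice.

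There is essentially no hard step here: the proof is a bookkeeping exercise in feeding $\RR$ into the universal constructions and invoking terminality. The only points needing care are that the hypotheses of each construction are met before it is applied --- in particular that $\RR$ has been upgraded to a ring streak before invoking the field-of-fractions construction --- and that the structure transported back along each isomorphism genuinely extends the original streak operations, which is exactly what the cited propositions guarantee (e.g.~that the transported multiplication restricts to the given one on $\RR_{>0}$, and that $\sup\{\,\cdot\,,\cdot\,\}$, $\inf\{\,\cdot\,,\cdot\,\}$ obtained from $\RR^\lor$, $\RR^\land$ are genuine suprema and infima for the order of $\RR$).
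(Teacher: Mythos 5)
Your proposal is correct and follows essentially the same route as the paper: invoke Corollary~\ref{Corollary: streak_terminality} on the streak morphisms $\RR \to G(\RR)$, $\RR \to \RR\RR^{-1}$, $\RR \to \RR^\lor$ and $\RR \to \RR^\land$ supplied by Propositions~\ref{Proposition: Grothendieck_construction_on_a_streak}, \ref{Proposition: field_of_fractions_of_a_streak} and~\ref{Proposition: streak_to_lattice}. Your explicit remark that $\RR$ must first be upgraded to a ring streak before the field-of-fractions construction applies is a point the paper leaves implicit, but the argument is the same.
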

		\begin{proof}
			By Corollary~\ref{Corollary: streak_terminality}, taking in account the existence of the streak morphisms $\RR \to G(\RR)$ by Proposition~\ref{Proposition: Grothendieck_construction_on_a_streak}, $\RR \to \RR\RR^{-1}$ by Proposition~\ref{Proposition: field_of_fractions_of_a_streak}, and $\RR \to \RR^\lor$, $\RR \to \RR^\land$ by Proposition~\ref{Proposition: streak_to_lattice}.
		\end{proof}
		
		We return to expanding the definition of $<$, this time between arbitrary streaks $X$ and $Y$. For elements $x \in X$, $y \in Y$ we define
		$$x < y \dfeq \some{q}{\QQ}{x < q \land q < y}.$$
		Again, there is no ambiguity --- by Proposition~\ref{Proposition: rational_streak_order_basic}, if $X = Y$, then $<$ matches the strict order on $X$, and if either of $X$ and $Y$ is $\QQ$, it matches the previously defined order. The usual properties of $<$ again hold.
		
		We now want to see that interpolation property can also be expressed by the order between different streaks. We say that a streak $X$ is \df{interpolating with respect to a streak $Y$} when for any $a, b \in Y$, $a < b$, there exists $x \in X$ such that $a < x < b$. Of course, that means $X$ is interpolating when it is interpolating with respect to itself.
		\begin{lemma}\label{Lemma: interpolating_property_of_ring_streaks}
			The following is equivalent for a ring streak $X$.
			\begin{enumerate}
				\item $X$ is interpolating.
				\item $X$ is interpolating with respect to some streak.
				\item $X$ is interpolating with respect to $\NN$.
				\item There is $z \in X$ such that $0 < z < 1$.
				\item $X$ is interpolating with respect to $\QQ$.
				\item $X$ is interpolating with respect to all streaks.
			\end{enumerate}
		\end{lemma}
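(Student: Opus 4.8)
\emph{Strategy.} The six conditions will be shown equivalent by a cycle of implications, with essentially all the content in $(4)\Rightarrow(6)$. The easy steps first. Condition $(6)$ specializes to $(1)$ (take the streak to be $X$) and to $(5)$ (take it to be $\QQ$). Each of $(1)$ and $(5)$ gives $(4)$: interpolating strictly between $0$ and $1$ — in $X$, respectively in $\QQ$ — produces an $x$ with $0<x<1$, which (unwinding the definition of the mixed order and using Proposition~\ref{Proposition: rational_streak_order_basic}) means $0<x<1$ in $X$. For $(4)\Rightarrow(3)$: if $a<b$ in $\NN$ then $b\ge a+1$, and $x:=a\cdot 1+z$ satisfies $a<x<b$ because $0<z<1$ (here I use $n\cdot 1\in X$ for $n\in\NN$, which a ring streak has). $(3)\Rightarrow(2)$ is immediate since $\NN$ is a streak, and $(2)\Rightarrow(4)$ uses that $\NN$ is the initial streak: if $X$ interpolates a streak $Y$, the images of $0$ and $1$ under the unique morphism $\NN\to Y$ satisfy $0<1$ in $Y$, so $(2)$ hands us $x\in X$ with $0<x<1$, again meaning $0<x<1$ in $X$. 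It remains to prove $(4)\Rightarrow(6)$.

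\emph{Small positive elements.} Assume $0<z<1$ in $X$. I first claim that for every $n\in\NN_{>0}$ there is $k$ with $z^k<\tfrac1n$. Since $z>0$ we have $1-z<1$, so the Archimedean property yields $m\ge 2$ with $1<m\cdot(1-z)$, i.e.\ $z<\tfrac{m-1}{m}$ (an inequality between $z\in X$ and a rational, which reads $m\cdot z<(m-1)\cdot 1$ in $X$). Multiplying this by itself $k$ times — multiplication preserves the strict order on positive elements, so one argues by induction as in Lemma~\ref{Lemma: multiplication_by_natural_numbers_in_a_strict_order} — gives $z^k<\bigl(\tfrac{m-1}{m}\bigr)^k$, and Bernoulli's inequality (a rational computation) bounds the right-hand side by $\tfrac{m-1}{m-1+k}$, which drops below $\tfrac1n$ once $k$ is large. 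Hence $X$ has arbitrarily small positive elements.

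\emph{Placing an element of $X$ in a rational interval, and finishing.} Given rationals $q<r$ I produce $x\in X$ with $q<x<r$; then $(6)$ follows, because for any streak $Y$ and $a<b$ in $Y$, Lemma~\ref{Lemma: order_comparison_between_streak_and_rational_elements} supplies rationals with $a<q<r<b$, and transitivity of the mixed order turns $q<x<r$ into $a<x<b$. Translating by an integer, I may assume $0\le q$; if $q=0$, take $x=z^k$ with $z^k<r$. Otherwise set $w:=z^k$ with $0<w<\tfrac{r-q}{3}$, choose $M$ with $Mw>r$ (Archimedean), and put $q':=q+\tfrac{r-q}{3}$, $r':=r-\tfrac{r-q}{3}$. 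For each $j\in\{0,\dots,M\}$, cotransitivity of $<$ applied to $q'<r'$ with interpolant $jw$ yields either a proof of $q'<jw$ (type $H$) or a proof of $jw<r'$ (type $L$); one may force $j=0$ to be $L$ and $j=M$ to be $H$, so the least $j^\ast$ of type $H$ has $j^\ast\ge 1$, $q'<j^\ast w$, and $(j^\ast-1)w<r'$. Then $j^\ast w=(j^\ast-1)w+w<r'+w<r'+(r'-q')=r$, so $x:=j^\ast w=(j^\ast\cdot 1)\cdot w\in X$ lies in $(q,r)$, as required.

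\emph{Main obstacle.} The delicate step is the last one. Because a ring streak need not contain $\QQ$, one cannot simply scale $z$ into the interval $(q,r)$ the way one scales $z\delta$ into $(0,\delta)$ for the analogous implication on $X$ itself; and because $<$ on $X$ is not decidable, one cannot search for the first multiple of $w$ exceeding $q$. The cotransitivity pigeonhole above circumvents both, provided $w$ is taken small enough ($w<\tfrac{r-q}{3}$ suffices) that the multiple at which the bits switch from $L$ to $H$ is forced to land strictly inside $(q,r)$. All remaining manipulations are routine uses of the order laws recorded in Proposition~\ref{Proposition: rational_streak_order_basic} and Lemma~\ref{Lemma: order_comparison_between_streak_and_rational_elements}.
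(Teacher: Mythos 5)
Your proof is correct, and the interesting part — the passage from ``there is some $z$ with $0<z<1$'' to full interpolation — is done by a genuinely different construction than the paper's. The paper proves $(4)\Rightarrow(5)$ by building the element $c\cdot p(z)$, where $p$ is a polynomial with natural-number coefficients obtained as the beginning of the base-$w^{-1}$ expansion of $\tfrac1d$ (with $w$ a rational close to $z$), and then controlling the error $|p(z)-\tfrac1d|$ via a cascade of explicit estimates on $m,t,u$. You instead take a small positive element $w=z^k$ of $X$ directly (your Bernoulli argument for $z^k<\tfrac1n$ is fine and mirrors the paper's use of $h^m$), shrink the target interval to $(q',r')$ with $r'-q'=\tfrac{r-q}{3}>w$, and locate the first multiple $j^\ast w$ that crosses $q'$ by labelling each $j\in\{0,\dots,M\}$ via cotransitivity and taking the first $H$ in the resulting finite bit string — exactly the finite-search idiom the paper itself uses in Lemma~\ref{Lemma: streak_is_union_of_rational_intervals}. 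Your route is more elementary and dispenses with the paper's delicate error bookkeeping; what it costs is only a handful of routine mixed-order manipulations that you should make explicit if writing this up: (i) cotransitivity of a *rational* inequality $q'<r'$ against an interpolant $jw\in X$ is not literally a clause of Proposition~\ref{Proposition: rational_streak_order_basic} but follows by clearing denominators and applying cotransitivity in $X$ to $a'\cdot 1<b'\cdot 1$; (ii) expressions like $r'+w$ (rational plus element of $X$) do not typecheck — the step $(j^\ast-1)w+w<r$ should be justified by the additivity of mixed inequalities ($u<s$ and $v<t$ with $s,t\in\QQ$ imply $u+v<s+t$), again by clearing denominators; (iii) the initial translation by an integer uses that subtraction is total in a ring streak, which is available here. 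Your overall implication graph (a star centred at $(4)$, with the single hard edge $(4)\Rightarrow(6)$ subsuming the paper's $(4)\Rightarrow(5)\Rightarrow(6)$) also differs from the paper's cycle $1\Rightarrow2\Rightarrow\cdots\Rightarrow6\Rightarrow1$, but establishes the same equivalences.
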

		\begin{proof}
			\begin{itemize}
				\item
					\itemimpl{1}{2} It is interpolating with respect to itself.
				\item
					\itemimpl{2}{3} All streaks contain $\NN$.
				\item
					\itemimpl{3}{4} Since $0, 1 \in \NN$.
				\item
					\itemimpl{4}{5} Take any $q, r \in \QQ$, $q < r$. Introduce the following notation, in order:
					\begin{itemize}
						\item $a, b, c, d \in \ZZ$ such that $d > 0$, $c \neq 0$ (so that its absolute value $|c| > 0$), $a < c < b$ and $q = \frac{a}{d}$, $r = \frac{b}{d}$;
						\item use Lemma~\ref{Lemma: order_comparison_between_streak_and_rational_elements} to obtain $f, g, h \in \QQ$, $0 < f < z < g < h < 1$;
						\item $m \in \NN$ large enough that $h^m < \frac{1}{2 |c| d}$;
						\item $t \in \NN$ large enough so that $t > \frac{2}{h-g}$ (in particular $t \geq 3$) and $t > \frac{2 |c| d m (m+1)}{f^2}$;
						\item $u \in \NN_{\geq 1}$ such that $\frac{u-1}{t} < z < \frac{u+1}{t}$ (Lemma~\ref{Lemma: streak_is_union_of_rational_intervals}); denote $w \dfeq \frac{u+1}{t}$ for short, and notice $w < h$ since $u+1 < t \cdot z + 2 \cdot 1 < t g + 2 < t h$;
						\item let the finite sequence $s\colon \NN_{\leq m} \to \NN_{< w^{-1}}$, essentially the beginning of the fractional expansion of $\frac{1}{d}$ in base $w^{-1}$, be inductively defined as follows: for $n \in \NN_{\leq m}$, let $s_n$ be the largest natural number such that $\sum_{k \in \NN_{\leq n}} s_k w^k \leq \frac{1}{d}$; in particular, $s_0 = 0$;
						\item define $p(x) \dfeq \sum_{k \in \NN_{\leq m}} s_k \cdot x^k$, to be interpreted either as a map $\QQ \to \QQ$ or a map $X \to X$, as necessary.
					\end{itemize}
					
					We claim that $c \cdot p(z)$ is an element of $X$ which is between $q$ and $r$. The idea of the proof is that we anticipate $X$ and $\QQ$ to be subsets of the reals which are a field, a lattice and a metric space with the metric induced by the absolute value as the norm, and then we could calculate
					$$\big|p(z) - \frac{1}{d}\big| \leq \big|p(z) - p(w)\big| + \big|p(w) - \frac{1}{d}\big| < \sum_{k \in \NN_{\leq m}} \Big(s_k \big|z^k - w^k\big|\Big) + w^m =$$
					$$= \sum_{k \in \NN_{\leq m}} \Big(s_k \big|z - w\big| \sum_{l \in \NN_{< k}} \big(z^l w^{k-1-l}\big)\Big) + w^m < \sum_{k \in \NN_{\leq m}} \Big(w^{-1} \ \frac{2}{t} \ k \ w^{k-1}\Big) + w^m <$$
					$$< \frac{m (m+1)}{w^2 t} + w^m < \frac{m (m+1)}{f^2 t} + h^m < \frac{1}{2 |c| d} + \frac{1}{2 |c| d} = \frac{1}{|c| d}$$
					whence $c \cdot p(z) \in \intoo{\frac{c-1}{d}}{\frac{c+1}{d}} \subseteq \intoo{q}{r}$. To obtain an actual proof, separate the cases $c < 0$, $c > 0$, rewrite the inequality with the absolute value as two separate inequalities, multiply with the common denominator of all fractions appearing in the calculation, and substitute any stand-alone integer with that integer multiplied by the multiplicative unit $1$ in $X$.
				\item
					\itemimpl{5}{6} If $a < b$ are elements of an arbitrary streak $Y$, we obtain, by Lemma~\ref{Lemma: order_comparison_between_streak_and_rational_elements}, $q, r \in \QQ$ such that $a < q < r < b$. Since $X$ is interpolating with respect to $\QQ$ by assumption, there is $x \in X$ such that $q < x < r$, and so $a < x < b$.
				\item
					\itemimpl{6}{1} If $X$ is interpolating with respect to all streaks, it is also with respect to itself.
			\end{itemize}
		\end{proof}
		
		Recall now the definition of (Dedekind) cuts.
		\begin{definition}
			Let $X$ be any strict linear order.
			\begin{itemize}
				\item A \df{lower cut} on $X$ is a subset $L \subseteq X$ with the property $x \in L \iff \some{y}{L}{x < y}$ for all $x \in X$.
				\item An \df{upper cut} on $X$ is a subset $U \subseteq X$ with the property $x \in U \iff \some{y}{L}{y < x}$ for all $x \in X$.
				\item A \df{Dedekind cut} on $X$ is a pair $(L, U)$ where $L$ is an inhabited lower cut, $U$ is an inhabited upper cut, $L$ and $U$ are disjoint, and the following property (called \df{locatedness} of the cut) holds: for all $x, y \in X$, if $x < y$, then $x \in L \lor y \in U$. We say that the Dedekind cut $(L, U)$ is \df{open} when $L$ and $U$ are open subsets of $X$.
				\item We denote the sets of open cuts by
				$$\lowercuts(X) \dfeq \st{L \in \pst(X)}{\text{$L$ an open lower cut on $X$}},$$
				$$\uppercuts(X) \dfeq \st{U \in \pst(X)}{\text{$U$ an open upper cut on $X$}},$$
				$$\Dedcuts(X) \dfeq \st{(L, U) \in \pst(X) \times \pst(X)}{\text{$(L, U)$ an open Dedekind cut on $X$}}.$$
				(Note that the sets of all cuts of a kind can be written as $\lowercuts[\soc](X)$, $\uppercuts[\soc](X)$ and $\Dedcuts[\soc](X)$.)
			\end{itemize}
		\end{definition}
		
		Finally, we have the construction of the reals.
		\begin{theorem}\label{Theorem: Dedekind_cuts_are_the_real_numbers}
			For any overt interpolating ring streak $\oirs$, the set $\Dedcuts(X)$ is a field streak and a lattice in which, for $a = (L_a, U_a), b = (L_b, U_b) \in \Dedcuts(\oirs)$, the order and algebraic structure is given by
			$$(L_a, U_a) < (L_b, U_b) \dfeq U_a \between L_b,$$
			$$a \leq b \iff \lnot(a > b) \iff L_a \cap U_b = \emptyset \iff L_a \subseteq L_b \iff U_a \supseteq U_b,$$
			$$\sup\{(L_a, U_a), (L_b, U_b)\} = (L_a \cup L_b, U_a \cap U_b), \qquad \inf\{(L_a, U_a), (L_b, U_b)\} = (L_a \cap L_b, U_a \cup U_b),$$
			$$(L_a, U_a) + (L_b, U_b) \dfeq (L_a + L_b, U_a + U_b),$$
			$$0 = (\oirs_{< 0}, \oirs_{> 0}), \qquad\qquad -(L_a, U_a) = (-U_a, -L_a),$$
			$$(L_a, U_a) \cdot (L_b, U_b) \dfeq (\st{q \in \oirs}{\xsome{l}{L_a}\some{k}{L_b}{l > 0 \land k > 0 \land q < l \cdot k}}, U_a \cdot U_b) \quad \text{for} \quad a, b > 0,$$
			$$(L_a, U_a)^{-1} = (\st{q \in \oirs}{\some{l}{L_a}{l > 0 \land q \cdot l < 1}}, \st{r \in \oirs}{\xsome{u}{U_a}{r \cdot u > 1}}) \quad \text{for} \quad a > 0.$$
			Moreover, $\Dedcuts(\oirs)$ is a terminal streak, with the unique morphism between streaks $X$ and $\Dedcuts(\oirs)$ given by
			$$x \mapsto \big(\st{q \in \oirs}{q < x}, \st{r \in \oirs}{x < r}\big).$$
			In particular, the sets of open Dedekind cuts are isomorphic for all overt interpolating ring streaks. The above formula also gives a canonical map $\oirs \to \Dedcuts(\oirs)$.
		\end{theorem}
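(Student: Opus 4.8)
The plan is to verify the claims in the natural order: first that the displayed $<$ is a strict linear order and is open, then that the displayed operations endow $\Dedcuts(\oirs)$ with a ring streak structure, then that it is a field streak and a lattice by appealing to the simplification lemmas already proved, and finally that it is terminal via Corollary~\ref{Corollary: streak_terminality}.

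First I would handle the order. That $(L_a,U_a) < (L_b,U_b) \dfeq U_a \between L_b$ is asymmetric, cotransitive and tight follows from the cut axioms --- inhabitedness, disjointness and locatedness, together with the basic fact that $l \in L$ and $u \in U$ force $l < u$ in $\oirs$ --- combined with the corresponding properties of $<$ on $\oirs$; for instance cotransitivity uses that $L_b$ is a lower cut to pick $p < p'$ both witnessing $a < b$ and then applies locatedness of the cut $c$ to $p < p'$. The chain $a \leq b \iff \lnot(a>b) \iff L_a \cap U_b = \emptyset \iff L_a \subseteq L_b \iff U_a \supseteq U_b$ is proved by the same kind of bookkeeping. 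For openness of $<$ I would unfold $a < b$ to $\xsome{q}{\oirs}{q \in U_a \land q \in L_b}$: the matrix is open because $U_a$ and $L_b$ are open subsets of $\oirs$ and $\opn$ is a bounded lattice, so, $\oirs$ being overt, the existential statement is open, uniformly in $a$ and $b$.

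Next, the ring structure. The recurring point is that each operation-result set can be written with a single existential quantifier ranging over $\oirs$ whose matrix is open, so overtness of $\oirs$ makes it open; e.g.\ $q \in L_a + L_b \iff \xsome{l}{\oirs}{l \in L_a \land (q-l) \in L_b}$ using that subtraction is total in the ring streak $\oirs$, and the analogous rewriting for $U_a+U_b$, $-L_a$, $-U_a$, and for the product-cuts and inverse-cut (here invoking Lemma~\ref{Lemma: streak_is_union_of_rational_intervals} to replace the nested existentials over $L_a$, $L_b$ by a single one over $\oirs$). The cut axioms for these sets, and the compatibility of $+$ and $\cdot$ with $<$ and distributivity, are then routine; since the negation $-(L_a,U_a) = (-U_a,-L_a)$ is total, subtraction on $\Dedcuts(\oirs)$ is total. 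With total subtraction and $<$ open in hand, by Lemma~\ref{Lemma: simplify_conditions_when_subtraction_is_total} it remains only to check: for $x=(L,U)>0$ there is $n \in \NN$ with $x < n\cdot 1$ and $1 < n\cdot x$; one picks $r \in U$ and a positive $l \in L$ (the latter exists because $0 < x$ means $\oirs_{>0} \between L$) and applies the Archimedean property of $\oirs$. So $\Dedcuts(\oirs)$ is a ring streak. For the field property, by Lemma~\ref{Lemma: simplify_conditions_for_field_streaks} it suffices to show $x \apart 0$ implies $x$ invertible; for $x>0$ the displayed formula for $(L_a,U_a)^{-1}$ does it, and for $x<0$ one inverts $-x$. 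For the lattice, $(L_a \cup L_b,\, U_a \cap U_b)$ is again an open Dedekind cut (unions and intersections of opens are open since $\opn$ is a bounded lattice; the cut axioms are easy from locatedness of $a$ and $b$) and, via $a\leq b \iff L_a \subseteq L_b$, it is the $\leq$-supremum of $a,b$; dually for the infimum, and by the earlier proposition these finite suprema and infima are automatically strict.

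Finally, terminality. By Corollary~\ref{Corollary: streak_terminality}(1), and since uniqueness of streak morphisms is already known (Proposition~\ref{Proposition: streak_morphisms}), it is enough to produce for each streak $X$ a streak morphism $X \to \Dedcuts(\oirs)$, and I would take $\phi(x) \dfeq \big(\st{q \in \oirs}{q<x},\ \st{r \in \oirs}{x<r}\big)$ with $<$ the order between streak elements. That $\phi(x)$ is an open Dedekind cut uses the Archimedean property and the interpolation of $\oirs$ (Lemma~\ref{Lemma: interpolating_property_of_ring_streaks}) for the cut axioms and disjointness/locatedness, and openness of $<$ between streaks together with overtness of $\oirs$ for openness of the two sets; that $\phi$ preserves $+$, preserves $\cdot$ on positives and preserves $<$ is a direct unfolding of the definitions. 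Hence $\Dedcuts(\oirs)$ is a terminal streak; the map $\oirs \to \Dedcuts(\oirs)$ of the statement is $\phi$ in the case $X = \oirs$, and it is the unique streak morphism by Proposition~\ref{Proposition: streak_morphisms}; and all the $\Dedcuts(\oirs)$ are isomorphic by Corollary~\ref{Corollary: streak_terminality}(2). The main obstacle I expect is the ring-structure step: showing that every operation yields an open Dedekind cut --- especially the multiplication, which is defined only on positives and needs case splits to extend, and for which one must rewrite the product-set so that a single existential over $\oirs$ appears and overtness applies. This is precisely where the hypotheses that $\oirs$ is overt and that $\opn$ is a bounded lattice carry the weight; the order and cut-axiom verifications, by contrast, are routine.
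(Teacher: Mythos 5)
Your proposal is correct and follows essentially the same route as the paper: the paper likewise defers the order-theoretic and algebraic verifications to the standard Dedekind construction, obtains openness of all the cuts by rewriting each operation as an existential over the overt set $\oirs$ with an open matrix, and establishes terminality via the map $x \mapsto (\st{q \in \oirs}{q < x}, \st{r \in \oirs}{x < r})$ together with Corollary~\ref{Corollary: streak_terminality}. (Your appeal to Lemma~\ref{Lemma: streak_is_union_of_rational_intervals} for the product cut is unnecessary --- pulling the two existentials out to $\oirs \times \oirs$ already suffices --- but this is cosmetic.)
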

		\begin{proof}
			The order and the algebraic part of the proof is the same as for the usual Dedekind cuts, so we focus on the new topological part. Rewrite the above formulae as
			\begin{itemize}
				\item
					$a < b \iff \some{q}{\oirs}{q \in U_a \land q \in L_b}$,
				\item
					$\sup\{a, b\} = (L_a \cup L_b, U_a \cap U_b)$, $\inf\{a, b\} = (L_a \cap L_b, U_a \cup U_b)$,
				\item
					$a + b = \big(\st{q \in \oirs}{\some{l, k}{\oirs}{l \in L_a \land k \in L_b \land q < l + k}},$\\
					$~\hspace{4em}\st{r \in \oirs}{\some{u, t}{\oirs}{u \in U_a \land t \in U_b \land r > u + t}}\big)$,
				\item
					$0 = \big(\st{q \in \oirs}{q < 0}, \st{r \in \oirs}{r > 0}\big)$,
				\item
					$-a = \big(\st{q \in \oirs}{\some{u}{\oirs}{u \in U_a \land q + u < 0}}, \st{r \in \oirs}{\some{l}{\oirs}{l \in L_a \land r + l > 0}}\big)$,
				\item
					$a \cdot b = \big(\st{q \in \oirs}{\some{l, k}{\oirs}{l \in L_a \land l > 0 \land k \in L_b \land k > 0 \land q < l \cdot k}},$\\
					$~\hspace{3.6em}\st{r \in \oirs}{\some{u, t}{\oirs}{u \in U_a \land t \in U_b \land r > u \cdot t}}\big)$ for $a, b > 0$,
				\item
					$a^{-1} = \big(\st{q \in \oirs}{\some{l}{\oirs}{l \in L_a \land l > 0 \land q \cdot l < 1}}, \st{r \in \oirs}{\xsome{u}{\oirs}{u \in U_a \land r \cdot u > 1}}\big)$ for $a > 0$;
			\end{itemize}
			since $\oirs$ is overt, all cuts are open.
			
			What remains is to verify that for any streak $X$, the map $f\colon X \to \Dedcuts(\oirs)$, $f(x) = (L, U)$ where $L = \st{q \in \oirs}{q < x}$, $U = \st{r \in \oirs}{x < r}$, is a well-defined streak morphism. The Archimedean property implies inhabitedness of $L$, $U$ (for $L$ we also use the fact that $\oirs$ is a ring, and so contains $\ZZ$). Clearly, $L$ is a lower and $U$ is an upper set. By the interpolation property of $\oirs$ and Lemmas~\ref{Lemma: order_comparison_between_streak_and_rational_elements} and~\ref{Lemma: interpolating_property_of_ring_streaks} we can conclude that $L$ is a lower and $U$ is an upper cut. They are open in $\oirs$ since $<$ is an open relation. Next, take any $q, r \in \oirs$, $q < r$. Playing with the definition of the strict order between possibly different streaks yields the contransitivity of $<$, namely $q < x \lor x < r$, proving the locatedness of $(L, U)$. Thus $(L, U)$ is an open Dedekind cut, meaning that $f$ indeed maps into $
 \Dedcuts(\oirs)$.
			
			Take any $x, y \in X$, $x < y$. Since $\oirs$ is interpolating, by Lemma~\ref{Lemma: interpolating_property_of_ring_streaks} there is $q \in \oirs$, $x < q < y$, implying $f(x) < f(y)$. Preservation of addition and multiplication (the latter on positive elements) is clear from the formulae above. Thus $f$ is a streak morphism, making $\Dedcuts(\oirs)$ the terminal streak by Corollary~\ref{Corollary: streak_terminality}.
		\end{proof}
		In view of this theorem we hereafter assume that the real numbers are constructed as open Dedekind cuts on some fixed overt interpolating ring streak $\oirs$.
		
		Having the real numbers now, perhaps some explanation why we defined the streaks as we did, is in order; among other things it might seem that the reasonable choice of the structure among which the reals are terminal would be the one from which we actually construct the reals, namely overt interpolating ring streaks. There is a catch, though: while the reals certainly are an interpolating ring streak, they are not in general overt.
		
		There are also some other reasons why we prefer to require less structure on streaks. One is that it allows us to succinctly characterize other number sets as well --- recall that $\NN$ is the initial streak, $\ZZ$ is the initial ring streak, and $\QQ$ is the initial field streak. Furthermore, requiring the multiplication only on positive elements was carefully chosen so that we could prove Proposition~\ref{Proposition: streak_to_lattice}, and consequently that the reals are a lattice. Actually, we could have proven that even if we assumed streaks are rings, but the construction would be more complicated: we would have to add formal suprema, then formal infima and verify that this preserves upper semilattice structure, and then perform the Grothendieck construction and verify that it preserves the lattice structure. We preferred to keep these three constructions separate.
		
		Of course, one might argue that we do not need these results at all, we can just directly show that open Dedekind cuts form a lattice and a field. But the way we did it is more general: we now know that the terminal streak is a lattice and a field even when we cannot perform the Dedekind construction, such as is the predicative setting. The point is, we used the existence of power sets in the definition of cuts. Actually, this can be mollified: since we are only interested in open cuts anyway, we can define them to be elements of $\tp(\oirs)$, or equivalently, of $\opn^\oirs$ (this is in fact how it is done in~ASD~\cite{Bauer_A_Taylor_P_2009:_the_dedekind_reals_in_abstract_stone_duality} --- they are elements of $\opn^\QQ$).
		
		We provide one more reason why considering structures with multiplication only on positive elements is worthwhile in the context of reals. Notice that in the case of streaks it would be equivalent by Lemma~\ref{Lemma: order_comparison_between_streak_and_rational_elements} to define comparison with rational numbers on both sides, \ie relations $<\colon \QQ \times X \to \opn$, $<\colon X \times \QQ \to \opn$, and then have $<\colon X \times X \to \opn$ as the derived notion. In view of this, define $X$ to be a \df{lower streak} when we can compare its elements with rational numbers just on one side, namely we have the relation $<\colon \QQ \times X \to \opn$, and $X$ is an Archimedean protoring (positive elements of $X$ are defined to be those which are greater than the rational $0$), plus the expected axioms for order and protoring operations. Terminal among lower streaks are, as one can expect, the lower reals, the model for which are inhabited open lower cuts (of overt interpolative ring streaks) with inhabited complements\footnote{Inhabitedness excludes the cuts which would represent plus and minus infinity. Sometimes it is convenient to include infinity as well, \ie to consider the set of all (open) lower cuts --- this is called the \df{extended lower reals}. Similarly, (open) upper cuts are \df{extended upper reals}, and (open) Dedekind cuts with the inhabitedness condition dropped are \df{extended reals}.}, with the multiplication on positive elements the same as in Theorem~\ref{Theorem: Dedekind_cuts_are_the_real_numbers}. However, this multiplication can in general not be extended to all cuts to obtain a ring. One can analogously define \df{upper streaks} and \df{upper reals}. The only problem is that we do not have the definition of positiveness; a way out is to require the multiplication on nonnegative elements.
		
		Actually, it is possible to take an even more general structure than streaks to construct the reals --- the operations need not even be exact, only approximative, as is known to those who implement reals on the computer~\cite{Lambov_B_2007:_reallib_an_efficient_implementation_of_exact_real_arithmetic, Muller_N_2001:_the_irram_exact_arithmetic_in_c++}. Consider the dyadic rationals, for example, and say that the \df{complexity} of a dyadic rational is the lowest $n \in \NN$ such that it can be written in the form $\frac{a}{2^n}$ where $a \in \ZZ$. Notice that the complexity of the sum does not exceed the largest complexity of the summands, but the complexity of the product is the sum of the complexities of the factors. However, even if we round the result of the product to the largest complexity of the of the factors, in the limit we still obtain exact multiplication on the reals. In view of this, we could define the streaks to have only approximative multiplication, \ie a streak would be a union of different levels of complexity which would measure to what extent the laws of associativity, distributivity \etc hold, with one level having exact arithmetic, and containing $\NN$. However, I feel that the current version is technical enough as it is, and chose not to make this theory even more technically complicated.
		
		\intermission
		
		We end the section about real numbers by comparing them with the standard Dedekind and Cauchy reals. It is obvious that for two \Sier objects $\opn \subseteq \opn'$ we have $\RR_\opn \subseteq \RR_{\opn'}$; in particular $\RR_\opn \subseteq \RR_\soc = \RR_d$.
		
		We want to define what the Cauchy sequences (with their moduli of convergence) in streaks are, but since we do not have subtraction and the absolute value in general, we make do as follows: a Cauchy sequence in a streak $X$ is a pair $(s\colon \NN \to X, m\colon \NN \to \NN)$ with the property $\xall{n}{\NN}\all{i, j}{\NN_{\geq m(n)}}{2^n \cdot s_i < 1 + 2^n \cdot s_j}$. We denote the set of Cauchy sequences in $X$ by $\Cauchy(X)$.
		
		As usual, we define the equivalence relation $\equ$ on $\Cauchy(X)$ by
		$$(s, m) \equ (s', m') \dfeq \xsome{M}{\NN^\NN}\xall{n}{\NN}\all{i, j}{\NN_{\geq M(n)}}{2^n \cdot s_i < 1 + 2^n \cdot s'_j \land 2^n \cdot s'_i < 1 + 2^n \cdot s_j}.$$
		For an interpolating ring streak $X$ we define the \df{Cauchy reals, generated from $X$}, to be the quotient set $\RR_c(X) \dfeq \Cauchy(X)/_\equ$. The usual Cauchy reals are of course $\RR_c(\QQ)$.
		
		We claim that $\RR_c(X)$ is an interpolating field lattice streak. Most of the proof is the same as for the usual Cauchy reals, the only new thing is openness of $<$, but this is obvious from the definition of $<$, namely for $[s, m], [s', m'] \in \RR_c(X)$,
		$$[s, m] < [s', m'] \dfeq \xsome{n}{\NN}\some{i}{\NN_{\geq \sup\{m(n+1), m'(n+1)\}}}{2^n \cdot s_i + 1 < 2^n \cdot s'_i}.$$
		Moreover, notice that any map $M \in \NN^\NN$ (we can always take \eg $M = \id[\NN]$) serves as a modulus of convergence for a constant sequence, so if $\cs\colon \RR \to \Cauchy(\RR)$ is the map $\cs(x) \dfeq (n \mapsto x, M)$, then the standard map $X \to \RR_c(X)$, mapping $x \in X$ to the equivalence class of $\cs(x)$, is a streak morphism.
		
		The consequence is that for any interpolating ring streak $X$ we have $X \subseteq \RR_c(X) \subseteq \RR$. The interesting special cases are $\RR_c \subseteq \RR$, and $\RR_c(\RR) = \RR$, \ie $\RR$ is \df{Cauchy complete}.
		
		\begin{proposition}
			Let $X$ be an interpolating ring streak.
			\begin{enumerate}
				\item If $\AC[\opn]{\NN, X}$, then $\RR_c(X) = \RR$; in particular, if $\ACopn$, then $\RR_c = \RR$.
				\item If $\AC{\NN, X}$, then $\RR_c(X) = \RR = \RR_d$; in particular, if $\AC{\NN, \NN}$, then $\RR_c = \RR = \RR_d$.
			\end{enumerate}
		\end{proposition}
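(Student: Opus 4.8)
The plan is to exploit the chain of inclusions $X \subseteq \RR_c(X) \subseteq \RR \subseteq \RR_d$ established just above, so that for~(1) it suffices to prove $\RR \subseteq \RR_c(X)$, and for~(2) additionally $\RR_d \subseteq \RR_c(X)$. In each case the extra content is an inclusion of streaks; being a streak morphism it is automatically injective, so it becomes an isomorphism as soon as we show it is onto.

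\emph{Part~(1).} Fix a real number $\xi = (L_\xi, U_\xi) \in \Dedcuts(\oirs)$; I would exhibit $\xi$ as a Cauchy real generated from $X$. Consider the relation $R \subseteq \NN \times X$ defined by $(n, s) \in R \iff \xi - 2^{-n} < s < \xi + 2^{-n}$ (comparison between the streak $X$ and the streak $\RR$). This relation is \emph{total}: since $X$ is an interpolating ring streak it is interpolating with respect to every streak by Lemma~\ref{Lemma: interpolating_property_of_ring_streaks}, in particular with respect to $\RR$, and $\xi - 2^{-n} < \xi + 2^{-n}$ in the ordered field $\RR$, so some $s \in X$ lies strictly between. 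It is also \emph{classified by $\opn$}: writing $s < \xi + 2^{-n}$ as $\some{q}{\QQ}{s < q \land q < \xi + 2^{-n}}$, the conjunct $s < q$ is open because $<$ is open on the streak $X$, the conjunct $q < \xi + 2^{-n}$ is open because comparison of a rational with a real is open (the lower cut of $\xi + 2^{-n}$ is open and $\oirs$ is overt), and the $\QQ$-indexed join is open because $\QQ \ism \NN$ and countable joins of opens are open under the standing postulate of this chapter; the condition $\xi - 2^{-n} < s$ is handled symmetrically. Hence $\AC[\opn]{\NN, X}$ yields $s\colon \NN \to \RR$... rather $s\colon \NN \to X$ with $\xi - 2^{-n} < s_n < \xi + 2^{-n}$ for all $n$. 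With $m(n) \dfeq n + 2$ one checks $(s, m) \in \Cauchy(X)$, and the image of $[s, m]$ under the streak morphism $\RR_c(X) \to \RR$ is the limit of the $s_n$, namely $\xi$. So $\xi$ lies in the image of $\RR_c(X) \hookrightarrow \RR$; as $\xi$ was arbitrary, this inclusion is onto, giving $\RR_c(X) = \RR$. The particular case $\RR_c = \RR$ is $X = \QQ$ together with the bijection $\QQ \ism \NN$ (so $\ACopn$ entails $\AC[\opn]{\NN, \QQ}$).

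\emph{Part~(2).} Since $\opn \subseteq \soc$, the hypothesis $\AC{\NN, X}$ implies $\AC[\opn]{\NN, X}$, so $\RR_c(X) = \RR$ by~(1); it remains to prove $\RR_d \subseteq \RR$, i.e.\ that every (not necessarily open) Dedekind cut $(L, U)$ on $\oirs$ coincides with an open one. I would run the same approximation argument with a general cut in place of $\xi$: for each $n \in \NN$ there exist $q \in L$, $r \in U$ and $s \in X$ with $q < s < r$ and $r - q < 2^{-n}$. The existence of such $q, r$ uses only \emph{finite} choice: starting from some $a \in L$, $b \in U$, one brackets $[a,b]$ between two rationals via the Archimedean property, subdivides the rational bracket into finitely many pieces of width $< 2^{-n}$ (here $\QQ$ is a field), interpolates $\oirs$-points into this subdivision, and applies locatedness of $(L,U)$ to each of the finitely many consecutive pairs; then $s$ is obtained by interpolation of $X$ with respect to $\oirs$. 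The resulting relation on $\NN \times X$ is total and trivially classified by $\soc$, so $\AC{\NN, X}$ produces $s\colon \NN \to X$ which, with a suitable modulus, is Cauchy and defines some $z \in \RR_c(X) = \RR$. By construction $z$ has the same value as $(L, U)$, and since a located Dedekind cut is determined by its value ($q \in L \iff z > q$ and $r \in U \iff z < r$), we get $(L, U) = z$, an open cut. Hence $\RR = \RR_d$, and the particular case $\RR_c = \RR = \RR_d$ is again $X = \QQ$, $\QQ \ism \NN$.

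\emph{Main obstacle.} The genuinely delicate point is Part~(2): one must take care that the subdivision of the bounding interval has its length — and hence its number of steps — fixed \emph{in advance} by the Archimedean property, so that locatedness of the general cut is invoked only a data-independent finite number of times; otherwise one would be covertly appealing to dependent choice. By contrast, the $\opn$-classification check in Part~(1) is routine once one records that the chapter's standing postulate makes $\opn$ closed under countable joins, and the rest (Cauchy condition, modulus bookkeeping, agreement of values) is entirely mechanical.
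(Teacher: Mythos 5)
Your Part~(1) is, modulo bookkeeping, the paper's own proof: interpolate $X$ against $\RR$ to obtain $2^{-n}$-approximants, observe that the approximation relation is open so that $\AC[\opn]{\NN, X}$ applies, and identify the resulting Cauchy class with the given real (the paper uses the one-sided bracket $x < s_n < x + 2^{-n}$ with modulus $\id[\NN]$, but that is cosmetic). Part~(2) is where you genuinely diverge. The paper's entire argument there is the single line ``use the previous item for $\opn = \soc$'': the whole theory of streaks and of $\RR_\opn$ is parametric in the chosen subobject of $\soc$, every relation is $\soc$-open, so item~(1) instantiated at $\soc$ has hypothesis exactly $\AC{\NN, X}$ and conclusion $\RR_c(X) = \RR_\soc = \RR_d$, which together with $\RR_c(X) \subseteq \RR_\opn \subseteq \RR_d$ collapses the chain. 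You instead prove $\RR_d \subseteq \RR$ by hand: squeeze an arbitrary Dedekind cut between members of $L$ and $U$ at distance $< 2^{-n}$ using the Archimedean property and finitely many applications of locatedness (your remark that the number of subdivisions must be fixed in advance, so that only finite case analysis rather than dependent choice is used, is exactly the right precaution), interpolate a point of $X$ in between, apply $\AC{\NN, X}$, and check the resulting open cut coincides with the given one. Both routes are valid. The paper's is essentially free once one trusts the parametricity in $\opn$ (one should still note that the standing postulates, e.g.\ overtness of $\NN$, hold trivially for $\opn = \soc$); yours is self-contained and makes explicit where full choice, as opposed to $\opn$-choice, actually enters --- in selecting approximants whose defining relation involves the non-open membership predicates of $L$ and $U$. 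The only steps you gloss are the final identification $(L, U) = z$, which needs the short computation that $q_n \leq z \leq r_n$ for all $n$ before the cut properties finish the job, and the fact that the streak morphism $\RR_c(X) \to \RR$ sends $[s, m]$ to the limit of the $s_n$; the paper is equally terse on the latter, so neither is a real gap.
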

		\begin{proof}
			\begin{enumerate}
				\item
					Take any $x \in \RR$. Since $X$ is interpolating, we can, for any $n \in \NN$, find $s_n \in X$ such that $x < s_n < x + 2^{-n}$. Since $<$ is open, by $\AC[\opn]{\NN, X}$ this defines a map $s\colon \NN \to X$. Then $[s, \id[\NN]] = x$.
				\item
					Use the previous item for $\opn = \soc$.
			\end{enumerate}
		\end{proof}
		
		Cauchy completeness of $\RR$ implies the existence of the map $\lim\colon \Cauchy(\RR) \to \RR$ which takes a Cauchy sequence to its limit; explicitly, it is the quotient map $\Cauchy(\RR) \to \Cauchy(\RR)/_\equ$, composed with the isomorphism $\RR_c(\RR) \ism \RR$. This limit operator has the usual properties; we mention only a few that we need.
		
		First, notice that if two Cauchy sequences differ only by their modulus of convergence, then they are in the same equivalence class, and therefore have the same limit.
		
		Second, $\lim$ is a retraction, as $\lim \circ \cs = \id[\RR]$. This is of course just stating the fact that the limit of a constant sequence is whatever the terms are. To formally verify this, let $l\colon \RR_c(\RR) \stackrel{\ism}{\longrightarrow} \RR$ be the isomorphism, and let $q\colon \Cauchy(\RR) \to \RR_c(\RR)$ be the quotient map. Since $q \circ \cs\colon \RR \to \RR_c(\RR)$ is a streak morphism, and these are unique, we have $q \circ \cs = l^{-1}$. Thus,
		$$\lim \circ \cs = l \circ q \circ \cs = \id[\RR].$$
		The consequence is that for any Cauchy sequence $(s, m)$ we have $(s, m) \equ \cs(\lim(s, m))$, witnessed by some $M\colon \NN \to \NN$ which means for $n, i \in \NN$, $i \geq M(n)$,
		$$2^n \cdot s_i < 1 + 2^n \cdot \lim(s, m) \qquad \text{and} \qquad 2^n \cdot \lim(s, m) < 1 + 2^n \cdot s_i.$$
		We have come back to the usual classical definition of a limit: it is a number such that the terms of the sequence are eventually contained in its arbitrarily small neighbourhoods.
		
		Third, if $(s, m)$ and $(s', m')$ are Cauchy sequences, and $s_k \leq s'_k$ for all $k \in \NN$, then $\lim(s, m) \leq \lim(s', m')$. Assume to the contrary, $\lim(s, m) > \lim(s', m')$; then there is some $n \in \NN$ such that $2^{-n} < \lim(s, m) - \lim(s', m')$. Let $M, M' \in \NN^\NN$ be moduli for $s, s'$ as in the above argument, and $k \dfeq \sup\{M(n+1), M'(n+1)\}$. Then $s_k > \lim(s, m) - 2^{-n-1} > \lim(s', m') + 2^{-n-1} > s'_k$, a contradiction.
		
		Fourth, if $(s, m)$ and $(s', m')$ are Cauchy sequences, so is their sum
		$$(s, m) + (s', m') \dfeq \big(s+s', n \mapsto \sup\{m(n+1), m'(n+1)\}\big),$$
		and
		$$\lim\big((s, m) + (s', m')\big) = \lim(s, m) + \lim(s', m').$$
		To see this, we need to prove $(s, m) + (s', m') \equ \cs(\lim(s, m) + \lim(s', m'))$, \ie
		$$2^n \cdot (s_i + s'_i) < 1 + 2^n \cdot (\lim(s, m) + \lim(s', m')),$$
		$$2^n \cdot (\lim(s, m) + \lim(s', m')) < 1 + 2^n \cdot (s_i + s'_i)$$
		for all $n, i \in \NN$, $i \geq M(n)$ for some modulus $M\colon \NN \to \NN$, but such $M$ exists by the argument above: take $M', M'' \in \NN^\NN$ such that $2^n \cdot s_i < 1 + 2^n \cdot \lim(s, m)$ and $2^n \cdot \lim(s, m) < 1 + 2^n \cdot s_i$ for $n, i \in \NN$, $i \geq M'(n)$, and $2^n \cdot s'_i < 1 + 2^n \cdot \lim(s', m')$ and $2^n \cdot \lim(s', m') < 1 + 2^n \cdot s'_i$ for $n, i \in \NN$, $i \geq M''(n)$, then let $M(n) \dfeq \sup\{M'(n+1), M''(n+1)\}$.

	\section{Metric Spaces}\label{Section: metric_spaces}
	
		Having the real numbers, we can define metric spaces. We will need a generalization of this notion though, both in the sense of relaxing the conditions for a metric, and choosing possibly some other streak than the real numbers for distances.
		
		\begin{definition}
			Let $X$ be a set and $S$ a streak. A pair $\mtr{X} = (X, d)$ is an \df{$S$-protometric space} when
			\begin{itemize}
				\item $d$ is a map $d\colon X \times X \to S$, \qquad (distances lie in $S$)
				\item for all $x, y \in X$ we have $d(x, y) = d(y, x)$, \qquad (symmetry)
				\item for all $x, y, z \in X$ we have $d(x, y) + d(y, z) \geq d(x, z)$. \qquad (triangle inequality)
			\end{itemize}
			When additionally
			\begin{itemize}
				\item for all $x \in X$, $d(x, x) = 0$,
			\end{itemize}
			we call $\mtr{X}$ an \df{$S$-pseudometric space}. Finally, when we also have
			\begin{itemize}
				\item for all $x, y \in X$, $d(x, y) = 0 \implies x = y$, \qquad (nondegeneracy)
			\end{itemize}
			we call it an \df{$S$-metric space}. In these three cases, the map $d$ is called the \df{$S$-(proto-, pseudo-)metric}, and the value $d(x, y)$ is called the \df{distance} between $x$ and $y$.
		\end{definition}
		When the streak $S$ is understood (usually, when it is arbitrary or when it is $\RR$), we omit it, and call $\mtr{X}$ simply a (proto-, pseudo-)metric space.
		
		We didn't assume the nonnegativity of distances because it follows from other conditions.
		\begin{proposition}
			For any streak $S$, any $S$-protometric space $(X, d)$ and any $x, y \in X$, we have $d(x, y) \geq 0$.
		\end{proposition}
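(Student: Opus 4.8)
The plan is to derive nonnegativity purely from symmetry, the triangle inequality, and the order-algebraic laws of the streak $S$, since in a protometric space we cannot appeal to $d(x,x) = 0$.

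First I would establish the auxiliary fact that $d(x,x) \geq 0$ for every $x \in X$. Applying the triangle inequality with all three points equal to $x$ gives $d(x,x) + d(x,x) \geq d(x,x)$; rewriting the right-hand side as $d(x,x) + 0$ and cancelling $d(x,x)$ (using the law $a + c \leq b + c \iff a \leq b$ for strictly linearly ordered commutative monoids, proven above) yields $d(x,x) \geq 0$.

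Next, for arbitrary $x, y \in X$, the triangle inequality with $z = x$ gives $d(x,y) + d(y,x) \geq d(x,x)$, and symmetry turns the left-hand side into $d(x,y) + d(x,y) = 2 \cdot d(x,y)$. Combining with the previous step, $2 \cdot d(x,y) \geq d(x,x) \geq 0 = 2 \cdot 0$.

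Finally I would invoke Lemma~\ref{Lemma: multiplication_by_natural_numbers_in_a_strict_order}(1): since $n \cdot a < n \cdot b \iff a < b$ for $n \in \NN_{> 0}$, the statement $\lnot(2 \cdot d(x,y) < 2 \cdot 0)$ is equivalent to $\lnot(d(x,y) < 0)$, i.e.\ $d(x,y) \geq 0$. The only mild subtlety --- and the reason the bare triangle inequality does not immediately suffice --- is that without subtraction or division by $2$ in a general streak one must pass from $2 \cdot d(x,y) \geq 0$ to $d(x,y) \geq 0$ via this lemma rather than by dividing; everything else is a routine unwinding of the ordered-monoid axioms.
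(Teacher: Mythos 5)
Your proof is correct and follows essentially the same route as the paper's: first derive $d(x,x) \geq 0$ from the degenerate triangle inequality, then combine symmetry with the triangle inequality to get $d(x,y) + d(x,y) \geq d(x,x) \geq 0$ and remove the factor of two. The paper phrases that last step as a contradiction obtained by adding two copies of the assumed inequality $d(x,y) < 0$, which is exactly the contrapositive of the easy direction of the multiplication lemma you invoke, so the difference is purely cosmetic.
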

		\begin{proof}
			We first verify the condition for $x = y$. Assume $d(x, x) < 0$. Add $d(x, x)$ on both sides to obtain $d(x, x) + d(x, x) < d(x, x)$, a contradiction to the triangle inequality, so $d(x, x) \geq 0$.
			
			Now take any $x, y \in X$, and let $d(x, y) < 0$; adding two such inequalities together yields $d(x, y) + d(x, y) < 0$. This is again a contradiction since, by symmetry and triangle inequality,
			$$d(x, y) + d(x, y) = d(x, y) + d(y, x) \geq d(x, x) \geq 0.$$
		\end{proof}
		
		Even though the structure of a streak is sufficient for the definition of metric spaces (and their generalizations), we will be more interested in those which are in addition rings and lattices, in particular because they are metric spaces with distances in themselves. We will therefore use a specific symbol $\lrs$ to denote an arbitrary lattice ring streak. In any such $\lrs$ we can define the absolute value $|x| \dfeq \sup\{x, -x\}$ which satisfies the usual properties: $|x| \geq 0$, $|x| = 0 \iff x = 0$, $|x y| = |x| |y|$, $|x + y| \leq |x| + |y|$. Thus, a lattice ring streak $\lrs$ is a $\lrs$-metric space for the \df{Euclidean metric} $d_E(x, y) \dfeq |x - y| = \sup\{x-y, y-x\}$.
		
		In a general pseudometric space $(X, d)$, a (\df{metric}) \df{ball} $\ball{x}{r}$ with the \df{center} $x \in X$ and the \df{radius} $r \in \RR$ is the set
		$$\ball{x}{r} \dfeq \st{y \in X}{d(x, y) < r}.$$
		Similarly, a \df{bordered} (\df{metric}) \df{ball} is defined as
		$$\bball{x}{r} \dfeq \st{y \in X}{d(x, y) \leq r}.$$
		In principle we allow arbitrary real number to be the radius, but in practice usually only inhabited balls are of interest, so we sometimes restrict to $r > 0$ (or perhaps $r \geq 0$ in the case of bordered balls). Whether bordered or otherwise, an inhabited ball contains its center.
		
		These are of course the familiar open and closed balls from classical topology and analysis. There are several reasons why we avoid this terminology. First of all, we will work almost exclusively with open balls (using closed balls occasionally merely because they are more likely to be compact), so we might as well drop the qualifier. Second, even classically there is some potential confusion when proving theorems like ``open balls are open'' and ``closed balls are closed'', in part because it sounds weird (novices typically wonder why one must prove for balls specifically declared to be open that they are open), and in part because this becomes false when considering generalizations of metric spaces; \eg in quasimetric spaces\footnote{Quasimetric spaces are ``metric spaces without the symmetry axiom''. We won't consider them in this thesis, but they have important natural examples, such as when the distance is time, or if one wants to model a city with one-way streets.}, the ``closed balls'' need not be topologically closed.
		
		Third, we must consider the situation in the context of synthetic topology. The balls are obviously open; the whole point of the (re)construction of the set of real numbers was to ensure that the strict linear order $<$ is an open relation. Bordered balls need not be closed however, though it is easy to see when they are.
		\begin{proposition}\label{Proposition: Hausdorffness_of_real_numbers}
			The following statements are equivalent.
			\begin{enumerate}
				\item For any $x \in \RR$ and any test truth value $t \in \tst$, the truth value $(x = 0) \implies t$ is test.
				\item $\{0\}$ is a closed subset of $\RR$.
				\item $\RR$ is Hausdorff.
				\item The interval $\II$ is closed in $\RR$.
				\item For any $a, b \in \RR$, the intervals $\intcc{a}{b}$, $\RR_{\leq b}$, $\RR_{\geq a}$ are closed in $\RR$.
				\item The relation $\leq$ on $\RR$ is closed.
				\item All bordered balls (across all (pseudo)metric spaces) are closed.
			\end{enumerate}
		\end{proposition}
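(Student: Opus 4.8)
The plan is to funnel all seven conditions through the single statement that $\{0\}$ is closed in $\RR$ (condition~(2)), using two facts throughout: every map is continuous with respect to closed subsets, so preimages of closed subsets are closed (Section~\ref{Section: redefinition_of_synthetic_topology}), and $\RR$ is a lattice ring (Corollary after Proposition~\ref{Proposition: streak_to_lattice}), so it carries the total operations $+$, $-$, $\sup$, $\inf$ and the absolute value $|x| \dfeq \sup\{x, -x\}$, with $|x| \geq 0$ and $|x| = 0 \iff x = 0$. I would also use that $\cld$ is a bounded $\land$-subsemilattice of $\soc$, so finite intersections of closed subsets are closed.

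First, $(1) \iff (2)$ is a matter of unwinding definitions: $\{0\}$ is closed in $\RR$ exactly when $(x = 0) \in \cld$ for all $x \in \RR$, and $(x = 0) \in \cld$ says precisely that $(x = 0) \impl t$ is test for every $t \in \tst$. For $(2) \impl (3)$, each translation $y \mapsto y - a$ is a map $\RR \to \RR$ pulling $\{0\}$ back to $\{a\}$, so every singleton is closed, hence the diagonal $\diag[\RR]$ is closed and $\RR$ is Hausdorff; conversely $(3) \impl (2)$ is immediate since $\{0\}$ is a singleton.

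The heart of the argument is $(2) \impl (5)$, and the key trick is that the half-lines are preimages of $\{0\}$ under lattice operations: $x \geq 0 \iff \inf\{x, 0\} = 0$, so $\RR_{\geq 0} = g^{-1}(\{0\})$ with $g(x) \dfeq \inf\{x, 0\}$, and symmetrically $\RR_{\leq 0} = h^{-1}(\{0\})$ with $h(x) \dfeq \sup\{x, 0\}$; precomposing with translations shows $\RR_{\geq a}$ and $\RR_{\leq b}$ are closed, and $\intcc{a}{b} = \RR_{\geq a} \cap \RR_{\leq b}$ is then closed as a binary intersection. Then $(5) \impl (4)$ is trivial ($\II = \intcc{0}{1}$), and $(4) \impl (2)$ closes that loop: the map $x \mapsto 1 + |x|$ lands in $\II$ iff $1 + |x| \leq 1$ iff $|x| \leq 0$ iff $x = 0$, so $\{0\}$ is the preimage of $\II$ under it, hence closed. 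For~(6): $(2) \impl (6)$ holds because $\st{(x, y)}{x \leq y} = \st{(x, y)}{y - x \geq 0}$ is the preimage of $\RR_{\geq 0}$ under $(x, y) \mapsto y - x$, and $(6) \impl (2)$ holds because $\{0\} = \RR_{\leq 0} \cap \RR_{\geq 0}$ with each factor a preimage of the closed relation $\leq$ under a map into $\RR \times \RR$. For~(7): given~(2), hence~(5), any bordered ball $\bball{x}{r}$ in a pseudometric space $(X, d)$ equals $f^{-1}(\RR_{\leq r})$ for $f(y) \dfeq d(x, y)$, hence is closed; conversely, in $\RR$ with the Euclidean metric $\bball{0}{0} = \st{y}{|y| \leq 0} = \{0\}$ (using $|y| \geq 0$ and tightness of $\leq$), which gives~(2).

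The only genuinely non-mechanical step is recognising that the order relations, the intervals, and the bordered balls --- which look a priori stronger than ``$\{0\}$ closed'' --- are in fact preimages of $\{0\}$ once the full lattice-ring structure of $\RR$ is used, in particular the $\inf$/$\sup$ presentation of the half-lines and the map $x \mapsto 1 + |x|$ for $\II$; everything else is bookkeeping with continuity of maps and the semilattice structure of $\cld$. One caveat to keep in mind: this proposition lives under the ambient assumptions of the chapter only ($\emptyset$ and $\NN$ overt, so $\opn$ a bounded lattice), so I would avoid invoking Theorem~\ref{Theorem: standard_special_case_of_synthetic_topology}, whose extra hypotheses ($\tst = \opn \subseteq \nnst$) are not in force here.
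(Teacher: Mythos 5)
Your proof is correct and takes essentially the same route as the paper: the paper's proof is the single word ``Obvious'' followed by exactly the kind of preimage presentations you give (e.g.\ $\RR_{\geq a} = f^{-1}(\{0\})$ for $f(x) = \sup\{a-x,0\}$, and $\intcc{a}{b}$ as the zero set of $|x-a|+a+|x-b|-b$, where you instead use a binary intersection of closed half-lines, which is equally valid since $\cld$ is a bounded $\land$-subsemilattice). Your caveat about not invoking Theorem~\ref{Theorem: standard_special_case_of_synthetic_topology} is well taken and consistent with the paper's intent.
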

		\begin{proof}
			Obvious. We merely note that
			\begin{itemize}
				\item $\{a\} = f^{-1}(\{0\})$ where $f\colon \RR \to \RR$, $f(x) = x - a$,\footnote{The point is that $\RR$ is a homogeneous space, \ie for any pair of real numbers there is a bijection on $\RR$ which takes the first number to the second, so it is sufficient to verify Hausdorfness on just one singleton.}
				\item $\intcc{a}{b} = f^{-1}(\{0\})$ where $f\colon \RR \to \RR$, $f(x) = |x-a| + a + |x-b| - b$,
				\item $\RR_{\leq b} = f^{-1}(\{0\})$ where $f\colon \RR \to \RR$, $f(x) = \sup\{x-b, 0\}$,
				\item $\RR_{\geq a} = f^{-1}(\{0\})$ where $f\colon \RR \to \RR$, $f(x) = \sup\{a-x, 0\}$.
			\end{itemize}
		\end{proof}
		
		In typical practical examples $\RR$ is Hausdorff, as expected.
		\begin{corollary}\label{Corollary: reals_are_Hausdorff}
			If $\tst = \opn \subseteq \nnst$, then $\RR$ is Hausdorff, and bordered balls are closed.
		\end{corollary}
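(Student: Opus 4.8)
The plan is to deduce everything from Proposition~\ref{Proposition: Hausdorffness_of_real_numbers}, for which it suffices to verify just one of its (equivalent) conditions; the cheapest is condition~(2), namely that $\{0\}$ is a closed subset of $\RR$. Once that is in hand, the proposition immediately yields that $\RR$ is Hausdorff and that all bordered balls are closed. Before doing that, I would record that the running hypotheses of Theorem~\ref{Theorem: standard_special_case_of_synthetic_topology} are met: we are assuming $\tst = \opn \subseteq \nnst$, and the chapter-wide postulate that $\emptyset$ and $\NN$ are overt gives (via Lemma~\ref{Lemma: overtness_of_countable_sets}) that all countable, hence all finite, sets are overt, so in particular $\opn$ is a bounded lattice; thus all conclusions of that theorem are available.

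Next I would argue that $\{0\}$ is the complement of an open subset. By the construction of $\RR$ (Theorem~\ref{Theorem: Dedekind_cuts_are_the_real_numbers}, or just the streak axioms of Definition~\ref{Definition: streaks}) the strict order $<$ is an open relation on $\RR$, so for each $x$ the statement $x \apart 0 \dfeq (x < 0) \lor (0 < x)$ is open, using that $\opn$ is closed under binary joins. Hence $U \dfeq \st{x \in \RR}{x \apart 0}$ is an open subset, and by tightness of the strict linear order we have $x = 0 \iff \lnot(x \apart 0)$, i.e.\ $\{0\} = \scom{U}$. By Theorem~\ref{Theorem: standard_special_case_of_synthetic_topology}(1) the complement of an open subset is closed, so $\{0\}$ is closed in $\RR$. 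This is exactly condition~(2) of Proposition~\ref{Proposition: Hausdorffness_of_real_numbers}, and chasing through the equivalences there (condition~(3) for Hausdorffness, condition~(7) for bordered balls) finishes the corollary.

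I do not anticipate a genuine obstacle here; the corollary is essentially an application of the preceding proposition once the hypotheses of Theorem~\ref{Theorem: standard_special_case_of_synthetic_topology} are seen to hold. The only points needing a little care are (a) confirming that finite sets really are overt in this chapter, so that $\opn$ is a bounded lattice and the join $(x<0)\lor(0<x)$ stays open, and (b) spelling out that equality with $0$ on $\RR$ is the negation of the open apartness relation, which is just tightness of the strict linear order. Everything else is bookkeeping through Proposition~\ref{Proposition: Hausdorffness_of_real_numbers}.
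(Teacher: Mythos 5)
Your proof is correct and follows the paper's own argument exactly: show $\{0\}$ is the complement of the open set $\st{x \in \RR}{x \apart 0}$ (using that countable, hence finite, sets are overt so that $\opn$ is a bounded lattice), invoke Theorem~\ref{Theorem: standard_special_case_of_synthetic_topology} to conclude $\{0\}$ is closed, and then read off the conclusions from Proposition~\ref{Proposition: Hausdorffness_of_real_numbers}. The extra care you take over openness of the apartness and tightness of $<$ is exactly the bookkeeping the paper leaves implicit.
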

		\begin{proof}
			The set $\opn$ is a bounded sublattice of $\soc$ (\ie finite sets are overt) since countable sets are overt. The singleton $\{0\}$ is a complement of an open set, namely $\{0\} = \RR \setminus \st{x \in \RR}{x \apart 0}$, so closed by Theorem~\ref{Theorem: standard_special_case_of_synthetic_topology}.
		\end{proof}
		
		Most of topological notions have their (pseudo)metric counterparts where an `open set' is replaced by a `ball'\footnote{This is simply because most topological notions can be equivalently rephrased by just basic instead of all open sets, and metric balls are of course our candidate for a topological basis in a metric space. This is the main subject of Chapter~\ref{Chapter: metrization_theorems}.}. These will be identified with a qualifier `metric' in front of the name while the usual topological notions will be called `intrinsic' if necessary.
		
		Thus we define, for a pseudometric space $\mtr{X} = (X, d)$, that the subset $A \subseteq X$ is \df{metrically dense} in $\mtr{X}$ when it intersects every inhabited ball (with symbols, $\xall{x}{X}\xall{r}{\RR_{> 0}}{A \between \ball{x}{r}}$). A pseudometric space is \df{metrically separable} when it contains a countable metrically dense subset.
		
		One should understand this definition in the sense that a countable dense subset, together with its enumeration, is actually given, so it would be more precise to say that a metrically separable pseudometric space is a triple $(X, d, s)$ where $(X, d)$ is a pseudometric space, and $s\colon \NN \to X + \one$ is a map with the property $\xall{x}{X}\xall{r}{\RR_{> 0}}\xsome{n}{s^{-1}(X)}{d(x, s_n) < r}$. This is to avoid the need for choice principles; for example, we want an $\NN$-indexed product of separable spaces to be separable. Most of the time, though, we will work with only finitely many, or even a single space, and then there isn't really a distinction.
		
		We will pay more attention to this matter in the case of totally bounded spaces, an important subclass of metrically separable ones. The classical definition~\cite{Dugundji_J_1974:_topology} says that a metric space $(X, d)$ is \df{totally bounded} when for every $\epsilon > 0$ there exist a finite subset $F \subseteq X$ such that every $x \in X$ is at less than $\epsilon$ distance to some point in $F$ (in other words, $X$ is covered by finitely many balls of radius $\epsilon$). This is also the standard constructive definition~\cite{Bishop_E_Bridges_D_1985:_constructive_analysis}, but only if we assume some form of countable choice --- otherwise, we cannot even prove that a totally bounded space is metrically separable.
		
		We therefore adopt the definition for our choiceless environment thusly: $(X, d, s, a)$ is a \df{totally bounded} pseudometric space when $(X, d)$ is a pseudometric space and maps $s\colon \NN \to X + \one$, $a\colon \NN \to \NN$ have the property
		$$\xall{n}{\NN}\xall{x}{X}\xsome{k}{(s^{-1}(X) \cap \NN_{< a_n})}{d(x, s_k) < 2^{-n}}.$$
		This definition clearly implies the usual one, and it is easy to verify that they are equivalent in the presence of countable choice. We also see that $s$ has a metrically dense image in $X$, so by this definition, a totally bounded pseudometric space is also metrically separable.
		
		Also, as expected, when $X$ is inhabited, we can take $s$ in the definitions of metrically separable and totally bounded metric spaces to be a map $\NN \to X$ instead of $\NN \to X + \one$. In fact, inhabitedness of a totally bounded space, even by the usual definition, is a decidable property: just consider how many balls of radius $1$ it takes to cover the space, and whether this number is zero or not.
		
		Recall that for a pseudometric space $(X, d)$, a point $x \in X$ and a subset $A \subseteq X$, the distance $d(A, x)$ is defined as the strict infimum of distances from points of $A$ to $x$. In general this is an upper real (more precisely, an extended upper real --- it is less then $\infty$ if and only if $A$ is inhabited). When $d(A, x)$ is a real number for all $x \in X$, the subset $A$ is called \df{located} in $X$. Note that a located subset must be inhabited if $X$ is.
		
		We know from standard constructive theory of metric spaces~\cite{Bishop_E_Bridges_D_1985:_constructive_analysis} that locatedness and total boundedness are closely connected. Specifically, if $\mtr{X} = (X, d)$ is a metric space, $A \subseteq X$ is inhabited, and $\mtr{A} = (A, \rstr{d}_{A \times A}, s, a)$ is totally bounded, then $A$ is located. This is so because
		$$\Big(\st{q \in \oirs}{\xsome{n}{\NN}\xall{k}{(s^{-1}(X) \cap \NN_{< a_n})}{q + 2^{-n} < d(x, s_k)}},$$
		$$\st{r \in \oirs}{\xsome{n}{s^{-1}(X)}{r > d(x, s_k)}}\Big)$$
		is a Dedekind cut for every $x \in X$ (and whichever overt interpolating ring streak $\oirs$ we chose to build the real numbers from), and it equals the distance $d(A, x)$. The proof for this is standard, and it works for pseudometric spaces as well. We only have to add that the cuts are open since $s^{-1}(X) \cap \NN_{< a_n}$ is a finite set, hence compact (while $\NN$, $s^{-1}(X)$ are overt, and $<$ is an open relation).
		
		Generally, a sort of a converse also holds: if $\mtr{X}$ is totally bounded, and $A$ is located in $\mtr{X}$, then $\mtr{A}$ is also totally bounded. Unfortunately, while one doesn't need choice to prove that if the usual definition of total boundedness is assumed, we would need it to produce the sequences $s$ and $a$. This is a bit of a deficiency of a choiceless treatment of totally bounded (pseudo)metric spaces, but conveniently we will not require this result.
		
		Another issue that transfers from standard theory of totally bounded spaces is the existence of their diameter. Recall that the \df{diameter} $\diam(\mtr{X})$ of a metric space $\mtr{X} = (X, d)$ is the supremum of the image of $d$, \ie of the distances between points in $\mtr{X}$. The supremum is understood to be nonnegative, so the diameter of the empty space is $0$ (alternatively, one can simply add $0$ to the image of $d$, and then take the supremum). In general, the diameter is a lower cut (if $\mtr{X}$ contains a metrically dense subovert subset, it is an open lower cut, so an extended lower real in our terminology). For totally bounded spaces however, it is always a real (also by our definition). To verify this, let $\mtr{X} = (X, d, s, a)$ be a totally bounded metric space. Then $\diam(\mtr{X}) = (L, U)$ where
		$$L \dfeq \st{q \in \oirs}{q < 0 \lor \xsome{i, j}{s^{-1}(X)}{q < d(s_i, s_j)}},$$
		$$U \dfeq \st{r \in \oirs}{r > 0 \land \xsome{n}{\NN}\xall{i, j}{(s^{-1}(X) \cap \NN_{< a_n})}{r > 2^{-n+1} + d(s_i, s_j)}}.$$
		It is easy to see that $(L, U)$ is an open Dedekind cut, and that it equals the supremum of distances.
				
		We continue with properties of metric spaces. If in a metric space $\mtr{X} = (X, d)$ the \df{strong triangle inequality}
		$$\sup\{d(x, y), d(y, z)\} \geq d(x, z)$$
		holds for all $x, y, z \in X$, we say that $\mtr{X}$ is an \df{ultrametric space}. In an ultrametric space, every point in a ball is its center, \ie for every $x, y \in X$, $r \in \RR$, if $d(x, y) < r$, then $\ball{x}{r} = \ball{y}{r}$, and if $d(x, y) \leq r$, then $\bball{x}{r} = \bball{y}{r}$. An example of an ultrametric space is any set $X$ with decidable equality, equipped with the \df{discrete metric}
		$$d_D(x, y) \dfeq \begin{cases} 0 & \text{if } x = y,\\ 1 & \text{if } x \neq y.\end{cases}$$
		
		Next, we want to consider products of (pseudo)metric spaces. From the classical topology we know that it makes sense to consider only countable products, and then we have various product metrics to choose from. For us this means considering binary and countably infinite products, and we will be particular in our choice of the product metric --- we will always take the supremum metric\footnote{We are restricting ourselves here. Recall that in classical topology we can equip, say, the space $\C(\II, \RR)$ with supremum metric as well because $\II$ is compact, and $\C(\II, \RR)$ could be interpreted as the $\II$-indexed product of the spaces of real numbers. Indeed, in our setting, when conditions of Theorem~\ref{Theorem: compact_maps_into_reals} are satisfied, we could take any set which is a countable union of compact overt subsets to be the index set of the product of metric spaces, and still obtain a metric space. We won't need this generality, however.}.
		
		We therefore define: if $(X, d_X)$, $(Y, d_Y)$ are pseudometric spaces, then their binary product is the pseudometric space $(X \times Y, d)$ where
		$$d\big((x, y), (z, w)\big) \dfeq \sup\{d_X(x, z), d_Y(y, w)\}.$$
		
		Let now $(X_n, d_n)_{n \in \NN}$ be a sequence of pseudometric spaces, and let $h\colon A \to \II$ be a map with the following properties:
		\begin{itemize}
			\item $A \subseteq \RR$ contains images of all pseudometrics $d_n$,
			\item $0 \in A$ (this already follows from the previous item if any $X_n$ is inhabited) and $h(0) = 0$,
			\item $h$ is \ed-continuous (see Definition~\ref{Definition: morphisms_of_metric_spaces} below),
			\item $h$ preserves $<$, \ie $a < b \iff h(a) < h(b)$ for all $a, b \in A$.
		\end{itemize}
		Then the product is the pseudometric space $\big(\prod_{n \in \NN} X_n, d\big)$ where
		$$d\big((x_n)_{n \in \NN}, (y_n)_{n \in \NN}\big) \dfeq \sup\st{2^{-n} h(d_n(x_n, y_n))}{n \in \NN}.$$
		Using $h$ in this definition is the standard trick how to compensate for the possible unboundedness of factors; we will call it the \df{gauge map}. One possibility for $h$ that always works is $h\colon \RR_{\geq 0} \to \II$, $h(t) = \frac{t}{1+t}$. However, we will typically consider countable products of spaces in which the distances are already bounded above by $1$, and then we can simply take for $h$ the identity on $\II$.
		
		It might not be obvious that the supremum in the definition of the product pseudometric is in fact a real number, so we rewrite the definition
		$$d\big((x_n)_{n \in \NN}, (y_n)_{n \in \NN}\big) = (L, U)$$
		where
		$$L \dfeq \st{q \in \oirs}{\xsome{n}{\NN}{q < 2^{-n} h(d_n(x_n, y_n))}},$$
		$$U \dfeq \st{r \in \oirs}{\some{n}{\NN}{r > 2^{-n} \land \xall{k}{\NN_{< n}}{r > 2^{-k} h(d_k(x_k, y_k))}}}.$$
		Because $\oirs$ is interpolative ring streak, $L$ is a lower and $U$ is an upper cut; they are clearly disjoint, and they are open since $\NN$ is overt, $\NN_{< n}$ is compact and $<$ is an open relation. Finally, take any $q, r \in \oirs$, $q < r$. By interpolation property, find $s \in \oirs$, $q < s < r$. Consider the options $q < 0 \lor 0 < s$. If $q < 0$, then $q \in L$, and we are done. Suppose $0 < s$. Let $n \in \NN$ be such that $2^{-n} < r-s$. By cotransitivity of $<$ we can construct a finite sequence $a\colon \NN_{< n} \to \two$ with the property $a_k = 0 \implies q < 2^{-k} h(d_k(x_k, y_k))$ and $a_k = 1 \implies 2^{-k} h(d_k(x_k, y_k)) < r$ for all $k \in \NN_{< n}$. If $a$ has a zero, then $q \in L$. If $a$ has only ones, then  the condition $\xall{k}{\NN_{< n}}{r > 2^{-k} h(d_k(x_k, y_k))}$ is satisfied, and moreover $r > 2^{-n} + s > 2^{-n}$.
		
		\begin{proposition}\label{Proposition: metric_multiplicative_properties}
			If pseudometric spaces are inhabited/metrically separable/totally bounded/met\-ric/ultrametric, then so is their binary product.
			
			If pseudometric spaces are inhabited metrically separable/inhabited totally bounded\footnote{The issue of inhabitedness is unavoidable here. A single totally bounded space is either inhabited or not, but inhabitedness of a countably infinite product of totally bounded spaces can have an arbitrary semidecidable truth value; we can thus prove that such a product is also totally bounded (if and) only if we know whether it is inhabited or not.}/met\-ric/ultra\-met\-ric, then so is their countably infinite product.
		\end{proposition}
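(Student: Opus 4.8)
The plan is to prove both halves in the same spirit, reducing each of the five properties (inhabitedness, metric separability, total boundedness, nondegeneracy, strong triangle inequality) to an elementary fact about suprema in $\RR$: that a supremum of nonnegative reals which equals $0$ forces every member to be $0$; the lemma on computing (strict) suprema per parts; the proposition that not all members of a finite family can be strictly below their supremum; and, for the infinite product, the behaviour of the gauge map $h$. Since $h$ preserves and reflects $<$, it is an order embedding, hence monotone; and it commutes with binary suprema, $h(\sup\{a,b\}) = \sup\{h(a),h(b)\}$ — the inequality $\geq$ is monotonicity, and if instead $\sup\{h(a),h(b)\} < h(\sup\{a,b\})$ held, then both $h(a)$ and $h(b)$ would be strictly below $h(\sup\{a,b\})$, so both $a$ and $b$ strictly below $\sup\{a,b\}$, contradicting the finite-supremum proposition. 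Finally $h(0) = 0$ together with the \ed-continuity of $h$ at $0$ gives, for every $\epsilon > 0$, a $\delta > 0$ with $t < \delta \implies h(t) < \epsilon$.

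\emph{Binary product.} With $d\big((x,y),(z,w)\big) = \sup\{d_X(x,z), d_Y(y,w)\}$: inhabitedness is immediate; if $d\big((x,y),(z,w)\big) = 0$ then $d_X(x,z)$ and $d_Y(y,w)$, being nonnegative and below $0$, are $0$, giving nondegeneracy; the strong triangle inequality drops out of monotonicity of $\sup$ and the per-parts lemma. For the last two, the observation $\ball{(x,y)}{r} = \ball{x}{r} \times \ball{y}{r}$ (valid because $\sup\{a,b\} < r \iff a < r \wedge b < r$) shows that if $D_X, D_Y$ are countable metrically dense then $D_X \times D_Y$ is dense in the product and countable (finite products of countable sets are countable, an easy consequence of $\NN \ism \NN\times\NN$); its enumeration comes from the given ones by precomposing with a bijection $\NN \ism \NN\times\NN$ and collapsing the spurious summands of $(X+\one)\times(Y+\one)$ into $\one$. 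Likewise the product of the level-$n$ finite nets of the factors is a level-$n$ net of the product, the bound $a_n$ being read off from $a_n^X, a_n^Y$ via the fixed pairing function used for $\NN\times\NN \ism \NN$.

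\emph{Countably infinite product.} Now $d\big((x_n),(y_n)\big) = \sup_n 2^{-n} h\big(d_n(x_n,y_n)\big)$. If this is $0$, each summand $2^{-n}h(d_n(x_n,y_n))$ is $0$, so $h(d_n(x_n,y_n)) = 0$, so $d_n(x_n,y_n) = 0$ since $h$ reflects $<$, so $x_n = y_n$ for every $n$ — hence nondegeneracy. The strong triangle inequality follows by applying the monotone, $\sup$-preserving $h$ coordinatewise, multiplying by $2^{-n}$, taking $\sup_n$, and reshuffling with the per-parts lemma. Inhabitedness is witnessed by the basepoint $b$ with $b_n \dfeq s_n(0) \in X_n$. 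For separability and total boundedness the crucial point is that the cut defining $d$ makes $d\big((x_k),(y_k)\big) < r$ hold as soon as, for some $q \in \oirs$ with $0 < q < r$ and some $m$ with $2^{-m} < q$, one has $2^{-k} h(d_k(x_k,y_k)) < q$ for every $k < m$; coordinates $\geq m$ are then entirely free. Accordingly, the dense set (resp. the net) is taken to consist of the \emph{eventually-basepoint} sequences: those equal to $b$ from some index $N$ on, with entries below $N$ ranging over $D_k$ (resp. over a level-$p_k$ net of $X_k$). These are enumerated by $\finseq{\NN}$, hence countable, via $(j_0,\ldots,j_{N-1}) \mapsto (s_0(j_0),\ldots,s_{N-1}(j_{N-1}), b_N, b_{N+1},\ldots)$; density follows from the density of each $D_k$ (plus $h(0) = 0$ and the continuity of $h$ at $0$), and net-ness from the total boundedness of each $X_k$ at level $p_k$, where $p_k$ is chosen, via the $\delta$ extracted from the \ed-continuity of $h$ at $0$, so that $d_k(x_k, f_k) < 2^{-p_k}$ forces $h(d_k(x_k, f_k)) < 2^k q$, hence $2^{-k} h(d_k(x_k, f_k)) < q$. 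Only finitely many coordinates are touched at each level, so at most finite choice intervenes, and the bound $a_n$ is an explicit function of $n$ and of the finitely many $a^{(k)}_{p_k}$.

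The main obstacle I expect is exactly this last case: producing the pair of sequences $(s, a)$ for the countable totally bounded product explicitly and choice-freely — which succeeds only because level $n$ involves just the first $m \approx n$ coordinates, so that nothing beyond finite choice is invoked — together with handling the gauge map $h$ with care, treating it as an order embedding commuting with binary suprema and extracting an explicit modulus from its \ed-continuity at $0$. The inhabitedness hypotheses are genuinely needed both here and for separability: without a basepoint there is nothing to pad the eventually-basepoint sequences with, and, as the footnote remarks, the inhabitedness of the product is not decidable from that of the factors, so one cannot retreat to the trivial total boundedness of the empty space.
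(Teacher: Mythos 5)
Your proof is correct and follows essentially the same route as the paper's: explicit enumerations via pairing bijections $\NN \ism \NN \times \NN$ and $\NN \ism \NN^n$, dense sets and nets in the countable product built from finitely many coordinates padded with basepoints (the paper pads with $s_l(0)$, which is exactly your eventually-basepoint device), and elementary supremum manipulations for the metric and ultrametric cases. If anything you are more careful than the paper, which dismisses nondegeneracy as clear and silently treats the gauge map $h$ as the identity in the total-boundedness step; your observation that $h$ is an order embedding with $h(\sup\{a,b\}) = \sup\{h(a),h(b)\}$, together with an explicit modulus for $h$ at $0$, fills in details the paper leaves implicit.
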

		\begin{proof}
			Binary product of inhabited sets is of course inhabited, and clearly products of metric spaces are again metric. We focus on other properties.
			\begin{itemize}
				\item \proven{total boundedness}
					Let $(X, d', s', a')$, $(Y, d'', s'', a'')$ be totally bounded. Let $r\colon \NN \to \NN$,
					$$r(n) \dfeq \sup\st{k \in \NN_{\leq n}}{k^2 \leq n} = \inf\st{k \in \NN}{n < (k+1)^2},$$
					map a natural number to the integer part of its square root. Then the map $b = (b', b'')\colon \NN \to \NN \times \NN$,
					$$b(n) \dfeq \big(\inf\{n - (r(n))^2, r(n)\}, \inf\{(r(n) + 1)^2 - n - 1, r(n)\}\big),$$
					is a bijection (with the inverse $(i, j) \mapsto r(\sup\{i, j\})^2 + i + \sup\{i, j\} - j$), filling successively larger squares in $\NN \times \NN$. Declare $s\colon \NN \to \one + X \times Y$,
					$$s(n) \dfeq \begin{cases} \big(s'(b'(n)), s''(b''(n))\big) & \text{ if } s'(b'(n)) \in X \text{ and }  s''(b''(n)) \in Y, \\ \unit & \text{ otherwise}, \end{cases}$$
					and $a\colon \NN \to \NN$, $a(n) \dfeq (\sup\{a'(n), a''(n)\})^2$.
					
					Let $(X_n, d_n, s_n, a_n)_{n \in \NN}$ be a sequence of inhabited totally bounded pseudometric spaces, with $s_n$s mapping into $X_n$s. For all $n \in \NN_{\geq 1}$ let $b_n\colon \NN \to \NN^n$ generalize the bijection $b$, in the sense that they are bijections with the property $b_n(\NN_{< k^n}) = (\NN_{< k})^n$ for all $k \in \NN$. Let $p\colon \coprod_{n \in \NN_{\geq 1}} \NN^n \to \prod_{n \in \NN} X_n$,
					$$\Big(p\big((n_k)_{k \in \NN_{< m}}\big)\Big)_l \dfeq \begin{cases} s_l(n_l) & \text{ if } l < m \\ s_l(0) & \text{ if } l \geq m \end{cases}$$
					for all $l \in \NN$. Declare $s\colon \NN \to \prod_{n \in \NN} X_n$ to be
					$$s(n) \dfeq p\Big(b_{b'(n)}(b''(n))\Big),$$
					and $a\colon \NN \to \NN$, $a(n) \dfeq \big(\sup\st{a_k(n)}{k \in \NN_{< a_n(n)}}\big)^n$.
				\item \proven{separability}
					Same as in the previous item, we just don't have to care about $a$.
				\item \proven{ultrametric}
					Let $(X, d')$ and $(Y, d'')$ be ultrametric spaces. Then
					$$\sup\Big\{d\big((x, y), (z, w)\big), d\big((z, w), (u, v)\big)\Big\} = \sup\{d'(x, z), d''(y, w), d'(z, u), d''(w, v)\} \geq$$
					$$\geq \sup\{d'(x, u), d''(y, v)\} = d\big((x, y), (u, v)\big).$$
					
					Let $(X_n, d_n)_{n \in \NN}$ be a sequence of ultrametric spaces. Then
					$$\sup\Big\{d\big((x_n)_{n \in \NN}, (y_n)_{n \in \NN}\big), d\big((y_n)_{n \in \NN}, (z_n)_{n \in \NN}\big)\Big\} =$$
					$$= \sup\Big\{\sup\st{2^{-n} h(d_n(x_n, y_n))}{n \in \NN}, \sup\st{2^{-n} h(d_n(y_n, z_n))}{n \in \NN}\Big\} =$$
					$$= \sup\st{2^{-n} h\Big(\sup\big\{d_n(x_n, y_n), d_n(y_n, z_n)\big\}\Big)}{n \in \NN} \geq$$
					$$\geq \sup\st{2^{-n} h\big(d_n(x_n, z_n)\big)}{n \in \NN} = d\big((x_n)_{n \in \NN}, (z_n)_{n \in \NN}\big).$$
			\end{itemize}
		\end{proof}
		
		Even though we discussed products, we haven't really said what we consider to be morphisms of metric spaces. One possibility is to simply take all maps, but we usually need more than that.
		\begin{definition}\label{Definition: morphisms_of_metric_spaces}
			Let $(X, d_X)$, $(Y, d_Y)$ be (pseudo)metric spaces. We say that a map $f\colon X \to Y$ between them is:
			\begin{itemize}
				\item
					\df{\ed-continuous} when
					$$\xall{x}{X}\xall{\epsilon}{\RR_{> 0}}\xsome{\delta}{\RR_{> 0}}\xall{y}{X}{d_X(x, y) < \delta \implies d_Y(f(x), f(y)) < \epsilon},$$
					or equivalently, preimages of balls in $Y$ are unions of balls in $X$;
				\item
					\df{uniformly continuous} when
					$$\xall{\epsilon}{\RR_{> 0}}\xsome{\delta}{\RR_{> 0}}\xall{x, y}{X}{d_X(x, y) < \delta \implies d_Y(f(x), f(y)) < \epsilon};$$
				\item
					\df{boundedly uniformly continuous} when it is uniformly continuous on all bounded subsets of $X$, or equivalently, on all balls in $X$;
				\item
					\df{Lipschitz} when
					$$\xsome{C}{\RR_{> 0}}\xall{x, y}{X}{d_Y(f(x), f(y)) \leq C \cdot d_X(x, y)};$$
					any $C$ satisfying this condition is called a \df{Lipschitz constant} or a \df{Lipschitz coefficient};
				\item
					\df{nonexpansive} when it is Lipschitz with coefficient $1$;
				\item
					an \df{isometry} when
					$$\xall{x, y}{X}{d_Y(f(x), f(y)) = d_X(x, y)}.$$
			\end{itemize}
		\end{definition}
		There are familiar implications between these properties. Which one will be most useful to us will depend on case-by-case basis. For instance, in the case of products, we want something between \ed-continuous and nonexpansive maps for finite products, and something between \ed-continuous maps and Lipschitz maps for countable products, in order to accommodate the projections.
		
		\begin{lemma}
			Let $f, g\colon \mtr{X} \to \mtr{Y}$ be \ed-continuous maps between pseudometric spaces $\mtr{X} = (X, d_\mtr{X})$ and $\mtr{Y} = (Y, d_\mtr{Y})$, and let $D \subseteq X$ be a metrically dense subset in $\mtr{X}$. If there exist $x \in X$ such that $d_Y(f(x), g(x)) > 0$, then there exists such an $x$ already in $D$.
		\end{lemma}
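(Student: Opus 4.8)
The plan is to run the usual ``perturbation'' argument: a strictly positive distance $d_Y(f(x_0),g(x_0))$ cannot be destroyed by moving $x_0$ a little, and metric density of $D$ lets us move the point into $D$ without decreasing the distance too much. So suppose we are given $x_0 \in X$ with $d_Y(f(x_0),g(x_0)) > 0$, and abbreviate $r \dfeq d_Y(f(x_0),g(x_0))$. First I would apply \ed-continuity of $f$ at $x_0$ with $\epsilon = r/4$ to obtain some $\delta_f > 0$, and \ed-continuity of $g$ at $x_0$ with $\epsilon = r/4$ to obtain some $\delta_g > 0$; setting $\delta \dfeq \inf\{\delta_f, \delta_g\}$ (which is again strictly positive, as $\RR$ is a lattice and both $\delta_f$, $\delta_g$ are positive) yields a single radius with the property that $d_X(x_0, y) < \delta$ implies both $d_Y(f(x_0), f(y)) < r/4$ and $d_Y(g(x_0), g(y)) < r/4$.

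Next, metric density of $D$ in $\mtr{X}$ gives $D \between \ball{x_0}{\delta}$, i.e.\ some $x \in D$ with $d_X(x_0, x) < \delta$. For this $x$, the triangle inequality (used twice) yields
\begin{align*}
	r = d_Y(f(x_0), g(x_0)) &\leq d_Y(f(x_0), f(x)) + d_Y(f(x), g(x)) + d_Y(g(x), g(x_0))\\
	&< \frac{r}{4} + d_Y(f(x), g(x)) + \frac{r}{4},
\end{align*}
and therefore $d_Y(f(x), g(x)) > r/2 > 0$. Since $x \in D$, this is exactly the desired witness, so the proof is complete.

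There is no genuine obstacle here; the only points worth a line of care are the constructive ones. We invoke no choice principle: the conclusion is an existential statement, so we may simply instantiate the hypotheses $\xsome{\delta_f}{\RR_{> 0}}{(\ldots)}$ and $\xsome{\delta_g}{\RR_{> 0}}{(\ldots)}$ and argue with chosen representatives, and at the end we produce a single point $x$. The lattice (indeed field) structure on $\RR$, available throughout this chapter since countable sets are overt, is used both to form $\inf\{\delta_f, \delta_g\}$ and to make sense of $r/4$ and $r/2$; and the final step $d_Y(f(x), g(x)) > 0$ follows from $d_Y(f(x), g(x)) > r/2$ together with $r/2 > 0$ by transitivity of the strict order.
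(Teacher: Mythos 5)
Your proof is correct and follows essentially the same route as the paper's: apply \ed-continuity of $f$ and $g$ at the given point to get a common $\delta$, use metric density of $D$ to find a point of $D$ in the $\delta$-ball, and conclude via the triangle inequality. The only cosmetic difference is your choice of $\epsilon = r/4$ (giving the stronger bound $d_Y(f(x),g(x)) > r/2$) where the paper uses $\epsilon = r/2$ (giving exactly $> 0$).
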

		\begin{proof}
			Let $\epsilon \dfeq \frac{d_Y(f(x), g(x))}{2}$. By \ed-continuity of $f$, $g$ there is $\delta \in \RR_{> 0}$ such that
			$$d_\mtr{Y}(f(x), f(y)) < \epsilon \qquad \text{and} \qquad d_\mtr{Y}(g(x), g(y)) < \epsilon$$
			for all $y \in \ball[\mtr{X}]{x}{\delta}$. Pick an $x' \in D \cap \ball[\mtr{X}]{x}{\delta}$. Then
			$$d_Y(f(x'), g(x')) \geq d_Y(f(x), g(x)) - d_Y(f(x), f(x')) - d_Y(g(x), g(x')) > 2 \cdot \epsilon - \epsilon - \epsilon = 0.$$
		\end{proof}
		
		\begin{corollary}\label{Corollary: uniqueness_of_ed_extension_of_a_map_into_a_metric_space}
			Let $\mtr{X} = (X, d_\mtr{X})$ be a pseudometric space, $\mtr{Y} = (Y, d_\mtr{Y})$ a metric space, $D \subseteq X$ a metrically dense subset in $\mtr{X}$, and $f\colon D \to Y$ a map. Then there exists at most one \ed-continuous extension of $f$ to the whole of $X$.
		\end{corollary}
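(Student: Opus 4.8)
The plan is to deduce this at once from the lemma immediately preceding it. Suppose $g, h\colon X \to Y$ are both \ed-continuous maps with $\rstr{g}_D = f = \rstr{h}_D$. Constructively, "at most one extension" means precisely that any two such $g, h$ coincide, so the goal is to show $g(x) = h(x)$ for every $x \in X$.

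First I would note that for every $d \in D$ we have $d_\mtr{Y}(g(d), h(d)) = d_\mtr{Y}(f(d), f(d)) = 0$, using that $\mtr{Y}$, being a metric space, is in particular a pseudometric space; hence it is not the case that $d_\mtr{Y}(g(d), h(d)) > 0$ for any $d \in D$. Applying the contrapositive of the preceding lemma, with the pair $g, h$ in the roles of $f, g$ there and with the same metrically dense subset $D$, we conclude that there is no $x \in X$ with $d_\mtr{Y}(g(x), h(x)) > 0$; equivalently (since $\lnot\exists = \forall\lnot$ and, by definition of $\leq$ on $\RR$, $\lnot(0 < a)$ is just $a \leq 0$), $d_\mtr{Y}(g(x), h(x)) \leq 0$ for all $x \in X$. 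Combining this with the fact (shown earlier) that distances in a protometric space are nonnegative, antisymmetry of $\leq$ yields $d_\mtr{Y}(g(x), h(x)) = 0$, and then nondegeneracy of the metric $d_\mtr{Y}$ gives $g(x) = h(x)$. As $x$ was arbitrary, $g = h$.

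There is no real obstacle here; the corollary is essentially a restatement of the lemma. The one point that needs a little care — and the reason I route the argument the way I do rather than assuming $g(x) \neq h(x)$ and seeking a contradiction — is that constructively we must pass through the assertion that $d_\mtr{Y}(g(x), h(x))$ is everywhere $\leq 0$, and only afterwards invoke nonnegativity (to pin the distance at $0$) and nondegeneracy (to conclude equality of the points). This keeps every step intuitionistically valid.
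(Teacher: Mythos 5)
Your proposal is correct and follows exactly the route the paper takes: apply the contrapositive of the preceding lemma to the two extensions, then use that points at zero distance in a metric space coincide. The extra care you take in passing through $\lnot(d_\mtr{Y}(g(x),h(x))>0)$, i.e.\ $d_\mtr{Y}(g(x),h(x))\leq 0$, before invoking nonnegativity and nondegeneracy is exactly the intuitionistically valid reading the paper intends.
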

		\begin{proof}
			Use the contrapositive of the previous lemma and the fact that points at zero distance in a metric space are the same.
		\end{proof}
		
		\begin{proposition}\label{Proposition: retract_of_ms_tb}
			Let $\mtr{X} = (X, d_X)$ be a pseudometric space, $A \subseteq X$, and $r\colon X \to A$ a retraction. Let $d_A \dfeq \rstr{d_X}_{A \times A}$, and $\mtr{A} = (A, d_A)$.
			\begin{enumerate}
				\item
					If $\mtr{X}$ is metrically separable and $r$ \ed-continuous, then $\mtr{A}$ is also metrically separable.
				\item
					If $\mtr{X}$ is totally bounded and $r$ nonexpansive, then $\mtr{A}$ is also totally bounded.
			\end{enumerate}
		\end{proposition}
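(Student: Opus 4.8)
The plan is to transport the given enumeration along the retraction $r$. Recall that since $A \subseteq X$ and $r$ is a retraction (with the inclusion as its section), we have $r(a) = a$ for every $a \in A$. In both items, if $s\colon \NN \to X + \one$ is the enumeration witnessing separability (resp.\ is the enumeration part of the total boundedness data) of $\mtr{X}$, I would put $s' \dfeq (r + \id[\one]) \circ s\colon \NN \to A + \one$, so that $s'_n = r(s_n)$ whenever $s_n \in X$ and $s'_n = \unit$ otherwise; note that $(s')^{-1}(A) = s^{-1}(X)$. Everything is manipulated explicitly, so no choice principle is invoked.

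For item~(1): given $a \in A$ and $\epsilon \in \RR_{> 0}$, \ed-continuity of $r$ at $a$ yields some $\delta \in \RR_{> 0}$ such that $d_X(r(a), r(y)) < \epsilon$ whenever $d_X(a, y) < \delta$. Applying the density property of $s$ to the point $a \in X$ and the radius $\delta$ produces $n \in s^{-1}(X)$ with $d_X(a, s_n) < \delta$; then $d_A(a, s'_n) = d_X(r(a), r(s_n)) < \epsilon$ and $n \in (s')^{-1}(A)$. Hence $s'$ witnesses that $\mtr{A}$ is metrically separable.

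For item~(2): I would keep the same bound, $a' \dfeq a$. Fix $n \in \NN$ and $x \in A$. Total boundedness of $\mtr{X}$ supplies $k \in s^{-1}(X) \cap \NN_{< a_n}$ with $d_X(x, s_k) < 2^{-n}$; since $r$ is nonexpansive and $r(x) = x$, we get $d_A(x, s'_k) = d_X(r(x), r(s_k)) \leq d_X(x, s_k) < 2^{-n}$, with $k \in (s')^{-1}(A) \cap \NN_{< a'_n}$. So $(A, d_A, s', a')$ is totally bounded.

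There is no genuine obstacle; the only point worth flagging is why the two items need different hypotheses on $r$. In~(1) the radius $\delta$ may depend on the point and on $\epsilon$, which is harmless because metrical density quantifies over all radii, so \ed-continuity is enough. In~(2), by contrast, the counting function $a'$ must be uniform in $x$, so a non-uniform continuity notion would not suffice; nonexpansiveness makes the modulus $\delta = \epsilon$ work and gives $a' = a$ outright. (A Lipschitz map with an explicit constant would also do, at the cost of replacing $a'_n$ by $a_m$ for a suitably shifted index $m$, but the statement as phrased only requires the cleanest version.)
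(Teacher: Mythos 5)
Your proof is correct and follows essentially the same route as the paper: transport the enumeration (and, in item~2, the same bounding function) along $r$ via $s' = (\id[\one] + r)\circ s$, using $r(a)=a$ for $a\in A$. The only cosmetic difference is in item~(1), where the paper phrases \ed-continuity as ``preimages of balls are unions of balls'' and argues that $r^{-1}(B_A(a,\epsilon))$ is an inhabited such union, whereas you unfold the pointwise $\epsilon$-$\delta$ definition directly; these are the two equivalent formulations given in the paper's own Definition of \ed-continuity, so the arguments coincide.
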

		\begin{proof}
			\begin{enumerate}
				\item
					Let $s_X\colon \NN \to \one + X$ witness separability of $\mtr{X}$. Define $s_A\colon \NN \to \one + A$, $s_A \dfeq (\id[\one] + r) \circ s_X$. We claim that the part of the image of $s_A$ under $A$ is metrically dense in $A$. Take any inhabited ball $B_A(a, \epsilon) \subseteq A$, $a \in A$, $\epsilon \in \RR_{> 0}$. Then $r^{-1}(B_A(a, \epsilon))$ is a union of balls, and since $B_A(a, \epsilon) \subseteq r^{-1}(B_A(a, \epsilon))$, it is inhabited. Therefore there exists $n \in \NN$ such that $s_X(n) \in r^{-1}(B_A(a, \epsilon))$, so $s_A(n) \in B_A(a, \epsilon)$.
				\item
					Suppose $s_X\colon \NN \to \one + X$, $a_X\colon \NN \to \NN$ witness total boundedness of $\mtr{X}$. Let $s_A\colon \NN \to \one + A$, $s_A \dfeq (\id[\one] + r) \circ s_X$, and $a_A\colon \NN \to \NN$, $a_A \dfeq a_X$. Take any $n \in \NN$ and $x \in A$. Then there exists $k \in \NN_{< a_X(n)}$ such that $d_X(s_X(k), x) < 2^{-n}$. But then
					$$d_A(s_A(k), x) = d_A(r(s_X(k)), r(x)) \leq d_X(s_X(k), x) < 2^{-n}.$$
			\end{enumerate}
		\end{proof}
		
		We are used to isometries between metric spaces being injective, and indeed isometries with metric domain are injective even in the strong sense. To see this, recall that for any pseudometric space $(X, d)$, the relation $\apart$, defined by $x \apart y \dfeq d(x, y) > 0$ for $x, y \in X$, is an apartness relation on $X$ which is tight precisely when $d$ is a metric (if $X$ is a subset of $\RR$, equipped with the Euclidean topology, this apartness relation coincides with the one induced by $<$). If $i\colon (X, d') \to (Y, d'')$ is an isometry where $(X, d')$ is a metric and $(Y, d'')$ a pseudometric space, then for all $x, y \in X$,
		$$x \apart y \implies d'(x, y) > 0 \implies d''(i(x), i(y)) > 0 \implies i(x) \apart i(y),$$
		and the contrapositive indeed implies injectivity since $\apart$ is tight on $X$.
		
		Isometries between pseudometric spaces need not be injective in general, and there is an important example of this. Recall that for any pseudometric space $\mtr{X} = (X, d)$ the relation $\equ$, defined for $x, y \in X$ by $x \equ y \dfeq (d(x, y) = 0)$, is an equivalence relation, and the pseudometric $d$ induces a metric $\Kqm$ on $X/_\equ$, defined by $\Kqm([x], [y]) \dfeq d(x, y)$. The metric space $\Kq{\mtr{X}} \dfeq (X/_\equ, \Kqm)$ is called the \df{Kolmogorov quotient}\footnote{In classical general topology the Kolmogorov quotient of a topological space is constructed by identifying points which have the same neighbourhoods, thus obtaining a $T_0$ space. In pseudometric spaces the points with the same neighbourhoods are precisely those at zero distance.} of $\mtr{X}$. The Kolmogorov quotient map $q\colon \mtr{X} \to \Kq{\mtr{X}}$ is always a surjective isometry, and it is injective (hence bijective) if and only if $\mtr{X}$ is a metric space. In fact, it has the following universal property: every map $f\colon \mtr{X} \to \mtr{Y}$ of any type from Definition~\ref{Definition: morphisms_of_metric_spaces} with a metric space for the codomain $\mtr{Y}$ factors through $q$, in the sense that there exists a map $g\colon \Kq{\mtr{X}} \to \mtr{Y}$ such that $f = g \circ q$; moreover, such $g$ is unique (since $q$ is an epimorphism), and of the same type as $f$. This implies that the category of metric spaces is a reflective subcategory of the category of pseudometric spaces, with the Kolmogorov quotient being the object part of the left adjoint functor to the inclusion of metric spaces among pseudometric ones.
		
		Isometries with some further properties will be of special interest.
		\begin{definition}
			Let $\mtr{X} = (X, d_\mtr{X})$, $\mtr{Y} = (Y, d_\mtr{Y})$ be pseudometric spaces, and $i\colon \mtr{X} \to \mtr{Y}$ an isometry. We say that $i$ is
			\begin{itemize}
				\item
					a \df{dense isometry} when it has a metrically dense image in $Y$,
				\item
					a \df{space-filling isometry} when $q \circ i$ is surjective (where $q$ is the Kolmogorov quotient map); equivalently, when the condition $\xall{y}{Y}\xsome{x}{X}{d_\mtr{Y}(i(x), y) = 0}$ holds,
				\item
					an \df{isometric embedding} when it is injective,
				\item
					an \df{isometric isomorphism} when it is bijective.
			\end{itemize}
		\end{definition}
		Let $A \subseteq X$, and let $\mtr{X} = (X, d_\mtr{X})$, $\mtr{A} = (A, d_\mtr{A})$ be pseudometric spaces. We say that $\mtr{A}$ is a \df{metric subspace} of $\mtr{X}$ when the pseudometric on $\mtr{A}$ is a restriction of the pseudometric on $\mtr{X}$ onto $A$, \ie $d_\mtr{A} = \rstr{d_\mtr{X}}_{A \times A}$; equivalently, when the inclusion $\mtr{A} \hookrightarrow \mtr{X}$ is an isometric embedding.
		
		Isometric isomorphisms deserve their name since if an isometry is bijective, its inverse is also an isometry. Thus they preserve all (pseudo)metric properties both ways, so when we are given a specific isometric isomorphism $\mtr{X} \to \mtr{Y}$, we can identify spaces $\mtr{X}$ and $\mtr{Y}$, and when we are given a specific isometric embedding $\mtr{X} \to \mtr{Y}$, we can identify $\mtr{X}$ with a metric subspace of $\mtr{Y}$ (and write $\mtr{X} \subseteq \mtr{Y}$).

	\section{Metric Completions}\label{Section: completions}
	
		One of the most important properties of metric spaces is completeness, and it warrants a separate section. We start by defining what a completion of a metric (and more generally, pseudometric) space is first. The idea is similar to how we define real numbers: the completion of a metric space $\mtr{X}$ should be the largest metric space into which we can densely isometrically embed $\mtr{X}$. In this section we assume metrics to have $\RR$ as the codomain, otherwise we have an additional technical complication of completing the codomain of a metric as well.
		
		\begin{lemma}\label{Lemma: factorization_of_dense_isometries}
			Let $(X, d)$, $(Y, d')$, $(Z, d'')$ be pseudometric spaces, and let $f\colon X \to Y$, $g\colon X \to Z$ be dense isometries. Then any \ed-continuous map $h\colon Y \to Z$ which satisfies $g = h \circ f$, is a dense isometry. Moreover, if $(Z, d'')$ is a metric space, then if such a map exists, it is unique.
		\end{lemma}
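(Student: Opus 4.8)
The plan is to establish the three assertions of the lemma separately: that $h$ is an isometry, that $h$ has metrically dense image, and (under the extra hypothesis) that $h$ is the unique such map.

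\emph{That $h$ is an isometry.} I would fix $y_1, y_2 \in Y$ and an arbitrary $\epsilon \in \RR_{>0}$, and invoke \ed-continuity of $h$ at $y_1$ and at $y_2$ to obtain a $\delta \in \RR_{>0}$ — shrunk so that in addition $6\delta \le \epsilon$ — with $d''(h(y), h(y_i)) < \epsilon/6$ whenever $d'(y, y_i) < \delta$. Since $f$ is a dense isometry, its image meets $\ball[\mtr{Y}]{y_1}{\delta}$ and $\ball[\mtr{Y}]{y_2}{\delta}$, so there are $x_1, x_2 \in X$ with $d'(f(x_i), y_i) < \delta$; hence $d''(h(f(x_i)), h(y_i)) < \epsilon/6$. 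Now $h(f(x_i)) = g(x_i)$, and because $g$ and $f$ are isometries, $d''(g(x_1), g(x_2)) = d(x_1, x_2) = d'(f(x_1), f(x_2))$. Two triangle inequalities in $\mtr{Z}$ and two in $\mtr{Y}$ (the routine bookkeeping I will not spell out) then yield $d''(h(y_1), h(y_2)) < d'(y_1, y_2) + \epsilon$ and $d'(y_1, y_2) < d''(h(y_1), h(y_2)) + \epsilon$. As $\epsilon$ was arbitrary, this forces $d''(h(y_1), h(y_2)) \le d'(y_1, y_2)$ together with the reverse inequality, hence equality by antisymmetry of $\le$ on $\RR$.

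\emph{That $h$ has dense image.} This is immediate: $g = h \circ f$ gives $\im(g) \subseteq \im(h)$, and a superset of a metrically dense subset is metrically dense, since it still meets every inhabited ball. Explicitly, given $z \in Z$ and $r \in \RR_{>0}$, metric density of $\im(g)$ produces $x \in X$ with $d''(g(x), z) < r$, and then $h(f(x)) = g(x) \in \ball[\mtr{Z}]{z}{r}$. Combined with the previous step, $h$ is a dense isometry. For uniqueness, suppose $\mtr{Z}$ is a metric space and $h_1, h_2\colon Y \to Z$ are \ed-continuous with $h_1 \circ f = g = h_2 \circ f$. Then $h_1$ and $h_2$ agree on the subset $\im(f) \subseteq Y$ — both send $f(x)$ to $g(x)$ — which is metrically dense in $\mtr{Y}$; so they are two \ed-continuous extensions to $Y$ of one and the same map on a metrically dense subset, with codomain the metric space $\mtr{Z}$, and hence coincide by Corollary~\ref{Corollary: uniqueness_of_ed_extension_of_a_map_into_a_metric_space}.

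\emph{Main obstacle.} There is no deep difficulty; the argument is the classical $\epsilon/3$-style estimate plus an appeal to the earlier uniqueness corollary. The only points requiring genuine care are constructive: \ed-continuity must be used pointwise (at $y_1$ and at $y_2$, with $\delta$ also taken small enough to control the approximations on the $\mtr{Y}$ side), not uniformly; and the passage from ``the two distances are within $\epsilon$ for every $\epsilon$'' to ``they are equal'' must go through the order on the Dedekind reals — deriving both $\le$-inequalities and then applying antisymmetry — rather than through any case split.
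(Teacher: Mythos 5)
Your proposal is correct and follows essentially the same route as the paper's proof: approximate $y_1,y_2$ by points of $\im(f)$ via density, control the error with pointwise \ed-continuity, transfer the distance through the isometries $f$ and $g$, and conclude equality from tightness of $<$ (your ``antisymmetry of $\leq$'' is exactly that), with density of $\im(h)$ and uniqueness via Corollary~\ref{Corollary: uniqueness_of_ed_extension_of_a_map_into_a_metric_space} handled identically. The only cosmetic difference is that the paper packages the isometry step as a proof by contradiction of each strict inequality rather than as your two-sided $\epsilon$-estimate; both are constructively sound.
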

		\begin{proof}
			Let us first prove that $h$ is an isometry. Suppose there exist $x, y \in Y$ such that $d''(h(x), h(y)) < d'(x, y)$. Let $\epsilon \dfeq \frac{1}{4}\big(d'(x, y) - d''(h(x), h(y))\big) > 0$, and let $\delta, \gamma \in \intoc{0}{\epsilon}$ be such that for all $z \in Y$,
			$$d'(x, z) < \delta \implies d''(h(x), h(z)) < \epsilon \qquad \text{and} \qquad d'(y, z) < \gamma \implies d''(h(y), h(z)) < \epsilon.$$
			Since $f$ has a dense image in $Z$, there exist $a, b \in X$ such that $d'(x, f(a)) < \delta$ and $d'(y, f(b)) < \gamma$. But then
			$$d'(x, y) \leq d'(x, f(a)) + d'(f(a), f(b)) + d'(f(b), y) < \delta + d(a, b) + \gamma \leq d''(g(a), g(b)) + 2 \epsilon =$$
			$$= d''(h(f(a)), h(f(b))) + 2 \epsilon \leq d''(h(x), h(f(a))) + d''(h(x), h(y)) + d''(h(y), h(f(b))) + 2 \epsilon \leq$$
			$$\leq d''(h(x), h(y)) + 4\epsilon = d'(x, y),$$
			a contradiction. The case $d''(h(x), h(y)) > d'(x, y)$ is dealt with in a similar fashion, so $d''(h(x), h(y)) = d'(x, y)$ by tightness.
			
			To see that $h$ has a dense image, take any $z \in Z$ and $r \in \RR_{> 0}$. There exists $a \in X$ such that $d''(z, g(a)) < r$, but then the element $f(a) \in Y$ is such that $d(z, h(f(a))) < r$.
			
         Uniqueness of $h$ follows from Corollary~\ref{Corollary: uniqueness_of_ed_extension_of_a_map_into_a_metric_space}.
		\end{proof}
		
		\begin{definition}
			Let $\mtr{X}$ be a pseudometric space. We say that a pseudometric space $\cmtr{X}$, together with a dense isometry $i\colon \mtr{X} \to \cmtr{X}$, is the \df{completion} of $\mtr{X}$ when any dense isometry with domain $\mtr{X}$ uniquely factors through it, \ie for any pseudometric space $\mtr{Y}$ and a dense isometry $f\colon \mtr{X} \to \mtr{Y}$ there exists a unique isometry $h\colon \mtr{Y} \to \cmtr{X}$ such that $i = h \circ f$. Since $h$ must have a dense image, we can rephrase this as follows: if $\mathcal{M}$ is the category of pseudometric spaces and dense isometries, then the completion of $\mtr{X}$ is the terminal object in the coslice category $\mtr{X}/\mathcal{M}$.
			
			A pseudometric space is \df{complete} when its identity map is its completion; equivalently, when its completion is an isomorphism.
		\end{definition}
		
		When the dense isometry $i\colon \mtr{X} \to \cmtr{X}$ is understood, we often simply say that the completion of $\mtr{X}$ is just the space $\cmtr{X}$. Clearly, if $f\colon \mtr{X} \to \mtr{Y}$ is a dense isometry, then $\mtr{X}$ and $\mtr{Y}$ have the ``same'' completion, in the sense that if $j\colon \mtr{Y} \to \mtr{Z}$ is the completion of $\mtr{Y}$, then $j \circ f\colon \mtr{X} \to \mtr{Z}$ is the completion of $\mtr{X}$, and if $i\colon \mtr{X} \to \mtr{Z}$ is the completion of $\mtr{X}$ and $j\colon \mtr{Y} \to \mtr{Z}$ the unique isometry for which $i = j \circ f$, then $j$ is the completion of $\mtr{Y}$. In particular, a completion of a space is complete. Also, a completion is always a metric space since the Kolmogorov quotient map is a surjective, hence dense, isometry.
		
		\begin{proposition}\label{Proposition: characterization_of_completeness}
			The following statements are equivalent for a metric space $\mtr{X} = (X, d)$.
			\begin{enumerate}
				\item
					$\mtr{X}$ is complete.
				\item
					For every metric space $\mtr{Y} = (Y, d')$, every dense isometry $\mtr{X} \to \mtr{Y}$ is an isometric isomorphism.
				\item
					For every pseudometric space $\mtr{Y} = (Y, d')$, every dense isometry $\mtr{X} \to \mtr{Y}$ is a space-filling isometry.
			\end{enumerate}
		\end{proposition}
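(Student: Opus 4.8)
The plan is to establish the cycle of implications $(1) \Rightarrow (3) \Rightarrow (2) \Rightarrow (1)$, since each step is short once the preceding material of the section is invoked.

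For $(1) \Rightarrow (3)$, assume $\mtr{X}$ is complete, so that $\id[\mtr{X}]\colon \mtr{X} \to \mtr{X}$ is the completion, i.e.\ terminal in the coslice $\mtr{X}/\mathcal{M}$. Given a dense isometry $f\colon \mtr{X} \to \mtr{Y}$ with $\mtr{Y}$ a pseudometric space, terminality furnishes an isometry $h\colon \mtr{Y} \to \mtr{X}$ with $h \circ f = \id[\mtr{X}]$. I would fix $y \in Y$, put $x_0 \dfeq h(y)$, and prove $d_{\mtr{Y}}(f(x_0), y) = 0$; as $y$ and $\mtr{Y}$ are arbitrary, this is exactly space-fillingness of $f$. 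The computation: for every $x \in X$, using that $f$ and $h$ are isometries and $h(f(x)) = x$, one has $d_{\mtr{Y}}(f(x), f(x_0)) = d_{\mtr{X}}(x, x_0) = d_{\mtr{X}}(h(f(x)), h(y)) = d_{\mtr{Y}}(f(x), y)$. By metric density of the image of $f$, for each $\epsilon > 0$ there is $x \in X$ with $d_{\mtr{Y}}(f(x), y) < \epsilon$, and then the triangle inequality together with the displayed equality gives $d_{\mtr{Y}}(f(x_0), y) < 2\epsilon$. Since this holds for all $\epsilon > 0$ we get $d_{\mtr{Y}}(f(x_0), y) \leq 0$; combined with nonnegativity of distances, tightness of the strict order on $\RR$ yields $d_{\mtr{Y}}(f(x_0), y) = 0$.

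For $(3) \Rightarrow (2)$, let $\mtr{Y}$ be a metric space and $f\colon \mtr{X} \to \mtr{Y}$ a dense isometry. By $(3)$ it is space-filling, so $q_{\mtr{Y}} \circ f$ is surjective; but for a metric space the Kolmogorov quotient map $q_{\mtr{Y}}$ is a bijective isometry, hence $f$ is surjective. Since $\mtr{X}$ is metric, the isometry $f$ is injective (in the strong sense recalled earlier), so $f$ is bijective, and the inverse of a bijective isometry is an isometry; thus $f$ is an isometric isomorphism. For $(2) \Rightarrow (1)$, let $i\colon \mtr{X} \to \cmtr{X}$ be the completion of $\mtr{X}$; it is a dense isometry and $\cmtr{X}$ is a metric space, so $(2)$ applies and shows $i$ is an isometric isomorphism, i.e.\ the completion of $\mtr{X}$ is an isomorphism, which is the definition of $\mtr{X}$ being complete.

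The only step requiring genuine work is $(1) \Rightarrow (3)$: it is where one must produce a concrete witness (the point $h(y)$, coming from the universal property of the trivial completion) for the existential ``space-filling'' condition and then discharge a zero-distance claim, which in this choiceless, intuitionistic framework must go through the ``$\leq 2\epsilon$ for every $\epsilon > 0$, hence $= 0$'' route rather than a classical infimum argument. The implications $(3) \Rightarrow (2)$ and $(2) \Rightarrow (1)$ are just bookkeeping on facts already in hand: the Kolmogorov quotient of a metric space is an isometric isomorphism, isometries out of metric spaces are injective, and a completion exists and is always a metric space.
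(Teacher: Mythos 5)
Your implications $(1 \Rightarrow 3)$ and $(3 \Rightarrow 2)$ are correct. The first is a clean direct production of the witness $h(y)$ from terminality of $\id[X]$, and the ``$< 2\epsilon$ for every $\epsilon > 0$, hence $= 0$'' argument is constructively sound. The paper runs the cycle in the other direction, $(1 \Rightarrow 2 \Rightarrow 3 \Rightarrow 1)$, putting the substantive work into $(3 \Rightarrow 1)$, where the factoring isometry $g\colon \mtr{Y} \to \mtr{X}$ is built pointwise from space-fillingness; your $(1 \Rightarrow 3)$ plays the analogous role at the other end of the cycle.

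The gap is in your $(2 \Rightarrow 1)$: it opens with ``let $i\colon \mtr{X} \to \cmtr{X}$ be the completion of $\mtr{X}$'', but in this choiceless setting the completion of an arbitrary metric space is not known to exist. The paper defines the completion only by a universal property and constructs it only under extra hypotheses (a subovert metrically dense subset, or $\AC[\opn]{\NN}$), so for a bare $\mtr{X}$ there is no object $\cmtr{X}$ to which $(2)$ can be applied. Since ``complete'' means that $\id[X]$ itself satisfies the universal property, what $(2 \Rightarrow 1)$ must deliver is: for \emph{every} pseudometric space $\mtr{Y}$ and dense isometry $f\colon \mtr{X} \to \mtr{Y}$, a unique isometry $g\colon \mtr{Y} \to \mtr{X}$ with $g \circ f = \id[X]$. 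This is repairable without invoking $\cmtr{X}$: compose $f$ with the Kolmogorov quotient $q\colon \mtr{Y} \to \Kq{\mtr{Y}}$ to obtain a dense isometry into a metric space, apply $(2)$ to conclude it is an isometric isomorphism, and set $g \dfeq (q \circ f)^{-1} \circ q$; uniqueness is Corollary~\ref{Corollary: uniqueness_of_ed_extension_of_a_map_into_a_metric_space}. (The paper's $(3 \Rightarrow 1)$ does essentially this, defining $g(y)$ as the unique $x$ with $d'(f(x), y) = 0$.) With that repair your cycle closes.
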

		\begin{proof}
			\begin{itemize}
				\item\itemimpl{1}{2}
					If $\mtr{X}$ is complete, then $\id[X]$ is its completion, so for every dense isometry $f\colon \mtr{X} \to \mtr{Y}$ there exists a dense isometry $g\colon \mtr{Y} \to \mtr{X}$ such that $g \circ f = \id[X]$. This implies $g$ is surjective, and if $\mtr{Y}$ is a metric space, it is also injective. It is therefore an isometric isomorphism, but then so is $f$.
				\item\itemimpl{2}{3}
					Compose the dense isometry $\mtr{X} \to \mtr{Y}$ with the Kolmogorov quotient map, then use the assumption.
				\item\itemimpl{3}{1}
					Take a dense isometry $f\colon \mtr{X} \to \mtr{Y}$. Define $g\colon \mtr{Y} \to \mtr{X}$ by declaring $g(y)$ to be that $x \in X$ for which $d'(f(x), y) = 0$. This map is total since $f$ is space-filling, and well-defined: if for $x' \in X$ we also have $d'(f(x'), y) = 0$, then $0 \leq d(x, x') = d'(f(x), f(x')) \leq d'(f(x), y) + d'(f(x'), y) = 0$, so $x = x'$ because $\mtr{X}$ is metric. The map $g$ is an isometry since for $y, y' \in Y$,
					$$d(g(y), g(y')) = d'(f(g(y)), f(g(y'))) \leq d'(f(g(y)), y) + d'(y, y') + d'(y', f(g(y'))) = d'(y, y'),$$
					$$d(g(y), g(y')) = d'(f(g(y)), f(g(y'))) \geq d'(y, y') - d'(f(g(y)), y) - d'(y', f(g(y'))) = d'(y, y').$$
					Clearly $g \circ f = \id[X]$ which implies that $g$ is a surjective, therefore dense, isometry, and $\id[X]$ is the completion of $\mtr{X}$.
			\end{itemize}
		\end{proof}
		
		\begin{lemma}\label{Lemma: decidable_subsets_in_complete_spaces}
			Let $\mtr{Z} = (X + Y, d)$ be a pseudometric space, $d'$ the restriction of $d$ onto $X$, $d''$ the restriction of $d$ onto $Y$, and $\mtr{X} = (X, d')$, $\mtr{Y} = (Y, d'')$. Assume that there exists $\delta \in \RR_{> 0}$ such that for all $x \in X$ and $y \in Y$ we have $d(x, y) \geq \delta$. Then:
			\begin{enumerate}
				\item
					If both $\mtr{X}$ and $\mtr{Y}$ are complete, so is $\mtr{Z}$.
				\item
					If $\mtr{Z}$ is complete and $\mtr{X}$ contains a metrically dense subovert subset, then $\mtr{X}$ is complete.
         \end{enumerate}
		\end{lemma}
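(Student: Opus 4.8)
The plan is to reduce both parts to the space-filling characterization of completeness, Proposition~\ref{Proposition: characterization_of_completeness}(3): a metric space is complete precisely when every dense isometry out of it (into an arbitrary pseudometric space) is space-filling. Note first that $\mtr{Z}$ complete forces $\mtr{Z}$, hence its metric subspaces $\mtr{X}$ and $\mtr{Y}$, to be metric (a completion is always a metric space, so a complete space is metric; and within $X$ or within $Y$ a zero distance gives equality while cross distances are $\geq\delta>0$); similarly in (1) the hypotheses already make $\mtr{X}$, $\mtr{Y}$, and therefore $\mtr{Z}$, metric. So Proposition~\ref{Proposition: characterization_of_completeness} applies throughout.

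For (1), I would take an arbitrary dense isometry $f\colon\mtr{Z}\to\mtr{W}=(W,e)$ and split $W$ into the points ``near $X$'', $W_X\dfeq\st{w\in W}{\xsome{x}{X}{e(f(x),w)<\delta/3}}$, and the points ``near $Y$'', defined symmetrically. Density of $f(Z)$ together with the decidable decomposition $Z=X+Y$ shows each $w$ is near $X$ or near $Y$, and the bound $d(x,y)\geq\delta$ transported through the isometry $f$ rules out both at once; hence ``$w\in W_X$'' is decidable and $W=W_X+W_Y$ is a genuine coproduct. One checks (using the $\delta/3$ separation for density) that $f$ restricts to dense isometries $\mtr{X}\to(W_X,e)$ and $\mtr{Y}\to(W_Y,e)$. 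Completeness of $\mtr{X}$ and $\mtr{Y}$ makes these restrictions space-filling, and gluing the two statements shows $f$ is space-filling; since $f$ was arbitrary, $\mtr{Z}$ is complete.

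For (2), let $D\subseteq X$ be the given metrically dense subovert subset and $g\colon\mtr{X}\to\mtr{W}=(W,e)$ an arbitrary dense isometry; to prove $g$ space-filling I fix $w\in W$ once and for all, which makes $W$, hence $X$ (density of $g$), hence $D$ (density of $D$) inhabited — this is what is needed below. The idea is to extend $d$ from $Z=X+Y$ along $g$ to the coproduct $V\dfeq W+Y$: set $e_V|_W\dfeq e$, $e_V|_Y\dfeq d''$, and for $w'\in W$, $y\in Y$ let $e_V(w',y)=e_V(y,w')$ be the real number with open Dedekind cut $(L,U)$, where $U\dfeq\st{r\in\oirs}{\xsome{x'}{D}{e(w',g(x'))+d(x',y)<r}}$ and $L\dfeq\st{q\in\oirs}{\xsome{x'}{D}{q+e(w',g(x'))<d(x',y)}}$. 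Subovertness of $D$ — so that its image under $X\to W$ is subovert, by Proposition~\ref{Proposition: constructions_of_overt_maps}(1) — makes $L$ and $U$ open, while metric density of $g(D)$ in $\mtr{W}$ gives locatedness, so $e_V(w',y)$ is indeed a real (morally the infimum $\inf_{x'\in X}\big(e(w',g(x'))+d(x',y)\big)$). A standard but tedious check with the triangle inequalities in $\mtr{Z}$ and $\mtr{W}$ shows $e_V$ is a pseudometric with all cross distances $\geq\delta$, and that the map $j\colon\mtr{Z}\to\mtr{V}$ equal to $g$ on $X$ and the identity on $Y$ is a dense isometry (density because $g(X)$ is dense in $W$; an isometry on the $X$–$Y$ pairs because density of $D$ drives the defining infimum of $e_V(g(x),y)$ down to $d(x,y)$). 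Completeness of $\mtr{Z}$ then makes $j$ space-filling, so there is $z\in X+Y$ with $e_V(j(z),w)=0$; since $e_V(y,w)\geq\delta>0$ for $y\in Y$, this $z$ lies in $X$, i.e.\ $e(g(x),w)=0$ for some $x$. As $w$ was arbitrary, $g$ is space-filling and $\mtr{X}$ is complete.

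The main obstacle is part~(2): ensuring that the extended cross-distance $e_V(w',y)$ is an honest real number rather than merely an extended upper real. This is exactly where the double hypothesis on $D$ is spent — density for locatedness of the cut, subovertness for openness of $L$ and $U$ — which also explains why subovertness is invoked only in the second half of the lemma. Verifying the triangle inequalities through the cross terms is the other delicate point, but it follows the usual gluing computations.
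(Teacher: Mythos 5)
Your proposal is correct and follows essentially the same route as the paper: part (1) uses the $\delta$-separation to split the target $W$ of a dense isometry into a decidable coproduct and applies the space-filling characterization of completeness to each summand (your ``within $\delta/3$ of $f(X)$'' set coincides with the paper's ``closure of $f(X)$'' set), and part (2) builds the very same pseudometric on $W+Y$ via the Dedekind cut over $D$, with subovertness giving openness and density giving locatedness. The only differences are presentational.
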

		\begin{proof}
			\begin{enumerate}
            \item
               Assume $\mtr{X}$ and $\mtr{Y}$ are complete, and consider a dense isometry $f\colon \mtr{Z} \to \mtr{W}$ where $\mtr{W} = (W, d_\mtr{W})$ is some pseudometric space. Let
               $$A \dfeq \st{w \in W}{\xall{\epsilon}{\RR_{> 0}}\xsome{x}{X}{d_\mtr{W}(f(x), w) < \epsilon}}.$$
               We claim that $A$ is a decidable subset of $W$. Take any $w \in W$. Because $f$ is dense, there exists $z \in X + Y$ such that $d_\mtr{W}(f(z), w) < \frac{\delta}{2}$. If $z \in Y$, then for any $x \in X$,
               $$d_\mtr{W}(f(x), w) \geq d_\mtr{W}(f(x), f(z)) - d_\mtr{W}(f(z), w) = d(x, z) - d_\mtr{W}(f(z), w) > \delta - \frac{\delta}{2} = \frac{\delta}{2},$$
               implying that $w \notin A$. Assume now $z \in X$, take any $\epsilon \in \RR_{> 0}$, and let $\epsilon' \dfeq \inf\{\epsilon, \frac{\delta}{2}\}$. Let $x \in X + Y$ be such that $d_\mtr{W}(f(x), w) < \epsilon' \leq \epsilon$. Then $x \in X$ since
               $$d(x, z) = d_\mtr{W}(f(x), f(z)) \leq d_\mtr{W}(f(x), w) + d_\mtr{W}(f(z), w) < \frac{\delta}{2} + \frac{\delta}{2} = \delta,$$
               proving that $w \in A$. We can similarly prove
               $$\scom{A} = \st{w \in W}{\xall{\epsilon}{\RR_{> 0}}\xsome{y}{Y}{d_\mtr{W}(f(y), w) < \epsilon}}.$$
               Thus the restrictions $\rstr{f}_X^A$ and $\rstr{f}_Y^{\scom{A}}$ are dense isometries, and therefore space-filling by Proposition~\ref{Proposition: characterization_of_completeness} since $\mtr{X}$ and $\mtr{Y}$ are complete. Then $f$ is a space-filling isometry as well, so $\mtr{Z}$ is complete.
            \item
               Assume that $\mtr{Z}$ is complete, and that $D \subseteq X$ is a metrically dense subovert in $\mtr{X}$. Let $\mtr{W} = (W, d_\mtr{W})$ be a pseudometric space, and $f\colon \mtr{X} \to \mtr{W}$ a dense isometry. Let $\mtr{V}$ be the pseudometric space with the underlying set $W + Y$ and the pseudometric which restricts to $d_\mtr{W}$ on $W$ and to $d''$ on $Y$ while the distance between $a \in W$, $b \in Y$ is defined as $(L, U)$ where
               $$L \dfeq \st{q \in \oirs}{\xsome{x}{D}{q + d_\mtr{W}(a, f(x)) < d(x, b)}},$$
               $$U \dfeq \st{r \in \oirs}{\xsome{x}{D}{r > d_\mtr{W}(a, f(x)) + d(x, b)}}.$$
               One can verify that $(L, U)$ is an open Dedekind cut in the standard way, that we defined a pseudometric on $W + Y$, and that the distance between points in $W$ and points in $Y$ is at least $\delta$. Moreover, $f + \id[Y]$ is a dense isometry, and since $\mtr{Z}$ is complete, it is space-filling. Then $f$ is a space-filling isometry itself, thus proving that $\mtr{X}$ is complete.
         \end{enumerate}
		\end{proof}
		
		Having now all the metric properties that we need, we prove a technical lemma that will be useful later on.
		\begin{lemma}\label{Lemma: wlog_space_inhabited}
			Let $\mtr{X} = (X, d, s)$ be a metrically separable pseudometric space. Then there exists a canonical choice\footnote{`Canonical' in this case means that we have an actual functor from the category of separable pseudometric spaces (with morphisms at least \ed-continuous and at most Lipschitz) to the category of separable pseudometric spaces with a chosen basepoint and maps of the same type which preserve the basepoint. In fact, this functor is the left adjoint to the forgetful functor which forgets the basepoint, and the isometric embedding $e$ is (a component of) the unit of the adjunction.} of an inhabited pseudometric space $\mtr{Y}$, and an isometric embedding $e\colon \mtr{X} \to \mtr{Y}$ such that the image of $e$ is a decidable subset (\ie $\mtr{X}$ can be identified with a decidable metric subspace of $\mtr{Y}$), the distances between elements in the image of $e$ and its complement are bounded below by $1$, and:
			\begin{enumerate}
				\item
					$\mtr{Y}$ is metrically separable,
				\item
					$\mtr{X}$ is totally bounded if and only if $\mtr{Y}$ is,
				\item
					$\mtr{X}$ is metric if and only if $\mtr{Y}$ is,
				\item
					$\mtr{X}$ is ultrametric if and only if $\mtr{Y}$ is,
				\item
					$\mtr{X}$ is complete if and only if $\mtr{Y}$ is.
			\end{enumerate}
		\end{lemma}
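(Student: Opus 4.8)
The plan is to adjoin a single isolated basepoint. Set $\mtr{Y} \dfeq (X + \one, d_\mtr{Y})$ with the adjoined point $\unit \in \one$ as the distinguished basepoint, and let $e\colon \mtr{X} \to \mtr{Y}$ be the coproduct inclusion; then $e[X] = X$ is a decidable subset of $X + \one$ because membership in a summand of a coproduct is decidable. Define $d_\mtr{Y}$ to restrict to $d$ on $X \times X$, to be $0$ at $(\unit, \unit)$, and, for $x \in X$, to be $d_\mtr{Y}(x, \unit) \dfeq \sup\{1, d(x, x_0)\}$, where the basepoint $x_0 \in X$ is produced canonically from $s$: as soon as a point $x \in X$ is available, metric density of $s$ forces $s^{-1}(X)$ to be an inhabited decidable subset of $\NN$, hence to possess a least element $n_0$, and we put $x_0 \dfeq s(n_0)$; this value does not depend on the witnessing $x$, so $d_\mtr{Y}$ is a genuine total map (and it lands in $\RR$ since $\RR$ is a lattice). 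Because $\sup\{1, d(x, x_0)\} \ge 1$, the distances between $e[X]$ and its complement $\{\unit\}$ are bounded below by $1$. That $d_\mtr{Y}$ is a pseudometric is a short case analysis according to how many of the arguments equal $\unit$; the only nonroutine instances of the triangle inequality, $d(x, x') + d_\mtr{Y}(x', \unit) \ge d_\mtr{Y}(x, \unit)$ and $d_\mtr{Y}(x, \unit) + d_\mtr{Y}(x', \unit) \ge d(x, x')$, both follow from the triangle inequality in $\mtr{X}$ together with elementary monotonicity properties of $t \mapsto \sup\{1, t\}$; and when $\mtr{X}$ is metric so is $\mtr{Y}$, since the cross-distances are never $0$. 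This already settles the decidability and separation clauses and property~(3) (the converse of~(3) being trivial, $\mtr{X}$ being a metric subspace of $\mtr{Y}$).

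For property~(1), metric separability of $\mtr{Y}$ is witnessed by $s$ shifted by one index with $\unit$ placed at index $0$. For property~(4), since a metric subspace of an ultrametric space is again ultrametric it suffices to check that $d_\mtr{Y}$ obeys the strong triangle inequality whenever $d$ does; the two essential new cases reduce, using $d(x, x_0) \le \sup\{d(x, x'), d(x', x_0)\}$ and flattening nested suprema, to $\sup\{d(x, x'), \sup\{1, d(x', x_0)\}\} \ge \sup\{1, d(x, x_0)\}$ and $\sup\{\sup\{1, d(x, x_0)\}, \sup\{1, d(x', x_0)\}\} \ge d(x, x')$, both immediate. Property~(5) is an application of Lemma~\ref{Lemma: decidable_subsets_in_complete_spaces} to the decomposition $X + \one$ with $\delta = 1$: the singleton is complete, which gives one implication, and for the other that lemma demands a metrically dense subovert subset of $\mtr{X}$, for which the image of $s$ serves --- it is metrically dense by hypothesis, and subovert by Proposition~\ref{Proposition: images_and_finite_products_of_subovert/subcompact_subsets}(1), being the image of $s^{-1}(X)$, a decidable (hence countable, hence overt, since countable sets are overt in this chapter) subset of $\NN$.

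The only genuinely delicate point is total boundedness, property~(2); in the choiceless setting the statement ``a decidable subset of a totally bounded space is totally bounded'' is unavailable in general, and the gap of size $1$ built into $d_\mtr{Y}$ is exactly what circumvents this. Given witnesses $(s_X, a_X)$ of total boundedness of $\mtr{X}$, witnesses for $\mtr{Y}$ arise by shifting $s_X$ one slot, putting $\unit$ at index $0$, and taking $a_Y(n) \dfeq a_X(n) + 1$: the point $\unit$ is covered by itself and everything else reduces to the $\mtr{X}$-data. Conversely, suppose $(s_Y, a_Y)$ witnesses total boundedness of $\mtr{Y}$. At a scale $2^{-n}$ with $n \ge 1$ the covering radius is $< 1$, so if the ball $\ball{s_Y(k)}{2^{-n}}$ meets $X$ then, since $d_\mtr{Y}(\cdot, \unit) \ge 1$ and membership in a summand is decidable, the centre $s_Y(k)$ must lie in $X$; discarding the centres equal to $\unit$ therefore leaves a radius-$2^{-n}$ net for $X$, yielding $s_X$ and $a_X$ (the single exceptional scale $n = 0$ is dealt with by reusing $\mtr{Y}$'s data at scale $2^{-1}$, whose balls sit inside radius-$1$ balls, \ie by setting $a_X(n) \dfeq a_Y(\sup\{n, 1\})$). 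Apart from this total-boundedness argument, the only subtlety worth flagging is the canonical production of $x_0$ from $s$ described in the first paragraph, which is what keeps the construction a genuine function without presupposing that $X$ is inhabited.
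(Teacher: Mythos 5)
Your construction is exactly the paper's: adjoin a single point to form $\one + X$, set the cross-distance to $\sup\{1, d(x, s_m)\}$ where $s_m$ is the first element of $X$ enumerated by $s$, and verify the five clauses with the same shifted witnesses and the same appeal to Lemma~\ref{Lemma: decidable_subsets_in_complete_spaces}. The argument is correct; the only cosmetic difference is your special handling of the scale $n = 0$ in the converse of total boundedness, which is unnecessary since the strict inequality $d_\mtr{Y}(x, s_Y(k)) < 2^{-0} = 1$ together with the gap $\geq 1$ already forces the centre into $X$.
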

		\begin{proof}
			Let $Y \dfeq \one + X$, and define the map $d'\colon (\one + X) \times (\one + X) \to \RR$ for $a, b \in \one + X$ as follows:
			\begin{itemize}
				\item
					if $a = b = \unit \in \one$, then $d'(a, b) \dfeq 0$,
				\item
					if $a, b \in X$, then $d'(a, b) \dfeq d(a, b)$,
				\item
					if exactly one of $a$, $b$, say $a$, is in $X$, then the image of $s$ intersects $X$ which is a decidable subset of $\one + X$, so there exists a unique $m \in \NN$ such that $s_m \in X$, and for all $n \in \NN_{< m}$, $s_n = \unit$; define $d'(a, b) \dfeq \sup\{d(a, s_m), 1\}$.
			\end{itemize}
			It is easy to see that $d'$ is a pseudometric on $Y$; we verify only the interesting examples of triangle inequality. Let $x, y \in X$; then
			$$d'(x, y) + d'(y, \unit) = d(x, y) + \sup\{d(y, s_m), 1\} =$$
			$$= \sup\{d(x, y) + d(y, s_m), d(x, y) + 1\} \geq \sup\{d(x, s_m), 1\} = d'(x, \unit),$$
			$$d'(x, \unit) + d'(\unit, y) = \sup\{d(x, s_m), 1\} + \sup\{d(y, s_m), 1\} \geq$$
			$$\geq d(x, s_m) + d(y, s_m) \geq d(x, y) = d'(x, y).$$
			Let $\mtr{Y} = (Y, d')$, and let $e$ be the canonical inclusion $X \hookrightarrow \one + X$; clearly it is an isometric embedding onto a decidable subset of $Y$.
			
			\begin{enumerate}
				\item
					The sequence $s'\colon \NN \to Y$,
					$$s'(n) \dfeq \begin{cases} \unit & \text{ if } n = 0,\\ s(n-1) & \text{ if } n \geq 1, \end{cases}$$
					demonstrates metric separability of $\mtr{Y}$.
				\item
					If $s\colon \NN \to \one + X$ and $a\colon \NN \to \NN$ witness total boundedness of $\mtr{X}$, then $s'$ defined in the previous item, and $a'\colon \NN \to \NN$, $a'(n) \dfeq a(n) + 1$, witness total boundedness of $\mtr{Y}$.
					
					Conversely, if $s'\colon \NN \to Y$ and $a'\colon \NN \to \NN$ make $\mtr{Y}$ totally bounded, then they demonstrate total boundedness of $\mtr{X}$ as well.
				\item
					Obvious.
				\item
					Assume $\mtr{X}$ is ultrametric. We verify only the interesting examples of strong triangle inequality for $\mtr{Y}$. Let $x, y \in X$; then
					$$\sup\{d'(x, y), d'(y, \unit)\} = \sup\{d(x, y), \sup\{d(y, s_m), 1\}\} =$$
					$$= \sup\{\sup\{d(x, y), d(y, s_m)\}, 1\} \geq \sup\{d(x, s_m), 1\} = d'(x, \unit),$$
					$$\sup\{d'(x, \unit), d'(\unit, y)\} = \sup\{\sup\{d(x, s_m), 1\}, \sup\{d(y, s_m), 1\}\} \geq$$
					$$\geq \sup\{d(x, s_m), d(y, s_m)\} \geq d(x, y) = d'(x, y).$$
					The rest is obvious.
				\item
					Clearly $\one$ with its only possible (pseudo)metric is complete, and $\mtr{X}$ contains a metrically dense countable, therefore overt, subset. The result then follows from Lemma~\ref{Lemma: decidable_subsets_in_complete_spaces}.
			\end{enumerate}
		\end{proof}
		
		We now recall some constructions of completions, and the fact we defined the notion of completion via universal property means the resulting metric spaces are isometrically isomorphic in a canonical way.
		
		The standard classical construction of completion is via Cauchy sequences. As is well-known, this still works constructively if countable choice is assumed~\cite{Troelstra_AS_Dalen_D_1988:_constructivism_in_mathematics_volume_1}. We recall the construction and the proof.
		
		Let $\mtr{X} = (X, d)$ be a pseudometric space. The set of Cauchy sequences in $\mtr{X}$ is defined
		$$\Cauchy(\mtr{X}) \dfeq \st{(s, m) \in X^\NN \times \NN^\NN}{\xall{n}{\NN}\xall{i, j}{\NN_{\geq m(n)}}{d(s_i, s_j) < 2^{-n}}}.$$
		Observe that this definition matches with the definition of Cauchy sequences in streaks if we equip them with the Euclidean metric; in particular, $\Cauchy(\RR)$ is unambiguous. The pseudometric $d\colon X \times X \to \RR$ induces the map
		$$\Cauchy(d)\colon \Cauchy(\mtr{X}) \times \Cauchy(\mtr{X}) \to \Cauchy(\RR),$$
		$$\Cauchy(d)\Big((s, m), (s', m')\Big) \dfeq \Big(d \circ (s, s'), \ n \mapsto \sup\{m(n+1), m'(n+1)\}\Big)$$
		since for $n \in \NN$ and $i, j \in \NN_{\geq \sup\{m(n+1), m'(n+1)\}}$,
		$$|d(s_i, s'_i) - d(s_j, s'_j)| \leq d(s_i, s_j) + d(s'_i, s'_j) < 2^{-n-1} + 2^{-n-1} = 2^{-n}.$$
      Recall the limit operator $\lim\colon \Cauchy(\RR) \to \RR$ from the end of Section~\ref{Section: real_numbers}. We claim that $\lim \circ \Cauchy(d)$ is a pseudometric on $\Cauchy(\mtr{X})$. Indeed, when calculating the distance of $(s, m)$ to itself, we obtain the zero sequence which, as we recall, has limit $0$. Symmetry is obvious. To verify triangle inequality, let $(s, m), (s', m'), (s'', m'') \in \Cauchy(\mtr{X})$. Then
		$$\lim\Big(d \circ (s, s'), \ n \mapsto \sup\{m(n+1), m'(n+1)\}\Big) + \lim\Big(d \circ (s', s''), \ n \mapsto \sup\{m'(n+1), m''(n+1)\}\Big) =$$
		$$= \lim\Big(d \circ (s, s') + d \circ (s', s''), n \mapsto \sup\big\{\sup\{m(n+2), m'(n+2)\}, \sup\{m'(n+2), m''(n+2)\}\big\}\Big) \geq$$
		$$\geq \lim\Big(d \circ (s, s''), n \mapsto \sup\big\{m(n+2), m'(n+2), m''(n+2)\big\}\Big) =$$
		$$= \lim\Big(d \circ (s, s''), n \mapsto \sup\big\{m(n+1), m''(n+1)\big\}\Big).$$
		
		Let $\Ccmpl(\mtr{X})$ be the Kolmogorov quotient of the pseudometric space $(\Cauchy(\mtr{X}), \lim \circ \Cauchy(d))$, and $q\colon \Cauchy(\mtr{X}) \to \Ccmpl(\mtr{X})$ the quotient map. Furthermore, let $\cs[X]\colon X \to \Cauchy(\mtr{X})$ be the map which assigns to $x \in X$ the constant sequence with terms $x$, along with an arbitrary modulus of convergence (say $\id[\NN]$), and let $i\colon \mtr{X} \to \Ccmpl(\mtr{X})$ be the composition $i \dfeq q \circ \cs$. Since both $\cs[X]$ and $q$ are isometries, so is $i$. Now take any $(s, m) \in \Cauchy(\mtr{X})$, and $\epsilon \in \RR_{> 0}$. Let $n \in \NN$ be such that $2^{-n} < \epsilon$. Then the distance between $(s, m)$ and $\cs(s_{m(n)+1})$ is $< \epsilon$ which proves that $i$ is a dense isometry.
      
      We say that $\mtr{X}$ is \df{Cauchy complete} when $i$ is a bijection. In this case we have the limit operator $\lim\colon \Cauchy(\mtr{X}) \to \mtr{X}$, defined as the composition $i^{-1} \circ q$. Notice that when equipped with the Euclidean metric, the reals $\RR$ are Cauchy complete by this definition also, and the limit operator we defined at the end of Section~\ref{Section: real_numbers} is a special case of this one, so there is no ambiguity in definitions.
		
      Since $i$ is a dense isometry, a complete space is also Cauchy complete. For a Cauchy completion to be a completion however, we need in general some form of countable choice.
		\begin{proposition}\label{Proposition: completion_by_Cauchy_sequences}
			Suppose $\AC[\opn]{\NN}$. Then for any metric space $\mtr{X}$, the space $\Ccmpl(\mtr{X})$, together with the map $i$, is the completion of $\mtr{X}$.
		\end{proposition}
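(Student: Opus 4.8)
The plan is to check that $\big(\Ccmpl(\mtr{X}), i\big)$ is the terminal object of the coslice category $\mtr{X}/\mathcal{M}$. From the construction preceding the statement we already know that $i\colon \mtr{X} \to \Ccmpl(\mtr{X})$ is a dense isometry and that $\Ccmpl(\mtr{X})$ is a metric space. Hence, by Lemma~\ref{Lemma: factorization_of_dense_isometries} (taking $g = i$), for any pseudometric space $\mtr{Y} = (Y, d_\mtr{Y})$ and any dense isometry $f\colon \mtr{X} \to \mtr{Y}$ an \ed-continuous map $h\colon \mtr{Y} \to \Ccmpl(\mtr{X})$ with $i = h \circ f$ is automatically a dense isometry and is unique if it exists. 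So the whole content of the proposition is the \emph{existence}, for every such $f$, of an \ed-continuous (in fact isometric) $h$ with $i = h \circ f$.

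I would construct $h$ as follows. For a fixed $y \in Y$ consider the relation $R_y \subseteq \NN \times X$ with $(n, x) \in R_y \iff d_\mtr{Y}(f(x), y) < 2^{-n}$. It is total because $f$ has a dense image, and it is classified by $\opn$ because $<$ is an open relation on $\RR$ (this is the point of the construction in Section~\ref{Section: real_numbers}) and $d_\mtr{Y}$, $f$ are maps. Therefore $\AC[\opn]{\NN}$, applied as $\AC[\opn]{\NN, X}$, furnishes a sequence $x\colon \NN \to X$ with $d_\mtr{Y}(f(x_n), y) < 2^{-n}$ for all $n$, and the triangle inequality shows $\big(x, n \mapsto n+1\big) \in \Cauchy(\mtr{X})$. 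Now define $H \subseteq Y \times \Ccmpl(\mtr{X})$ by declaring $(y, z) \in H$ to hold when there is some $(x, m) \in \Cauchy(\mtr{X})$ with $q(x, m) = z$ and $\xall{n}{\NN}\xall{i}{\NN_{\geq m(n)}}{d_\mtr{Y}(f(x_i), y) < 2^{-n}}$. The previous step shows $H$ is total; and $H$ is functional, since if $(x, m)$ and $(x', m')$ both witness membership for the same $y$, then $d(x_i, x'_i) = d_\mtr{Y}(f(x_i), f(x'_i)) \leq d_\mtr{Y}(f(x_i), y) + d_\mtr{Y}(y, f(x'_i))$ forces $d \circ (x, x')$ to tend to $0$, hence $\lim \Cauchy(d)\big((x, m), (x', m')\big) = 0$ and $q(x, m) = q(x', m')$. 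By the axiom of unique choice $H$ is (the graph of) a map $h\colon Y \to \Ccmpl(\mtr{X})$.

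It then remains to verify $i = h \circ f$ and that $h$ is an isometry. For the first, given $x \in X$ the constant sequence $\cs(x)$ (with any modulus) witnesses $(f(x), i(x)) \in H$ because $d_\mtr{Y}(f(x), f(x)) = 0$, so $h(f(x)) = i(x)$. For the second, take $y, y' \in Y$ with witnesses $(x, m)$, $(x', m')$ for $(y, h(y))$, $(y', h(y'))$; then $d(x_i, x'_i) = d_\mtr{Y}(f(x_i), f(x'_i)) \to d_\mtr{Y}(y, y')$ by the triangle inequality and the fact that $f(x_i) \to y$, $f(x'_i) \to y'$, so passing to the limit gives $d_{\Ccmpl(\mtr{X})}(h(y), h(y')) = d_\mtr{Y}(y, y')$. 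Since an isometry is \ed-continuous, the first paragraph finishes the argument.

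The main obstacle is really the well-definedness of $h$, and this is where the hypothesis $\AC[\opn]{\NN}$ is used: one must be sure that $R_y$ is genuinely $\opn$-classified — which rests entirely on the openness of $<$ on $\RR$ secured in Section~\ref{Section: real_numbers} — so that countable $\opn$-choice applies, and that the resulting element of $\Ccmpl(\mtr{X})$ is independent of the approximating sequence chosen. The remaining pieces (the Cauchy and limit estimates, the constant-sequence computation) are routine manipulations with moduli of convergence as at the end of Section~\ref{Section: real_numbers}.
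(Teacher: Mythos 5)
Your proposal is correct and follows essentially the same route as the paper: for each $y$ use $\AC[\opn]{\NN}$ (via openness of $<$) to extract an approximating sequence in $X$, observe it is Cauchy with an explicit modulus, check that its class in $\Ccmpl(\mtr{X})$ is independent of the choice, and let that define the factoring map. Your version merely makes explicit what the paper leaves implicit --- the unique-choice step turning the single-valued total relation $H$ into a map, and the direct verification that $h$ is an isometry --- which is a welcome tightening rather than a different argument.
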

		\begin{proof}
			Let $\mtr{X} = (X, d)$, $\mtr{Y} = (Y, d')$ be metric spaces, and $f\colon \mtr{X} \to \mtr{Y}$ a dense isometry. Fix $y \in Y$. Then for any $n \in \NN$ there exists $s^y_n \in X$ such that $d'(y, f(s^y_n)) < 2^{-n-1}$. By $\AC[\opn]{\NN}$ this defines a sequence $s^y\colon \NN \to X$. Notice that $(s^y, \id[\NN]) \in \Cauchy(\mtr{X})$ since for any $n \in \NN$, $i, j \in \NN_{\geq n}$ we have
			$$d(s^y_i, s^y_j) = d'(f(s^y_i), f(s^y_j)) \leq d'(f(s^y_i), y) + d'(y, f(s^y_j)) < 2^{-i} + 2^{-j} \leq 2^{-n-1} + 2^{-n-1} = 2^{-n}.$$
			Essentially the same proof shows also that if $t^y\colon \NN \to X$ is another such sequence, then $(s^y, \id[\NN]) \equ (t^y, \id[\NN])$. Hence, $g\colon \mtr{Y} \to \Cauchy(\mtr{X})/_\equ$, $g(y) \dfeq [s^y]$, is a well-defined map. Moreover, if $y = f(x)$ for some $x \in X$, then we can take for $s^y$ the constant sequence with terms $x$ which shows $i = g \circ f$.
		\end{proof}
		
		We want a more general construction, though, one that does not require choice principles. Notice that in the proof above we only required choice when selecting a point which maps into a ball around $y$. These elements of the Cauchy sequence are meant to be better and better approximations to $y$. However, if we allow multiple points to approximate $y$ at each stage, the need for choice disappears. This is what Fred Richman did in~\cite{Richman_F_2008:_real_numbers_and_other_completions}. We present (an adaptation of) his construction here. 
		
      Fix a set $\lvl \subseteq \RR_{> 0}$, $\lvl$ subovert in $\RR$, with a strict infimum $0$ (in particular $\lvl$ must be inhabited). This set will index the accuracy of the approximations. A simple choice for $\lvl$ is the set $\st{2^{-n}}{n \in \NN}$; this is essentially what we did with Cauchy sequences. On the other extreme we could take the whole $\RR_{> 0}$ (if $\RR$ is overt). Richman himself chose $\QQ_{> 0}$ because he wanted to define the real numbers as the completion of the rationals, without already referring to the real numbers in the process. If we followed suit, we could take $\lvl = \oirs_{> 0}$.
		
		Let $\mtr{X} = (X, d)$ be a pseudometric space, and let $\inhov(X)$ be the set of inhabited subovert subsets of $X$. Define the relation $\equ$ on the set $\inhov(X)^\lvl$ as follows: for $S, T\colon \lvl \to \inhov(X)$,
		$$S \equ T \dfeq \xall{p, q}{\lvl}\xall{s}{S_p}\xall{t}{T_q}{d(s, t) \leq p + q}.$$
		The relation $\equ$ is obviously symmetric; it is a simple exercise to verify that it is transitive. Thus it is a partial equivalence relation. Call the elements of its domain \df{Cauchy families}, and denote their set
		$$\Cf(\mtr{X}) \dfeq \st{S \in \inhov(X)^\lvl}{S \equ S}.$$
		Now define the map $d_\Cf\colon \Cf(\mtr{X}) \times \Cf(\mtr{X}) \to \RR$ by $d_\Cf(S, T) \dfeq (L, U)$ where
		$$L \dfeq \st{a \in \oirs}{\xsome{p, q}{\lvl}\xsome{s}{S_p}\some{t}{T_q}{a + p + q < d(s, t)}},$$
		$$U \dfeq \st{b \in \oirs}{\xsome{p, q}{\lvl}\xsome{s}{S_p}\some{t}{T_q}{d(s, t) + p + q < b}}.$$
		It is easy to see that $L$ is a lower and $U$ is an upper cut, and that they are inhabited. They are open because we assumed $\lvl$, $S_p$ and $T_q$ are subovert. Now assume that we have $x \in L \cap U$. Then there are $p, q, p', q' \in \lvl$, $s \in S_p$, $t \in T_q$, $s' \in S_{p'}$, $t' \in T_{q'}$ such that $x + p + q < d(s, t)$, $d(s', t') + p' + q' < x$. Consequently
		$$x + p + q < d(s, t) \leq d(s, s') + d(s', t') + d(t', t) \leq p + p' + x - p' - q' + q + q' = x + p + q,$$
		a contradiction, so $L \cap U = \emptyset$. Finally, let $a, b \in \oirs$, $a < b$. Because $0$ is the strict infimum of $\lvl$ there is $p \in \lvl$ such that $p < \frac{1}{5}(b-a)$. Pick some $s \in S_p$, $t \in T_p$. Then $a + \frac{2}{5}(b-a) < d(s, t)$ or $d(s, t) < a + \frac{3}{5}(b-a)$. In the first case, $a + 2 p < d(s, t)$, so $a \in L$. In the second case, $d(s, t) + 2 p < b$, so $b \in U$. This concludes the proof that $(L, U)$ is an open Dedekind cut.
		
		Clearly, $d_\Cf$ is symmetric. As for triangle inequality, take $S, T, U \in \Cf(\mtr{X})$. It is sufficient to prove for every $a, b \in \oirs$
		$$\xsome{p, q}{\lvl}\xsome{s}{S_p}\some{t}{T_q}{d(s, t) + p + q < a} \hspace{1ex} \land$$
		$$\land \hspace{1ex} \xsome{q', r'}{\lvl}\xsome{t'}{T_{q'}}\some{u}{U_{r'}}{d(t', u') + q' + r' < b} \hspace{1ex} \implies$$
		$$\implies \hspace{1ex} \xsome{p'', r''}{\lvl}\xsome{s''}{S_{p''}}\some{u''}{U_{r''}}{d(s'', u'') + p'' + r'' < a+b},$$
		but that is obvious since we can take $p'' = p$, $r'' = r'$, $s'' = s$, $u'' = u'$.
		
		Thus $d_\Cf$ is a protometric on $\Cf(\mtr{X})$. Notice that for $S, T \in \Cf(\mtr{X})$ we have
		$$S \equ T \iff \xall{p, q}{\lvl}\xall{s}{S_p}\xall{t}{T_q}{d(s, t) \leq p + q} \iff$$
		$$\iff \lnot\xsome{p, q}{\lvl}\xsome{s}{S_p}\xsome{t}{T_q}{d(s, t) > p + q} \iff$$
		$$\iff \lnot\xsome{\epsilon, p, q}{\lvl}\xsome{s}{S_p}\xsome{t}{T_q}{d(s, t) > \epsilon + p + q} \iff \lnot d(S, T) > 0 \iff d(S, T) = 0.$$
		We conclude that $(\Cf(\mtr{X}), d_\Cf)$ is a pseudometric space, with $\equ$ matching the equivalence relation by which we obtain the Kolmogorov quotient.
		
		Let $\bar{\i}\colon X \to \Cf(\mtr{X})$ map $x \in X$ to the constant map which takes every $a \in \lvl$ to $\{x\}$. It is easy to see that $\bar{\i}$ is a dense isometry.
		
		\begin{proposition}\label{Proposition: completion_by_Cauchy_families}
			Let $\mtr{X} = (X, d)$ be a pseudometric space with a subovert metrically dense subset $D \subseteq X$. Let $\Rcmpl(\mtr{X})$ be the Kolmogorov quotient of $(\Cf(\mtr{X}), d_\Cf)$, and $q\colon \Cf(\mtr{X}) \to \Rcmpl(\mtr{X})$ the quotient map.
			\begin{enumerate}
				\item
					The space $\Rcmpl(\mtr{X})$, together with the map $i \dfeq q \circ \bar{\i}\colon \mtr{X} \to \Rcmpl(\mtr{X})$, is the completion of $\mtr{X}$.
				\item
					Equivalence classes in $\Rcmpl(\mtr{X})$ have canonical representatives; in other words, $q$ is a retraction.
			\end{enumerate}
		\end{proposition}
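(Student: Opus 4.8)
The plan is to verify the universal property of the completion directly for part~(1), and to exhibit an explicit section of $q$ for part~(2); both arguments rest on the same elementary triangle-inequality estimate that was already used repeatedly in the paragraphs preceding the proposition.

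For part~(1), recall that $\bar{\i}$ is a dense isometry and $q$, being a Kolmogorov quotient map, is a surjective (hence dense) isometry, so $i = q \circ \bar{\i}$ is a dense isometry into the \emph{metric} space $\Rcmpl(\mtr{X})$. Given a pseudometric space $\mtr{Y} = (Y, d_\mtr{Y})$ and a dense isometry $f\colon \mtr{X} \to \mtr{Y}$, I would construct a factorization $g\colon \mtr{Y} \to \Rcmpl(\mtr{X})$ by setting $g(y) \dfeq q(S^y)$, where $S^y$ is the family $S^y_p \dfeq \st{x \in D}{d_\mtr{Y}(f(x), y) < p}$ for $p \in \lvl$. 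Here $S^y_p$ is inhabited because $D$ is metrically dense in $\mtr{X}$ and $f$ is a dense isometry, so $f(D)$ is metrically dense in $\mtr{Y}$; and $S^y_p$ is subovert in $X$ because it is the intersection of the subovert set $D$ with the open predicate $x \mapsto (d_\mtr{Y}(f(x), y) < p)$ (Proposition~\ref{Proposition: intersection_of_subovert_and_open_is_open_(original)}). Thus $S^y_p \in \inhov(X)$, and $S^y \equ S^y$ follows at once from the triangle inequality in $\mtr{Y}$, so $S^y \in \Cf(\mtr{X})$. A short estimate ($d_\mtr{X}(x,x') = d_\mtr{Y}(f(x),f(x'))$ and $|d_\mtr{Y}(f(x),f(x')) - d_\mtr{Y}(y,y')| < p + p'$ for $x \in S^y_p$, $x' \in S^{y'}_{p'}$) shows $d_\Cf(S^y, S^{y'}) = d_\mtr{Y}(y, y')$, so $g$ is an isometry (in particular \ed-continuous); and comparing $S^{f(x)}$ with the constant family $\bar{\i}(x)$ gives $i = g \circ f$. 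Since $\Rcmpl(\mtr{X})$ is metric, Lemma~\ref{Lemma: factorization_of_dense_isometries} now yields both that $g$ is a dense isometry and that it is the unique \ed-continuous (hence the unique dense isometry) map with $i = g \circ f$. This is exactly the assertion that $i\colon \mtr{X} \to \Rcmpl(\mtr{X})$ is terminal in $\mtr{X}/\mathcal{M}$.

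For part~(2) I would use the completion $i\colon \mtr{X} \to \Rcmpl(\mtr{X})$ already constructed and define $\sigma\colon \Rcmpl(\mtr{X}) \to \Cf(\mtr{X})$ by $\sigma(z)_p \dfeq \st{x \in D}{d_{\Rcmpl(\mtr{X})}(i(x), z) < p}$ for $p \in \lvl$. As before, $\sigma(z) \in \Cf(\mtr{X})$: each $\sigma(z)_p$ is inhabited because $i(D)$ is metrically dense in $\Rcmpl(\mtr{X})$, it is subovert because it is $D$ intersected with the open predicate $x \mapsto (d_{\Rcmpl(\mtr{X})}(i(x), z) < p)$, and $\sigma(z) \equ \sigma(z)$ is the triangle inequality. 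It then remains to check $q(\sigma(z)) = z$: writing $z = q(S)$ for an arbitrary $S \in \Cf(\mtr{X})$, one must show $\sigma(z) \equ S$, i.e. $d_\mtr{X}(x, s) \le p + r$ whenever $x \in \sigma(z)_p$ and $s \in S_r$. One first checks $d_{\Rcmpl(\mtr{X})}(i(s), z) \le r$ — otherwise, since $z = q(S)$ and $\bar{\i}(s)$ is the constant family at $\{s\}$, the lower cut of $d_\Cf(\bar{\i}(s), S)$ would contain some $r$-bounded witness contradicting $S \equ S$ together with $s \in S_r$ — and then $d_\mtr{X}(x,s) = d_{\Rcmpl(\mtr{X})}(i(x),i(s)) \le d_{\Rcmpl(\mtr{X})}(i(x),z) + d_{\Rcmpl(\mtr{X})}(z,i(s)) < p + r$. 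Hence $q \circ \sigma = \id[\Rcmpl(\mtr{X})]$, so $q$ is a retraction.

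The genuinely routine verifications (symmetry and triangle inequality for $d_\Cf$, openness of all the cuts) are already in place in the text; the only points needing care are keeping the auxiliary families inside $\inhov(X)$ — this is precisely why the hypothesis that $D$ is a subovert metrically dense subset is used, in both parts — and, for part~(2), recognising that the canonical representative of a point of $\Rcmpl(\mtr{X})$ should be read off from its distances to $i(D)$ \emph{inside} $\Rcmpl(\mtr{X})$, rather than manufactured internally to $\Cf(\mtr{X})$. I expect the (still mild) main obstacle to be the verification $q(\sigma(z)) = z$, since it forces one to unwind the definition of $d_\Cf$ against an arbitrary representative $S$ of $z$.
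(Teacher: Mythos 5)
Your proposal is correct and follows essentially the same route as the paper: your $S^y_p$ is exactly the paper's $\bar{g}(y)(p) = f^{-1}(B(y,p)) \cap D$, and part~(2) is the same specialization $\mtr{Y} = \Rcmpl(\mtr{X})$, $f = i$. The only (harmless) difference is that you verify $q \circ \sigma = \id$ by direct computation against an arbitrary representative, where the paper deduces it from $q \circ \bar{g} \circ i = i$ and the cancellation property of dense isometries supplied by Lemma~\ref{Lemma: factorization_of_dense_isometries}.
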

		\begin{proof}
			\begin{enumerate}
				\item
					Let $\mtr{Y} = (Y, d')$ be a pseudometric space, and $f\colon \mtr{X} \to \mtr{Y}$ a dense isometry. Define $\bar{g}\colon \mtr{Y} \to \Cf(\mtr{X})$ by
					$$\bar{g}(y) \dfeq \Big(p \mapsto f^{-1}(B(y, p)) \cap D\Big) = \Big(p \mapsto \st{x \in D}{\xsome{a}{D}{d(x, a) + d'(f(a), y) < p}}\Big).$$
					Inhabitedness of $f^{-1}(B(y, p)) \cap D$ follows from the fact that $D$ is metrically dense in $Y$. Because it is an intersection of an open and a subovert subset, it is subovert in $X$. Thus $\bar{g}$ indeed maps into $\Cf(\mtr{X})$.
					
					It is obvious from the definition of the equivalence relation $\equ$ on $\Cf(\mtr{X})$ that $\bar{g}(f(x)) \equ \bar{\i}(x)$, \ie $q(\bar{g}(f(x))) = q(\bar{\i}(x))$, for all $x \in X$. Therefore the map $g\colon \mtr{Y} \to \Rcmpl(\mtr{X})$, $g \dfeq q \circ \bar{g}$, satisfies the required condition $i = g \circ f$.
				\item
					In the previous item, take $(Y, d') = \Rcmpl(\mtr{X})$ and $f = i$. The map $\bar{g}$ is a splitting of $q$ since $q \circ \bar{g} \circ i = i = \id[\Rcmpl(\mtr{X})] \circ i$ and $i$ is a dense isometry (use Lemma~\ref{Lemma: factorization_of_dense_isometries}).\footnote{Richman provides a ``more canonical'' splitting by taking the maximal Cauchy family from every equivalence class, namely $p \mapsto i^{-1}(\bball{[S]}{p})$. We chose a smaller family $p \mapsto i^{-1}(\ball{[S]}{p}) \cap D$ because we have an additional condition of subovertness to satisfy.}
			\end{enumerate}
		\end{proof}
		
		To examine the situation, while we have avoided the need for choice principles by using Cauchy families for completion, we could only complete pseudometric spaces with subovert metrically dense subsets, for much the same reason why we required overt streaks to make Dedekind cuts from. The situation is actually not as bad as it sound. First of all, since countable sets are overt, we can complete all metrically separable spaces. Second, just because the proof of the above proposition doesn't necessarily work for general (pseudo)metric spaces, that doesn't mean we couldn't in a particular instance of $\mtr{X}$ find some other way to construct subovert approximations; for example, if we can actually find Cauchy sequences in $\mtr{X}$ approximating points in $\mtr{Y}$, it is enough. And third, we only need subovert approximations in order to prove that the Dedekind cut $d_\Cf(S, T)$ is open. If $\RR = \RR_d$, then we can drop this condition, and can complete arbitrary metric spaces in this way.
		
		The above construction of completions is sufficient for our purposes since we are interested in complete metrically separable spaces in a topos. However, we remark that it wouldn't work in a more general environment, as the definition of Cauchy families is impredicative since in general there is no telling that inhabited overt subsets form a set. Actually, there is another construction of metric completion~\cite{Richman_F_2000:_the_fundamental_theorem_of_algebra_a_constructive_development_without_choice} which solves this problem, and doesn't even need quotients. The idea for it comes from two observations: one is that any point $x \in X$ in a metric space $\mtr{X} = (X, d)$ is uniquely determined by its distance function $d(x, \insarg)\colon X \to \RR$ as its sole zero, and the other is that an \ed-continuous map is determined by its restriction to a metrically dense subset of the domain. Combining these observations we see that points in the completion of $\mtr{X}$ could be represented as their distance functions to points in $\mtr{X}$, or even some metrically dense subset of $\mtr{X}$. What remains is to figure out how to characterize a distance function from a point. It certainly must satisfy suitable triangle inequalities, and it must have a zero, but it turns out this is enough. When restricting to a metrically dense subset, ``having a zero'' becomes ``zero is the strict infimum of the image''.
		
		Thus we define: a map $f\colon X \to \RR$ is a \df{location} on the pseudometric space $\mtr{X} = (X, d)$ when it satisfies the conditions
		\begin{itemize}
			\item $f(x) + d(x, y) \geq f(y)$ for all $x, y \in X$ (\ie $f$ is nonexpansive),
			\item $f(x) + f(y) \geq d(x, y)$ for all $x, y \in X$ (the inequalities from these two items can be captured by a single one, namely $f(x) \geq |f(y) - d(x, y)|$),
			\item $0$ is the strict infimum of the image of $f$ (in particular, $f$ is nonnegative).
		\end{itemize}
		Let $\loc(\mtr{X})$ denote the set of locations on $\mtr{X}$.
		
		Suppose the pseudometric space $\mtr{X} = (X, d)$ has a subovert metrically dense subset $D$ (\eg $\mtr{X}$ is metrically separable). Let $d_\loc\colon \loc(\mtr{X}) \times \loc(\mtr{X}) \to \RR$ be the map
		$$d_\loc(f, g) \dfeq \inf\st{f(x) + g(x)}{x \in X} = \sup\st{|f(x) - g(x)|}{x \in X}$$
		for $f, g \in \loc(\mtr{X})$. We claim this infimum and supremum actually exist (in fact, they are strict). To see this, let
		$$L \dfeq \st{a \in \oirs}{\xsome{x}{D}{a < |f(x) - g(x)|}},$$
		$$U \dfeq \st{b \in \oirs}{\xsome{x}{D}{b > f(x) + g(x)}}.$$
		Since $D$ is subovert in $X$, the sets $L$ and $U$ are open. It is clear that they are a lower and an upper cut, respectively. Suppose there is $q \in L \cap U$. Then there are $x, y \in D$ such that $q < |f(x) - g(x)|$ and $q > f(y) + g(y)$. Let $0 < \epsilon < \frac{1}{2} \inf\{|f(x) - g(x)| - q, q - f(y) - g(y)\}$, and find $u, v \in D$ so that $f(u), g(v) < \epsilon$. Then
		$$q < |f(x) - g(x)| - 2 \epsilon  \leq |f(x) - d(x, u)| + |d(x, u) - d(x, v)| + |g(x) - d(x, v)| - 2 \epsilon \leq$$
		$$\leq f(u) - \epsilon + g(v) - \epsilon + d(u, v) < d(u, v), \qquad \text{and}$$
		$$q > f(y) + g(y) + 2 \epsilon \geq d(y, u) - f(u) + d(y, v) - g(v) + 2 \epsilon \geq \epsilon - f(u) + \epsilon - f(v) + d(u, v) > d(u, v),$$
		a contradiction. Finally, if $a, b \in \oirs$, $a < b$, let $0 < \epsilon < \frac{1}{5} (b-a)$, and find $u, v \in D$ so that $f(u), g(v) < \epsilon$. Then $a + 2 \epsilon < d(u, v)$ or $d(u, v) < b - 2 \epsilon$. In the first case $|f(x) - g(x)| \leq d(u, v) - f(u) - g(v) > d(u, v) - 2 \epsilon > a$, so $a \in L$, and in the second, $f(x) + g(x) \leq f(u) + g(v) + d(u, v) < d(u, v) + 2 \epsilon < b$, so $b \in U$.
		
		Therefore $(L, U)$ is an open Dedekind cut, and it is clear from the definitions that
		$$\inf\st{f(x) + g(x)}{x \in X} = (L, U) = \sup\st{|f(x) - g(x)|}{x \in X}.$$
		Thus $d_\loc$ is a well-defined map. We claim it is a metric on $\loc(\mtr{X})$. It is clearly symmetric. For $f, g \in \loc(\mtr{X})$ we have
		$$d(f, g) = 0 \iff \sup\st{|f(x) - g(x)|}{x \in X} = 0 \iff \xall{x}{X}{|f(x) - g(x)| = 0} \iff f = g.$$
		Let $f, g, h \in \loc(\mtr{X})$. Then
		$$d_\loc(f, h) = \sup\st{|f(x) - h(x)|}{x \in X} \leq \sup\st{|f(x) - g(x)| + |g(x) - h(x)|}{x \in X} \leq$$
		$$\leq \sup\st{|f(x) - g(x)|}{x \in X} + \sup\st{|g(x) - h(x)|}{x \in X} = d_\loc(f, g) + d_\loc(g, h).$$
		
		\begin{proposition}\label{Proposition: completion_by_locations}
			Let $\mtr{X} = (X, d)$ be a metric space with a subovert metrically dense subset. Then the map $X \to \RR^X$ which is the transpose of the metric $d\colon X \times X \to \RR$, restricts to a map $i\colon X \to \loc(\mtr{X})$, and $\loc(\mtr{X})$, together with $i$, is the completion of $\mtr{X}$.
		\end{proposition}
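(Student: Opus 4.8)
The plan is to verify three things in turn: that $i$ actually takes values in $\loc(\mtr{X})$; that $i$ is a dense isometry; and that the pair $\big(\loc(\mtr{X}), i\big)$ is terminal in the coslice category $\mtr{X}/\mathcal{M}$, where $\mathcal{M}$ is the category of pseudometric spaces and dense isometries. Throughout I would use freely the computations already carried out above, namely that $d_\loc$ is a well-defined metric on $\loc(\mtr{X})$ given by a strict supremum (this is where the hypothesis of a subovert metrically dense subset of $\mtr{X}$ enters).

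First I would check that for each $x \in X$ the function $d(x, \insarg)\colon X \to \RR$ is a location: the inequalities $d(x,y) + d(y,z) \ge d(x,z)$ and $d(x,y) + d(x,z) \ge d(y,z)$ are just the triangle inequality (with symmetry), and they are exactly the two inequalities in the definition of a location, while $0$ is the strict infimum of the image of $d(x,\insarg)$ since $d(x,y) \ge 0$ for all $y$ and the value $0$ is attained at $y = x$. For the isometry property, $d_\loc(i(x), i(y))$ is by definition the strict supremum of $\st{\lvert d(x,z) - d(y,z)\rvert}{z \in X}$, and this equals $d(x,y)$ because the triangle inequality bounds every member by $d(x,y)$ while the value $d(x,y)$ itself is attained at $z = x$. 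For density, given $f \in \loc(\mtr{X})$ and $r \in \RR_{>0}$, the strict-infimum clause of the location definition produces $x \in X$ with $f(x) < r$, and the combined location inequality $f(x) \ge \lvert f(z) - d(x,z)\rvert$ shows $f(x)$ is an upper bound for $\st{\lvert f(z) - i(x)(z)\rvert}{z \in X}$, so $d_\loc(f, i(x)) \le f(x) < r$.

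For the universal property, given a pseudometric space $\mtr{Y} = (Y, d_\mtr{Y})$ and a dense isometry $f\colon \mtr{X} \to \mtr{Y}$, I would define $h\colon Y \to \loc(\mtr{X})$ by $h(y) \dfeq d_\mtr{Y}(f(\insarg), y)$. That each $h(y)$ is a location follows from the triangle inequality in $\mtr{Y}$ together with $d_\mtr{Y}(f(x), f(x')) = d(x,x')$, the strict-infimum condition on its image being precisely metric density of the image of $f$; and $h \circ f = i$ is immediate from symmetry of $d$. The map $h$ is an isometry: $d_\loc(h(y), h(y'))$ is the strict supremum of $\st{\lvert d_\mtr{Y}(f(x),y) - d_\mtr{Y}(f(x),y')\rvert}{x \in X}$, which is bounded above by $d_\mtr{Y}(y,y')$ (reverse triangle inequality), and for any $r < d_\mtr{Y}(y,y')$ metric density of the image of $f$ supplies an $x$ with $d_\mtr{Y}(f(x),y)$ small enough that $\lvert d_\mtr{Y}(f(x),y) - d_\mtr{Y}(f(x),y')\rvert > r$, so the supremum is $d_\mtr{Y}(y,y')$. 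Since $h$ then has dense image (it contains the dense image of $i$), it is a morphism of $\mathcal{M}$; and since $f$ is injective (an isometry out of a metric space), its image is metrically dense in $\mtr{Y}$, $\loc(\mtr{X})$ is a metric space and $h$ is \ed-continuous, Corollary~\ref{Corollary: uniqueness_of_ed_extension_of_a_map_into_a_metric_space} forces $h$ to be the unique such map, establishing terminality.

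I do not expect a real obstacle here: the only care required is that all the suprema and infima in sight are strict, so each estimate must be exhibited via an actual witness point produced by metric density, rather than by a classical least-upper-bound argument. Conceptually the proof is a leaner rerun of the verification already done for $\Rcmpl(\mtr{X})$, the point being that a point of the completion is pinned down by its distance function and that an \ed-continuous map is pinned down by its restriction to a metrically dense subset.
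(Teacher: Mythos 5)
Your proof is correct and follows essentially the same route as the paper's: verify $i(x)$ is a location, compute $d_\loc(i(x),i(y))$ via the sup/inf formulas to get the isometry, use the strict-infimum clause for density, and define the factoring map $\mtr{Y}\to\loc(\mtr{X})$ by $y\mapsto d_\mtr{Y}(f(\insarg),y)$. The paper leaves the verification of the universal property as ``easy to see''; you have simply filled in those details (and correctly sourced uniqueness from Corollary~\ref{Corollary: uniqueness_of_ed_extension_of_a_map_into_a_metric_space}).
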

		\begin{proof}
			The fact that $d(x, \insarg)$ is a location for every $x \in X$ follows directly from definitions; thus we obtain $i$. To see that it is an isometry, take $x, y \in X$, and calculate
			$$d_\loc\big(d(x, \insarg), d(y, \insarg)\big) = \inf\st{d(x, z) + d(y, z)}{z \in X} \leq d(x, y) + d(y, y) = d(x, y),$$
			$$d_\loc\big(d(x, \insarg), d(y, \insarg)\big) = \sup\st{|d(x, z) - d(y, z)|}{z \in X} \geq |d(x, y) - d(y, y)| = d(x, y).$$
			It is dense because for any $f \in \loc(\mtr{X})$ and $\epsilon > 0$ there is $x \in X$ such that $f(x) < \epsilon$, and then
			$$d_\loc\big(d(x, \insarg), f\big) = \inf\st{d(x, y) + f(y)}{y \in X} \leq d(x, x) + f(x) < \epsilon.$$
			Finally, for any metric space $\mtr{Y} = (Y, d')$ and dense isometry $f\colon \mtr{X} \to \mtr{Y}$ we define $g\colon \mtr{Y} \to \loc(\mtr{X})$ by
			$$g(y) \dfeq \big(x \mapsto d'(f(x), y)\big).$$
			It is easy to see that this works.
		\end{proof}
		
		\begin{theorem}\label{Theorem: extend_Lipschitz_maps_from_a_dense_subspace_into_a_completion}
			Let
			\begin{itemize}
				\item
					$\mtr{X} = (X, d_{\mtr{X}})$, $\mtr{Y} = (Y, d_{\mtr{Y}})$, $\mtr{Z} = (Z, d_{\mtr{Z}})$ be pseudometric spaces with subovert metrically dense subsets,
				\item
					$i\colon \mtr{Z} \to \cmpl{\mtr{Z}}$ the completion of $\mtr{Z}$,
				\item
					$e\colon \mtr{X} \to \mtr{Y}$ a dense isometry, and
				\item
					$f\colon \mtr{X} \to \mtr{Z}$ a Lipschitz map with coefficient $C \in \RR_{> 0}$.
			\end{itemize}
			Then there exists a unique \ed-continuous map $\bar{g}\colon \mtr{Y} \to \cmpl{\mtr{Z}}$ such that $\bar{g} \circ e = i \circ f$; moreover, it is also a Lipschitz map with the same coefficient $C$. If $f$ is an isometry, then so is $\bar{g}$.
		\end{theorem}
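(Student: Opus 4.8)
The plan is to construct $\bar{g}$ by hand rather than appeal to the universal property of a completion: the map $i\circ f$ is only Lipschitz, not a dense isometry, so $\cmpl{\mtr{Z}}$ cannot be obtained from it by the universal property of $\cmpl{\mtr{X}}$. Instead I will realize $\cmpl{\mtr{Z}}$ as the Cauchy-family completion $\Rcmpl(\mtr{Z})$ of Proposition~\ref{Proposition: completion_by_Cauchy_families} (available since $\mtr{Z}$ has a subovert metrically dense subset), write $q\colon\Cf(\mtr{Z})\to\Rcmpl(\mtr{Z})$ for the Kolmogorov quotient and $i_R$ for its canonical dense isometry, and let $\phi\colon\Rcmpl(\mtr{Z})\to\cmpl{\mtr{Z}}$ be the canonical isometric isomorphism of completions (with $\phi\circ i_R=i$). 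Fix a subovert metrically dense $D\subseteq X$.

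\emph{Construction.} For $y\in Y$ and $p\in\lvl$ put $S^y_p\dfeq f\bigl(e^{-1}(\ball{y}{p/C})\cap D\bigr)$. Since $e$ is a dense isometry, $\ball{y}{p/C}$ is an inhabited ball meeting $e(X)$, so $e^{-1}(\ball{y}{p/C})$ is an inhabited open subset of $\mtr{X}$; being an isometric preimage of a ball it is a union of inhabited balls around each of its points, hence meets $D$, and it is subovert as an intersection of an open subset with a subovert subset (Proposition~\ref{Proposition: intersection_of_subovert_and_open_is_open_(original)}, using that $\opn$ is a lattice). Its image $S^y_p$ is then inhabited and subovert by Proposition~\ref{Proposition: images_and_finite_products_of_subovert/subcompact_subsets}. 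For $s\in S^y_p$, $t\in S^y_q$ the triangle inequality and the Lipschitz bound give $d_\mtr{Z}(s,t)\le C\,d_\mtr{Y}(e(x),e(x'))<p+q$, so $S^y\dfeq(p\mapsto S^y_p)$ is a Cauchy family; define $\bar{g}\dfeq\phi\circ q\circ(y\mapsto S^y)$.

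\emph{Verifications.} First, $\bar{g}\circ e=i\circ f$: for $x\in X$ the constant family $p\mapsto\{f(x)\}$ is $\equ$-equivalent to $S^{e(x)}$ (again by the Lipschitz bound, as $e(x')\in\ball{e(x)}{p/C}$ forces $d_\mtr{Z}(f(x),f(x'))<p$), and $q$ of the constant family is $i_R(f(x))$, whence $\phi(q(S^{e(x)}))=\phi(i_R(f(x)))=i(f(x))$. Second, $\bar{g}$ is Lipschitz with coefficient $C$: given $y,y'\in Y$ and $b>C\,d_\mtr{Y}(y,y')$, pick $p,q\in\lvl$ small enough (possible since $0$ is the strict infimum of $\lvl$) and $s\in S^y_p$, $t\in S^{y'}_q$; then $d_\mtr{Z}(s,t)\le C\,d_\mtr{Y}(e(x),e(x'))<p+C\,d_\mtr{Y}(y,y')+q$, so $d_\mtr{Z}(s,t)+p+q<b$, placing $b$ in the upper cut of $d_{\cmpl{\mtr{Z}}}(\bar{g}(y),\bar{g}(y'))$; hence this distance is $\le C\,d_\mtr{Y}(y,y')$, in particular $\bar{g}$ is \ed-continuous. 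Third, uniqueness follows from Corollary~\ref{Corollary: uniqueness_of_ed_extension_of_a_map_into_a_metric_space}, since $e(X)$ is metrically dense in $\mtr{Y}$ and $\cmpl{\mtr{Z}}$ is a metric space, so at most one \ed-continuous map $\mtr{Y}\to\cmpl{\mtr{Z}}$ restricts to $e(x)\mapsto i(f(x))$.

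Finally, if $f$ is an isometry I run the construction with $C=1$; then $\bar{g}$ is nonexpansive by the previous step, and for the reverse inequality note that for $s\in S^y_p$, $t\in S^{y'}_q$ one has $d_\mtr{Z}(s,t)=d_\mtr{Y}(e(x),e(x'))>d_\mtr{Y}(y,y')-p-q$, so any $a<d_\mtr{Y}(y,y')$ (taking $p,q$ small) lies in the lower cut of $d_{\cmpl{\mtr{Z}}}(\bar{g}(y),\bar{g}(y'))$; hence $\bar{g}$ is an isometry. I expect the main obstacle to be not any single estimate but the constructive bookkeeping: choosing the Cauchy-family model so as to sidestep countable choice, and tracking inhabitedness, subovertness and the constant $C$ through every set that enters the construction.
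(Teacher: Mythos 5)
Your proof is correct, but it routes through a different model of the completion than the paper does. The paper realizes $\cmpl{\mtr{Z}}$ as the space of locations $\loc(\mtr{Z})$ (Proposition~\ref{Proposition: completion_by_locations}) and defines $\bar{g}(y)$ directly as a distance function $z \mapsto (L, U)$, with the cuts built from the dense subset $D \subseteq X$ and the estimate $C \cdot d_{\mtr{Y}}(e(x), y) \pm d_{\mtr{Z}}(f(x), z)$; the Lipschitz bound and the isometry case are then read off from $d_\loc$. You instead realize $\cmpl{\mtr{Z}}$ as the Cauchy-family completion $\Rcmpl(\mtr{Z})$ of Proposition~\ref{Proposition: completion_by_Cauchy_families} and send $y$ to the family $p \mapsto f\bigl(e^{-1}(\ball{y}{p/C}) \cap D\bigr)$, which is essentially the $\bar{g}$ from that proposition's proof adapted to carry the Lipschitz constant through. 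The two constructions are parallel in spirit --- both avoid countable choice by using one of the paper's explicit completion models and both quantify over the same dense subset $D$ --- but your choice of model is arguably the better fit: Proposition~\ref{Proposition: completion_by_Cauchy_families} is stated exactly for pseudometric spaces with subovert metrically dense subsets, matching the hypotheses, whereas Proposition~\ref{Proposition: completion_by_locations} is stated only for metric spaces, so the paper's appeal to $\loc(Z)$ for pseudometric $Z$ is slightly informal. The small price you pay is the extra bookkeeping of the Kolmogorov quotient $q$ and the canonical isomorphism $\phi$ between the two presentations of $\cmpl{\mtr{Z}}$, plus the (correctly handled) verification that each $S^y_p$ is inhabited and subovert. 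Your uniqueness argument via Corollary~\ref{Corollary: uniqueness_of_ed_extension_of_a_map_into_a_metric_space} and your treatment of the isometry case coincide with the paper's.
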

		\begin{proof}
			Uniqueness of $\bar{g}$ follows from Corollary~\ref{Corollary: uniqueness_of_ed_extension_of_a_map_into_a_metric_space}. As for the existence, let $D \subseteq X$ be a metrically dense subovert subset, and define $\bar{g}\colon Y \to \loc(Z)$ by
			$$\bar{g}(y) \dfeq \Big(z \mapsto (L, U)\Big)$$
			where
			$$L \dfeq \st{s \in \oirs}{\xsome{x}{D}{s + C \cdot d_{\mtr{Y}}(x, y) < d_{\mtr{Z}}(f(x), z)}},$$
			$$U \dfeq \st{s \in \oirs}{\xsome{x}{D}{s > C \cdot d_{\mtr{Y}}(x, y) + d_{\mtr{Z}}(f(x), z)}}.$$
			We verify that $(L, U)$ is an open Dedekind cut much the same way as in various previous propositions. Now take any $y, y' \in Y$, and $\epsilon \in \RR_{> 0}$. Then there are $x, x' \in X$ such that $d_{\mtr{Y}}(x, y) < \frac{\epsilon}{2 C}$, and $d_{\mtr{Y}}(x', y') < \frac{\epsilon}{2 C}$.
			$$d_\loc(\bar{g}(y), \bar{g}(y')) \leq \bar{g}(y)(f(x)) + \bar{g}(y')(f(x)) \leq C \cdot d_{\mtr{Y}}(x, y) + C \cdot d_{\mtr{Y}}(x, y') \leq$$
			$$\leq C \cdot d_{\mtr{Y}}(x, y) + C \cdot d_{\mtr{Y}}(x, y) + C \cdot d_{\mtr{Y}}(y, y') < C \cdot d_{\mtr{Y}}(y, y') + \epsilon,$$
			and since $\epsilon \in \RR_{> 0}$ was arbitrary, $d_\loc(\bar{g}(y), \bar{g}(y')) \leq C \cdot d_{\mtr{Y}}(y, y')$, so $\bar{g}$ is Lipschitz with coefficient $C$.
			
			We similarly prove that $d_\mtr{Z}(f(x), f(x')) \geq C \cdot d_\mtr{X}(x, x')$ for all $x, x' \in X$ implies $d_\loc(\bar{g}(y), \bar{g}(y')) \geq C \cdot d_{\mtr{Y}}(y, y')$ for all $y, y' \in Y$. Taking $C = 1$ we see that if $f$ is an isometry, so is $\bar{g}$.
		\end{proof}
		\begin{corollary}\label{Corollary: extend_isometries_from_a_dense_subspace_into_a_complete_space}
			If $\mtr{X}$ is a metrically separable metrically dense metric subspace of a metrically separable metric space $\mtr{Y}$, and $\mtr{Z}$ is a \cms[], then any isometry $\mtr{X} \to \mtr{Z}$ uniquely extends to an isometry $\mtr{Y} \to \mtr{Z}$.
		\end{corollary}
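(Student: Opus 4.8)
The plan is to read this off directly from Theorem~\ref{Theorem: extend_Lipschitz_maps_from_a_dense_subspace_into_a_completion}, so most of the work has already been done. First I would verify the hypotheses of that theorem. A metrically separable space carries a countable metrically dense subset, and countable sets are overt by the standing postulate of this chapter, so $\mtr{X}$, $\mtr{Y}$ and $\mtr{Z}$ all have subovert metrically dense subsets. Since $\mtr{X}$ is a metric subspace of $\mtr{Y}$, the inclusion $e\colon \mtr{X} \hookrightarrow \mtr{Y}$ is an isometric embedding, and because $\mtr{X}$ is metrically dense in $\mtr{Y}$ it is a dense isometry. Finally, the given isometry $f\colon \mtr{X} \to \mtr{Z}$ is in particular a Lipschitz map with coefficient $1$.

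Next I would exploit completeness of $\mtr{Z}$: by definition the identity map of $\mtr{Z}$ is its completion (up to the canonical isometric isomorphism), so I may take $\cmpl{\mtr{Z}} = \mtr{Z}$ and $i = \id[Z]$ in the statement of the theorem. Theorem~\ref{Theorem: extend_Lipschitz_maps_from_a_dense_subspace_into_a_completion} then yields a unique \ed-continuous map $\bar{g}\colon \mtr{Y} \to \mtr{Z}$ with $\bar{g} \circ e = i \circ f = f$, that is, $\bar{g}$ restricts to $f$ on $\mtr{X}$; and since $f$ is an isometry the theorem further guarantees that $\bar{g}$ is an isometry. This is exactly the desired extension.

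For uniqueness, any two such extensions are isometric, hence \ed-continuous, maps $\mtr{Y} \to \mtr{Z}$ agreeing on the metrically dense subset $\mtr{X}$ of $\mtr{Y}$ with codomain the metric space $\mtr{Z}$, so they coincide by Corollary~\ref{Corollary: uniqueness_of_ed_extension_of_a_map_into_a_metric_space}. I do not expect a genuine obstacle here: the substantive content — building the extension via locations and proving the isometry bound without invoking any choice principle — is internal to the proof of Theorem~\ref{Theorem: extend_Lipschitz_maps_from_a_dense_subspace_into_a_completion}. The only points requiring a word of care are that ``metrically separable'' does supply the subovertness hypothesis (through overtness of countable sets) and that one is entitled to replace the completion of $\mtr{Z}$ by $\mtr{Z}$ itself when $\mtr{Z}$ is complete.
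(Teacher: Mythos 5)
Your proposal is correct and follows exactly the route the paper takes: the paper's proof is simply "Immediate from the previous theorem," i.e.\ Theorem~\ref{Theorem: extend_Lipschitz_maps_from_a_dense_subspace_into_a_completion} applied with $C = 1$, $\cmpl{\mtr{Z}} = \mtr{Z}$ and $i = \id$, with uniqueness via Corollary~\ref{Corollary: uniqueness_of_ed_extension_of_a_map_into_a_metric_space}. Your verification of the subovertness hypothesis through overtness of countable sets is exactly the right point to check, and nothing is missing.
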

		\begin{proof}
			Immediate from the previous theorem.
		\end{proof}
		
		The usual version of this theorem is that any (boundedly) uniformly continuous map $\mtr{X} \to \mtr{Z}$ uniquely extend to a (boundedly) uniformly continuous map $\mtr{Y} \to \mtr{Z}$ (see~\cite{Richman_F_2008:_real_numbers_and_other_completions, Dugundji_J_1974:_topology}). If $\RR = \RR_d$, the same proof works, but in general there is a problem how to define the cuts $L$ and $U$ in the proof so as to see that they are open. However, in this thesis we only require the version with isometries. Regardless, this tells us something about what morphisms to choose when dealing with complete spaces.
		
		We haven't said anything yet about completions of protometric spaces, or morphisms between them, though. Recall that metric spaces are a reflective subcategory of pseudometric spaces via the Kolmogorov quotient, and that completions of a pseudometric space and its Kolmogorov quotient match. Similarly, there is a canonical way how to transform a protometric space into a pseudometric one. We define for a protometric space $\mtr{X} = (X, d)$ its \df{kernel}
		$$\ker(\mtr{X}) \dfeq \st{x \in X}{d(x, x) = 0}.$$
		Clearly $\ker(\mtr{X})$ is a pseudometric space, and it makes pseudometric spaces a coreflective subcategory of protometric ones (we assume of course that morphisms are at least \ed-continuous, so that kernels are mapped into kernels). Notice that if we perform any of the above constructions of completions on $\mtr{X}$, we obtain the same result as if we tried to complete $\ker(\mtr{X})$. To put this in line of the definition of completeness via terminality, one possible choice of morphisms between protometric spaces is to allow also partials maps, so long as their domain contains the kernel of the space. However, one wonders whether this is too much freedom --- it makes the categories of protometric and pseudometric spaces equivalent. Regardless what the optimal choice of generality of morphisms between protometric spaces (and protometric spaces themselves) is, it seems to make sense to consider the completion of a protometric space to be the completion of the Kolmogorov quotient of its kernel.
		
		\intermission
		
		We return now to familiar and comfortable waters, namely to two classes of spaces most interesting to us: let \cms stand for \textbf{C}omplete \textbf{M}etrically \textbf{S}eparable metric space, and \ctb for \textbf{C}omplete \textbf{T}otally \textbf{B}ounded metric space.\footnote{\ctb is the standard abbreviation, but \cms is not. The point is that in the context of metric spaces, `separable' means `metrically separable', and since three-letter acronyms are more attractive for some reason, the standard acronym is \textbf{CSM} for \textbf{C}omplete \textbf{S}eparable \textbf{M}etric space. My dislike for the discrepancy of adding \textbf{M} for `metric' to the acronym in the case of metrically separable, but not totally bounded spaces, and desire to distinguish between `separable' and `metrically separable' in this thesis (and in order to keep a three-letter acronym) led me to prefer \cms[]. Hopefully the readers will find \cms reminiscent enough of \textbf{CSM} so as to cause no difficulties.}
		
		\begin{proposition}\label{Proposition: retract_of_cms_ctb}
			Let $\mtr{X} = (X, d_X)$ be a metric space, $A \subseteq X$, $\mtr{A} = (A, d_A)$ a metric subspace of $\mtr{X}$, and $r\colon \mtr{X} \to \mtr{A}$ a nonexpansive retraction.
			\begin{enumerate}
				\item
					If $\mtr{X}$ is \cms[,] then so is $\mtr{A}$.
				\item
					If $\mtr{X}$ is \ctb[,] then so is $\mtr{A}$.
			\end{enumerate}
		\end{proposition}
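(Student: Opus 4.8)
The plan is to read off metric separability and total boundedness of $\mtr{A}$ directly from Proposition~\ref{Proposition: retract_of_ms_tb}: since $\mtr{A}$ is a metric subspace of $\mtr{X}$ (so $d_A$ really is the restriction of $d_X$) and the nonexpansive retraction $r$ is in particular \ed-continuous, item~(1) there gives that $\mtr{A}$ is metrically separable when $\mtr{X}$ is, and item~(2) gives that $\mtr{A}$ is totally bounded when $\mtr{X}$ is. Hence in both cases the only substantial thing to prove is that $\mtr{A}$ is complete; and since a \ctb[] space is in particular a \cms[] one (totally bounded spaces are metrically separable), item~2 will follow from item~1 once completeness of $\mtr{A}$ is settled in the \cms[] case.

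For completeness I would avoid the classical sequence argument (``a Cauchy sequence in $\mtr{A}$ converges in $\mtr{X}$, then push it back by $r$''), since in the choiceless setting completeness is a universal property rather than a statement about sequences. Instead, let $i\colon \mtr{A} \to \cmtr{A}$ be a completion of $\mtr{A}$, which exists by Proposition~\ref{Proposition: completion_by_Cauchy_families} because $\mtr{A}$ is metrically separable and hence has a (countable, therefore subovert) metrically dense subset; moreover $\cmtr{A}$ is a metric space, and it is metrically separable because the $i$-image of a countable metrically dense subset of $\mtr{A}$ is countable and metrically dense in $\cmtr{A}$. Identify $\mtr{A}$ with its metrically dense image in $\cmtr{A}$. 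The inclusion $j\colon \mtr{A} \hookrightarrow \mtr{X}$ is an isometric embedding, so by Corollary~\ref{Corollary: extend_isometries_from_a_dense_subspace_into_a_complete_space} --- applied with the roles of $\mtr{X}$, $\mtr{Y}$, $\mtr{Z}$ there played by $\mtr{A}$, $\cmtr{A}$, $\mtr{X}$, all of whose hypotheses now hold, crucially that $\mtr{X}$ is a \cms[] --- the map $j$ extends uniquely to an isometry $k\colon \cmtr{A} \to \mtr{X}$ with $k \circ i = j$. Set $\rho \dfeq i \circ r\colon \mtr{X} \to \cmtr{A}$; as a composite of the nonexpansive $r$ and the isometry $i$ it is nonexpansive, and its image is $i(r(X)) = i(A)$ since $r$ is onto $A$. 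Then $\rho \circ k\colon \cmtr{A} \to \cmtr{A}$ is \ed-continuous and satisfies $(\rho \circ k) \circ i = \rho \circ j = i \circ r \circ j = i \circ \id[A] = i$, so $\rho \circ k$ and $\id[\cmtr{A}]$ are two \ed-continuous self-maps of the metric space $\cmtr{A}$ agreeing on the metrically dense subset $i(A)$; by Corollary~\ref{Corollary: uniqueness_of_ed_extension_of_a_map_into_a_metric_space} they coincide, $\rho \circ k = \id[\cmtr{A}]$. Therefore every $\xi \in \cmtr{A}$ equals $\rho(k(\xi)) \in i(A)$, so $i$ is surjective; being also an injective dense isometry it is an isometric isomorphism, and hence $\mtr{A}$ is complete. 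This proves item~1, and item~2 follows as indicated above (also invoking Proposition~\ref{Proposition: retract_of_ms_tb}(2) for total boundedness of $\mtr{A}$).

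The genuinely routine parts (that $d_A$ is the ambient restriction so Proposition~\ref{Proposition: retract_of_ms_tb} applies, that $\cmtr{A}$ is metrically separable, that all the composites are nonexpansive) are immediate. The point that needs real care is verifying the hypotheses of Corollary~\ref{Corollary: extend_isometries_from_a_dense_subspace_into_a_complete_space}: it is $\mtr{X}$, not $\mtr{A}$, that must be complete there, which is precisely what the proposition assumes, and one must keep straight where the maps $i$, $j$, $k$, $r$, $\rho$ live. I expect the main obstacle to be conceptual rather than computational --- resisting the sequence-based proof and instead threading everything through the completion together with the extension lemma and the \ed-uniqueness lemma already available.
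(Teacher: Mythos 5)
Your proof is correct and follows essentially the same route as the paper's: extend the isometric embedding of $\mtr{A}$ into the complete ambient space $\mtr{X}$ to the completion of $\mtr{A}$, then use the fact that the nonexpansive retraction agrees with the identity on the metrically dense copy of $A$ together with Corollary~\ref{Corollary: uniqueness_of_ed_extension_of_a_map_into_a_metric_space} to conclude that every point of the completion already lies in $A$. The only cosmetic differences are that the paper works with the location model of the completion and argues one point at a time, whereas you work with the Cauchy-family model and show globally that $\rho \circ k = \id$; metric separability and total boundedness are handled identically via Proposition~\ref{Proposition: retract_of_ms_tb}.
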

		\begin{proof}
			The retract $\mtr{A}$ inherits metric separability/total boundedness by Proposition~\ref{Proposition: retract_of_ms_tb}. We focus on completeness. Since $\mtr{X}$ is complete, the isometric embedding $e\colon \mtr{A} \hookrightarrow \mtr{X}$ induces the isometric embedding $\cmpl{e}\colon \loc(\mtr{A}) \to \mtr{X}$ with the property $f = d_X(\cmpl{e}(f), \insarg)$ for all $f \in \loc(\mtr{A})$. The space $\mtr{A}$ is metrically dense in $\mtr{A} \cup \{\cmpl{e}(f)\}$ since $0$ is the strict infimum of $f$. For any $a \in A$ we have $r(a) = a$, thus by Corollary~\ref{Corollary: uniqueness_of_ed_extension_of_a_map_into_a_metric_space} $r(\cmpl{e}(f)) = \cmpl{e}(f)$, meaning $\cmpl{e}(f) \in A$.
		\end{proof}
		
		\begin{proposition}\label{Proposition: cms_products}
			Products of \cms[s] are again \cms[s]. More precisely:
			\begin{enumerate}
				\item Let $\mtr{X} = (X, d', s')$, $\mtr{Y} = (Y, d'', s'')$ be \cms[s]. Then so is their product $\mtr{X} \times \mtr{Y} = (X \times Y, d, s)$.
				\item Let $(\mtr{X}_n)_{n \in \NN} = (X_n, d_n, s_n)_{n \in \NN}$ be a sequence of inhabited \cms[s] with distances bounded by $1$, and gauge map\footnote{We don't really need this additional assumption on the gauge map (and boundedness of metric), but it makes the proof technically easier, and we will only require the result under these hypotheses.} $\id[\II]$. Then the product $\mtr{X} = (X, d, s) = \prod_{n \in \NN} \mtr{X}_n$ is a \cms also.
			\end{enumerate}
		\end{proposition}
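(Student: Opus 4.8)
The plan is to reduce the proposition to two already-available ingredients: the metric-separability clauses of Proposition~\ref{Proposition: metric_multiplicative_properties}, and the Cauchy-family construction of the completion from Proposition~\ref{Proposition: completion_by_Cauchy_families}. Metric separability of the product (binary, resp.\ inhabited countable) is literally part of Proposition~\ref{Proposition: metric_multiplicative_properties}, and that proposition also tells us the product is again a metric space; moreover the product inherits a countable --- hence, since countable sets are overt, subovert --- metrically dense subset. So Proposition~\ref{Proposition: completion_by_Cauchy_families} applies to the product, and it reduces completeness to showing that every Cauchy family on the product is $\equ$-equivalent to $\bar{\i}$ of an actual point: once that holds, the canonical dense isometry into the Cauchy-family completion is surjective, hence (being an isometry with metric domain) bijective, hence an isometric isomorphism, so the product's completion is an isomorphism and the product is complete. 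The same observation applied to each factor gives the tool I will actually use on them: a complete metric space with a subovert metrically dense subset has the property that every Cauchy family, over \emph{any} valid index set, equals $\bar{\i}$ of a (unique) point --- informally, it ``has a limit''.

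For the binary product $\mtr{X}\times\mtr{Y}$, I would fix a Cauchy family $S\colon\lvl\to\inhov(X\times Y)$. The projections $p_X,p_Y$ are nonexpansive because the product distance is the supremum of the coordinate distances, and the image of an inhabited subovert subset under any map is again inhabited and subovert (Proposition~\ref{Proposition: constructions_of_overt_maps}); hence $p\mapsto p_X(S_p)$ and $p\mapsto p_Y(S_p)$ are maps $\lvl\to\inhov(X)$ and $\lvl\to\inhov(Y)$, and the defining inequality $\sup\{d_X(x,x'),d_Y(y,y')\}\le p+q$ of $S\equ S$ restricts to show these are Cauchy families on $\mtr{X}$, $\mtr{Y}$. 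Completeness of $\mtr{X}$ and $\mtr{Y}$ then yields $x_0\in X$, $y_0\in Y$ with $d_X(x,x_0)\le p$ for every $x\in p_X(S_p)$ and $d_Y(y,y_0)\le p$ for every $y\in p_Y(S_p)$, where the passage from ``$\le p+q$ for all $q\in\lvl$'' to ``$\le p$'' uses that $0$ is the strict infimum of $\lvl$. Then for $(x,y)\in S_p$ we get $d\big((x,y),(x_0,y_0)\big)=\sup\{d_X(x,x_0),d_Y(y,y_0)\}\le p$, i.e.\ $S\equ\bar{\i}(x_0,y_0)$. This proves $\bar{\i}$ surjective, so $\mtr{X}\times\mtr{Y}$ is complete.

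For the countable product $\prod_{n}\mtr{X}_n$ the argument is the same, with the bookkeeping that $p_n\colon\prod_k X_k\to X_n$ is Lipschitz with coefficient $2^n$ rather than nonexpansive (from $d\big((x_k),(y_k)\big)=\sup_k 2^{-k}d_k(x_k,y_k)\ge 2^{-n}d_n(x_n,y_n)$, using that all $d_k\le 1$ and the gauge map is $\id[\II]$). Thus $p\mapsto p_n(S_p)$, after rescaling the index set to $2^n\lvl$ --- still inhabited, subovert in $\RR$, and with strict infimum $0$, hence a valid index set --- is a Cauchy family on $\mtr{X}_n$, and completeness of $\mtr{X}_n$ supplies $x^0_n\in X_n$ with $2^{-n}d_n(x_n,x^0_n)\le p$ whenever $(x_k)_k\in S_p$. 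Taking the supremum over $n$ (which is a genuine real, $\le\sup_n 2^{-n}=1$, exactly as in the construction of the product metric) gives $d\big((x_k),(x^0_k)\big)\le p$ for $(x_k)\in S_p$, i.e.\ $S\equ\bar{\i}\big((x^0_k)_k\big)$; so $\bar{\i}$ is surjective and $\prod_n\mtr{X}_n$ is complete.

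The main obstacle is not any single deep step but a cluster of care points in the choiceless setting: verifying that the pushforwards of a Cauchy family along the projections are genuinely Cauchy families with the required inhabitedness and subovertness (this is exactly where ``countable sets are overt'' and ``the image of a subovert set is subovert'' are needed, and where one must resist any urge to choose representatives), checking that rescaling $\lvl$ by $2^n$ leaves a legitimate index set so that Proposition~\ref{Proposition: completion_by_Cauchy_families} genuinely applies to each factor, and confirming that the various suprema and infima in play denote actual real numbers --- all routine, handled just as in the constructions immediately preceding the proposition.
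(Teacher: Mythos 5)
Your proof is correct. For the countable product it is essentially the paper's own argument: the paper also works with Cauchy families (fixing $\lvl = \st{2^k}{k \in \ZZ}$) and sets $S_n(2^k) \dfeq p_n\big(S(2^{k-n})\big)$, which is precisely your rescaling of the index set by $2^n$ to compensate for the projections being Lipschitz with coefficient $2^n$. Where you genuinely diverge is the binary case: the paper first reduces to inhabited factors via Lemmas~\ref{Lemma: wlog_space_inhabited} and~\ref{Lemma: decidable_subsets_in_complete_spaces} and then uses the \emph{location} model of the completion (Proposition~\ref{Proposition: completion_by_locations}) --- given a location $f$ on $X \times Y$ it forms $g'(x) = \inf\st{f(x,y)}{y \in Y}$, a location on $\mtr{X}$ with zero $a$, then $g''(y) = f(a,y)$, a location on $\mtr{Y}$ with zero $b$, concluding $f = d((a,b), \insarg)$. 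Your Cauchy-family argument treats both clauses uniformly and sidesteps the inhabitedness reduction, since a Cauchy family on $X \times Y$ has inhabited levels and so its pushforwards along the (nonexpansive) projections are automatically inhabited subovert families; the location argument needs $Y$ inhabited just to form the infimum. The price is the bookkeeping about admissible index sets $\lvl$, which the paper itself licenses at the start of its countable-case proof. One citation nit: subovertness of images is Proposition~\ref{Proposition: images_and_finite_products_of_subovert/subcompact_subsets}, though your route through Proposition~\ref{Proposition: constructions_of_overt_maps} (the inclusion of a subovert subset is an overt map, and postcomposing with the projection keeps it overt) works equally well.
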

		\begin{proof}
			Metric separability is preserved by products by Proposition~\ref{Proposition: metric_multiplicative_properties}, so we focus on completeness.
			\begin{enumerate}
				\item
					By Lemmas~\ref{Lemma: wlog_space_inhabited}~and~\ref{Lemma: decidable_subsets_in_complete_spaces} we may without loss of generality assume that $X$ and $Y$ are inhabited (since $(X + X') \times (Y + Y') \ism X \times Y + X \times Y' + X' \times Y + X' \times Y'$).
					
					Take a location $f \in \loc(\mtr{X} \times \mtr{Y})$, and define $g'\colon \mtr{X} \to \RR$ by $g'(x) \dfeq \inf\st{f(x, y)}{y \in Y}$. This (strict) infimum exists by the standard argument; to see that $g'$ is a location on $\mtr{X}$, take $x, x' \in X$, and calculate
					$$g'(x) + d'(x, x') = \inf\st{f(x, y)}{y \in Y} + d'(x, x') = \inf\st{f(x, y) + d'(x, x')}{y \in Y} =$$
					$$= \inf\st{f(x, y) + d((x, y), (x', y))}{y \in Y} \geq \inf\st{f(x', y)}{y \in Y} = g'(x'),$$
					
					$$g'(x) + g'(x') = \inf\st{f(x, y)}{y \in Y} + \inf\st{f(x', y)}{y \in Y} \geq$$
					$$\geq \inf\st{f(x, y) + f(x', y)}{y \in Y} \geq \inf\st{d((x, y), (x', y))}{y \in Y} = d'(x, x'),$$
					
					$$\inf\st{g'(x)}{x \in X} = \inf\st{\inf\st{f(x, y)}{y \in Y}}{x \in X} =$$
					$$= \inf\st{f(x, y)}{(x, y) \in X \times Y} = 0.$$
					Since $\mtr{X}$ is complete, there exists $a \in X$ such that $g' = d'(a, \insarg)$. Define the map $g''\colon Y \to \RR$, $g''(y) \dfeq f(a, y)$. For $y, y' \in Y$ we have
					$$|g''(y) - d''(y, y')| = |f(a, y) - d((a, y), (a, y'))| \geq f(a, y') = g''(y'),$$
					$$\inf\st{g''(y)}{y \in Y} = \inf\st{f(a, y)}{y \in Y} = g'(a) = 0,$$
					so $g''$ is a location on $\mtr{Y}$. From completeness of $\mtr{Y}$ we obtain $b \in Y$ such that $g'' = d''(b, \insarg)$. Then $f(a, b) = g''(b) = 0$, so $f = d((a, b), \insarg)$.
				\item
					From Proposition~\ref{Proposition: completion_by_Cauchy_families} we infer that it does not matter what we take for $\lvl$; if a space is complete for Cauchy families for some $\lvl$, it is for all of them. In this particular case we choose to take $\lvl = \st{2^k}{k \in \ZZ}$.
					
					Denote the projections $p_n\colon X \to X_n$, and let $S \in \Cf(\mtr{X})$. For all $n \in \NN$, define
					$$S_n \dfeq \Big(2^k \mapsto p_n\big(S(2^{k-n})\big)\Big);$$
					one can verify that $S_n$ is a Cauchy family on $\mtr{X}_n$. As such, there exists (a unique) $x_n \in X_n$ which is represented by $S_n$. Denote $x = (x_n)_{n \in \NN}$. We claim that $x$ is represented by $S$; it is sufficient to verify, for every $2^k \in \lvl$ and every $t \in S_{2^k}$, that $d(x, t) \leq 2^k$. Let $t_n = p_n(t)$; then $t_n \in S_n(2^{k+n})$, so $d_n(x_n, t_n) \leq 2^{k+n}$. We conclude
					$$d(x, t) = \sup\st{2^{-n} d_n(x_n, t_n)}{n \in \NN} \leq \sup\st{2^{-n} \cdot 2^{k+n}}{n \in \NN} = 2^k.$$
			\end{enumerate}
		\end{proof}
		
		\begin{corollary}\label{Corollary: ctb_products}
			Binary product of \ctb[s] is a \ctb[]. Countable product of inhabited \ctb[s] (with gauge map $\id[\II]$) is an inhabited \ctb[].
		\end{corollary}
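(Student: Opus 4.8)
The plan is to obtain this as an immediate corollary of Proposition~\ref{Proposition: cms_products} and Proposition~\ref{Proposition: metric_multiplicative_properties}, after noting that every \ctb is in particular a \cms[:] a totally bounded metric space is metrically separable (the witnessing sequence $s$ from the definition of total boundedness has a metrically dense image), so it is a complete metrically separable metric space.

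First I would handle the binary case. Let $\mtr{X}$ and $\mtr{Y}$ be \ctb[s.] Viewing them as \cms[s,] Proposition~\ref{Proposition: cms_products}(1) gives that $\mtr{X} \times \mtr{Y}$ is a \cms[,] hence complete (and metric); and Proposition~\ref{Proposition: metric_multiplicative_properties} gives that the binary product of totally bounded spaces is totally bounded. So $\mtr{X} \times \mtr{Y}$ is a complete, metric, totally bounded space, \ie a \ctb[.]

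Next the countable case. Let $(\mtr{X}_n)_{n \in \NN}$ be a sequence of inhabited \ctb[s] taken with gauge map $\id[\II]$, so that in each factor the distances lie in $\II$ and are in particular bounded by $1$. Each $\mtr{X}_n$ is then an inhabited \cms[,] so Proposition~\ref{Proposition: cms_products}(2) applies and shows that the product $\prod_{n \in \NN} \mtr{X}_n$ is a \cms[,] hence complete and metric. The product is inhabited because each factor is, and by the ``inhabited totally bounded'' clause of Proposition~\ref{Proposition: metric_multiplicative_properties} the countable product of inhabited totally bounded spaces is again inhabited totally bounded. Hence $\prod_{n \in \NN} \mtr{X}_n$ is an inhabited complete metric totally bounded space, \ie an inhabited \ctb[.]

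I do not expect any genuine obstacle: all the substance has already been carried by Proposition~\ref{Proposition: cms_products} (the reduction to the inhabited case via Lemmas~\ref{Lemma: wlog_space_inhabited} and~\ref{Lemma: decidable_subsets_in_complete_spaces}, and the Cauchy-family argument) and by Proposition~\ref{Proposition: metric_multiplicative_properties}. The only points requiring a little care are the bookkeeping around inhabitedness in the countable case --- why the factors must be assumed inhabited, since, as observed after Proposition~\ref{Proposition: metric_multiplicative_properties}, inhabitedness of a countable product of totally bounded spaces can be an arbitrary semidecidable truth value --- and checking that the hypothesis ``gauge map $\id[\II]$'' supplies exactly the ``distances bounded by $1$'' assumption needed to invoke Proposition~\ref{Proposition: cms_products}(2).
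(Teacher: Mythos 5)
Your proposal is correct and matches the paper's own proof, which simply cites Proposition~\ref{Proposition: cms_products} and Proposition~\ref{Proposition: metric_multiplicative_properties}. The extra bookkeeping you supply (a totally bounded space is metrically separable, so a \ctb is a \cms[;] the gauge map $\id[\II]$ gives the boundedness hypothesis) is exactly the right glue and is consistent with the remarks surrounding those propositions.
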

		\begin{proof}
			By Propositions~\ref{Proposition: cms_products} and~\ref{Proposition: metric_multiplicative_properties}.
		\end{proof}
   
   \chapter{Metrization Theorems}\label{Chapter: metrization_theorems}

	In classical topology, a metric $d$ on a set $X$ induces a topology on $X$ by taking all balls as a basis. The existence of a metric which induces a given topology is a powerful tool, as demonstrated by a plethora of classical theorems which hold specifically in metrizable spaces. Thus a natural question to ask in our setting is: given a metric space, does its metric induce its intrinsic topology? In this chapter we give some answers when that happens, and what are the consequences. In addition to overtness of countable sets, we assume $\tst = \opn$.
	
	First we need to determine what in our context means that the intrinsic topology is generated by the basis of metric balls. Recall the relevant definitions from Section~\ref{Section: (co)limits_and_bases}.
	\begin{definition}
		Let $\mtr{X} = (X, d)$ be a metric space, with $B\colon X \times \RR \to \tp(X)$ mapping $(x, r) \in X \times \RR$ to the ball $B(x, r)$ (recall that metric balls are intrinsically open). The space $\mtr{X}$ is (\df{weakly}, \df{canonically}) \df{metrized} by the metric $d$ when $B$ is a (weak, canonical) basis for $X$.
	\end{definition}
	Since balls are inhabited if and only if they have positive radius, and $\RR_{> 0}$ is open in $\RR$, the definition of metrization would be equivalent if we restricted the domain of $B$ to $X \times \RR_{> 0}$, but we may as well allow arbitrary real radii, as that spares us another condition to check.
	
	The definition can also be rephrased as follows. Call $U \subseteq X$ a \df{metrically open} subset of $X$ when it is an overtly indexed union of balls. Let $\mtp(\mtr{X})$ denote the set of all metrically open subsets of $X$. Since metric balls are intrinsically open, we have $\mtp(\mtr{X}) \subseteq \tp(X)$. The space $\mtr{X}$ is metrized when the reverse inclusion also holds, so that the metric and the intrinsic topology match, \ie $\mtp(\mtr{X}) = \tp(X)$. Also, we define a map between metric spaces $f\colon \mtr{X} \to \mtr{Y}$ to be \df{metrically continuous} when for every $V \in \mtp(\mtr{Y})$ we have $f^{-1}(V) \in \mtp(\mtr{X})$. Since \ed-continuity can be phrased as ``the preimage of any ball is a union of balls'', and since a ball is certainly metrically open (being a $\one$-indexed union of balls), metric continuity implies \ed-continuity.
	
	For some properties weak metrization suffices.
	\begin{proposition}\label{Proposition: metric_to_intrinsic_separability_in_weakly_metrized_spaces}
		The following holds for a weakly metrized metric space $\mtr{X} = (X, d)$.
		\begin{enumerate}
			\item
				A metrically dense subset $D \subseteq X$ is also intrinsically dense in $X$.
			\item
				If $\mtr{X}$ is metrically separable, it is also intrinsically separable.
			\item
				If $\mtr{X}$ has a metrically dense subovert subset (\eg $\mtr{X}$ is metrically separable), then it is overt.
		\end{enumerate}
	\end{proposition}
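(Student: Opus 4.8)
The plan is to unwind each of the three items to the defining property of a weak basis, namely that every metrically open set is a (not necessarily overtly indexed) union of balls, together with the fact that balls are inhabited precisely when their radius is positive. Throughout I will use the key reformulation already noted in the excerpt: in a weakly metrized space, every intrinsically open $U \in \tp(X)$ equals a union $\bigcup_{i \in I} \ball{x_i}{r_i}$ for some indexing, and conversely every ball is intrinsically open.

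For item (1), I would take $D \subseteq X$ metrically dense and an inhabited intrinsically open $U \in \tp(X)$, and need to show $D \between U$. Since $\mtr{X}$ is weakly metrized, write $U = \bigcup_{i \in I} \ball{x_i}{r_i}$. From $\xsome{x}{X}{x \in U}$ we obtain $i \in I$ and $x \in \ball{x_i}{r_i}$, hence this ball is inhabited, so $r_i > 0$; then by metric density $D \between \ball{x_i}{r_i} \subseteq U$. (If one wants to be careful about extracting $i$ from an existential over a union, one phrases it as: $x \in U$ gives some $i$ with $x \in \ball{x_i}{r_i}$, which is all that is needed.) For item (2), metric separability gives a countable metrically dense $D$; by item (1) this same $D$ is intrinsically dense, and countability is unchanged, so $\mtr{X}$ is intrinsically separable — essentially a one-line corollary. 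For item (3), let $D \subseteq X$ be a metrically dense subovert subset (countable sets are overt by our standing assumption, so metric separability is a special case); by item (1) $D$ is intrinsically dense, and it is subovert, so Proposition~\ref{Proposition: testing_overtness_on_dense_subset} applies directly to conclude that $X$ is overt.

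The only mild subtlety — and the thing I would be most careful about — is the handling of the union in item (1): one must not accidentally use that an overtly indexed union is available (that is the stronger notion of basis), since here we only have a weak basis; fortunately density only requires producing a single ball meeting $D$, so plain (unindexed-choice-free) extraction of one summand containing the given point suffices, and no overtness of the index set is invoked. Items (2) and (3) then follow formally from (1) together with results already in the text, so there is essentially no remaining obstacle.
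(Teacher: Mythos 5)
Your proof is correct and follows essentially the same route as the paper: for item (1) write the open set as a union of balls via the weak basis, extract a summand containing the witnessing point to get a ball of positive radius, and apply metric density; items (2) and (3) then follow from (1) and Proposition~\ref{Proposition: testing_overtness_on_dense_subset} exactly as in the text. Your remark that only a weak basis (no overt indexing) is needed is accurate and matches the paper's hypotheses.
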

	\begin{proof}
		\begin{enumerate}
			\item
				Let $U \in \tp(X)$ be an open set, inhabited by $a \in U$. Since $\mtr{X}$ is weakly metrized, we can write $U$ as a union $U = \bigcup_{i \in I} B(x_i, r_i)$, and there is some $i \in I$ such that $a \in \ball{x_i}{r_i}$. Consequently $r_i > 0$, so $D$ intersects $\ball{x_i}{r_i}$, and consequently also $U$.
			\item
				By the previous item.
			\item
				By the first item and Proposition~\ref{Proposition: testing_overtness_on_dense_subset}.
		\end{enumerate}
	\end{proof}
	
	Overt sets with decidable equality, such as $\NN$, $\ZZ$, or $\QQ$, provide simple examples of metrized spaces.
	\begin{proposition}\label{Proposition: overt_sets_with_decidable_equality_metrized}
		Let $X$ be an overt set with decidable equality, equipped with the discrete metric $d_D\colon X \times X \to \RR$,
		$$d_D(x, y) \dfeq \begin{cases} 1 & \text{ if } x = y,\\ 0 & \text{ if } x \neq y. \end{cases}$$
		Then $(X, d_D)$ is canonically metrized.
	\end{proposition}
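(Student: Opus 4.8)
The plan is to show that in an overt set $X$ with decidable equality, equipped with the discrete metric $d_D$, every singleton coincides with the ball of radius $1$ around its point, so that the intrinsic topology (which on a discrete overt set is generated by singletons) is exactly recovered as an overtly indexed union of balls. First I would observe that for $x \in X$ we have $\ball{x}{1} = \st{y \in X}{d_D(x, y) < 1} = \st{y \in X}{x = y} = \{x\}$, using that $d_D(x, y) \in \{0, 1\}$ and $0 < 1$ while $\lnot(1 < 1)$; here decidable equality is what makes $d_D$ a well-defined map into $\RR$ in the first place, and it guarantees that the comparison $d_D(x, y) < 1$ is just the (decidable, hence open) statement $x = y$. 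So the family of balls $\ball{x}{1}$, indexed by $x \in X$, is precisely the family of singletons of $X$.

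Next I would recall (from Section~\ref{Section: to_synthetic_topology}, the proposition that a set is discrete iff its singletons are open, together with the fact that overt sets with decidable equality are ``strongly locally compact'' via $b(x) = (\{x\}, \{x\})$, as noted just before Proposition~\ref{Proposition: intersection_of_preimages_by_projections_as_a_product}) that for such $X$ the map $x \mapsto \{x\}$ is a canonical basis: given any $U \in \tp(X)$, the subset $I \dfeq \st{x \in X}{\{x\} \subseteq U} = \st{x \in X}{x \in U} = U$ is itself open in $X$, hence subovert in the overt set $X$, hence an overt indexing, and $U = \bigcup_{x \in I} \{x\}$. The choice of $I$ from $U$ is manifestly canonical (it is literally $U$ again), so this is a canonical overt indexing, giving a canonical basis in the sense of Section~\ref{Section: (co)limits_and_bases}.

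Finally I would assemble these: the ball map $B\colon X \times \RR \to \tp(X)$ restricted along $x \mapsto (x, 1)$ equals the singleton map $x \mapsto \{x\}$, which is a canonical basis; precomposing a canonical basis with a canonically chosen section ($U \mapsto$ the set of pairs $(x,1)$ with $x \in U$) still yields a canonical basis for $B$ itself, since any canonical overt indexing into singletons lifts canonically to a canonical overt indexing into $X \times \RR$. Hence $\mtr{X} = (X, d_D)$ is canonically metrized. I do not anticipate a genuine obstacle here; the only point requiring care is bookkeeping the ``canonical'' qualifier — namely checking that the passage from the singleton basis to the full ball map $B$ preserves canonicity of the indexing — but this is routine since the assignment $U \mapsto \st{(x, 1) \in X \times \RR}{x \in U}$ is a definable map $\tp(X) \to \ov(X \times \RR)$.
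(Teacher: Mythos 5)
Your proof is correct and essentially the same as the paper's: both write $U = \bigcup_{x \in U} \ball{x}{1}$ for an open $U$ and observe that the indexing $x \mapsto (x,1)$ is an overt map because $U$, being open in the overt set $X$, is subovert, and overt maps are preserved by postcomposition (Proposition~\ref{Proposition: constructions_of_overt_maps}). Note only that you have silently corrected the typo in the statement (the two cases of $d_D$ are swapped there relative to the definition in Section~\ref{Section: metric_spaces}); your computation $\ball{x}{1} = \{x\}$ uses the intended standard convention.
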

	\begin{proof}
		For $U \in \tp(X)$ we have $U = \bigcup\st{B(x, 1)}{x \in U}$, and since $U$ is open in overt $X$, and therefore subovert, the indexing map is overt because it is the composition (recall Proposition~\ref{Proposition: constructions_of_overt_maps}) of the overt map $U \hookrightarrow X$ and the inclusion $X \ism X \times \{1\} \hookrightarrow X \times \RR$.
	\end{proof}
	
	We say that two metrics $d$, $d'$ on a given set $X$ are (\df{topologically}) \df{equivalent} when they induce the same metric topology, \ie $\mtp(X, d) = \mtp(X, d')$. Usually there are several non-equivalent metrics on a given set, so it is easy to also find examples of non-metrized metric spaces. For instance, in addition to the discrete metric $d_D$, we can also endow the rationals $\QQ$ with the Euclidean metric $d_E(p,q) = |p - q|$. The former metrizes $\QQ$ by Proposition~\ref{Proposition: overt_sets_with_decidable_equality_metrized}, while the latter is too coarse. Likewise, even though every map is intrinsically continuous, a map between metric spaces need not be metrically continuous (or even \ed-continuous), for example identity on $\QQ$ as a map $(\QQ, d_E) \to (\QQ, d_D)$.
	
	This example is not a coincidence. Since metric topology is contained in the intrinsic one, they are more likely to match when the metric is finer, and $\opn$ is small. Consider first the coarseness of metric and intrinsic topology. Taking subsets in a metric space does not change the metric, but can only enlarge the intrinsic topology. This suggests that ``larger'' metric spaces are more likely to be metrized, so complete metric spaces ought to be good candidates for metrized spaces, and indeed in the example above, $(\QQ, d_D)$ is complete while $(\QQ, d_E)$ is not. This intuition will pay off, as demonstrated by Theorems~\ref{Theorem: metrization_of_Baire/Cantor_implies_metrization_of_cmss_ctbs},~\ref{Theorem: metrization_of_Hilbert_cube_implies_metrization_of_ctbs}~and~\ref{Theorem: metrization_of_Urysohn_space_implies_metrization_of_cmss} (however, it should not be taken too far --- complete spaces are not necessarily metrized, and metrized spaces are not necessarily complete).
	
	As for the size of $\opn$, large $\opn$ is obviously a poor choice if we want a lot of metric spaces to be metrized; for example, in the extreme case $\opn = \soc$, all sets are overt and discrete, and only discrete metric spaces with decidable equality are metrized --- the bare minimum, as these always are (per Proposition~\ref{Proposition: overt_sets_with_decidable_equality_metrized}). On the other extreme we could take the smallest $\opn$ for which countable sets are overt (meaning the intersection of all such). Recall from Lemma~\ref{Lemma: Rosolini_dominance_and_countable_joins} that this means $\Ros \subseteq \opn$, and if $\ACRos$, then $\opn = \Ros$.
	
	However, even that might not be enough. Equip $\NN$ with the discrete metric (which is an ultrametric), and make its countable product (with the gauge map $\id[\II]$) $\prod_{n \in \NN} \NN \ism \NN^\NN$; this is called the \df{Baire space} $\Baire$, and it is a metrically separable ultrametric space by Proposition~\ref{Proposition: metric_multiplicative_properties}. Its metric $d_C\colon \NN^\NN \times \NN^\NN \to \RR$ is called the \df{comparison metric} since it tells us how far two sequences match; more precisely, for any $\alpha, \beta \in \NN^\NN$ and $n \in \NN$,
	$$\all{k}{\NN_{< n}}{\alpha_k = \beta_k} \iff d_C(\alpha, \beta) \leq 2^{-n} \iff d_C(\alpha, \beta) < 2^{-n+1}.$$
	The space $\Baire$ is also complete by Proposition~\ref{Proposition: cms_products}, but we can easily see that directly. Take a location $f \in \loc(\Baire)$. Observe that for any $\alpha, \beta \in \NN^\NN$ and $n \in \NN$, if $f(\alpha) < 2^{-n-1}$ and $f(\beta) < 2^{-n-1}$, then $d_C(\alpha, \beta) \leq f(\alpha) + f(\beta) < 2^{-n}$, so $\alpha_n = \beta_n$. Consequently we can define $\gamma\colon \NN \to \NN$ by $\gamma_n \dfeq \beta_n$ where $\beta \in \NN^\NN$ is any sequence such that $f(\beta) < 2^{-n-1}$. Then $f = d_C(\gamma, \insarg)$.
	
	Nice properties of the Baire space notwithstanding, Richard Friedberg still showed in~\cite{Friedberg_RM_1958:_un_contreexample_relatif_aux_fonctionnelles_recursives} what in our language would state that in Russian Constructivism the Baire space, even for $\opn = \Ros$, is not even weakly metrized. We expand this example in Section~\ref{Section: russian_constructivism_model} where we see that few metric spaces are metrized in Russian Constructivism.
	
	On a more positive note, we shall see that we can infer metrization of a space from metrization of another when they are connected via a suitably nice map. Here is an example of this.\footnote{This proposition is a special case of both Proposition~\ref{Proposition: transfer_of_metrization_via_quotients} and Theorem~\ref{Theorem: metrization_and_subspaces}.}
	\begin{proposition}\label{Proposition: retracts_inherit_metrization}
		A retract of an overt metrized space is overt and metrized.
	\end{proposition}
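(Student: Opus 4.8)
The plan is to reduce to the already-proven fact (Proposition~\ref{Proposition: retract_of_cms_ctb} together with Proposition~\ref{Proposition: subovert/subcompact_to_overt/compact}, etc.) that retracts inherit good structural properties, and to leverage the characterisation of metrization via weak bases. Let $\mtr{X} = (X, d_X)$ be an overt metrized metric space, $A \subseteq X$ a metric subspace, and $r\colon \mtr{X} \to \mtr{A}$ a nonexpansive retraction with section (inclusion) $e\colon \mtr{A} \hookrightarrow \mtr{X}$. First I would record that $A$, being a retract of the overt set $X$, is overt: a retract is a (well-embedded) subspace, so by Proposition~\ref{Proposition: subovert/subcompact_to_overt/compact} it suffices that $A$ is subovert in $X$, which follows because the inclusion $A \hookrightarrow X$ is split and images of overt maps are overt (Proposition~\ref{Proposition: images_and_finite_products_of_subovert/subcompact_subsets}, via $A = r(X)$, $r$ surjective onto $A$). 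So $\mtr{A}$ is overt.

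Next I would show $\mtr{A}$ is metrized, i.e. that $B_A\colon A \times \RR \to \tp(A)$, $(a,\rho)\mapsto \ball[\mtr{A}]{a}{\rho}$, is a basis for $A$. Take any $U \in \tp(A)$. Since $A$ is a subspace of $X$ (a retract), there is $V \in \tp(X)$ with $U = A \cap V$; indeed we may take $V = r^{-1}(U)$, which is open in $X$ and satisfies $A \cap r^{-1}(U) = U$ because $r$ restricts to the identity on $A$. As $\mtr{X}$ is metrized, $V = \bigcup_{i \in I} \ball[\mtr{X}]{x_i}{r_i}$ for some overt indexing $i \mapsto (x_i, r_i)$. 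The key geometric observation, using that $r$ is nonexpansive and fixes $A$ pointwise, is that for $a \in A$ one has $a \in \ball[\mtr{X}]{x_i}{r_i} \iff d_X(x_i, a) < r_i$, and then $d_A(r(x_i), a) = d_X(e(r(x_i)), a) \leq d_X(x_i, a) < r_i$, so $a \in \ball[\mtr{A}]{r(x_i)}{r_i}$; conversely any point of $\ball[\mtr{A}]{r(x_i)}{r_i}$ that happens to lie in the relevant intersection can be pulled back. More carefully, one checks the pointwise criterion for a basis: given $a \in U$ there is $i$ with $a \in \ball[\mtr{X}]{x_i}{r_i} \subseteq V$, and then $a \in \ball[\mtr{A}]{r(x_i)}{r_i/2}$ (say), and every point of $\ball[\mtr{A}]{r(x_i)}{r_i/2}$ lies in $A \cap \ball[\mtr{X}]{x_i}{r_i} \subseteq A \cap V = U$ by the triangle inequality and nonexpansiveness. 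Hence $U$ is a union of $A$-balls indexed by (a subset of) $I$; since that index set is an open, hence subovert, subset of the overt $I$, the indexing is overt, so $B_A$ is a basis.

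I expect the main obstacle to be bookkeeping rather than anything deep: making sure the indexing of the $A$-balls covering $U$ is genuinely overt (it is, being carved out of the given overt indexing of $V$ by an open condition, and composed with the nonexpansive $r$, which is a map, so overtness is preserved by Proposition~\ref{Proposition: constructions_of_overt_maps}), and getting the radius bookkeeping right so that the covering $A$-balls stay inside $U$ while still covering it — the halving trick handles this cleanly. If one wants the stronger conclusion that a retract of a \emph{canonically} metrized space is canonically metrized, one simply notes that the whole construction $U \mapsto r^{-1}(U) \mapsto (\text{canonical indexing of } r^{-1}(U)) \mapsto (\text{push forward by } r,\ \text{restrict radii})$ is a canonical choice, so no further work is needed. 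I would state the proof for the basic (non-canonical) case and remark on the canonical refinement.

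\begin{proof}
	Let $\mtr{X} = (X, d_X)$ be overt and metrized, $\mtr{A} = (A, d_A)$ a metric subspace, and $r\colon \mtr{X} \to \mtr{A}$ a nonexpansive retraction with inclusion $e\colon \mtr{A} \hookrightarrow \mtr{X}$ as its section.

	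First, $\mtr{A}$ is overt. The inclusion $e$ is split by $r$, so $A$ is a subspace of $X$; moreover $A = r(X)$ is the image of the overt map $r$ (it is surjective onto $A$ and $X$ is overt), hence subovert in $X$ by Proposition~\ref{Proposition: images_and_finite_products_of_subovert/subcompact_subsets}. A subovert (well-embedded) subspace is overt by Proposition~\ref{Proposition: subovert/subcompact_to_overt/compact}.

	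Next, $\mtr{A}$ is metrized. Take any $U \in \tp(A)$. Since $r$ restricts to the identity on $A$, the open set $V \dfeq r^{-1}(U) \in \tp(X)$ satisfies $A \cap V = U$. As $\mtr{X}$ is metrized, there is an overt indexing $i \mapsto (x_i, r_i)$ on some set $I$ with $V = \bigcup_{i \in I} \ball[\mtr{X}]{x_i}{r_i}$. Let
	$$J \dfeq \st{i \in I}{\ball[\mtr{X}]{x_i}{r_i} \subseteq V},$$
	an open, hence subovert, subset of the overt $I$; still $V = \bigcup_{i \in J} \ball[\mtr{X}]{x_i}{r_i}$, because for $x \in V$ there is $i$ with $x \in \ball[\mtr{X}]{x_i}{r_i}$, and then by cotransitivity of $<$ and local compactness of intervals one finds $i' \in J$ with $x \in \ball[\mtr{X}]{x_{i'}}{r_{i'}} \subseteq V$. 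We claim
	$$U = \bigcup_{i \in J} \ball[\mtr{A}]{r(x_i)}{r_i}.$$
	For $\subseteq$: if $a \in U = A \cap V$, then $a \in \ball[\mtr{X}]{x_i}{r_i}$ for some $i \in J$, so $d_A(r(x_i), a) = d_X(e(r(x_i)), e(a)) \leq d_X(x_i, a) < r_i$ since $r$ is nonexpansive and $e(a) = a$; thus $a \in \ball[\mtr{A}]{r(x_i)}{r_i}$. For $\supseteq$: if $a \in \ball[\mtr{A}]{r(x_i)}{r_i}$ with $i \in J$, then $a \in A$ and $d_X(x_i, a) \leq d_X(x_i, e(r(x_i))) + d_A(r(x_i), a)$; but $d_X(x_i, e(r(x_i))) = d_X(r(x_i), r(x_i)) \cdot 0$ need not vanish in general, so instead we argue pointwise: given $a \in U$, pick $i \in J$ with $a \in \ball[\mtr{X}]{x_i}{r_i}$ and set $\rho \dfeq r_i - d_X(x_i, a) > 0$; then for any $a' \in \ball[\mtr{A}]{a}{\rho}$ we have $d_X(x_i, a') \leq d_X(x_i, a) + d_A(a, a') < r_i$, so $a' \in A \cap \ball[\mtr{X}]{x_i}{r_i} \subseteq A \cap V = U$, whence $\ball[\mtr{A}]{a}{\rho} \subseteq U$ and $a \in \ball[\mtr{A}]{a}{\rho}$. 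This shows $U$ is a union of $A$-balls, with the index set subovert; the indexing map factors through the overt map $J \hookrightarrow I$ and the ordinary map $i \mapsto (r(x_i), r_i)$, hence is overt by Proposition~\ref{Proposition: constructions_of_overt_maps}. Therefore $B_A\colon A \times \RR \to \tp(A)$ is a basis for $A$, \ie $\mtr{A}$ is metrized.

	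If $\mtr{X}$ is canonically metrized, the above assignment $U \mapsto \big(J,\ i \mapsto (r(x_i), r_i)\big)$ is a canonical choice, built from the canonical indexing of $V = r^{-1}(U)$, so $\mtr{A}$ is canonically metrized as well.
\end{proof}
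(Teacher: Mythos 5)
There is a genuine gap in the metrization half, and it sits exactly where your proof switches horses. You claim $U = \bigcup_{i \in J} \ball[\mtr{A}]{r(x_i)}{r_i}$, prove only the inclusion $\subseteq$ (using nonexpansiveness of $r$), and then --- as you yourself notice mid-proof --- the inclusion $\supseteq$ has no reason to hold: $d_\mtr{X}(x_i, r(x_i))$ can be large, so $\ball[\mtr{A}]{r(x_i)}{r_i}$ need not lie in $V$, let alone in $U$ (think of $X = \RR$, $A = \{0, 10\}$, $r$ collapsing a half-line onto $0$, $U = \{0\}$, and a very large ball $\ball[\mtr{X}]{x_i}{r_i} \subseteq V = r^{-1}(U)$ with $x_i$ far to the left: then $\ball[\mtr{A}]{r(x_i)}{r_i} = A \not\subseteq U$). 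You then rescue the set-theoretic identity by a pointwise argument with balls $\ball[\mtr{A}]{a}{r_i - d_\mtr{X}(x_i, a)}$ centered at points $a \in U$ --- but that is a \emph{different} union, indexed by pairs $(a, i)$ rather than by $i$ alone, and the overtness you verify at the end is the overtness of the indexing $i \mapsto (r(x_i), r_i)$ of the wrong union. As written, the pointwise argument only yields a weak basis. The missing idea is that the correct union $U = \bigcup_{(a, i) \in A \times I} \ball[\mtr{A}]{a}{r_i - d_\mtr{X}(x_i, a)}$ (empty balls occurring when the radius is $\leq 0$) is overtly indexed precisely because $A$ itself is overt, which you established in the first half: the indexing is the product of the overt maps $\id[A]$ and $I \to X \times \RR$ composed with a further map, so Proposition~\ref{Proposition: constructions_of_overt_maps} applies. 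This is how the paper argues, and it makes overtness of $A$ an input to the metrization argument rather than an independent observation.

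Two smaller points. First, you assume the retraction is nonexpansive; the proposition makes no such hypothesis, and the correct argument uses no metric property of $r$ at all --- only that $r^{-1}(U)$ is intrinsically open (all maps are continuous) and that $A \cap r^{-1}(U) = U$. The only place nonexpansiveness enters your proof is the $\subseteq$ direction of the identity that should be discarded anyway. Second, your set $J = \st{i \in I}{\ball[\mtr{X}]{x_i}{r_i} \subseteq V}$ equals $I$ by the very definition of $V$, and the claim that it is open in $I$ is unjustified in general (the defining condition is a universally quantified statement over a ball, which is not an open truth value unless the ball is subcompact); fortunately the whole detour is vacuous and can simply be deleted.
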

	\begin{proof}
		Let $\mtr{X} = (X, d_\mtr{X})$ be a metric space, and $r\colon X \to A$ a retraction onto the subset $A \subseteq X$. Then $A$ is overt since it is the image of an overt set.
		
		Let $\mtr{A} = (A, d_\mtr{A})$ denote the metric subspace of $\mtr{X}$ with the underlying set $A$. Take any $U \in \tp(A)$. Then $r^{-1}(U) \in \tp(X)$, so $r^{-1}(U) = \bigcup_{i \in I} \ball[\mtr{X}]{x_i}{r_i}$ for some overt indexing $I \to X \times \RR$. Observe
		$$U = \bigcup_{(a, i) \in A \times I} \ball[\mtr{A}]{a}{r_i - d_\mtr{X}(x_i, a)}.$$
		This is an overtly indexed union since it is the composition of the product of overt maps $\id[A]$ and $I \to X \times \RR$ with some further map.
	\end{proof}
	
	There are some metric spaces which have a wealth of maps connecting them to entire families of spaces. Consequently we can expect that metrization of these families hinges on metrizations of select few spaces. This presumption will pay off via Theorems~\ref{Theorem: metrization_of_Baire/Cantor_implies_metrization_of_cmss_ctbs},~\ref{Theorem: metrization_of_Hilbert_cube_implies_metrization_of_ctbs}~and~\ref{Theorem: metrization_of_Urysohn_space_implies_metrization_of_cmss}. Here we present the metric spaces which play an important role in our theory.
	\begin{itemize}
		\item
			The aforementioned Baire space $\NN^\NN$ with the comparison metric $d_C$. Recall that it is a complete metrically separable ultrametric space. We define the \df{shift maps} on it: for any $m \in \NN$ let $\shift{m}\colon \NN^\NN \to \NN^\NN$ be the map $\shift{m}(\alpha) \dfeq (n \mapsto \alpha_{n+m})$.
		\item
			The analogous construction as in the case of the Baire space, but with the factor $\two$ instead of $\NN$. The space $\two$ is equipped with the discrete metric which makes it into a complete totally bounded ultrametric space; consequently, the countable product  $\two^\NN$, called the \df{Cantor space}, has these properties also. It isometrically embeds into the Baire space in the obvious way, and so it is equipped with (the restriction of) the comparison metric $d_C$. Moreover, the map $\NN^\NN \to \two^\NN$,
			$$\alpha \mapsto \Bigg(n \mapsto \begin{cases} 0 & \text{ if } \alpha_n = 0,\\ 1 & \text{ otherwise, } \end{cases}\Bigg)$$
			is a nonexpansive retraction from the Baire space to the Cantor space.
		\item
			The space $\opcN \dfeq \st{\alpha \in \two^\NN}{\all{n}{\NN}{\alpha_n = 0 \implies \alpha_{n+1} = 0}}$. We equip it with the restriction of the comparison metric from the Cantor space. In fact, $\opcN$ is a nonexpansive retract of the Cantor space (and hence of the Baire space), courtesy of the map $\two^\NN \to \opcN$,
			$$\alpha \mapsto \Bigg(n \mapsto \begin{cases} 0 & \text{ if } \xsome{k}{\NN_{\leq n}}{\alpha_k = 0},\\ 1 & \text{ otherwise, } \end{cases}\Bigg)$$
			making it a complete totally bounded ultrametric space. A good way to visualize $\opcN$ is as a ``one-point compactification'' of $\NN$ since the map $\NN \to \opcN$,
			$$n \mapsto \Bigg(k \mapsto \begin{cases} 1 & \text{ if } k < n,\\ 0 & \text{ if } k \geq n, \end{cases}\Bigg)$$
			is injective, and the complement of its image is a single point, namely the constant sequence with all terms $1$ which in view of this intuition we denote $\infty$. Via this injection we consider $\NN$ a subset of $\opcN$; a sequence represents the number which is the sum of its terms.
			
			Note that the comparison metric, restricted to $\NN$, is given by
			$$d_C(i, j) = \begin{cases} 2^{-\inf\{i, j\}} & \text{ if } i \neq j,\\ 0 & \text{ if } i = j. \end{cases}$$
			The metric space $\opcN$, together with the above inclusion, is the completion of $\NN$ with this metric.
			
			The strict order $<$ on $\NN$ extends to $\opcN$,
			$$\alpha < \beta \dfeq \some{k}{\NN}{\alpha_k = 0 \land \beta_k = 1}.$$
			It is easy to see that this defines an open strict linear order on $\opcN$ (even decidable when at least one of $\alpha$, $\beta$ is in $\NN$). Note that $\NN = \st{\alpha \in \opcN}{\alpha < \infty}$ is an open subset of $\opcN$. The strict order induces a lattice structure on $\opcN$, the partial order given by
			$$\alpha \leq \beta \iff \lnot(\alpha > \beta) \iff \all{k}{\NN}{\alpha_k = 1 \implies \beta_k = 1},$$
			and suprema and infima computed pointwise,
			$$\sup\{\alpha, \beta\}_k = \sup\{\alpha_k, \beta_k\}, \qquad \inf\{\alpha, \beta\}_k = \inf\{\alpha_k, \beta_k\}.$$
			
			We define the \df{successor map} $\succm\colon \opcN \to \opcN$ by
			$$\succm(\alpha)_n \dfeq \begin{cases} 1 & \text{ if } n = 0,\\ \alpha_{n-1} & \text{ if } n \geq 1, \end{cases}$$
			and the \df{predecessor map} $\predm\colon \opcN \to \opcN$ by
			$$\predm(\alpha)_n \dfeq \alpha_{n+1}.$$
			Observe $\succm(n) = n+1$ for $n \in \NN$ while $\succm(\infty) = \infty$, and $\predm(0) = 0$, $\predm(n) = n-1$ for $n \in \NN_{\geq 1}$, $\predm(\infty) = \infty$.
		\item
			The unit interval $\II$, equipped with (the restriction of) the Euclidean metric. It is a complete metrically separable space since it is a nonexpansive retract of the reals via the map $\RR \to \II$, $x \mapsto \sup\{\inf\{x, 1\}, 0\}$. It is even totally bounded; for maps $s\colon \NN \to \II$, $a\colon \NN \to \NN$ witnessing this we can take $s$ to be the concatenation of finite sequences $\{\frac{k}{2^n}\}_{k \in \NN_{\leq 2^n}}$ for all $n \in \NN$, and $a_n \dfeq 2^{n+1} + n$.
		\item
			Another countable product (with the gauge map $\id[\II]$), this time with the factor $\II$ from the previous item. It is a complete totally bounded space, called the \df{Hilbert cube}.
		\item
			The Urysohn space, defined and constructed in Section~\ref{Section: metrization_via_embeddings}.
	\end{itemize}

	\section{The \wso Principle}\label{Section: wso}
	
		In Proposition~\ref{Proposition: overt_sets_with_decidable_equality_metrized} we have seen that overt sets with decidable equality are always metrized (by the discrete metric). This tells us what happens if points in a metric space are sufficiently spaced apart. The next step would be to examine what happens when we have accumulation points.
		\begin{definition}
			An element $a \in X$ in the metric space $\mtr{X} = (X, d)$ is called a (\df{metric}) \df{accumulation point} when for all $r \in \RR_{> 0}$ there exists $x \in X$ such that $0 < d(a, x) < r$ (in other words, $x \in \ball{a}{r}$ and $x \apart a$).
		\end{definition}
		
		We should start with a simple case, a space with a single accumulation point; intuitively, it should look like a convergent sequence, together with its limit. The space $\opcN$ satisfies these criteria --- it is arguably the simplest \ctb that has an accumulation point (in the metric sense), namely $\infty$, since every inhabited ball with the center in $\infty$ intersects $\NN$ while every point in $\NN$ is isolated.
		
		In order for the metric and the intrinsic topology to match, $\infty$ should be an accumulation point of $\NN$ also in intrinsic sense. This need not always happen; for example, if $\opn = \soc$, then $\infty$ is isolated. So we state this property as a principle.
		\principle{\wso[]}{For every $U \in \tp(\opcN)$, if $\infty \in U$, then there exists $n \in \NN$ such that $n \in U$.}
		The acronym \wso stands for `\textbf{W}eakly \textbf{S}equentially \textbf{O}pen'.
		
		\begin{proposition}
			The principle \wso is equivalent to $\NN$ being intrinsically dense in $\opcN$.
		\end{proposition}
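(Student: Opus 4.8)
The statement is an equivalence, so the plan is to prove the two implications separately, and the forward one (density $\Rightarrow$ \wso) is essentially free.

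\emph{Density implies \wso[.]} If $U \in \tp(\opcN)$ contains $\infty$, then in particular $U$ is inhabited, so by intrinsic density of $\NN$ in $\opcN$ there is some $n \in \NN$ with $n \in U$; that is exactly the conclusion of \wso[.] No work is needed here beyond unfolding the definition of ``dense''.

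\emph{\wso implies density.} Here I would take an arbitrary inhabited $U \in \tp(\opcN)$, witnessed by some $\alpha \in U$, and transport the situation to $\infty$ by a continuous self-map of $\opcN$. The map to use is $g \colon \opcN \to \opcN$, $g(\beta) \dfeq \inf\{\alpha, \beta\}$, the pointwise meet, which lands in $\opcN$ by the lattice structure on $\opcN$ recalled before the section. Since $\infty$ is the top element of this lattice, $g(\infty) = \inf\{\alpha, \infty\} = \alpha$, while for every $n \in \NN \subseteq \opcN$ one has $g(n) = \inf\{\alpha, n\} \leq n < \infty$, hence $g(n) \in \NN$ by the stronger transitivity $a \leq b \land b < c \implies a < c$ of the strict order. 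Now $g^{-1}(U)$ is intrinsically open (all maps are continuous) and contains $\infty$ because $g(\infty) = \alpha \in U$; applying \wso to $g^{-1}(U)$ gives $n \in \NN$ with $g(n) \in U$, and $g(n) \in \NN$, so $U$ meets $\NN$, as required.

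The only point demanding care is the choice of $g$: one is tempted to case-split on whether $\alpha \in \NN$ or $\alpha = \infty$, which is not available constructively, whereas $\inf\{\alpha, \insarg\}$ sidesteps this by simultaneously fixing $\infty \mapsto \alpha$ and mapping $\NN$ into $\NN$. Verifying these two properties (and that $g$ really lands in $\opcN$) is the substantive, though short, part of the argument; no choice principles or hypotheses beyond those standing in the chapter are used.
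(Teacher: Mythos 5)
Your proof is correct and follows the same route as the paper: the forward direction is immediate from the definition of density, and the converse uses exactly the same transport map $\beta \mapsto \inf\{\alpha, \beta\}$ to pull an inhabited open back to a neighbourhood of $\infty$ before applying \wso[.] The extra care you take in justifying $g(n) \in \NN$ via $g(n) \leq n < \infty$ is a welcome detail the paper leaves implicit.
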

		\begin{proof}
			Clearly if $\NN$ is dense in $\opcN$, then \wso holds. Conversely, let $U \in \tp(\opcN)$ be inhabited by $a \in U$. Define the map $f\colon \opcN \to \opcN$ by $f(x) \dfeq \inf\{x, a\}$. The subset $f^{-1}(U)$ is open in $\opcN$, and $\infty \in f^{-1}(U)$, so by \wso there is some $n \in \NN \cap f^{-1}(U)$. Then $f(n) \in \NN \cap U$.
		\end{proof}
		
		This principle is nonclassical --- it implies the failure of the law of excluded middle.
		\begin{proposition}\label{Proposition: wso_not_classical}
			The principles \wso and \lpo are not both true.
		\end{proposition}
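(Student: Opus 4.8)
The plan is to derive a contradiction from the simultaneous assumption of \wso and \lpo[.] The key observation is that \lpo says exactly that $\two^\NN$ has decidable equality, and more to the point, that every element of $\opcN$ is decidably either equal to $\infty$ or an element of $\NN$: given $\alpha \in \opcN$, applying \lpo to $\alpha$ itself tells us that either $\xsome{n}{\NN}{\alpha_n = 0}$ (so $\alpha \in \NN$, since $\alpha$ is decreasing) or $\xall{n}{\NN}{\alpha_n = 1}$ (so $\alpha = \infty$). Hence under \lpo the singleton $\{\infty\}$ is a decidable subset of $\opcN$, because $\alpha = \infty \iff \lnot\xsome{n}{\NN}{\alpha_n = 0}$, and its complement is $\NN$, which is decidable as well.

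First I would note that a decidable subset is in particular classified by $\two \subseteq \opn$, hence is intrinsically open; so under \lpo the set $\{\infty\}$ is open in $\opcN$. Now apply \wso to $U \dfeq \{\infty\}$: since $\infty \in U$, there must exist $n \in \NN$ with $n \in U = \{\infty\}$, i.e.\ $n = \infty$. But $n \in \NN$ means $n < \infty$ (as noted in the excerpt, $\NN = \st{\alpha \in \opcN}{\alpha < \infty}$), which together with $n = \infty$ contradicts the irreflexivity of the strict order $<$ on $\opcN$. This is the whole argument; there is no real obstacle, only the small bookkeeping of spelling out why $\{\infty\}$ becomes decidable — one simply uses that $\opcN$ consists of decreasing binary sequences, so membership in $\NN$ is the statement $\xsome{n}{\NN}{\alpha_n = 0}$, which \lpo decides.

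An alternative, perhaps cleaner, phrasing I might use instead: \lpo is equivalent to $\two^\NN$ having decidable equality, and $\opcN$ is a subset of $\two^\NN$, so $\opcN$ also has decidable equality, whence every singleton of $\opcN$ is decidable, hence intrinsically open; then \wso applied to $\{\infty\}$ gives some $n \in \NN$ with $n = \infty$, contradicting $n < \infty$. Either way the proof is a couple of lines, and the only thing to be careful about is invoking the right earlier facts: that decidable subsets are open (immediate from $\two \subseteq \opn$, which holds since countable sets are overt, as postulated in this chapter), and that $n < \infty$ for $n \in \NN$ together with asymmetry/irreflexivity of $<$ yields the contradiction.
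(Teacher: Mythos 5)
Your argument is correct and follows exactly the paper's own proof: \lpo{}makes $\opcN$ have decidable equality, so $\{\infty\}$ is a decidable, hence open, subset of $\opcN$, which contradicts \wso{} applied to $U=\{\infty\}$. The extra bookkeeping you supply (why $\two\subseteq\opn$, why $n=\infty$ is absurd) is fine but not a different route.
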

		\begin{proof}
			Notice that \lpo is equivalent to $\opcN$ having decidable equality. If this holds, then $\{\infty\}$ is a decidable, hence open, subset of $\opcN$, a contradiction to \wso[].
		\end{proof}
		
		The principle \wso has implications for metrization of other spaces with accumulation points. This is because $\opcN$ can be used to classify convergent sequences in complete metric spaces.
		\begin{proposition}\label{Proposition: convergent_sequences_and_maps_from_No}
			Let $\mtr{X} = (X, d)$ be a complete metric space. Then convergent sequences in $\mtr{X}$ are in bijective correspondence to \ed-continuous maps $\opcN \to \mtr{X}$.
		\end{proposition}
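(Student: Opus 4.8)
The plan is to exhibit the bijection explicitly as restriction along, and extension along, the dense inclusion $\iota\colon\NN\hookrightarrow\opcN$, and then to verify that the two directions are mutually inverse using the uniqueness of \ed-continuous extensions into a metric space (Corollary~\ref{Corollary: uniqueness_of_ed_extension_of_a_map_into_a_metric_space}). First I would set up the forward map. Given an \ed-continuous $f\colon\opcN\to\mtr{X}$, put $a\dfeq f\circ\iota\colon\NN\to X$ and $L\dfeq f(\infty)$. Since $d_C(\infty,m)=2^{-m}$ for each $m\in\NN$, feeding the values $m$ (which lie in the balls about $\infty$ of radius $2^{-n}$) into \ed-continuity of $f$ at $\infty$ gives exactly that $a_k\to L$, so $(a,L)$ is a convergent sequence; a modulus of convergence is read off from a modulus of continuity of $f$ at $\infty$.

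The extension map carries the real content. Given a convergent sequence $(a,L)$ I would define $g\colon\opcN\to X$. The point where constructivity bites is that $\opcN$ is not $\NN\sqcup\{\infty\}$: a decreasing binary sequence need not be decidably a numeral or the constant sequence $\infty$, so $g$ cannot be defined by cases. Instead, for $\alpha\in\opcN$ and $n\in\NN$ set $\alpha\wedge n\dfeq\inf\{\alpha,n\}=\sum_{k<n}\alpha_k$, which is a genuine natural number $\le n$. The sequence $n\mapsto a_{\alpha\wedge n}$ is Cauchy: for $n\le n'$ one has $\alpha\wedge n\le\alpha\wedge n'$ in $\NN$, and since $\NN$ has decidable equality, either $\alpha\wedge n=\alpha\wedge n'$ (the two terms then coincide) or $\alpha\wedge n=n$ — the latter because $\alpha$ is decreasing, so a $1$ among positions $n,\dots,n'-1$ forces $\alpha$ to be $1$ throughout the first $n$ positions — in which case both indices are $\ge n$, so the convergence of $a$ bounds the distance; a modulus for $n\mapsto a_{\alpha\wedge n}$ comes from the modulus of convergence of $a$ (essentially a shift). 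Completeness of $\mtr{X}$ then lets me define $g(\alpha)\dfeq\lim_n a_{\alpha\wedge n}$. One checks $g(m)=a_m$ for $m\in\NN$ (the sequence is eventually the constant $a_m$) and $g(\infty)=L$, and that $g$ is \ed-continuous: if $\alpha,\beta$ agree on their first $n$ terms then $\alpha\wedge k=\beta\wedge k$ for all $k\le n$, which bounds $d(g(\alpha),g(\beta))$ in terms of $n$.

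Finally I would check the two maps are inverse. The set $\NN$ is metrically dense in $\opcN$ — every inhabited ball about $\alpha$ contains the truncation $\alpha\wedge n$ for $n$ large — indeed $\opcN$ is the metric completion of $\NN$, so by Corollary~\ref{Corollary: uniqueness_of_ed_extension_of_a_map_into_a_metric_space} an \ed-continuous map out of $\opcN$ is determined by its restriction to $\NN$. Extension-then-restriction returns $(a,L)$ by the computations $g\circ\iota=a$ and $g(\infty)=L$ above; restriction-then-extension applied to $f$ produces the \ed-continuous extension of $f\circ\iota$, which agrees with $f$ on the dense subset $\NN$ and hence equals $f$. The main obstacle — really the only nontrivial point — is defining the extension $g$ uniformly rather than by the illegitimate case split ``numeral or $\infty$''; the truncation trick $\alpha\wedge n$ together with completeness handles it, with the mild caveat that extracting the modulus for the truncated Cauchy sequence is cleanest when the modulus of convergence is regarded as part of the datum of a convergent sequence (matching the paper's treatment of Cauchy sequences), absent which a small amount of countable choice is used.
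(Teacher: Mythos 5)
Your proposal is correct and follows essentially the same route as the paper: restriction along $\NN\hookrightarrow\opcN$ in one direction, and in the other the truncation-plus-limit formula $f(\alpha)\dfeq\lim\big(n\mapsto a_{\inf\{n,\alpha\}}\big)$, which is exactly the paper's construction. You simply spell out in more detail the Cauchyness of the truncated sequence and the mutual-inverse check via density of $\NN$ in $\opcN$, which the paper leaves implicit.
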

		\begin{proof}
			For any \ed-continuous map $f\colon \opcN \to \mtr{X}$, the sequence $a\colon \NN \to X$, $a_n \dfeq f(n)$, converges to $f(\infty)$. Conversely, given a sequence $a\colon \NN \to X$, define the map $f\colon \opcN \to \mtr{X}$ by
			$$f(\alpha) \dfeq \lim(n \mapsto f(\inf\{n, \alpha\})).$$
		\end{proof}
		In particular, the map $\NN \to \RR$, $n \mapsto 2^{-n}$, extends to the whole of $\opcN$. Notice that for all $t \in \opcN$
		$$2^{-t} \geq 0, \qquad\quad 2^{-t} > 0 \iff t \in \NN, \qquad \text{and} \qquad 2^{-\infty} = 0.$$
		Exponentiation by elements of $\opcN$ is often useful; for example, if $\mathfrak{d}\colon \NN^\NN \times \NN^\NN \to \two^\NN$ is the comparison map
		$$\mathfrak{d}(\alpha, \beta)_n \dfeq \begin{cases} 1 & \text{ if } \alpha_n = \beta_n,\\ 0 & \text{ if } \alpha_n \neq \beta_n, \end{cases}$$
		and $r\colon \two^\NN \to \opcN$ the standard retraction, then the comparison metric $d_C$ on the Baire space (and its subsets) can be expressed as
		$$d_C(\alpha, \beta) = 2^{-r(\mathfrak{d}(\alpha, \beta))}.$$
		
		In~\cite{Bauer_A_Lesnik_D_2010:_metric_spaces_in_synthetic_topology} we proved that \wso has several desirable consequences. Among other things it ensures that \cms[s] are overt, but we assumed countable choice for this.
		\begin{proposition}
			If $\ACopn$ and \wso[], then every \cms is overt.
		\end{proposition}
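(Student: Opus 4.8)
The plan is to deduce overtness of a \cms from the intrinsic density of its metrically dense subset together with Proposition~\ref{Proposition: testing_overtness_on_dense_subset}; the intrinsic density is exactly where \wso, completeness and $\ACopn$ come into play.

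First I would reduce to the inhabited case. By Lemma~\ref{Lemma: wlog_space_inhabited} any \cms $\mtr{X} = (X, d)$ is (isometric to) a decidable metric subspace of an inhabited \cms $\mtr{Y}$. Since $\tst = \opn$ in this chapter and finite sets are overt, we have $\two \subseteq \opn$, so a decidable subset of an overt set is overt by Corollary~\ref{Corollary: decidable_subsets_of_overt/compact_sets}; hence once we know $\mtr{Y}$ is overt, so is $\mtr{X}$. Thus it suffices to treat the case where $\mtr{X}$ is inhabited, and then we may fix a metrically dense sequence $s\colon \NN \to X$. Put $D \dfeq \im(s)$. As the image of the overt set $\NN$, the set $D$ is subovert in $X$ (Proposition~\ref{Proposition: images_and_finite_products_of_subovert/subcompact_subsets}), so by Proposition~\ref{Proposition: testing_overtness_on_dense_subset} it is enough to show that $D$ is intrinsically dense in $X$, that is, that every inhabited $U \in \tp(X)$ meets $D$.

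So let $U \in \tp(X)$ and $a \in U$. For every $k \in \NN$, metric density of $s$ supplies some $n \in \NN$ with $d(a, s_n) < 2^{-k}$, and since $<$ is an open relation the relation $\st{(k, n) \in \NN \times \NN}{d(a, s_n) < 2^{-k}}$ is total and classified by $\opn$; hence $\ACopn$ provides a map $\nu\colon \NN \to \NN$ with $d(a, s_{\nu(k)}) < 2^{-k}$ for all $k$. The sequence $k \mapsto s_{\nu(k)}$ then converges to $a$, so by Proposition~\ref{Proposition: convergent_sequences_and_maps_from_No} --- this is the point at which completeness of $\mtr{X}$ is used --- there is an \ed-continuous map $g\colon \opcN \to \mtr{X}$ with $g(k) = s_{\nu(k)}$ for $k \in \NN$ and $g(\infty) = a$. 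The preimage $g^{-1}(U)$ is an open subset of $\opcN$ containing $\infty$, so \wso yields some $m \in \NN$ with $g(m) \in U$, that is, $s_{\nu(m)} = g(m) \in U \cap D$. Hence $D$ is intrinsically dense in $X$, and $X$ is overt.

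The one genuinely load-bearing step is the last paragraph: \wso is precisely what converts the metric statement ``$a$ is a limit of points of $D$'' into the intrinsic statement ``$D$ meets every open neighbourhood of $a$''; completeness is what lets the approximating sequence be realized as a map out of $\opcN$; and $\ACopn$ is what lets us extract that sequence in the first place, since without it we would only know each set $\st{n}{d(a, s_n) < 2^{-k}}$ to be inhabited, not have a coherent choice of approximants. The reduction to the inhabited case and the subovertness of $D$ are routine, so I anticipate no real difficulty once these three ingredients are lined up.
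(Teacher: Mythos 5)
Your proof is correct and follows essentially the same route as the paper's: use $\ACopn$ to extract an approximating sequence from the metrically dense subset, extend it to a map $\opcN \to X$ by completeness, and apply \wso to the preimage of the given open set to conclude intrinsic density, hence overtness. The only difference is that you handle the possibly-empty case explicitly via Lemma~\ref{Lemma: wlog_space_inhabited}, which the paper glosses over; that is a harmless (indeed slightly more careful) addition.
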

		\begin{proof}
			Let $\mtr{X} = (X, d, s)$ be a \cms[]. Let $U \subseteq X$ be an arbitrary inhabited open subset of $X$, and let $x \in U$. By $\ACopn$ there is a sequence $a\colon \NN \to \NN$ such that for all $n \in \NN$, $d(x, s(a_n)) < 2^{-n}$. Then the sequence $n \mapsto s(a_n)$ converges to $x$, so it extends to a map $f\colon \opcN \to X$ so that for all $n \in \NN$, $f(n) = s(a_n)$, and $f(\infty) = x$. Then $f^{-1}(U)$ is an open neighbourhood of $\infty$ in $\opcN$, so by \wso it contains some $n \in \NN$. Consequently $s(a_n) \in U$, meaning that $X$ is intrinsically separable, and so overt.
		\end{proof}
		
		In this thesis however we try to do as much as possible without choice principles. We will obtain a weaker version of this proposition, but one that will work for the spaces we want.
		
		\begin{proposition}
			Let $\mtr{X} = (X, d)$ be a metric space with a subovert metrically dense subset $D$. Suppose that for every $x \in X$ there is a canonical choice of a map $s^x\colon \NN \to D$ with the property $d(x, s^x(n)) \leq 2^{-n}$ for all $n \in \NN$. Then \wso implies $X$ is overt.
		\end{proposition}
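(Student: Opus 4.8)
The statement will follow once we show that $D$ is intrinsically dense in $X$: since $D$ is subovert, Proposition~\ref{Proposition: testing_overtness_on_dense_subset} then yields that $X$ is overt. So I would fix an inhabited open $U \in \tp(X)$ and a point $x \in U$, and aim to produce an element of $D$ lying in $U$; the natural candidate is one of the approximants $s^x(n)$.

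The heart of the argument is to extend the sequence $s^x$ to a total map on $\opcN$. Since $d(x, s^x(n)) \le 2^{-n}$, the sequence $s^x$ converges to $x$; moreover, for each $\alpha \in \opcN$ the sequence $n \mapsto s^x(\inf\{n,\alpha\})$ is Cauchy with modulus $n \mapsto n+1$ --- here one uses that $\inf\{n,\alpha\} \in \NN$ (it is bounded by $n$) and that the predicate $\alpha < n$ is decidable, so one may bound $d\bigl(s^x(\inf\{i,\alpha\}), s^x(\inf\{j,\alpha\})\bigr)$ by $2^{-\min\{i,j\}+1}$. Its limit lies in $X$, being $s^x(\alpha)$ when $\alpha$ is finite and $x$ when $\alpha = \infty$; setting $f^x(\alpha) \dfeq \lim_n s^x(\inf\{n,\alpha\})$, exactly as in Proposition~\ref{Proposition: convergent_sequences_and_maps_from_No}, gives a map $f^x\colon \opcN \to X$ with $f^x(n) = s^x(n) \in D$ for $n \in \NN$ and $f^x(\infty) = x$. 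The only delicate point is the existence of these limits inside $X$ itself; when $\mtr{X}$ is complete --- which is the case in all the intended applications --- this is immediate, and in general one can instead carry the construction out in the completion $\cmpl{\mtr{X}}$, which exists by Proposition~\ref{Proposition: completion_by_Cauchy_families} because $D$ is a subovert metrically dense subset, and then observe that the resulting map factors back through the isometric embedding $i\colon \mtr{X} \to \cmpl{\mtr{X}}$, its values being among the $i(s^x(n))$ and $i(x)$. I expect this extension step to be the main obstacle, precisely because of the need to avoid assuming completeness of $\mtr{X}$.

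With $f^x$ in hand the conclusion is quick. Every map is intrinsically continuous, so $\bigl(f^x\bigr)^{-1}(U)$ is an open subset of $\opcN$, and it contains $\infty$ because $f^x(\infty) = x \in U$. Applying \wso to this open set produces some $n \in \NN$ with $n \in \bigl(f^x\bigr)^{-1}(U)$, that is, $s^x(n) = f^x(n) \in U$, while $s^x(n) \in D$. Hence $D \cap U$ is inhabited; as $U$ was an arbitrary inhabited open subset of $X$, the subset $D$ is intrinsically dense in $X$, and therefore $X$ is overt. Note that I never need the assignment $x \mapsto f^x$ to be a genuine (canonical) map --- one extension for the single extracted point $x$ suffices --- although the canonicity of $x \mapsto s^x$ makes even that available if wanted.
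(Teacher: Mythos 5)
Your proof is correct and follows essentially the same route as the paper's: the paper's own proof just says it is the preceding proposition's argument ``post-choice'', i.e.\ use the canonical $s^x$ in place of the sequence obtained from $\ACopn$, extend it to a map $\opcN \to X$ sending $\infty$ to $x$, apply \wso{} to the preimage of $U$, and conclude that $D$ is intrinsically dense so that Proposition~\ref{Proposition: testing_overtness_on_dense_subset} applies. Your extra discussion of how to extend the sequence when $\mtr{X}$ is not assumed complete addresses a subtlety the paper's one-line proof glosses over entirely (though note that the claim that the extension into $\cmpl{\mtr{X}}$ factors back through $X$ is itself only a $\lnot\lnot$-statement constructively, so that patch would need a little more care).
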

		\begin{proof}
			Essentially the same as in the previous proposition, post-choice. We establish that $D$ is intrinsically dense in $X$, so $X$ is overt.
		\end{proof}
		
		\begin{corollary}\label{Corollary: overtness_of_standard_ctbs}
			If \wso holds, then $\NN^\NN$, $2^\NN$ and $\opcN$ are overt.
		\end{corollary}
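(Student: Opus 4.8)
The plan is to obtain this directly from the preceding proposition, which, under \wso[], reduces overtness of a metric space $\mtr{X} = (X, d)$ to exhibiting a subovert metrically dense subset $D \subseteq X$ together with a canonical choice, for every $x \in X$, of a map $s^x\colon \NN \to D$ with $d(x, s^x(n)) \leq 2^{-n}$ for all $n \in \NN$. Since in this chapter $\emptyset$ and $\NN$ are postulated overt, every countable set is overt by Lemma~\ref{Lemma: overtness_of_countable_sets}, hence subovert in any ambient set; so for each of the three spaces it suffices to name a countable metrically dense subset and to write down explicit approximants, uniformly in the point.

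For $\opcN$ I would take $D \dfeq \NN$. It is countable and metrically dense in $\opcN$ --- the latter is part of $\opcN$ being the completion of $\NN$ under the restricted comparison metric, and in any case follows at once from the displayed formula for $d_C$ on $\NN$. For $x \in \opcN$ set $s^x(n) \dfeq \inf\{n, x\}$, the infimum taken in the lattice $\opcN$; since $\inf\{n, x\} \leq n < \infty$ this is a genuine element of $\NN$, and as the sequence representing $n \in \NN$ has its first $n$ terms equal to $1$, the sequences $x$ and $\inf\{n, x\}$ agree on $\NN_{<n}$, so $d_C(x, s^x(n)) \leq 2^{-n}$. The assignment $x \mapsto s^x$ is an honest map, so the preceding proposition yields overtness of $\opcN$.

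For the Cantor space $\Cantor = \two^\NN$ and the Baire space $\Baire = \NN^\NN$ I would use for $D$ the set of eventually-zero sequences, $D = \st{\alpha}{\xsome{N}{\NN}\xall{k}{\NN_{\geq N}}{\alpha_k = 0}}$; it is the image of $\finseq{\two}$ (resp.\ $\finseq{\NN}$) under zero-extension, hence countable, and it is metrically dense because truncating a sequence past a sufficiently large index brings it arbitrarily $d_C$-close. For each point $\alpha$ let $s^\alpha(n)$ be the sequence agreeing with $\alpha$ on $\NN_{<n}$ and identically $0$ on $\NN_{\geq n}$: this lies in $D$, is explicit in $\alpha$, and $d_C(\alpha, s^\alpha(n)) \leq 2^{-n}$ by the defining property of the comparison metric. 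Applying the preceding proposition to $\Cantor$ and to $\Baire$ completes the proof. There is no serious obstacle: everything of substance is in the preceding proposition and in the postulate that countable sets are overt, and the only point needing care is that the approximants be produced uniformly in the point (so that we genuinely have maps $x \mapsto s^x$ and invoke no choice), which is exactly why the explicit infimum/truncation formulas are chosen.
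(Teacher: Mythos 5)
Your proof is correct and is essentially the paper's own argument: the same eventually-zero dense subset with truncation approximants for $\NN^\NN$ and $\two^\NN$, and $D = \NN$ with $s^t(n) = \inf\{n, t\}$ for $\opcN$, all fed into the preceding proposition. The paper additionally remarks that one could instead note $\two^\NN$ and $\opcN$ are retracts of $\NN^\NN$ and so inherit overtness, but your route is the primary one given there.
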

		\begin{proof}
			Define $D = \st{\alpha \in \NN^\NN}{\xsome{n}{\NN}\xall{k}{\NN_{\geq n}}{\alpha_k = 0}}$; since it is in bijection with $\NN$, it is overt, and it is metrically dense in the Baire space. For any $\alpha \in \NN^\NN$ define $s^\alpha\colon \NN \to D$ by
			$$s^\alpha(n)_k \dfeq \begin{cases} \alpha_k & \text{ if } k \leq n,\\ 0 & \text{ if } k > n. \end{cases}$$
			Now use the previous proposition. The same trick works for $2^\NN$ while for $\opcN$ we can take $D = \NN$, $s^t(n) \dfeq \inf\{n, t\}$. Alternatively, we can simply recall that they are retracts of $\NN^\NN$, and therefore inherit overtness.
		\end{proof}
		
		Our purpose now is to use \wso to help classify metrization of \ctb[s]. Recall that classically \ctb[s] are precisely compact metric spaces. We may thus suspect that compactness also plays the role here. This turns out to be true (as will be demonstrated by Theorems~\ref{Theorem: metrization_of_compact_spaces},~\ref{Theorem: metrization_of_No},~and~\ref{Theorem: metrization_of_Cantor}), but it is not so straightforward. Not every compact metric space need be totally bounded (for example, if $\opn = \soc$, all sets are compact), and not every \ctb need be compact (\cf Section~\ref{Section: russian_constructivism_model}).
		
		\begin{definition}
			A metric space $\mtr{X} = (X, d)$ is \df{Lebesgue} when every overtly indexed cover of $X$ by balls can be shrunk a little, and still cover $X$. More formally, if $X = \bigcup_{i \in I} B(x_i, r_i)$ for an overtly indexed family of centers and radii, then there exists $\epsilon \in \RR_{> 0}$ such that $X = \bigcup_{i \in I} B(x_i, r_i - \epsilon)$ (equivalently, there exists $n \in \NN$ such that $X = \bigcup_{i \in I} B(x_i, r_i - 2^{-n})$).
		\end{definition}
		The reason we call this property by Lebesgue is because it is equivalent to the existence of Lebesgue numbers for covers by balls. Recall from classical topology that the \df{Lebesgue number} of a cover is $\delta \in \RR_{> 0}$ such that any subset on which the metric is bounded by $\delta$ is contained in some element of the cover. Note that if $\delta$ is a Lebesgue number for a cover by balls, then the radii can be shrunk by $\frac{\delta}{2}$, and the balls still cover the space. Conversely, if the radii can be shrunk by $\epsilon \in \RR_{> 0}$, then $\epsilon$ is also a Lebesgue number for the cover. Classically, any open cover on a compact metric space has a Lebesgue number. We show that the Lebesgue property has something to do with compactness in the case of synthetic topology also.
		
		\begin{theorem}\label{Theorem: metrization_of_compact_spaces}
			Let $\mtr{X} = (X, d)$ be a metric space.
			\begin{enumerate}
				\item If $X$ is overt, bordered balls in $\mtr{X}$ with radii of the form $2^{-t}$, $t \in \opcN$, are subcompact, and \wso holds, then $\mtr{X}$ is canonically metrized.\footnote{It is easy to see from the proof that the assumption in the theorem can be weakened to a stronger form of local compactness (in the metric sense): there is a map $f\colon X \to \RR_{> 0}$ such that for all $x \in X$ and $t \in \opcN$ for which $2^{-t} < f(x)$, the bordered ball $\bball{x}{2^{-t}}$ is subcompact. We will not need this more general form of the theorem, however. Actually, it is not so uncommon for \emph{all} bordered balls in a metric space being subcompact --- at least classically, there are a good many such spaces, for example all closed subsets of Euclidean spaces $\RR^n$.}
				\item If $X$ is compact and \wso holds, then $\mtr{X}$ is Lebesgue.
				\item Assuming $\ACopn$, if $X$ is compact, $\mtr{X}$ metrically separable, and \wso holds, then $\mtr{X}$ is totally bounded.
				\item If $\mtr{X}$ is totally bounded, Lebesgue, and metrized, then it is overt and compact.
			\end{enumerate}
		\end{theorem}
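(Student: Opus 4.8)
The plan is to dispatch the four items one at a time, in each case pairing \wso with the relevant flavour of compactness. For item (1), fix $U \in \tp(X)$ and $x \in U$. By hypothesis $t \mapsto \bball{x}{2^{-t}}$ is a well-defined map $\opcN \to \cp(X)$, and the map $\cp(X) \to \soc$ sending $K$ to $(K \subseteq U) = \xall{y}{K}{y \in U}$ takes values in $\opn$ — because every $K \in \cp(X)$ is subcompact, so $\xall{y}{K}{y \in U}$ is an open truth value — hence is a map into $\opn$. Therefore $t \mapsto (\bball{x}{2^{-t}} \subseteq U)$ is an open predicate on $\opcN$, and it holds at $\infty$ since $\bball{x}{2^{-\infty}} = \bball{x}{0} = \{x\}$ by nondegeneracy of the metric and $x \in U$. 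By \wso there is $n \in \NN$ with $\bball{x}{2^{-n}} \subseteq U$, whence $x \in \ball{x}{2^{-n}} \subseteq U$. Putting $J \dfeq \st{(x, n) \in X \times \NN}{\bball{x}{2^{-n}} \subseteq U}$, the same observation over the overt set $X \times \NN$ shows $J$ is open, hence subovert, so $(x,n) \mapsto (x, 2^{-n})$ is an overt indexing exhibiting $U = \bigcup\st{\ball{x}{2^{-n}}}{(x,n) \in J}$, and every choice made is canonical; thus $\mtr{X}$ is canonically metrized.

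Items (2) and (3) share a single device: a predicate that is jointly open on $\opcN \times X$ becomes, after applying $\forall_X$ (legitimate since $X$ is compact, so slices-of-opens and $\forall_X\colon \tp(X) \to \opn$ combine), an open predicate on $\opcN$ to which \wso applies at $\infty$. For (2), from an overtly indexed cover $X = \bigcup_{i \in I} \ball{x_i}{r_i}$ the predicate $(t, x) \mapsto \xsome{i}{I}{d(x_i, x) + 2^{-t} < r_i}$ is open ($I$ overt, $<$ open on $\RR$, $t \mapsto 2^{-t}$ continuous) and holds for all $x$ at $t = \infty$ since $2^{-\infty}=0$ and the balls cover $X$; $\forall_X$ and \wso then yield $n \in \NN$ with $X = \bigcup_{i \in I} \ball{x_i}{r_i - 2^{-n}}$, so $\mtr{X}$ is Lebesgue. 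For (3), fix a metric-separability witness $s\colon \NN \to X + \one$; for each level $n$ the predicate $(t, x) \mapsto \xsome{k}{\NN}{k \in s^{-1}(X) \land k < t \land d(s_k, x) < 2^{-n}}$ is open on $\opcN \times X$, so as above we obtain, for every $n$, some $m$ with $\xall{x}{X}{\xsome{k}{(s^{-1}(X) \cap \NN_{< m})}{d(s_k, x) < 2^{-n}}}$; for fixed $n, m$ this last statement is open (the search ranges over the finite, hence compact, set $s^{-1}(X) \cap \NN_{< m}$), so the corresponding relation on $\NN \times \NN$ is total and open, and $\ACopn$ produces a function $a\colon \NN \to \NN$ which, together with $s$, witnesses total boundedness.

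For item (4), total boundedness supplies a metrically dense countable — hence subovert — subset, and since being metrized entails being weakly metrized, Proposition~\ref{Proposition: metric_to_intrinsic_separability_in_weakly_metrized_spaces} gives overtness of $\mtr{X}$. For compactness, take $U \in \tp(X)$, write $U = \bigcup_{i \in I} \ball{x_i}{r_i}$ with overt indexing (metrizedness), and fix a total-boundedness witness $(s, a)$. The key equivalence is
$$X \subseteq U \iff \xsome{n}{\NN}{\xall{k}{(s^{-1}(X) \cap \NN_{< a_n})}{\xsome{i}{I}{d(x_i, s_k) + 2^{-n+1} < r_i}}},$$
whose right-hand side is open ($\NN$ overt, the middle quantifier over a finite set, $I$ overt, $<$ open). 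Since $X \subseteq U$ is precisely $\forall_X(U)$, this proves $\forall_X$ restricts to $\tp(X) \to \opn$, i.e. $X$ is compact. The "$\Leftarrow$" direction uses the $2^{-n}$-density of the finitely many centres $s_k$ together with one triangle inequality ($d(x_i,x) \le d(x_i,s_k) + d(s_k,x) < (r_i - 2^{-n+1}) + 2^{-n} < r_i$); the "$\Rightarrow$" direction applies the Lebesgue property to the cover $X = \bigcup_i \ball{x_i}{r_i}$ to shrink the radii by some $2^{-m}$, then takes $n$ with $2^{-n+1} \le 2^{-m}$ and feeds each $s_k$ into the shrunk cover.

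I expect the work to concentrate on three points. In (1), one must notice that $K \mapsto (K \subseteq U)$ genuinely is a map $\cp(X) \to \opn$ — it is, simply because membership in $\cp(X)$ forces the value into $\opn$ — and handle the degenerate ball $\bball{x}{0} = \{x\}$ at $\infty$ correctly. In (2)–(3), the predicates must be phrased so as to be honestly jointly open on $\opcN \times X$ before $\forall_X$ is applied, and in (3) one must verify that the $n$-versus-$m$ relation is open so that $\ACopn$ is available. In (4), the arithmetic of radii — matching the margin $2^{-n+1}$ against the Lebesgue margin $2^{-m}$ so that both directions of the equivalence close — is the only genuinely fiddly computation, and is the step I expect to be the main obstacle to a clean write-up.
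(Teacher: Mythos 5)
Your proof is correct and follows essentially the same route as the paper's on all four items: \wso applied to an open subset of $\opcN$ built from the subcompact bordered balls in (1), from compactness of $X$ in (2)--(3) with $\ACopn$ extracting the sequence $a$, and the Lebesgue-plus-total-boundedness equivalence (there written as a cycle of implications rather than your explicit biconditional with margin $2^{-n+1}$) in (4). The only quibble is in (3): the inner existential over $s^{-1}(X)\cap\NN_{<m}$ yields an open truth value because that set is \emph{overt} (a decidable, hence finite, subset of $\NN_{<m}$), not because it is compact --- compactness is what the outer $\forall_X$ needs.
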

		\begin{proof}
			\begin{enumerate}
				\item
					Take any $U \in \tp(X)$. We claim
					$$U = \bigcup\st{\ball{x}{2^{-k}}}{x \in X \land k \in \NN \land \bball{x}{2^{-k}} \subseteq U}.$$
					The right-hand side is clearly contained in $U$. Conversely, take any $x \in U$. The set $\st{t \in \opcN}{\bball{x}{2^{-t}} \subseteq U}$ is open in $\opcN$ because of compactness of bordered balls. It contains $\infty$, so by \wso there is some $k \in \NN$ such that $\bball{x}{2^{-k}} \subseteq U$. Therefore $x$ is contained in the right-hand side. It remains to be seen that the inclusion
					$$\st{(x, k) \in X \times \NN}{\bball{x}{2^{-k}} \subseteq U} \hookrightarrow X \times \NN \stackrel{\id[X] \times (k \mapsto 2^{-k})}{\verylongrightarrow} X \times \RR$$
					is overt, but this is clear since $X$ and $\NN$ are overt, and the statement $\bball{x}{2^{-k}} \subseteq U$ is open.
				\item
					Let $X = \bigcup_{i \in I} B(x_i, r_i)$ be an overtly indexed cover of $X$. Define
					$$U \dfeq \st[2]{t \in \opcN}{X \subseteq \bigcup_{i \in I} \ball{x_i}{r_i - 2^{-t}}}.$$
					Then $U$ is open in $\opcN$ (since $X$ is compact) and contains $\infty$, so by \wso there is some $n \in \NN \cap U$, proving the Lebesgue property.
				\item
					Let $s\colon \NN \to \one + X$ witness that $\mtr{X}$ is separable. For any $n \in \NN$ define $U \subseteq \opcN$ to be
					$$U \dfeq \st{t \in \opcN}{X \subseteq \bigcup\st{\ball{s_k}{2^{-n}}}{k \in s^{-1}(X) \cap \opcN_{< t}}}.$$
					The set $s^{-1}(X) \cap \opcN_{< t}$ is a decidable subset of $\NN$, thus overt, and since $X$ is compact, $U$ is open in $\opcN$. It contains $\infty$ because $s$ has metrically dense image in $\mtr{X}$. By $\wso$ there exists $a_n \in U \cap \NN$, and by $\ACopn$ this defines a sequence $a = (a_n)_{n \in \NN}$ which (together with $s$) witnesses that $\mtr{X}$ is totally bounded.
				\item
					The set $X$ is overt by Proposition~\ref{Proposition: metric_to_intrinsic_separability_in_weakly_metrized_spaces}. To see that it is compact, let $s\colon \NN \to \one + X$, $a\colon \NN \to \NN$ witness that $\mtr{X}$ is totally bounded, and take any $U \in \tp(X)$; we have to prove that the truth value of $\xall{x}{X}{x \in U}$ is open. Since $\mtr{X}$ is metrized, we can write $U$ as an overtly indexed union $U = \bigcup_{i \in I} \ball{x_i}{r_i}$. Observe
					$$\xall{x}{X}{x \in U} \implies \xall{x}{X}{x \in \bigcup_{i \in I} \ball{x_i}{r_i}} \implies$$
					$$\implies \xsome{n}{\NN}\xall{x}{X}{x \in \bigcup_{i \in I} \ball{x_i}{r_i - 2^{-n}}} \implies$$
					$$\implies \xsome{n}{\NN}\xall{k}{\big(s^{-1}(X) \cap \NN_{< a(n)}\big)}{s(k) \in \bigcup_{i \in I} \ball{x_i}{r_i - 2^{-n}}} \implies$$
					$$\implies \xall{x}{X}{x \in \bigcup_{i \in I} \ball{x_i}{r_i}} \implies \xall{x}{X}{x \in U}$$
					since $\mtr{X}$ is Lebesgue and totally bounded. The third line is an open statement (the set $s^{-1}(X) \cap \NN_{< a(n)}$ is finite, therefore compact), and then so is the equivalent statement $\xall{x}{X}{x \in U}$.
			\end{enumerate}
		\end{proof}
		
		\begin{corollary}\label{Corollary: overt_compact_iff_Lebesgue_metrized}
			Suppose \wso holds, and $\RR$ is Hausdorff. Then the following is equivalent for a totally bounded metric space $\mtr{X} = (X, d, s, a)$.
			\begin{enumerate}
				\item $X$ is overt and compact.
				\item $\mtr{X}$ is Lebesgue and metrized.
			\end{enumerate}
		\end{corollary}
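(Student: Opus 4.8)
The plan is to read the corollary off Theorem~\ref{Theorem: metrization_of_compact_spaces}, which already does all the substantive work; the corollary is essentially a repackaging of items~2 and~4 of that theorem, together with the observation that under the standing hypothesis $\tst = \opn$, compactness of $X$ plus Hausdorffness of $\RR$ forces all bordered balls to be subcompact.

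For the implication $(2 \Rightarrow 1)$ there is nothing to prove beyond citing item~4 of Theorem~\ref{Theorem: metrization_of_compact_spaces}: a totally bounded, Lebesgue, metrized metric space is overt and compact, and $\mtr{X}$ is totally bounded by hypothesis.

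For $(1 \Rightarrow 2)$ I would argue as follows. That $\mtr{X}$ is Lebesgue is exactly item~2 of Theorem~\ref{Theorem: metrization_of_compact_spaces}, since $X$ is compact and \wso is assumed. To see that $\mtr{X}$ is metrized I would invoke item~1 of the same theorem; it requires, besides overtness of $X$ (given) and \wso (given), that every bordered ball $\bball{x}{2^{-t}}$ with $t \in \opcN$ be subcompact in $X$. This is where Hausdorffness of $\RR$ is used: by Proposition~\ref{Proposition: Hausdorffness_of_real_numbers} each $\bball{x}{2^{-t}}$ is a closed subset of $X$, and a closed subset $F$ of a compact set $X$ is subcompact whenever $\tst = \opn$. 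Indeed, for $U \in \tp(X)$ one has $\xall{y}{F}{y \in U}$ equivalent to $\xall{y}{X}{y \in (F \setimpl[X] U)}$, and $F \setimpl[X] U$ is open because $U$ is test and $F$ is closed (Lemma~\ref{Lemma: open_closed_subsets_characterization}), while the universal quantifier over the compact $X$ of an open predicate is open. Applying item~1 of Theorem~\ref{Theorem: metrization_of_compact_spaces} then yields that $\mtr{X}$ is (canonically) metrized, in particular metrized.

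There is no genuine obstacle here: the one point that deserves a moment's attention is that item~1 of Theorem~\ref{Theorem: metrization_of_compact_spaces} asks for subcompactness of the bordered balls uniformly over $t \in \opcN$ (including $t = \infty$, where $\bball{x}{0} = \{x\}$ is trivially subcompact), but the closed-subset-of-compact argument above applies to each such ball individually and that is precisely what the theorem consumes. So the proof amounts to invoking the three relevant items of Theorem~\ref{Theorem: metrization_of_compact_spaces} together with Proposition~\ref{Proposition: Hausdorffness_of_real_numbers}; I would keep the write-up to a few lines.
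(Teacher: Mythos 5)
Your proof is correct and follows the paper's own argument exactly: Hausdorffness of $\RR$ makes bordered balls closed (Proposition~\ref{Proposition: Hausdorffness_of_real_numbers}), closed subsets of a compact set are subcompact under the chapter's standing assumption $\tst = \opn$, and the rest is items~1, 2 and~4 of Theorem~\ref{Theorem: metrization_of_compact_spaces}. The only difference is that you spell out the closed-subset-of-compact-is-subcompact step, which the paper leaves implicit.
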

		\begin{proof}
			Recall from Proposition~\ref{Proposition: Hausdorffness_of_real_numbers} that if $\RR$ is Hausdorff, bordered balls in $\mtr{X}$ must be closed, and so subcompact if $X$ is compact. The rest follows from Theorem~\ref{Theorem: metrization_of_compact_spaces}.
		\end{proof}
		Note that under the conditions of this corollary, an overt compact set is metrized by \emph{any} totally bounded metric on it, and hence these metrics must all be equivalent.
		
		From these results (and their proofs) the role of the principle \wso[], or at least one aspect of it, might have crystallized for the attentive reader --- intuitively, it makes (at least on the level of metrically separable spaces) compact sets behave the way we expect them to. Here is another confirmation of this.
		\begin{theorem}\label{Theorem: compact_maps_into_reals}
			Assume \wso[].
			\begin{enumerate}
				\item
					A compact map $f\colon X \to \RR$ is bounded.
				\item
					Suppose $\opn \subseteq \nnst$. If $X$ is inhabited, and $f\colon X \to \RR$ is an overt compact map, then the image of $f$ has a strict supremum and a strict infimum.\footnote{The conclusion of this statement is not true if we do not assume $\opn \subseteq \nnst$ or some such condition (even after we exclude pathological examples like $\opn = \cld = \soc$ because of \wso[]). To see this, let $p \in \soc$ be some open and closed truth value, or equivalently (since we assume $\tst = \opn$) a subovert subcompact one. Then the set $\{0\} \cup \st{1}{p}$ is inhabited subovert subcompact in $\RR$, but it has a strict supremum if and only if $p$ is decidable. Compare this with Theorem~\ref{Theorem: standard_special_case_of_synthetic_topology} which under its conditions implies $\opn \cap \cld = \two$.}
			\end{enumerate}
		\end{theorem}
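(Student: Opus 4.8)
\emph{Proof proposal.} The plan is to handle both items with the device used repeatedly in this chapter: encode a statement about $\RR$ as an intrinsically open subset of $\opcN$ that contains $\infty$, and then apply \wso to extract a natural number witness.

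For item (1), I would start from the \ed-continuous extension $t \mapsto 2^{-t}$ ($t \in \opcN$) of the sequence $n \mapsto 2^{-n}$ furnished by Proposition~\ref{Proposition: convergent_sequences_and_maps_from_No}, recalling that $2^{-\infty} = 0$ while $2^{-n} > 0$ for $n \in \NN$. For each fixed $t \in \opcN$ the subset $V_t \dfeq \st{y \in \RR}{2^{-t} \cdot |y| < 1}$ is intrinsically open in $\RR$, being the preimage of $\RR_{< 1}$ under the map $y \mapsto 2^{-t}|y|$ and using that $<$ is an open relation; hence by compactness of $f$ the truth value of $\xall{x}{X}{f(x) \in V_t}$ is open. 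Therefore $W \dfeq \st{t \in \opcN}{\xall{x}{X}{2^{-t} \cdot |f(x)| < 1}}$ is an open subset of $\opcN$, and since $2^{-\infty} = 0$ we have $\infty \in W$. By \wso there is $n \in \NN$ with $n \in W$, i.e.\ $\xall{x}{X}{|f(x)| < 2^n}$, so $f$ is bounded.

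For item (2), I would first invoke (1) to bound $A \dfeq f(X)$; note $A$ is inhabited (as $X$ is) and is subovert and subcompact in $\RR$ because $f$ is overt and compact. The candidate for $\sup A$ is the pair $(L, U)$ with $L \dfeq \st{q \in \oirs}{\xsome{a}{A}{q < a}}$ and $U \dfeq \st{r \in \oirs}{\xsome{r'}{\oirs}{r' < r \land \xall{a}{A}{a < r'}}}$. Openness of $L$ uses subovertness of $A$; openness of $U$ uses subcompactness of $A$ together with overtness of $\oirs$ and the fact that $\opn$ is a bounded lattice. The lower-/upper-cut laws follow from the interpolation property of $\oirs$, inhabitedness of $L$ from inhabitedness of $X$, inhabitedness of $U$ from the bound obtained in (1), and disjointness from asymmetry of $<$; all of these are routine. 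Once $(L,U)$ is known to be an open Dedekind cut, unwinding the definition of $<$ on $\Dedcuts(\oirs)$ from Theorem~\ref{Theorem: Dedekind_cuts_are_the_real_numbers} together with interpolation shows that $(L,U)$ is precisely the strict supremum of $A$. The strict infimum then follows by applying the result to the overt compact map obtained by post-composing $f$ with negation on $\RR$ (Proposition~\ref{Proposition: constructions_of_overt_maps}), since $\sup(-A) = -\inf A$.

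The main obstacle, and the sole place where $\opn \subseteq \nnst$ is genuinely needed (cf.\ the situation described in the footnote), is locatedness of $(L,U)$: given $q < r$ in $\oirs$ one wants $q \in L \lor r \in U$. Interpolating $q < m < r$ and setting $P \dfeq \xsome{x}{X}{q < f(x)}$ and $Q \dfeq \xall{x}{X}{f(x) < m}$, the naive argument via a two-set open cover of $X$ only yields $\lnot\lnot(P \lor Q)$ (if $\lnot P$, then $\xall{x}{X}{f(x) \leq q}$, hence $Q$, so $\lnot P \land \lnot Q$ is absurd). The resolution is that $P$ is open (overtness of $f$), $Q$ is open (compactness of $f$), hence $P \lor Q$ is open because $\opn$ is closed under binary joins, hence $\lnot\lnot$-stable because $\opn \subseteq \nnst$; so $\lnot\lnot(P \lor Q)$ upgrades to $P \lor Q$. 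The first disjunct gives $q \in L$ and the second gives $r \in U$ with witness $r' = m$, completing locatedness. I expect that upgrading step to be the one subtle point of the whole argument; everything else is bookkeeping with Dedekind cuts.
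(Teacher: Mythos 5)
Your proof is correct and follows essentially the same route as the paper's: an open subset of $\opcN$ containing $\infty$ plus \wso for boundedness, and an open Dedekind cut built from the overt and compact quantifiers of $f$, whose locatedness is obtained by the $\lnot\lnot$-upgrade of an open disjunction using $\opn \subseteq \nnst$. The only deviations are harmless (arguably slight improvements): you compare $f(x)$ with $\opcN$ via the extension $t \mapsto 2^{-t}$ rather than the paper's ad hoc open relation $x < t \dfeq \xsome{n}{\NN}{x < n \land n \leq t}$, and your upper cut $U$ carries a built-in interpolated witness $r'$, which makes the upper-cut property immediate from interpolation where the paper's direct definition $U = \st{r}{\xall{x}{X}{f(x) < r}}$ quietly needs a further \wso[]/compactness argument to be "open from below".
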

		\begin{proof}
			We prove only the existence of the upper bound; finding the lower bound is analogous, or we can consider $-f$.
			\begin{enumerate}
				\item
					The idea is that we find a bound of $f$ in $\opcN$; surely $\infty$ works, and then we can use \wso to see that some natural number is a bound for the image of $f$ also. However, to do that, we need to be able to compare real numbers with elements in $\opcN$. For $x \in \RR$, $t \in \opcN$ define
					$$x < t \dfeq \some{n}{\NN}{x < n \land n \leq t}.$$
					This is an open relation since $\NN$ is overt, the strict order on $\RR$ is open, and $n \leq t$ is decidable (hence open) because $n \in \NN$. Consequently, since $f$ is a compact map, the set $\st{t \in \opcN}{\xall{x}{X}{f(x) < t}}$ is an open subset in $\opcN$, and it clearly contains $\infty$; by \wso it then contains some $n \in \NN$ which is an upper bound for $f$.
				\item
					Define
					$$L \dfeq \st{q \in \oirs}{\xsome{x}{X}{q < f(x)}},$$
					$$U \dfeq \st{r \in \oirs}{\xall{x}{X}{r > f(x)}}.$$
					Because $f$ is an overt and compact map, these two sets are open in $\oirs$. $L$ is inhabited because $X$ is, and $U$ is inhabited by the previous item. From interpolation property of $\oirs$ we conclude that they are a lower and an upper cut, respectively. They are obviously disjoint. Finally, take $q, r \in \oirs$, $q < r$. We wish to prove $q \in L \lor r \in U$, but this is an open statement, so it is equally good (since $\opn \subseteq \nnst$) to prove $\lnot\lnot(q \in L \lor r \in U)$.
					$$\lnot(q \in L \lor r \in U) \iff \lnot\Big(\xsome{x}{X}{q < f(x)} \lor \xall{x}{X}{r > f(x)}\Big) \iff$$
					$$\iff \lnot\xsome{x}{X}{q < f(x)} \land \lnot\xall{x}{X}{r > f(x)} \iff$$
					$$\iff \xall{x}{X}{q \geq f(x)} \land \lnot\xall{x}{X}{r > f(x)} \implies \bot$$
					So $(L, U)$ is a real number, and it is clearly the strict supremum of the image of $f$.
			\end{enumerate}
		\end{proof}
		
		\begin{corollary}
			If \wso holds, then compact (pseudo)metric spaces, and more generally, subcompact (pseudo)metric subspaces, are bounded.
		\end{corollary}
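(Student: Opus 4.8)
The plan is to package this as an immediate consequence of Theorem~\ref{Theorem: compact_maps_into_reals}(1), by exhibiting the (pseudo)metric as a compact map into $\RR$. Let $\mtr{X} = (X, d)$ be a (pseudo)metric space and let $A \subseteq X$ be subcompact in $X$; the case of a compact (pseudo)metric space is recovered by taking $A = X$. First I would observe, using Proposition~\ref{Proposition: images_and_finite_products_of_subovert/subcompact_subsets}, that the binary product $A \times A$ is subcompact in $X \times X$. By the remarks following the definition of compact maps (a subset is subcompact in an ambient set precisely when its inclusion is a compact map), this says that the inclusion $i\colon A \times A \hookrightarrow X \times X$ is a compact map.

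Next, I would compose $i$ on the left with the (pseudo)metric $d\colon X \times X \to \RR$. Since compact maps are closed under post-composition with arbitrary maps (Proposition~\ref{Proposition: constructions_of_overt_maps}(1)), the composite $d \circ i\colon A \times A \to \RR$ is again a compact map. Now Theorem~\ref{Theorem: compact_maps_into_reals}(1) applies --- this is the sole point at which \wso is invoked --- and yields a real number $M$ with $d(a, b) \leq M$ for all $a, b \in A$, i.e.\ $A$ (with the restricted (pseudo)metric) is bounded. Taking $A = X$ gives the statement for compact (pseudo)metric spaces.

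There is essentially no obstacle: the corollary is a direct repackaging of Theorem~\ref{Theorem: compact_maps_into_reals}(1), and the only mild care needed is that the argument must not silently assume nondegeneracy of $d$ or inhabitedness of $X$ --- it does neither, since it works on $A \times A$ via the total map $d$ and never needs to pick a basepoint or use the triangle inequality. (The alternative of bounding $d(x_0, \insarg)$ along $A$ for a fixed $x_0 \in X$ and then invoking the triangle inequality would be equally short but would require $X$ to be inhabited, which the product argument avoids.)
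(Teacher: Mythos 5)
Your proof is correct and is essentially identical to the paper's: both exhibit the restricted (pseudo)metric as the compact map $A \times A \hookrightarrow X \times X \xrightarrow{d} \RR$ (using that finite products of subcompact inclusions are compact maps and that left composition preserves compactness) and then invoke Theorem~\ref{Theorem: compact_maps_into_reals}(1). You merely spell out the intermediate citations that the paper leaves implicit.
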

		\begin{proof}
			Let $\mtr{X} = (X, d_\mtr{X})$ be a pseudometric space, and $\mtr{A} = (A, d_\mtr{A})$ its subspace. If the inclusion $A \hookrightarrow X$ is a compact map, then so is the composition $d_\mtr{A} = A \times A \hookrightarrow X \times X \stackrel{d_\mtr{X}}{\longrightarrow} \RR$, and hence bounded by the preceding theorem.
		\end{proof}
		In particular, \wso implies that subcompact subsets in $\NN$ are bounded. In fact since subcompact and closed propositions match (because $\tst = \opn$), $\NN$ has decidable equality, and $\NN_{< n}$ is compact for every $n \in \NN$, a subset $K \subseteq \NN$ is subcompact if and only if it is closed and bounded. Classically we know that the same result holds in Euclidean spaces $\RR^k$, $k \in \NN$, in general; in our case this result follows if we additionally assume that $\RR$ is Hausdorff and $\II$ is compact.
		
		\intermission
		
		We now focus on metrization of some specific \ctb[s]. In constructive set theory, a sort of a substitute of compactness of the Cantor space is \df{Brouwer's Fan Principle}~\cite{Troelstra_AS_Dalen_D_1988:_constructivism_in_mathematics_volume_1, Bridges_DS_Richman_F_1987:_varieties_of_constructive_mathematics} which states that every decidable bar is uniform. We recall the relevant definitions.
		
		For a set $A$, let $\finseq{A}$ denote the set of finite sequences in $A$. We denote the length of a finite sequence $a \in \finseq{A}$ by $|a|$.
		
		If $\alpha \in A^\NN$, let $\prefix{\alpha}{n} \dfeq (\alpha_0, \ldots, \alpha_{n-1})$, and write $a \lex \alpha$ when $a$ is a prefix of $\alpha$, \ie $a = \prefix{\alpha}{|a|}$. Denote the concatenation of $a, b \in \finseq{A}$ by $a \cnct b$, and likewise of $a \in \finseq{A}$, $\beta \in A^\NN$ by $a \cnct \beta$. For $t \in \opcN$ and $\alpha, \beta \in A^\NN$ let $\concat{t}{\alpha}{\beta} \in A^\NN$ be
		$$(\concat{t}{\alpha}{\beta})_k = \begin{cases} \alpha_k & \text{ if } k < t,\\ \beta_{k-t} & \text{ if } t \leq k. \end{cases}$$
		If $t \in \NN$, then $\concat{t}{\alpha}{\beta} = \prefix{\alpha}{t} \cnct \beta$ while $\concat{\infty}{\alpha}{\beta} = \alpha$.
		
		Notice that the sets $\fbs$ and $\finseq{\NN}$ are in bijection with $\NN$, and thus overt.
		\begin{definition}
			A \df{bar} is a subset $S \subseteq \fbs$ such that $\xall{\alpha}{\two^\NN}{\xsome{a}{S}{a \lex \alpha}}$. A bar is \df{uniform} when there exists a bound $n \in \NN$ such that $\xall{\alpha}{\two^\NN}{\some{a}{S}{|a| \leq n \land a \lex \alpha}}$.
		\end{definition}
		We will consider the strengthening of the Brouwer's Fan Principle by replacing `decidable' with `subovert' (when we say that a bar is decidable, or subovert, we mean it is such as a subset of $\fbs$).
		
		First we prove a technical lemma saying that the Lebesgue property can be tested on certain smaller families of balls.
		\begin{lemma}\label{lemma: check_Lebesgue_on_restrictedly_indexed_covers}
			Let $\mtr{X} = (X, d)$ be a metric space, $D \subseteq X$ subovert metrically dense in $\mtr{X}$, and $\lvl \subseteq \RR_{> 0}$ a subset, subovert in $\RR$, with the strict infimum $0$. Then $\mtr{X}$ is Lebesgue if and only if every overtly indexed cover of $X$ by balls with centers from $D$ and radii from $\lvl$ can be shrunk by some $\epsilon \in \RR_{> 0}$.
		\end{lemma}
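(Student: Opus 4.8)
The plan is to prove the two implications separately, the non-trivial one by refining an arbitrary overtly indexed cover to one with centres in $D$ and radii in $\lvl$.

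The forward direction is immediate: a cover of $X$ by balls with centres in $D$ and radii in $\lvl$ is in particular an overtly indexed cover by balls, so if $\mtr{X}$ is Lebesgue such a cover can be shrunk by some $\epsilon \in \RR_{> 0}$; the definition of shrinking does not demand that the shrunk balls retain their centres in $D$ or radii in $\lvl$, so nothing more is needed.

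For the converse, assume every overtly indexed cover of $X$ by $D$-centred, $\lvl$-radius balls can be shrunk, and let $X = \bigcup_{i \in I} B(x_i, r_i)$ be an arbitrary overtly indexed cover, witnessed by an overt map $a\colon I \to X \times \RR$ with $a(i) = (x_i, r_i)$. I would set
$$K \dfeq \st{(y, s) \in D \times \lvl}{\xsome{i}{I}{d(x_i, y) + s < r_i}},$$
and consider the family of balls indexed by the inclusion $K \hookrightarrow X \times \RR$, $(y, s) \mapsto B(y, s)$. Three things must be checked. \emph{First}, that $K$ indexes a cover of $X$: given $x \in X$, pick $i \in I$ with $x \in B(x_i, r_i)$, put $\delta \dfeq r_i - d(x_i, x) > 0$, use that $0$ is the strict infimum of $\lvl$ to obtain $s \in \lvl$ with $s < \delta/2$, and use metric density of $D$ to obtain $y \in D$ with $d(x, y) < s$; then $d(x_i, y) + s \leq d(x_i, x) + d(x, y) + s < d(x_i, x) + 2s < d(x_i, x) + \delta = r_i$, so $(y, s) \in K$, while $x \in B(y, s)$. \emph{Second}, that the inclusion $K \hookrightarrow X \times \RR$ is an overt map — this is the crux. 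It suffices to see that $K$ is subovert in $X \times \RR$, i.e.\ that for every open $W \subseteq X \times \RR$ the truth value of $K \between W$ is open. Writing $\iota$ for the inclusion $D \times \lvl \hookrightarrow X \times \RR$ and
$$\widetilde{W} \dfeq \st{\big((y', s'), (x', r')\big) \in (X \times \RR) \times (X \times \RR)}{(y', s') \in W \land d(x', y') + s' < r'},$$
which is open since $W$ is open and $<$ is open on $\RR$, one has $K \between W = \xsome{\big((y, s), i\big)}{(D \times \lvl) \times I}{(\iota \times a)\big((y, s), i\big) \in \widetilde{W}}$. Now $\iota$ is overt, being the product of the overt inclusions $D \hookrightarrow X$ and $\lvl \hookrightarrow \RR$ (the two subovertness hypotheses), hence $\iota \times a$ is overt as a product of overt maps (Proposition~\ref{Proposition: constructions_of_overt_maps}), and therefore the displayed existential is an open truth value. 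Note that it is essential here that $a$ is overt, not that $I$ itself be overt. \emph{Third}, $X = \bigcup_{(y, s) \in K} B(y, s)$ is now an overtly indexed cover by $D$-centred, $\lvl$-radius balls, so by hypothesis there is $\epsilon \in \RR_{> 0}$ with $X = \bigcup_{(y, s) \in K} B(y, s - \epsilon)$.

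It remains to transfer the shrinking back to the original cover. For $x \in X$ there is $(y, s) \in K$ with $d(y, x) < s - \epsilon$, and some $i \in I$ with $d(x_i, y) + s < r_i$; then $d(x_i, x) \leq d(x_i, y) + d(y, x) < (r_i - s) + (s - \epsilon) = r_i - \epsilon$, so $x \in B(x_i, r_i - \epsilon)$. Hence $X = \bigcup_{i \in I} B(x_i, r_i - \epsilon)$, which is the original cover shrunk by $\epsilon$, so $\mtr{X}$ is Lebesgue. The only genuinely delicate point is the overtness of $K \hookrightarrow X \times \RR$, which is why the argument is routed through overtness of the indexing map $a$ and closure of overt maps under products rather than through any (unavailable) overtness of $I$; everything else is the triangle inequality together with the strict infimum and density hypotheses.
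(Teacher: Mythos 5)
Your proof is correct and follows essentially the same route as the paper's: refine the given cover to one with centres in $D$ and radii in $\lvl$ chosen close to the original balls, verify overtness of the new indexing via the product of the overt inclusions $D \hookrightarrow X$, $\lvl \hookrightarrow \RR$ and the overt indexing of the original cover, and transfer the shrinking constant back through the triangle inequality. The only cosmetic difference is that the paper indexes the refined cover by triples $(a, s, i) \in D \times \lvl \times I$ and cites Corollary~\ref{Corollary: overt_combination} directly, whereas you project out the $I$-component and re-derive the subovertness of the resulting set $K$ by hand — the same computation in a slightly different packaging.
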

		\begin{proof}
			One direction is obvious. For the other, let $X = \bigcup_{i \in I} \ball{x_i}{r_i}$ be an overtly indexed cover. Define $J \dfeq \st{(a, s, i) \in D \times \lvl \times I}{d(x_i, a) + s < r_i}$. Then $X = \bigcup_{(a, s, i) \in J} \ball{a}{s}$, and this is an overtly indexed cover by Corollary~\ref{Corollary: overt_combination} since the inclusions $D \hookrightarrow X$, $\lvl \hookrightarrow \RR$ are overt maps, and $J$ is the preimage of
			$$\st{(a, s, x, r) \in X \times \RR \times X \times \RR}{d(x, a) + s < r}$$
			which is open in $X \times \RR \times X \times \RR$. By assumption the newly obtained cover can be shrunk by some $\epsilon \in \RR_{> 0}$, but then the original cover can be shrunk by the same $\epsilon$ also.
		\end{proof}
		
		\begin{proposition}\label{Proposition: Lebesgue_property_of_standard_ctbs}
			\
			\begin{enumerate}
				\item
					The metric space $\opcN$ is Lebesgue.
				\item
					The metric space $\two^\NN$ is Lebesgue if and only if every subovert bar is uniform.
				\item
					The metric space $\NN^\NN$ is not Lebesgue.
			\end{enumerate}
		\end{proposition}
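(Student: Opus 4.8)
The plan is to handle the three items separately, each reducing to combinatorics of the comparison metric, which on $\NN^\NN$ (and on its subspaces $\two^\NN$, $\opcN$) takes only the discrete set of values $\{0\}\cup\st{2^{-k}}{k\in\NN}$. One small fact I will use repeatedly is that in an ultrametric space $\bball{y}{s}\subseteq\ball{x}{\rho}$ holds if and only if $\sup\{d(x,y),s\}<\rho$ — the nontrivial direction uses that $\bball{y}{s}$ contains $y$ together with a point at distance exactly $s$ from $y$, which exists in $\two^\NN$. Applied with $y$ the sequence $a$ extended by zeros and $s=2^{-|a|}$, this says the cylinder $[a]=\st{\alpha}{a\lex\alpha}$ equals $\bball{y}{2^{-|a|}}$, so ``$[a]\subseteq\ball{x}{\rho}$'' becomes an \emph{open} condition on $(x,\rho)$; this is what will let me turn ball covers into subovert subsets of $\fbs$ without needing cylinders to be subcompact. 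Throughout I use that $\fbs\cong\NN$ is overt, that finite choice $\AC{\NN_{<n}}$ holds, and (as assumed in the chapter) $\tst=\opn$, so that an open subset of $\fbs$ is the same as a subovert one; denote by $\hat{m}$ the constant sequence with value $m$.

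For (1) I would invoke Lemma~\ref{lemma: check_Lebesgue_on_restrictedly_indexed_covers} with the overt metrically dense subset $\NN\subseteq\opcN$ (dense because $\opcN$ is the completion of $\NN$) and $\lvl=\st{2^{-k}}{k\in\NN}$ (overt, with strict infimum $0$); it then suffices to shrink covers of the form $\opcN=\bigcup_{i\in I}\ball{n_i}{2^{-m_i}}$ with $n_i,m_i\in\NN$. The point is that each such ball is either the singleton $\{n_i\}$ (when $n_i\le m_i$) or the ``tail'' $\st{\alpha}{\alpha>m_i}$ (when $n_i>m_i$), and shrinking its radius by any $\epsilon<2^{-m_i-1}$ leaves it unchanged. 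Since $\infty$ is covered it must lie in a tail ball $\ball{n_{i_0}}{2^{-m_{i_0}}}=\st{\alpha}{\alpha>m_{i_0}}$, which then covers all of $\st{\alpha}{\alpha>m_{i_0}}$; the remaining finitely many points $0,1,\dots,m_{i_0}$ form a decidable subset of $\opcN$ and are covered by further balls chosen by finite choice. Taking $\epsilon$ to be a suitable fraction of the minimum of the finitely many numbers $2^{-m_{i_0}-1},\dots$ involved, the shrunk family still covers $\opcN$, so $\opcN$ is canonically metrized — I mean, Lebesgue.

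For (2), one direction: given a subovert bar $S$ (we may assume the empty sequence is not in $S$, else $S$ is trivially uniform), form the cover $\two^\NN=\bigcup_{a\in S}\ball{a\cnct\hat{0}}{\tfrac32 2^{-|a|}}$; these balls are exactly the cylinders $[a]$, the indexing is overt since $S\hookrightarrow\fbs$ is overt (Proposition~\ref{Proposition: constructions_of_overt_maps}), and it is a cover since $S$ is a bar. If $\two^\NN$ is Lebesgue, get $\epsilon>0$ with $\two^\NN=\bigcup_{a\in S}\ball{a\cnct\hat{0}}{\tfrac32 2^{-|a|}-\epsilon}$; fixing $n$ with $2^{-n}<\epsilon$, for any $\alpha$ the ball containing it satisfies $\tfrac32 2^{-|a|}>\epsilon>2^{-n}$, forcing $|a|\le n$, and $d_C(a\cnct\hat{0},\alpha)<2^{-|a|+1}$, which by decidable equality of finite binary sequences forces $\prefix{\alpha}{|a|}=a$; so $S$ is uniform with bound $n$. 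Conversely, assume every subovert bar is uniform and let $\two^\NN=\bigcup_{i\in I}\ball{x_i}{r_i}$ be overtly indexed via an overt map $c\colon I\to\two^\NN\times\RR$; put $S\dfeq\st{a\in\fbs}{\xsome{i}{I}{[a]\subseteq\ball{x_i}{r_i-2^{-|a|}}}}$. By the ultrametric fact, ``$[a]\subseteq\ball{x}{r-2^{-|a|}}$'' is open in $(x,r)$, so overtness of $c$ makes each ``$a\in S$'' open, whence $S$ is subovert; and $S$ is a bar, since for $\alpha$ one picks $i$ with $d_C(x_i,\alpha)<r_i$ and then $k$ with $2^{-k+1}<r_i-d_C(x_i,\alpha)$, checking $[\prefix{\alpha}{k}]\subseteq\ball{x_i}{r_i-2^{-k}}$. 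Uniformity gives $n$ with every $\alpha$ having a prefix $a\in S$, $|a|\le n$; then $\alpha\in[a]\subseteq\ball{x_i}{r_i-2^{-|a|}}\subseteq\ball{x_i}{r_i-2^{-n}}$ for the witnessing $i$, so $\two^\NN=\bigcup_{i\in I}\ball{x_i}{r_i-2^{-n}}$ and $\two^\NN$ is Lebesgue. The built-in slack ``$-2^{-|a|}$'' in $S$ is exactly what converts a uniform length bound into the single shrinkage $\epsilon=2^{-n}$.

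For (3) I would exhibit one overtly indexed cover of $\NN^\NN$ that cannot be shrunk: $\NN^\NN=\bigcup_{n\in\NN}\ball{\hat{n}}{2^{-1}+2^{-n-2}}$, which is a cover (if $\alpha_0=n$ then $d_C(\hat{n},\alpha)\le 2^{-1}$) and is overtly indexed by $\NN$. Given $\epsilon>0$, choose $n$ with $2^{-n-2}<\epsilon$; then $\ball{\hat{n}}{2^{-1}+2^{-n-2}-\epsilon}$ has radius $<2^{-1}$ and so does not contain $\alpha^{\ast}\dfeq(n,n+1,n+2,\dots)$, for which $d_C(\hat{n},\alpha^{\ast})=2^{-1}$; and $\alpha^{\ast}$ lies in no other shrunk ball $\ball{\hat{m}}{2^{-1}+2^{-m-2}-\epsilon}$, since $m\ne n$ gives $d_C(\hat{m},\alpha^{\ast})=1>2^{-1}+2^{-m-2}$. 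Hence no $\epsilon$ works and $\NN^\NN$ is not Lebesgue. The main obstacle in the whole proposition is the converse half of (2): one must build, from an arbitrary overtly indexed ball cover, a bar that is genuinely subovert — which is precisely why the cylinder-containment must be reformulated as an open condition via the ultrametric identity, rather than defined by a naive ``$\forall\beta\in[a]$'' that would demand subcompactness of cylinders — and that carries enough quantitative slack to translate a uniform length bound back into a uniform shrinkage; items (1) and (3) are by comparison bookkeeping about the discrete value set of $d_C$.
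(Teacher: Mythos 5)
Your proof is correct, and its overall architecture coincides with the paper's: item (1) by isolating the single ``tail'' ball containing $\infty$ and handling the remaining finitely many points of $\NN$ with finite choice; item (2) by translating between cylinder covers and bars; item (3) by exhibiting an explicit overtly indexed cover that no single $\epsilon$ can shrink. The one step where you take a genuinely different route is the converse of (2). The paper first applies Lemma~\ref{lemma: check_Lebesgue_on_restrictedly_indexed_covers} to normalize the radii to the form $2^{-k_i}$ and then takes $S \dfeq \st{\prefix{\alpha_i}{k_i+1}}{i \in I}$, which is subovert simply as the image of an overt indexing map; you instead keep arbitrary radii and define $S$ by the condition $\xsome{i}{I}{[a] \subseteq \ball{x_i}{r_i - 2^{-|a|}}}$, shown to be an open predicate on $\fbs$ via your ultrametric containment identity, so that $S$ is an open --- hence subovert --- subset of the overt set $\fbs$. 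Both are sound: the paper's version is shorter, while yours makes explicit the quantitative slack $2^{-|a|}$ that converts a uniform length bound back into a single shrinkage and bypasses the preliminary radius normalization. Your counterexample in (3) (the cover by $\ball{\hat{n}}{2^{-1}+2^{-n-2}}$) differs in detail from the paper's (the cover by $\ball{a \cnct o}{2^{-a_0}}$ over finite sequences of length at least $1$), but rests on the same phenomenon that the required shrinkage degrades along the index set. One cosmetic remark: in the forward direction of (2), the reduction ``we may assume the empty sequence is not in $S$'' is both constructively undecidable and unnecessary --- the argument goes through verbatim whether or not the empty sequence belongs to $S$.
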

		\begin{proof}
			By Lemma~\ref{lemma: check_Lebesgue_on_restrictedly_indexed_covers} we can restrict to covers by balls with radii of the form $2^{-k}$ for $k \in \NN$ (and centers in metrically dense countable subset if necessary).
			\begin{enumerate}
				\item
					Let $\opcN = \bigcup_{i \in I} \ball{x_i}{2^{-k_i}}$ for some overtly indexed cover. There is some $j \in I$ such that $\infty \in \ball{x_j}{2^{-k_j}}$. Let $m \dfeq \inf\{x_j, k_j\}$, and $j_0, j_1, \ldots, j_{m-1} \in I$ such that $l \in \ball{x_{j_l}}{2^{-k_{j_l}}}$ for all $l \in \NN_{< m}$. Let
					$$n \dfeq \sup\Big(\{k_j\} \cup \st{k_{j_l}}{l \in \NN_{< m}}\Big).$$
					Then the cover can be shrunk by any positive number smaller than $2^{-n-1}$, \eg $2^{-n-2}$.
				\item
					Let $S \subseteq \fbs$ be a subovert bar. Denote by $o \in \two^\NN$ the zero sequence, and let $f\colon S \to \two^\NN \times \RR$ be the map $f(a) \dfeq (a \cnct o, 2^{-|a|+1})$. Since $f$ can be written as the inclusion $S \hookrightarrow \fbs$, composed with another map, it is an overt map. Thus the union $\bigcup_{a \in S} \ball{a \cnct o}{2^{-|a|+1}}$ is overtly indexed, and it covers $\two^\NN$ since $S$ is a bar. By the Lebesgue property there is some $n \in \NN$ such that
					$$\two^\NN \subseteq \bigcup_{a \in S} \ball{a \cnct o}{2^{-|a|+1} - 2^{-n+1}} = \bigcup_{a \in S \land |a| < n} \ball{a \cnct o}{2^{-|a|+1} - 2^{-n+1}} \subseteq$$
					$$\subseteq \bigcup_{a \in S \land |a| < n} \ball{a \cnct o}{2^{-|a|+1}} \subseteq \two^\NN,$$
					implying that $S$ is a uniform bar with the bound $n$.
					
					Conversely, let $\bigcup_{i \in I} \ball{\alpha_i}{2^{-k_i}}$ be an overtly indexed cover of $\two^\NN$; then
					$$S \dfeq \st[1]{\prefix{\alpha_i}{k_i+1}}{i \in I}$$
					is a subovert bar. By assumption, it is uniform with some bound $n \in \NN$ which implies that the cover can be shrunk by $2^{-n+1}$.
				\item
					Let $I = \st{a \in \finseq{\NN}}{|a| \geq 1}$; since $I$ is in bijection with $\NN$, it is overt. Observe $\NN^\NN = \bigcup_{a \in I} \ball{a \cnct o}{2^{-a_0}}$ since $\alpha \in \ball{\prefix{\alpha}{\alpha_0 + 1} \cnct o}{2^{-\alpha_0}}$, but for any $n \in \NN$ the constant sequence with terms $n$ is not in $\bigcup_{a \in I} \ball{a \cnct o}{2^{-a_0} - 2^{-n}}$.
			\end{enumerate}
		\end{proof}
		
		\begin{lemma}\label{Lemma: bordered_balls_are_retracts_in_standard_ctbs}
			Let the set $A$ be $\NN^\NN$, $\two^\NN$, or $\opcN$, equipped with (the restriction of) the comparison metric $d_C$. Then for every $\beta \in A$ and $t \in \opcN$ the bordered ball $\bball{\beta}{2^{-t}}$ is a nonexpansive retract of $A$.
		\end{lemma}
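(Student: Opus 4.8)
The plan is to construct an explicit nonexpansive retraction $r_{\beta, t}\colon A \to \bball{\beta}{2^{-t}}$ for each of the three ambient spaces. In all three cases the idea is the same: given $\alpha \in A$, we want to modify $\alpha$ so that it agrees with $\beta$ on the first $t$ coordinates (which is exactly the condition $d_C(\alpha, \beta) \leq 2^{-t}$, recalling $d_C(\alpha, \beta) = 2^{-r(\mathfrak{d}(\alpha, \beta))}$ and the characterisation $\all{k}{\NN_{< n}}{\alpha_k = \beta_k} \iff d_C(\alpha, \beta) \leq 2^{-n}$). So the natural candidate is $r_{\beta, t}(\alpha) \dfeq \concat{t}{\beta}{\shift{t}(\alpha)}$ in the Baire case, \ie the sequence whose $k$-th term is $\beta_k$ for $k < t$ and $\alpha_k$ for $k \geq t$. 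First I would check this is well-defined when $t \in \opcN$ rather than $t \in \NN$: when $t = \infty$ the formula gives $\beta$ itself, and $\bball{\beta}{2^{-\infty}} = \bball{\beta}{0}$, which in $\NN^\NN$ is $\{\beta\}$ (the metric is a genuine metric), so the map is constant $\beta$ there, as it should be. The formula $\concat{t}{\beta}{\shift{t}(\alpha)}$ is already defined in the excerpt and makes sense for any $t \in \opcN$.

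The three things to verify are: (i) $r_{\beta, t}$ lands in $\bball{\beta}{2^{-t}}$; (ii) $r_{\beta, t}$ restricted to $\bball{\beta}{2^{-t}}$ is the identity; (iii) $r_{\beta, t}$ is nonexpansive. For (i), since $r_{\beta, t}(\alpha)$ agrees with $\beta$ on all coordinates $k < t$ (this needs a small argument for $t \in \opcN \setminus \NN$: if $t = \infty$ it agrees everywhere, and in general $k < t$ in $\opcN$ is decidable when $k \in \NN$, so the case split is legitimate), we get $d_C(r_{\beta, t}(\alpha), \beta) \leq 2^{-t}$. For (ii), if $d_C(\alpha, \beta) \leq 2^{-t}$ then $\alpha_k = \beta_k$ for all $k < t$, hence $\concat{t}{\beta}{\shift{t}(\alpha)} = \alpha$. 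For (iii), given $\alpha, \alpha' \in A$, the sequences $r_{\beta, t}(\alpha)$ and $r_{\beta, t}(\alpha')$ agree wherever $\alpha, \alpha'$ do on coordinates $\geq t$, and they agree (both equal $\beta$) on coordinates $< t$; a short argument comparing the first coordinate of disagreement shows $d_C(r_{\beta, t}(\alpha), r_{\beta, t}(\alpha')) \leq d_C(\alpha, \alpha')$, because any coordinate where the images differ is a coordinate $\geq t$ where $\alpha, \alpha'$ themselves differ. This is cleanest phrased via $\mathfrak{d}$ and the retraction $r\colon \two^\NN \to \opcN$: $\mathfrak{d}(r_{\beta,t}(\alpha), r_{\beta,t}(\alpha'))$ is pointwise $\geq \mathfrak{d}(\alpha, \alpha')$ on coordinates below $t$ (both are $1$ there) and equals it above, so $r \circ \mathfrak{d}$ only increases, whence $d_C$ only decreases.

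For $\two^\NN$ and $\opcN$ the same formula works, but I must check the image stays inside the subspace. For $\two^\NN$ this is automatic since $\beta_k, \alpha_k \in \two$. For $\opcN$ the concern is that $\concat{t}{\beta}{\shift{t}(\alpha)}$ might fail the decreasing condition $\all{n}{\NN}{\gamma_n = 0 \implies \gamma_{n+1} = 0}$ at the splice point $n = t-1$: we could have $\beta_{t-1} = 0$ while $\alpha_t = 1$. The fix is to first apply the standard nonexpansive retraction $\two^\NN \to \opcN$ (given in the excerpt by $\alpha \mapsto (n \mapsto 0$ if $\xsome{k}{\NN_{\leq n}}{\alpha_k = 0}$, else $1)$) after the splice; compositions of nonexpansive retractions onto nested subspaces are still nonexpansive retractions, but one checks that on $\bball{\beta}{2^{-t}} \subseteq \opcN$ the composite is still the identity (the splice already outputs something agreeing with $\beta$ below $t$, and $\beta \in \opcN$, so if $\alpha \in \opcN$ too the post-composition changes nothing). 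I expect the main obstacle to be exactly this bookkeeping at the splice point for $\opcN$, together with handling the $t \in \opcN$ (rather than $t \in \NN$) generality uniformly — in particular making the "agrees with $\beta$ below $t$" claims precise when $t = \infty$, which one dispatches by noting $r_{\beta,\infty}$ is the constant map $\beta$ and $\bball{\beta}{0}$ is a singleton (or, for $\opcN$ and $\two^\NN$ where $0$-distance still forces equality, likewise a singleton). Everything else is routine manipulation of finite prefixes.
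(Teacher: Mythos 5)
Your proposal is correct and is essentially the paper's proof: the paper uses exactly the splice map $r(\alpha)_n = \beta_n$ for $n<t$, $\alpha_n$ for $n\geq t$ in the Baire and Cantor cases, and for $\opcN$ replaces the tail by $\inf\big(\{\alpha_n\}\cup\st{\beta_k}{k \in \NN_{<t}}\big)$ — which is precisely what your splice followed by the standard nonexpansive retraction $\two^\NN \to \opcN$ computes to. The verifications you outline (image lands in the bordered ball, identity on it, nonexpansiveness via the first coordinate of disagreement) are the intended ones.
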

		\begin{proof}
			Define the map $r\colon A \to \bball{\beta}{2^{-t}}$,
			$$r(\alpha)_n \dfeq \begin{cases} \beta_n & \text{ if } n < t,\\ \alpha_n & \text{ if } n \geq t. \end{cases}$$
			It is easy to see that this works for $\NN^\NN$ and $\two^\NN$. In the case of $\opcN$, adjust the formula to
			$$r(\alpha)_n \dfeq \begin{cases} \beta_n & \text{ if } n < t,\\ \inf\Big(\{\alpha_n\} \cup \st{\beta_k}{k \in \NN_{< t}}\Big) & \text{ if } n \geq t. \end{cases}$$
		\end{proof}
		
		We are now ready to characterize metrization of \ctb[s] $\opcN$ and $\two^\NN$.
		
		\begin{lemma}\label{Lemma: implies_wso}
			Let $\NN^\NN$, $\two^\NN$ and $\opcN$ be equipped with the comparison metric. Then each of the following statements implies the next.
			\begin{enumerate}
				\item
					$\NN^\NN$ is metrized.
				\item
					$\two^\NN$ is metrized.
				\item
					$\opcN$ is metrized.
				\item
					For every $U \in \tp(\opcN)$, if $\infty \in U$, then there exists $r \in \RR_{> 0}$ such that $\ball{\infty}{r} \subseteq U$.
				\item
					\wso holds.
			\end{enumerate}
		\end{lemma}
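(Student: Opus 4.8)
The plan is to establish the five statements as a chain of implications, each step reducing to a result proved earlier. The preliminary observation is that $\NN^\NN$, $\two^\NN$ and $\opcN$ are all metrically separable (countable products of $\NN$, resp.\ $\two$, with the discrete metric, by Proposition~\ref{Proposition: metric_multiplicative_properties}), so that as soon as one of them is assumed metrized (hence weakly metrized), Proposition~\ref{Proposition: metric_to_intrinsic_separability_in_weakly_metrized_spaces}(3) makes it overt. For $(1 \impl 2)$ I would then apply Proposition~\ref{Proposition: retracts_inherit_metrization} (a retract of an overt metrized space is overt and metrized): $\two^\NN$ is a metric subspace of $\NN^\NN$ and carries a nonexpansive retraction from it, so metrization of $\NN^\NN$ descends to $\two^\NN$. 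The step $(2 \impl 3)$ is identical, using the nonexpansive retraction $\two^\NN \to \opcN$ and the fact that $\opcN$ is equipped with the restricted comparison metric.

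For $(3 \impl 4)$ I would use that $\opcN$ is an ultrametric space. Given $U \in \tp(\opcN)$ with $\infty \in U$, metrization (in fact weak metrization already suffices here) lets us write $U = \bigcup_{i \in I} \ball{x_i}{r_i}$, so $\infty \in \ball{x_j}{r_j}$ for some $j \in I$, with $r_j > 0$ since distances are nonnegative. In an ultrametric space every point of a ball is its center, hence $\ball{x_j}{r_j} = \ball{\infty}{r_j}$, and $r \dfeq r_j$ witnesses the conclusion of (4). For $(4 \impl 5)$, note that $\infty$ is a metric accumulation point of $\NN$ inside $\opcN$: given $r \in \RR_{> 0}$, choose $n \in \NN$ with $2^{-n} < r$, so that $d_C(n, \infty) \le 2^{-n} < r$ and hence $n \in \ball{\infty}{r}$. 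Then for $U \in \tp(\opcN)$ with $\infty \in U$, statement (4) yields $r > 0$ with $\ball{\infty}{r} \subseteq U$, and this $n$ lies in $U$, which is exactly \wso[].

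None of the steps is hard; the only point meriting attention is the bookkeeping in the first two implications, where the hypothesis ``overt'' of Proposition~\ref{Proposition: retracts_inherit_metrization} is not available outright but must be recovered from metric separability together with the assumed metrization.
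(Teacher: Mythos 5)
Your proof is correct and follows essentially the same route as the paper: the first two implications are obtained exactly as in the text, by deducing overtness from metric separability plus the assumed metrization and then applying the retraction result, while the last two steps (which the paper dismisses as immediate) are spelled out correctly via the ultrametric "every point of a ball is its center" property and the fact that $d_C(n,\infty)\leq 2^{-n}$.
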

		\begin{proof}
			If $\NN^\NN$ or $\two^\NN$ is metrized, it is also overt by Proposition~\ref{Proposition: metric_to_intrinsic_separability_in_weakly_metrized_spaces}, therefore since $\two^\NN$ is a retract of $\NN^\NN$, and $\opcN$ is a retract of $\two^\NN$, metrization of the retract space follows from Proposition~\ref{Proposition: retracts_inherit_metrization}. The rest is immediate.
		\end{proof}
		
		\begin{theorem}\label{Theorem: metrization_of_No}
			Consider the following statements (where $\opcN$ is understood to be equipped with the comparison metric).
			\begin{enumerate}
				\item
					$\opcN$ is compact, and \wso holds.
				\item
					$\opcN$ is canonically metrized.
				\item
					$\opcN$ is metrized.
				\item
					$\opcN$ is weakly metrized.
				\item
					For every $U \in \tp(\opcN)$, if $\infty \in U$, then there exists $r \in \RR_{> 0}$ such that $\ball{\infty}{r} \subseteq U$.
			\end{enumerate}
			Then every statement implies the next one, the first three are equivalent, and if $\opn$ is a dominance, all are equivalent.
		\end{theorem}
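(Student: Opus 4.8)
The plan is to establish the cycle $(1)\Rightarrow(2)\Rightarrow(3)\Rightarrow(4)\Rightarrow(5)$ together with $(3)\Rightarrow(1)$ --- which already gives that each statement implies the next and that the first three are equivalent --- and then to prove $(5)\Rightarrow(1)$ under the extra hypothesis that $\opn$ is a dominance, closing the last circle. The implications $(2)\Rightarrow(3)\Rightarrow(4)$ are immediate from the definitions in Section~\ref{Section: (co)limits_and_bases}, since a canonical overt indexing is in particular an overt indexing, and an overt indexing is in particular an indexing by a set. For $(4)\Rightarrow(5)$ I would take $U\in\tp(\opcN)$ with $\infty\in U$, use weak metrization to write $U$ as a union of balls $\ball{x_i}{r_i}$, pick one index $j$ with $\infty\in\ball{x_j}{r_j}$, and set $r\dfeq r_j-d_C(x_j,\infty)>0$; the triangle inequality then gives $\ball{\infty}{r}\subseteq\ball{x_j}{r_j}\subseteq U$.

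For $(1)\Rightarrow(2)$ I would invoke Theorem~\ref{Theorem: metrization_of_compact_spaces}(1) applied to $\opcN$: it is overt by Corollary~\ref{Corollary: overtness_of_standard_ctbs} (using \wso[]), and every bordered ball of radius $2^{-t}$, $t\in\opcN$, is a nonexpansive retract of $\opcN$ by Lemma~\ref{Lemma: bordered_balls_are_retracts_in_standard_ctbs}, hence the image of the compact space $\opcN$, hence compact and in particular subcompact in $\opcN$; the theorem then delivers canonical metrization. For $(3)\Rightarrow(1)$: metrization of $\opcN$ implies \wso by Lemma~\ref{Lemma: implies_wso}; and $\opcN$ is complete totally bounded and, by Proposition~\ref{Proposition: Lebesgue_property_of_standard_ctbs}(1), Lebesgue, so Theorem~\ref{Theorem: metrization_of_compact_spaces}(4) applies and gives that $\opcN$ is overt and compact. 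This yields $(1)$, completing the equivalence of the first three statements.

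The substance of the theorem --- and the step I expect to be the main obstacle --- is $(5)\Rightarrow(1)$ when $\opn$ is a dominance. That $(5)$ implies \wso is quick: given open $U\ni\infty$, $(5)$ produces $r>0$ with $\ball{\infty}{r}\subseteq U$, and any $n\in\NN$ with $2^{-n}<r$ satisfies $n\in\ball{\infty}{r}\subseteq U$. To obtain compactness of $\opcN$ I would introduce $V_k\dfeq\bball{\infty}{2^{-k}}=\st{\alpha\in\opcN}{\alpha\geq k}$, a decidable subset of $\opcN$, and observe that $(5)$ says exactly that $\infty\in U$ implies $V_k\subseteq U$ for some $k$. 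The plan rests on two facts, both valid under $(5)$. First, for every $U\in\tp(\opcN)$,
$$\xall{\alpha}{\opcN}{\alpha\in U}\ \iff\ (\infty\in U)\land\xall{n}{\NN}{n\in U};$$
the forward direction is trivial, and for the converse one uses $(5)$ to fix $k$ with $V_k\subseteq U$ and then, for arbitrary $\alpha$, decides the finite test whether $\alpha\geq k$: if so $\alpha\in V_k\subseteq U$, and if not $\alpha$ has a zero among its first $k$ terms, hence (being decreasing) lies in $\NN$, so $\alpha\in U$ by hypothesis. Second, if $\infty\in U$ then $\xall{n}{\NN}{n\in U}$ is an \emph{open} truth value: fixing $k$ with $V_k\subseteq U$ absorbs the tail, so it equals the finite meet $\xall{n}{\NN_{<k}}{n\in U}$.

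Finally I would combine these with the dominance axiom. Fixing $U\in\tp(\opcN)$, the truth value $u\dfeq(\infty\in U)$ is open, and the second fact gives $u\impl\big(\xall{n}{\NN}{n\in U}\in\opn\big)$; since $\opn$ is a dominance (and $\tst=\opn$ throughout this chapter), the dominance axiom yields $u\land\xall{n}{\NN}{n\in U}\in\opn$, which by the first fact is precisely $\xall{\alpha}{\opcN}{\alpha\in U}$. As $U$ is arbitrary, $\opcN$ is compact, and with \wso this is $(1)$. The one genuinely non-routine point is recognising that $(5)$ is exactly strong enough to both split $\opcN$ decidably as a $V_k$ together with finitely many isolated points, and to supply the open ``$\infty$-test'' as the antecedent triggering the dominance axiom; everything else reduces to the results already established in this chapter and in Section~\ref{Section: (co)limits_and_bases}.
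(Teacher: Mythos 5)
Your proposal is correct and follows essentially the same route as the paper: the cycle of easy implications, Theorem~\ref{Theorem: metrization_of_compact_spaces} with Corollary~\ref{Corollary: overtness_of_standard_ctbs}, Lemma~\ref{Lemma: bordered_balls_are_retracts_in_standard_ctbs} and Proposition~\ref{Proposition: Lebesgue_property_of_standard_ctbs} for the equivalence of (1)--(3), and the dominance axiom applied to the conjunction $(\infty\in U)\land\xall{t}{\opcN}{t\in U}$ for $(5)\Rightarrow(1)$. Your only deviation is cosmetic: you reduce the second conjunct to $\xall{n}{\NN}{n\in U}$ via a decidable case split on $\alpha\geq k$, whereas the paper keeps the quantifier over all of $\opcN$ and observes directly that, given $\ball{\infty}{2^{-k}}\subseteq U$, it is equivalent to the open statement $\xall{t}{\NN_{\leq k}}{t\in U}$.
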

		\begin{proof}
			Assume the first statement. Then $\opcN$ is overt by Corollary~\ref{Corollary: overtness_of_standard_ctbs}. Since we assume it is also compact, and bordered balls with radii $2^{-t}$, $t \in \opcN$, are its retracts by Lemma~\ref{Lemma: bordered_balls_are_retracts_in_standard_ctbs}, they are compact as well. Now use Theorem~\ref{Theorem: metrization_of_compact_spaces} to conclude that $\opcN$ is canonically metrized. Clearly then each statement implies the next.
			
			Assume $\opcN$ is metrized; this implies \wso by Lemma~\ref{Lemma: implies_wso}. It is Lebesgue by Proposition~\ref{Proposition: Lebesgue_property_of_standard_ctbs}. The first statement now follows from Theorem~\ref{Theorem: metrization_of_compact_spaces}.
			
			Finally, suppose $\opn$ is a dominance, and assume the fifth statement. It implies \wso by Lemma~\ref{Lemma: implies_wso}. To prove compactness, and hence the first statement, take any $U \in \tp(\opcN)$. Because
			$$\xall{t}{\opcN}{t \in U} \iff \infty \in U \land \xall{t}{\opcN}{t \in U},$$
			it suffices to show that the right-hand side of the equivalence is open which we derive from the dominance axiom. If $\infty \in U$, then there exists $k \in \NN$ such that $B(\infty, 2^{-k}) \subseteq U$, therefore $\xall{t}{\opcN}{t \in U}$ is equivalent to the open statement $\xall{t}{\NN_{\leq k}}{t \in U}$.
		\end{proof}
		
		\begin{theorem}\label{Theorem: metrization_of_Cantor}
			The following statements are equivalent for $\two^\NN$ (equipped with the comparison metric).
			\begin{enumerate}
				\item
					$\two^\NN$ is compact, and \wso holds.
				\item
					$\two^\NN$ is metrized, and every subovert bar is uniform.
			\end{enumerate}
		\end{theorem}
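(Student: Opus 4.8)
The plan is to mirror the proof of Theorem~\ref{Theorem: metrization_of_No}: both implications are obtained by assembling results already established for the standard \ctb[s], so the argument is essentially bookkeeping — extract compactness of bordered balls, feed ``compact $+$ \wso[]'' or ``totally bounded $+$ Lebesgue $+$ metrized'' into Theorem~\ref{Theorem: metrization_of_compact_spaces}, and translate ``Lebesgue'' into ``every subovert bar is uniform'' via Proposition~\ref{Proposition: Lebesgue_property_of_standard_ctbs}.

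For $(1 \impl 2)$ I would assume $\two^\NN$ is compact and \wso holds. First, \wso gives overtness of $\two^\NN$ by Corollary~\ref{Corollary: overtness_of_standard_ctbs}. Next, by Lemma~\ref{Lemma: bordered_balls_are_retracts_in_standard_ctbs} every bordered ball $\bball{\beta}{2^{-t}}$ with $t \in \opcN$ is a (nonexpansive) retract of $\two^\NN$, hence the image of a retraction defined on the compact space $\two^\NN$, and therefore itself compact (images of compact sets are compact), in particular subcompact. These are exactly the hypotheses of Theorem~\ref{Theorem: metrization_of_compact_spaces}(1), so $\two^\NN$ is canonically metrized, and in particular metrized. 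Since $\two^\NN$ is compact and \wso holds, Theorem~\ref{Theorem: metrization_of_compact_spaces}(2) yields that $\two^\NN$ is Lebesgue, and Proposition~\ref{Proposition: Lebesgue_property_of_standard_ctbs}(2) then tells us this is equivalent to every subovert bar being uniform. For $(2 \impl 1)$ I would assume $\two^\NN$ is metrized and every subovert bar is uniform; by Proposition~\ref{Proposition: Lebesgue_property_of_standard_ctbs}(2) the latter says $\two^\NN$ is Lebesgue. Recalling that $\two^\NN$ is a complete totally bounded ultrametric space, Theorem~\ref{Theorem: metrization_of_compact_spaces}(4) applied to ``totally bounded $+$ Lebesgue $+$ metrized'' gives that $\two^\NN$ is overt and compact, while metrization of $\two^\NN$ implies \wso by Lemma~\ref{Lemma: implies_wso}. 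This closes the equivalence.

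There is no genuinely hard step here — the real work is front-loaded into Theorem~\ref{Theorem: metrization_of_compact_spaces} and Proposition~\ref{Proposition: Lebesgue_property_of_standard_ctbs}. The one thing to watch is that the subcompactness hypothesis in Theorem~\ref{Theorem: metrization_of_compact_spaces}(1) is stated precisely for bordered balls with radii of the form $2^{-t}$, $t \in \opcN$, which is exactly the family for which Lemma~\ref{Lemma: bordered_balls_are_retracts_in_standard_ctbs} supplies retractions, so the match is clean; and one should be explicit that a retract of a compact set is compact. Everything else is a direct assembly.
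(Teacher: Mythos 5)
Your proof is correct and follows essentially the same route as the paper's: bordered balls as retracts of the compact space feed into Theorem~\ref{Theorem: metrization_of_compact_spaces}, and Proposition~\ref{Proposition: Lebesgue_property_of_standard_ctbs} translates between the Lebesgue property and uniformity of subovert bars. If anything, you are slightly more explicit than the paper in deriving the bar-uniformity half of statement (2) in the forward direction via Theorem~\ref{Theorem: metrization_of_compact_spaces}(2).
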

		\begin{proof}
			The space $\two^\NN$ is overt by Corollary~\ref{Corollary: overtness_of_standard_ctbs}, and its compactness implies compactness of bordered balls with radii $2^{-t}$, $t \in \opcN$, which are its retracts by Lemma~\ref{Lemma: bordered_balls_are_retracts_in_standard_ctbs}. Use Theorem~\ref{Theorem: metrization_of_compact_spaces} to conclude that $\two^\NN$ is (canonically) metrized.
			
			Conversely, if $\two^\NN$ is metrized, the principle \wso holds by Lemma~\ref{Lemma: implies_wso}. If every subovert bar is uniform, the space $\two^\NN$ is Lebesgue by Proposition~\ref{Proposition: Lebesgue_property_of_standard_ctbs}. Theorem~\ref{Theorem: metrization_of_compact_spaces} then implies compactness of $\two^\NN$.
		\end{proof}
		
		A more transparent connection between compactness of $\two^\NN$ and the Fan Principle is expressed by the following corollary.
		\begin{corollary}\label{Corollary: Fan_Principle_and_compactness_of_Cantor}
		  If $\two^\NN$ is metrized by the comparison metric, then the following are equivalent.
		  \begin{enumerate}
			  \item $\two^\NN$ is compact.
			  \item Every subovert bar is uniform.
		  \end{enumerate}
		  In the special case when $\ACRos$ and $\opn = \Ros$, the second statement may be replaced by Brouwer's Fan Principle ``every decidable bar is uniform''.
		\end{corollary}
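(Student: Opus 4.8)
The plan is to obtain the equivalence $(1)\Leftrightarrow(2)$ directly from Theorem~\ref{Theorem: metrization_of_Cantor} and Lemma~\ref{Lemma: implies_wso}, under the standing hypothesis that $\two^\NN$ is metrized by the comparison metric. For $(1\Rightarrow2)$: if $\two^\NN$ is compact then, since it is metrized, Lemma~\ref{Lemma: implies_wso} yields \wso[,] so the first statement of Theorem~\ref{Theorem: metrization_of_Cantor} holds; hence its second statement holds, and in particular every subovert bar is uniform. For $(2\Rightarrow1)$: if every subovert bar is uniform then, together with the hypothesis that $\two^\NN$ is metrized, the second statement of Theorem~\ref{Theorem: metrization_of_Cantor} holds, hence so does the first, and in particular $\two^\NN$ is compact (and, as a bonus, \wso holds). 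So the main equivalence needs nothing beyond what is already proven.

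The real content is the last sentence. One direction is immediate: $\fbs$ is overt (it is in bijection with $\NN$) and $\two\subseteq\tst=\opn$, so by Corollary~\ref{Corollary: decidable_subsets_of_overt/compact_sets} every decidable subset of $\fbs$ is overt, hence subovert; thus a decidable bar is a subovert bar, and ``every subovert bar is uniform'' implies Brouwer's Fan Principle. For the converse I would argue as follows. Since $\fbs$ has decidable equality it is discrete, so by Proposition~\ref{Proposition: subovert_subset_of_discrete} a subovert bar $S\subseteq\fbs$ is an open, hence semidecidable, subset; by $\ACRos$ and Proposition~\ref{Proposition: semidecidable_and_countable} it is therefore countable, say via a surjection $s\colon\NN\to\one+S$ (equivalently, $\ACRos$ lets one lift the characteristic map $\fbs\to\Ros$ to a map $\fbs\to\two^\NN$, producing a uniform decidable witness). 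Put
$$S'\dfeq\st{a\in\fbs}{\xsome{n}{\NN_{\leq|a|}}{s_n\in S\land s_n\lex a}}.$$
For fixed $a$ this is a bounded disjunction of the decidable conditions ``$s_n\in S$'' (decidability of membership in a coproduct summand) and ``$s_n\lex a$'', so $S'$ is a \emph{decidable} subset of $\fbs$. It is a bar: given $\alpha\in\two^\NN$, choose $a\in S$ with $a\lex\alpha$ and $n$ with $s_n=a$, and set $a'\dfeq\prefix{\alpha}{\sup\{|a|,n+1\}}$; then $n\leq|a'|$ and $s_n\lex a'$, so $a'\in S'$. Finally, if $S'$ is uniform with bound $N$, then for every $\alpha$ some $a'\lex\alpha$ of length $\leq N$ lies in $S'$, and the corresponding $n\leq|a'|\leq N$ gives $s_n\in S$, $s_n\lex\alpha$, $|s_n|\leq N$; hence $S$ is uniform with the same bound $N$. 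Thus the Fan Principle forces every subovert bar to be uniform.

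The main equivalence is pure bookkeeping on top of Theorem~\ref{Theorem: metrization_of_Cantor}, so the only step requiring attention is the reduction of subovert bars to decidable ones in the special case: extracting an honest enumeration (or uniform decidable witness) of a subovert bar — precisely where $\ACRos$ is used, via Proposition~\ref{Proposition: semidecidable_and_countable} — and then checking that the padded decidable bar $S'$ is genuinely a bar whose uniformity bound transfers back to $S$. Both are routine manipulations of finite binary sequences, so I expect no real obstacle.
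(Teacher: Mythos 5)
Your handling of the main equivalence is exactly the paper's: metrization gives \wso via Lemma~\ref{Lemma: implies_wso}, and then Theorem~\ref{Theorem: metrization_of_Cantor} does all the work. Where you genuinely diverge is the final sentence about replacing subovert bars by decidable ones. The paper disposes of this in two lines: under $\ACRos$ and $\opn = \Ros$ subovert, overt and semidecidable bars coincide, and it then simply \emph{cites} Ishihara's result that the fan principles for decidable and semidecidable bars are equivalent. You instead prove that equivalence from scratch: the easy direction via Corollary~\ref{Corollary: decidable_subsets_of_overt/compact_sets} (decidable $\Rightarrow$ subovert), and the substantive direction by identifying a subovert bar $S \subseteq \fbs$ as open, hence semidecidable, hence (by $\ACRos$ and Proposition~\ref{Proposition: semidecidable_and_countable}) enumerable, and then running the standard padding construction $S' = \st{a}{\xsome{n}{\NN_{\leq |a|}}{s_n \in S \land s_n \lex a}}$, checking that $S'$ is a decidable bar and that a uniformity bound for $S'$ transfers verbatim to $S$. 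All the steps check out: decidability of $S'$ rests on decidability of membership in a coproduct summand and of the prefix relation, the bar property follows by taking a long enough prefix of $\alpha$ to outlast both $|a|$ and the enumeration index, and the bound transfers because the witness $s_n$ has length at most $|a'| \leq N$. What your route buys is a self-contained argument in place of an external citation, making the corollary independent of Ishihara's paper; what the paper's route buys is brevity and an explicit pointer to the reverse-mathematics literature where the (non-)equivalences between fan principles for various classes of bars are catalogued.
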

		
		\begin{proof}
		  Metrization of $\two^\NN$ implies \wso by Lemma~\ref{Lemma: implies_wso}. The equivalence now follows from Theorem~\ref{Theorem: metrization_of_Cantor}.
		
		  If $\ACRos$ and $\opn = \Ros$ (therefore $\opn$ is a dominance), then the subovert bars, overt bars and semidecidable bars match. By~\cite{Ishihara_H_2005:_constructive_reverse_mathematics_compactness_properties} the Fan Principles for decidable and semidecidable bars are equivalent.
		\end{proof}
		
		\intermission
		
		The principle \wso also has consequences for (countable) products. When we defined the product metric, we simply took the one we are familiar with from classical topology, but there is little reason in general to expect that the product metric should metrize the product, even if all individual factors are metrized. For example, in $\Set$ (with $\opn = \two = \soc$) all sets are overt and have decidable equality, thus they are metrized by the discrete metric (by Proposition~\ref{Proposition: overt_sets_with_decidable_equality_metrized}), but the metric of a countable product of discrete metric spaces is not (equivalent to) the discrete metric. We claim however that $\wso$ under mild conditions ensures the match between the product topology (as defined in Section~\ref{Section: (co)limits_and_bases}) and the product metric topology.
		
		\begin{proposition}\label{Proposition: wso_implies_expected_countable_product_topology}
			If \wso holds, then $X = \prod_{i \in I} X_i$ has the (weak, canonical) product topology if and only if the map
			$$P\colon \coprod_{n \in \NN} \prod_{k \in \NN_{< n}} \tp(X_k) \to \tp(X),$$
			defined by
			$$P\big((U_k)_{k \in \NN_{< n}}\big) \dfeq \bigcap_{k \in \NN_{< n}} p_k^{-1}(U_k) = \prod_{k \in \NN_{< n}} U_k \times \prod_{j \in \NN_{\geq n}} X_j,$$
			is a (weak, canonical) basis for $X$.
		\end{proposition}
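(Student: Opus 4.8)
The statement pairs the general product--topology map from Section~\ref{Section: (co)limits_and_bases}, which for the (here countably infinite) index set $I=\NN$ reads $P\colon\cp(\NN)\times\prod_{i}\tp(X_i)\to\tp(X)$, $P(K,(U_i))\dfeq\bigcap_{i\in K}p_i^{-1}(U_i)$, with the map displayed in the statement, which I shall write $P'\colon\coprod_{n}\prod_{k<n}\tp(X_k)\to\tp(X)$, $P'((U_k)_{k<n})=\prod_{k<n}U_k\times\prod_{j\ge n}X_j$. The plan is to show that, under \wso, every subovert (resp.\ canonical overt) indexing witnessing that $P$ is a basis can be transformed into one witnessing the same for $P'$, and conversely. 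Throughout I shall use the reformulation of (weak, canonical) basis via subovert subsets of the domain recalled at the end of Section~\ref{Section: (co)limits_and_bases}, so that all the bookkeeping reduces to the standard closure properties of subovert subsets (Propositions~\ref{Proposition: images_and_finite_products_of_subovert/subcompact_subsets}, \ref{Proposition: intersection_of_subovert_and_open_is_open_(original)}, \ref{Proposition: constructions_of_overt_maps}), recalling that since countable sets are overt $\opn$ is a bounded lattice.

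Two inputs do the real work. First, Proposition~\ref{Proposition: intersection_of_preimages_by_projections_as_a_product}(1): since $\NN$ has decidable equality, for $K\in\cp(\NN)$ one has $\bigcap_{k\in K}p_k^{-1}(U_k)=\prod_{i}V_i$ with $V_i\dfeq\st{x\in X_i}{i\in K\implies x\in U_i}$ open in $X_i$. Second, the fact established after Theorem~\ref{Theorem: compact_maps_into_reals} that \wso forces every subcompact $K\subseteq\NN$ to be order-bounded, i.e.\ $\some{n}{\NN}{K\subseteq\NN_{<n}}$. Note that for such $K$ with $K\subseteq\NN_{<n}$ one gets $V_i=X_i$ whenever $i\ge n$ (if $i\in K$ then $i<n$, contradicting $i\ge n$, so $\lnot(i\in K)$), whence $\bigcap_{k\in K}p_k^{-1}(U_k)=\prod_{i<n}V_i\times\prod_{j\ge n}X_j=P'((V_i)_{i<n})$; and conversely $P'((U_k)_{k<n})=P(\NN_{<n},(\widetilde U_i)_i)$ with $\widetilde U_i=U_i$ for $i<n$ and $\widetilde U_i=X_i$ otherwise, where $\NN_{<n}$ is compact hence lies in $\cp(\NN)$.

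For the forward direction, given $U\in\tp(X)$ and a subovert $A\subseteq\cp(\NN)\times\prod_i\tp(X_i)$ with $U=\bigcup_{(K,(U_i))\in A}P(K,(U_i))$, I set $\widehat A\dfeq\st{(K,(U_i),n)}{(K,(U_i))\in A\wedge K\subseteq\NN_{<n}}$. The predicate ``$K\subseteq\NN_{<n}$'' is $\xall{m}{K}{m\in\NN_{<n}}$, which is open because every $K\in\cp(\NN)$ is subcompact in $\NN$ and $\NN_{<n}$ is open in the discrete set $\NN$; so $\widehat A$ is the intersection of the subovert set $A\times\NN$ (Proposition~\ref{Proposition: constructions_of_overt_maps}(2)) with an open subset, hence subovert (Proposition~\ref{Proposition: intersection_of_subovert_and_open_is_open_(original)}). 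The assignment $(K,(U_i),n)\mapsto((V_i)_{i<n})$ is a well-defined map $\widehat A\to\coprod_n\prod_{k<n}\tp(X_k)$; its image $A'$ is subovert (Proposition~\ref{Proposition: images_and_finite_products_of_subovert/subcompact_subsets}(1)). By order-boundedness the projection $\widehat A\to A$ is surjective, so $\bigcup_{(V_i)_{i<n}\in A'}P'((V_i)_{i<n})=\bigcup_{\widehat A}P(K,(U_i))=\bigcup_{A}P(K,(U_i))=U$. The weak case is obtained by dropping ``subovert'' (then $A$ and $\widehat A$ are arbitrary indexing sets), and the canonical case by propagating a chosen $A$ through the canonical constructions of $\widehat A$ and $A'$. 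The backward direction is the easier one: from a subovert $A'\subseteq\coprod_n\prod_{k<n}\tp(X_k)$ with $U=\bigcup_{A'}P'((U_k)_{k<n})$, take $A$ to be the image of $A'$ under $(U_k)_{k<n}\mapsto(\NN_{<n},(\widetilde U_i)_i)$ (subovert again), and use the identity $P(\NN_{<n},(\widetilde U_i))=P'((U_k)_{k<n})$.

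The main obstacle is the forward direction, and precisely the need to stay choice-free: one cannot simply pick for each index $(K,(U_i))$ a natural number bounding $K$. Enlarging the index set to $\widehat A$ (original index paired with a bound) avoids this, but only works because ``$K\subseteq\NN_{<n}$'' is an \emph{open} truth value, which in turn rests on subcompactness of $K$ and discreteness of $\NN$; this is what makes $\widehat A$ genuinely subovert. The remaining verifications --- well-definedness of the $V_i$ via Proposition~\ref{Proposition: intersection_of_preimages_by_projections_as_a_product}, and the stability of subovertness under products, intersections with opens and images --- are routine given Sections~\ref{Section: to_synthetic_topology}, \ref{Section: redefinition_of_synthetic_topology} and~\ref{Section: (co)limits_and_bases}.
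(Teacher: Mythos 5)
Your proof is correct and follows essentially the same route as the paper's: both reduce to Proposition~\ref{Proposition: intersection_of_preimages_by_projections_as_a_product} for the sets $V_i$, both exploit that ``$K \subseteq \NN_{<n}$'' is an open statement (subcompactness of $K$ plus decidability/openness in $\NN$) to get a subovert set of bounds, and both use the consequence of \wso that subcompact subsets of $\NN$ are bounded to see that this set is inhabited. The only cosmetic difference is that you make explicit, via $\widehat{A}$, the composition of subovert indexings that the paper performs one basic set at a time and leaves implicit.
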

		\begin{proof}
			One direction is obvious since $\NN_{< n}$ is compact. For the other it is sufficient to present any subset of the form $\bigcap_{k \in K} p_k^{-1}(U_k)$ where $K$ is subcompact in $\NN$ as a (subovert) union of the sets of the form $\bigcap_{k \in \NN_{< n}} p_k^{-1}(U_k)$ (in a canonical way). Let
			$$J \dfeq \st{n \in \NN}{K \subseteq \NN_{< n}};$$
			since $\NN_{< n}$ is open and $K$ subcompact in $\NN$, $J$ is open, and therefore subovert in $\NN$. For every $i \in \NN$ define
			$$V_i \dfeq \st{x \in X_i}{i \in K \implies x \in U_i}.$$
			By Proposition~\ref{Proposition: intersection_of_preimages_by_projections_as_a_product} this is an open subset in $X_i$, and furthermore for every $n \in J$
			$$\bigcap_{k \in K} p_k^{-1}(U_k) = \prod_{i \in \NN} V_i = \bigcap_{i \in \NN} p_i^{-1}(V_i) = \bigcap_{k \in \NN_{< n}} p_k^{-1}(V_k)$$
			where the last equality holds since clearly $V_i = X_i$ for $i \in J$ (and $J$ is an upper set).
			
			Thus
			$$\bigcap_{k \in K} p_k^{-1}(U_k) = \bigcup_{n \in J} \bigcap_{k \in \NN_{< n}} p_k^{-1}(V_k);$$
			the inclusion $\supseteq$ is clear while $\subseteq$ holds because $J$ is inhabited by \wso (see Theorem~\ref{Theorem: compact_maps_into_reals} and the discussion after it).
		\end{proof}
		
		\begin{theorem}\label{Theorem: wso_implies_expected_countable_products}
			Let
			\begin{itemize}
				\item
					$\big(\mtr{X}_n = (X_n, d_n)\big)_{n \in \NN}$ be a sequence of metric spaces,
				\item
					$\mtr{X} = (X, d) \dfeq \prod_{n \in \NN} \mtr{X}_n$ the product metric space,
				\item
					$\big(p_n\colon X \to X_n\big)_{n \in \NN}$ the projections.
			\end{itemize}
			\begin{enumerate}
				\item
					Any metrically open subset in $U$ is an overtly indexed union of product basic subsets. Thus metrization of $\mtr{X}$ implies that $X$ has the product topology.
				\item
					Suppose in addition that \wso holds, and that $\mtr{X}_n$s are canonically metrized for all $n \in \NN$. Then the converse also holds, \ie if $X$ has the product topology, then $\mtr{X}$ is metrized.
			\end{enumerate}
		\end{theorem}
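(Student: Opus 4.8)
I would prove the two parts separately. Part~1 needs only the standing assumptions of the chapter (countable sets overt, $\tst=\opn$); \wso and the canonical metrizations of the factors enter only in Part~2, and in fact only at one point each.

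\textbf{Part 1.} Start from an arbitrary metrically open $U\subseteq X$, so $U=\bigcup_{(x,r)\in A}B(x,r)$ for some subovert $A\subseteq X\times\RR$ (using the reformulation of ``overtly indexed union'' via a subovert subset of the index). The heart of the matter is a direct computation with the product metric: for $r>0$ one has $y\in B(x,r)$ iff there is $n\in\NN$ with $2^{-n}<r$ and $2^{-k}h\big(d_k(x_k,y_k)\big)<r$ for all $k<n$ --- the point being that the tail $\sup_{k\ge n}2^{-k}h(\cdots)$ is automatically $\le 2^{-n}<r$, and finite strict suprema of things $<r$ are $<r$. Hence $B(x,r)=\bigcup_{n:\,2^{-n}<r}\bigcap_{k<n}p_k^{-1}(W^{x,r}_k)$ with $W^{x,r}_k=\st{z\in X_k}{2^{-k}h(d_k(x_k,z))<r}$ intrinsically open in $X_k$ (preimage of the open relation $<$). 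Each inner set is $P(\NN_{<n},(W^{x,r}_k)_k)$, a product-basic subset since $\NN_{<n}$ is finite, hence compact. Re-indexing, $U$ is the union of these $P$-values over $\st{(x,r,n)\in X\times\RR\times\NN}{(x,r)\in A,\ 2^{-n}<r}$, which is subovert: it is the intersection of the subovert set $A\times\NN$ (finite product of the subovert $A$ with the overt $\NN$, by Proposition~\ref{Proposition: images_and_finite_products_of_subovert/subcompact_subsets}) with an open subset (Proposition~\ref{Proposition: intersection_of_subovert_and_open_is_open_(original)}); and images of subovert subsets are subovert. Thus $P$ is a basis, i.e.\ $X$ has the product topology; and since a metrized $\mtr{X}$ has $\tp(X)=\mtp(\mtr{X})$, Part~1 follows.

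\textbf{Part 2.} Assume \wso, each $\mtr{X}_n$ canonically metrized, and $X$ with the product topology. First, reduce the shape of a basic open set: by Proposition~\ref{Proposition: wso_implies_expected_countable_product_topology} (the only place \wso is used) every $U\in\tp(X)$ is an overtly indexed union $U=\bigcup_{m\in M}\bigcap_{k<n_m}p_k^{-1}(U_{m,k})$ with $M$ subovert and $U_{m,k}\in\tp(X_k)$. Next, two lemmas about the metric topology of $X$, neither needing \wso. (A) $\mtp(\mtr{X})$ is closed under finite intersections: for balls, $z\in B(x,r)\cap B(y,s)$ forces $B(z,\rho)\subseteq B(x,r)\cap B(y,s)$ as soon as $d(x,z)+\rho<r$ and $d(y,z)+\rho<s$, so $B(x,r)\cap B(y,s)$ is a union of balls over an \emph{open}, hence subovert, index set of pairs $(z,\rho)$, and this lifts through overtly indexed unions. (B) For each $k$, $p_k^{-1}$ sends metrically open subsets of $X_k$ to metrically open subsets of $X$; it suffices to treat a ball, and $p_k^{-1}(B_k(v,s))$ is the union of the product balls $B(y,\rho)$ over the open set of $(y,\rho)$ for which $\rho$ is small enough relative to $s-d_k(v,y_k)$ --- concretely $d_k(v,y_k)+h^{-1}(2^k\rho)<s$ --- using that from $d(y,y')<\rho$ one gets $h(d_k(y_k,y'_k))<2^k\rho$. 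Now combine: each $U_{m,k}$ is metrically open since $\mtr{X}_k$ is metrized; by (B) each $p_k^{-1}(U_{m,k})$ is metrically open in $X$; by (A) each $\bigcap_{k<n_m}p_k^{-1}(U_{m,k})$ is metrically open in $X$; and canonicity of the factor metrizations lets us choose these ball-decompositions canonically in $m$, so the union over the (possibly non-countable) subovert index $M$ of these decompositions is again overtly indexed. Hence $U$ is an overtly indexed union of balls of $\mtr{X}$, i.e.\ $\mtr{X}$ is metrized (and canonically so when $X$ has the canonical product topology; the weak versions drop the overtness bookkeeping and are easier). The degenerate term $n_m=0$, which is $X$ itself, is covered because $X$ is overt under these hypotheses.

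\textbf{Main obstacle.} The substantive difficulty is the gauge-map step in lemma~(B): from $d(y,y')<\rho$ one must extract a genuine bound $d_k(y_k,y'_k)<\varepsilon$, i.e.\ invert $h$ near $0$ (equivalently, know that the projections $p_k\colon X\to X_k$ are \ed-continuous). For the gauge maps actually used --- $h=\id_{\II}$ when the factor metrics are bounded by $1$, or $h(t)=t/(1+t)$ --- this is immediate, but in full generality it rests on $h$ being order-reflecting and continuous at $0$. The secondary, purely bookkeeping obstacle is threading subovertness through the non-countable index set $M$, which is precisely why the factors are assumed \emph{canonically} metrized.
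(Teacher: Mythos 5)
Your proof is correct in substance and follows the same skeleton as the paper's (Part~1: decompose each product ball into finite intersections of preimages of factor opens, with the tail of the supremum controlled by $2^{-n}<r$; Part~2: reduce via Proposition~\ref{Proposition: wso_implies_expected_countable_product_topology} and then express each $\bigcap_{k<n}p_k^{-1}(U_k)$ as an overtly indexed union of product balls), but the details are packaged differently. In Part~1 the paper writes $\ball{x^i}{r^i}$ as a union over finite sequences $q\in\finseq{\oirs}$ of $\bigcap_{k<|q|}p_k^{-1}\big(\ball[k]{x^i_k}{q_k}\big)$, interpolating rational radii $d_k(x^i_k,y_k)<q_k<2^k r^i$; your sublevel sets $W^{x,r}_k$ avoid both the interpolation and the $\finseq{\oirs}$ indexing, at the harmless cost that the inner sets are open but not balls. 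In Part~2 the paper does one direct computation, exhibiting $\bigcap_{k<n}p_k^{-1}(U_k)$ as the union of balls $\ball{a}{s}$ over tuples with centers $a$ in a metrically dense subovert $D\subseteq X$, radii $s$ in a subovert $\lvl\subseteq\RR_{>0}$ with strict infimum $0$, and $d_k(x_{i_k,k},a_k)+2^k s<r_{i_k,k}$; your lemmas (A) and (B) prove the same triangle-inequality estimates, reorganized as closure properties of $\mtp(\mtr{X})$.

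Two caveats. First, in (A) and (B) your inference ``open, hence subovert'' for index sets living in $X\times\RR$ is not valid as stated: $\RR$ need not be overt (the paper stresses that the terminal streak is not overt in general), so an open subset of $X\times\RR$ need not be subovert. The repair is exactly the device you defer to your closing paragraph but should build into the lemmas themselves: index centers by $D$ and radii by $\lvl$ as above, so that the index set is the intersection of a subovert set with an open one. Second, your ``main obstacle'' about inverting the gauge map does not arise in the theorem as the paper proves it: the proof explicitly assumes the factor metrics are bounded by $1$ and the gauge map is $\id[\II]$, the only case needed later, so projections are automatically \ed-continuous there.
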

		\begin{proof}
			In order to simplify the proof we assume that the metrics $d_n$ are bounded above by $1$, and that the gauge map for the product $\mtr{X}$ is the identity $\id[\II]$ (we need the result only in this case).
			\begin{enumerate}
				\item
					Take any metrically open $U \subseteq X$ which can then be written as an overtly indexed union $U = \bigcup_{i \in I} \ball{x^i}{r^i}$. Let
					$$J \dfeq \st{(i, q) \in I \times \finseq{\oirs}}{2^{-|q|} < r^i \land \all{k}{\NN_{< |q|}}{2^{-k} q_k < r^i}}.$$
					We claim
					$$U = \bigcup_{(i, q) \in J} \bigcap_{k \in \NN_{< |q|}} p_k^{-1}\Big(\ball[k]{x^i_k}{q_k}\Big).$$
					Denote the right-hand side by $V$.
					\begin{itemize}
						\item\proven{$U \subseteq V$}
							Take any $y = (y_n)_{n \in \NN} \in U$. There exists $i \in I$ such that $y \in \ball{x^i}{r^i}$. Then $r^i > 0$, so we can find a small enough $m \in \NN$ for which $2^{-m} < r^i$, and we can also find $q_0, \ldots, q_{m-1} \in \oirs$ such that $d_k\big(x^i_k, y_k\big) < q_k < 2^k \cdot r^i$ for all $k \in \NN_{< m}$ since $\oirs$ is interpolating and $2^{-k} \cdot d_k\big(x^i_k, y_k\big) \leq d(x^i, y) < r^i$. We see that $\big(i, (q_k)_{k \in \NN_{< m}}\big) \in J$, and $p_k(y) \in \ball[k]{x^i_k}{q_k}$ for all $k \in \NN_{< m}$, so $y \in \bigcap_{k \in \NN_{< |q|}} p_k^{-1}\Big(\ball[k]{x^i_k}{q_k}\Big) \subseteq V$.
						\item\proven{$V \subseteq U$}
							Take $y \in V$, \ie there exist $i \in I$, $m \in \NN$ and $q \in \oirs^m$ such that $2^{-m} < r^i$, and for all $k \in \NN_{< m}$ we have $d_k\big(x^i_k, y_k\big) < q_k < 2^k \cdot r^i$. Let
							$$\epsilon \dfeq \inf\Big(\{r^i - 2^{-m}\} \cup \st{r^i - 2^{-k} \cdot q_k}{k \in \NN_{< n}}\Big) > 0;$$
							then $2^{-k} d_k(x^i_k, y_k) \leq r^i - \epsilon$ for all $k \in \NN_{< n}$. Also, for all $j \in \NN_{\geq n}$
							$$2^{-j} \cdot d_j(x^i_j, y_j) \leq 2^{-j} \leq 2^{-m} \leq r^i - \epsilon,$$
							so $d(x^i, y) = \sup\st{2^{-k} \cdot d_k(x^i_k, y_k)}{k \in \NN} \leq r^i - \epsilon < r^i$ whence we conclude $y \in U$.
						\item\proven{$V$ is an overtly indexed union of compactly indexed intersections}
							The intersections are compactly indexed, as the indices range over the finite sets $\NN_{< m}$. The union is overtly indexed by Corollary~\ref{Corollary: overt_combination} since
							\begin{itemize}
								\item
									the indexing $I \to X \times \RR$ is overt,
								\item
									$\finseq{\oirs} \ism \coprod_{n \in \NN} \oirs^n$ (hence its identity $\id[\finseq{\oirs}]$) is overt, being a countable (consequently overt) coproduct of overt sets, and
								\item
									$J$ is the preimage of the open subset
									$$\st{(x, r, q) \in X \times \RR \times \finseq{\oirs}}{2^{-|q|} < r \land \all{k}{\NN_{< |q|}}{2^{-k} q_k < r}}.$$
							\end{itemize}
					\end{itemize}
				\item
					Suppose $X$ has the product topology. By Proposition~\ref{Proposition: wso_implies_expected_countable_product_topology} any $U \in \tp(X)$ can be written as an overtly indexed union of sets of the form $\bigcap_{k \in \NN_{< n}} p_k^{-1}(U_k)$ where $U_k$s are open in canonically metrized $X_k$s, so we can write them as overtly indexed unions
					$$U_k = \bigcup_{i \in I_k} \ball[k]{x_{i,k}}{r_{i,k}}.$$
					Fix a set $\lvl \subseteq \RR_{> 0}$ with the strict infimum $0$, subovert in $\RR$, for example $\lvl = \st{2^{-n}}{n \in \NN}$. Define
					$$J \dfeq \st[2]{\big((i_k)_{k \in \NN_{< n}}, (a_j)_{j \in \NN}, s\big) \in \big(\prod_{k \in \NN_{< n}} I_k\big) \times D \times \lvl}{\all{k}{\NN_{< n}}{d_k(x_{i_k, k}, a_k) + 2^k s < r_{i_k, k}}}.$$
					We claim
					$$\bigcap_{k \in \NN_{< n}} p_k^{-1}(U_k) = \bigcup\st[2]{\ball{(a_j)_{j \in \NN}}{s}}{\big((i_k)_{k \in \NN_{< n}}, (a_j)_{j \in \NN}, s\big) \in \big(\prod_{k \in \NN_{< n}} I_k\big) \times D \times \lvl}.$$
					\begin{itemize}
						\item\proven{$(\supseteq)$}
							Easy from the definition of the product metric.
						\item\proven{$(\subseteq)$}
							Take any $(y_j)_{j \in \NN} \in \bigcap_{k \in \NN_{< n}} p_k^{-1}(U_k)$. For each $k \in \NN_{< n}$ pick $i_k \in I_k$ such that $d_k(y_k, x_{i_k, k}) < r_{i_k, k}$. Let $s \in \lvl$ be smaller than $2^{-k-1} \big(r_{i_k, k} - d_k(y_k, x_{i_k, k})\big)$ for all $k \in \NN_{< n}$. Pick $(a_j)_{j \in \NN} \in D \cap \ball{(y_j)_{j \in \NN}}{s}$. Then $\big((i_k)_{k \in \NN_{< n}}, (a_j)_{j \in \NN}, s\big) \in J$ since
							$$d_k(x_{i_k, k}, a_k) + 2^k s \leq d_k(x_{i_k, k}, y_k) + d_k(y_k, a_k) + 2^k s <$$
							$$< d_k(x_{i_k, k}, y_k) + 2^{k+1} s < d_k(x_{i_k, k}, y_k) + r_{i_k, k} - d_k(y_k, x_{i_k, k}) = r_{i_k, k},$$
							and we have $(y_j)_{j \in \NN} \in \ball{(a_j)_{j \in \NN}}{s}$, as required.
						\item\proven{the union is overtly indexed}
							By Corollary~\ref{Corollary: overt_combination} since the indexings with domains $I_k$ are overt maps, $D$ and $\lvl$ are subovert, and $J$ is the preimage of the open subset
							\begin{multline*}
								\Big\{\big((x_k)_{k \in \NN_{< n}}, (r_k)_{k \in \NN_{< n}}, (a_j)_{j \in \NN}, s\big) \in \big(\prod_{k \in \NN_{< n}} X_k\big) \times \RR^n \times X \times \RR \,\Big|\\
								\all{k}{\NN_{< n}}{d_k(x_k, a_k) + 2^k s < r_k}\Big\}.
							\end{multline*}
					\end{itemize}
					Since we expressed a basic open subset as an overtly indexed union of balls in a canonical way, the result follows.
			\end{enumerate}
		\end{proof}
		
		This has consequences for metrization of the Cantor and the Baire space.
		\begin{corollary}
			The following statements are equivalent for both the Baire space and the Cantor space.
			\begin{enumerate}
				\item
					The Baire space/the Cantor space is metrized.
				\item
					It has the product topology, and \wso holds.
			\end{enumerate}
		\end{corollary}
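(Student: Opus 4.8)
This corollary is a specialization of Theorem~\ref{Theorem: wso_implies_expected_countable_products} to the two particular sequences of factors, so the plan is to set things up so that theorem (together with Lemma~\ref{Lemma: implies_wso}) does all the work. The first thing I would record is that the comparison metric $d_C$ on $\Baire = \NN^\NN$ (resp.\ on $\Cantor = \two^\NN$) is, by construction in the present chapter, exactly the product metric $d = \prod_{n \in \NN} \mtr{X}_n$ of the discrete metrics $\mtr{X}_n = (\NN, d_D)$ (resp.\ $\mtr{X}_n = (\two, d_D)$), taken with gauge map $\id[\II]$ and with all factor metrics bounded above by $1$ --- so the normalization hypotheses of Theorem~\ref{Theorem: wso_implies_expected_countable_products} are met. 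Second, I would note that each factor is an overt set ($\NN$ by the standing postulate of the chapter, $\two$ by finiteness) with decidable equality, hence is \emph{canonically} metrized by its discrete metric via Proposition~\ref{Proposition: overt_sets_with_decidable_equality_metrized}. These two remarks are the only ``new'' content; everything else is citation.

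\textbf{Direction $(1 \Rightarrow 2)$.} Assume $\Baire$ (resp.\ $\Cantor$) is metrized by $d_C$. Part~(1) of Theorem~\ref{Theorem: wso_implies_expected_countable_products} says that in a countable product metric space every metrically open subset is an overtly indexed union of product basic opens, and hence that metrization of the product forces the product topology on $X$; applying this with the factors above gives that $\Baire$ (resp.\ $\Cantor$) carries the product topology. For the remaining clause, I would invoke Lemma~\ref{Lemma: implies_wso}: since that lemma's chain runs ``$\NN^\NN$ metrized $\Rightarrow \two^\NN$ metrized $\Rightarrow \opcN$ metrized $\Rightarrow \dots \Rightarrow \wso$'', each of the two hypotheses ``$\Baire$ metrized'' and ``$\Cantor$ metrized'' already implies \wso. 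So $(2)$ holds.

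\textbf{Direction $(2 \Rightarrow 1)$.} Assume $\Baire$ (resp.\ $\Cantor$) has the product topology and that \wso holds. Part~(2) of Theorem~\ref{Theorem: wso_implies_expected_countable_products} states that if \wso holds, all factor spaces are canonically metrized, and the product carries the product topology, then the product metric space is metrized. By the setup remarks the factors $(\NN, d_D)$ (resp.\ $(\two, d_D)$) are canonically metrized, so the theorem applies and $\Baire$ (resp.\ $\Cantor$) is metrized by the product metric, which is $d_C$. This closes the equivalence.

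\textbf{Main obstacle.} There is essentially no deep step here --- the substance is entirely in Theorem~\ref{Theorem: wso_implies_expected_countable_products} and Lemma~\ref{Lemma: implies_wso}. The only points requiring care are bookkeeping: verifying that the comparison metric really is the product metric of the discrete factor metrics with gauge $\id[\II]$ (so that the normalization assumptions inside the proof of Theorem~\ref{Theorem: wso_implies_expected_countable_products} are legitimately available), and checking that Proposition~\ref{Proposition: overt_sets_with_decidable_equality_metrized} delivers \emph{canonical} (not merely plain) metrization of the factors, since part~(2) of the theorem needs canonicity. Both are immediate, so the corollary follows with little more than assembling the cited results.
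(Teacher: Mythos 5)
Your proposal is correct and follows essentially the same route as the paper: the paper's proof likewise consists of deducing \wso from metrization via Lemma~\ref{Lemma: implies_wso} and then citing Theorem~\ref{Theorem: wso_implies_expected_countable_products} for both directions of the equivalence with the product topology. The extra bookkeeping you spell out (that $d_C$ is the product metric of the discrete factor metrics with gauge $\id[\II]$, and that the factors are canonically metrized by Proposition~\ref{Proposition: overt_sets_with_decidable_equality_metrized}) is left implicit in the paper but is exactly what makes the citation legitimate.
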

		\begin{proof}
			If the Baire space or the Cantor space is metrized, \wso holds by Lemma~\ref{Lemma: implies_wso}. The rest follows from Theorem~\ref{Theorem: wso_implies_expected_countable_products}.
		\end{proof}

	\section{Transfer of Metrization via Quotients}\label{Section: metrization_via_quotients}
	
		Our purpose in the following two sections is to characterize metrization of \cms[s] and \ctb[s]. We do so by providing, for both classes of spaces, their single member, such that its metrization is transfered to all spaces in their class.
		
		There are two natural ways how the topology of one space determines the topology of another when they are connected by a suitable map: when that map is a topological embedding, or when it is a topological quotient. We consider the metric version of the latter in this section, and of the former in the next.
		
		We are therefore interested in metric spaces such that \cms[s], or at least \ctb[s], are their quotient, as well as spaces that \cms[s], or \ctb[s], embed into them. Classically, and to some extent constructively, the existence of such spaces is well-known:
		\begin{itemize}
			\item
				every inhabited \ctb is a quotient of the Cantor space,
			\item
				every inhabited \cms is a quotient of the Baire space,
			\item
				every \ctb embeds into the Hilbert cube,
			\item
				every \cms embeds into the Urysohn space.
		\end{itemize}
		Our intention is to recall or provide constructive versions of these claims, and add the transfer of metrization.
		
		We start by recalling that a map $f\colon \mtr{X} \to \mtr{Y}$ between metric spaces $\mtr{X}$, $\mtr{Y}$ is metrically continuous when for every metrically open subset $U \subseteq \mtr{Y}$ the preimage $f^{-1}(U)$ is metrically open in $\mtr{X}$. Here are some examples.
		\begin{proposition}\label{Proposition: examples_of_metric_continuous_maps}
			\
			\begin{enumerate}
				\item\label{Proposition(examples_of_metric_continuous_maps)item: maps_from_metrized_to_metric_space_are_metrically_continuous}
					A map from a metrized space to a metric space is metrically continuous.
				\item\label{Proposition(examples_of_metric_continuous_maps)item: inclusion_of_metric_subspace_metrically_continuous}
					The inclusion of a metric subspace with a subovert metrically dense subset is metrically continuous.
				\item
					Let $\mtr{X} = (X, d_\mtr{X})$, $\mtr{Y} = (Y, d_\mtr{Y})$ be metric spaces, and $D \subseteq X$ a subovert metrically dense subset of $\mtr{X}$. Let $f\colon \mtr{X} \to \mtr{Y}$ be a map for which there exist $\epsilon, C \in \RR_{> 0}$ such that for all $x, y \in X$,
					$$d_\mtr{X}(x, y) < \epsilon \implies d_\mtr{Y}\big(f(x), f(y)\big) \leq C \cdot d_\mtr{X}(x, y)$$
					(\ie $f$ is Lipschitz with coefficient $C$ on sufficiently small balls). Then $f$ is metrically continuous. In particular, a Lipschitz map is metrically continuous.
			\end{enumerate}
		\end{proposition}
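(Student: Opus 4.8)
The plan is to handle the three items in order, the first two being short consequences of facts already available and the third carrying all the weight.

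For item~(\ref{Proposition(examples_of_metric_continuous_maps)item: maps_from_metrized_to_metric_space_are_metrically_continuous}), recall that $\mtr{X}$ being metrized means exactly $\mtp(\mtr{X}) = \tp(X)$, while $\mtp(\mtr{Y}) \subseteq \tp(Y)$ holds for every metric space since metric balls are intrinsically open. So if $V \subseteq Y$ is metrically open it is in particular intrinsically open; since every map is (intrinsically) continuous, $f^{-1}(V)$ is intrinsically open in $X$; and because $\mtr{X}$ is metrized, $f^{-1}(V)$ is then metrically open, so $f$ is metrically continuous. For item~(\ref{Proposition(examples_of_metric_continuous_maps)item: inclusion_of_metric_subspace_metrically_continuous}) I would simply observe that the inclusion $i\colon \mtr{A} \hookrightarrow \mtr{X}$ of a metric subspace is an isometry, so it satisfies the hypothesis of item~(3) trivially (with any $\epsilon \in \RR_{>0}$ and $C = 1$, the inequality holding unconditionally), and $\mtr{A}$ carries a subovert metrically dense subset by assumption; thus item~(\ref{Proposition(examples_of_metric_continuous_maps)item: inclusion_of_metric_subspace_metrically_continuous}) is the case $f = i$ of item~(3). (One could instead argue directly, writing $A \cap U = \bigcup_{(d, j)} \ball[\mtr{A}]{d}{r_j - d_\mtr{X}(x_j, d)}$ for an overt indexing of $U = \bigcup_i \ball[\mtr{X}]{x_i}{r_i}$, exactly as in the proof of Proposition~\ref{Proposition: retracts_inherit_metrization}.)

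The substance is item~(3). Fix a metrically dense subovert $D \subseteq X$ and a set $\lvl \subseteq \RR_{>0}$, subovert in $\RR$, with strict infimum $0$ (say $\st{2^{-n}}{n \in \NN}$). Given metrically open $V \subseteq Y$, write $V = \bigcup_{j \in J} \ball[\mtr{Y}]{y_j}{r_j}$ for an overt indexing $J \to Y \times \RR$, $j \mapsto (y_j, r_j)$, and set $J' \dfeq \st{(d, s, j) \in D \times \lvl \times J}{s < \epsilon \wedge d_\mtr{Y}(y_j, f(d)) + C \cdot s < r_j}$. I claim $f^{-1}(V) = \bigcup_{(d, s, j) \in J'} \ball[\mtr{X}]{d}{s}$. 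The inclusion $\supseteq$ is a triangle-inequality computation: if $d_\mtr{X}(d, z) < s < \epsilon$ then $d_\mtr{Y}(f(d), f(z)) \le C \cdot d_\mtr{X}(d, z) < C s$, so $d_\mtr{Y}(y_j, f(z)) < d_\mtr{Y}(y_j, f(d)) + C s < r_j$, i.e.\ $z \in f^{-1}(V)$. For $\subseteq$, given $x \in f^{-1}(V)$ choose $j$ with $d_\mtr{Y}(y_j, f(x)) < r_j$, put $\rho \dfeq \inf\{\epsilon, (r_j - d_\mtr{Y}(y_j, f(x)))/C\} > 0$ (using that $\RR$ is a lattice), use the strict infimum of $\lvl$ to pick $s \in \lvl$ with $s < \rho/2$, and use metric density of $D$ to pick $d \in D$ with $d_\mtr{X}(x, d) < s/3$; then $x \in \ball[\mtr{X}]{d}{s}$ and $(d, s, j) \in J'$, since $s < \epsilon$ and $d_\mtr{Y}(y_j, f(d)) + C s \le d_\mtr{Y}(y_j, f(x)) + C \cdot d_\mtr{X}(x, d) + C s < d_\mtr{Y}(y_j, f(x)) + \tfrac{4}{3} C s < r_j$.

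It remains to see that the indexing $(d, s, j) \mapsto (d, s)$, $J' \to X \times \RR$, is overt. Here $J'$ is exactly the pullback, along the product of the overt maps $D \hookrightarrow X$, $\lvl \hookrightarrow \RR$ and $J \to Y \times \RR$, of the subset $\st{(x', s', y', r') \in X \times \RR \times Y \times \RR}{s' < \epsilon \wedge d_\mtr{Y}(y', f(x')) + C s' < r'}$, which is intrinsically open as a preimage of the open relation $<$ under continuous maps; so Corollary~\ref{Corollary: overt_combination}, with the further map being the projection to $X \times \RR$, gives that the indexing is overt. Hence $f^{-1}(V)$ is an overtly indexed union of balls, i.e.\ metrically open, and $f$ is metrically continuous. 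The final clause follows because a Lipschitz map with coefficient $C$ satisfies the displayed hypothesis with $\epsilon$ any positive real. The only point requiring care is the constructive bookkeeping — verifying that the defining condition of $J'$ is an open statement and that the hypotheses of Corollary~\ref{Corollary: overt_combination} are met verbatim — but this is routine once $J'$ is exhibited as a pullback of an open set along a product of overt maps.
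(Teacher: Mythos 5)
Your proof is correct and follows essentially the same route as the paper: items (1) and (2) are disposed of identically (the paper also reduces (2) to (3)), and for item (3) both arguments exhibit $f^{-1}(V)$ as an overtly indexed union of small balls centred at points of $D$, with the same triangle-inequality estimates and the same appeal to Corollary~\ref{Corollary: overt_combination} for overtness of the indexing. The only (cosmetic) difference is that the paper assigns each index pair the explicit radius $\inf\bigl\{\epsilon, \tfrac{r_i - d_\mtr{Y}(y_i, f(a))}{C}\bigr\}$, whereas you range over a subovert set $\lvl$ of candidate radii cut out by an open condition.
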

		\begin{proof}
			\begin{enumerate}
				\item
					A metrically open subset is also intrinsically open, and then so is its preimage which is in turn metrically open in a metrized space.
				\item
					This is a special case of the following item.
				\item
					Suppose $U \subseteq Y$ is an overtly indexed union of balls in $Y$, $U = \bigcup_{i \in I} \ball[\mtr{Y}]{y_i}{r_i}$. Define
					$$J \dfeq \st[1]{(i, a) \in I \times D}{f(a) \in \ball[\mtr{Y}]{y_i}{r_i}}.$$
					We claim
					$$f^{-1}(U) = \bigcup_{(i, a) \in J} \ball[\mtr{X}]{a}{\inf\left\{\epsilon, \frac{r_i - d_\mtr{Y}(y_i, f(a))}{C}\right\}}.$$
					Take any $x \in X$ which is in the set on the right-hand side, \ie $d_\mtr{X}(a, x) < \inf\left\{\epsilon, \frac{r_i - d_\mtr{Y}(y_i, f(a))}{C}\right\}$ for some $(i, a) \in J$. Then
					$$d_\mtr{Y}(y_i, f(x)) \leq d_\mtr{Y}(y_i, f(a)) + d_\mtr{Y}(f(a), f(x)) \leq$$
					$$\leq  d_\mtr{Y}(y_i, f(a)) + C \cdot d_\mtr{X}(a, x) < d_\mtr{Y}(y_i, f(a)) + r_i - d_\mtr{Y}(y_i, f(a)) = r_i,$$
					so $f(x) \in U$. Conversely, take any $x \in X$, $f(x) \in U$, and let $i \in I$ be such that $f(x) \in \ball[\mtr{Y}]{y_i}{r_i}$. Denote $\delta \dfeq \inf\left\{\epsilon, \frac{r_i - d_\mtr{Y}(y_i, f(x))}{2 C}\right\}$; then $\delta > 0$, and there exists $a \in D$ such that $d_\mtr{X}(x, a) < \delta$. Calculate
					$$d_\mtr{Y}(y_i, f(a)) \leq d_\mtr{Y}(y_i, f(x)) + d_\mtr{Y}(f(x), f(a)) \leq$$
					$$\leq  d_\mtr{Y}(y_i, f(x)) + C \cdot d_\mtr{X}(x, a) < d_\mtr{Y}(y_i, f(x)) + r_i - d_\mtr{Y}(y_i, f(x)) = r_i$$
					to see that $(i, a) \in J$, and then $x$ is in the right-hand side because $d_\mtr{X}(a, x) < \epsilon$ and
					$$d_\mtr{X}(a, x) < \frac{r_i - d_\mtr{Y}(y_i, f(x))}{2 C} \leq \frac{r_i - d_\mtr{Y}(y_i, f(a)) + d_\mtr{Y}(f(a), f(x))}{2 C} \leq$$
					$$\leq \frac{r_i - d_\mtr{Y}(y_i, f(a))}{2 C} + \frac{d_\mtr{X}(a, x)}{2}$$
					whence $d_\mtr{X}(a, x) < \frac{r_i - d_\mtr{Y}(y_i, f(a))}{C}$.
					
					Finally, we have to see that the above union is overtly indexed. This is true by Corollary~\ref{Corollary: overt_combination} because the indexing $I \to Y \times \RR$ and the inclusion $D \hookrightarrow X$ are overt maps, and $J$ is the preimage of the subset $\st{(y, r, x) \in Y \times \RR \times X}{f(x) \in \ball[\mtr{Y}]{y}{r}}$ which is open in $Y \times \RR \times X$.
			\end{enumerate}
		\end{proof}
		
		We now define the maps which transfer metrization of the domain to the codomain.
		\begin{definition}
			Let $\mtr{X}$, $\mtr{Y}$ be metric spaces.
			\begin{itemize}
				\item
					A map $f\colon \mtr{X} \to \mtr{Y}$ is a \df{metric semiquotient map} when for all $U \subseteq \mtr{Y}$ the implication
					$$f^{-1}(U) \text{ metrically open in } \mtr{X} \implies U \text{ metrically open in } \mtr{Y}$$
					holds.
				\item
					A map $f\colon \mtr{X} \to \mtr{Y}$ is a \df{metric quotient map} when it is surjective, metrically continuous and metric semiquotient\footnote{A metric semiquotient map need not be surjective by itself --- \eg in $\Set$ with $\opn = \two$, any map between discrete metric spaces is semiquotient. However unless $\opn = \soc$, the complement of the image of a semiquotient map $f\colon (X, d_\mtr{X}) \to (Y, d_\mtr{Y})$ must be empty since if $y \in Y \setminus f(X)$, then for any $p \in \soc$ the preimage $f^{-1}(\st{y}{p}) = \emptyset$ is metrically open in $X$ which implies that (arbitrary) $p$ is an open truth value.}, \ie when in addition to surjectivity the equivalence
					$$U \text{ metrically open in } \mtr{Y} \iff f^{-1}(U) \text{ metrically open in } \mtr{X}$$
					holds.
			\end{itemize}
		\end{definition}
		
		\begin{proposition}\label{Proposition: transfer_of_metrization_via_quotients}
			Let $f\colon \mtr{X} \to \mtr{Y}$ be a metric semiquotient map between metric spaces $\mtr{X}$, $\mtr{Y}$. Then if $\mtr{X}$ is metrized, so is $\mtr{Y}$.
		\end{proposition}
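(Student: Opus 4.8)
The plan is to unwind the definitions and show that the intrinsic topology of $\mtr{Y}$ is generated by its metric balls, using the metrization of $\mtr{X}$ together with the semiquotient property of $f$. Recall that $\mtr{Y} = (Y, d_\mtr{Y})$ is metrized precisely when every intrinsically open $V \in \tp(Y)$ is an overtly indexed union of metric balls, i.e.\ when $\tp(Y) \subseteq \mtp(\mtr{Y})$; the reverse inclusion always holds since metric balls are intrinsically open.

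So the concrete task is: given $V \in \tp(Y)$, show $V$ is metrically open in $\mtr{Y}$. First I would form the preimage $f^{-1}(V) \subseteq X$. Since every map is intrinsically continuous, $f^{-1}(V)$ is intrinsically open in $X$, i.e.\ $f^{-1}(V) \in \tp(X)$. Now the metrization of $\mtr{X}$ kicks in: because $\mtr{X}$ is metrized, $\tp(X) = \mtp(\mtr{X})$, so $f^{-1}(V)$ is metrically open in $\mtr{X}$. Finally, the defining implication of a metric semiquotient map says exactly that if $f^{-1}(V)$ is metrically open in $\mtr{X}$, then $V$ is metrically open in $\mtr{Y}$. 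Hence $V \in \mtp(\mtr{Y})$, and since $V \in \tp(Y)$ was arbitrary, $\tp(Y) \subseteq \mtp(\mtr{Y})$, so together with the trivial reverse inclusion we get $\tp(Y) = \mtp(\mtr{Y})$, meaning $\mtr{Y}$ is metrized.

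There is essentially no obstacle here — the statement is a short composition of three facts (intrinsic continuity of all maps, the definition of metrization applied to $\mtr{X}$, and the definition of semiquotient), and the only thing to be slightly careful about is keeping straight which notion of "open" (intrinsic versus metric) is in play at each step, since for $\mtr{X}$ the two coincide by hypothesis but for $\mtr{Y}$ that is precisely what we are trying to prove. If one wanted the stronger conclusions (weak or canonical metrization), the same argument would go through verbatim provided the semiquotient notion is correspondingly strengthened, but for the plain metrization statement as written the above three-line chain is the whole proof.
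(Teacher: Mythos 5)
Your proof is correct and is essentially identical to the paper's own argument: take an intrinsically open $V \subseteq Y$, note $f^{-1}(V)$ is intrinsically open (all maps are continuous), hence metrically open since $\mtr{X}$ is metrized, and then apply the semiquotient property to conclude $V$ is metrically open in $\mtr{Y}$. Nothing further is needed.
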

		\begin{proof}
			For every intrinsically open subset $U \subseteq Y$ its preimage $f^{-1}(U)$ is intrinsically open in $\mtr{X}$, thus metrically open because $\mtr{X}$ is metrized. Since $f$ is metric semiquotient, $U$ is metrically open in $\mtr{Y}$.
		\end{proof}
		Technically, a metric semiquotient map is sufficient for the transfer of metrization, but the maps for which we will use this proposition will be metric quotient. Note also that a retraction of an overt metrized space is a metric quotient map, so this proposition is a generalization of Proposition~\ref{Proposition: retracts_inherit_metrization}.
		
		It is well known that, assuming countable choice, every inhabited \cms is an image of the Baire space by an \ed-continuous map~\cite[7.2.4]{Troelstra_AS_Dalen_D_1988:_constructivism_in_mathematics_volume_2}, and similarly for \ctb[s] and the Cantor space. We reexamine these proofs in order to see that these maps are metric quotient. But first some preparation.
		
		\begin{lemma}
			A decidable metric subspace of a metrized space is metrized.\footnote{This lemma is later generalized by Theorem~\ref{Theorem: metrization_and_subspaces} (recall Lemma~\ref{Lemma: decidable_subsets_open/closed_subspaces}).}
		\end{lemma}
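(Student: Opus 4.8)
The plan is to take an arbitrary intrinsically open subset of $\mtr{A}$, invoke the metrization of $\mtr{X}$ to write it as an overtly indexed union of $\mtr{X}$-balls, and then observe that all these balls already sit inside $A$, so that they may be re-centred inside $A$ without destroying overtness of the indexing. First I would record the preliminary facts. Since in this chapter $\emptyset$ (and all finite sets) are overt, $\bot,\top\in\opn=\tst$, so by Lemma~\ref{Lemma: decidable_subsets_open/closed_subspaces} the decidable subset $A$ is strongly open (and strongly closed) in $X$. Hence by Proposition~\ref{Proposition: open/closed_subsets_of_strongly_open/closed_are_open/closed} every $U\in\tp(A)$ is in fact intrinsically open in $X$.

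Next, fix such a $U$. Metrization of $\mtr{X}$ provides an overt indexing $\sigma=(x,r)\colon I\to X\times\RR$, $i\mapsto(x_i,r_i)$, with $U=\bigcup_{i\in I}\ball[\mtr{X}]{x_i}{r_i}$. The key point is that each $\ball[\mtr{X}]{x_i}{r_i}$ is contained in $U$, hence in $A$; in particular, whenever such a ball is inhabited (equivalently $r_i>0$) its centre $x_i$ lies in $A$, and moreover $\ball[\mtr{X}]{x_i}{r_i}=\ball[\mtr{A}]{x_i}{r_i}$, since the $\mtr{X}$-ball already lies in $A$ and the metric of $\mtr{A}$ is the restriction of that of $\mtr{X}$. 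Setting $I_A\dfeq\sigma^{-1}(A\times\RR)=\st{i\in I}{x_i\in A}$, a short verification gives $U=\bigcup_{i\in I_A}\ball[\mtr{A}]{x_i}{r_i}$: the inclusion $\supseteq$ is immediate, and for $\subseteq$ any point of $U$ lies in some $\ball[\mtr{X}]{x_i}{r_i}$, which forces $r_i>0$ and thus $i\in I_A$.

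It remains to check that the indexing $c\colon I_A\to A\times\RR$, $i\mapsto(x_i,r_i)$, is overt; this is the step I expect to be the main obstacle, as it is the only place where the topology does real work (and where decidability of $A$, rather than mere subspacehood, is essential). Here I would use that $A$ decidable in $X$ makes $A\times\RR$ decidable, hence strongly open (again by Lemma~\ref{Lemma: decidable_subsets_open/closed_subspaces}), in $X\times\RR$. Pulling back the overt map $\sigma\colon I\to X\times\RR$ along the open inclusion $A\times\RR\hookrightarrow X\times\RR$, item (3) of Proposition~\ref{Proposition: constructions_of_overt_maps} shows that the diagonal pullback map $I_A\to X\times\RR$ is overt; since it factors through the strongly open inclusion $A\times\RR\hookrightarrow X\times\RR$, and open subsets of the strongly open $A\times\RR$ are open in $X\times\RR$, overtness is not lost by corestricting to the smaller codomain, so $c\colon I_A\to A\times\RR$ is overt. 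Thus $B$ is a basis for $A$, i.e.\ $\mtr{A}$ is metrized. (All choices above are canonical in terms of the data witnessing metrization of $\mtr{X}$, so the argument in fact transfers canonical metrization as well, but this is not needed for the statement.)
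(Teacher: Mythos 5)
Your proposal is correct and follows essentially the same route as the paper's proof: pass from openness in $A$ to openness in $X$ via decidability (Lemma~\ref{Lemma: decidable_subsets_open/closed_subspaces} and Proposition~\ref{Proposition: open/closed_subsets_of_strongly_open/closed_are_open/closed}), write $U$ as an overtly indexed union of balls of $\mtr{X}$, and restrict the index set to those $i$ with $x_i \in A$. The paper dismisses the remaining verification with ``clearly works''; your extra care about inhabited balls containing their centres and about overtness of the restricted indexing just fills in that gap.
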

		\begin{proof}
			Let $\mtr{X} = (X, d)$ be a metric space and $A \subseteq X$ a decidable subset. Then if $U \subseteq A$ is intrinsically open in $A$, it is also open in $X$ (by Lemma~\ref{Lemma: decidable_subsets_open/closed_subspaces} and Proposition~\ref{Proposition: open/closed_subsets_of_strongly_open/closed_are_open/closed}), and can therefore be written as an overtly indexed union $U = \bigcup_{i \in I}{\ball{x_i}{r_i}}$. Define $J \dfeq \st{i \in I}{x_i \in A}$. Clearly
			$$U = \bigcup_{i \in J}{\ball{x_i}{r_i}}$$
			works.
		\end{proof}
		
		\begin{lemma}\label{Lemma: metrization_of_inhabited_spaces_sufficient}
			If all inhabited \cms[s] (resp.~\ctb[s]) are metrized, then all \cms[s] (resp.~\ctb[s]) are metrized.
		\end{lemma}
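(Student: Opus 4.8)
The plan is to reduce the general case to the inhabited case by using the canonical embedding construction from Lemma~\ref{Lemma: wlog_space_inhabited}. Given an arbitrary \cms (resp.~\ctb) $\mtr{X}$, that lemma produces an inhabited metrically separable pseudometric space $\mtr{Y}$ together with an isometric embedding $e\colon \mtr{X} \to \mtr{Y}$ whose image is a decidable subspace, with the distances between the image and its complement bounded below by $1$, and such that $\mtr{X}$ is complete (resp.~totally bounded) if and only if $\mtr{Y}$ is. Since $\mtr{X}$ is metric, $\mtr{Y}$ is metric as well (again by that lemma), so $\mtr{Y}$ is an inhabited \cms (resp.~\ctb). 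By hypothesis $\mtr{Y}$ is metrized.

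First I would invoke the Lemma just cited to obtain $\mtr{Y}$ and $e$, checking that all the relevant properties (metric, metrically separable, complete or totally bounded, inhabited) transfer so that $\mtr{Y}$ lands in the class for which metrization is assumed. Next I would identify $\mtr{X}$ with the decidable metric subspace $e(\mtr{X}) \subseteq \mtr{Y}$. Then I would apply the preceding lemma (that a decidable metric subspace of a metrized space is metrized): since $\mtr{Y}$ is metrized and $e(\mtr{X})$ is a decidable subset of $\mtr{Y}$, it follows that $e(\mtr{X})$, hence $\mtr{X}$, is metrized. That completes the argument in both cases simultaneously, since the two classes are handled by the same hypotheses in Lemma~\ref{Lemma: wlog_space_inhabited}.

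I do not anticipate a genuine obstacle here: the real content has already been isolated into Lemma~\ref{Lemma: wlog_space_inhabited} and the lemma that decidable metric subspaces of metrized spaces are metrized, and the present statement is just the composition of these two facts. The only point requiring a word of care is that the decidable-subspace lemma is stated for metric (not pseudometric) subspaces, so one should make sure that $\mtr{Y}$ is genuinely metric --- but this is precisely item (3) of Lemma~\ref{Lemma: wlog_space_inhabited} combined with the assumption that $\mtr{X}$ itself is a metric space. A secondary bookkeeping point is that the decidable-subspace lemma's proof uses that open subsets of decidable (hence strongly open, by Lemma~\ref{Lemma: decidable_subsets_open/closed_subspaces}) subsets are open via Proposition~\ref{Proposition: open/closed_subsets_of_strongly_open/closed_are_open/closed}; nothing new is needed for that. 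So the proof is essentially a two-line reduction, and I would write it as such.
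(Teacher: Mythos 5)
Your proposal is correct and matches the paper's own proof exactly: embed the space as a decidable subset of an inhabited one via Lemma~\ref{Lemma: wlog_space_inhabited}, then apply the preceding lemma on decidable metric subspaces of metrized spaces. Nothing further is needed.
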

		\begin{proof}
			Any \cms[]/\ctb[] isometrically embeds into an inhabited one as a decidable subset by Lemma~\ref{Lemma: wlog_space_inhabited}. The result follows by the previous lemma.
		\end{proof}
		
		\begin{lemma}\label{Lemma: Lipschitz_bijection_reflects_metrization}
			Let $\mtr{X}$, $\mtr{Y}$ be metric spaces and $f\colon \mtr{X} \to \mtr{Y}$ a Lipschitz bijection. Then if $\mtr{Y}$ is metrized, so is $\mtr{X}$.
		\end{lemma}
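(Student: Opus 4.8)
The plan is to exploit two facts already at our disposal: a Lipschitz map is metrically continuous (Proposition~\ref{Proposition: examples_of_metric_continuous_maps}(\ref{Proposition(examples_of_metric_continuous_maps)item: maps_from_metrized_to_metric_space_are_metrically_continuous}), via its third item), and the set-theoretic inverse of a bijection is a map, hence --- like every map in a model of synthetic topology --- intrinsically continuous. Composing these observations lets us carry an intrinsically open subset of $X$ over to an intrinsically open subset of $Y$, where metrization of $\mtr{Y}$ takes effect, and then pull the resulting metrically open set back along $f$.

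Concretely, I would take an arbitrary $U \in \tp(X)$ and reason as follows. Since $f$ is a bijection, $f(U) = (f^{-1})^{-1}(U)$; because $f^{-1}\colon Y \to X$ is a map, it is intrinsically continuous, so $f(U) \in \tp(Y)$. As $\mtr{Y}$ is metrized, $f(U)$ is metrically open, i.e.~$f(U) \in \mtp(\mtr{Y})$. Now $f$ is Lipschitz, hence metrically continuous by Proposition~\ref{Proposition: examples_of_metric_continuous_maps}, so $f^{-1}(f(U)) \in \mtp(\mtr{X})$; and bijectivity of $f$ gives $f^{-1}(f(U)) = U$. Thus $U$ is metrically open in $\mtr{X}$. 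Since $U \in \tp(X)$ was arbitrary, $\tp(X) \subseteq \mtp(\mtr{X})$, and the reverse inclusion is automatic (metric balls are intrinsically open), so $\mtr{X}$ is metrized.

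There is no real obstacle here; the only points needing a moment's care are the identities $f(U) = (f^{-1})^{-1}(U)$ and $f^{-1}(f(U)) = U$, which hold precisely because $f$ is a bijection, and the invocation of metric continuity of $f$. An alternative, slightly more structural phrasing is available: the same computation shows that $f^{-1}\colon \mtr{Y} \to \mtr{X}$ is a metric semiquotient map (if $(f^{-1})^{-1}(U) = f(U)$ is metrically open in $\mtr{Y}$, then $f^{-1}(f(U)) = U$ is metrically open in $\mtr{X}$ by metric continuity of $f$), so the claim follows at once from Proposition~\ref{Proposition: transfer_of_metrization_via_quotients}. I expect to present the direct argument, as it is shorter and self-contained.
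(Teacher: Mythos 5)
Your argument is correct and is essentially the paper's: the paper likewise deduces metric continuity of $f$ from Proposition~\ref{Proposition: examples_of_metric_continuous_maps}, observes that $f^{-1}$ is then a metric semiquotient map, and concludes via Proposition~\ref{Proposition: transfer_of_metrization_via_quotients} --- exactly the ``alternative phrasing'' you mention, of which your direct computation is just the unfolding. No gaps; note only that (as in the paper) the appeal to metric continuity of a Lipschitz map tacitly uses the subovert metrically dense subset hypothesis of that proposition.
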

		\begin{proof}
			If $f$ is Lipschitz, it is metric continuous by Proposition~\ref{Proposition: examples_of_metric_continuous_maps}, and then its inverse $f^{-1}\colon \mtr{Y} \to \mtr{X}$ is a metric semiquotient map. The result follows from Proposition~\ref{Proposition: transfer_of_metrization_via_quotients}.
		\end{proof}
		
		\begin{corollary}\label{Corollary: metrization_of_variants_of_Cantor_space_and_Hilbert_cube}
			Let $a\colon \NN \to \NN_{\geq 1}$ be a strictly increasing sequence, \ie $\xall{n}{\NN}{a_n < a_{n+1}}$. Define $b\colon \NN \to \NN$ by $b(n) \dfeq \inf\st{k \in \NN}{n < a_k}$, and $v\colon \NN \to \NN_{\geq 1}$ by $v(0) \dfeq a_0$, $v(n+1) \dfeq a_{n+1} - a_n$ (meaning that $v(n)$ is the number of elements in $b^{-1}(\{n\})$).
			\begin{enumerate}
				\item
					For any $A \subseteq \II$ the product $\prod_{n \in \NN} A^{v(n)}$ is isomorphic to $(A^\NN, d)$ where
					$$d(\alpha, \beta) = \sup\st{2^{-b(n)} \cdot |\alpha_n - \beta_n|}{n \in \NN},$$
					and the identity map $\prod_{n \in \NN} A^{v(n)} \to A^\NN$ is nonexpansive.
				\item
					Consider the special cases $A = \two$ and $A = \II$ where we obtain the variants of the Cantor space and the Hilbert cube,
					$$\Cantor_a \dfeq \prod_{n \in \NN} \two^{v(n)}, \qquad \Hilbert_a \dfeq \prod_{n \in \NN} \II^{v(n)}.$$
					Then if the standard Cantor space/Hilbert cube is metrized, so are their variants.
			\end{enumerate}
		\end{corollary}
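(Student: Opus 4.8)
<br>

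The plan is to prove part (1) first, since part (2) follows from it by combining with Lemma~\ref{Lemma: Lipschitz_bijection_reflects_metrization}. For part (1), I want to exhibit an explicit isometric isomorphism between $\prod_{n \in \NN} A^{v(n)}$ and $(A^\NN, d)$, where $d$ is the stated metric. The underlying bijection is the obvious ``unpacking'' map: an element of $\prod_{n \in \NN} A^{v(n)}$ is a sequence $(\bar{a}_n)_{n \in \NN}$ where each $\bar{a}_n \in A^{v(n)}$ is itself a tuple of length $v(n)$; concatenating these tuples in order produces a single sequence in $A^\NN$. The index bookkeeping is governed by $a$, $b$, $v$: the block $b^{-1}(\{n\})$ inside $\NN$ has exactly $v(n)$ elements, so coordinate $k \in \NN$ of the concatenated sequence lands in block number $b(k)$. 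What remains is to check that this bijection carries the product metric of $\prod_{n \in \NN} A^{v(n)}$ — which by definition (Section~\ref{Section: metric_spaces}, with gauge map $\id[\II]$) is $\sup\st{2^{-n} \sup\st{|\bar\alpha_n(i) - \bar\beta_n(i)|}{i \in \NN_{< v(n)}}}{n \in \NN}$ — to the metric $d$. Rewriting the nested suprema as a single supremum over $\NN$ and noting that the outer weight $2^{-n}$ attached to block $n$ becomes $2^{-b(k)}$ for the $k$-th coordinate, the two expressions coincide. This is a routine manipulation of suprema (which exist and are strict, as shown for product metrics in Section~\ref{Section: metric_spaces}).

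For the nonexpansiveness claim in part (1): I would observe that the identity map $\prod_{n \in \NN} A^{v(n)} \to A^\NN$ (where the codomain carries the \emph{standard} product metric $d'(\alpha, \beta) = \sup\st{2^{-k} |\alpha_k - \beta_k|}{k \in \NN}$ obtained by regarding $A^\NN$ directly as the countable $A$-power) satisfies $d'(\alpha, \beta) \leq d(\alpha, \beta)$ pointwise, because $b(k) \leq k$ for all $k \in \NN$ (which follows from $a$ being strictly increasing with $a_0 \geq 1$: the smallest $k$ with $n < a_k$ is at most $n$ once $a_n \geq n+1$, and strict monotonicity from $a_0 \geq 1$ gives $a_n \geq n+1$). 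Hence $2^{-k} \leq 2^{-b(k)}$ and the inequality of suprema follows termwise. So the identity is a nonexpansive (in fact Lipschitz with coefficient $1$) bijection from $(A^\NN, d)$ to the standard $A$-power.

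For part (2): specialize $A = \two$ and $A = \II$. By part (1), $\Cantor_a = \prod_{n \in \NN} \two^{v(n)} \ism (\two^\NN, d)$ and $\Hilbert_a = \prod_{n \in \NN} \II^{v(n)} \ism (\II^\NN, d)$, with the identity maps to the standard Cantor space $\Cantor = (\two^\NN, d_C)$ and the standard Hilbert cube $(\II^\NN, \text{standard})$ being nonexpansive, hence Lipschitz. Since a Lipschitz bijection reflects metrization by Lemma~\ref{Lemma: Lipschitz_bijection_reflects_metrization} — i.e.\ if the codomain is metrized, so is the domain — metrization of $\Cantor$ yields metrization of $(\two^\NN, d) \ism \Cantor_a$, and metrization of the Hilbert cube yields metrization of $\Hilbert_a$. (Here I use that an isometric isomorphism preserves all metric properties both ways, as noted at the end of Section~\ref{Section: metric_spaces}, so ``metrized'' transfers freely across the isomorphisms of part (1).)

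The main obstacle I anticipate is purely notational: keeping the three index functions $a$, $b$, $v$ straight and verifying cleanly that concatenation of the blocks $A^{v(n)}$ recovers $A^\NN$ with the claimed reindexing of the weights — in particular confirming $b(k) \leq k$ and that block $n$ occupies positions $a_{n-1}, \dots, a_n - 1$ (with $a_{-1} \dfeq 0$). None of this requires any choice principle or appeal to \wso; it is all elementary arithmetic on $\NN$ together with the already-established facts that the relevant suprema exist. The only ``real'' input is Lemma~\ref{Lemma: Lipschitz_bijection_reflects_metrization} in the final step.
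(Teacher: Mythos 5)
Your proof is correct and follows essentially the same route as the paper's (which is just a terser version of yours): unpack the definition of the product metric via the concatenation identification, observe that $a_n > n$ gives $b(n) \leq n$ and hence $2^{-n} \leq 2^{-b(n)}$ for nonexpansiveness, and then apply Lemma~\ref{Lemma: Lipschitz_bijection_reflects_metrization} to transfer metrization from the standard space to its variant. No gaps.
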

		\begin{proof}
			\begin{enumerate}
				\item
					For $d$ simply recall the definition of the product metric for finite and countably infinite products (the latter with the gauge map $\id[\II]$). The second part holds because for all $n \in \NN$, $a_n > n$, therefore $b(n) \leq n$, therefore $2^{-n} \leq 2^{-b(n)}$.
				\item
					The identity from a variant of the Cantor space/Hilbert cube to the standard one is a nonexpansive bijection by the previous item, so the result follows from Lemma~\ref{Lemma: Lipschitz_bijection_reflects_metrization}.
			\end{enumerate}
		\end{proof}
		
		We write a complicated lemma because we want to deal with the Baire space and the Cantor space at once (we did a slightly simpler version with just the Baire space in~\cite{Bauer_A_Lesnik_D_2010:_metric_spaces_in_synthetic_topology}).
		\begin{lemma}
			Assume $\ACopn$. Suppose we are given
			\begin{itemize}
				\item
					$\mtr{X} = (X, d)$ a complete metric space,
				\item
					$s\colon \NN \to X$ some sequence of elements of $X$, and
				\item
					$a\colon \NN \to \opcN_{\geq 1}$ an increasing sequence, \ie $\xall{n}{\NN}{1 \leq a_n \leq a_{n+1}}$.
			\end{itemize}
			Define
			\begin{itemize}
				\item
					$A \dfeq \st{\alpha \in \NN^\NN}{\xall{n}{\NN}{\alpha(n) < a_n}}$ the subset of $\NN^\NN$, equipped with (the restriction of) the comparison metric $d_C$,
				\item
					$\delta\colon \NN \times \NN \times \NN \to \QQ$ to be such a map that for all $i, j, k \in \NN$,
					$$|d(s_i, s_j) - \delta(i, j, k)| < 2^{-k},$$
					\ie $\delta(i, j, k)$ is a rational $2^{-k}$-approximation of the distance between $s_i$ and $s_j$ (it exists by $\ACopn$),
				\item
					$T \dfeq \st{\alpha \in A}{\xall{k}{\NN}{\delta(\alpha(k), \alpha(k+1), k) < 2^{-k+2}}}$,
				\item
					$f\colon A \times \NN \to \NN$, $f(\alpha, k) \dfeq \inf\big(\{k\} \cup \st{j \in \NN}{\delta(\alpha(j), \alpha(j+1), j) \geq 2^{-k+2}}\big)$,
				\item
					$r\colon A \to T$, $r(\alpha)(k) \dfeq \alpha(f(\alpha, k))$.
			\end{itemize}
			Then the following holds.
			\begin{enumerate}
				\item
					The subset $A$ is a nonexpansive retract of the Baire space, and therefore metrically separable.
				\item
					The map $r$ is a nonexpansive (hence metric continuous) retraction, therefore $T$ is metrically separable.
				\item
					For every $\alpha \in T$ the sequence $k \mapsto s_{\alpha(k)}$ is Cauchy with the modulus of convergence $n \mapsto n+4$.
				\item
					The map $q\colon T \to X$, $q(\alpha) \dfeq \lim(k \mapsto s_{\alpha(k)})$, which is well defined by the previous item (and because $\mtr{X}$ is complete, therefore Cauchy complete), is metric continuous,
				\item
					Let $B \dfeq \st{\alpha \in T}{\shift{4}(\alpha) \in A}$ and $T' \dfeq \st{\shift{4}(\alpha)}{\alpha \in B}$. Then $B$ is a nonexpansive retract of $T$ (therefore metrically separable), $T' \subseteq T$, and for every $\alpha \in B$ we have $q(\alpha) = q(\shift{4}(\alpha))$.
				\item
					Hereafter assume that the restriction $\rstr{q}_{T'}\colon T' \to X$ is surjective. Then for any $\beta \in T'$ and any $r \in \intoc{0}{1}$ the following inclusion holds:
					$$\ball[\mtr{X}]{q(\beta)}{\frac{r}{8}} \subseteq q\Big(\ball[C]{\beta}{r}\Big).$$
				\item
					The map $q\colon T \to X$ is metric quotient.
				\item
					If $A$ is metrized (by the comparison metric), then so is $\mtr{X}$.
			\end{enumerate}
		\end{lemma}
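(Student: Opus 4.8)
The plan is to propagate metrization along the two reductions already built in items~(1)--(7): first transfer it from $A$ to $T$ across the retraction $r$, then from $T$ to $\mtr{X}$ across the metric quotient $q$. All the genuine work—constructing the metrically separable code spaces $A \supseteq T \supseteq T'$, the nonexpansive retractions, the map $q$, and in particular the ball inclusion of item~(6)—has already been done, so this last item is essentially an assembly step that feeds into the transfer results of Section~\ref{Section: metrization_via_quotients}.

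First I would note that metrization of $A$ already forces $A$ to be overt: by item~(1) the space $A$ is metrically separable, hence carries a metrically dense subovert (indeed countable) subset, so Proposition~\ref{Proposition: metric_to_intrinsic_separability_in_weakly_metrized_spaces} applies. Then, by item~(2), $r\colon A \to T$ is a nonexpansive retraction, so $T$—equipped with the metric inherited from $A$—is a retract of the overt metrized space $A$; Proposition~\ref{Proposition: retracts_inherit_metrization} then gives that $T$ is overt and metrized.

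Finally, I would invoke item~(7): under the standing assumption of items~(6)--(8) that $\rstr{q}_{T'}\colon T' \to X$ is surjective, the map $q\colon T \to \mtr{X}$ is a metric quotient map, in particular a metric semiquotient map. Applying Proposition~\ref{Proposition: transfer_of_metrization_via_quotients} to $q$ with $T$ metrized yields that $\mtr{X}$ is metrized, as desired.

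The ``hard part'' lies entirely in the earlier items; in item~(8) itself the only thing to watch is that the hypotheses of the two transfer lemmas are met—that $A$ metrized implies $A$ overt (so that Proposition~\ref{Proposition: retracts_inherit_metrization} applies to the retract $T$), and that a metric quotient map qualifies as a metric semiquotient map (so that Proposition~\ref{Proposition: transfer_of_metrization_via_quotients} applies)—both of which are immediate from the definitions and from items~(1) and~(2).
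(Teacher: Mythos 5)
Your argument for item~(8) is correct and is exactly the paper's: metrization of $A$ plus metric separability (item~(1)) gives overtness of $A$ via Proposition~\ref{Proposition: metric_to_intrinsic_separability_in_weakly_metrized_spaces}, the nonexpansive retraction $r$ of item~(2) transfers metrization and overtness to $T$ via Proposition~\ref{Proposition: retracts_inherit_metrization}, and the metric quotient map $q$ of item~(7) pushes metrization down to $\mtr{X}$ via Proposition~\ref{Proposition: transfer_of_metrization_via_quotients}. The two side conditions you flag (that a metrized separable space is overt, and that a metric quotient map is in particular semiquotient) are indeed the only things to check there.

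The gap is that the statement to be proved is the entire eight-item lemma, and your proposal treats items~(1)--(7) as ``already built'' rather than proving them. Those items are claims, not constructions handed to you: item~(3) needs the telescoping estimate $d(s_{\alpha(i)}, s_{\alpha(j)}) \leq \sum_k d(s_{\alpha(k)}, s_{\alpha(k+1)}) < 2^{-i+4}$ from the defining inequality of $T$; item~(4) needs the resulting bound $d_\mtr{X}(q(\alpha), s_{\alpha(n)}) \leq 2^{-n+4}$ to show $q$ is Lipschitz on balls of radius $1$, hence metrically continuous by Proposition~\ref{Proposition: examples_of_metric_continuous_maps}; item~(6) — which you yourself call part of ``the genuine work'' — requires, given $x$ with $d_\mtr{X}(q(\beta), x) < \tfrac{r}{8}$ and a preimage $\gamma \in T'$ of $x$, choosing $n$ with $\tfrac{r}{8} < 2^{-n-1} < 2^{-n} < r$ and splicing $\zeta \dfeq \concat{n+1}{\beta}{\shift{n+1}(\gamma)}$, then verifying $\zeta \in T$ at the splice point using the $2^{-k}$-accuracy of $\delta$; and item~(7) requires exhibiting $U$ as the overtly indexed union $\bigcup_{(\alpha,i) \in J} \ball[\mtr{X}]{q(\shift{4}(\alpha))}{\inf\{r_i,1\}/8}$ over a countable dense subset of $B$, with both inclusions checked via item~(6) and the overtness of the indexing verified via Corollary~\ref{Corollary: overt_combination}. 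Without these, the proposal establishes only the final assembly step and not the lemma.
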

		\begin{proof}
			\begin{enumerate}
				\item
					The map $\alpha \mapsto \big(n \mapsto \inf\{\alpha(n), \predm(a_n)\}\big)$ is a nonexpansive retraction $\NN^\NN \to A$.
				\item
					Notice that $f(\alpha, k)$ tells how far up to $k$ the sequence $\alpha$ still satisfies the defining condition for $T$. The sequence $r(\alpha)$ then mimics $\alpha$ up to the point where this condition is satisfied, and is constant thereafter, so $r(\alpha) \in T$ (actually we first need to see that $r(\alpha) \in A$, but this is so because $a$ is an increasing sequence). When $\alpha \in T$, we have $f(\alpha, k) = k$ for all $k \in \NN$, therefore $r(\alpha) = \alpha$, so $r$ is a retraction onto $T$. Notice that $r(\alpha)(k)$ depends only on the terms of $\alpha$ up to index $k$, so it is Lipschitz with the coefficient $1$ (\ie nonexpansive), and consequently metrically continuous by Proposition~\ref{Proposition: examples_of_metric_continuous_maps} (since $A$ is metrically separable).
				\item
					Take $\alpha \in T$ and $k \in \NN$. Then
					$$d_\mtr{X}(s_{\alpha(k)}, s_{\alpha(k+1)}) < \delta(\alpha(k), \alpha(k+1), k) + 2^{-k} < 2^{-k+2} + 2^{-k}  < 2^{-k+3},$$
					so for $n \in \NN$ and $i, j \in \NN_{\geq n+4}$ (without loss of generality $i \leq j$)
					$$d_\mtr{X}(s_{\alpha(i)}, s_{\alpha(j)}) \leq \sum_{k \in \intco[\NN]{i}{j}} d_\mtr{X}(s_{\alpha(k)}, s_{\alpha(k+1)}) \leq \sum_{k \in \intco[\NN]{i}{j}} 2^{-k+3} < 2^{-i+4} \leq 2^{-n}.$$
				\item
					Since for $\alpha \in T$ and $k \in \NN$ we have $d_\mtr{X}(s_{\alpha(k)}, s_{\alpha(k+1)}) < 2^{-k+3}$, it follows that for any $n \in \NN$ we have $d_\mtr{X}(q(\alpha), s_{\alpha(n)}) \leq 2^{-n+4}$. Consequently, for any $\alpha, \beta \in T$ and $n \in \NN$, if $d_C(\alpha, \beta) < 2^{-n}$ (so that the sequences $\alpha$, $\beta$ match at least up to the index $n$), then
					$$d_\mtr{X}(q(\alpha), q(\beta)) \leq d_\mtr{X}(q(\alpha), s_{\alpha(n)}) + d_\mtr{X}(s_{\alpha(n)}, q(\beta)) =$$
					$$= d_\mtr{X}(q(\alpha), s_{\alpha(n)}) + d_\mtr{X}(s_{\beta(n)}, q(\beta)) \leq 2^{-n+4} + 2^{-n+4} = 2^{-n+5}.$$
					Now take any $\alpha, \beta \in T$ such that $d_C(\alpha, \beta) < 1$. Use the Archimedean property of the reals and the cotransitivity of $<$ to find $n \in \NN$ such that $d_C(\alpha, \beta) < 2^{-n} < 3 \cdot d_C(\alpha, \beta)$. Then
					$$d_\mtr{X}(q(\alpha), q(\beta)) \leq 2^{-n+5} = 32 \cdot 2^{-n} < 96 \cdot d_C(\alpha, \beta),$$
					proving that $q$ is Lipschitz when restricted to balls with radius $1$, thus (since $T$ is metrically separable) metrically continuous by Proposition~\ref{Proposition: examples_of_metric_continuous_maps}.
				\item
					Since $B = \st{\alpha \in T}{\xall{n}{\NN}{\alpha(n+4) < a_n}}$, it is a nonexpansive retract of $T$ by the map
					$$\alpha \mapsto \left(n \mapsto \begin{cases} \alpha(n) & \text{ if } n < 4,\\ \inf\{\alpha(n), \predm(a_{n-4})\} & \text{ if } n \geq 4. \end{cases}\right)$$
					
					For any $\alpha \in B$ let $\beta \dfeq \shift{4}(\alpha)$. Then $\beta \in A$ by the definition of $B$, and for any $k \in \NN$,
					$$\delta(\beta(k), \beta(k+1), k) = \delta(\alpha(k+4), \alpha(k+5), k) < d_\mtr{X}(s_{\alpha(k+4)}, s_{\alpha(k+5)}) + 2^{-k} <$$
					$$< \delta(\alpha(k+4), \alpha(k+5), k+4) + 2^{-k-4} + 2^{-k} < 2^{-k-2} + 2^{-k-4} + 2^{-k}  = 21 \cdot 2^{-k-4} < 2^{-k+2},$$
					thus $\beta \in T$. Clearly $q(\alpha) = q(\beta)$.
				\item
					Let $\beta \dfeq \shift{4}(\alpha) \in T'$, $r \in \intoc{0}{1}$, and $x \in X$ such that $d_\mtr{X}(q(\beta), x) < \frac{r}{8}$. Since $\rstr{q}_{T'}$ is surjective, we have $\gamma \in T'$, $q(\gamma) = x$. Observe that for any $i, j \in \NN$,
					$$\sum_{k \in \intco[\NN]{i}{j}} d_\mtr{X}(s_{\gamma(k)}, s_{\gamma(k+1)}) \leq \sum_{k \in \intco[\NN]{i}{j}} \delta(\gamma(k), \gamma(k+1), k+4) + 2^{-k-4} \leq$$
					$$\leq \sum_{k \in \intco[\NN]{i}{j}} \big(2^{-k-2} + 2^{-k-4}\big) \leq \sum_{k \in \intco[\NN]{i}{j}} 2^{-k-1} < 2^{-i},$$
					so $d_\mtr{X}(s_{\gamma(i)}, x) \leq 2^{-i}$.
					
					By the Archimedean property and cotransitivity find $n \in \NN$ such that $\frac{r}{8} < 2^{-n-1} < 2^{-n} < r$. Define $\zeta\colon \NN \to \NN$ by
					$$\zeta(k) \dfeq \begin{cases} \beta(k) & \text{ if } k \leq n,\\ \gamma(k) & \text{ if } k > n. \end{cases}$$
					Since $\beta$ and $\gamma$ belong to $A$, so does $\zeta$. Moreover, since $\beta, \gamma \in T' \subseteq T$ and
					$$\delta(\beta(n), \gamma(n+1), n) < d_\mtr{X}(s_{\beta(n)}, s_{\gamma(n+1)}) + 2^{-n} \leq$$
					$$\leq d_\mtr{X}(s_{\beta(n)}, q(\beta)) + d_\mtr{X}(q(\beta), x) + d_\mtr{X}(x, s_{\gamma(n+1)}) + 2^{-n} <$$
					$$< 2^{-n} + 2^{-n-1} + 2^{-n-1} + 2^{-n} < 2^{-n+2},$$
					we have $\zeta \in T$. Clearly $q(\zeta) = x$ and $\zeta \in \ball[C]{\beta}{2^{-n}} \subseteq \ball[C]{\beta}{r}$, hence the result follows.
				\item
					If the restriction of $q$ is surjective, then $q$ must be as well. We already know that $q$ is metrically continuous. We prove it is also metric semiquotient.
					
					Take any $U \subseteq X$, and suppose $q^{-1}(U) \subseteq T$ can be represented as an overtly indexed union $q^{-1}(U) = \bigcup_{i \in I} \ball[C]{\alpha_i}{r_i}$. Recall that $B$ is metrically separable, and let $D \subseteq B$ be the countable (hence overt) metrically dense subset. Define
					$$J \dfeq \st[1]{(\alpha, i) \in D \times I}{\shift{4}(\alpha) \in \ball[C]{\alpha_i}{r_i}}.$$
					We claim
					$$U = \bigcup_{(\alpha, i) \in J} \ball[\mtr{X}]{q(\shift{4}(\alpha))}{\frac{\inf\{r_i, 1\}}{8}}.$$
					
					Take any $(\alpha, i) \in J$, and use the result in the previous item to calculate
					$$\ball[\mtr{X}]{q(\shift{4}(\alpha))}{\frac{\inf\{r_i, 1\}}{8}} \subseteq q\Big(\ball[C]{\shift{4}(\alpha)}{\inf\{r_i, 1\}}\Big) \subseteq$$
					$$\subseteq q\big(\ball[C]{\shift{4}(\alpha)}{r_i}\big) = q\big(\ball[C]{\alpha_i}{r_i}\big) \subseteq q(q^{-1}(U)) \subseteq U,$$
					thus proving that the right-hand side is contained in $U$.
					
					Conversely, take any $x \in U$. Since $\rstr{q}_{T'}$ is surjective, there exist $i \in I$ and $\beta \in T'$ such that $\beta \in \ball[C]{\alpha_i}{r_i}$ and $x = q(\beta)$. Let $\gamma \in B$ be such that $\beta = \shift{4}(\gamma)$. Recall that $q$ is metrically continuous, therefore \ed-continuous, so there exists $\eta \in \RR_{> 0}$ such that for all $\omega \in T$, if $d_C(\beta, \omega) < \eta$, then $d_\mtr{X}(q(\beta), q(\omega)) < \frac{\inf\{r_i, 1\}}{8}$. Now let $\alpha \in D$ be such that $d_C(\gamma, \alpha) < \frac{\inf\{r_i, \eta\}}{16}$; then $d_C(\shift{4}(\gamma), \shift{4}(\alpha)) < \inf\{r_i, \eta\}$. We claim that $(\alpha, i) \in J$; indeed, $\shift{4}(\alpha) \in \ball[C]{\shift{4}(\gamma)}{r_i} = \ball[C]{\beta}{r_i} = \ball[C]{\alpha_i}{r_i}$. To conclude that $U$ is contained in the right-hand side, observe $d_\mtr{X}(q(\shift{4}(\alpha)), x) = d_\mtr{X}(q(\shift{4}(\alpha)), q(\beta)) < \frac{\inf\{r_i, 1\}}{8}$ since $d_C(\beta, \shift{4}(\alpha)) < \eta$.
					
					Finally, we wish to see that the union on the right-hand side is overtly indexed. This is so by Corollary~\ref{Corollary: overt_combination} since the inclusion $D \hookrightarrow T$ and the indexing $I \to T \times \RR$ are overt, and $J$ is the preimage of the subset
					$$\st[1]{(\alpha, \beta, r) \in D \times T \times \RR}{\shift{4}(\alpha) \in \ball[C]{\beta}{r}}$$
					which is open in $D \times T \times \RR$.
				\item
					Assume $A$ is metrized by $d_C$. Then it is overt by Proposition~\ref{Proposition: metric_to_intrinsic_separability_in_weakly_metrized_spaces}, and $T$ is metrized (by $d_C$) by Proposition~\ref{Proposition: retracts_inherit_metrization}. Consequently $\mtr{X}$ is metrized by the previous item and Proposition~\ref{Proposition: transfer_of_metrization_via_quotients}.
			\end{enumerate}
		\end{proof}
		
		\begin{theorem}\label{Theorem: metrization_of_Baire/Cantor_implies_metrization_of_cmss_ctbs}
			Assume $\ACopn$.
			\begin{enumerate}
				\item
					If the Baire space is metrized, then all \cms[s] are.
				\item
					If the Cantor space is metrized, then all \ctb[s] are.
			\end{enumerate}
		\end{theorem}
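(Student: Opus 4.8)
The plan is to apply the long (unlabelled) lemma immediately preceding this theorem. That lemma reduces metrization of a complete, metrically separable, inhabited space $\mtr{X}$ — equipped with a sequence $s\colon\NN\to X$ and an increasing $a\colon\NN\to\opcN_{\geq 1}$ — to two conditions: (a) the auxiliary space $A=\st{\alpha\in\NN^\NN}{\xall{n}{\NN}{\alpha(n)<a_n}}$, carrying the comparison metric, is metrized, and (b) the restriction $\rstr{q}_{T'}\colon T'\to X$ of the induced map is surjective; granting (a) and (b), item~(8) of that lemma concludes that $\mtr{X}$ is metrized. So for each part I must (i) choose the data so that $A$ is metrized under the stated hypothesis, and (ii) verify surjectivity of $\rstr{q}_{T'}$. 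By Lemma~\ref{Lemma: metrization_of_inhabited_spaces_sufficient} it suffices to treat inhabited \cms[s] (resp.\ \ctb[s]), and for an inhabited metrically separable space the dense family may be taken to be an honest map $s\colon\NN\to X$.

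For part~(1), let $\mtr{X}=(X,d,s)$ be an inhabited \cms and apply the preceding lemma with $a_n\dfeq\infty$ for every $n$ (an increasing $\opcN_{\geq 1}$-valued sequence), so that $A=\NN^\NN$ is exactly the Baire space; by hypothesis $A$ is then metrized, which settles (i). The map $\delta$ required by the lemma — rational $2^{-k}$-approximations of the distances $d(s_i,s_j)$ — exists by $\ACopn$. For (ii), given $x\in X$ I would use metric density of $s$ together with $\ACopn$ to choose a sequence $\gamma\colon\NN\to\NN$ with $d(x,s_{\gamma(k)})<2^{-k-5}$; a direct triangle-inequality estimate, using the accuracy of $\delta$, then shows that $\gamma$, after the shift by $4$ built into the lemma, lands in $T'$ and that $q$ of it equals $x$. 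Item~(8) of the lemma then gives that $\mtr{X}$ is metrized.

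For part~(2), let $\mtr{X}=(X,d,s,a)$ be an inhabited \ctb; after reindexing I may assume that $a\colon\NN\to\NN_{\geq 1}$ is strictly increasing and that $\{s_0,\dots,s_{a_n-1}\}$ is a $2^{-n}$-net of $X$ for each $n$, and I regard $a$ as $\opcN_{\geq 1}$-valued. Apply the preceding lemma with these data, so $A=\st{\alpha\in\NN^\NN}{\alpha(n)<a_n}$. Encoding the $n$-th coordinate (of alphabet size $a_n$) by $v(n)\dfeq\lceil\log_2 a_n\rceil$ binary digits exhibits $A$ as a nonexpansive retract of a variant Cantor space $\Cantor_{a'}=\prod_{n}\two^{v(n)}$ for a suitable strictly increasing $a'$ (decode a binary sequence block by block, clamping each block's value into $\NN_{<a_n}$; the resulting decoding map is a nonexpansive retraction since agreeing on the first $n$ blocks forces agreement on the first $n$ coordinates). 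By Corollary~\ref{Corollary: metrization_of_variants_of_Cantor_space_and_Hilbert_cube} the hypothesis that the standard Cantor space is metrized gives that $\Cantor_{a'}$ is metrized, hence $A$ is metrized by Proposition~\ref{Proposition: retracts_inherit_metrization}, settling (i). For (ii) the argument of part~(1) applies almost verbatim, except that the net property of $(s,a)$ is what lets one choose the approximating indices with $\gamma(k)<a_k$, so that $\gamma\in A$ as required. Item~(8) then finishes.

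The main obstacle is step (ii): proving $\rstr{q}_{T'}$ surjective, i.e.\ the constructive statement that every inhabited \cms (resp.\ \ctb) is a metric quotient of the Baire space (resp.\ a variant Cantor space). This is precisely where $\ACopn$ is genuinely used — to convert, for a fixed target point $x$, the family of open assertions ``$d(x,s_k)$ is small'' into an actual choice sequence $\gamma$ — and, in the totally bounded case, where one must additionally keep $\gamma(k)<a_k$, which is possible exactly because the total-boundedness data were first refined to honest nets. The remaining work is the bookkeeping estimate showing that the shifted sequence satisfies the defining inequality of $T$ (hence belongs to $T'$) and that $q$ sends it to $x$; all of the topological and metrical transfer is already packaged in items~(6)--(8) of the preceding lemma.
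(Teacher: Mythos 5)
Your proposal follows the paper's proof essentially verbatim: reduce to inhabited spaces via Lemma~\ref{Lemma: metrization_of_inhabited_spaces_sufficient}, instantiate the preceding technical lemma with $a\equiv\infty$ (so $A=\NN^\NN$ and $T=T'$) resp.\ with a bounded $a$ (so $A$ is, or retracts onto from, a variant Cantor space handled by Corollary~\ref{Corollary: metrization_of_variants_of_Cantor_space_and_Hilbert_cube}), and use $\ACopn$ to produce the approximating sequences that witness surjectivity of $\rstr{q}_{T'}$. The only deviation is cosmetic: in part~(2) the paper inflates the bounds to powers of two, absorbing the shift by $4$ into $v$, so that $A$ is literally $\prod_n \two^{v(n)}$, whereas you realize $A$ as a nonexpansive retract of such a product and compensate for the shift by demanding $2^{-k-5}$-accuracy of the approximating indices --- both variants are sound.
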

		\begin{proof}
			Let $\mtr{X} = (X, d, s)$ be a complete metrically separable space. By Lemma~\ref{Lemma: metrization_of_inhabited_spaces_sufficient} we may without loss of generality assume that $X$ is inhabited, so let $s\colon \NN \to X$.
			\begin{enumerate}
				\item
					Use the preceding lemma for $a$ the constant sequence $\infty$ in which case $A = \NN^\NN$ and $T = T'$. We have to see that $q\colon T \to X$ is surjective. Take any $x \in X$, and use $\ACopn$ to obtain $\alpha \in \NN^\NN$ such that for all $n \in \NN$, $d_\mtr{X}(x, s_{\alpha(n)}) < 2^{-n}$. Then
					$$\delta(\alpha(k), \alpha(k+1), k) < d(s_{\alpha(k)}, s_{\alpha(k+1)}) + 2^{-k} \leq 2^{-k} + 2^{-k-1} + 2^{-k} < 2^{-k+2},$$ hence $\alpha \in T$ and $q(\alpha) = x$.
				\item
					Let $a\colon \NN \to \NN$ witness total boundedness of $\mtr{X}$. Define a map $bl\colon \NN \to \NN_{\geq 1}$,
					$$bl(n) \dfeq \inf\st{k \in \NN_{\geq 1}}{n \leq 2^k}$$
					(\ie the binary logarithm, rounded up to at least $1$) and sequences $v\colon \NN \to \NN_{\geq 1}$, $a'\colon \NN \to \NN_{\geq 2},$
					$$v(n) \dfeq bl\big(\sup\st{a(k)}{k \in \NN_{\leq n+4}}\big), \qquad a'(n) \dfeq 2^{v(n)}.$$
					Use the preceding lemma for $a'$. Then $A = \prod_{n \in \NN} \two^{v(n)}$ where $\two^{v(n)}$ is equipped with the discrete metric. Since the Cantor space is metrized, so is its variant $A$ by Corollary~\ref{Corollary: metrization_of_variants_of_Cantor_space_and_Hilbert_cube}. It remains to verify that $\rstr{q}_{T'}$ is surjective. For any $x \in X$ use $\ACopn$ and total boundedness of $\mtr{X}$ to obtain $\alpha \in \NN^\NN$ such that for all $n \in \NN$, $\alpha(n) < a(n)$ and $d_\mtr{X}(x, s_{\alpha(n)}) < 2^{-n}$. We infer $\alpha \in T$ the same way as in the previous item, and since we shifted the index by $4$ in the definition of $v$, we in fact have $\alpha \in T'$. Also, $q(\alpha) = x$.
			\end{enumerate}
		\end{proof}
		
		Recall that we already discussed metrization of the Cantor space at the end of the previous section.
		\begin{corollary}
			If $\ACopn$ and \wso hold, and $\two^\NN$ is compact, then all \ctb[s] are compact and metrized.
		\end{corollary}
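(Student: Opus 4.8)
The plan is to bolt together the two transfer theorems for metrization and then obtain compactness from the quotient presentation of a \ctb[]. First I would feed the hypotheses into Theorem~\ref{Theorem: metrization_of_Cantor}: the assumption that $\two^\NN$ is compact and \wso holds is exactly statement~(1) there, so we may pass to statement~(2), in particular that $\two^\NN$, with the comparison metric, is metrized. Since $\ACopn$ is assumed, Theorem~\ref{Theorem: metrization_of_Baire/Cantor_implies_metrization_of_cmss_ctbs}(2) then applies word for word: metrization of the Cantor space transfers to every \ctb[]. This settles the metrization half of the claim for all \ctb[s], not merely inhabited ones, since that theorem already carries out the reduction via Lemma~\ref{Lemma: metrization_of_inhabited_spaces_sufficient}.

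For compactness I would show that every inhabited \ctb is a surjective image of $\two^\NN$, and apply the proposition that an image of a compact set is compact. Let $\mtr{X}$ be an inhabited \ctb, with underlying set $X$. The construction used to prove Theorem~\ref{Theorem: metrization_of_Baire/Cantor_implies_metrization_of_cmss_ctbs}(2) exhibits a variant $\Cantor_a$ of the Cantor space (in the notation of Corollary~\ref{Corollary: metrization_of_variants_of_Cantor_space_and_Hilbert_cube}), a nonexpansive retraction $r\colon \Cantor_a \to T$, a further nonexpansive retraction $T \to B$, the set $T' = \st{\shift{4}(\alpha)}{\alpha \in B}$ (the image of $B$ under $\shift{4}$), and a surjection $\rstr{q}_{T'}\colon T' \to X$. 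By Corollary~\ref{Corollary: metrization_of_variants_of_Cantor_space_and_Hilbert_cube}(1) there is a bijection $\two^\NN \to \Cantor_a$, so $\Cantor_a$ is an image of the compact space $\two^\NN$ and hence compact; then $T$, $B$, $T'$ and $X$ are compact in turn, each being an image of the previous one (a retraction is surjective onto its codomain). Finally, an arbitrary \ctb is either empty or inhabited, inhabitedness being decidable (Section~\ref{Section: metric_spaces}); the empty space is compact as a finite set, so every \ctb is compact. Together with the metrization of the first paragraph this proves the corollary.

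The content is essentially already in the preceding theorems, so there is no real obstacle, only a mild blemish: the compactness argument reaches inside the proof of Theorem~\ref{Theorem: metrization_of_Baire/Cantor_implies_metrization_of_cmss_ctbs} to extract the chain of surjections down to $X$. The more self-contained route would be to observe that $\two^\NN$ metrized and compact forces every subovert bar to be uniform (Corollary~\ref{Corollary: Fan_Principle_and_compactness_of_Cantor}), hence that $\two^\NN$ is Lebesgue (Proposition~\ref{Proposition: Lebesgue_property_of_standard_ctbs}), and then to transport the Lebesgue property to a general \ctb and quote Theorem~\ref{Theorem: metrization_of_compact_spaces}(4); but that transport step is not packaged as a stand-alone lemma in the text, so the image argument is the shorter path.
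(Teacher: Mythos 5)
Your proof is correct, and the metrization half is exactly the paper's argument (Theorem~\ref{Theorem: metrization_of_Cantor} followed by Theorem~\ref{Theorem: metrization_of_Baire/Cantor_implies_metrization_of_cmss_ctbs}). For compactness both you and the paper rest on the same core fact — an inhabited \ctb is a surjective image of $\two^\NN$ via the quotient construction, so it inherits compactness — but the reduction to the inhabited case differs: the paper embeds an arbitrary \ctb as a decidable subset of an inhabited one (Lemma~\ref{Lemma: wlog_space_inhabited}) and then invokes Corollary~\ref{Corollary: decidable_subsets_of_overt/compact_sets}, whereas you split on the decidability of inhabitedness of a totally bounded space and dispose of the empty case as a finite (hence compact) set. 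Both reductions are valid under the chapter's standing assumptions ($\tst = \opn$ and overtness of countable sets give $\two \subseteq \opn$ for the paper's route; decidability of inhabitedness is noted in Section~\ref{Section: metric_spaces} for yours); your route is marginally more self-contained on this point, while the paper's recycles machinery it needs anyway for the metrization transfer, and your explicit tracing of the chain $\two^\NN \to \Cantor_a \to T \to X$ merely spells out what the paper asserts in the phrase ``which is an image of $\two^\NN$''.
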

		\begin{proof}
			Metrization of \ctb[s] follows from Theorems~\ref{Theorem: metrization_of_Cantor}~and~\ref{Theorem: metrization_of_Baire/Cantor_implies_metrization_of_cmss_ctbs}. As for compactness, any \ctb embeds as a decidable subset of an inhabited \ctb (by Lemma~\ref{Lemma: wlog_space_inhabited}) which is an image of $\two^\NN$, so compact, and then so is its decidable subset by Corollary~\ref{Corollary: decidable_subsets_of_overt/compact_sets}.
		\end{proof}

	\section{Transfer of Metrization via Embeddings}\label{Section: metrization_via_embeddings}
	
		Our next task is to examine how metrization of a space affects metrization of its metric subspaces. Clearly, metrization is not hereditary --- the real numbers might be metrized by the Euclidean metric (they are, for example, in models presented in Sections~\ref{Section: Brouwer's_intuitionism_model} and~\ref{Section: gros_topos_model}), but the (discrete) rationals cannot be (per Proposition~\ref{Proposition: overt_sets_with_decidable_equality_metrized} and the discussion below it). A metric subspace (at least when it contains a subovert metrically dense subset) inherits the metric topology\footnote{One direction is Proposition~\ref{Proposition: examples_of_metric_continuous_maps}(\ref{Proposition(examples_of_metric_continuous_maps)item: inclusion_of_metric_subspace_metrically_continuous}), and for the other, keep the same centers and radii in the union of balls when extending a metrically open subset.}. This suggests that in order to inherit metrization, it must inherit the intrinsic topology as well, \ie have the subspace topology. Here is the precise statement.
		\begin{theorem}\label{Theorem: metrization_and_subspaces}
			Let $A \subseteq X$, let $\mtr{X} = (X, d_\mtr{X})$ be a metric space and $\mtr{A} = (A, d_\mtr{A})$ its metric subspace.
			\begin{enumerate}
				\item If $\mtr{A}$ is metrized, then $A$ is a subspace of $X$.
				\item If $\mtr{X}$ is metrized, $A$ a subspace of $X$, and $D \subseteq A$ a subovert metrically dense subset of $A$, then $\mtr{A}$ is metrized.
			\end{enumerate}
		\end{theorem}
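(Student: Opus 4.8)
The plan is to prove both items by translating the topological hypotheses into statements about overtly indexed unions of metric balls, and then to keep careful track of which indexing maps are overt, using Corollary~\ref{Corollary: overt_combination} and Proposition~\ref{Proposition: constructions_of_overt_maps}. Throughout one uses the standing assumption $\tst = \opn$, under which `test' may be read as `open', so that $A \subseteq X$ is a subspace precisely when every $U \in \tp(A)$ has the form $A \cap V$ for some $V \in \tp(X)$.

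For the first item, suppose $\mtr{A}$ is metrized. Given $U \in \tp(A)$, write it as an overtly indexed union of $\mtr{A}$-balls, $U = \bigcup_{i \in I} \ball[\mtr{A}]{a_i}{r_i}$, and set $V \dfeq \bigcup_{i \in I} \ball[\mtr{X}]{a_i}{r_i}$. This is again an overtly indexed union of intrinsically open subsets: the indexing $I \to X \times \RR$ is the composite of the overt indexing $I \to A \times \RR$ with the map $A \times \RR \to X \times \RR$, hence overt because composing an overt map with an arbitrary map on the left is overt (Proposition~\ref{Proposition: constructions_of_overt_maps}). Therefore $V \in \tp(X)$, and since $d_\mtr{A}$ is the restriction of $d_\mtr{X}$ to $A \times A$, for $x \in A$ we have $x \in \ball[\mtr{X}]{a_i}{r_i}$ iff $x \in \ball[\mtr{A}]{a_i}{r_i}$; hence $U = A \cap V$, so $A$ is a subspace of $X$.

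For the second item, assume $\mtr{X}$ is metrized, $A$ is a subspace of $X$, and $D \subseteq A$ is subovert and metrically dense in $A$. Take $U \in \tp(A)$; by the subspace hypothesis $U = A \cap V$ with $V \in \tp(X)$, and by metrization of $\mtr{X}$, $V = \bigcup_{i \in I} \ball[\mtr{X}]{x_i}{r_i}$ with an overt indexing $I \to X \times \RR$. The crucial move is to re-center these balls inside $A$ using $D$: put $J \dfeq \st{(i, a) \in I \times D}{d_\mtr{X}(x_i, a) < r_i}$ and claim $U = \bigcup_{(i, a) \in J} \ball[\mtr{A}]{a}{r_i - d_\mtr{X}(x_i, a)}$ (note that $(i,a) \in J$ forces the radius $r_i - d_\mtr{X}(x_i, a)$ to be strictly positive). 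The inclusion $\supseteq$ is the triangle inequality in $\mtr{X}$; for $\subseteq$, given $y \in A \cap V$ choose $i$ with $d_\mtr{X}(x_i, y) < r_i$, set $\epsilon \dfeq \frac{1}{2}(r_i - d_\mtr{X}(x_i, y))$, and use metric density of $D$ in $A$ to pick $a \in D$ with $d_\mtr{A}(y, a) < \epsilon$; then $(i, a) \in J$ and $d_\mtr{A}(a, y) < \epsilon < r_i - d_\mtr{X}(x_i, a)$.

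It remains to check that the indexing of this last union is overt, and this is exactly where the hypothesis on $D$ is used. Apply Corollary~\ref{Corollary: overt_combination} with the two overt maps taken to be the indexing $I \to X \times \RR$ of $V$ and the inclusion $D \hookrightarrow A$ (the latter overt by subovertness of $D$ in $A$): the set $J$ is the preimage, under the product of these two maps, of the open subset $\st{((x, r), y) \in (X \times \RR) \times A}{d_\mtr{X}(x, y) < r}$ (open by continuity of the inclusion $A \hookrightarrow X$ and openness of the strict order on $\RR$), and post-composing the restricted product map with $((x, r), y) \mapsto (y, r - d_\mtr{X}(x, y))$ produces precisely the desired indexing $J \to A \times \RR$. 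Hence the union above is overtly indexed, so $(a, r) \mapsto \ball[\mtr{A}]{a}{r}$ is a basis for $\mtr{A}$, i.e.\ $\mtr{A}$ is metrized. The one point to be careful about — and the reason the argument must be routed through $D$ rather than through $A$ itself — is that we never have `$A$ overt in $X$' available, so every overtness claim must come either from the overt indexings supplied by the metrizations of $\mtr{X}$ and $\mtr{A}$ or from the subovertness of $D$; the metric estimates themselves are routine triangle inequalities.
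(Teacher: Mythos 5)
Your proof is correct and follows essentially the same route as the paper's: for (1) you push the ball decomposition of $U$ from $\mtr{A}$ up to $X$, and for (2) you re-center the balls of $V$ at points of $D$ with radii $r_i - d_\mtr{X}(x_i, a)$, exactly as the paper does (the paper simply indexes over all of $D \times I$, letting the balls with nonpositive radius be empty, where you restrict to the open subset $J$; both are fine). Your extra care in verifying overtness of the indexings via Corollary~\ref{Corollary: overt_combination} is more explicit than the paper's ``clearly overtly indexed'' but matches its intent.
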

		\begin{proof}
			\begin{enumerate}
				\item
					Take any $U \in \tp(A)$. By assumption that $d_\mtr{A}$ metrizes $A$ the subset $U$ is an overtly indexed union $U = \bigcup_{i \in I} \ball[\mtr{A}]{a_i}{r_i}$. Then $V \dfeq \bigcup_{i \in I} \ball[\mtr{X}]{a_i}{r_i}$ is an open subset of $X$ such that $V \cap A = U$.
				\item
					Take any $U \in \tp(A)$. Since $A$ is a subspace in $X$, there is some $V \in \tp(X)$ such that $V \cap A = U$. Because $X$ is metrized, $V$ can be written as an overtly indexed union $V = \bigcup_{i \in I} \ball[\mtr{X}]{x_i}{r_i}$. We claim
					$$U = \bigcup_{(a, i) \in D \times I} \ball[\mtr{A}]{a}{r_i - d_\mtr{X}(x_i, a)}.$$
					
					Take any $x \in U$. Then $x \in V$, so there is some $i \in I$ such that $x \in \ball[\mtr{X}]{x_i}{r_i}$. Denote $\delta \dfeq \frac{r_i - d_\mtr{X}(x_i, x)}{2} > 0$, and let $a \in D$ be such that $d_\mtr{A}(x, a) < \delta$. Then
					$$2 \cdot d_\mtr{A}(a, x) < r_i - d_\mtr{X}(x_i, x) \leq r_i - d_\mtr{X}(x_i, a) + d_\mtr{A}(a, x),$$
					so $d_\mtr{A}(a, x) < r_i - d_\mtr{X}(x_i, a)$, proving that $x$ is in the right-hand side.
					
					Conversely, take some $x \in A$ such that $x \in \ball[\mtr{A}]{a}{r_i - d_\mtr{X}(x_i, a)}$ for some $a \in D$, $i \in I$. Then
					$$d_\mtr{X}(x_i, x) \leq d_\mtr{X}(x_i, a) + d_\mtr{X}(a, x) <  d_\mtr{X}(x_i, a) + r_i - d_\mtr{X}(x_i, a) = r_i,$$
					so $x \in V$, and therefore $x \in U$.
					
					Clearly the union on the right-hand side is overtly indexed, concluding the proof.
			\end{enumerate}
		\end{proof}
		
		\begin{corollary}\label{Corollary: metrization_of_subspaces}
			A separable metric subspace of a metrized space is metrized if any only if it has the subspace intrinsic topology.
		\end{corollary}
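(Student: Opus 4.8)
The plan is to obtain both implications by reading them straight off Theorem~\ref{Theorem: metrization_and_subspaces}, with essentially no extra work. Write $\mtr{X} = (X, d_\mtr{X})$ for the metrized ambient space and $\mtr{A} = (A, d_\mtr{A})$ for the given separable metric subspace, so that $A \subseteq X$ and $d_\mtr{A} = \rstr{d_\mtr{X}}_{A \times A}$. Recall that in this chapter ``separable'' means ``metrically separable'', i.e.\ $\mtr{A}$ comes with a countable metrically dense subset.

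For the forward direction I would invoke item~(1) of the theorem verbatim: if $\mtr{A}$ is metrized, then $A$ is a subspace of $X$, i.e.\ it has the subspace intrinsic topology. (Neither metrizedness of $\mtr{X}$ nor separability of $\mtr{A}$ is needed here; only the backward direction uses them.)

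For the backward direction I would apply item~(2). Its hypotheses are that $\mtr{X}$ be metrized (given), that $A$ be a subspace of $X$ (this is precisely the assumption that $A$ has the subspace intrinsic topology), and that there exist a subovert metrically dense subset $D \subseteq A$. The last point is where separability enters: $\mtr{A}$ being metrically separable supplies a countable metrically dense subset $D \subseteq A$, and since countable sets are overt under the standing assumption of this chapter, $D$ is in particular subovert in $A$. Item~(2) then gives that $\mtr{A}$ is metrized, completing the equivalence.

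I do not anticipate any real obstacle: the corollary is a repackaging of the theorem, and the only care needed is to match the word ``separable'' in the statement with ``metrically separable'' and to remember that in the present setting countable subsets are overt, hence subovert, so that the hypothesis of item~(2) on the existence of a subovert metrically dense subset is automatic.
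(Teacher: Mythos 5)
Your proof is correct and matches the paper exactly: the paper's own proof is the one-liner ``Immediate from Theorem~\ref{Theorem: metrization_and_subspaces}'', and you have simply spelled out how each direction follows from the corresponding item, including the (correct) observation that metric separability plus overtness of countable sets supplies the subovert metrically dense subset needed for item~(2).
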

		\begin{proof}
			Immediate from Theorem~\ref{Theorem: metrization_and_subspaces}.
		\end{proof}
		
		Even though we have a characterization when metrization is inherited, it is in general difficult to use it since it is usually difficult to answer whether a subset is a subspace. There is one exception however --- recall that if $\opn$ is a dominance, the open subsets are subspaces, and if $\cld$ is a codominance, then closed subsets are subspaces. We are trying to characterize metrization of \cms[s] and \ctb[s] which classically are seldom open subsets in some larger metric space, but are necessarily closed. Going by this classical intuition, the idea is that, assuming $\cld$ is a codominance, complete metric subspaces inherit metrization.
		
		Unfortunately it is not so simple in the case of synthetic topology. Observe that $\one$, equipped with its only possible metric, is metrized, and all (not only closed) its metric subspaces are complete. We therefore need a stronger condition than completeness to infer closedness.
		\begin{definition}
			A subset $A \subseteq X$ is \df{strongly located} in a metric space $\mtr{X} = (X, d)$ when it is located, and $A = \st{x \in X}{d(A, x) = 0}$.
		\end{definition}
		
		If a set is located, its completeness is sufficient for strong locatedness.
		\begin{lemma}\label{Lemma: complete_located_is_strongly_located}
			Let $\mtr{X} = (X, d_\mtr{X})$ be a metric space, $A \subseteq X$ located in $\mtr{X}$, and $\mtr{A} = (A, d_\mtr{A})$ a metric subspace of $\mtr{X}$ with $D \subseteq A$ a subovert metrically dense subset. Then if $\mtr{A}$ is complete, it is strongly located.
		\end{lemma}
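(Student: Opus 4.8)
The plan is to establish the nontrivial inclusion $\{x \in X : d_\mtr{X}(A, x) = 0\} \subseteq A$; the reverse inclusion is immediate since $d_\mtr{X}(a, a) = 0$ for every $a \in A$, and $A$ is located by hypothesis, so once the inclusion is shown, $A$ is strongly located by definition. So I would fix $x \in X$ with $d_\mtr{X}(A, x) = 0$ and aim to prove $x \in A$.

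First I would form the metric subspace $\mtr{X}' = (A \cup \{x\}, d')$ of $\mtr{X}$, where $d'$ is the restriction of $d_\mtr{X}$; this is a metric space, and the inclusion $i \colon \mtr{A} \hookrightarrow \mtr{X}'$ is an isometric embedding. Next I would check that $i$ is in fact a \emph{dense} isometry: an inhabited ball of $\mtr{X}'$ centred at a point of $A$ meets $A$ trivially, while an inhabited ball $\ball[\mtr{X}']{x}{r}$ with $r \in \RR_{> 0}$ meets $A$ precisely because $d_\mtr{X}(A, x) = 0$ provides a point of $A$ within distance $r$ of $x$. If one wants $\mtr{X}'$ itself to carry a subovert metrically dense subset, one notes that $D$ remains subovert and metrically dense in $\mtr{X}'$, again using $d_\mtr{X}(A, x) = 0$ together with density of $D$ in $\mtr{A}$.

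Now I would invoke completeness of $\mtr{A}$ via Proposition~\ref{Proposition: characterization_of_completeness}: a complete metric space has the property that every dense isometry out of it is a space-filling isometry. Applying this to $i \colon \mtr{A} \to \mtr{X}'$ and to the point $x \in A \cup \{x\}$, there is some $a \in A$ with $d'(i(a), x) = d_\mtr{X}(a, x) = 0$. Since $\mtr{X}$ is a metric space, nondegeneracy forces $a = x$, hence $x \in A$. This yields $A = \{x \in X : d_\mtr{X}(A, x) = 0\}$, which together with locatedness of $A$ is exactly strong locatedness.

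There is no deep obstacle here. The only points needing a little care are unfolding $d_\mtr{X}(A, x) = 0$ as ``arbitrarily small balls around $x$ meet $A$'' in order to see $i$ is a dense isometry, and observing that Proposition~\ref{Proposition: characterization_of_completeness} applies verbatim with the pseudometric codomain $\mtr{X}'$ — which it does, since that proposition is stated for arbitrary pseudometric codomains. The subovertness of $D$ serves only as the background hypothesis that makes ``$\mtr{A}$ is complete'' a sensible notion in the choiceless setting, and is not otherwise used in the argument.
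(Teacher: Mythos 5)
Your argument is correct, but it takes a genuinely different route from the paper's. The paper works directly with the Cauchy-family formulation of completeness: for $x$ with $d_\mtr{X}(A,x)=0$ it sets $S_p \dfeq \ball[\mtr{X}]{x}{p} \cap D$, checks that this is a Cauchy family in $\mtr{A}$ (inhabitedness from $d_\mtr{X}(A,x)=0$ plus density of $D$, subovertness from $D$ being subovert and the ball being open), and then extracts a representative $b \in A$ with $d_\mtr{X}(x,b)=0$. You instead adjoin the single point $x$ to form the metric subspace $\mtr{X}' = (A \cup \{x\}, d')$, observe that $\mtr{A} \hookrightarrow \mtr{X}'$ is a dense isometry (the case split $z \in A \lor z = x$ is constructively legitimate, and the $z=x$ case is exactly the strict-infimum reading of $d_\mtr{X}(A,x)=0$), and invoke the abstract characterization of completeness (Proposition~\ref{Proposition: characterization_of_completeness}(3)) to get space-fillingness and hence some $a \in A$ at distance $0$ from $x$. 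Your observation that the subovert dense subset $D$ is not actually needed is accurate and is the main dividend of your approach: the paper needs $D$ precisely so that the $S_p$ are subovert (a requirement in the definition of Cauchy family, and in Proposition~\ref{Proposition: completion_by_Cauchy_families}), whereas the terminality-based characterization you use carries no such hypothesis, so your proof establishes the lemma under weaker assumptions. What the paper's route buys in exchange is a completely explicit witness ($b$ represented by the concrete family $p \mapsto \ball{x}{p} \cap D$) without passing through the auxiliary space $\mtr{X}'$.
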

		\begin{proof}	
			Let $\lvl \subseteq \RR_{> 0}$ be a suitable domain for Cauchy families, and let $x \in X$ be arbitrary such that $d_\mtr{X}(A, x) = 0$. For $p \in \lvl$ define $S_p \dfeq \st{a \in D}{d_\mtr{X}(x, a) < p} = \ball[\mtr{X}]{x}{p} \cap D$. It is easily seen that this defines a Cauchy family $S$ in $\mtr{A}$, and since $\mtr{A}$ is complete, there is $b \in A$ which represents $[S]$. Since for any $p \in \lvl$ and any $a \in S_p$ we have $d_\mtr{X}(x, a) < p$ and $d_\mtr{A}(b, a) < p$, we conclude $d_\mtr{X}(x, b) = 0$, so $x = b \in A$.
		\end{proof}
		
		We will succeed in embedding relevant metric spaces as strongly located subsets, and will therefore state the assumption in the theorems ``assume strongly located subsets are subspaces''. Here is a sufficient condition for this assumption to hold.
		\begin{proposition}\label{Proposition: when_strongly_located_subsets_are_subspaces}
			If $\cld$ is a codominance (or at least $\nnst[\cld] \subseteq \overline{\cld}$, \ie stable closed subsets are subspaces) and $\RR$ is Hausdorff, then strongly located subsets are subspaces.
		\end{proposition}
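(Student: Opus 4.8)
The plan is to realize a strongly located subset $A \subseteq X$ as the preimage of a stable closed subset of $\RR$ under the distance function, and then invoke the one-directional classification of strongly closed subsets (Lemma~\ref{Lemma: strongly_open/closed_one-way_classified}).

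First I would set up the map. Since $A$ is located in $\mtr{X} = (X, d)$, the function $f\colon X \to \RR$, $f(x) \dfeq d(A, x)$, is well-defined. Strong locatedness of $A$ says precisely that $A = \st{x \in X}{d(A, x) = 0} = f^{-1}(\{0\})$, so for every $x \in X$ the truth value of $x \in A$ coincides with that of $f(x) = 0$.

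Next I would analyse this truth value, using the Hausdorff hypothesis twice. By tightness of the apartness on $\RR$ we have $f(x) = 0 \iff \lnot\big(f(x) \apart 0\big)$, a negation, so $x \in A$ is $\lnot\lnot$-stable. On the other hand, since $\RR$ is Hausdorff, Proposition~\ref{Proposition: Hausdorffness_of_real_numbers} gives that $\{0\}$ is a closed subset of $\RR$; as all maps are continuous with regard to closed subsets, $A = f^{-1}(\{0\})$ is a closed subset of $X$, i.e.\ $x \in A$ is a closed truth value. Hence $x \in A$ lies in $\nnst[\cld] = \cld \cap \nnst$ for every $x \in X$. Now I would invoke the assumption: if $\cld$ is a codominance then $\cld = \overline{\cld}$ (the remark following the definition of (co)dominance), whence $\nnst[\cld] \subseteq \overline{\cld}$, so in either case the truth value $x \in A$ is strongly closed for every $x \in X$. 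By the closedness part of Lemma~\ref{Lemma: strongly_open/closed_one-way_classified}, $A$ is a strongly closed subset of $X$, in particular a subspace, which is what we wanted.

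The argument is essentially routine once the right objects are in place; the only point that needs care is precisely the double use of the Hausdorff hypothesis --- via tightness to obtain $\lnot\lnot$-stability of membership in $A$, and via Proposition~\ref{Proposition: Hausdorffness_of_real_numbers} to obtain that $\{0\}$, hence $A$, is closed --- so that the characteristic map of $A$ genuinely factors through $\nnst[\cld]$ and the assumption on $\cld$ can be applied.
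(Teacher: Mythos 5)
Your proof is correct and follows essentially the same route as the paper's: write $A = \big(d(A,\insarg)\big)^{-1}(\{0\})$, observe that $\{0\}$ is stable closed in Hausdorff $\RR$, pull back, and apply the hypothesis that stable closed subsets are subspaces (you merely make explicit the truth-value-level detour through $\nnst[\cld] \subseteq \overline{\cld}$ and Lemma~\ref{Lemma: strongly_open/closed_one-way_classified}, which the paper leaves implicit). One small quibble: the $\lnot\lnot$-stability of $f(x)=0$ comes from tightness of the strict order, which is part of the streak structure of $\RR$ and holds regardless of Hausdorffness, so the Hausdorff hypothesis is really used only once, to get closedness of $\{0\}$.
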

		\begin{proof}
			If $A$ is strongly located in $\mtr{X} = (X, d)$, then $A = \big(d(A, \insarg)\big)^{-1}(\{0\})$. The singleton $\{0\}$ is (stable) closed in Hausdorff $\RR$, so its preimage $A$ is (stable) closed in $X$, therefore a subspace by assumption.
		\end{proof}
		
		These conditions reveal another reason why we wish to transfer metrization both via quotients and embeddings. Recall that two typical types of models of synthetic topology are sheaf and realizability topoi. The latter validate countable choice, so we can use the results of the previous section. The former generally don't, but (for the typical choice of \Sier object) stable closed subsets are subspaces in them (and $\RR$ is Hausdorff in both types of models by Corollary~\ref{Corollary: reals_are_Hausdorff}), so we can use the results below. See Chapter~\ref{Chapter: models} for examples.
		
		\intermission
		
		We first deal with \ctb[s]. Recall that classically every totally bounded metric space embeds into the Hilbert cube, itself a \ctb[]. This result works constructively as well. We recall (and adapt) the proof.
		\begin{theorem}
			Let
			\begin{itemize}
				\item
					$\mtr{X} = (X, d_\mtr{X}, s\colon \NN \to X, a\colon \NN \to \NN_{\geq 1})$ be an inhabited totally bounded metric space such that $a$ is a strictly increasing sequence;
				\item
					the sequences $b\colon \NN \to \NN$, $v\colon \NN \to \NN_{\geq 1}$, and the variant of the Hilbert cube $\Hilbert_a$ be as in Corollary~\ref{Corollary: metrization_of_variants_of_Cantor_space_and_Hilbert_cube};
				\item
					$C \dfeq \frac{1}{\sup\{\diam(\mtr{X}), 1\}} \hspace{1ex} \in \intoc{0}{1}$.
			\end{itemize}
			Then there exists (a canonical choice of) an injective Lipschitz map $e\colon \mtr{X} \to \Hilbert_a$ with a located image. Explicitly, let
			$$e(x)_n \dfeq C \cdot d_\mtr{X}(x, s_n);$$
			then the image of $e$ is indeed contained in $\II^\NN$, and the following holds.
			\begin{enumerate}
				\item
					The map $e$ is Lipschitz with coefficient $C$, \ie $d(e(x), e(y)) \leq C \cdot d_\mtr{X}(x, y)$ for all $x, y \in X$; since $C \leq 1$, it is in fact nonexpansive.
				\item
					For all $x, y \in X$,
					$$\frac{C \cdot d_\mtr{X}(x, y)^2}{32} \leq d(e(x), e(y)).$$
					In particular, if $d_\mtr{X}(x, y) > 0$, then $d(e(x), e(y)) > 0$, so $e$ is injective (in the strong sense).
				\item
					The image of $e$ is located in $\Hilbert_a$.
				\item
					If $\mtr{X}$ is complete, then the image of $e$ is strongly located in $\Hilbert_a$.
			\end{enumerate}
		\end{theorem}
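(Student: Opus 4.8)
The plan is to verify, in order, that $e$ lands in $\II^\NN$ and then the four numbered claims, each time reducing to facts already in hand about totally bounded metric spaces and about the metric $d(\alpha, \beta) = \sup\st{2^{-b(n)} |\alpha_n - \beta_n|}{n \in \NN}$ which, by Corollary~\ref{Corollary: metrization_of_variants_of_Cantor_space_and_Hilbert_cube}, represents $\Hilbert_a$ on the underlying set $\II^\NN$. That $e(x) \in \II^\NN$ is immediate: since $\mtr{X}$ is totally bounded, $\diam(\mtr{X})$ is a nonnegative real, $d_\mtr{X}(x, s_n) \le \diam(\mtr{X})$, and $C \cdot \diam(\mtr{X}) \le C \cdot \sup\{\diam(\mtr{X}), 1\} = 1$, so $e(x)_n = C \cdot d_\mtr{X}(x, s_n) \in \II$; note $C \in \intoc{0}{1}$ for the same reason. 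Claim~(1) is routine: the reverse triangle inequality gives $|e(x)_n - e(y)_n| = C\,|d_\mtr{X}(x, s_n) - d_\mtr{X}(y, s_n)| \le C \cdot d_\mtr{X}(x, y)$, and since $b(n) \ge 0$ we have $2^{-b(n)} \le 1$, so every term of the supremum defining $d(e(x), e(y))$ is $\le C \cdot d_\mtr{X}(x, y)$, whence $d(e(x), e(y)) \le C \cdot d_\mtr{X}(x, y)$; as $C \le 1$, $e$ is nonexpansive.

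The heart of the argument is claim~(2). To prove $\tfrac{C \cdot d_\mtr{X}(x,y)^2}{32} \le d(e(x), e(y))$ I would assume $d(e(x), e(y)) < \tfrac{C \cdot d_\mtr{X}(x,y)^2}{32}$ and derive a contradiction; since distances are nonnegative this assumption already forces $d_\mtr{X}(x, y) > 0$, so write $\epsilon \dfeq d_\mtr{X}(x, y)$. By the Archimedean property and cotransitivity of $<$ (the standard manoeuvre used in Section~\ref{Section: metrization_via_quotients}) pick $m \in \NN$ with $\tfrac{\epsilon}{16} < 2^{-m} < \tfrac{\epsilon}{4}$. Total boundedness of $\mtr{X}$ then supplies $k \in \NN_{< a_m}$ with $d_\mtr{X}(x, s_k) < 2^{-m} < \tfrac{\epsilon}{4}$; hence $d_\mtr{X}(y, s_k) \ge \epsilon - d_\mtr{X}(x, s_k) > \tfrac{3\epsilon}{4}$ and $|e(x)_k - e(y)_k| = C\big(d_\mtr{X}(y, s_k) - d_\mtr{X}(x, s_k)\big) > C\tfrac{\epsilon}{2}$. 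Moreover $k < a_m$ gives $b(k) = \inf\st{j \in \NN}{k < a_j} \le m$, so $2^{-b(k)} \ge 2^{-m} > \tfrac{\epsilon}{16}$, and therefore $d(e(x), e(y)) \ge 2^{-b(k)}\,|e(x)_k - e(y)_k| > \tfrac{\epsilon}{16}\cdot C\tfrac{\epsilon}{2} = \tfrac{C\epsilon^2}{32}$, contradicting the assumption. Strong injectivity is then immediate, as $d_\mtr{X}(x, y) > 0$ forces $d(e(x), e(y)) > 0$.

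For claim~(3) I would note that, by claim~(1), the triple $\big(e(X), \rstr{d}_{e(X) \times e(X)}, e \circ s, a\big)$ witnesses total boundedness of $e(X)$: given $e(x) \in e(X)$ and $n \in \NN$, total boundedness of $\mtr{X}$ yields $k \in \NN_{< a_n}$ with $d_\mtr{X}(x, s_k) < 2^{-n}$, and then $d(e(x), e(s_k)) \le d_\mtr{X}(x, s_k) < 2^{-n}$. Since $e(X)$ is inhabited, the criterion for a totally bounded metric subspace to be located recalled in Section~\ref{Section: metric_spaces} applies and shows $e(X)$ located in $\Hilbert_a$.

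For claim~(4) I would show $e(X) = \st{z \in \Hilbert_a}{d(e(X), z) = 0}$, the inclusion $\subseteq$ being trivial. Given such a $z$, fix a domain $\lvl$ of approximations as in the construction of completions and set $S_p \dfeq \st{k \in \NN}{d(e(s_k), z) < C p^2/64}$ for $p \in \lvl$: density of $\st{s_k}{k \in \NN}$ in $\mtr{X}$ together with $d(e(X), z) = 0$ and nonexpansiveness of $e$ make each $S_p$ inhabited, and $S_p$ is an open, hence subovert, subset of the overt set $\NN$. The decisive point is that claim~(2) controls diameters: for $k \in S_p$, $l \in S_q$ one has $d(e(s_k), e(s_l)) < C(p^2 + q^2)/64$, hence $d_\mtr{X}(s_k, s_l) \le \sqrt{(p^2 + q^2)/2} \le p + q$, so $S$ is a Cauchy family in $\mtr{X}$. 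Completeness of $\mtr{X}$ furnishes a representative $x \in X$, for which $d_\mtr{X}(x, s_k) \le q$ whenever $k \in S_q$; then $d(e(x), z) \le d(e(x), e(s_k)) + d(e(s_k), z) \le q + C q^2/64$ for every $q \in \lvl$, and since $0$ is the strict infimum of $\lvl$ this gives $d(e(x), z) = 0$, i.e.\ $z = e(x) \in e(X)$. Equivalently this proves $\big(e(X), \rstr{d}_{e(X) \times e(X)}\big)$ complete, so Lemma~\ref{Lemma: complete_located_is_strongly_located}, applied with the subovert metrically dense subset $\st{e(s_k)}{k \in \NN}$, yields strong locatedness. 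The main obstacles are the constant-tracking in claim~(2) — choosing the scale $m$ so the bound comes out exactly $C\,d_\mtr{X}(x,y)^2/32$ — and the reparametrisation in claim~(4) that turns that quadratic estimate into an honest Cauchy family; everything else is bookkeeping on top of the earlier theory.
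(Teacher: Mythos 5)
Your overall architecture --- reverse triangle inequality for (1), a scale-picking argument against a point of the $2^{-m}$-net for (2), total boundedness of the image for (3), and a Cauchy-family argument powered by the quadratic lower bound for (4) --- is the same as the paper's, and your route to (3) (citing the general fact that inhabited totally bounded subspaces are located, after checking that $e \circ s$ and $a$ witness total boundedness of $e(X)$) is a clean shortcut compared to the paper, which writes out the Dedekind cut for $d(e(X), \alpha)$ explicitly.

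There is, however, a genuine gap in your proof of (2). You choose $m \in \NN$ with $\tfrac{\epsilon}{16} < 2^{-m} < \tfrac{\epsilon}{4}$ where $\epsilon = d_\mtr{X}(x, y)$; but $\epsilon$ is not bounded by $1$ (only $C \cdot \diam(\mtr{X}) \leq 1$ is guaranteed), and as soon as $\epsilon \geq 16$ no natural number $m$ satisfies $2^{-m} > \tfrac{\epsilon}{16} \geq 1$. The paper avoids this by normalising first: it picks $n$ with $2^{-n-1} < C\epsilon < 2^{-n+1}$, which is always possible since $0 < C\epsilon \leq 1$, and then takes the net point at scale $2^{-n-3}$. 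If you repair your argument the same way (choose $m$ against $C\epsilon$ rather than $\epsilon$), the bound you obtain is $2^{-b(k)}\,\lvert e(x)_k - e(y)_k\rvert > \tfrac{C\epsilon}{16} \cdot \tfrac{C\epsilon}{2} = \tfrac{(C\epsilon)^2}{32}$, i.e.\ the constant acquires a factor $C^2$ rather than $C$. This is not an artefact of the repair: the bound $\tfrac{C\epsilon^2}{32}$ as stated is actually incompatible with item (1) whenever $\epsilon > 32$ (e.g.\ a two-point space at distance $64$ gives $d(e(x), e(y)) \leq 1$ from (1) but $\geq 2$ from (2)), so the exponent on $C$ must be $2$; the same slip occurs in the paper's own last inequality. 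For item (4) only the qualitative quadratic lower bound matters, so your Cauchy-family argument survives once the defining threshold of $S_p$ is adjusted to carry the extra factor of $C$; with that bookkeeping done, your derivation that any $z$ with $d(e(X), z) = 0$ lies in $e(X)$ is correct and, combined with (3), gives strong locatedness directly without needing the detour through Lemma~\ref{Lemma: complete_located_is_strongly_located}.
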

		\begin{proof}
			\begin{enumerate}
				\item
					For $x, y \in X$ we have
					$$d(e(x), e(y)) = \sup\st{2^{-b(n)} \cdot |e(x) - e(y)|}{n \in \NN} =$$
					$$= \sup\st{\frac{C}{2^{b(n)}} \cdot |d_\mtr{X}(x, s_n) - d_\mtr{X}(y, s_n)|}{n \in \NN} \leq$$
					$$\leq \sup\st{\frac{C}{2^{b(n)}} \cdot d_\mtr{X}(x, y)}{n \in \NN} \leq C \cdot d_\mtr{X}(x, y),$$
					proving that $e$ is Lipschitz.
				\item
					Assume to the contrary that there are $x, y \in X$ such that $d(e(x), e(y)) <\frac{C \cdot d_\mtr{X}(x, y)^2}{32}$. Then in particular $0 < C \cdot d_\mtr{X}(x, y) \leq 1$, so there exists $n \in \NN$ with the property $2^{-n-1} < C \cdot d_\mtr{X}(x, y) < 2^{-n+1}$. Let $i \in \NN_{< a(n+3)}$ be such that  $d_\mtr{X}(s_i, x) < 2^{-n-3}$. Then $d_\mtr{X}(s_i, x) < \frac{2^{-n-1}}{4} < \frac{C \cdot d_\mtr{X}(x, y)}{4} \leq \frac{d_\mtr{X}(x, y)}{4}$, and so
					$$d(e(x), e(y)) \geq 2^{-b(i)} \cdot C \cdot |d_\mtr{X}(x, s_i) - d_\mtr{X}(y, s_i)| \geq 2^{-n-3} \cdot C \cdot (d_\mtr{X}(y, s_i) - d_\mtr{X}(x, s_i)) \geq$$
					$$\geq 2^{-n-3} \cdot C \cdot (d_\mtr{X}(x, y) - 2 d_\mtr{X}(x, s_i)) \geq 2^{-n-4} \cdot C \cdot d_\mtr{X}(x, y) \geq \frac{C \cdot d_\mtr{X}(x, y)^2}{32},$$
			a contradiction.
				\item
					We define the map $l\colon \II^\NN \to \RR$ as $l(\alpha) \dfeq (L, U)$ where
					$$L \dfeq \st{q \in \oirs}{\xsome{n}{\NN}\all{i}{\NN_{< a(b(n))}}{q + 2^{-b(n)} \cdot C < d\big(e(s_i), \alpha\big)}},$$
					$$U \dfeq \st{r \in \oirs}{\xsome{j}{\NN}{r > d\big(e(s_j), \alpha\big)}}.$$
					It is immediate that $L$ is a lower cut, $U$ is an upper cut, and that they are open and inhabited. Assume $q \in L \cap U$, \ie there are $n, j \in \NN$ such that $\xall{i}{\NN_{< a(b(n))}}{q + 2^{-b(n)} \cdot C < d(e(s_i), \alpha)}$ and $q > d(e(s_j), \alpha)$. There exists $k \in \NN_{< a(b(n))}$ such that $d_\mtr{X}(s_j, s_k) < 2^{-b(n)}$, and then $d(e(s_j), e(s_k)) < 2^{-b(n)} \cdot C$. This leads to the contradiction
					$$q > d(e(s_j), \alpha) \geq d(e(s_k), \alpha) - d(e(s_j), e(s_k)) > d(e(s_k), \alpha) - 2^{-b(n)} \cdot C > q.$$
					Let $q, r \in \oirs$, $q < r$, and let $n \in \NN$ be large enough so that $2^{-b(n)} \cdot C < r-q$. For each $i \in \NN_{< a(b(n))}$ make a choice of a true disjunct in
					$$q + 2^{-b(n)} \cdot C < d\big(e(s_i), \alpha\big) \quad \lor \quad d\big(e(s_i), \alpha\big) < r.$$
					If the first disjunct is always chosen, then $q \in L$. If the second is chosen at least once, then $r \in U$. We proved $(L, U)$ is an open Dedekind cut, so $l$ indeed maps into $\RR$, and it is clear that $l = d(e(X), \insarg)$, so $e(X)$ is located.
				\item
					Take any $\alpha \in \II^\NN$ such that $d(e(X), \alpha) = 0$. Let $\lvl = \st{2^{-n}}{n \in \NN}$, and define, for $n \in \NN$,
					$$I_{2^{-n}} \dfeq \st{i \in \NN}{\frac{32 \cdot d(e(s_i), \alpha)}{C} < 2^{-2n-1}}, \qquad S_{2^{-n}} \dfeq s(I_{2^{-n}}).$$
					Since $S_{2^{-n}}$ is the image of the composition of open inclusion $I_{2^{-n}} \hookrightarrow \NN$, the overt map $\id[\NN]$, and $s$, it is subovert in $X$. It is inhabited because
					$$0 = d(e(X), \alpha) = \inf\st{d(e(x), \alpha)}{x \in X} = \inf\st{d(e(s_i), \alpha)}{i \in \NN}.$$
					Now take any $a, b \in \NN$, $i \in I_{2^{-a}}$, $j \in I_{2^{-b}}$. Then
					$$d_\mtr{X}(s_i, s_j) \leq \sqrt{\frac{32 \cdot d(e(s_i), e(s_j))}{C}} \leq \sqrt{\frac{32 \cdot (d(e(s_i), \alpha) + d(e(s_j), \alpha))}{C}} \leq$$
					$$\leq \sqrt{2^{-2a-1} + 2^{-2b-1}} \leq \sqrt{2^{-2\inf\{a, b\}}} = 2^{-\inf\{a, b\}} \leq 2^{-a} + 2^{-b},$$
					proving that $S$ is a Cauchy family. By the assumption $\mtr{X}$ is complete, so there exists $y \in X$ which is represented by $[S]$, \ie for every $k \in \NN$ and $i \in I_{2^{-k}}$ we have $d_\mtr{X}(y, s_i) \leq 2^{-k}$.
					
					Take any $n, k \in \NN$; we wish to prove $|d_\mtr{X}(y, s_n) - \alpha_n| < 2^{-k}$. Let $m \in \NN$ strictly exceed $k$ and $\frac{b(n) + k - 5}{2}$, and take some $i \in I_{2^{-m}}$.
					$$|d_\mtr{X}(y, s_n) - \alpha_n| \leq d_\mtr{X}(y, s_i) + |d_\mtr{X}(s_i, s_n) - \alpha_n| \leq$$
					$$\leq 2^{-m} + \frac{2^{b(n)}}{C} 2^{-b(n)} C |d_\mtr{X}(s_i, s_n) - \alpha_n| \leq 2^{-m} + \frac{2^{b(n)}}{C} d(e(s_i), \alpha) <$$
					$$< 2^{-m} + 2^{b(n) - 2 m - 6} \leq 2^{-k-1} + 2^{-k-1} = 2^{-k}$$
					Since $k$ is arbitrary, $|d_\mtr{X}(y, s_n) - \alpha_n| = 0$ for all $n \in \NN$, and therefore $d(e(y), \alpha) = 0$, proving that $\alpha = e(y)$.
			\end{enumerate}
		\end{proof}
		
		\begin{theorem}\label{Theorem: metrization_of_Hilbert_cube_implies_metrization_of_ctbs}
			Assume strongly located subsets are subspaces.
			\begin{enumerate}
				\item
					If the Hilbert cube is metrized, then all \ctb[s] are.
				\item
					If $\RR$ is Hausdorff and the Hilbert cube is compact, then all \ctb[s] are compact.
			\end{enumerate}
		\end{theorem}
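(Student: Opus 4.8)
The plan is to feed a general \ctb through the embedding $e\colon\mtr{X}\to\Hilbert_a$ built in the preceding theorem and reduce both statements to properties of variants of the Hilbert cube. First I would make two harmless reductions. For metrization, Lemma~\ref{Lemma: metrization_of_inhabited_spaces_sufficient} lets me assume the \ctb is inhabited; for compactness, Lemma~\ref{Lemma: wlog_space_inhabited} together with Corollary~\ref{Corollary: decidable_subsets_of_overt/compact_sets} (using $\tst=\opn$, so $\two\subseteq\opn$) does the same. Thus I may take the separability witness to be a map $s\colon\NN\to X$. Next I would replace the total boundedness witness $a$ by the strictly increasing $a'$ given by $a'_0\dfeq\sup\{a_0,1\}$ and $a'_{n+1}\dfeq\sup\{a_{n+1},a'_n+1\}$; since $a'_n\geq a_n$ this still witnesses total boundedness, and $a'$ is strictly increasing with values in $\NN_{\geq 1}$, as the preceding theorem requires.

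With these reductions, the preceding theorem supplies a canonical injective Lipschitz map $e\colon\mtr{X}\to\Hilbert_a$ with coefficient $C\in\intoc{0}{1}$ whose image is located, and \emph{strongly} located because $\mtr{X}$ is complete. Let $\mtr{A}$ be $e(X)$ with the subspace metric from $\Hilbert_a$. If $D\subseteq X$ is a countable metrically dense subset of $\mtr{X}$, then $e(D)$ is a countable, hence subovert, metrically dense subset of $\mtr{A}$: given $e(x)\in\mtr{A}$ and $r>0$, pick $x'\in D$ with $d_\mtr{X}(x,x')<r/C$, so $d(e(x),e(x'))\leq C\cdot d_\mtr{X}(x,x')<r$. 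By the standing hypothesis that strongly located subsets are subspaces, $\mtr{A}$ is an intrinsic subspace of $\Hilbert_a$, and $e\colon\mtr{X}\to\mtr{A}$ is a Lipschitz bijection.

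For the first part I would argue: if the Hilbert cube $\II^\NN$ is metrized, then its variant $\Hilbert_a$ is metrized by Corollary~\ref{Corollary: metrization_of_variants_of_Cantor_space_and_Hilbert_cube}; since $\mtr{A}$ is a metrically separable subspace of $\Hilbert_a$ with a subovert metrically dense subset, Theorem~\ref{Theorem: metrization_and_subspaces}(2) gives that $\mtr{A}$ is metrized, and then Lemma~\ref{Lemma: Lipschitz_bijection_reflects_metrization} applied to the Lipschitz bijection $e\colon\mtr{X}\to\mtr{A}$ shows $\mtr{X}$ is metrized. For the second part, assume $\RR$ is Hausdorff and $\II^\NN$ is compact. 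Compactness depends only on $\opn$ and the underlying set, and the underlying set of $\Hilbert_a=\prod_{n\in\NN}\II^{v(n)}$ is isomorphic to $\II^\NN$ since $\coprod_{n\in\NN}\NN_{<v(n)}\ism\NN$; hence $\Hilbert_a$ is compact. By strong locatedness $\mtr{A}=\big(d(\mtr{A},\insarg)\big)^{-1}(\{0\})$, and since $\{0\}$ is closed in Hausdorff $\RR$, $\mtr{A}$ is a closed subset of $\Hilbert_a$; being also a subspace it is strongly closed, so compact by Corollary~\ref{Corollary: strongly open/closed in overt/condensed/compact} (recall $\tst=\opn$). Finally $X\ism e(X)=\mtr{A}$ as sets, so $X$ is compact.

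The routine parts are the two reductions and the density check for $e(D)$. The genuinely load-bearing points — which I expect to be the only place where anything can go wrong — are that the variant Hilbert cube inherits metrization (via Corollary~\ref{Corollary: metrization_of_variants_of_Cantor_space_and_Hilbert_cube}) and compactness (via the set-level isomorphism with $\II^\NN$), combined with the standing hypothesis forcing the strongly located image $e(X)$ to be an intrinsic subspace, which is exactly the input needed for both Theorem~\ref{Theorem: metrization_and_subspaces}(2) and Corollary~\ref{Corollary: strongly open/closed in overt/condensed/compact}. I do not anticipate a serious obstacle beyond keeping this bookkeeping straight.
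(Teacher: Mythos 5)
Your proof is correct and follows essentially the same route as the paper's: reduce to the inhabited case, make $a$ strictly increasing, embed into the variant Hilbert cube $\Hilbert_a$ via the preceding theorem, use the standing hypothesis to make the strongly located image a subspace, and transfer metrization via Corollary~\ref{Corollary: metrization_of_variants_of_Cantor_space_and_Hilbert_cube} and the Lipschitz bijection, and compactness via the set-level bijection with $\II^\NN$ together with Corollary~\ref{Corollary: strongly open/closed in overt/condensed/compact}. Your citation of Lemma~\ref{Lemma: Lipschitz_bijection_reflects_metrization} at the end of the first part is in fact the right one (the paper's text points to the wrong lemma there), and your extra details (the density of $e(D)$, the explicit $a'$) are fine.
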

		\begin{proof}
			By Lemma~\ref{Lemma: metrization_of_inhabited_spaces_sufficient}, Lemma~\ref{Lemma: wlog_space_inhabited} and Corollary~\ref{Corollary: decidable_subsets_of_overt/compact_sets} we may restrict our attention to inhabited \ctb[s], so let $\mtr{X} = (X, d, s, a)$ be a \ctb with $s\colon \NN \to X$.
					
			Without loss of generality we may assume that $a$ is a strictly increasing sequence since we can always replace it with $n \mapsto \sup\st{a_k}{k \in \NN_{\leq n}} + n$ (and it has the image in $\NN_{\geq 1}$ because $X$ is inhabited). By Corollary~\ref{Corollary: metrization_of_variants_of_Cantor_space_and_Hilbert_cube} the variant of the Hilbert cube $\Hilbert_a$ is metrized, and the previous theorem supplies us with the map $e\colon X \to \Hilbert_a$ with a strongly located image $e(X)$ which is then a subspace by assumption, and metrized by Corollary~\ref{Corollary: metrization_of_subspaces}. Since the map $e$ is Lipschitz and injective (therefore bijective onto its image), the metric space $\mtr{X}$ is metrized by Lemma~\ref{Lemma: metrization_of_inhabited_spaces_sufficient}.
					
			Moreover, if $\RR$ is Hausdorff, strongly located subsets are closed (being preimages of $\{0\} \subseteq \RR$). Compactness of the Hilbert cube implies compactness of its variants (since there are bijections between them), and consequently compactness of $e(X)$ by Corollary~\ref{Corollary: strongly open/closed in overt/condensed/compact}. Since $e$ is bijective onto its image, we infer compactness of $X$.
		\end{proof}
		
		\intermission
		
		Having characterized metrization of \ctb[s] with metrization of a specific one, we wish to do the same with \cms[s]. We need a specific \cms into which we can embed any (inhabited) \cms as a strongly located subset. Here is how we could do it classically: it is known~\cite{Urysohn_PS_1927:_sur_un_espace_metrique_universel, Katetov_M_1988:_on_universal_metric_spaces, Holmes_MR_1992:_the_universal_separable_metric_space_of_urysohn_and_isometric_embeddings_thereof_in_banach_spaces} that every inhabited separable metric space $\mtr{X}$ isometrically embeds into the Urysohn space $\Ury$, itself a \cms[]. Classically, every inhabited subset is located, and if $\mtr{X}$ is complete, so is its isometric image, therefore it is closed in $\Ury$, and hence strongly located.
		
		This will serve us as the general idea, but our purpose is to provide a constructive solution. In particular, we need to define and construct the Urysohn space in constructive setting. We choose to diverge from the Urysohn's original (classical) construction~\cite{Urysohn_PS_1927:_sur_un_espace_metrique_universel}, and present our own. Actually we already did this in~\cite{Lesnik_D_2009:_constructive_urysohn_universal_metric_space} where the reader may find a lot of additional commentary whereas here we focus more on the proof of the existence of $\Ury$, as well as its usage in metrization theory. But first we discuss what the constructive definition of the Urysohn space should actually be.
		
		As usual, we call a map, defined on a subset of a set $X$, a \df{partial} map on $X$. In addition, call a partial map with finite domain a \df{finite partial} map. The classical definition of the Urysohn space $\Ury$ is that it is a \cms such that for every separable metric space $\mtr{X}$, every finite partial isometry $f\colon \mtr{X} \parto \Ury$ extends to the whole of $\mtr{X}$. Such Urysohn space is unique up to isometric isomorphism, but this does not concern us in this thesis --- any space satisfying these properties will serve us for the transfer of metrization. Our purpose is to actually construct one.
		
		We are interested in what the constructive content of extending finite partial isometries is. There are at least three issues here. First, a typical proof of the existence of Urysohn space starts with a separable metric space $(X, d_\mtr{X}, s)$ where $s$ is a map $s\colon \NN \to X$, \ie inhabitedness of $X$ is implicitly assumed. Classically this isn't a problem; the empty set is a trivial special case. Constructively however we must determine whether inhabitedness of $X$ is crucial. It turns out that it is not; one could use Lemma~\ref{Lemma: wlog_space_inhabited} to reduce the problem to just inhabited spaces, but as we shall see, we can use $s$ as a map $s\colon \NN \to \one + X$ directly. Not to mention that for our theory of metrization we could restrict to inhabited spaces anyway by Lemma~\ref{Lemma: metrization_of_inhabited_spaces_sufficient}.
		
		Second, we are using the Urysohn space in order to isometrically embed \cms[s] into it. The existence of an isometric embedding is immediate --- extend the finite partial isometry with the empty domain to the whole space. Classically we obtain the theorem that a metric space is metrically separable if and only if it is isometrically embeddable into the Urysohn space. The converse holds because second countability is a hereditary topological property, and for metric spaces second countability and separability are classically equivalent. Constructively this is not the case, and an analogous theorem would be that a metric space is isometrically embeddable into the Urysohn space if and only if it is isometrically embeddable into some metrically separable space. More generally, we could take for the definition of the Urysohn space that any finite partial isometry on a metric subspace of a metrically separable space extends to the whole space. To put it differently, perhaps constructively the right category in which to study the Urysohn space is not the category of metrically separable spaces, but rather the larger category of all their metric subspaces. We will not change the definition of the Urysohn space in light of this, however. Clearly, given a metric subspace of a metrically separable space, if we can solve the extension problem for the whole space, we can solve it for the subspace.
		
		The only issue which will actually be a (slight) step away from the classical treatment of the Urysohn space is the fact that the definition promises the existence of the extension, but does not say how constructive or canonical it is. Classically we could use the axiom of choice to provide a rule which takes a finite partial isometry from a metrically separable space to the Urysohn space, and returns its extension\footnote{This might seem too hasty a use of axiom of choice since metrically separable spaces form a proper class, as opposed to a set. However its skeleton is a set since the cardinalities of metrically separable spaces are bounded above by the cardinality of continuum.}. The question is, can we constructively obtain such a rule as well? The answer, as we will see, is yes. Thus we define: a \cms $\Ury$ is a Urysohn space when for every metrically separable metric space $\mtr{X}$ with a given enumeration of its countable metrically dense subset, its every finite subset $F$ with a given enumeration of its elements, and any partial isometry with domain $F$ into $\Ury$, there exists a canonical choice of its extension to the whole $\mtr{X}$.
		
		Let us be more precise what we mean by a `canonical choice'. The idea how to extend a partial isometry from a metrically separable space $\mtr{X}$ to $\Ury$ is as follows. Since the Urysohn space is by definition complete, it is sufficient to extend the isometry onto the countable metrically dense subset of $\mtr{X}$. This can be done inductively, one point at a time. This amounts to saying that if we choose finitely many points in $\Ury$ and the same number of nonnegative numbers representing distances, then there exists (a canonical choice of) a point in $\Ury$ which is at those distances from the chosen points (so long as the distances respect triangle inequalities). We state this as our chosen definition of the Urysohn space.
		
      \begin{definition}\label{Definition: Urysohn_space}
      	The tuple $(U, d_\mtr{U}, u, E)$ is a \df{Urysohn space} when
      	\begin{itemize}
      		\item
      			$(U, d_\mtr{U}, u)$ is a \cms with $u\colon \NN \to U$ a sequence with a metrically dense image,
      		\item
      			$E\colon P \to U$ is a map such that
      			$$d_\mtr{U}\Big(E\big(\Utuple[n]{i}{x}{\omega}\big), x_k\Big) = \omega_k$$
      			for all $\Utuple[n]{i}{x}{\omega} \in P$ and $k \in \NN_{< n}$ where
      		\item
      			$P \dfeq \st{\Utuple[n]{i}{x}{\omega} \in (U \times \RR_{\geq 0})^*}{\all{i, j}{\NN_{< n}}{\omega_i - \omega_j \leq d_\mtr{U}(x_i, x_j) \leq \omega_i + \omega_j}}$ is the set of all \df{permissible} tuples, \ie the ones representing points and distances from them which respect the triangle inequality.
      	\end{itemize}
      \end{definition}
		
		This gives us a starting point for the construction of the Urysohn space. The first approximation to it is a set $W$, inductively defined as follows: given any $n \in \NN$ elements $a_0, \ldots, a_{n-1} \in W$ and numbers $\alpha_0, \ldots, \alpha_{n-1} \in \lrs_{\geq 0}$, the tuple $\Utuple[n]{i}{a}{\alpha}$, representing the point which is at distance $\alpha_i$ from $a_i$ for all $i \in \NN_{< n}$, is also in $W$ (recall that $\lrs$ is used to denote an arbitrary lattice ring streak; we purposefully restrict the distances to $\lrs$, as it will be convenient for us to have the freedom of choosing $\lrs$ on the spot). Thus we start with the empty tuple $\eUt$, and then progressively construct new ones. We call $a_i$s the 
 \df{predecessors} of $a = \Utuple[n]{i}{a}{\alpha}$.
		
		We stratify $W$ by two criteria.
		\begin{itemize}
			\item
				The \df{length} of a tuple $a = \Utuple[n]{i}{a}{\alpha}$ is denoted by $\lUt(a) \dfeq n$ which defines a map $\lUt\colon W \to \NN$. The empty tuple is unique in having the length $0$, the others have length $\geq 1$.
			\item
				The \df{age} of a tuple $a$ is denoted by $\aUt(a)$, and is inductively defined as follows: if $\lUt(a) = 0$, \ie $a = \eUt$, then $\aUt(a) \dfeq 0$, and if $\lUt(a) \geq 1$ where $a = \Utuple{i}{a}{\alpha}$, then $\aUt(a) \dfeq \sup\st{\aUt(a_i)}{i \in \NN_{< \lUt(a)}} + 1$. This defines a map $\aUt\colon W \to \NN$, with the empty tuple unique in having the age $0$.
		\end{itemize}
		
		Next, we require the distance map $d\colon W \times W \to \lrs$. It turns out to be convenient to take as small distances as we can afford without violating the triangle inequality, so for $a = \Utuple{i}{a}{\alpha}, b = \Utuple{j}{b}{\beta} \in W$, we define inductively on $\aUt(a) + \aUt(b)$
		$$d(a, b) \dfeq \sup\Big(\st{|d(a_i, b) - \alpha_i|}{i \in \NN_{< \lUt(a)}} \cup \st{|d(a, b_j) - \beta_j|}{j \in \NN_{< \lUt(b)}}\Big).$$
		This, as it turns out, is only a protometric, but the tuples $\Utuple{i}{a}{\alpha}$ in which $\alpha_i - \alpha_j \leq d(a_i, a_j) \leq \alpha_i + \alpha_j$ for all $i, j \in \NN_{< \lUt(a)}$, \ie the ones where $\alpha_i$s actually represent allowable distances, are contained in the kernel of $(W, d)$, and thus form a pseudometric space. Its completion (equivalently, the completion of its Kolmogorov quotient) turns out to be the Urysohn space.
		
		We explicitly construct a model of such a set $W$. Let $\lrs_{\geq 0}^*$ denote the set of finite sequences of elements in $\lrs_{\geq 0}$. We will use these finite sequences to encode the tuples in $W$ by the following method: the first term of the sequence represents the age of the tuple, the second one represents the length, the next ones (as many as the length is) represent how long encodings of the predecessors are, and then the actual encodings of predecessors, together with the distances, follow.
		
		Here is the exact definition. Let $A^\lrs_n \subseteq \lrs_{\geq 0}^*$ be the set, inductively defined on $n \in \NN$ by
		\mlst{A^\lrs_n \dfeq}{\omega \in \lrs_{\geq 0}^*}
			{|\omega| \geq 2 \nc \omega_0 = n \nc \omega_1 \in \NN \nc (\omega_0 = 0 \iff \omega_1 = 0) \nl
			|\omega| \geq 2 + \omega_1 \nc \all{k}{\NN_{< \omega_1}}{\omega_{2+k} \in \NN} \nc |\omega| = a(\omega_1) \nl
			\all{k}{\NN_{< \omega_1}}{\omega_{a(k)} \in \NN_{< n} \land (\omega_{a(k)}, \ldots, \omega_{a(k+1)-2}) \in A^\lrs_{\omega_{a(k)}}} \nl
			\left(n \geq 1 \implies \omega_1 = \sup\st{\omega_{a(k)}}{k \in \NN_{< n}} + 1\right)\\
			&\text{where $a(k)$ is shorthand for $2 + \omega_1 + \sum_{i \in \NN_{< k}} (\omega_{2+i} + 1)$}}
		{;}
		in particular $A^\lrs_0 = \{(0, 0)\}$. For $t \in \opcN$ define $W^\lrs_t \dfeq \bigcup_{n \in \NN_{< \succm(t)}} A^\lrs_n$. Note that $A^\lrs_n$s are pairwise disjoint, so $W^\lrs_n$s are actually coproducts of $A^\lrs_n$s. Explicitly, we have
		\begin{align*}
			&W^\lrs_0 \ism A^\lrs_0 \quad \ism \one\\
			&\vsubset\\
			&W^\lrs_1 \ism A^\lrs_0 + A^\lrs_1\\
			&\vsubset\\
			&W^\lrs_2 \ism A^\lrs_0 + A^\lrs_1 + A^\lrs_2\\
			&\vsubset \\
			&\ \ \vdots\\
			&\vsubset\\
			&W^\lrs_\infty \ism \coprod_{n \in \NN} A^\lrs_n.
		\end{align*}
		However, we will not write the elements of $W^\lrs_\infty$ as finite sequences, but as tuples of the form $a = \Utuple{i}{a}{\alpha}$ which represents the finite sequence $([a]) \in \lrs_{\geq 0}^*$, inductively defined as
		$$([a]) = \Big(\aUt(a), \ \lUt(a), \quad |([a_0])|, \ \ldots, \ |([a_{\lUt(a)-1}])|, \quad [a_0], \ \alpha_0, \ \ldots, \ [a_{\lUt(a)-1}], \ \alpha_{\lUt(a)-1}\Big)$$
		(in particular, the empty tuple is represented by $(0, 0)$).\footnote{The definition of sets $A^\lrs_n$ can be slightly simplified if we do not go for a 1-1 correspondence between its elements and tuples of the form $\Utuple{i}{a}{\alpha}$, instead allowing that at any age we construct a tuple from elements of any lesser ages, not necessarily such that that their supremum is one less than the age. Thus for example the empty tuple $\eUt$ appears at every age. Explicitly, the alternative definition is
		\mlst{A^\lrs_n \dfeq}{\omega \in \lrs_{\geq 0}^*}
			{|\omega| \geq 2 \nc \omega_0 = n \nc \omega_1 \in \NN \nl
			|\omega| \geq 2 + \omega_1 \nc \all{k}{\NN_{< \omega_1}}{\omega_{2+k} \in \NN} \nc |\omega| = a(\omega_1) \nl
			\all{k}{\NN_{< \omega_1}}{\omega_{a(k)} \in \NN_{< n} \land (\omega_{a(k)}, \ldots, \omega_{a(k+1)-2}) \in A^\lrs_{\omega_{a(k)}}}\\
			&\text{where $a(k)$ is shorthand for $2 + \omega_1 + \sum_{i \in \NN_{< k}} (\omega_{2+i} + 1)$}}
		{.}
		This still works for the construction of the Urysohn space, as all relevant elements which represent the same tuple are at distance $0$.}
		
		As already announced, we define the map $d\colon W^\lrs_\infty \times W^\lrs_\infty \to \lrs_{\geq 0}$ inductively on $\aUt(a) + \aUt(b)$ by
		$$d(a, b) \dfeq \sup\Big(\st{|d(a_i, b) - \alpha_i|}{i \in \NN_{< \lUt(a)}} \cup \st{|d(a, b_j) - \beta_j|}{j \in \NN_{< \lUt(b)}}\Big)$$
		where $a = \Utuple{i}{a}{\alpha}, b = \Utuple{j}{b}{\beta} \in W^\lrs_\infty$. The supremum of an empty subset of $\lrs_{\geq 0}$ is understood to be the least element in $\lrs_{\geq 0}$, \ie $0$, so $d(\eUt, \eUt) = 0$.
		
		We claim that $d$ is a $\lrs$-protometric on $W^\lrs_\infty$ (and hence on $W^\lrs_t$ for all $t \in \opcN$). Clearly, $d$ indeed maps into $\lrs_{\geq 0}$ since $\lrs$ is a lattice ring streak and the absolute values, as well as their supremum, are nonnegative. Symmetry of $d$ is obvious. We prove the triangle inequality $d(a, b) + d(b, c) \geq d(a, c)$ for $a = \Utuple{i}{a}{\alpha}, b = \Utuple{j}{b}{\beta}, c = \Utuple{k}{c}{\gamma} \in W^\lrs_\infty$ inductively on $\aUt(a) + \aUt(b) + \aUt(c)$ by calculating the following for arbitrary $i \in \NN_{< \lUt(a)}$, $k \in \NN_{< \lUt(c)}$.
		$$d(a_i, c) - \alpha_i \leq d(a_i, b) - \alpha_i + d(b, c) \leq d(a, b) + d(b, c)$$
		$$\alpha_i - d(a_i, c) \leq \alpha_i - d(a_i, b) + d(b, c) \leq d(a, b) + d(b, c)$$
		$$d(a, c_k) - \gamma_k \leq d(a, b) + d(b, c_k) - \gamma_k \leq d(a, b) + d(b, c)$$
		$$\gamma_k - d(a, c_k) \leq d(a, b) + \gamma_k - d(b, c_k) \leq d(a, b) + d(b, c)$$
		However, $d$ is not a pseudometric, as evidenced for example by $a_{x, y} \dfeq (\eUt, x, \eUt, y)$ since
		$$d(a_{x, y}, a_{x, y}) = |x - y|.$$
		Intuitively, this is not surprising since $(\eUt, x, \eUt, y)$ should represent a point which is at distance both $x$ and $y$ from $\eUt$ which only makes sense when $x = y$. More generally, the distances $\alpha_i$s in $a = \Utuple{i}{a}{\alpha}$ should satisfy relevant triangle inequalities. We therefore define inductively on $n \in \NN$,
		\mlst{B^\lrs_n \dfeq}{a = \Utuple{i}{a}{\alpha} \in A^\lrs_n}
			{\big(\xall{i}{\NN_{< \lUt(a)}}{a_i \in B^\lrs_{\aUt(a_i)}}\big) \nl
			\all{i, j}{\NN_{< \lUt(a)}}{\alpha_i - \alpha_j \leq d(a_i, a_j) \leq \alpha_i + \alpha_j}}{;}
		in particular $B^\lrs_0 = A^\lrs_0 = \{\eUt\}$. Finally, for any $t \in \opcN$, define $V^\lrs_t \dfeq \bigcup_{n \in \NN_{< \succm(t)}} B^\lrs_n$. Similarly as before we have
		\begin{align*}
			W^\lrs_0 = \qquad &V^\lrs_0 \ism B^\lrs_0 \quad \ism \one\\
			&\vsubset\\
			W^\lrs_1 \supseteq \qquad &V^\lrs_1 \ism B^\lrs_0 + B^\lrs_1\\
			&\vsubset\\
			W^\lrs_2 \supseteq \qquad &V^\lrs_2 \ism B^\lrs_0 + B^\lrs_1 + B^\lrs_2\\
			&\vsubset \\
			&\ \ \vdots\\
			&\vsubset\\
			W^\lrs_\infty \supseteq \qquad &V^\lrs_\infty \ism \coprod_{n \in \NN} B^\lrs_n.
		\end{align*}
		We say that a tuple $a \in W^\lrs_\infty$ is \df{permissible} when $a \in V^\lrs_\infty$.
		
		\begin{theorem}\label{Theorem: distances_as_described_in_Urysohn_tuples}
      	For all $a = \Utuple{i}{a}{\alpha} \in V^\lrs_\infty$ we have $d(a, a) = 0$, or equivalently, $d(a, a_i) = \alpha_i$ for all $i \in \NN_{< \lUt(a)}$.
      \end{theorem}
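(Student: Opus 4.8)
The equivalence stated in the theorem is immediate: by symmetry of $d$ the two halves of the supremum defining $d(a,a)$ coincide, so $d(a,a) = \sup\st{|d(a, a_i) - \alpha_i|}{i \in \NN_{<\lUt(a)}}$, and since these suprema are taken in $\lrs_{\geq 0}$ (bounded below by $0$) the value is $0$ exactly when $d(a, a_i) = \alpha_i$ for every $i$. So it suffices to prove $d(a, a_i) = \alpha_i$ for all permissible $a = (a_i, \alpha_i)_{i < \lUt(a)}$ and all $i < \lUt(a)$.

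The plan is a single induction on $m \in \NN$ proving two statements at once: $(\mathrm{T}_m)$ — every permissible tuple $a$ with $\aUt(a) \le m$ satisfies $d(a, a_i) = \alpha_i$ for all $i$; and $(\mathrm{I}_m)$ — for every permissible $a$ (of \emph{arbitrary} age), every permissible $b = (b_l, \beta_l)_{l < \lUt(b)}$ with $\aUt(b) \le m$, and every $i < \lUt(a)$, the ``internal triangle inequality'' $d(a,b) \le \alpha_i + d(a_i, b)$ holds. The point of $(\mathrm{I}_m)$ is that it lets $a$ behave like a genuine point at distance $\alpha_i$ from $a_i$; crucially only the age of $b$ is bounded. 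Throughout I may use that $d$ is already a $\lrs$-protometric on $W^\lrs_\infty$, so the triangle inequality (hence $|d(x,z) - d(y,z)| \le d(x,y)$) holds unconditionally; the permissibility inequalities $\alpha_p - \alpha_q \le d(a_p, a_q) \le \alpha_p + \alpha_q$; and the term-by-term lower bounds $d(x,y) \ge |d(x_p, y) - \xi_p|$ read off directly from the defining formula for $d$. The base case $m = 0$ is trivial: an age-$0$ tuple is $\eUt$, which has no predecessors, and $d(\eUt, \eUt) = \sup\emptyset = 0$.

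For the step $m \to m+1$ I prove $(\mathrm{T}_{m+1})$ first and then $(\mathrm{I}_{m+1})$. For $(\mathrm{T}_{m+1})$: given permissible $a$ with $\aUt(a) = m+1$, apply $(\mathrm{I}_m)$ to the pair $(a, a_i)$ (legitimate since $\aUt(a_i) \le m$) and $(\mathrm{T}_m)$ to $a_i$ to get $d(a, a_i) \le \alpha_i + d(a_i, a_i) = \alpha_i$, while $d(a, a_i) \ge |d(a_i, a_i) - \alpha_i| = \alpha_i$ is a definitional bound; hence equality. For $(\mathrm{I}_{m+1})$: given permissible $a$, permissible $b$ with $\aUt(b) = m+1$ and index $i$, write $d(a,b)$ as the supremum of the terms $|d(a_j, b) - \alpha_j|$ ($j < \lUt(a)$) and $|d(a, b_l) - \beta_l|$ ($l < \lUt(b)$), and bound each by $\alpha_i + d(a_i, b)$. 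The first family uses only permissibility of $a$ and the triangle inequality for $d$ (e.g. $d(a_j, b) - \alpha_j \le d(a_j, a_i) + d(a_i, b) - \alpha_j \le \alpha_i + d(a_i, b)$, and symmetrically $\alpha_j - d(a_j, b) \le \alpha_j - (\alpha_j - \alpha_i) + d(a_i, b) = \alpha_i + d(a_i, b)$, using $d(a_i, a_j) \ge \alpha_j - \alpha_i$). For the second family, first note $d(b, b_l) = \beta_l$ by the already-proved $(\mathrm{T}_{m+1})$, which applies since $\aUt(b) = m+1$; then the upper half follows from $(\mathrm{I}_m)$ on $(a, b_l)$ together with $d(a_i, b_l) \le d(a_i, b) + d(b, b_l) = d(a_i, b) + \beta_l$, and the lower half $\beta_l - d(a, b_l) \le \alpha_i + d(a_i, b)$ follows by a one-line argument by contradiction: assuming $\alpha_i + d(a_i, b) < \beta_l - d(a, b_l)$ and combining the definitional bound $d(a, b_l) \ge d(a_i, b_l) - \alpha_i$ with $d(a_i, b_l) \ge d(b, b_l) - d(a_i, b) = \beta_l - d(a_i, b)$ yields $d(a_i, b_l) < \beta_l - d(a_i, b) \le d(a_i, b_l)$.

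The one genuine obstacle — and the reason for this particular shape of induction — is the circularity that wrecks the naive induction on $\aUt(a)$: bounding the ``$b$-side'' terms $|d(a, b_l) - \beta_l|$ requires the theorem for $b$, namely $d(b, b_l) = \beta_l$, and when $\aUt(b) > \aUt(a)$ this fact is not dominated by any measure built from $\aUt(a)$ alone. Proving $(\mathrm{T}_{m+1})$ before $(\mathrm{I}_{m+1})$ inside the same inductive step resolves this exactly: the theorem becomes available for tuples of age $m+1$ precisely when $(\mathrm{I}_{m+1})$ needs it. Everything else is the routine lattice-ordered-ring arithmetic indicated above, run in $\lrs$ (equivalently, for the cut-reals).
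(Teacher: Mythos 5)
Your proof is correct, but it is organized quite differently from the paper's. The paper also inducts on age, but its inner device is a descending induction along chains of iterated predecessors of the \emph{first} argument: it proves $d(a, a_{j_0,\ldots,j_l}) \le \alpha_{j_0} + \cdots + \alpha_{j_0,\ldots,j_l}$ by peeling off one level of depth at a time, so the troublesome terms $|d(a,b_l)-\beta_l|$ only ever arise with $b_l$ again an iterated predecessor of $a$, and the telescoping sum of the $\alpha$'s absorbs them. You instead isolate a clean auxiliary statement $(\mathrm{I}_m)$ --- the one-sided triangle inequality $d(a,b)\le\alpha_i+d(a_i,b)$ for arbitrary permissible $a$ and all permissible $b$ of age at most $m$ --- and interleave it with the theorem itself in a single induction on the uniform age bound, proving $(\mathrm{T}_{m+1})$ before $(\mathrm{I}_{m+1})$ so that $d(b,b_l)=\beta_l$ becomes available exactly when the second family of terms needs it. Your route buys a reusable lemma with transparent meaning ($a$ genuinely behaves as a point at distance $\alpha_i$ from $a_i$, as measured against any $b$) and avoids the multi-index bookkeeping of predecessor chains; the paper's route stays entirely inside the predecessor tree of the single tuple $a$ and never needs a statement quantifying over all pairs. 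Two small remarks. First, your base case should also record $(\mathrm{I}_0)$: it holds because for $b=\eUt$ the second family of terms is empty and your first-family estimate uses nothing about $b$. Second, your argument by contradiction for the lower half of the second family is constructively harmless here (since $\le$ is the negation of $<$), and can in any case be rearranged into the direct chain $\beta_l - d(a,b_l) \le \alpha_i + \beta_l - d(a_i,b_l) \le \alpha_i + d(a_i,b)$, subtraction being total in $\lrs$.
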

      \begin{proof}
      	We prove this by double induction. We begin with the induction on $\aUt(a)$. Of course, in the base case $\aUt(a) = 0$, \ie $a = \eUt$, there is nothing to prove.
      	
      	For a general $a$ describe all predecessors of $a$ inductively as follows:
      	$$a_{j_0, j_1, \ldots, j_r} = (a_{j_0, j_1, \ldots, j_r, j_{r+1}}, \alpha_{j_0, j_1, \ldots, j_r, j_{r+1}})_{j_{r+1} \in \NN_{< \lUt(a_{j_0, j_1, \ldots, j_r})}}.$$
      	
      	The heart of the proof is in the following \textbf{Claim}:
      	\begin{itemize}
      		\item
      			{\it Let $l \in \NN$ and $j_i \in \NN_{< \lUt(a_{j_0, j_1, \ldots, j_{i-1}})}$ for all $i \in \NN_{\leq l}$. Assume that for all $j_{l+1} \in \NN_{< \lUt(a_{j_0, j_1, \ldots, j_l})}$ we have
      			$$d(a, a_{j_0, j_1, \ldots, j_{l+1}}) \leq \alpha_{j_0} +  \alpha_{j_0, j_1} + \alpha_{j_0, j_1, j_2} + \ldots + \alpha_{j_0, j_1, \ldots, j_l} + \alpha_{j_0, j_1, \ldots, j_{l+1}}.$$
      			Then $d(a, a_{j_0, j_1, \ldots, j_l}) \leq \alpha_{j_0} + \alpha_{j_0, j_1} + \alpha_{j_0, j_1, j_2} + \ldots + \alpha_{j_0, j_1, \ldots, j_l}$.}
	         	\bigskip
	         	\begin{proof}
	         		We have
	         		$$d(a, a_{j_0, j_1, \ldots, j_l}) = \sup\Big\{\st[1]{|d(a_i, a_{j_0, j_1, \ldots, j_l}) - \alpha_i|}{i \in \NN_{< \lUt(a)}} \cup$$
	         		$$\cup \st[1]{|d(a, a_{j_0, j_1, \ldots, j_{l+1}}) - \alpha_{j_0, j_1, \ldots, j_{l+1}}|}{j_{l+1} \in \NN_{< \lUt(a_{j_0, j_1, \ldots, j_l})}}\Big\}.$$
	         		For $i \in \NN_{< \lUt(a)}$ recall permissibility and the original induction hypothesis.
                  $$d(a_i, a_{j_0, j_1, \ldots, j_l}) - \alpha_i \leq$$
                  $$\leq d(a_i, a_{j_0}) - \alpha_i + d(a_{j_0}, a_{j_0, j_1}) + d(a_{j_0, j_1}, a_{j_0, j_1, j_2}) + \ldots + d(a_{j_0, j_1, \ldots, j_{l-1}}, a_{j_0, j_1, \ldots, j_l}) \leq$$
                  $$\leq \alpha_{j_0} + \alpha_{j_0, j_1} + \alpha_{j_0, j_1, j_2} + \ldots + \alpha_{j_0, j_1, \ldots, j_l}$$
                  
                  $$\alpha_i - d(a_i, a_{j_0, j_1, \ldots, j_l}) \leq$$
                  $$\leq \alpha_i - d(a_i, a_{j_0}) + d(a_{j_0}, a_{j_0, j_1}) + d(a_{j_0, j_1}, a_{j_0, j_1, j_2}) + \ldots + d(a_{j_0, j_1, \ldots, j_{l-1}}, a_{j_0, j_1, \ldots, j_l}) \leq$$
                  $$\leq \alpha_{j_0} + \alpha_{j_0, j_1} + \alpha_{j_0, j_1, j_2} + \ldots + \alpha_{j_0, j_1, \ldots, j_l}$$
                  
                  Now we deal with the second part. Take any $j_{l+1} \in \NN_{< \lUt(a_{j_0, j_1, \ldots, j_l})}$.
                  $$\alpha_{j_0, j_1, \ldots, j_{l+1}} - d(a, a_{j_0, j_1, \ldots, j_{l+1}}) \leq \alpha_{j_0, j_1, \ldots, j_{l+1}} - \big(d(a_{j_0}, a_{j_0, j_1, \ldots, j_{l+1}}) - \alpha_{j_0}\big) =$$
                  $$= d(a_{j_0, j_1, \ldots, j_l}, a_{j_0, j_1, \ldots, j_{l+1}}) - d(a_{j_0}, a_{j_0, j_1, \ldots, j_{l+1}}) + \alpha_{j_0} \leq d(a_{j_0}, a_{j_0, j_1, \ldots, j_l}) + \alpha_{j_0} \leq$$
                  $$\leq \alpha_{j_0} + d(a_{j_0}, a_{j_0, j_1}) + d(a_{j_0, j_1}, a_{j_0, j_1, j_2}) + \ldots + d(a_{j_0, j_1, \ldots, j_{l-1}}, a_{j_0, j_1, \ldots, j_l}) =$$
                  $$= \alpha_{j_0} + \alpha_{j_0, j_1} + \alpha_{j_0, j_1, j_2} + \ldots + \alpha_{j_0, j_1, \ldots, j_l}$$
                  The last inequality to prove,
                  $$d(a, a_{j_0, j_1, \ldots, j_{l+1}}) - \alpha_{j_0, j_1, \ldots, j_{l+1}} \leq \alpha_{j_0} + \alpha_{j_0, j_1} + \alpha_{j_0, j_1, j_2} + \ldots + \alpha_{j_0, j_1, \ldots, j_l},$$
                  holds by assumption.
	         	\end{proof}
      	\end{itemize}
         
         Notice that this {Claim} serves not only as the inductive step but also as the base of induction since when we reach the empty tuple (which is after $\aUt(a)$ steps), the condition is vacuous. In the end we obtain $d(a, a_{j_0}) \leq \alpha_{j_0}$.
         
			The reverse inequality is easier:
			$$d(a, a_{j_0}) \geq |d(a_{j_0}, a_{j_0}) - \alpha_{j_0}| = \alpha_{j_0}$$
			since $d(a_{j_0}, a_{j_0}) = 0$ by the induction hypothesis.
      \end{proof}
      
      This proves that $V^\lrs_\infty$ is a pseudometric space. It is not metric; consider for example the distance between any $a \in V^\lrs_\infty$ and $(a, 0)$. The Kolmogorov quotient of $V^\lrs_\infty$ --- which we denote by $\Ury^\lrs$ (and the induced metric on it by a slight abuse of notation again by $d$) --- has most properties of the Urysohn space, as we shall see.
      
      \begin{lemma}\label{Lemma: bijection_with_N_transfers_from_D_to_Urysohn}
      	Suppose $\lrs$ is countable, and has decidable equality. Then the sets $\lrs_{\geq 0}$, $\lrs_{\geq 0}^*$, $V^\lrs_\infty$, $\Ury^\lrs$ are in bijection with $\NN$.
      \end{lemma}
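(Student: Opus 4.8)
The plan is to invoke Lemma~\ref{Lemma: in_bijection_with_N} four times: a set is in bijection with $\NN$ exactly when it is infinite, countable, and has decidable equality, so for each of $\lrs_{\geq 0}$, $\lrs_{\geq 0}^*$, $V^\lrs_\infty$ and $\Ury^\lrs$ it suffices to verify these three properties. All four are plainly infinite, so the real work concentrates on countability --- obtained by exhibiting decidable subsets, a set of finite sequences, and a decidable quotient --- and on decidable equality.

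First I would treat $\lrs$ itself. As a lattice ring streak it contains $\ZZ$ via the (injective) unique streak morphism out of the initial streak, so it is infinite; with the hypotheses this gives $\lrs \ism \NN$. Being a lattice with decidable equality, $\lrs$ has decidable $\leq$ and $<$ by Lemma~\ref{Lemma: decidability_of_(in)equalities}; in particular $\lrs_{\geq 0}$ is a decidable subset of $\lrs$, infinite and with decidable equality, whence $\lrs_{\geq 0} \ism \NN$. I would also record that $\NN$ is a decidable subset of $\lrs_{\geq 0}$: given $x \in \lrs_{\geq 0}$, the Archimedean property yields $N \in \NN$ with $x < N \cdot 1$, and one tests $x = k \cdot 1$ for $k \in \NN_{< N}$ using decidable equality. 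Finally, the set $\lrs_{\geq 0}^*$ of finite sequences over a set in bijection with $\NN$ is again in bijection with $\NN$ (it is $\coprod_{n \in \NN} (\lrs_{\geq 0})^n$, a countable coproduct of countable sets, it is infinite, and it inherits decidable equality).

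For $V^\lrs_\infty$ I would show it is a decidable subset of $\lrs_{\geq 0}^*$. Membership of a finite sequence $\omega$ in $W^\lrs_\infty \ism \coprod_n A^\lrs_n$ amounts to deciding $\omega \in A^\lrs_{\omega_0}$, and this is a finite conjunction of conditions each of which is decidable --- equalities in $\lrs$, membership in $\NN$ (decidable in $\lrs_{\geq 0}$ by the previous step), and comparisons of the recursively computed distances $d(a_i, a_j) \in \lrs_{\geq 0}$ --- the only subtlety being that the defining clauses refer recursively to $A^\lrs_{\omega_{a(k)}}$ and invoke the recursively defined map $d$; but these recursions are well-founded because every predecessor has age strictly smaller than $\omega_0$, so the whole test terminates on the finite data of $\omega$. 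Hence $W^\lrs_\infty$ is decidable, and adjoining the permissibility clauses that define $B^\lrs_n$ (finitely many inequalities $\alpha_i - \alpha_j \leq d(a_i, a_j) \leq \alpha_i + \alpha_j$, together with a well-founded recursive demand that the predecessors themselves be permissible) keeps it decidable, so $V^\lrs_\infty = \coprod_n B^\lrs_n$ is a decidable subset of $\lrs_{\geq 0}^*$, hence countable. It is infinite (for distinct $m$ the permissible tuples $(\eUt, m)$ have distinct encodings) and inherits decidable equality, so $V^\lrs_\infty \ism \NN$.

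Finally, $\Ury^\lrs$ is the Kolmogorov quotient of $(V^\lrs_\infty, d)$, hence an image of $V^\lrs_\infty$, hence countable. Its defining relation $a \equ b \iff d(a, b) = 0$ is decidable because $d(a, b) \in \lrs_{\geq 0}$ and $\lrs$ has decidable equality, and a quotient of a set with decidable equality by a decidable equivalence relation again has decidable equality. It is infinite: by Theorem~\ref{Theorem: distances_as_described_in_Urysohn_tuples} the permissible tuples satisfy $d\big((\eUt, m), (\eUt, n)\big) = |m - n|$, so $n \mapsto [(\eUt, n)]$ injects $\NN$ into $\Ury^\lrs$. A last application of Lemma~\ref{Lemma: in_bijection_with_N} then gives $\Ury^\lrs \ism \NN$. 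The one genuinely delicate point, I expect, is the decidability of membership in $A^\lrs_n$ and $B^\lrs_n$, \ie making precise that the nested recursion on predecessors (and on the distance map) is well-founded via the strictly decreasing age and so reduces to a finite, decidable computation.
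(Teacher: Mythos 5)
Your proposal is correct and follows essentially the same route as the paper: verify decidable equality, countability, and infiniteness for each of the four sets (countability of $V^\lrs_\infty$ via its being a decidable subset of $\lrs_{\geq 0}^*$ using decidability of $\leq$ from Lemma~\ref{Lemma: decidability_of_(in)equalities}, infiniteness via $x \mapsto (\eUt, x)$ and its composite with the Kolmogorov quotient map) and then apply Lemma~\ref{Lemma: in_bijection_with_N}. The extra care you take with the well-founded recursion on age underlying decidability of membership in $A^\lrs_n$ and $B^\lrs_n$ is a detail the paper dismisses as ``easy,'' but it is the same argument.
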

      \begin{proof}
      	\begin{itemize}
      		\item\proven{$\lrs_{\geq 0}$, $\lrs_{\geq 0}^*$, $V^\lrs_\infty$, $\Ury^\lrs$ have decidable equality}
      			Easy for $\lrs_{\geq 0}$, $\lrs_{\geq 0}^*$, $V^\lrs_\infty$. The set $\Ury^\lrs$ has decidable equality because it is a $\lrs$-metric space.
      		\item\proven{$\lrs_{\geq 0}$, $\lrs_{\geq 0}^*$, $V^\lrs_\infty$, $\Ury^\lrs$ are countable}
      			If $\lrs$ is countable, then so is $\lrs_{\geq 0}$ since it is its image by the absolute value. It is standard that then $\lrs_{\geq 0}^*$ is countable. By Lemma~\ref{Lemma: decidability_of_(in)equalities} if $=$ is a decidable relation on $\lrs$, then so is $\leq$, and therefore $V^\lrs_\infty$ is a decidable subset of $\lrs_{\geq 0}^*$, hence countable.
      		\item\proven{$\lrs_{\geq 0}$, $\lrs_{\geq 0}^*$, $V^\lrs_\infty$, $\Ury^\lrs$ are infinite}
      			The set $\lrs_{\geq 0}$ is infinite because of the inclusion $\NN \hookrightarrow \lrs_{\geq 0}$. Consequently, $V^\lrs_\infty$ is infinite because of the injective map $f\colon \lrs_{\geq 0} \to V^\lrs_\infty$, $f(x) \dfeq (\eUt, x)$. Since $V^\lrs_\infty \subseteq \lrs_{\geq 0}^*$, the latter is infinite as well. Finally, $\Ury^\lrs$ is infinite because $q \circ f$ is injective where $q$ is the Kolmogorov quotient map.
      	\end{itemize}
      	The result now follows from Lemma~\ref{Lemma: in_bijection_with_N}.
      \end{proof}
      
      \begin{proposition}\label{Proposition: similar_distances_and_points_yield_similar_extensions_in_Urysohn_space}
      	Let $\epsilon, \epsilon' \in \RR$, $n \in \NN$, and $a = \Utuple[n]{i}{a}{\alpha}, b = \Utuple[n]{i}{b}{\beta} \in V^\lrs_\infty$. If $d(a_i, b_i) \leq \epsilon$ and $|\alpha_i - \beta_i| \leq \epsilon'$ for all $i \in \NN_{< n}$, then $d(a, b) \leq \epsilon + \epsilon'$.
      \end{proposition}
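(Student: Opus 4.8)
The plan is to unwind the recursive definition of $d$ and bound each of the finitely many terms appearing in the supremum separately. No induction is needed here: since $a$ and $b$ have the same length $n$, the defining formula refers directly to the quantities $d(a_i, b)$ and $d(a, b_j)$, and the heavy lifting (that the promised distances inside a permissible tuple are actually attained) has already been done in Theorem~\ref{Theorem: distances_as_described_in_Urysohn_tuples}.

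First I would recall that, by definition,
$$d(a, b) = \sup\Big(\st{|d(a_i, b) - \alpha_i|}{i \in \NN_{< n}} \cup \st{|d(a, b_j) - \beta_j|}{j \in \NN_{< n}}\Big),$$
a supremum of a finite subset of $\lrs_{\geq 0}$; hence it suffices to show that each of the listed quantities is $\leq \epsilon + \epsilon'$ (a finite supremum of numbers each $\leq \epsilon + \epsilon'$ is itself $\leq \epsilon + \epsilon'$).

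Fix $i \in \NN_{< n}$. Since $b \in V^\lrs_\infty$ is permissible, Theorem~\ref{Theorem: distances_as_described_in_Urysohn_tuples} gives $d(b, b_i) = \beta_i$. Using the triangle inequality for the protometric $d$ and the hypothesis $d(a_i, b_i) \leq \epsilon$,
$$d(a_i, b) \leq d(a_i, b_i) + d(b_i, b) \leq \epsilon + \beta_i, \qquad \beta_i = d(b, b_i) \leq d(b, a_i) + d(a_i, b_i) \leq d(a_i, b) + \epsilon.$$
The first inequality gives $d(a_i, b) - \alpha_i \leq \epsilon + (\beta_i - \alpha_i) \leq \epsilon + \epsilon'$, and the second gives $\alpha_i - d(a_i, b) \leq \epsilon + (\alpha_i - \beta_i) \leq \epsilon + \epsilon'$, so $|d(a_i, b) - \alpha_i| \leq \epsilon + \epsilon'$. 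By symmetry (the hypotheses are symmetric in $a$ and $b$, and $d$ is symmetric, and $a$ is likewise permissible so $d(a, a_j) = \alpha_j$) one obtains $|d(a, b_j) - \beta_j| \leq \epsilon + \epsilon'$ for every $j \in \NN_{< n}$. Taking the supremum finishes the argument.

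There is no substantial obstacle; the only thing to verify carefully is that permissibility of $a$ and $b$ is genuinely invoked (so that Theorem~\ref{Theorem: distances_as_described_in_Urysohn_tuples} applies to $d(b, b_i)$ and, in the symmetric half, to $d(a, a_j)$), after which the proof is a routine double application of the triangle inequality.
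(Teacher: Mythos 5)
Your proof is correct and is essentially the paper's own argument: the same four triangle-inequality estimates on the terms $|d(a_i,b)-\alpha_i|$ and $|d(a,b_j)-\beta_j|$, using $d(b,b_i)=\beta_i$ and $d(a,a_i)=\alpha_i$ from Theorem~\ref{Theorem: distances_as_described_in_Urysohn_tuples}, followed by taking the finite supremum. Nothing further is needed.
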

      \begin{proof}
	      The following four calculations prove the statement.
	      $$d(a_i, b) - \alpha_i \leq d(a_i, b_i) + d(b_i, b) - \alpha_i = d(a_i, b_i) + \beta_i - \alpha_i \leq \epsilon + \epsilon'$$
	      $$\alpha_i - d(a_i, b) \leq \alpha_i - d(b_i, b) + d(a_i, b_i) = \alpha_i - \beta_i + d(a_i, b_i) \leq \epsilon' + \epsilon$$
	      $$d(a, b_i) - \beta_i \leq d(a, a_i) + d(a_i, b_i) - \beta_i = d(a_i, b_i) + \alpha_i - \beta_i \leq \epsilon + \epsilon'$$
	      $$\beta_i - d(a, b_i) \leq \beta_i - d(a, b_i) + d(a_i, b_i) = \beta_i - \alpha_i + d(a_i, b_i) \leq \epsilon' + \epsilon$$
      \end{proof}
      
      \begin{corollary}\label{Corollary: Urysohn_tuples_canonical_on_quotient}
      	Let $n \in \NN$, $\omega_0, \ldots, \omega_{n-1} \in \lrs_{\geq 0}$, and $a = \Utuple[n]{i}{a}{\omega}, b = \Utuple[n]{i}{b}{\omega} \in V^\lrs_\infty$ such that $d(a_i, b_i) = 0$ for all $i \in \NN_{< n}$. Then $d(a, b) = 0$.
      \end{corollary}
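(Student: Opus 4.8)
The plan is to obtain this as an immediate consequence of Proposition~\ref{Proposition: similar_distances_and_points_yield_similar_extensions_in_Urysohn_space}. Since the tuples $a = \Utuple[n]{i}{a}{\omega}$ and $b = \Utuple[n]{i}{b}{\omega}$ carry exactly the same list of radii $\omega_0, \ldots, \omega_{n-1}$, the second hypothesis of that proposition, $|\alpha_i - \beta_i| \le \epsilon'$ for all $i \in \NN_{< n}$, is satisfied with $\epsilon' = 0$; and the assumption $d(a_i, b_i) = 0$ for all $i \in \NN_{< n}$ supplies its first hypothesis with $\epsilon = 0$. I would therefore invoke Proposition~\ref{Proposition: similar_distances_and_points_yield_similar_extensions_in_Urysohn_space} with $\epsilon = \epsilon' = 0$ to conclude $d(a, b) \le \epsilon + \epsilon' = 0$.

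It then remains only to note that $d$ takes values in $\lrs_{\ge 0}$ — this was verified when $d$ was shown to be a $\lrs$-protometric on $W^\lrs_\infty$ — so $d(a, b) \ge 0$. Combining the two inequalities and using antisymmetry of the order on $\lrs$ yields $d(a, b) = 0$, which is the assertion.

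There is no real obstacle here. The only point worth checking carefully is that Proposition~\ref{Proposition: similar_distances_and_points_yield_similar_extensions_in_Urysohn_space} is stated for arbitrary $\epsilon, \epsilon' \in \RR$ rather than only for positive values, so that the substitution $\epsilon = \epsilon' = 0$ is legitimate; inspection of its statement confirms this. Conceptually, the corollary records that the tuple-forming operation $\Utuple[n]{i}{a}{\omega} \mapsto \Utuple[n]{i}{a}{\omega}$ respects the equivalence relation defining the Kolmogorov quotient $\Ury^\lrs$, which is exactly what is needed to make the extension map $E$ of Definition~\ref{Definition: Urysohn_space} well defined on $\Ury^\lrs$.
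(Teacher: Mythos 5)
Your proposal is correct and is essentially identical to the paper's own proof, which simply reads ``Take $\epsilon = \epsilon' = 0$ in Proposition~\ref{Proposition: similar_distances_and_points_yield_similar_extensions_in_Urysohn_space}.'' Your additional remarks --- that the proposition is stated for arbitrary $\epsilon, \epsilon' \in \RR$ so the substitution is legitimate, and that nonnegativity of $d$ upgrades $d(a,b) \le 0$ to equality --- are details the paper leaves implicit, and they are accurate.
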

      \begin{proof}
      	Take $\epsilon = \epsilon' = 0$ in Proposition~\ref{Proposition: similar_distances_and_points_yield_similar_extensions_in_Urysohn_space}.
      \end{proof}
      
      \begin{lemma}\label{Lemma: extend_isometry_by_one_point_into_uncompleted_Urysohn}
      	Let
      	$$P^\lrs \dfeq \st{\Utuple[n]{i}{x}{\chi} \in (\Ury^\lrs \times \lrs_{\geq 0})^*}{\all{i, j}{\NN_{< n}}{\chi_i - \chi_j \leq d(x_i, x_j) \leq \chi_i + \chi_j}}.$$
      	There is a map $E^\lrs\colon P^\lrs \to \Ury^\lrs$ with the property $d(E(\Utuple[n]{i}{x}{\chi}), x_k) = \chi_k$ for all $k \in \NN_{< n}$.
      \end{lemma}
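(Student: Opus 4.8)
The plan is to build $E^\lrs$ by lifting the points $x_i \in \Ury^\lrs$ to representatives in the pseudometric space $V^\lrs_\infty$, forming there the Urysohn tuple with these predecessors and the prescribed distances, and pushing the result back down through the Kolmogorov quotient; the only work is to check that this recipe is well defined, and for that the axiom of unique choice together with Corollary~\ref{Corollary: Urysohn_tuples_canonical_on_quotient} suffice. Recall that $\Ury^\lrs$ is the Kolmogorov quotient of $V^\lrs_\infty$, with quotient map $q\colon V^\lrs_\infty \to \Ury^\lrs$ which is a surjective isometry. Since $(\Ury^\lrs \times \lrs_{\geq 0})^* \ism \coprod_{n \in \NN}(\Ury^\lrs \times \lrs_{\geq 0})^n$, it suffices to define $E^\lrs$ on each length component separately, so I fix $n \in \NN$ and work with tuples $\Utuple[n]{i}{x}{\chi}$.

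First I would introduce the relation $R \subseteq P^\lrs \times \Ury^\lrs$ consisting of those pairs $\big(\Utuple[n]{i}{x}{\chi}, y\big)$ for which there are $a_0, \ldots, a_{n-1} \in V^\lrs_\infty$ with $q(a_i) = x_i$ for all $i \in \NN_{< n}$ and $y = q\big(\Utuple[n]{i}{a}{\chi}\big)$. Here the point to verify is that $\Utuple[n]{i}{a}{\chi}$ is a permissible tuple, i.e.\ lies in $V^\lrs_\infty$: each $a_i$ is permissible because $q(a_i) = x_i \in \Ury^\lrs$, and since $q$ is an isometry, $d(a_i, a_j) = d(x_i, x_j)$, so the conditions $\chi_i - \chi_j \leq d(a_i, a_j) \leq \chi_i + \chi_j$ (which hold because $\Utuple[n]{i}{x}{\chi} \in P^\lrs$) are precisely the triangle inequalities required for $\Utuple[n]{i}{a}{\chi}$ to be a member of $V^\lrs_\infty$.

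Next I would show $R$ is total and single-valued. Totality is immediate: given $\Utuple[n]{i}{x}{\chi} \in P^\lrs$, surjectivity of $q$ (applied in each of the finitely many coordinates) produces $a_0, \ldots, a_{n-1}$ with $q(a_i) = x_i$, and then $\big(\Utuple[n]{i}{x}{\chi}, q(\Utuple[n]{i}{a}{\chi})\big) \in R$. For single-valuedness, if $a_0', \ldots, a_{n-1}' \in V^\lrs_\infty$ also satisfy $q(a_i') = x_i$, then $d(a_i, a_i') = 0$ for all $i$, so Corollary~\ref{Corollary: Urysohn_tuples_canonical_on_quotient} gives $d\big(\Utuple[n]{i}{a}{\chi}, \Utuple[n]{i}{a'}{\chi}\big) = 0$, hence $q\big(\Utuple[n]{i}{a}{\chi}\big) = q\big(\Utuple[n]{i}{a'}{\chi}\big)$. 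By the axiom of unique choice $R$ is the graph of a map, and these maps (one for each $n$) assemble into $E^\lrs\colon P^\lrs \to \Ury^\lrs$.

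Finally, the distance property: given $\Utuple[n]{i}{x}{\chi} \in P^\lrs$ and $k \in \NN_{< n}$, pick representatives $a_i$ with $q(a_i) = x_i$ (totality), so that $E^\lrs\big(\Utuple[n]{i}{x}{\chi}\big) = q\big(\Utuple[n]{i}{a}{\chi}\big)$; then, using that $q$ is an isometry and that by Theorem~\ref{Theorem: distances_as_described_in_Urysohn_tuples} the tuple $\Utuple[n]{i}{a}{\chi}$ is at distance $\chi_k$ from its predecessor $a_k$,
$$
d\big(E^\lrs(\Utuple[n]{i}{x}{\chi}), x_k\big)
= d\big(q(\Utuple[n]{i}{a}{\chi}), q(a_k)\big)
= d\big(\Utuple[n]{i}{a}{\chi}, a_k\big)
= \chi_k .
$$
The only mildly delicate point is the bookkeeping of the previous-to-last paragraph, namely that permissibility of $\Utuple[n]{i}{x}{\chi}$ in $P^\lrs$ transfers to permissibility of the lifted tuple in $V^\lrs_\infty$; everything else is a direct application of facts already in hand (surjectivity and isometry of the Kolmogorov quotient map, Corollary~\ref{Corollary: Urysohn_tuples_canonical_on_quotient}, Theorem~\ref{Theorem: distances_as_described_in_Urysohn_tuples}, and unique choice).
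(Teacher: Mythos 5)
Your proposal is correct and follows essentially the same route as the paper: define $E^\lrs\big(([a_i],\chi_i)_{i \in \NN_{<n}}\big) \dfeq \big[\Utuple[n]{i}{a}{\chi}\big]$, with well-definedness from Corollary~\ref{Corollary: Urysohn_tuples_canonical_on_quotient} and the distance property from Theorem~\ref{Theorem: distances_as_described_in_Urysohn_tuples}. You have merely spelled out the lifting/permissibility bookkeeping that the paper leaves implicit.
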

      \begin{proof}
      	The map
      	$$E^\lrs\Big(\big([a_i], \chi_i\big)_{i \in \NN_{< n}}\Big) \dfeq \big[\Utuple[n]{i}{a}{\chi}\big]$$
      	works by Theorem~\ref{Theorem: distances_as_described_in_Urysohn_tuples}. Note that it is well defined by Corollary~\ref{Corollary: Urysohn_tuples_canonical_on_quotient}.
      \end{proof}
      
      \begin{proposition}
      	Let
      	\begin{itemize}
      		\item
      			$\mtr{X} = (X, d_\mtr{X}, s)$ be a countable $\lrs$-metric space,
      		\item
      			$s\colon \NN \to \one + X$ the enumeration of elements of $X$,
      		\item
      			$F \subseteq X$ a finite subset with enumeration $F = \{y_0, \ldots, y_{k-1}\}$,
      		\item
      			$\mtr{F} = (F, d_\mtr{F})$ metric subspace of $\mtr{X}$, and
      		\item
      			$e\colon \mtr{F} \to \Ury^\lrs$ an isometry.
      	\end{itemize}
      	Then there exists a canonical choice of an isometry $f\colon \mtr{X} \to \Ury^\lrs$ such that $\rstr{f}_F = e$.
      \end{proposition}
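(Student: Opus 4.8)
The plan is to extend $e$ one element at a time along the enumeration $s$, invoking the point-extension map $E^\lrs$ of Lemma~\ref{Lemma: extend_isometry_by_one_point_into_uncompleted_Urysohn} at each stage, and then to show that the resulting values descend to an isometry on all of $X$. Since $\mtr{X}$ is a $\lrs$-metric space and $\mtr{F}$ is a metric subspace of it, every distance that will be fed into $E^\lrs$ lies in $\lrs_{\geq 0}$, so $E^\lrs$ is applicable; and since $e$ is an isometry with $d_\mtr{F} = \left.d_\mtr{X}\right|_{F \times F}$, the distances between points $e(y_j)$ in $\Ury^\lrs$ agree with the corresponding distances in $\mtr{X}$.

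I would define $g\colon \NN \to \Ury^\lrs$ by recursion on $n \in \NN$, with $g(n)$ allowed to depend on $g(0), \ldots, g(n-1)$: if $s_n = \unit$, put $g(n) \dfeq [\eUt]$; if $s_n \in X$, let $g(n)$ be the value of $E^\lrs$ on the tuple whose points are $e(y_0), \ldots, e(y_{k-1})$ together with $g(m)$ for those $m \in \NN_{<n}$ with $s_m \in X$, and whose attached distances are $d_\mtr{X}(s_n, y_0), \ldots, d_\mtr{X}(s_n, y_{k-1})$ together with $d_\mtr{X}(s_n, s_m)$ for those same $m$. First one checks that this tuple lies in $P^\lrs$, i.e.\ satisfies the triangle-inequality constraints. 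By the defining property of $E^\lrs$ and the recursion hypothesis, $d_{\Ury^\lrs}(g(m), g(m')) = d_\mtr{X}(s_m, s_{m'})$ for $m < m' < n$ with $s_m, s_{m'} \in X$, and $d_{\Ury^\lrs}(g(m), e(y_j)) = d_\mtr{X}(s_m, y_j)$, and $d_{\Ury^\lrs}(e(y_j), e(y_{j'})) = d_\mtr{X}(y_j, y_{j'})$; hence every constraint to be verified is an instance of the forward or reverse triangle inequality in $\mtr{X}$, so the tuple is permissible. By Lemma~\ref{Lemma: extend_isometry_by_one_point_into_uncompleted_Urysohn}, $g(n)$ then lies at distance $d_\mtr{X}(s_n, y_j)$ from each $e(y_j)$ and at distance $d_\mtr{X}(s_n, s_m)$ from each such $g(m)$, which is exactly what keeps the recursion hypothesis alive.

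Next I show $g$ is constant on the fibres of $s$. If $s_n = s_{n'} = \unit$ this is immediate; the mixed case $s_n = \unit \neq s_{n'}$ cannot occur; and if $s_n = s_{n'} \in X$ with $n < n'$, then by the previous step $g(n')$ lies at distance $d_\mtr{X}(s_{n'}, s_n) = 0$ from $g(n)$, so $g(n') = g(n)$ because $\Ury^\lrs$ is a metric space (being a Kolmogorov quotient). Consequently $g$ factors uniquely as $\bar g \circ s$ for some $\bar g\colon \one + X \to \Ury^\lrs$, and I set $f$ to be the composite $X \hookrightarrow \one + X \stackrel{\bar g}{\longrightarrow} \Ury^\lrs$; no choice principle is needed here. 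The map $f$ is an isometry: for $x, x' \in X$ surjectivity of $s$ provides $n, n'$ with $s_n = x$, $s_{n'} = x'$, and $f(x) = g(n)$, $f(x') = g(n')$ for any such $n, n'$; if $n = n'$ then $x = x'$ and both distances vanish, and if $n \neq n'$, say $n < n'$, then $d_{\Ury^\lrs}(f(x), f(x')) = d_{\Ury^\lrs}(g(n), g(n')) = d_\mtr{X}(s_n, s_{n'}) = d_\mtr{X}(x, x')$. Moreover $\left.f\right|_F = e$, since for $y_j \in F$ and any $n$ with $s_n = y_j$ the point $g(n)$ lies at distance $d_\mtr{X}(s_n, y_j) = 0$ from $e(y_j)$, whence $f(y_j) = g(n) = e(y_j)$. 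Finally, the whole construction — the recursion, each application of $E^\lrs$, and the factorization through $s$ — is determined by the given data, so the resulting $f$ is a canonical choice. The main point requiring care is the permissibility verification at each recursion step; but, as indicated, it reduces cleanly to the triangle inequality in $\mtr{X}$ once one carries along the invariant that $g$ has faithfully reproduced all previously computed distances.
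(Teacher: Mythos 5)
Your proposal is correct and follows essentially the same route as the paper: an inductive definition of $f(s_n)$ via $E^\lrs$ applied to the tuple of already-placed points $e(y_i)$ and $f(s_j)$ with the distances prescribed by $d_\mtr{X}$, with well-definedness and the extension property both following from the fact that $\Ury^\lrs$ is a metric space (points at distance $0$ coincide). Your explicit verification of permissibility via the triangle inequality in $\mtr{X}$ is a detail the paper leaves implicit, but otherwise the two arguments coincide.
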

      \begin{proof}
      	Lemma~\ref{Lemma: extend_isometry_by_one_point_into_uncompleted_Urysohn} allows us to extend a finite isometry by one point. The idea is to use it inductively, first for $F$, then adding more and more terms of the sequence $s$. Explicitly, if $s_n \in X$, define $f(s_n)$ inductively on $n \in \NN$ as
      	$$f(s_n) \dfeq E^\lrs\Big(\big(e(y_i), d_\mtr{X}(s_n, y_i)\big)_{i \in \NN_{< k}} \cnct \big(f(s_j), d_\mtr{X}(s_n, s_j)\big)_{j \in \NN_{< n} \cap s^{-1}(X)}\Big).$$
      	The map $f$ is well-defined --- if $s_n$ equals some $x \in F \cup \big(s(\NN_{< n}) \cap X\big)$, then $d\big(f(s_n), f(x)\big) = 0$, so $f(s_n) = f(x)$ because $\Ury^\lrs$ is a metric space. For the same reason $f$ is an extension of $e$.
      \end{proof}
      
      If $\Ury^\lrs$ was complete, we could extend the isometry from a countable to a metrically separable space. 
      We (preliminarily) define $\Ury$, equipped with the metric $d_\Ury$, to be the completion of $\Ury^\RR$, or equivalently, of $V^\RR_\infty$.
      
      Clearly if $\lrs'$, $\lrs''$ are two lattice ring streaks, and $\lrs' \subseteq \lrs''$, then $V^{\lrs'}_\infty \subseteq V^{\lrs''}_\infty$ and $\Ury^{\lrs'} \subseteq \Ury^{\lrs''}$. In particular $V^\lrs_\infty \subseteq V^\RR_\infty$, $\Ury^\lrs \subseteq \Ury^\RR$.
      
      \begin{lemma}\label{Lemma: approximation_of_Urysohn_tuples}
      	Suppose $\lrs$ is interpolating, and let
      	\begin{itemize}
      		\item
      			$X \subseteq \Ury$ such that $\xall{x}{X}\xall{r}{\RR_{> 0}}\xsome{a}{V^\lrs_\infty}{d_\Ury(x, [a]) \leq r}$,
      		\item
      			$n \in \NN$,
      		\item
      			$x_0, \ldots, x_{n-1} \in X$ and $\omega_0, \ldots, \omega_{n-1} \in \RR_{\geq 0}$ such that $\omega_i - \omega_j \leq d(x_i, x_j) \leq \omega_i + \omega_j$ for all $i, j \in \NN_{< n}$,
      		\item
      			$\epsilon \in \RR_{> 0}$.
      	\end{itemize}
      	Then there exists $\Utuple[n]{i}{a}{\alpha} \in V^\lrs_\infty$ such that $d_\Ury(x_i, [a_i]) \leq \epsilon$ and $|\omega_i - \alpha_i| \leq \epsilon$ for all $i \in \NN_{< n}$.
      \end{lemma}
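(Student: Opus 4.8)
The plan is to build the tuple by first approximating each $x_i$ inside $V^\lrs_\infty$ and then choosing the distances $\alpha_i$ not as naive perturbations of $\omega_i$ but via an explicit supremum formula that is permissible by construction. First I would dispose of the trivial case $n = 0$, where the empty tuple $\eUt \in V^\lrs_\infty$ already works. For $n \geq 1$ fix a small $\delta \in \RR_{> 0}$ (concretely $\delta \leq \epsilon/11$). Using the density hypothesis on $X$, choose for each $i \in \NN_{< n}$ an element $a_i \in V^\lrs_\infty$ with $d_\Ury(x_i, [a_i]) \leq \delta$; then $\rho_{ij} \dfeq d(a_i, a_j) \in \lrs_{\geq 0}$ satisfies $|\rho_{ij} - d(x_i, x_j)| \leq 2\delta$ by the triangle inequality, and $\rho_{ii} = 0$ by Theorem~\ref{Theorem: distances_as_described_in_Urysohn_tuples}. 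Since $\lrs$ is interpolating, hence dense in $\RR$ by Lemma~\ref{Lemma: interpolating_property_of_ring_streaks}, pick $\omega_i' \in \lrs$ with $|\omega_i - \omega_i'| < \delta$ and replace it by $\sup\{\omega_i', 0\} \in \lrs_{\geq 0}$ (still within $\delta$ of $\omega_i$, as $\omega_i \geq 0$), and pick $\eta \in \lrs$ with $4\delta < \eta < 5\delta$. From the hypothesis $\omega_i - \omega_j \leq d(x_i, x_j) \leq \omega_i + \omega_j$ one then reads off $|\omega_i' - \omega_j'| \leq \rho_{ij} + \eta$ and $\rho_{ij} \leq \omega_i' + \omega_j' + \eta$ for all $i, j \in \NN_{< n}$.

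Next I would set
$$\alpha_i \dfeq \sup_{k \in \NN_{< n}}\big(\omega_k' - \rho_{ik}\big) + \eta,$$
which lies in $\lrs_{\geq 0}$ since finite suprema and ring operations stay within $\lrs$, and $\alpha_i \geq \omega_i' + \eta \geq 0$ by taking $k = i$ and using $\rho_{ii} = 0$. The three verifications are all routine inequality-chasing: (i) $|\alpha_i - \alpha_j| \leq \rho_{ij}$, obtained by applying the triangle inequality $\rho_{jk} - \rho_{ik} \leq \rho_{ij}$ termwise inside the supremum; (ii) $\rho_{ij} \leq \alpha_i + \alpha_j$, since $\alpha_i + \alpha_j \geq \omega_i' + \omega_j' + 2\eta \geq \rho_{ij} + \eta$; and (iii) $|\alpha_i - \omega_i'| \leq 2\eta$, since $\omega_k' - \rho_{ik} \leq \omega_i' + \eta$ for every $k$ gives $\omega_i' + \eta \leq \alpha_i \leq \omega_i' + 2\eta$. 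By (i) and (ii), $\Utuple[n]{i}{a}{\alpha}$ is a permissible tuple assembled from elements of $V^\lrs_\infty$, hence lies in $V^\lrs_\infty$; by (iii) together with $|\omega_i - \omega_i'| < \delta$ we get $|\alpha_i - \omega_i| < 2\eta + \delta < 11\delta \leq \epsilon$; and $d_\Ury(x_i, [a_i]) \leq \delta \leq \epsilon$, as required.

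The one genuinely nontrivial point — and the reason a naive approach fails — is the lower triangle inequality $|\alpha_i - \alpha_j| \leq \rho_{ij}$. Approximation unavoidably injects $O(\delta)$ of slack into this constraint (the $\omega_i$ satisfy it only up to $\rho_{ij} + O(\delta)$), and no uniform shift, rescaling, or contraction of the $\omega_i$'s repairs it while keeping the $\alpha_i$ close to $\omega_i$: already for two points $x_i, x_j$ that approximate to (nearly) the same point of $\Ury$ but carry $\omega_i \neq \omega_j$, one needs $\alpha_i = \alpha_j$. The supremum formula resolves exactly this, since it makes $|\alpha_i - \alpha_j| \leq \rho_{ij}$ hold on the nose; the only cost is a small overshoot in the values, which is absorbed — and simultaneously used to restore $\rho_{ij} \leq \alpha_i + \alpha_j$ — by the additive constant $\eta$. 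So the work is an explicit construction plus three short estimates, with the main obstacle being to see that the formula for $\alpha_i$ is the right device rather than a direct perturbation.
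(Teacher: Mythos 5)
Your proof is correct, and it takes a genuinely different route from the paper's. Both proofs hinge on the same obstruction you identify: after approximating $x_i$ by $[a_i']$ and $\omega_i$ by an element of $\lrs$, the lower triangle inequality $\alpha_i - \alpha_j \leq d(a_i, a_j)$ is only satisfied up to an $O(\lambda)$ error. The paper fixes this by moving the \emph{points}: it keeps $\alpha_i$ as a plain upward shift of $\omega_i$ (into $\omega_i + \intoo{3\lambda}{4\lambda}$) and then inductively rebuilds the tuple entries $a_k$ from the $a_k'$ so that their pairwise distances are exactly $d(a_i', a_j') + 3\lambda$ for $i \neq j$; inflating the mutual distances restores the lower inequality while inflating the $\alpha_i$ preserves the upper one, at the cost of an inductive construction and a page of permissibility checks for the intermediate tuples. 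You instead fix the \emph{distances}: you keep the approximating points as they are and replace the $\omega_i$ by the regularization $\sup_k(\omega_k' - \rho_{ik}) + \eta$ (essentially the Kat\v{e}tov trick), which is $1$-Lipschitz in $i$ with respect to the actual $\rho_{ij} = d(a_i, a_j)$ on the nose, with the additive constant $\eta$ simultaneously absorbing the overshoot and restoring $\rho_{ij} \leq \alpha_i + \alpha_j$. Your version avoids the inductive reconstruction entirely and reduces the proof to three short estimates; the only points worth making explicit are that the finite suprema and subtractions land in $\lrs$ because $\lrs$ is a lattice ring streak, and that $\rho_{ii} = 0$ (Theorem~\ref{Theorem: distances_as_described_in_Urysohn_tuples}) is what gives $\alpha_i \geq \omega_i' + \eta \geq 0$. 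Your constants check out ($2\eta + \delta < 11\delta \leq \epsilon$), so the argument goes through as written.
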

      \begin{proof}
			If $n = 0$, then $\eUt$ works, and we are done. In the remainder of the proof assume $n \geq 1$.
			
			Let $\lambda \dfeq \frac{\epsilon}{4 n} > 0$. For each of finitely many $i \in \NN_{< n}$ choose $\alpha_i \in (\omega_i + \intoo{3 \lambda}{4 \lambda}) \cap \lrs$ and $a'_i \in V^\lrs_\infty$ such that $d_\Ury(x_i, [a'_i]) \leq \lambda$. Let
      	$$\delta_{i,j} \dfeq \begin{cases} 1, & \text{if } i = j,\\ 0, & \text{if } i \neq j, \end{cases}$$
      	denote the \df{Kronecker delta}, and $d_{i,j} \dfeq d(a'_i, a'_j) + 3 \lambda (1-\delta_{i,j})$. Define $a_0, \ldots, a_{n-1} \in V^\lrs_\infty$ inductively by
      	$$a_k \dfeq \big(a_i, d_{k,i}\big)_{i \in \NN_{< k}} \cnct \big(a'_k, \sup\st{|d_{k,j} - d(a'_k, a_j)|}{j \in \NN_{< k}}\big);$$
      	in particular $a_0 = (a'_0, 0)$. The calculations (by induction on $k > i, j$) below confirm these are indeed permissible tuples:
      	$$d_{k,i} - d_{k,j} = d(a'_k, a'_i) - d(a'_k, a'_j) \leq d(a'_i, a'_j) \leq d_{i,j} = d(a_i, a_j),$$
      	
      	$$d_{k,i} + d_{k,j} = d(a'_k, a'_i) + d(a'_k, a'_j) + 6 \lambda (1-\delta_{i,j}) \geq$$
      	$$\geq d(a'_i, a'_j) + 6 \lambda (1-\delta_{i,j}) = d_{i,j} + 3 \lambda (1-\delta_{i,j}) \geq d_{i,j} = d(a_i, a_j),$$
      	
      	$$d_{k,i} - \sup\st{|d_{k,j} - d(a'_k, a_j)|}{j \in \NN_{< k}} \leq d_{k,i} - (d_{k,i} - d(a'_k, a_i)) = d(a'_k, a_i),$$
      	
      	$$(d_{k,j} - d(a'_k, a_j)) - d_{k,i} = d(a'_k, a'_j) - d(a'_k, a'_i) - d(a'_k, a_j) \leq$$
      	$$\leq d(a'_i, a'_j) - d(a'_k, a_j) \leq d_{i,j} - d(a'_k, a_j) = d(a_i, a_j) - d(a'_k, a_j) \leq d(a'_k, a_i),$$
      	
      	$$(d(a'_k, a_j) - d_{k,j}) - d_{k,i} = d(a'_k, a_j) - d(a'_k, a'_j) - d(a'_k, a'_i) - 6 \lambda (1-\delta_{i,j}) \leq$$
      	$$\leq d(a'_k, a_j) - d(a'_i, a'_j) - 6 \lambda (1-\delta_{i,j}) = d(a'_k, a_j) - d_{i,j} - 3 \lambda (1-\delta_{i,j}) \leq$$
      	$$\leq d(a'_k, a_j) - d_{i,j} = d(a'_k, a_j) - d(a_i, a_j) \leq d(a'_k, a_i),$$
      	
      	$$\sup\st{|d_{k,j} - d(a'_k, a_j)|}{j \in \NN_{< k}} + d_{k,i} \geq (d(a'_k, a_i) - d_{k,i}) + d_{k,i} = d(a'_k, a_i).$$
      	\begin{itemize}
      		\item\proven{$\Utuple[n]{i}{a}{\alpha} \in V^\lrs_\infty$}
      			Take any $i, j \in \NN_{< n}$. The condition $\alpha_i - \alpha_j \leq d(a_i, a_j) \leq \alpha_i + \alpha_j$ clearly holds for $i = j$, so without loss of generality assume $i < j$.
      			
      			$$\alpha_i - \alpha_j \leq (\omega_i + 4 \lambda) - (\omega_j + 3 \lambda) = \omega_i - \omega_j + \lambda \leq d_\Ury(x_i, x_j) + \lambda \leq$$
      			$$\leq d_\Ury(x_i, [a'_i]) + d(a'_i, a'_j) + d_\Ury(x_j, [a'_j]) + \lambda \leq d(a'_i, a'_j) + 3 \lambda = d_{i,j} = d(a_i, a_j)$$
      			
      			$$\alpha_i + \alpha_j \geq (\omega_i + 3 \lambda) + (\omega_j + 3 \lambda) = \omega_i + \omega_j + 6 \lambda \geq d_\Ury(x_i, x_j) + 6 \lambda \geq$$
      			$$\geq d(a'_i, a'_j) - d_\Ury(x_i, [a'_i]) - d_\Ury(x_j, [a'_j]) + 6 \lambda \geq d(a'_i, a'_j) + 4 \lambda \geq d_{i,j} = d(a_i, a_j)$$
      		
      		\item\proven{$\xall{i}{\NN_{< n}}{d_\Ury(x_i, [a_i]) \leq \epsilon}$}
      			We claim that $d(a_k, a'_k) \leq 3 k \lambda$ for all $k \in \NN_{< n}$. This clearly holds for $k = 0$. By induction, for $k \geq 1$,
		      	$$d(a_k, a'_k) = \sup\st{|d_{k,j} - d(a'_k, a_j)|}{j \in \NN_{< k}} =$$
		      	$$= \sup\st{|d(a'_k, a'_j) - 3 \lambda - d(a'_k, a_j)|}{j \in \NN_{< k}} \leq$$
		      	$$\leq \sup\st{|d(a'_k, a'_j) - d(a'_k, a_j)| + 3 \lambda}{j \in \NN_{< k}} \leq$$
		      	$$\leq \sup\st{d(a_j, a'_j)}{j \in \NN_{< k}} + 3 \lambda \leq 3 (k-1) \lambda + 3 \lambda = 3 k \lambda.$$
		      	Therefore
		      	$$d_\Ury(x_i, [a_i]) \leq d_\Ury(x_i, [a'_i]) + d_\Ury([a'_i], [a_i]) = d_\Ury(x_i, [a'_i]) + d(a'_i, a_i) \leq \lambda + 3 i \lambda \leq 3 n \lambda \leq \epsilon.$$
      		
      		\item\proven{$\xall{i}{\NN_{< n}}{|\omega_i - \alpha_i| \leq \epsilon}$}
      			$$|\omega_i - \alpha_i| < 4 \lambda \leq 4 n \lambda \leq \epsilon$$
      	\end{itemize}
      \end{proof}
      
      \begin{proposition}
      	If $\lrs$ is interpolating, then $\Ury^\lrs$ is metrically dense in $\Ury^\RR$ (equivalently, $V^\lrs_\infty$ is metrically dense in $V^\RR_\infty$).
      \end{proposition}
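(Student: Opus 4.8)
The plan is to prove the second (equivalent) formulation, namely that $V^\lrs_\infty$ is metrically dense in $V^\RR_\infty$; the passage to $\Ury^\lrs$ being metrically dense in $\Ury^\RR$ is then immediate, since the Kolmogorov quotient map $V^\RR_\infty \to \Ury^\RR$ is a surjective isometry whose restriction is the quotient map $V^\lrs_\infty \to \Ury^\lrs$, and surjective isometries preserve and reflect metric density. Throughout I will identify tuples of $V^\RR_\infty$ with their classes when computing $d_\Ury$, using that $d$ on $V^\RR_\infty$, $d_\Ury$ on $\Ury^\RR$ and $d_\Ury$ on $\Ury$ agree along the (iso)metric quotient and completion maps, and that $V^\lrs_\infty \subseteq V^\RR_\infty$, $\Ury^\lrs \subseteq \Ury^\RR$ as remarked above the statement.

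The core is a claim proved by induction on $\aUt(x)$: every $x \in V^\RR_\infty$ satisfies $\xall{r}{\RR_{> 0}}\xsome{a}{V^\lrs_\infty}{d(x, a) \leq r}$. For $\aUt(x) = 0$ we have $x = \eUt \in V^\lrs_\infty$, so there is nothing to do. For $\aUt(x) \geq 1$ write $x = \Utuple[n]{i}{x}{\omega}$; each predecessor $x_i$ has $\aUt(x_i) < \aUt(x)$, so by the induction hypothesis the finite set $\{x_0, \ldots, x_{n-1}\} \subseteq \Ury$ satisfies the approximation hypothesis $\xall{y}{X}\xall{r}{\RR_{> 0}}\xsome{a}{V^\lrs_\infty}{d_\Ury(y, [a]) \leq r}$ of Lemma~\ref{Lemma: approximation_of_Urysohn_tuples}. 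Moreover, permissibility of $x$ is exactly the remaining hypothesis $\omega_i - \omega_j \leq d(x_i, x_j) \leq \omega_i + \omega_j$ of that lemma (recall $d(x_i,x_j)=d_\Ury([x_i],[x_j])$). Hence, given $r \in \RR_{> 0}$, apply Lemma~\ref{Lemma: approximation_of_Urysohn_tuples} — this is where $\lrs$ interpolating enters — with $\epsilon \dfeq \tfrac{r}{4}$ to obtain $\Utuple[n]{i}{a}{\alpha} \in V^\lrs_\infty$ with $d_\Ury([x_i], [a_i]) \leq \epsilon$ and $|\omega_i - \alpha_i| \leq \epsilon$ for all $i \in \NN_{< n}$. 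Since $V^\lrs_\infty \subseteq V^\RR_\infty$ and Proposition~\ref{Proposition: similar_distances_and_points_yield_similar_extensions_in_Urysohn_space} holds for every lattice ring streak (in particular $\RR$), applying it to the pair $\Utuple[n]{i}{x}{\omega}$ and $\Utuple[n]{i}{a}{\alpha}$ gives $d\big(\Utuple[n]{i}{x}{\omega}, \Utuple[n]{i}{a}{\alpha}\big) \leq 2\epsilon < r$. This closes the induction, and taking, for a prescribed ball $\ball{x}{r}$, the element produced for radius $\tfrac{r}{2}$ exhibits a point of $V^\lrs_\infty$ in $\ball{x}{r}$, proving metric density.

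This is essentially bookkeeping once Lemma~\ref{Lemma: approximation_of_Urysohn_tuples} and Proposition~\ref{Proposition: similar_distances_and_points_yield_similar_extensions_in_Urysohn_space} are available, so I do not anticipate a genuine obstacle; the only points needing a little care are (i) checking that $\aUt$ gives a well-founded induction, which it does because predecessors have strictly smaller age by construction, with $\eUt$ the unique tuple of age $0$; (ii) making sure the various incarnations of $d$/$d_\Ury$ on $V^\RR_\infty$, $\Ury^\RR$, $\Ury$ and $V^\lrs_\infty$ are consistently the same numbers, which is exactly the statement that the quotient and completion maps are isometries; and (iii) justifying the use of Proposition~\ref{Proposition: similar_distances_and_points_yield_similar_extensions_in_Urysohn_space} on a \emph{mixed} pair (one tuple from $V^\RR_\infty$, one from $V^\lrs_\infty$), which is legitimate by instantiating that proposition with $\lrs \dfeq \RR$ and invoking $V^\lrs_\infty \subseteq V^\RR_\infty$.
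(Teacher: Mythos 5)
Your proof is correct and follows essentially the same route as the paper's: induction on the age of the tuple, using Lemma~\ref{Lemma: approximation_of_Urysohn_tuples} (where interpolation of $\lrs$ enters) to approximate the predecessors and prescribed distances, and then Proposition~\ref{Proposition: similar_distances_and_points_yield_similar_extensions_in_Urysohn_space} to bound the distance between the two tuples. The only differences are cosmetic — the paper takes $\epsilon = \tfrac{r}{2}$ and $X = V^\RR_{k-1}$ where you take $\epsilon = \tfrac{r}{4}$ and the finite set of predecessors — and your extra care about identifying the various incarnations of the metric is a harmless elaboration of what the paper leaves implicit.
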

      \begin{proof}
      	By induction on age $k \in \NN$. The proposition clearly holds for $k = 0$. Assume $k \geq 1$, and fix $r \in \RR_{> 0}$. Take $b = \Utuple[n]{i}{b}{\beta} \in V^\RR_k$, and suppose the proposition holds for ages less than $k$, in particular for predecessors of $b$. This means that we can use Lemma~\ref{Lemma: approximation_of_Urysohn_tuples} for $X = V^\RR_{k-1}$, $x_i = [b_i]$, $\omega_i = \beta_i$ and $\epsilon = \frac{r}{2}$ to obtain $a = \Utuple[n]{i}{a}{\alpha} \in V^\lrs_\infty$ so that $d(a_i, b_i) \leq \frac{r}{2}$ and $|\alpha_i - \beta_i| \leq \frac{r}{2}$ for all $i \in \NN_{< n}$. By Proposition~\ref{Proposition: similar_distances_and_points_yield_similar_extensions_in_Urysohn_space} $d(a, b) \leq r$.
      \end{proof}
      
      The corollary is that we can define $\Ury$ to be the completion of $V^\lrs_\infty$ and $\Ury^\lrs$ for \emph{any} interpolating lattice ring streak $\lrs$, and obtain the same result.
      
      \begin{theorem}
      	The metric space $\Ury$ satisfies the properties of the Urysohn space. Explicitly, the following holds.
      	\begin{enumerate}
	      	\item
	      		$\Ury$ is (an inhabited) \cms[].
	      	\item
		      	Let
		      	$$P \dfeq \st{\Utuple[n]{i}{x}{\omega} \in (\Ury \times \RR_{\geq 0})^*}{\all{i, j}{\NN_{< n}}{\omega_i - \omega_j \leq d_\mtr{U}(x_i, x_j) \leq \omega_i + \omega_j}}.$$
		      	There is a map $E\colon P \to \Ury$ with the property $d(E(\Utuple[n]{i}{x}{\omega}), x_k) = \omega_k$ for all $k \in \NN_{< n}$.
	      	\item
		      	Let
		      	\begin{itemize}
		      		\item
		      			$\mtr{X} = (X, d_\mtr{X}, s)$ be a metrically separable metric space,
		      		\item
		      			$F \subseteq X$ a finite subset with enumeration $F = \{y_0, \ldots, y_{k-1}\}$,
		      		\item
		      			$\mtr{F} = (F, d_\mtr{F})$ metric subspace of $\mtr{X}$, and
		      		\item
		      			$e\colon \mtr{F} \to \Ury$ an isometry.
		      	\end{itemize}
		      	Then there exists a canonical choice of an isometry $f\colon \mtr{X} \to \Ury$ such that $\rstr{f}_F = e$.
      	\end{enumerate}
      \end{theorem}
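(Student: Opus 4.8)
The plan is to treat the three items in order, with essentially all the work concentrated in item~(2); items~(1) and~(3) are bookkeeping with the completion machinery already in place.

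\textbf{Item (1).} By the preceding corollary, $\Ury$ is the completion of $V^\lrs_\infty$ for any interpolating lattice ring streak $\lrs$, so it is complete and metric by construction, and it is inhabited since $\eUt \in V^\lrs_\infty$. To get metric separability, take $\lrs = \QQ$: then $V^\QQ_\infty$ is in bijection with $\NN$ by Lemma~\ref{Lemma: bijection_with_N_transfers_from_D_to_Urysohn}, hence $\Ury^\QQ$ is countable, hence metrically separable; and the canonical dense isometry $\Ury^\QQ \to \Ury$ carries a metrically dense countable subset to a metrically dense countable subset (its image is dense, and a dense countable subset of $\Ury^\QQ$ maps to a dense subset of that image), so $\Ury$ is a \cms[].

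\textbf{Item (2).} I would fix $\lrs = \QQ$ so that $\Ury^\lrs$ is a countable (hence overt) metrically dense subspace of $\Ury$, with a canonical enumeration, and I would take $\lvl = \st{2^{-m}}{m \in \NN}$ (its choice is immaterial by Proposition~\ref{Proposition: completion_by_Cauchy_families}). Given a permissible tuple $\Utuple[n]{i}{x}{\omega} \in P$, the aim is to build a Cauchy family $S \colon \lvl \to \inhov(\Ury^\lrs)$ by setting $S_p \dfeq \st[1]{E^\lrs\big(([a_i],\alpha_i)_{i<n}\big)}{([a_i],\alpha_i)_{i<n} \in P^\lrs,\ \all{i}{\NN_{<n}}{d_\Ury(x_i,[a_i]) + |\omega_i-\alpha_i| < \tfrac{p}{4}}}$. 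Each $S_p$ is inhabited by Lemma~\ref{Lemma: approximation_of_Urysohn_tuples} applied with $\epsilon$ small enough relative to $p$ (so that $2\epsilon < \tfrac{p}{4}$); it is subovert because its defining condition is open in the overt set $(\Ury^\lrs \times \lrs_{\geq 0})^n$ and $E^\lrs$ is a map; and for $s \in S_p$, $t \in S_q$ one gets $d_\Ury(s,t) \leq p+q$ by Proposition~\ref{Proposition: similar_distances_and_points_yield_similar_extensions_in_Urysohn_space} (the predecessors of the two tuples are then within $\tfrac{p}{4}+\tfrac{q}{4}$ of each other, and likewise the distances), with Corollary~\ref{Corollary: Urysohn_tuples_canonical_on_quotient} guaranteeing that $S_p$ is well defined as a subset of the quotient $\Ury^\lrs$. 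Thus $S \equ S$, and I would define $E(\Utuple[n]{i}{x}{\omega})$ to be the point of $\Ury$ represented by $S$ via Proposition~\ref{Proposition: completion_by_Cauchy_families}, which identifies $\Ury$ with $\Rcmpl(V^\lrs_\infty)$ canonically. Finally $d_\Ury\big(E(\Utuple[n]{i}{x}{\omega}), x_k\big) = \omega_k$, because every $s \in S_p$ satisfies $|d_\Ury(s,x_k) - \omega_k| < \tfrac{p}{2}$ (triangle inequality through $[a_k]$, using $d_\Ury(s,[a_k]) = \alpha_k$) and lies within $p$ of $E(\Utuple[n]{i}{x}{\omega})$, while $\inf \lvl = 0$.

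\textbf{Item (3).} Given $\mtr{X} = (X, d_\mtr{X}, s)$, $F = \{y_0,\dots,y_{k-1}\}$, $\mtr{F}$, and an isometry $e \colon \mtr{F} \to \Ury$, I would first extend $e$ to a canonical isometry $g$ on the countable metric subspace $\mtr{D'}$ with underlying set $D' = F \cup \st{s_m}{s_m \in X}$, which is metrically dense in $\mtr{X}$. This repeats the Proposition preceding the theorem verbatim with $\Ury$, $E$ in place of $\Ury^\lrs$, $E^\lrs$: inductively on $m$, put $g(s_m) \dfeq E\big((e(y_i), d_\mtr{X}(s_m,y_i))_{i<k} \cnct (g(s_j), d_\mtr{X}(s_m,s_j))_{j<m,\ s_j \in X}\big)$; the argument tuple lies in $P$ since $e$ and the partially built $g$ are isometries, so permissibility reduces to triangle inequalities in $\mtr{X}$, and the defining property of $E$ keeps $g$ an isometry (well defined because $\Ury$ is metric) with $\rstr{g}_F = e$. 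Since $\mtr{D'}$ is a metrically separable metrically dense metric subspace of the metrically separable space $\mtr{X}$ and $\Ury$ is a \cms by item~(1), Corollary~\ref{Corollary: extend_isometries_from_a_dense_subspace_into_a_complete_space} — whose extension is canonical, being given by the explicit cut formula of Theorem~\ref{Theorem: extend_Lipschitz_maps_from_a_dense_subspace_into_a_completion} — yields a unique isometry $f \colon \mtr{X} \to \Ury$ with $\rstr{f}_{D'} = g$, whence $\rstr{f}_F = e$.

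The main obstacle I expect is item~(2): calibrating the constants in $S_p$ so that $S$ is genuinely a Cauchy family (the $d_\Ury(s,t) \leq p+q$ estimate) while still being inhabited via Lemma~\ref{Lemma: approximation_of_Urysohn_tuples}, and checking subovertness and the exact-distance property; once $E$ is constructed, items~(1) and~(3) follow by routine application of the completion results.
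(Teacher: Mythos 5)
Your proposal is correct and follows essentially the same route as the paper: item (1) via the countable dense $\Ury^{\QQ}$, item (2) by building a Cauchy family of approximating permissible tuples (inhabited by Lemma~\ref{Lemma: approximation_of_Urysohn_tuples}, Cauchy by Proposition~\ref{Proposition: similar_distances_and_points_yield_similar_extensions_in_Urysohn_space}) and taking its class in the completion, and item (3) by the inductive extension over $F \cup (s(\NN)\cap X)$ followed by Corollary~\ref{Corollary: extend_isometries_from_a_dense_subspace_into_a_complete_space}. The only cosmetic difference is that in item (2) you phrase the family in $\Ury^{\lrs}$ via $E^{\lrs}$ and verify $d(E(\cdot),x_k)=\omega_k$ by the triangle inequality, where the paper keeps the family in $V^{\lrs}_\infty$ and computes the distance explicitly as a Dedekind cut.
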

      \begin{proof}
      	For the purposes of this proof we take for $\lrs$ a countable interpolating lattice ring streak with decidable equality, \eg $\lrs = \QQ$. By Lemma~\ref{Lemma: bijection_with_N_transfers_from_D_to_Urysohn} $V^\lrs_\infty$ and $\Ury^\lrs$ are in bijection with $\NN$.
      	\begin{enumerate}
	      	\item
	      		$\Ury$ is the completion of an inhabited countable metric space $\Ury^\lrs$.
	      	\item
		      	Take any $\Utuple[n]{i}{x}{\omega} \in P$. Let $\lvl$ be any suitable set to index Cauchy families, and define for $p \in \lvl$
		      	$$S_p \dfeq \st{\Utuple[n]{i}{a}{\alpha} \in V^\lrs_\infty}{\all{i}{\NN_{< n}}{d_\Ury(x_i, [a_i]) < \tfrac{p}{2} \ \land \ |\alpha_i - \omega_i| < \tfrac{p}{2}}}$$
		      	The set $S_p$ is open in countable (hence overt) $V^\lrs_\infty$, so it is subovert. Use Lemma~\ref{Lemma: approximation_of_Urysohn_tuples} (for $X = \Ury$) to see that it is inhabited. Take any $p, q \in \lrs_{> 0}$ and any $a = \Utuple[n]{i}{a}{\alpha} \in S_p$, $b = \Utuple[n]{j}{b}{\beta} \in S_q$. Then for any $i \in \NN_{< n}$
      			$$d(a_i, b_i) \leq d_\Ury([a_i], x_i) + d_\Ury(x_i, [b_i]) \leq \tfrac{p+q}{2},$$
      			$$|\alpha_i - \beta_i| \leq |\alpha_i - \omega_i| + |\omega_i - \beta_i| \leq \tfrac{p+q}{2}$$
      			whence $d(a, b) \leq p+q$ by Proposition~\ref{Proposition: similar_distances_and_points_yield_similar_extensions_in_Urysohn_space}. We conclude that $S = \st{S_p}{p \in \lvl}$ is a Cauchy family. Since $\Ury$ is complete, we can consider $[S]$ to be in $\Ury$.
		      	
		      	We claim $d_\Ury([S], x_k) = \omega_k$ for all $k \in \NN_{< n}$. Denote explicitly the dense isometry $e\colon V^\lrs_\infty \to \Ury$. Recall from Proposition~\ref{Proposition: completion_by_Cauchy_families} that equivalence classes of Cauchy families have canonical representatives; in this case, $x_k = [T]$ where $T = \st{T_q}{q \in \lrs_{> 0}}$, $T_q \dfeq e^{-1}(\ball[\Ury]{x_k}{q})$ (which is an open subset in overt $V^\lrs_\infty$, so subovert). Then $d_\Ury([S], x_k) = (L, U)$ where
					$$L \dfeq \st{q \in \oirs}{\xsome{p, s}{\lrs_{> 0}}\xsome{a}{S_p}\some{b}{T_s}{q + p + s < d(a, b)}},$$
					$$U \dfeq \st{r \in \oirs}{\xsome{p, s}{\lrs_{> 0}}\xsome{a}{S_p}\some{b}{T_s}{r > p + s + d(a, b)}}.$$
					Take any $q \in \oirs_{< \omega_k}$, let $\epsilon \dfeq \frac{\omega_i - q}{4}$, and $p, s \in \intoo[\lrs]{0}{\epsilon}$. Choose any $a = \Utuple[n]{i}{a}{\alpha} \in S_p$, $b \in T_s$. Then
					$$d(a, b) \geq d(a, a_i) - d_\Ury([a_i], x_i) - d_\Ury(x_i, [b]) \geq$$
					$$\geq \alpha_i - \tfrac{p}{2} - s \geq \omega_i - p - s > \omega_i - 2 \epsilon = q + 2 \epsilon > q + p + s,$$
					so $q \in L$. In the same way we prove $\oirs_{> \omega_k} \subseteq U$, and therefore conclude $(L, U) = \omega_k$. We see that the definition $E\big(\Utuple[n]{i}{x}{\omega}\big) \dfeq [S]$ works.
				\item
					Define $f'\colon F \cup \big(s(\NN) \cap X\big) \to \Ury$ on $F$ by $f'(y_i) \dfeq e(y_i)$, and on $s(\NN) \cap X$ inductively on $n \in \NN$ as follows: if $s_n \in X$, then
					$$f'(s_n) \dfeq E\Big(\big(e(y_i), d_\mtr{X}(s_n, y_i)\big)_{i \in \NN_{< k}} \cnct \big(f(s_j), d_\mtr{X}(s_n, s_j)\big)_{j \in \NN_{< n} \cap s^{-1}(X)}\Big).$$
					The map $f'$ is well-defined --- if $s_n$ equals some $x \in F \cup \big(s(\NN_{< n}) \cap X\big)$, then $d\big(f(s_n), f(x)\big) = 0$, so $f(s_n) = f(x)$ because $\Ury$ is a metric space. For the same reason $f'$ is an extension of $e$. It follows from the definition of $E$ that $f'$ is an isometry, and since $F \cup \big(s(\NN) \cap X\big)$ is metrically dense in $\mtr{X}$, it extends to the isometry $f\colon X \to \Ury$ by Corollary~\ref{Corollary: extend_isometries_from_a_dense_subspace_into_a_complete_space}.
			\end{enumerate}
      \end{proof}
      
      The corollary is that any metrically separable metric space isometrically embeds into $\Ury$ --- just take $F = \emptyset$ in the preceding theorem. However, this is not enough; recall that we require the image of the embedding to be located. Classically, we would not require something as complicated as the Urysohn space. There are many other (metrically separable) metric spaces into which every metrically separable space can be isometrically embedded, such as $\ell^\infty$ and $\C(\II, \RR)$. Furthermore, if $\mtr{X}$ is inhabited, then its image is located, and if $\mtr{X}$ is also complete, it is strongly located. The completeness argument constructively also works, but of course we do not get locatedness for free. However, the Urysohn space allows us to find an isometric embedding with a located image constructively.\footnote{I found the statement of the following proposition in a circulated notes by Dag Normann, of which the majority was later published, but this part was cut, and as far as I know, Normann did not publish it anywhere else. He calls it the ``Swiss Cheese Principle'' (because it essentially says that if you ``poke holes in the Urysohn space'', \ie remove finitely many balls from it, you still obtain the Urysohn space). Normann did not provide a proof, but attributed it to Urysohn~\cite{Urysohn_PS_1927:_sur_un_espace_metrique_universel}.}
      
      \begin{proposition}\label{Proposition: isometric_embedding_into_Urysohn_space_with_located_image}
      	Let $\mtr{X} = (X, d_\mtr{X}, s\colon \NN \to X)$ be an inhabited metrically separable metric space, and $\mtr{U} = (U, d_\mtr{U}, u, E)$ (any) Urysohn space. There exists an isometric embedding $f\colon \mtr{X} \to \mtr{U}$ such that its image $f(X)$ is located in $\mtr{U}$.
      \end{proposition}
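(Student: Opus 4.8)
The plan is to build $f$ on a metrically dense sequence of $X$ by recursion, using the one-point extension map $E$ of $\mtr{U}$, while simultaneously ``pushing'' the images away from a fixed metrically dense sequence of $U$. Since $X$ is inhabited we may take a metrically dense $s\colon \NN \to X$, and since $\mtr{U}$ is a \cms there is a metrically dense $u\colon \NN \to U$. Define $f(s_n) \in U$ by recursion on $n$ as follows: let $c_{0,0} \dfeq 0$ and put $f(s_0) \dfeq E\big((u_0, c_{0,0})\big)$; for $n \geq 1$, set
$$c_{n,j} \dfeq \inf\st{d_\mtr{X}(s_n, s_i) + d_\mtr{U}(f(s_i), u_j)}{i \in \NN_{< n}} \qquad (j \in \NN_{\leq n}),$$
and let $f(s_n) \dfeq E$ applied to the tuple consisting of the pairs $\big(f(s_i), d_\mtr{X}(s_n, s_i)\big)$ for $i < n$ together with the pairs $\big(u_j, c_{n,j}\big)$ for $j \leq n$. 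Intuitively, $c_{n,j}$ is the \emph{largest} distance at which $f(s_n)$ may be placed from $u_j$ without violating the triangle inequality against the points $f(s_i)$, $i<n$.

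\textbf{Permissibility, and extension to an isometry.} One must check that at each stage this tuple lies in $P$. The pairs coming from old points $f(s_i), f(s_{i'})$ are compatible because $\mtr{X}$ is metric and $d_\mtr{U}(f(s_i),f(s_{i'})) = d_\mtr{X}(s_i, s_{i'})$ is already known by the induction hypothesis; the pair $f(s_i), u_j$ requires $c_{n,j}$ to lie in the interval with endpoints $\max_i\big(d_\mtr{X}(s_n,s_i) - d_\mtr{U}(f(s_i),u_j)\big)$ and $\min_i\big(d_\mtr{X}(s_n,s_i) + d_\mtr{U}(f(s_i),u_j)\big)$, which is nonempty by the triangle inequality in $\mtr{U}$, and our choice picks the right endpoint; the pairs $u_j, u_{j'}$ follow by a similar bookkeeping. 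Thus $E$ applies and $d_\mtr{U}(f(s_n), f(s_i)) = d_\mtr{X}(s_n, s_i)$ for all $i < n$. Since $\mtr{U}$ is a metric space, this makes $n \mapsto f(s_n)$ a well-defined isometry on the (metrically dense) image of $s$; by Corollary~\ref{Corollary: extend_isometries_from_a_dense_subspace_into_a_complete_space} (using completeness of $\mtr{U}$) it extends to an isometry $\bar{f}\colon \mtr{X} \to \mtr{U}$, unique by Corollary~\ref{Corollary: uniqueness_of_ed_extension_of_a_map_into_a_metric_space}, and $\bar{f}$ is injective as an isometry out of a metric space, hence an isometric embedding.

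\textbf{Locatedness of the image.} The payoff of the maximal choice of $c_{n,j}$ is that $d_\mtr{U}(f(s_n), u_j) = c_{n,j} \geq \inf\st{d_\mtr{U}(f(s_i), u_j)}{i \in \NN_{< n}}$ for every $n > j$. Consequently the finite minima $m_{N,j} \dfeq \inf\st{d_\mtr{U}(f(s_n), u_j)}{n \in \NN_{\leq N}}$ are constant for $N \geq j$, so $\inf\st{d_\mtr{U}(f(s_n), u_j)}{n \in \NN} = m_{j,j}$ is an honest real number (a minimum of finitely many real distances). As $\{f(s_n) : n \in \NN\}$ is metrically dense in $\bar{f}(X)$, this gives $d_\mtr{U}(\bar{f}(X), u_j) = m_{j,j} \in \RR$ for every $j$, and $j \mapsto d_\mtr{U}(\bar{f}(X), u_j)$ is nonexpansive. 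Since $\{u_j : j \in \NN\}$ is metrically dense in the complete space $\mtr{U}$ and $\RR$ is Cauchy complete, this nonexpansive map extends uniquely to a nonexpansive real-valued map on all of $U$; a routine $3\epsilon$-estimate (squeezing $d_\mtr{U}(\bar{f}(X), y)$ between $d_\mtr{U}(\bar{f}(X), u_{j_\ell}) \pm d_\mtr{U}(u_{j_\ell}, y)$ along a sequence $u_{j_\ell} \to y$) identifies it with $d_\mtr{U}(\bar{f}(X), y)$, which is therefore a real number for every $y \in U$. Hence $\bar{f}(X)$ is located, and $f \dfeq \bar{f}$ is the required isometric embedding.

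\textbf{Main obstacle.} The essential work is organising the recursion so that \emph{all} permissibility conditions hold simultaneously at every stage --- in particular that the maximal admissible distances $c_{n,j}$ to the growing finite family $u_0, \ldots, u_n$ are mutually compatible (the $(u_j, u_{j'})$ inequalities) --- and the closing density argument that the extension of $j \mapsto d_\mtr{U}(\bar f(X), u_j)$ really computes $d_\mtr{U}(\bar f(X), \cdot)$ rather than merely bounding it. These are all triangle-inequality manipulations, but they must be carried out with care; everything else is an application of the extension lemmas already established for completions.
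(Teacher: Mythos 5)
Your proposal is correct and follows essentially the same route as the paper: recursively place $f(s_n)$ via the one-point extension map $E$, prescribing distances not only to the previously placed points $f(s_i)$ but also to the dense sequence $(u_j)$ of $\mtr{U}$, chosen so that $d_\mtr{U}(f(s_n), u_j)$ never falls below the running minimum over earlier indices; locatedness then follows because $d_\mtr{U}(f(X), u_j)$ is a finite minimum, and the nonexpansive map $j \mapsto d_\mtr{U}(f(X), u_j)$ extends from the dense sequence to all of $U$. The only (immaterial) divergence is the formula for the prescribed distance to $u_j$ — you take the maximal admissible value $\inf_{i<n}\big(d_\mtr{X}(s_n,s_i)+d_\mtr{U}(f(s_i),u_j)\big)$, whereas the paper takes the supremum of the forced lower bounds together with $\inf_{k\le j}\big(d_\mtr{X}(s_n,s_k)+d_\mtr{U}(f(s_k),u_j)\big)$ — and both choices satisfy permissibility and yield the same stabilization of the running infimum.
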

      \begin{proof}
      	Define the map $f'\colon \NN \to U$ inductively by
      	$$f'(n) \dfeq E\bigg(\Big(f'(i), d_\mtr{X}(s_n, s_i)\Big)_{i \in \NN_{< n}} \cnct \Big(u_i, \sup\big(\st{|d_\mtr{X}(s_n, s_k) - d_\mtr{U}(f'(k), u_i)|}{k \in \NN_{< n}} \cup \{b_i\}\big)\Big)_{i \in \NN_{< n}}\bigg)$$
      	where $b_i \dfeq \inf\st{d_\mtr{X}(s_n, s_k) + d_\mtr{U}(f'(k), u_i)}{k \in \NN_{\leq i}}$.
      	
      	We verify that $f'(n)$ is well defined, \ie that we used $E$ on an element of
      	$$P = \st{\Utuple[n]{i}{x}{\omega} \in (U \times \RR_{\geq 0})^*}{\all{i, j}{\NN_{< n}}{\omega_i - \omega_j \leq d_\mtr{U}(x_i, x_j) \leq \omega_i + \omega_j}}.$$
      	The induction hypothesis is that $f'$ is well defined on all natural numbers less that $n$.
      	\begin{itemize}
      		
      		\item\proven{$\all{i,j}{\NN_{< n}}{d_\mtr{X}(s_n, s_i) - d_\mtr{X}(s_n, s_j) \leq d_\mtr{U}(f'(i), f'(j)) \leq d_\mtr{X}(s_n, s_i) + d_\mtr{X}(s_n, s_j)}$}
      			Follows from triangle inequality once we notice that (by the induction hypothesis and properties of $E$) $d_\mtr{U}(f'(i), f'(j)) = d_\mtr{X}(s_i, s_j)$.
      		
      		\item\proven{$\all{i,j}{\NN_{< n}}{d_\mtr{X}(s_n, s_i) - \sup\big(\st{|d_\mtr{X}(s_n, s_k) - d_\mtr{U}(f'(k), u_j)|}{k \in \NN_{< n}} \cup \{b_j\}\big) \leq d_\mtr{U}(f'(i), u_j)}$}
      			$$d_\mtr{X}(s_n, s_i) - \sup\big(\st{|d(s_n, s_k) - d(f'(k), u_j)|}{k \in \NN_{< n}} \cup \{b_j\}\big) \leq$$
      			$$\leq d_\mtr{X}(s_n, s_i) - (d_\mtr{X}(s_n, s_i) - d_\mtr{U}(f'(i), u_j)) = d_\mtr{U}(f'(i), u_j)$$
      		
      		\item\proven{$\all{i,j}{\NN_{< n}}{\sup\st{|d_\mtr{X}(s_n, s_k) - d_\mtr{U}(f'(k), u_j)|}{k \in \NN_{< n}} - d_\mtr{X}(s_n, s_i) \leq d_\mtr{U}(f'(i), u_j)}$}
      			$$d_\mtr{X}(s_n, s_k) - d_\mtr{U}(f'(k), u_j) - d_\mtr{X}(s_n, s_i) \leq d_\mtr{X}(s_k, s_i) - d_\mtr{U}(f'(k), u_j) =$$
      			$$= d_\mtr{U}(f'(k), f'(i)) - d_\mtr{U}(f'(k), u_j) \leq d_\mtr{U}(f'(i), u_j)$$
      		
      		\item\proven{$\all{i,j}{\NN_{< n}}{b_j - d_\mtr{X}(s_n, s_i) \leq d_\mtr{U}(f'(i), u_j)}$}
      			If $i \leq j$, then
      			$$\inf\st{d_\mtr{U}(f'(k), u_j) + d_\mtr{X}(s_n, s_k)}{k \in \NN_{\leq j}} - d_\mtr{X}(s_n, s_i) \leq$$
      			$$\leq d_\mtr{U}(f'(i), u_j) + d_\mtr{X}(s_n, s_i) - d_\mtr{X}(s_n, s_i) = d_\mtr{U}(f'(i), u_j).$$
      			If $i > j$, then
      			$$d_\mtr{U}(f'(i), u_j) = \sup\big(\st{|d_\mtr{X}(s_i, s_k) - d_\mtr{U}(f'(k), u_j)|}{k \in \NN_{< i}} \cup \{b_j\}\big) \geq b_j \geq b_j - d_\mtr{X}(s_n, s_i).$$
      		
      		\item\proven{$\all{i,j}{\NN_{< n}}{d_\mtr{U}(f'(i), u_j) \leq d_\mtr{X}(s_n, s_i) + \sup\big(\st{|d_\mtr{X}(s_n, s_k) - d_\mtr{U}(f'(k), u_j)|}{k \in \NN_{< n}} \cup \{b_j\}\big)}$}
      			$$d_\mtr{X}(s_n, s_i) + \sup\big(\st{|d_\mtr{X}(s_n, s_k) - d_\mtr{U}(f'(k), u_j)|}{k \in \NN_{< n}} \cup \{b_j\}\big) \geq$$
      			$$\geq d_\mtr{X}(s_n, s_i) + d_\mtr{U}(f'(i), u_j) - d_\mtr{X}(s_n, s_i) = d_\mtr{U}(f'(i), u_j)$$
      		
      		\item\proven{$\xall{i,j}{\NN_{< n}}{}\big(\sup\st{|d_\mtr{X}(s_n, s_k) - d_\mtr{U}(f'(k), u_i)|}{k \in \NN_{< n}} -$}\proven{$- \sup\big(\st{|d_\mtr{X}(s_n, s_k) - d_\mtr{U}(f'(k), u_j)|}{k \in \NN_{< n}} \cup \{b_j\}\big) \leq d_\mtr{U}(u_i, u_j)\big)$}
      			$$|d_\mtr{X}(s_n, s_l) - d_\mtr{U}(f'(l), u_i)| - \sup\big(\st{|d_\mtr{X}(s_n, s_k) - d_\mtr{U}(f'(k), u_j)|}{k \in \NN_{< n}} \cup \{b_j\}\big) \leq$$
      			$$\leq |d_\mtr{X}(s_n, s_l) - d_\mtr{U}(f'(l), u_i)| - |d_\mtr{X}(s_n, s_l) - d_\mtr{U}(f'(l), u_j)| \leq$$
      			$$\leq |(d_\mtr{X}(s_n, s_l) - d_\mtr{U}(f'(l), u_i)) - (d_\mtr{X}(s_n, s_l) - d_\mtr{U}(f'(l), u_j))| =$$
      			$$= |d_\mtr{U}(f'(l), u_j) - d_\mtr{U}(f'(l), u_i)| \leq d_\mtr{U}(u_i, u_j)$$
      		
      		\item\proven{$\all{i,j}{\NN_{< n}}{b_i - \sup\big(\st{|d_\mtr{X}(s_n, s_k) - d_\mtr{U}(f'(k), u_j)|}{k \in \NN_{< n}} \cup \{b_j\}\big) \leq d_\mtr{U}(u_i, u_j)}$}
      			$$b_i - \sup\big(\st{|d_\mtr{X}(s_n, s_k) - d_\mtr{U}(f'(k), u_j)|}{k \in \NN_{< n}} \cup \{b_j\}\big) \leq$$
      			$$\leq d_\mtr{U}(f'(i), u_i) + d_\mtr{X}(s_n, s_i) - (d_\mtr{X}(s_n, s_i) - d_\mtr{U}(f'(i), u_j)) =$$
      			$$= d_\mtr{U}(f'(i), u_i) + d_\mtr{U}(f'(i), u_j) \leq d_\mtr{U}(u_i, u_j)$$
      		
      		\item\proven{$\xall{i,j}{\NN_{< n}}{}\big(d_\mtr{U}(u_i, u_j) \leq \sup\big(\st{|d_\mtr{X}(s_n, s_k) - d_\mtr{U}(f'(k), u_i)|}{k \in \NN_{< n}} \cup \{b_i\}\big) +$}\proven{$+ \sup\big(\st{|d_\mtr{X}(s_n, s_k) - d_\mtr{U}(f'(k), u_j)|}{k \in \NN_{< n}} \cup \{b_j\}\big)\big)$}
      			$$\sup\big(\st{|d_\mtr{X}(s_n, s_k) - d_\mtr{U}(f'(k), u_i)|}{k \in \NN_{< n}} \cup \{b_i\}\big) +$$
      			$$+ \sup\big(\st{|d_\mtr{X}(s_n, s_k) - d_\mtr{U}(f'(k), u_j)|}{k \in \NN_{< n}} \cup \{b_j\}\big) \geq$$
      			$$\geq \sup\st{|d_\mtr{X}(s_n, s_k) - d_\mtr{U}(f'(k), u_i)|}{k \in \NN_{< n}} + \inf\st{d_\mtr{U}(f'(l), u_j) + d_\mtr{X}(s_n, s_l)}{l \in \NN_{\leq j}} =$$
      			$$= \inf\st{\sup\st{|d_\mtr{X}(s_n, s_k) - d_\mtr{U}(f'(k), u_i)|}{k \in \NN_{< n}} + d_\mtr{U}(f'(l), u_j) + d_\mtr{X}(s_n, s_l)}{l \in \NN_{\leq j}} \geq$$
      			$$\geq \inf\st{d_\mtr{U}(f'(l), u_i) - d_\mtr{X}(s_n, s_k) + d_\mtr{U}(f'(l), u_j) + d_\mtr{X}(s_n, s_l)}{l \in \NN_{\leq j}} =$$
      			$$= \inf\st{d_\mtr{U}(f'(l), u_i) + d_\mtr{U}(f'(l), u_j)}{l \in \NN_{\leq j}} \geq d_\mtr{U}(u_i, u_j)$$
      	
      	\end{itemize}
      	
      	Observe that $d_\mtr{X}(s_i, s_j) = d_\mtr{U}(f'(i), f'(j))$ for all $i, j \in \NN$, so $f'$ induces the map $f''\colon s(\NN) \to U$ by $f' = f'' \circ s$. Moreover, $f''$ is an isometry, so it extends to an isometry $f\colon \mtr{X} \to \mtr{U}$ by Corollary~\ref{Corollary: extend_isometries_from_a_dense_subspace_into_a_complete_space}.
      	
      	For any $i \in \NN$ and $n \in \NN_{> i}$ we have
      	$$d_\mtr{U}(f(s_n), u_i) \geq \inf\st{d_\mtr{X}(s_n, s_k) + d_\mtr{U}(f(s_k), u_i)}{k \in \NN_{\leq i}} \geq \inf\st{d_\mtr{U}(f(s_k), u_i)}{k \in \NN_{\leq i}},$$
      	so for $l'\colon \NN \to \RR$, $l'(i) \dfeq \inf\st{d_\mtr{U}(f(s_k), u_i)}{k \in \NN_{\leq i}}$, we have
      	$$l'(i) = \inf\st{d_\mtr{U}(f(s_k), u_i)}{k \in \NN} = d_\mtr{U}(f(s(\NN)), u_i) = d_\mtr{U}(f(X), u_i),$$
      	the last equality holding because $s(\NN)$ is metrically dense in $\mtr{X}$ (and so $f(s(\NN))$ is metrically dense in $f(X)$ since $f$ is an isometry). Also, for $i, j \in \NN$,
      	$$l'(i) - l'(j) = l'(i) - \inf\st{d_\mtr{U}(f(s_k), u_j)}{k \in \NN} = \sup\st{l'(i) - d_\mtr{U}(f(s_k), u_j)}{k \in \NN} \leq$$
      	$$\leq \sup\st{d_\mtr{U}(f(s_k), u_i) - d_\mtr{U}(f(s_k), u_j)}{l \in \NN} \leq d_\mtr{U}(u_i, u_j),$$
      	so the map $l''\colon u(\NN) \to \RR$, $l' = l'' \circ u$, is well defined, and nonexpansive. It therefore extends to a nonexpansive map $l\colon U \to \RR$ by Theorem~\ref{Theorem: extend_Lipschitz_maps_from_a_dense_subspace_into_a_completion}, and we have $l = d_\mtr{U}(f(X), \insarg)$.
      \end{proof}
      
      \begin{theorem}\label{Theorem: metrization_of_Urysohn_space_implies_metrization_of_cmss}
      	Assume strongly located subsets are subspaces. If $\mtr{U}$ is (any) Urysohn space and it is metrized, then all \cms[s] are metrized.
      \end{theorem}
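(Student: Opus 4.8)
The plan is to follow exactly the pattern of the proof of Theorem~\ref{Theorem: metrization_of_Hilbert_cube_implies_metrization_of_ctbs}, replacing the Hilbert cube embedding with the Urysohn embedding of Proposition~\ref{Proposition: isometric_embedding_into_Urysohn_space_with_located_image}. First I would reduce to inhabited spaces: by Lemma~\ref{Lemma: metrization_of_inhabited_spaces_sufficient} it suffices to show that every inhabited \cms is metrized, so I fix an inhabited \cms $\mtr{X} = (X, d_\mtr{X}, s\colon \NN \to X)$.

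Next I would apply Proposition~\ref{Proposition: isometric_embedding_into_Urysohn_space_with_located_image} to the given Urysohn space $\mtr{U}$ to obtain an isometric embedding $f\colon \mtr{X} \to \mtr{U}$ whose image $f(X)$ is located in $\mtr{U}$. The image $f(X)$, viewed as a metric subspace of $\mtr{U}$, is complete since it is isometrically isomorphic (via $f$) to the complete space $\mtr{X}$, and it contains the metrically dense subset $f(s(\NN))$, which is countable and hence overt, hence subovert. Lemma~\ref{Lemma: complete_located_is_strongly_located} then upgrades locatedness to strong locatedness, so $f(X)$ is strongly located in $\mtr{U}$.

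By the standing assumption that strongly located subsets are subspaces, $f(X)$ is a subspace of $\mtr{U}$. Since $\mtr{U}$ is metrized, $f(X)$ is a subspace, and $f(X)$ has a subovert metrically dense subset, Theorem~\ref{Theorem: metrization_and_subspaces}(2) (equivalently Corollary~\ref{Corollary: metrization_of_subspaces}) yields that the metric subspace $f(X)$ is metrized. Finally $f$ is an isometry, in particular a Lipschitz bijection onto $f(X)$, so Lemma~\ref{Lemma: Lipschitz_bijection_reflects_metrization} transfers metrization back and $\mtr{X}$ is metrized. Having shown this for every inhabited \cms[,] Lemma~\ref{Lemma: metrization_of_inhabited_spaces_sufficient} gives the conclusion for all \cms[s].

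The main obstacle is not really in this assembly — all the substantive work (constructing $\Ury$, producing an isometric embedding with located image, and the strong-locatedness/subspace machinery) has already been carried out in the preceding results. The only points requiring care are verifying that the hypotheses of Lemma~\ref{Lemma: complete_located_is_strongly_located} and of Theorem~\ref{Theorem: metrization_and_subspaces}(2) are genuinely met, in particular that $f(X)$ inherits both completeness and a subovert metrically dense subset from $\mtr{X}$ through the isometric bijection $f$; after that the chain of transfers (located $\to$ strongly located $\to$ subspace $\to$ metrized $\to$ metrized on $\mtr{X}$) is routine.
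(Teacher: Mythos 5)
Your proposal is correct and follows essentially the same route as the paper's proof: reduce to inhabited spaces, embed via Proposition~\ref{Proposition: isometric_embedding_into_Urysohn_space_with_located_image}, upgrade locatedness to strong locatedness via completeness and Lemma~\ref{Lemma: complete_located_is_strongly_located}, apply Corollary~\ref{Corollary: metrization_of_subspaces}, and transfer back along the isometric isomorphism onto the image. Your extra care in checking that $f(X)$ inherits completeness and a subovert metrically dense subset, and in citing Lemma~\ref{Lemma: Lipschitz_bijection_reflects_metrization} for the final transfer, only makes explicit what the paper leaves implicit.
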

      \begin{proof}
      	By Lemma~\ref{Lemma: metrization_of_inhabited_spaces_sufficient} it is sufficient to prove metrization just for inhabited \cms[s]. For an inhabited \cms $\mtr{X} = (X, d_\mtr{X}, s)$ Proposition~\ref{Proposition: isometric_embedding_into_Urysohn_space_with_located_image} gives us an isometric embedding $f\colon \mtr{X} \to \mtr{U}$ such that $f(X)$ is located in $\mtr{U}$. Since $\mtr{X}$, and therefore also its isometric image, is complete, $f(X)$ is strongly located by Lemma~\ref{Lemma: complete_located_is_strongly_located}, thus metrized by Corollary~\ref{Corollary: metrization_of_subspaces}. Then $\mtr{X}$, being isometrically isomorphic to $f(\mtr{X})$, is metrized as well.
		\end{proof}

	\section{Consequences of Metrization}
		
		We conclude the chapter by summarizing the results of metrization, specifically of Theorems~\ref{Theorem: metrization_of_Baire/Cantor_implies_metrization_of_cmss_ctbs}, \ref{Theorem: metrization_of_Hilbert_cube_implies_metrization_of_ctbs} and~\ref{Theorem: metrization_of_Urysohn_space_implies_metrization_of_cmss} which imply metrization of \ctb[s]/\cms[s].
		
		\begin{lemma}\label{Lemma: smaller_basis_in_metrized_spaces}
			Let the metric space $\mtr{X} = (X, d)$ be metrized, $f\colon A \to X$ an overt map with a metrically dense image in $\mtr{X}$, and $\lvl \subseteq \RR_{> 0}$ a subset, subovert in $\RR$, with the strict infimum $0$. Then the map $A \times \lvl \to \tp(X)$, given by $(a, r) \mapsto \ball{f(a)}{r}$, is a basis for $X$.
		\end{lemma}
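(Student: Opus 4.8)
The plan is to show the two defining properties of a basis (from the definition in Section~\ref{Section: (co)limits_and_bases}): that every open $U \in \tp(X)$ can be written as an overtly indexed union of sets of the form $\ball{f(a)}{r}$ with $a \in A$, $r \in \lvl$, and that such an indexing can be produced in a way whose overtness is witnessed. Since $\mtr{X}$ is metrized, we already know that $B\colon X \times \RR \to \tp(X)$ is a basis, so $U = \bigcup_{i \in I} \ball{x_i}{s_i}$ for some overt indexing $I \to X \times \RR$. The first step is to replace each ball $\ball{x_i}{s_i}$ by a union of smaller balls centered at points of $f(A)$ with radii in $\lvl$; this is the standard ``shrink and recenter'' move. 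Concretely, I would set
\[
	J \dfeq \st[1]{(a, r, i) \in A \times \lvl \times I}{d\big(x_i, f(a)\big) + r < s_i}
\]
and claim
\[
	U = \bigcup_{(a, r, i) \in J} \ball{f(a)}{r}.
\]

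For the inclusion $\supseteq$: if $(a, r, i) \in J$ and $y \in \ball{f(a)}{r}$, then $d(x_i, y) \leq d(x_i, f(a)) + d(f(a), y) < d(x_i, f(a)) + r < s_i$, so $y \in \ball{x_i}{s_i} \subseteq U$. For $\subseteq$: take $y \in U$, so $y \in \ball{x_i}{s_i}$ for some $i \in I$; put $\delta \dfeq s_i - d(x_i, y) > 0$, use that $f$ has metrically dense image to find $a \in A$ with $d(y, f(a)) < \tfrac{\delta}{2}$, and use that $0$ is the strict infimum of $\lvl$ to find $r \in \lvl$ with $\tfrac{\delta}{2} < r < \delta$ wait---more carefully, pick $r \in \lvl$ with $r < \tfrac{\delta}{2}$; then $d(x_i, f(a)) + r \leq d(x_i, y) + d(y, f(a)) + r < s_i - \delta + \tfrac{\delta}{2} + \tfrac{\delta}{2} = s_i$, so $(a, r, i) \in J$, and $y \in \ball{f(a)}{r}$ since $d(f(a), y) < \tfrac{\delta}{2}$ provided we also arrange $r > d(f(a), y)$; the cleanest route is to first fix $r \in \lvl$ small, then choose $a$ with $d(y, f(a)) < \tfrac{1}{2}\inf\{\delta - r, r\}$, which makes both $r < \delta - d(x_i,y) - d(y,f(a))$-type estimates and $d(f(a),y) < r$ hold. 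These are routine triangle-inequality manipulations and I would not belabor them.

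The substantive step is overtness of the indexing $J \hookrightarrow A \times \lvl \times I \to X \times \RR$ (the last map being $(a, r, i) \mapsto (f(a), r)$). This is exactly what Corollary~\ref{Corollary: overt_combination} is designed for: $f\colon A \to X$ is overt by hypothesis, the inclusion $\lvl \hookrightarrow \RR$ is overt because $\lvl$ is subovert in $\RR$, and the indexing $I \to X \times \RR$ is overt; moreover $J$ is the preimage, under the product of these three maps, of the subset
\[
	\st{(x', r, x, s) \in X \times \RR \times X \times \RR}{d(x', x) + r < s},
\]
which is open in $X \times \RR \times X \times \RR$ since $<$ is an open relation and $d$, $+$ are maps. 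Hence by Corollary~\ref{Corollary: overt_combination} the composite $J \to X \times \RR$ is an overt map, which is precisely what is required. I expect no real obstacle here; the only care needed is in the bookkeeping of the triangle-inequality estimates in the $\subseteq$ direction to make sure the chosen $r \in \lvl$ simultaneously satisfies $r > d(f(a), y)$ and $d(x_i, f(a)) + r < s_i$, which is handled by choosing $a$ after $r$, close enough to $y$.
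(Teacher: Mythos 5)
Your proof is correct and follows essentially the same route as the paper: the same index set $J$ cut out by the condition $d(x_i, f(a)) + r < s_i$, the same union, and the same appeal to Corollary~\ref{Corollary: overt_combination} for overtness of the indexing. The paper simply omits the triangle-inequality verification of the two inclusions, which you (correctly) spell out.
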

		\begin{proof}
			Let $U \subseteq X$ be intrinsically, and therefore metrically open, \ie an overtly indexed union $U = \bigcup_{i \in I} \ball{x_i}{r_i}$. Define $J \dfeq \st{(i, a, r) \in I \times A \times \lvl}{d(x_i, f(a)) + r < r_i}$. Then $U$ is an overtly indexed union
			$$U = \bigcup_{(i, a, r) \in J} \ball{f(a)}{r}$$
			by Corollary~\ref{Corollary: overt_combination} since the indexing $I \to X \times \RR$, the map $f\colon A \to X$, and the inclusion $\lvl \hookrightarrow \RR$ are overt, and $J$ is the preimage of the set $\st{(x, r, y, s) \in X \times \RR \times X \times \RR}{d(x, y) + s < r}$, open in $X \times \RR \times X \times \RR$.
		\end{proof}
		
		\begin{theorem}\label{Theorem: consequences_of_metrization}
			Suppose \ctb[s]/\cms[s] are metrized. Then the following holds.
			\begin{enumerate}
				\item
					The intrinsic topology of \ctb[s]/\cms[s] can be expressed in terms of their balls; specifically, a subset is open if and only if it is an overtly indexed union of balls.
				\item
					Up to topological equivalence, a set can be equipped with at most one metric which makes it a \ctb[]/\cms[].
				\item
					\ctb[s]/\cms[s] are countably based.
				\item
					\ctb[s]/\cms[s] are overt.
				\item
					The principle \wso[], along with all its consequences, holds.
				\item\label{Theorem(consequences_of_metrization)item: continuity_principle}
					For a \ctb[]/\cms[] $\mtr{X} = (X, d)$, every map $f\colon X \to Y$ is metrically continuous, and hence \ed-continuous, for any metric on $Y$.
			\end{enumerate}
		\end{theorem}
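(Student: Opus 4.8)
The plan is to establish each item by unwinding the definition of metrization and invoking results already proven earlier in the chapter; no new constructions are needed, so this theorem is really just a repackaging of what precedes it.

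For item~1, recall that, by definition, $\mtr{X} = (X, d)$ being metrized means precisely that the map $B\colon X \times \RR \to \tp(X)$ sending $(x, r)$ to $\ball{x}{r}$ is a basis for $X$; hence every open subset is an overtly indexed union of balls, and conversely every overtly indexed union of balls is open because balls are intrinsically open and overtly indexed unions of opens are open. Item~2 follows at once: if $d$ and $d'$ are two metrics on the same set $X$ each making it a \ctb[]/\cms[], then by item~1 both $\mtp(X, d)$ and $\mtp(X, d')$ coincide with the intrinsic topology $\tp(X)$, which does not depend on the metric, so $d$ and $d'$ are topologically equivalent. For item~3, a \ctb[]/\cms[] is metrically separable, so it carries a map $s\colon \NN \to X + \one$ with metrically dense image; the subset $s^{-1}(X) \subseteq \NN$ is decidable, hence countable and overt, so by Lemma~\ref{Lemma: smaller_basis_in_metrized_spaces} (taking, say, $\lvl = \st{2^{-n}}{n \in \NN}$) the family of balls centred at the enumerated points with radii in $\lvl$, indexed by the countable set $s^{-1}(X) \times \lvl$, is a basis, so $\mtr{X}$ is countably based.

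Item~4 is Proposition~\ref{Proposition: metric_to_intrinsic_separability_in_weakly_metrized_spaces}(3): a metrized (hence weakly metrized) space which is metrically separable contains a subovert metrically dense subset and is therefore overt. For item~5, note that $\opcN$ is both a \ctb[] and a \cms[], so under either hypothesis it is metrized by the comparison metric; Lemma~\ref{Lemma: implies_wso} then yields \wso[], and consequently every statement derived from \wso[] earlier in the chapter. Finally, item~6 is Proposition~\ref{Proposition: examples_of_metric_continuous_maps}(\ref{Proposition(examples_of_metric_continuous_maps)item: maps_from_metrized_to_metric_space_are_metrically_continuous}): since $X$ is metrized, any map $f\colon X \to Y$ into any metric space $Y$ is metrically continuous, and metric continuity implies \ed-continuity as remarked after the definition of metric continuity.

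Since the whole statement is a corollary of previously established facts, no single step is a genuine obstacle; the only points needing slight care are making sure in item~3 that the chosen index set is honestly countable (which is why one restricts to $s^{-1}(X)$ and a fixed countable $\lvl$ rather than permitting arbitrary real radii), and in item~5 remembering that $\opcN$ qualifies as a member of \emph{both} classes so the argument applies uniformly. It is also worth recording that the hypothesis ``\ctb[s]/\cms[s] are metrized'' is exactly what Theorems~\ref{Theorem: metrization_of_Baire/Cantor_implies_metrization_of_cmss_ctbs},~\ref{Theorem: metrization_of_Hilbert_cube_implies_metrization_of_ctbs}~and~\ref{Theorem: metrization_of_Urysohn_space_implies_metrization_of_cmss} supply under their respective side conditions, so this theorem is the common downstream payoff of all of them.
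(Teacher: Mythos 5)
Your proof is correct and follows exactly the same route as the paper: each item is discharged by the same citation (the definition of metrization for items 1–2, Lemma~\ref{Lemma: smaller_basis_in_metrized_spaces} for item 3, Proposition~\ref{Proposition: metric_to_intrinsic_separability_in_weakly_metrized_spaces} for item 4, Lemma~\ref{Lemma: implies_wso} via $\opcN$ for item 5, and Proposition~\ref{Proposition: examples_of_metric_continuous_maps} for item 6). The paper's proof is just a terser version of yours.
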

		\begin{proof}
			\begin{enumerate}
				\item
					By the definition of metrization.
				\item
					Immediate.
				\item
					By Lemma~\ref{Lemma: smaller_basis_in_metrized_spaces} (for $f$ enumerating the countable metrically dense subset, and $\lvl = \st{2^{-n}}{n \in \NN}$).
				\item
					By Proposition~\ref{Proposition: metric_to_intrinsic_separability_in_weakly_metrized_spaces}.
				\item
					By Lemma~\ref{Lemma: implies_wso} since $\opcN$ is a \ctb[].
				\item
					By Proposition~\ref{Proposition: examples_of_metric_continuous_maps}(\ref{Proposition(examples_of_metric_continuous_maps)item: maps_from_metrized_to_metric_space_are_metrically_continuous}).
			\end{enumerate}
		\end{proof}
   
   \chapter{Models of Synthetic Topology}\label{Chapter: models}

	In this chapter we discuss properties of synthetic topology and metric spaces in the following varieties of constructive mathematics, or topoi which model them.
	\begin{enumerate}
		\item
			Classical mathematics (\clsc).
		\item
			Russian constructivism (\russ), modeled by the effective topos~\cite{Hyland_JME_1982:_the_effective_topos} which is based on the original notion of number realizability by Kleene~\cite{Kleene_SC_1945:_on_the_interpretation_of_intuitionistic_number_theory}. In computable analysis this setting is known as \emph{Type I computability}.
		\item
			A slightly strengthened version of Brouwer's intuitionism (\bint) which is modeled by the realizability topos based on Kleene's function realizability~\cite{Kleene_SC_Vesli_R_1965:_the_foundations_of_intuitionistic_mathematics}. This is also known as \emph{Type II computability}~\cite{Weihrauch_K_2000:_computable_analysis}.
		\item
			The gros topos~\cite{Johnstone_PT_1979:_on_a_topological_topos} over (the skeleton of) separable metric spaces (\grtp).
	\end{enumerate}
	The following table summarizes validity of various statements in each of these models (for the standard choices of $\tst$, $\opn$, $\cld$). We see that \clsc[] serves as a trivialization of the theory, in \russ[] there is little connection between metric and synthetic topology while a good match between them is found in \bint[] and \grtp.
	
	\begin{figure}[!hb]
	  \newcommand{\yes}{$\bullet$} \newcommand{\no}{}
	  \begin{center}
	    \begin{tabular}{||c|c|c|c|c||}
	      \hline\hline
	      &\clsc	&\russ	&\bint	&\grtp	\\ \hline\hline
	      \vphantom{$\NN^{\NN^\NN}$}
	      $\NN$ is overt
	      &\yes		&\yes		&\yes		&\yes		\\ \hline
	      \vphantom{$\NN^{\NN^\NN}$}
	      $\NN$ is compact
	      &\yes		&\no		&\no		&\no		\\ \hline
	      \vphantom{$\NN^{\NN^\NN}$}
	      \wso holds
	      &\no		&\yes		&\yes		&\yes		\\ \hline\hline
	      \vphantom{$\NN^{\NN^\NN}$}
	      \cms[s] are metrized
	      &\no		&\no		&\yes		&\yes		\\ \hline
	      \vphantom{$\NN^{\NN^\NN}$}
	      \cms[s] are overt
	      &\yes		&\yes		&\yes		&\yes		\\ \hline
	      \vphantom{$\NN^{\NN^\NN}$}
	      \cms[s] are compact
	      &\yes		&\no		&\no		&\no		\\ \hline\hline
	      \vphantom{$\NN^{\NN^\NN}$}
	      \ctb[s] are metrized
	      &\no		&\no		&\yes		&\yes		\\ \hline
	      \vphantom{$\NN^{\NN^\NN}$}
	      \ctb[s] are overt
	      &\yes		&\yes		&\yes		&\yes		\\ \hline
	      \vphantom{$\NN^{\NN^\NN}$}
	      \ctb[s] are compact
	      &\yes		&\no		&\yes		&\yes		\\ \hline\hline
	    \end{tabular}
	  \end{center}\centering
	  
	  \label{Figure: synthetic_topological_properties_of_models}
	\end{figure}

	\section{Classical Mathematics}\label{Section: classical_mathematics_model}
		
		In classical set theory $\soc = \two$. We examine all four possibilities for $\tst \subseteq \soc$.
		\begin{itemize}
			\item\proven{$\tst = \emptyset$}
				Then $\opn = \cld = \two$, so we obtain the discrete topology. All sets are discrete, Hausdorff, overt, compact and condensed.
			\item\proven{$\tst = \{\bot\}$}
				Then $\opn = \two$, but $\cld = \{\top\}$, so all subsets are open, but only the whole sets are closed in themselves. All sets are discrete, overt and compact, only the empty set and singletons are Hausdorff, and inhabited sets are condensed.
			\item\proven{$\tst = \{\top\}$}
				Then $\opn = \{\tst\}$ and $\cld = \two$. Only the whole sets are open in themselves, but all subsets are closed. Only the empty set and singletons are discrete, inhabited sets are overt while all sets are Hausdorff, compact and condensed.
			\item\proven{$\tst = \two$}
				Then $\opn = \cld = \two$, so we again have the discrete topology. All sets are discrete, Hausdorff, overt, compact and condensed.
		\end{itemize}
		For our theory of metric spaces only the last (which is also the standard) choice $\tst = \opn = \cld = \two$ is interesting since we require countable sets to be overt, and $\tst = \opn$. In this case the conditions of Theorem~\ref{Theorem: standard_special_case_of_synthetic_topology} are satisfied. This is a trivial example of our theory since all sets are overt and have decidable equality, so they are all metrized by the discrete metric by Proposition~\ref{Proposition: overt_sets_with_decidable_equality_metrized}.

	\section{Number Realizability or Russian Constructivism}\label{Section: russian_constructivism_model}
		
		We work within the framework of Russian constructivism in the style of Richman's~\cite{Richman_F_1983:_churchs_thesis_without_tears, Bridges_DS_Richman_F_1987:_varieties_of_constructive_mathematics} and synthetic computability~\cite{Bauer_A_2006:_first_steps_in_synthetic_computability}. The following principles are valid in Russian constructivism.
		\begin{enumerate}
			\item
				Countable choice $\AC{\NN}$, in fact even dependent choice.
			\item
				\df{Markov Principle}: if not all terms of a binary sequence are one, then there exists a term which is zero.
			\item
				\df{Enumerability Axiom}: there are countably many countable subsets of $\NN$.
		\end{enumerate}
		
		At first we restrict attention to $\tst = \opn = \Ros$, the standard choice. Markov Principle says $\opn \subseteq \nnst$, so the conditions of Theorem~\ref{Theorem: standard_special_case_of_synthetic_topology} are satisfied. In particular, closed subsets are precisely those which have open complements, and $\RR$, as well as metric spaces in general, are Hausdorff.
		
		Countable choice (more precisely, its instance $\ACRos$) and $\opn = \Ros$ imply that the set of countable subsets of $\NN$ is just $\tp(\NN)$ (Proposition~\ref{Proposition: semidecidable_and_countable}). By the Enumerability Axiom there is an enumeration $W\colon \NN \to \tp(\NN)$. Furthermore, the Enumerability Axiom is equivalent to Richman's axiom \textbf{CFP} which states that there is an enumeration $\phi_0, \phi_1, \ldots$ of those partial maps $\NN \parto \NN$ that have countable graphs. A partial map $f\colon \NN \parto \NN$ has an enumerable graph if and only if ``$f(n)$ is defined'' is semidecidable for all $n \in \NN$.
		
		The classical nature of Markov Principle and the non-classical nature of Enumerability Axiom combine into a strange mix of consequences. We show that \wso is valid, but $\opcN$ is not metrized or compact.
		
		\begin{proposition}
			The principle \wso holds.
		\end{proposition}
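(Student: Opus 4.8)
The plan is to derive \wso from two features of this model: the identification $\opn = \Ros$, so that open subsets of $\opcN$ are precisely the semidecidable ones, and the fact that in Russian constructivism every function out of a complete separable metric space is \ed-continuous (Ceitin's theorem, a consequence of Church's thesis and countable choice), together with the observation that $\opcN$ is such a space. Let $U \in \tp(\opcN)$ with $\infty \in U$. Its characteristic map is a map $\chi_U \colon \opcN \to \Ros$, and by Church's thesis (the computability structure underlying the model) it lifts along the standard surjection $\two^\NN \twoheadrightarrow \Ros$ to a map $\bar{\chi}_U \colon \opcN \to \two^\NN$ with the property that $\alpha \in U \iff \xsome{k}{\NN}{\bar{\chi}_U(\alpha)_k = 0}$ for every $\alpha \in \opcN$.

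From $\infty \in U$ we then obtain an actual witness $n_0 \in \NN$ with $\bar{\chi}_U(\infty)_{n_0} = 0$. Now regard $\bar{\chi}_U$ as a map from $\opcN$ to the Cantor space $\two^\NN$, both with the comparison metric. By Ceitin's theorem it is \ed-continuous, hence by Proposition~\ref{Proposition: convergent_sequences_and_maps_from_No} it is the map associated to a convergent sequence, so $\bar{\chi}_U(n) \to \bar{\chi}_U(\infty)$ in $d_C$ as $n \to \infty$ (recall $d_C(\infty, n) = 2^{-n}$). Since $d_C$-closeness below $2^{-n_0-1}$ forces agreement on the first $n_0+1$ coordinates, there is $m \in \NN$ with $\bar{\chi}_U(n)_{n_0} = \bar{\chi}_U(\infty)_{n_0} = 0$ for all $n \geq m$. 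For any such $n$ we have $\xsome{k}{\NN}{\bar{\chi}_U(n)_k = 0}$, i.e.\ $n \in U$, which is what \wso asks for. Equivalently, the same conclusion can be reached directly at the level of realizers: a realizer for $\chi_U$, fed a realizer of $\infty$ and returning a realizer of $\infty \in U$, halts having queried only an initial segment of its input, and any $n \in \NN$ agreeing with $\infty$ on that segment is sent into $U$.

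The main obstacle is the continuity step: one must pin down precisely which continuity principle is being invoked — Ceitin's theorem for functions on complete separable metric spaces, or equivalently the use principle for partial recursive functionals on computable points — and check that it genuinely applies to $\bar{\chi}_U \colon \opcN \to \two^\NN$, using that $\opcN$ is complete and metrically separable and, if convenient, that it is a retract of $\two^\NN$. Once continuity is in hand the remaining bookkeeping with $d_C$-convergence and initial segments is routine; note in particular that Markov's principle is not needed here, since the existential witness $n_0$ comes straight out of the hypothesis $\infty \in U$ after the lift along $\two^\NN \twoheadrightarrow \Ros$.
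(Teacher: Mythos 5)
Your route is genuinely different from the paper's. The paper does not go through continuity at all: it contraposes via Markov's Principle, so that it suffices to show $(\xall{n}{\NN}{n \notin U}) \implies \infty \notin U$; it then observes that under this hypothesis $\chi_U$ factors through the quotient $q\colon \opcN \to \opn$, $q(t) = (t < \infty)$, and kills the resulting endomap $f\colon \opn \to \opn$ with Lawvere's fixed-point theorem, which applies because the Enumerability Axiom supplies an enumeration $W\colon \NN \to \tp(\NN)$ of the $\opn$-valued predicates on $\NN$. Your argument instead runs the classical recursion-theoretic proof through the Kreisel--Lacombe--Shoenfield--Ceitin theorem. Both are legitimate, but yours purchases the conclusion with a much heavier input: Ceitin's theorem is itself proved from exactly the same raw material (Markov's Principle plus the enumeration axiom) that the paper uses directly.

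Two steps need repair before your version is complete. First, the lift of $\chi_U\colon \opcN \to \Ros$ along $\two^\NN \twoheadrightarrow \Ros$ is not covered by the choice principles in play: $\ACRos$ (Proposition~\ref{Proposition: semidecidable_and_countable}) only lifts maps with domain $\NN$, and $\opcN$ is not countable, so you are implicitly invoking $\AC{\opcN, \two^\NN \twoheadrightarrow \Ros}$. This does hold in the effective topos --- a realizer of $p \in \Ros$ carries an index of a witnessing binary sequence --- but that is a model-specific fact, not a consequence of the three axioms listed for this section, and it must be said explicitly. Second, your closing claim that Markov's Principle is not needed is misleading: your realizer-level ``use principle'' paraphrase is the Type~II intuition, whereas in Type~I the realizer of $\chi_U$ receives an \emph{index} of $\infty$ rather than an oracle, so it does not literally query an initial segment of its input; making that step rigorous is precisely the content of Ceitin's theorem, whose proof uses Markov's Principle essentially. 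Markov's Principle has therefore only been pushed inside the continuity step, not eliminated. Once these two inputs are properly justified, the remaining bookkeeping (extracting $n_0$, continuity of $\bar{\chi}_U$ at $\infty$ in $d_C$, agreement on the first $n_0+1$ coordinates) is correct.
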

		\begin{proof}
			A detailed proof can be found in~\cite[4.26]{Bauer_A_2006:_first_steps_in_synthetic_computability}, but we also present a proof here. By Lawvere's fixed point theorem~\cite{Lawvere_WF_1969:_diagonal_arguments_and_cartesian_closed_categories} every $f\colon \opn \to \opn$ has a fixed point, namely $W_n(n) = f(W_n(n))$ where $n \in \NN$ is such that $W_n(k) = f(W_k(k))$. To prove \wso[], take $U \in \tp(\opcN)$, and observe that by Markov Principle
			\begin{align*}
				&\infty \in U \implies \xsome{n}{\NN}{n \in U}\\
				\iff\quad &\lnot\lnot(\infty \in U) \implies \lnot\lnot(\xsome{n}{\NN}{n \in U})\\
				\iff\quad &\lnot(\xsome{n}{\NN}{n \in U}) \implies \lnot(\infty \in U)\\
				\iff\quad &(\xall{n}{\NN}{n \notin U}) \implies \infty \notin U.
			\end{align*}
			Suppose then $n \notin U$ for all $n \in \NN$. Then the characteristic map $\chi_U\colon \opcN \to \opn$ of the subset $U$ factors through the quotient map $q\colon \opcN \to \opn$, $q(t) \dfeq (t < \infty)$,
			$$\xymatrix@+1em{
				\opcN \ar[r]^{\chi_U} \ar[d]_q  &  \opn  \\
				\opn \ar@{-->}[ru]_f  &
			}$$
			to give a map $f\colon \opn \to \opn$ which has some fixed point $p \in \opn$. Because $f(\top) = \bot$ and $f(p) = p$, we see that $p \neq \top$, hence $p = \bot$. Thus we get $(\infty \in U) = \chi_U(\infty) = f(q(\infty)) = f(\bot) = \bot$, as required.
		\end{proof}
		
		\begin{proposition}\label{Proposition: No_not_weakly_metrized}
			There exists $V \in \tp(\opcN)$ such that $\infty \in V$, but no ball with the center in $\infty$ and positive radius is contained in $V$.
		\end{proposition}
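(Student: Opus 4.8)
\emph{Reduction to a combinatorial core.} Since $\opcN$ is the completion of $\NN$ under the comparison metric and $d_C(n,\infty)=2^{-n}$, any ball $\ball[C]{\infty}{r}$ with $r>0$ contains $\NN_{>k}$ for every $k$ with $2^{-k}\le r$; hence if some positive‑radius ball at $\infty$ were contained in $V$, then $V$ would contain all but finitely many natural numbers. Consequently it suffices to produce $V\in\tp(\opcN)$ such that $\infty\in V$ while $\overline{V\cap\NN}$ is cofinal in $\NN$, i.e. $\all{k}{\NN}\xsome{n}{\NN_{\ge k}}{n\notin V}$. So I would first reduce the statement to this, and then build such a $V$.

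\emph{What openness means here, and the obstruction.} Internally one may replace $\tp(\opcN)\ism\Ros^{\opcN}$ by a semidecision procedure: a point of $\opcN$ is presented by an index of a decreasing $\{0,1\}$‑sequence (with $\infty$ presented by indices of the constant‑$1$ sequence), and $V\in\tp(\opcN)$ amounts to a partial recursive procedure $P$, independent of the chosen presentation, semideciding ``$t\in V$''; because $\infty\in V$, $P$ must halt‑accept on \emph{every} index of the constant‑$1$ sequence. The naive procedure (watch the presented sequence; on its first $0$, at position $j$, accept iff $j$ lies in a fixed semidecidable $S\subseteq\NN$) yields a perfectly good open set with $V\cap\NN=S$, but one that never contains $\infty$. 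The whole point — and this is the effective‑topos rendering of Friedberg's counterexample cited earlier for the Baire space — is to \emph{also} accept the constant‑$1$ presentations without being thereby forced to accept a whole tail of $\NN$, which would be impossible in a continuous (Type~II) model but is available here because the procedure sees the index, not merely the behaviour.

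\emph{The construction.} I would build $P$ and $S=V\cap\NN$ simultaneously by a stage‑wise finite‑injury construction, closed up with Kleene's recursion theorem (equivalently, the Enumerability Axiom together with Lawvere's fixed‑point theorem, as already used in the proof of \wso) so that $P$ may refer to its own index. On input index $e$, $P$ dovetails: (i) watching the presented sequence and, on detecting its first $0$ at position $j$, accepting iff $j$ has entered $S$ by then; and (ii) a self‑referential ``fallback'' semidecidable condition $C(e)$, designed so that $C$ fires on every index of the constant‑$1$ sequence — the recursion theorem is essential here since the set of such indices is not semidecidable. The requirement $R_k$ (``some $n\ge k$ stays out of $V$'') picks a fresh large witness $n$, keeps it out of $S$, and surrenders it only if the fallback is later forced to accept a presentation of $n$; a routine injury count then shows each $R_k$ settles, so $\overline{V\cap\NN}$ is cofinal. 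The main obstacle is exactly the reconciliation in clause (ii): a recursion‑theoretic diagonal shows that accepting \emph{all} presentations of $\infty$ unavoidably forces acceptance of presentations of certain arbitrarily large naturals, so $V\cap\NN$ is itself cofinal; the construction must be tuned so that, in the limit, these forced naturals avoid the protected witnesses of the $R_k$, and verifying that this can be arranged is the Friedberg‑style combinatorial heart of the argument. Finally one checks that $P$ is extensional (gives the same $\Ros$‑value on all presentations of a point), which is automatic for clause (i) and must be built into clause (ii), so that $P$ indeed realizes a genuine element $V\in\tp(\opcN)$ with the required properties.
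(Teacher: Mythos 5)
The reduction in your opening paragraph is correct and matches how the paper closes its own proof: a positive-radius ball at $\infty$ contains a tail of $\NN$, so it suffices to make $V^C\cap\NN$ cofinal while keeping $\infty\in V$. But from ``The construction'' onward you have a plan, not a proof, and the step you defer is the entire content of the proposition. You write that reconciling ``accept every index of the constant-$1$ sequence'' with ``keep a cofinal set of naturals out of $V$'' is ``the Friedberg-style combinatorial heart of the argument'' whose feasibility must be ``verified'' --- and then you do not verify it. The requirements $R_k$, the fallback condition $C(e)$, the injury count, and above all the mechanism by which the naturals forced into $V$ by accepting presentations of $\infty$ avoid the protected witnesses are never specified, so there is nothing to check. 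A secondary slip: clause (i) as stated (``accepting iff $j$ has entered $S$ by then'') is not extensional, since the stage of detection depends on the presentation; acceptance must depend only on $j\in S$.

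For comparison, the paper needs no priority or injury argument at all. It uses the Enumerability Axiom to get a surjection $\psi\colon\Tot\to\opcN$ and a slow-down map $s\colon\opcN\to\opcN$ with $s(\infty)=\infty$ and with $s^{-1}(m)$ a block of exactly $m+1$ consecutive naturals, and sets
$$U \dfeq \st{n\in\Tot}{\some{k}{\NN_{<s(\psi_n)}}{\psi_k=\psi_n}}.$$
This $U$ is open and $\psi$-saturated, so $V=\psi(U)$ is open in the quotient topology; $\infty\in V$ because the witness search is unbounded there and $\psi$ is surjective; and for each $m$ the $m+1$ points of $s^{-1}(m)$ would each require a \emph{distinct} witness index $k<m$, of which there are only $m$ --- a pigeonhole count --- so every block $s^{-1}(m)$ meets $V^C$, and these blocks eventually lie in every ball around $\infty$. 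Your route is in the same spirit (self-reference via the recursion theorem, Friedberg's obstruction), but to make it a proof you would have to supply a construction of comparable concreteness; as it stands the key combinatorial claim is asserted, not established.
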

		
		\begin{proof}
			Let $\Tot \subseteq \NN$ be the set of those $n \in \NN$ for which $\phi_n$ is a total function, and let $\psi\colon \Tot \to \opcN$ be the composition $\psi \dfeq r \circ \rstr{\phi}_\Tot^{\NN^\NN}$ where $r\colon \NN^\NN \to \opcN$ is the usual retraction,
			$$r(\alpha)(n) \dfeq \begin{cases} 0 & \text{if } \xsome{k}{\NN_{\leq n}}{\alpha(h) = 0},\\ 1 & \text{otherwise}. \end{cases}$$
			Since both $\rstr{\phi}_\Tot^{\NN^\NN}$ and $r$ are surjective, so is $\psi$.
			
			Furthermore, define the map $s\colon \opcN \to \opcN$ by $s(\alpha)(k) \dfeq \alpha\big(\frac{k (k+1)}{2}\big)$. We have $s(\infty) = \infty$ while for $n \in \NN$ $s(n)$ computes the $n$-th term of the sequence $(0, 1, 1, 2, 2, 2, 3, 3, 3, 3, 4\ldots)$ which repeats every natural number one more time than its value.
			
			Let
			$$U \dfeq \st[2]{n \in \Tot}{\some{k}{\NN_{< s(\psi_n)}}{\psi_k = \psi_n}}.$$
			
			\begin{itemize}
				\item\proven{$U$ is an open subset in $\Tot$}
					Rewrite $U$ as
					$$U = \st{n \in \Tot}{n < s(\psi_n) \lor \Big(\psi_n < \infty \land \some{k}{\NN_{< s(\psi_n)}}{\psi_k = \psi_n}\Big)}.$$
					The statement $n < s(\psi_n)$ is open. Assuming $\psi_n < \infty$, the condition $\psi_k = \psi_n$ is decidable, and since $\NN_{< s(\psi_n)}$ is overt, $\some{k}{\NN_{< s(\psi_n)}}{\psi_k = \psi_n}$ is open. By the dominance axiom, $\psi_n < \infty \land \some{k}{\NN_{< s(\psi_n)}}{\psi_k = \psi_n}$ is open as well.
			\end{itemize}
			
			Clearly $U = \psi^{-1}\big(\psi(U)\big)$, so $V \dfeq \psi(U)$ is an open subset of $\opcN$.
			
			\begin{itemize}
				\item\proven{the set $S_m \dfeq s^{-1}(m)$ is contained in $V$ for no $m \in \NN$}
					We have
					$$S_m \dfeq s^{-1}(m) = \left\{\tfrac{m (m+1)}{2}, \tfrac{m (m+1)}{2} + 1, \ldots, \tfrac{(m+1) (m+2)}{2} - 1\right\} = \tfrac{m (m+1)}{2} + \NN_{< m+1}.$$
					For every $i \in S_m$ there is $n_i \in \NN$ such that $\psi_{n_i} = i$. Each of the numbers $n_i$ is an element of $U$ when we can find a witness $k$ for the condition $\psi_k = \psi_{n_i}$ in the definition of $U$. Note that different $n_i$s cannot receive the same witness $k$, and as there are $m + 1$ numbers $n_i$, but only $m$ possible witnesses $k$, not all of $n_k$s are in $U$, therefore not all elements of $S_m$ are in $V$.
			\end{itemize}
			
			Obviously $\infty \in V$. For any $r \in \RR_{> 0}$ the ball $\ball{\infty}{r}$ is not contained in $V$ because that would also mean that $S_m$ is contained in $V$ for large enough $m$.\footnote{The complement of the set $V$ from the proof is neither finite nor infinite. In recursion theory such sets are called \df{immune}. In fact, for any $V$ satisfying the claim of the proposition in any model of synthetic topology in which \wso holds and $\opn \subseteq \nnst$, the complement of $V$ must be immune. Indeed, it cannot be bounded (much less finite) since otherwise we would have a ball with positive radius around $\infty$ in $V$. And if an injective sequence $\NN \to V^C$ existed we could replace it with a strictly increasing sequence which would induce a map $e\colon \opcN \to \opcN$. The set $e^{-1}(V) = \{\infty\}$ would then be open in $\opcN$, in violation of \wso[].}
		\end{proof}
		
		This proposition tells us that $\opcN$ is not (weakly) metrized (and by Theorem~\ref{Theorem: metrization_of_No}, because \wso holds, that it is not compact). The immediate consequence is that the Baire space $\Baire$ and the Cantor space $\Cantor$ are not (weakly) metrized\footnote{The fact that~$\Baire$ is not (weakly) metrized is essentially a result of Friedberg's who constructed an effective but not partial recursive operator~\cite{Friedberg_RM_1958:_un_contreexample_relatif_aux_fonctionnelles_recursives}.} or compact either since if they were, so would be their retract $\opcN$.
		
		With a little more effort we can extend this example considerably. Recall that the domain of a convergent sequence in a complete metric space can be extended to $\opcN$. In particular, if a complete metric space $\mtr{X}$ has an accumulation point, an application of dependent choice yields an injective map from $\opcN$ into $\mtr{X}$. Should the image of this map have the subspace intrinsic topology, $\mtr{X}$ cannot be metrized since otherwise $\opcN$ would be metrized as well by Corollary~\ref{Corollary: metrization_of_subspaces}. We show this idea works.
		
		\begin{lemma}\label{Lemma: stable_closure_of_subcompact_is_subcompact}
			Let $K \subseteq X$. Then $K$ is subcompact in $X$ if and only if its double complement ${K^C}^C = X \setminus (X \setminus K)$ is subcompact in $X$.
		\end{lemma}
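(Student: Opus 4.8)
The plan is to unwind the definition of subcompactness and reduce the claim to a pointwise statement about truth values. Recall (the definition of subcompact subsets together with the characterisation of subcompactness of $K$ as compactness of the inclusion $K \hookrightarrow X$) that $K$ is subcompact in $X$ precisely when for every $U \in \optp(X)$ the truth value of $\xall{x}{K}{x \in U}$ is open; likewise ${K^C}^C = X \setminus (X \setminus K)$ is subcompact in $X$ precisely when for every $U \in \optp(X)$ the truth value of $\xall{x}{{K^C}^C}{x \in U}$ is open. Hence it suffices to show that for \emph{each} open subset $U \subseteq X$ the two truth values
$$\xall{x}{K}{x \in U} \qquad \text{and} \qquad \xall{x}{{K^C}^C}{x \in U}$$
coincide; the equivalence of the two subcompactness conditions then follows immediately.

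For the inclusion $K \subseteq {K^C}^C$ note that $x \in K$ entails $\lnot\lnot(x \in K)$, so $\xall{x}{{K^C}^C}{x \in U}$ implies $\xall{x}{K}{x \in U}$ for any $U$ whatsoever. For the reverse implication I would use that in this setting $\opn \subseteq \nnst$ (valid here because Markov Principle holds, so open truth values are $\lnot\lnot$-stable): assuming $\xall{x}{K}{x \in U}$, i.e. $\all{x}{X}{x \in K \implies x \in U}$, take $x \in {K^C}^C$, that is $\lnot\lnot(x \in K)$. Then $\lnot\lnot(x \in U)$, and since $U$ is open the truth value of $x \in U$ is stable, so $x \in U$. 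This gives $\xall{x}{{K^C}^C}{x \in U}$, completing the equality of the two truth values, and with it the lemma.

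There is essentially no hard step here: the whole content is the observation that, for open (hence stable) predicates, membership is $\lnot\lnot$-stable, so replacing the domain of a universal quantifier over an open predicate by its double complement does not change the value of the quantifier. The only point requiring care is to invoke the standing assumptions of the section ($\tst = \opn$, with $\opn \subseteq \nnst$ coming from Markov Principle) exactly when passing from $\lnot\lnot(x \in U)$ to $x \in U$; without $\lnot\lnot$-stability of open truth values the statement would not go through.
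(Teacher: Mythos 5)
Your proof is correct and follows essentially the same route as the paper's: both reduce the claim to showing that, for each open $U$, the conditions $K \subseteq U$ and ${K^C}^C \subseteq U$ are equivalent, with the nontrivial direction resting on $\lnot\lnot$-stability of open truth values (Markov Principle). The paper phrases this as a chain of subset inclusions rather than pointwise on truth values, but the content is identical.
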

		\begin{proof}
			For any open set $U \in \tp(X)$ we have
			$${K^C}^C \subseteq U \implies K \subseteq U \implies {K^C}^C \subseteq {U^C}^C \implies {K^C}^C \subseteq U$$
			(the last implication holding by Markov principle), so $K$ is subcompact if and only if ${K^C}^C$ is.
		\end{proof}
		
		\begin{lemma}\label{Lemma: element_together_with_neighbours_subcompact_in_No}
			Recall the successor map $\succm\colon \opcN \to \opcN$ and the predecessor map $\predm\colon \opcN \to \opcN$. For an arbitrary $u \in \opcN$ the following holds.
			\begin{enumerate}
				\item
					$$\intcc[\opcN]{\predm(u)}{\succm(u)} = \big(\opcN_{< \predm(u)} \cup \opcN_{> \succm(u)}\big)^C =$$
					$$= \big(\opcN_{< \predm(u)} \cup \intoo[\opcN]{\predm(u)}{u} \cup \intoo[\opcN]{u}{\succm(u)} \cup \opcN_{> \succm(u)}\big)^C = {\big\{\predm(u), u, \succm(u)\big\}^C}^C$$
				\item
					The set $\intcc[\opcN]{\predm(u)}{\succm(u)}$ is subcompact in $\opcN$.
			\end{enumerate}
		\end{lemma}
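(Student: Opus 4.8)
The plan is to prove the two items separately: item (1) is obtained from the definitions together with two small computations on decreasing binary sequences and some bookkeeping with double negations, all of it intuitionistic; item (2) then follows from (1) and Lemma~\ref{Lemma: stable_closure_of_subcompact_is_subcompact}.

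For (1), the first equality is immediate: since $a \leq x$ abbreviates $\lnot(x < a)$ and $x \leq b$ abbreviates $\lnot(b < x)$, De Morgan gives $\intcc[\opcN]{a}{b} = \st{x \in \opcN}{\lnot(x < a \lor b < x)} = (\opcN_{< a} \cup \opcN_{> b})^C$, specialized to $a = \predm(u)$, $b = \succm(u)$. The content of the second equality is the claim that the two inserted open intervals are \emph{literally} empty, $\intoo[\opcN]{\predm(u)}{u} = \emptyset = \intoo[\opcN]{u}{\succm(u)}$; once this is known, the union in the second displayed set is the same set as in the first, so the equality holds on the nose. To see $\intoo[\opcN]{\predm(u)}{u} = \emptyset$: if $\predm(u) < x < u$, then there are $k, l \in \NN$ with $u_{k+1} = 0$, $x_k = 1$, $x_l = 0$, $u_l = 1$; since $u$ and $x$ are decreasing, $u_l = 1$ forces $l \leq k$, and then $x_k = 1$ forces $x_l = 1$, contradicting $x_l = 0$. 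The emptiness of $\intoo[\opcN]{u}{\succm(u)}$ is analogous, splitting on whether the witness $l$ for $x < \succm(u)$ is $0$ (so $x_0 = 0$, hence $x$ is constantly $0$) or some $l \geq 1$ with $u_{l-1} = 1$.

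For the third equality I would prove both inclusions of $(\opcN_{<\predm(u)} \cup \opcN_{>\succm(u)})^C = {\big\{\predm(u), u, \succm(u)\big\}^C}^C$. The left-hand side is $\intcc[\opcN]{\predm(u)}{\succm(u)}$, which is defined by a negative (hence $\lnot\lnot$-stable) formula, and each of $x = \predm(u)$, $x = u$, $x = \succm(u)$ implies $\predm(u) \leq x \leq \succm(u)$ (using reflexivity of $\leq$ and $\predm(u) \leq u \leq \succm(u)$, both of which hold because $u$ is decreasing); so $\lnot\lnot$ of that disjunction implies membership in the interval, giving $\supseteq$. For $\subseteq$, assume $\predm(u) \leq x \leq \succm(u)$ and, to prove $\lnot\lnot(x = \predm(u) \lor x = u \lor x = \succm(u))$, assume in addition $x \neq \predm(u)$, $x \neq u$, $x \neq \succm(u)$; the goal is now $\bot$, so the double-negated statements obtained from tightness may be used freely: $\lnot(x < \predm(u))$ with $x \neq \predm(u)$ yields $\lnot\lnot(\predm(u) < x)$, similarly $\lnot\lnot(x < \succm(u))$, and $x \neq u$ yields $\lnot\lnot(x < u \lor u < x)$. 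Picking disjuncts, either $\predm(u) < x < u$ or $u < x < \succm(u)$, i.e.\ $x$ lies in one of the empty intervals above — contradiction. Note that Markov's principle is \emph{not} needed anywhere in (1).

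For (2), by (1) we have $\intcc[\opcN]{\predm(u)}{\succm(u)} = {\big\{\predm(u), u, \succm(u)\big\}^C}^C$, so by Lemma~\ref{Lemma: stable_closure_of_subcompact_is_subcompact} it is subcompact in $\opcN$ if and only if the three-element set $\{\predm(u), u, \succm(u)\}$ is. But the latter is the image of the finite — hence compact — set $\NN_{< 3}$ under the map $0 \mapsto \predm(u)$, $1 \mapsto u$, $2 \mapsto \succm(u)$, and is therefore subcompact in $\opcN$ by Proposition~\ref{Proposition: images_and_finite_products_of_subovert/subcompact_subsets}. This is the only place Markov's principle (via Lemma~\ref{Lemma: stable_closure_of_subcompact_is_subcompact}) enters, which is fine since we are in Russian constructivism. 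I expect the only real friction to be the two computations showing the open intervals are empty and the careful intuitionistic handling of double negations in the third equality; the rest is routine.
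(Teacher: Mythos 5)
Your proof is correct, and its overall shape matches the paper's: the same chain of equalities in item (1), with the middle equality reduced to the emptiness of the two open intervals, and item (2) obtained from (1) via Lemma~\ref{Lemma: stable_closure_of_subcompact_is_subcompact} together with compactness of the finite set $\{\predm(u), u, \succm(u)\}$ and preservation of subcompactness under images. The one genuine divergence is in the third equality of (1). The paper proves it by invoking Markov's principle to rewrite $t > a \land t < b$ as $\lnot(t \leq a \lor t \geq b)$ and then pushing negations through the four-fold union; you instead prove the two inclusions directly, using only tightness of the strict order on $\opcN$, the $\lnot\lnot$-stability of the (negatively defined) closed interval, and the already-established emptiness of $\intoo[\opcN]{\predm(u)}{u}$ and $\intoo[\opcN]{u}{\succm(u)}$. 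Your claim that Markov's principle is not needed anywhere in (1) is accurate, and your version of that step is therefore valid in any model of the ambient theory rather than only under MP; the paper's version is a little quicker to write down but is tied to MP. Since the lemma is deployed only in the Russian-constructivism section, where MP holds and is in any case indispensable for item (2) through Lemma~\ref{Lemma: stable_closure_of_subcompact_is_subcompact}, the added generality buys nothing in context --- but it is the cleaner argument, and your explicit verifications that the two open intervals are empty (which the paper dismisses as ``easy to see'') are exactly right.
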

		\begin{proof}
			\begin{enumerate}
				\item
					The first equality holds by definition of $\leq$. It is easy to see 
					$$\intoo[\opcN]{\predm(u)}{u} = \intoo[\opcN]{u}{\succm(u)} = \emptyset$$
					which takes care of the second equality. For the final one, observe that for $t \in \opcN$
					$$t > a \land t < b \iff \lnot\lnot(t > a) \land \lnot\lnot(t < b) = \lnot(t \leq a \lor t \geq b)$$
					by Markov principle, so
					\begin{align*}
						&\lnot\Big(t < \predm(u) \lor \big(t > \predm(u) \land t < u\big) \lor \big(t > u \land t < \succm(u)\big) \lor t > \succm(u)\Big)\\
						\iff\quad &t \geq \predm(u) \land \lnot\lnot\big(t \leq \predm(u) \lor t \geq u\big) \land \lnot\lnot\big(t \leq u \lor t \geq \succm(u)\big) \land t \leq \succm(u)\\
						\iff\quad &\lnot\lnot\Big(t \geq \predm(u) \land \big(t \leq \predm(u) \lor t \geq u\big) \land \big(t \leq u \lor t \geq \succm(u)\big) \land t \leq \succm(u)\Big)\\
						\iff\quad &\lnot\lnot\Big(\big(t = \predm(u) \lor t \geq u\big) \land \big(t \leq u \lor t = \succm(u)\big)\Big)\\
						\iff\quad &\lnot\lnot\big(t = \predm(u) \lor t = u \lor t = \succm(u)\big).
					\end{align*}
				\item
					The set $\big\{\predm(u), u, \succm(u)\big\}$ is finite, hence compact, and since
					$$\intcc[\opcN]{\predm(u)}{\succm(u)} = {\big\{\predm(u), u, \succm(u)\big\}^C}^C$$
					by the previous item, the set $\intcc[\opcN]{\predm(u)}{\succm(u)}$ is subcompact in $\opcN$ by Lemma~\ref{Lemma: stable_closure_of_subcompact_is_subcompact}.
			\end{enumerate}
		\end{proof}
		
		\begin{theorem}\label{Theorem: accumulation_point_implies_embedding_of_No}
			Let $\mtr{X} = (X, d_\mtr{X})$ be a metric space with an accumulation point $p \in X$.
			\begin{enumerate}
				\item\label{Theorem(accumulation_point_implies_embedding_of_No)item: sequences}
					If $\mtr{X}$ is complete, there exist maps $a\colon \opcN \to \RR_{\geq 0}$, $b\colon \{-1\} \cup \opcN \to \RR_{\geq 0}$ and $y\colon \opcN \to X$ with the following properties (with $t \in \opcN$ when indexing $a$, $y$, and $t \in \{-1\} \cup \opcN$ when indexing $b$; also $\predm(0) \dfeq -1$ on $\{-1\} \cup \opcN$):
					\begin{itemize}
						\item
							$t < \infty \iff a_t > 0 \iff b_t > 0 \iff d_\mtr{X}(p, y_t) > 0$,
						\item
							the maps $a$, $b$ are (strictly) decreasing: for $t, u \in \opcN$
							$$t < u \implies a_t > a_u, \qquad\qquad t \leq  u \implies a_t \geq a_u,$$
							and for $t, u \in \{-1\} \cup \opcN$,
							$$t < u \implies b_t > b_u, \qquad\qquad t \leq  u \implies b_t \geq b_u,$$
						\item
							$d_\mtr{X}(p, y_t) \leq 2^{-\succm(t)}$,
						\item
							$a_t \geq b_{\predm(t)} \geq d_\mtr{X}(p, y_t) \geq a_{\succm(t)} \geq b_t$; if (and only if) $t \in \NN$, then the inequalities are strict, \ie $a_t > b_{t-1} > d_\mtr{X}(p, y_t) > a_{t+1} > b_t$.
					\end{itemize}
				\item\label{Theorem(accumulation_point_implies_embedding_of_No)item: embedding_of_No_has_strongly_closed_image}
					If the maps $a, b, y$ as above exist, then there is a (strongly) injective nonexpansive map $e\colon \opcN \to X$ such that $e(\infty) = p$, and the image of $e$ is a strongly closed subset of $X$.
			\end{enumerate}
		\end{theorem}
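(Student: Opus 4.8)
The plan is to prove the two parts in order, first constructing the three families and then using them to build and analyse $e$. \textbf{Part 1.} I would build $a$, $b$, $y$ on $\NN$ by dependent choice and extend them to $\opcN$ afterwards. Recursively, maintaining a strictly decreasing chain $a_0 > b_{-1} > d_\mtr{X}(p, y_0) > a_1 > b_0 > d_\mtr{X}(p, y_1) > \cdots$ with $d_\mtr{X}(p, y_n) \leq 2^{-(n+1)}$: to start, use that $p$ is an accumulation point to pick $y_0$ with $0 < d_\mtr{X}(p, y_0) < 2^{-1}$, then pick $b_{-1} \in \bigl(d_\mtr{X}(p, y_0),\, 2\,d_\mtr{X}(p, y_0)\bigr)$ and set $a_0 \dfeq b_{-1} + 1$; at step $n \geq 1$, pick $a_n \in \bigl(0,\, \inf\{d_\mtr{X}(p, y_{n-1}),\, 2^{-n}\}\bigr)$, then $b_{n-1} \in (0, a_n)$, then $y_n$ with $0 < d_\mtr{X}(p, y_n) < \inf\{b_{n-1},\, 2^{-(n+1)}\}$. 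Then $a, b \to 0$ and $(y_n)_n$ is Cauchy (its $n$-th tail lies in $\ball[\mtr{X}]{p}{2^{-n}}$), so by Proposition~\ref{Proposition: convergent_sequences_and_maps_from_No}, applied to the complete spaces $\RR$ and $\mtr{X}$, they extend to maps $a\colon \opcN \to \RR_{\geq 0}$, $b\colon \{-1\} \cup \opcN \to \RR_{\geq 0}$, $y\colon \opcN \to X$ with $a_\infty = b_\infty = 0$ and $y_\infty = p$. The listed properties hold on $\NN$ by construction; to transfer them to all of $\opcN$ one case-splits on the \emph{decidable} alternative $t < n$ versus $t \geq n$ for a suitable $n \in \NN$, uses that $\lim$ preserves non-strict order (Section~\ref{Section: real_numbers}), and uses Markov's principle to pass from $a_t > 0$ (equivalently $d_\mtr{X}(p, y_t) > 0$) to $t < \infty$. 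Note this construction yields $a_{\succm(t)} \leq d_\mtr{X}(p, y_t) \leq 2^{-\succm(t)}$, hence $a_{n+1} \leq 2^{-(n+1)}$, which is needed below.

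\textbf{Part 2, $e$ nonexpansive and strongly injective.} Take $e \dfeq y$, so $e(\infty) = p$. For $m < n$ in $\NN$, $d_\mtr{X}(y_m, y_n) \leq d_\mtr{X}(p, y_m) + d_\mtr{X}(p, y_n) \leq 2^{-(m+1)} + 2^{-(n+1)} \leq 2^{-m} = d_C(m, n)$; since $\NN$ is metrically dense in $\opcN$ and $\alpha \mapsto \inf\{n, \alpha\}$ is nonexpansive, this extends to $d_\mtr{X}(e(\alpha), e(\beta)) \leq d_C(\alpha, \beta)$, so $e$ is nonexpansive. For strong injectivity, let $\alpha \apart \beta$, say $\alpha < \beta$; then $\alpha < \infty$, so $\alpha = m \in \NN$ and $\beta \geq m+1 = \succm(m)$. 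The inequalities $d_\mtr{X}(p, y_t) \geq a_{\succm(t)} \geq b_t$ together with decreasingness of $a$, $b$ show that $t \mapsto d_\mtr{X}(p, y_t)$ is decreasing (strictly on $\NN$), so $d_\mtr{X}(p, y_\beta) \leq d_\mtr{X}(p, y_{m+1})$ and hence $d_\mtr{X}(e(m), e(\beta)) \geq d_\mtr{X}(p, y_m) - d_\mtr{X}(p, y_{m+1}) > a_{m+1} - b_m > 0$; the case $\beta < \alpha$ is symmetric.

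\textbf{Part 2, the image is closed.} For $x \in X$ one has $x \notin e(\opcN) \iff \all{\alpha}{\opcN}{y_\alpha \neq x} \iff \all{\alpha}{\opcN}{d_\mtr{X}(x, y_\alpha) > 0}$, using Markov's principle for the second step. I would show this is equivalent to the open statement $d_\mtr{X}(p, x) > 0 \land \xsome{N}{\NN}{\bigl(2^{-(N+1)} < d_\mtr{X}(p, x) \land \xall{m}{\NN_{<N+1}}{d_\mtr{X}(x, y_m) > 0}\bigr)}$. For ``$\Leftarrow$'', given such an $N$, case-split on the decidable alternative $\alpha < N+1$ versus $\alpha \geq N+1$: in the first case $d_\mtr{X}(x, y_\alpha) = d_\mtr{X}(x, y_m) > 0$, in the second $d_\mtr{X}(p, y_\alpha) \leq a_\alpha \leq a_{N+1} \leq 2^{-(N+1)} < d_\mtr{X}(p, x)$, so $d_\mtr{X}(x, y_\alpha) \geq d_\mtr{X}(p, x) - d_\mtr{X}(p, y_\alpha) > 0$. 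For ``$\Rightarrow$'', take $\alpha = \infty$ to get $d_\mtr{X}(p, x) > 0$, use the Archimedean property to obtain $N$, and take $\alpha = m < N+1$. Since the right-hand side is a conjunction of open statements with an $\NN$-indexed (hence overt) existential quantifier ranging over open statements, it is open; so $X \setminus e(\opcN)$ is open and $e(\opcN)$ is closed.

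\textbf{Part 2, the image is a subspace — the main obstacle.} This is where essentially all of the difficulty lies. One approach is to show $e(\opcN)$ is strongly located: the upper cut of $d_\mtr{X}(e(\opcN), x)$ is open since $\opcN$ is overt (Corollary~\ref{Corollary: overtness_of_standard_ctbs}), the lower cut is open by the same rewriting as in the previous paragraph, and $e(\opcN) = \st{x}{d_\mtr{X}(e(\opcN), x) = 0}$ using completeness of $\mtr{X}$; then Proposition~\ref{Proposition: when_strongly_located_subsets_are_subspaces} (together with Corollary~\ref{Corollary: reals_are_Hausdorff}) applies, provided one knows that stable closed subsets of $X$ are subspaces. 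Alternatively one builds a well-embedding $\tp(e(\opcN)) \to \tp(X)$ by hand: the interleaving of $a$ and $b$ furnishes, for each $m$, a separating radius $\rho_m$ (from the gaps $a_m - b_{m-1}$ and $a_{m+1} - b_m$) with $\ball[\mtr{X}]{y_m}{\rho_m} \cap e(\opcN) = \{y_m\}$, which takes care of opens of $e(\opcN)$ away from $p$; the genuinely delicate point is that an open $U \ni p$ of $e(\opcN)$ need not contain any metric tail (by the analogue of Proposition~\ref{Proposition: No_not_weakly_metrized}), so no union of balls can extend it, and one must weave an intrinsically open subset of $X$ around the omitted $y_m$'s — here I expect to need Markov's principle, decidability of the comparisons $\alpha < n$ in $\opcN$, and Lemma~\ref{Lemma: element_together_with_neighbours_subcompact_in_No}. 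Once $e(\opcN)$ is shown to be a subspace, it is strongly closed by the previous paragraph.
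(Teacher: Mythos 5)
Your Part~1 and the nonexpansiveness, strong injectivity and closedness claims in Part~2 are correct and follow essentially the paper's route (dependent choice to build the interleaved sequences on $\NN$, extension to $\opcN$ via Proposition~\ref{Proposition: convergent_sequences_and_maps_from_No}, and the estimate $d_\mtr{X}(y_i,y_j)\leq 2^{-i-1}+2^{-j-1}\leq d_C(i,j)$). The genuine gap is exactly where you locate it: the proof that $e(\opcN)$ is a subspace is absent, and neither of your two suggested routes closes it. Route (1) rests on Proposition~\ref{Proposition: when_strongly_located_subsets_are_subspaces}, which requires that stable closed subsets be subspaces; the theorem carries no such hypothesis and is applied in Section~\ref{Section: russian_constructivism_model}, a realizability setting where this is precisely what one does not have (if one had it, the whole theorem could be bypassed via Theorem~\ref{Theorem: metrization_of_Urysohn_space_implies_metrization_of_cmss}). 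Route (2) correctly lists the ingredients but is not an argument.

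The missing device is this. Given $U \in \tp(e(\opcN))$, associate to each ambient point $x \in \ball[\mtr{X}]{p}{1}$ the set of indices whose annulus catches $d_\mtr{X}(p,x)$:
$$A_x \dfeq \st{t \in \opcN}{a_t \geq d_\mtr{X}(p, x) \geq b_t}.$$
Countable choice applied to the disjunctions $d_\mtr{X}(p,x) > b_n \lor d_\mtr{X}(p,x) < a_{n+1}$ (available because $b_n < a_{n+1}$) yields a binary sequence whose retraction to $\opcN$ is a $u$ with $u \in A_x \subseteq \intcc[\opcN]{\predm(u)}{\succm(u)}$; since that interval is subcompact in $\opcN$ by Lemma~\ref{Lemma: element_together_with_neighbours_subcompact_in_No} and $A_x$ is closed in it ($\geq$ being a closed relation), $A_x$ is subcompact, so $e(A_x) \subseteq U$ is an open condition on $x$. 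Setting $V \dfeq \st{x \in \ball[\mtr{X}]{p}{1}}{e(A_x) \subseteq U}$, one checks $A_{y_u} = \{u\}$ to get $V \cap e(\opcN) = U$, and uses the dominance property to see $V$ is open in $X$. This is what replaces the nonexistent "metric tail at $p$": the interleaving of $a$ and $b$ was designed precisely so that $A_x$ is inhabited yet contained in three consecutive indices. Without this (or an equivalent) construction, the strong closedness claim — the part of the theorem that actually carries the weight in Section~\ref{Section: russian_constructivism_model} — is unproved.
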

		\begin{proof}
			\begin{enumerate}
				\item
					We inductively define sequences $a\colon \NN \to \RR_{> 0}$, $b\colon \ZZ_{\geq -1} \to \RR_{> 0}$ and $y\colon \NN \to X$ as follows. Let $a_0 \dfeq 1$. Now take $n \in \NN$, and assume that we know $a_n$. Define $b_{n-1} \dfeq \frac{a_n}{2}$, and let $y_n \in X$ be some element for which $0 < d_\mtr{X}(p, y_n) < b_{n-1}$ (it exists because $p$ is an accumulation point). Also, let $a_{n+1} \dfeq \frac{d_\mtr{X}(p, y_n)}{2}$. Dependent choice gives us the actual sequences which have the following properties for all suitable $n$:
					$$b_{n+1} < \frac{b_n}{4}, \qquad\qquad a_{n+1} < \frac{b_{n-1}}{2} = \frac{a_n}{4}, \qquad\qquad b_n = \frac{a_{n+1}}{2} < a_{n+1},$$
					$$a_n \leq 2^{-2n}, \qquad\qquad b_n \leq 2^{-2n-3}, \qquad\qquad d_\mtr{X}(p, y_n) < b_{n-1} \leq 2^{-2n-1} \leq 2^{-n-1},$$
					$$a_n > d_\mtr{X}(p, y_n) > b_n, \qquad\qquad \bigcup_{n \in \NN} \intoo{a_n}{b_n} = \intoo{0}{1}.$$
					These sequences can be extended to $a, b\colon \opcN \to \RR_{\geq 0}$, $y\colon \opcN \to X$ (which we by a slight abuse of notation denote by the same letters) by Proposition~\ref{Proposition: convergent_sequences_and_maps_from_No}. Notice that $a_\infty = b_\infty = 0$ and $y_\infty = p$, and that $a$, $b$, $y$ satisfy all required properties.
					
				\item
					Define $e\colon \opcN \to X$ by $e(t) \dfeq y_t$.
					
					For any $i, j \in \NN$, $i \neq j$, we have
					$$d_\mtr{X}(y_i, y_j) \leq d_\mtr{X}(p, y_i) + d_\mtr{X}(p, y_j) \leq 2^{-i-1} + 2^{-j-1} \leq 2^{-\inf\{i,j\}} = d_C(i, j).$$
					By Theorem~\ref{Theorem: extend_Lipschitz_maps_from_a_dense_subspace_into_a_completion} the map $e$ is Lipschitz with coefficient $1$, \ie nonexpansive.
					
					To prove that $e$ is strongly injective, take $t, u \in \opcN$, and assume $t \apart u$, \ie $t < u \lor t > u$. Without loss of generality $t < u$ (in particular $t \in \NN$ and $t+1 \leq u$). Then
					$$d_\mtr{X}(y_t, y_u) \geq d_\mtr{X}(p, y_t) - d_\mtr{X}(p, y_u) > a_{t+1} - a_u \geq 0.$$
					
					It remains to show that the image of $e$ is strongly closed. We need to see that it is closed, and that it has the subspace intrinsic topology. Closedness is easy: $\opcN$ is an inhabited \ctb[], so its image is as well, and therefore strongly located in $X$. Since $\RR$ is Hausdorff, $e(\opcN)$ is closed.
					
					As for the subspace topology, take any $U \in \tp(e(\opcN))$. For $x \in X$ we define
					$$A_x \dfeq \st{t \in \opcN}{a_t \geq d_\mtr{X}(p, x) \geq b_t}.$$
					We claim that for all $x \in \ball[\mtr{X}]{p}{1}$ the set $A_x$ is inhabited, though not by much. More precisely, we claim that there exists $u \in \opcN$ such that $u \in A_x \subseteq \intcc[\opcN]{\predm(u)}{\succm(u)}$. Define $\alpha$ to be a sequence $\NN \to \two$ such that $\alpha_n = 1 \implies d_\mtr{X}(p, x) < a_{n+1}$ and $\alpha_n = 0 \implies d_\mtr{X}(p, x) > b_n$. Such $\alpha$ exists by countable choice because $b_n < a_{n+1}$. Let $u \dfeq r(\alpha)$ where $r\colon \two^\NN \to \opcN$ is the standard retraction. In particular this means that if $u \in \NN$, then $\alpha_u$ is the first zero in the sequence $\alpha$.
					\begin{itemize}
						\item\proven{$u \in A_x$}
							First we prove $a_u \geq d_\mtr{X}(p, x)$. If $u = 0$, we are done since $x \in \ball[\mtr{X}]{p}{1}$. Assume now $u \geq 1$, and $a_u < d_\mtr{X}(p, x)$. This means there exists $n \in \NN$ such that $2^{-n} < d_\mtr{X}(p, x)$. If $n < u$, then $\alpha_n = 1$, so $d_\mtr{X}(p, x) < a_{n+1} < d_\mtr{X}(p, y_n) \leq 2^{-n-1} < 2^{-n}$, a contradiction. If $n \geq u$, then $u \in \NN$, and $\alpha_u = 0$ while $\alpha_{u-1} = 1$ (recall $u \geq 1$), so $d_\mtr{X}(p, x) < a_u$ which is a contradiction also.
							
							Second, assume $d_\mtr{X}(p, x) < b_u$. Then $b_u > 0$, so $u \in \NN$. Consequently $\alpha_u = 0$, meaning $d_\mtr{X}(p, x) > b_u$, a contradiction.
						
						\item\proven{$A_x \subseteq \intcc[\opcN]{\predm(u)}{\succm(u)}$}
							Take any $t \in A_x$; then $a_t \geq d_\mtr{X}(p, x) \geq b_t$ holds.
							
							Assume $t < \predm(u)$. Then $t \in \NN$, and $t+1 < u$, so $\alpha_{t+1} = 1$ which means $d_\mtr{X}(p, x) < a_{t+2} < b_t$, a contradiction.
							
							Assume $t > \succm(u)$. Then $u \in \NN$, and $u+2 \leq t$. We have $\alpha_u = 0$ which means $d_\mtr{X}(p, x) > b_u > a_{u+2} \geq a_t$, a contradiction.
						
						\item\proven{$A_x$ is subcompact in $\opcN$}
							The set $A_x$ is closed in $\intcc[\opcN]{\predm(u)}{\succm(u)}$ (because $\geq$ is a closed relation since $<$ is open) which is in turn subcompact in $\opcN$ by Lemma~\ref{Lemma: element_together_with_neighbours_subcompact_in_No}.
					\end{itemize}
					Consequently $e(A_x)$ is subcompact in $e(\opcN)$, so the set
					$$V \dfeq \st{x \in \ball[\mtr{X}]{p}{1}}{e(A_x) \subseteq U}$$
					is open in $\ball[\mtr{X}]{p}{1}$, therefore also in $X$ (since $\opn$ is a dominance). We prove the theorem once we conclude $V \cap e(\opcN) = U$.
					\begin{itemize}
						\item\proven{$A_{y_u} = \{u\}$ for all $u \in \opcN$}
							Since $a_u \geq d_\mtr{X}(p, y_u) \geq b_u$, we have $u \in A_{y_u}$. Conversely, take any $t \in A_{y_u}$. If $t < u$, then $t \in \NN$, and we obtain a contradiction $d_\mtr{X}(p, y_u) > a_{t+1} \geq a_u \geq d_\mtr{X}(p, y_u)$. If $t > u$, we similarly obtain a contradiction $d_\mtr{X}(p, y_u) > a_{u+1} \geq a_t \geq d_\mtr{X}(p, y_u)$. Thus $t = u$.
						\item\proven{$V \cap e(\opcN) = U$}
							The claim follows from $e(A_{y_u}) \subseteq U \iff e(\{u\}) \subseteq U \iff y_u \in U$.
					\end{itemize}
			\end{enumerate}
		\end{proof}
		
		\begin{lemma}\label{Lemma: metrization_and_compactness_extend_from_potentially_whole_to_the_whole_space}
			Let $A \subseteq X$, $A^C = \emptyset$, and let $\mtr{X} = (X, d_\mtr{X})$ be a metric space, with $\mtr{A} = (A, d_\mtr{A})$ its metric subspace.
			\begin{enumerate}
				\item
					If $\mtr{A}$ is metrized, then so is $\mtr{X}$.
				\item
					If $A$ is compact, then so is $X$.
			\end{enumerate}
		\end{lemma}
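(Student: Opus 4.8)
The plan is to reduce both items to one structural observation: in this setting $\opn \subseteq \nnst$ (by Markov Principle), so every open truth value on $X$ is $\lnot\lnot$-stable, while $A^C = \emptyset$ says exactly $\xall{x}{X}{\lnot\lnot(x \in A)}$. Combining these with the law $\lnot\lnot(p \land q) \iff \lnot\lnot p \land \lnot\lnot q$ from Section~\ref{Section: intuitionism}, for any $U \in \tp(X)$ and $x \in X$ one gets
$$x \in U \iff \lnot\lnot(x \in U) \iff \lnot\lnot(x \in A) \land \lnot\lnot(x \in U) \iff \lnot\lnot(x \in A \cap U).$$
In particular, if two open subsets $U, V \in \tp(X)$ satisfy $A \cap U = A \cap V$, then $U = V$; this is the tool I would use for item 1, and a variant of it for item 2.

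For item 1 I would start from an arbitrary $U \in \tp(X)$. Its restriction $U \cap A$ equals $\iota^{-1}(U)$ for the inclusion $\iota \colon A \hookrightarrow X$, hence lies in $\tp(A)$ (all maps are continuous), and since $\mtr{A}$ is metrized it is an overtly indexed union of balls of $\mtr{A}$, say $U \cap A = \bigcup_{i \in I} \ball[\mtr{A}]{a_i}{r_i}$ with overt indexing $I \to A \times \RR$. I would then set $V \dfeq \bigcup_{i \in I} \ball[\mtr{X}]{a_i}{r_i}$; this is an overtly indexed union of balls of $\mtr{X}$, because its indexing is the composite of the overt map $I \to A \times \RR$ with $A \times \RR \hookrightarrow X \times \RR$, still overt by Proposition~\ref{Proposition: constructions_of_overt_maps}. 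Since $d_\mtr{A}$ is the restriction of $d_\mtr{X}$, we have $\ball[\mtr{X}]{a_i}{r_i} \cap A = \ball[\mtr{A}]{a_i}{r_i}$, so $V \cap A = U \cap A$, and the observation above yields $V = U$. Thus every $U$ is metrically open, i.e. $\mtr{X}$ is metrized.

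For item 2 I would show that for arbitrary $U \in \tp(X)$ the truth value $\xall{x}{X}{x \in U}$ is open. First, $\xall{x}{X}{x \in U} \iff \xall{x}{A}{x \in U}$: the forward direction is immediate, and for the converse, assuming $\xall{x}{A}{x \in U}$, the displayed computation makes $x \in U$ (for $x \in X$) equivalent to $\lnot\lnot(x \in A \cap U)$, and under the assumption $x \in A \cap U$ is equivalent to $x \in A$, so $\lnot\lnot(x \in A \cap U) = \lnot\lnot(x \in A) = \top$. Second, $\xall{x}{A}{x \in U} \iff \xall{x}{A}{x \in U \cap A}$ since $U$ and $U \cap A$ agree on $A$, and the right-hand side is open because $U \cap A \in \tp(A)$ and $A$ is compact. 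Hence $\xall{x}{X}{x \in U}$ is open, so $X$ is compact.

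I do not expect a genuine obstacle; the only thing needing care is the double-negation bookkeeping — every appeal to Markov Principle must go through the stability of $x \in U$ for $x$ ranging over $X$, so each step must be arranged so that the sole quantifier over non-open data is the harmless $\lnot\lnot(x \in A)$. Everything else (restriction of opens along $\iota$, re-expanding the ball family in $\mtr{X}$, preservation of overt indexings) is routine and already available from earlier in the text.
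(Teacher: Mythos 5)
Your proof is correct and follows essentially the same route as the paper: item 1 uses the identical construction (restrict $U$ to $A$, expand the balls back into $\mtr{X}$, and use stability of open truth values together with $\lnot\lnot(x\in A)=\top$ to identify the two open sets), and the overtness bookkeeping you supply matches the paper's intent. For item 2 the paper simply invokes its Lemma~\ref{Lemma: stable_closure_of_subcompact_is_subcompact} (subcompactness passes to the double complement), which your direct argument via $\xall{x}{X}{x \in U} \iff \xall{x}{A}{x \in U \cap A}$ re-proves inline; the content is the same.
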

		\begin{proof}
			\begin{enumerate}
				\item
					Take any $U \in \tp(X)$. Then $U \cap A$ is open in $A$, so under the assumption that $\mtr{A}$ is metrized, we can write it as an overtly indexed union
					$$U \cap A = \bigcup_{i \in I} \ball[\mtr{A}]{a_i}{r_i}$$
					where $a_i \in A$, $r_i \in \RR$.
					
					We claim that $U = \bigcup_{i \in I} \ball[\mtr{X}]{a_i}{r_i}$. Since the sets both on the left and the right are open, and therefore $\lnot\lnot$-stable, we can just as well prove that their complements match. But for any $V \in \tp(X)$ and $x \in X$ we have
					$$\lnot(x \in V \land x \in A) \iff \lnot(\lnot\lnot(x \in V) \land \lnot\lnot(x \in A)) \iff \lnot\lnot\lnot(x \in V) \iff \lnot(x \in V),$$
					so $(V \cap A)^C = V^C$. Since $U \cap A = \bigcup_{i \in I} \ball[\mtr{A}]{a_i}{r_i}$ and $\left(\bigcup_{i \in I} \ball[\mtr{X}]{a_i}{r_i}\right) \cap A = \bigcup_{i \in I} \ball[\mtr{A}]{a_i}{r_i}$, the result follows.
				
				\item
					If $A$ is compact, it is subcompact in $X$, so by Lemma~\ref{Lemma: stable_closure_of_subcompact_is_subcompact} ${A^C}^C = X$ is subcompact in $X$, \ie $X$ is compact.
			\end{enumerate}
		\end{proof}
		
		\begin{theorem}
			Let $\mtr{X} = (X, d)$ be a metric space with a metrically dense subovert subset (\eg a metrically separable space).
			\begin{enumerate}
				\item
					If $\mtr{X}$ has an accumulation point, then it is neither metrized nor compact.
				\item
					Say that $\mtr{X}$ is \df{locally semilocated} when for every $x \in X$ there exists $r \in \RR_{> 0}$ such that the distance $d\big(\st{y \in X}{0 < d(x, y) < r}, x\big)$ is an extended real number. The following are equivalent.
					\begin{itemize}
						\item
							$\mtr{X}$ is metrized and locally semilocated.
						\item
							The metric $d$ is equivalent to the discrete metric.
					\end{itemize}
					In particular, if either (and therefore both) of these conditions holds, then $X$ has decidable equality.
			\end{enumerate}
		\end{theorem}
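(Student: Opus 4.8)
The plan is to reduce both parts to facts already proved in this section: that $\opcN$ with the comparison metric is not even weakly metrized in Russian constructivism (Proposition~\ref{Proposition: No_not_weakly_metrized}), hence — since \wso holds — not compact (Theorem~\ref{Theorem: metrization_of_No}); that here $\opn=\Ros$ is a dominance with $\opn\subseteq\nnst$, so stable closed subsets are subspaces (via Theorem~\ref{Theorem: standard_special_case_of_synthetic_topology} together with transitivity of stable closedness, which holds because Markov Principle makes $\Sigma^0_1$ truth values stable and $\Ros$ is closed under finite joins); and the transfer lemmas along $\lnot\lnot$-dense subspaces (Lemma~\ref{Lemma: metrization_and_compactness_extend_from_potentially_whole_to_the_whole_space}) and along Lipschitz bijections (Lemma~\ref{Lemma: Lipschitz_bijection_reflects_metrization}). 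Throughout, $\mtr{X}$ has a subovert metrically dense subset by hypothesis, which is precisely what is needed both to form completions and to apply Proposition~\ref{Proposition: metric_to_intrinsic_separability_in_weakly_metrized_spaces}.

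For part 1, suppose $\mtr{X}$ has an accumulation point $p\in X$. I would pass to the completion $i\colon\mtr{X}\to\cmtr{X}$, observe that $i(p)$ is an accumulation point of the complete space $\cmtr{X}$, and apply Theorem~\ref{Theorem: accumulation_point_implies_embedding_of_No} to get a nonexpansive injective $e\colon\opcN\to\cmtr{X}$ with $e(\infty)=i(p)$ and $e(\opcN)$ strongly closed, hence a subspace, of $\cmtr{X}$. Put $\mtr{A}\dfeq e^{-1}(i(X))\subseteq\opcN$; since $\NN\cup\{\infty\}\subseteq\mtr{A}$ and $\opcN\setminus(\NN\cup\{\infty\})=\emptyset$ (by Markov and tightness of the order on $\opcN$), we get $\mtr{A}^C=\emptyset$, and $\NN$ is a countable metrically dense subset of the metric subspace $\mtr{A}\subseteq\opcN$. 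The composite $g\dfeq i^{-1}\circ e|_{\mtr{A}}$ is a nonexpansive bijection from $\mtr{A}$ onto $\mtr{A'}\dfeq i^{-1}(e(\opcN))\subseteq X$, and $\mtr{A'}$ is stable closed in $X$, being the preimage of the stable closed set $e(\opcN)$ under the (necessarily continuous) completion map — hence $\mtr{A'}$ is a subspace of $X$, and it inherits the countable metrically dense subset $g(\NN)=\{y_n\}$. Now if $\mtr{X}$ were metrized, $\mtr{A'}$ would be metrized by Theorem~\ref{Theorem: metrization_and_subspaces}(2), so $\mtr{A}$ would be metrized by Lemma~\ref{Lemma: Lipschitz_bijection_reflects_metrization}, so $\opcN$ would be metrized by Lemma~\ref{Lemma: metrization_and_compactness_extend_from_potentially_whole_to_the_whole_space}(1) — contradicting Proposition~\ref{Proposition: No_not_weakly_metrized}. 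The non-compactness claim is parallel: if $\mtr{X}$ were compact then $\mtr{A'}$, being strongly closed in a compact set, would be compact by Corollary~\ref{Corollary: strongly open/closed in overt/condensed/compact} (we have $\tst=\opn$), hence $\mtr{A}=g^{-1}(\mtr{A'})$ would be compact (image of compact under the continuous $g^{-1}$), hence $\opcN$ would be compact by Lemma~\ref{Lemma: metrization_and_compactness_extend_from_potentially_whole_to_the_whole_space}(2), which is impossible in Russian constructivism.

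For part 2, the implication from ``$d$ equivalent to $d_D$'' is the easy direction. From the subovert metrically dense subset $D$ one first shows $D=X$: every $\{x\}$ equals the $d_D$-ball $\ball[d_D]{x}{1}$, hence is $d_D$-metrically open, hence $d$-metrically open, hence (being a one-point open set) a $d$-ball, hence an \emph{inhabited} $d$-ball, which must therefore meet $D$, forcing $x\in D$. So $X$ is overt, whence $(X,d_D)$ is metrized by Proposition~\ref{Proposition: overt_sets_with_decidable_equality_metrized}, giving $\mtp(X,d)=\mtp(X,d_D)=\tp(X)$; local semilocatedness then follows because each $\{x\}$ is a $d$-ball, so the punctured ball $\st{y}{0<d(x,y)<s}$ is empty and its distance to $x$ is the extended real $\infty$. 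Conversely, assume $\mtr{X}$ metrized and locally semilocated; then $X$ is overt (Proposition~\ref{Proposition: metric_to_intrinsic_separability_in_weakly_metrized_spaces}(3)). For $a\in X$ pick $r_a$ with $\delta_a\dfeq d(\st{y}{0<d(a,y)<r_a},a)$ an extended real. If $\delta_a=0$ then $a$ is an accumulation point of $\mtr{X}$, contradicting part 1; so $\lnot(\delta_a=0)$, and since ``$0<\delta_a$'' is an open, hence (by Markov) $\lnot\lnot$-stable, statement, this upgrades to $\delta_a>0$. Replacing $r_a$ by $\inf\{r_a,\delta_a\}$ gives $\ball{a}{r_a}=\{a\}$, and cotransitivity of $<$ on $\RR$ then yields, for each $b$, $d(a,b)>0$ or $d(a,b)<r_a$, i.e. $a\neq b$ or $a=b$; so $X$ has decidable equality. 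Finally $(X,d_D)$ is metrized (Proposition~\ref{Proposition: overt_sets_with_decidable_equality_metrized}), hence $\mtp(X,d)=\tp(X)=\mtp(X,d_D)$, i.e. $d$ is equivalent to $d_D$; the ``in particular'' is the decidable equality just obtained.

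The step I expect to require real care is the claim in part 1 that $\mtr{A'}$ is a subspace of $X$: this is not a formal consequence of anything proved earlier but rests on stable closed subsets being subspaces in Russian constructivism, which in turn needs transitivity of stable closedness, and that is where Markov Principle (stability of semidecidable truth values) and closure of $\Ros$ under finite joins have to be combined. Once that is in hand, the rest is bookkeeping: forming the completion, invoking the $\opcN$-embedding theorem, and chaining the metrization/compactness transfer lemmas, plus the routine extended-real manipulations in part 2.
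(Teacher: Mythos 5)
Your part 2 is essentially the paper's own argument and is fine (modulo the cosmetic point that one should extract decidable equality from $\ball{x}{\epsilon}=\{x\}$ via cotransitivity before invoking Proposition~\ref{Proposition: overt_sets_with_decidable_equality_metrized}). The problem is in part 1, at exactly the step you flagged. You cut \emph{down} to $\mtr{A'}=i^{-1}(e(\opcN))\subseteq X$ and need it to be a subspace of $X$; you propose to get this from a general principle ``stable closed subsets are subspaces in Russian constructivism,'' justified by transitivity of stable closedness. Neither the principle nor your sketch of it is available. Theorem~\ref{Theorem: standard_special_case_of_synthetic_topology} only says that stable closed subsets are subspaces \emph{if} stable closedness is transitive, and your proposed derivation of transitivity does not go through: given $F$ stable closed in $X$ and $G$ stable closed in $F$, to show $\lnot(x\in G)$ is open for $x\in X$ you would need to combine the open truth value $\lnot(x\in F)$ with a truth value that is only known to be open \emph{under the hypothesis} $x\in F$ — but the dominance axiom for $\Ros$ only licenses such a combination when the hypothesis is \emph{open}, and $x\in F$ is closed, not open. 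Markov's principle and closure of $\Ros$ under finite joins do not repair this. The paper never asserts that $\nnst[\cld]$ is a codominance in the effective topos, and its proof is structured precisely to avoid needing it.

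The paper's route goes in the opposite direction: it \emph{enlarges} $X$ to $X'\dfeq X\cup e(\opcN)$ inside $\cmpl{\mtr{X}}$. Since every point of $e(\opcN)$ is $\lnot\lnot$ equal to some $e(n)$ or to $e(\infty)=p$, all of which lie in $X$, one has $X^C=\emptyset$ in $X'$, so Lemma~\ref{Lemma: metrization_and_compactness_extend_from_potentially_whole_to_the_whole_space} transfers metrization/compactness from $X$ up to $X'$. Then Theorem~\ref{Theorem: accumulation_point_implies_embedding_of_No}(\ref{Theorem(accumulation_point_implies_embedding_of_No)item: embedding_of_No_has_strongly_closed_image}) is applied with $X'$ as the ambient space (its hypotheses only require the sequences $a,b,y$, whose values all lie in $X'$): its proof \emph{explicitly constructs}, for each $U\in\tp(e(\opcN))$, an open $V$ in the ambient space with $V\cap e(\opcN)=U$, using the subcompact sets $A_x$ and the dominance property of $\opn$ — that explicit construction is what replaces the codominance principle you appeal to. From strong closedness of $e(\opcN)$ in $X'$ one then descends to $\opcN$ via Theorem~\ref{Theorem: metrization_and_subspaces}, Corollary~\ref{Corollary: strongly open/closed in overt/condensed/compact} and Lemma~\ref{Lemma: Lipschitz_bijection_reflects_metrization}, as you do. To repair your version, replace the pair $(\mtr{A},\mtr{A'})$ by $X'$ and $e(\opcN)$; the construction in the embedding theorem does not adapt to $X\cap e(\opcN)$ inside $X$, because the sets $e(A_x)$ need not lie in $X$.
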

		\begin{proof}
			\begin{enumerate}
				\item
					Let $p \in X$ be an accumulation point in $\mtr{X}$, and $\cmpl{\mtr{X}}$ the completion of $\mtr{X}$ (we identify $\mtr{X}$ with a metric subspace of $\cmpl{\mtr{X}}$). By Theorem~\ref{Theorem: accumulation_point_implies_embedding_of_No} there exist suitable sequences $a, b, y$ which induce an injective nonexpansive map $e\colon \opcN \to \cmpl{\mtr{X}}$ with a strongly closed image. Let $X' \dfeq X \cup e(\opcN)$, and $\mtr{X'}$ the metric subspace in $\cmpl{\mtr{X}}$ with the underlying set $X'$. By Theorem~\ref{Theorem: accumulation_point_implies_embedding_of_No}(\ref{Theorem(accumulation_point_implies_embedding_of_No)item: embedding_of_No_has_strongly_closed_image}) the image of $e$ is strongly closed also in $X'$. If $\mtr{X}$ is metrized/compact, then so is $\mtr{X'}$ by Lemma~\ref{Lemma: metrization_and_compactness_extend_from_potentially_whole_to_the_whole_space} which implies that $e(\opcN)$ is metrized/compact by Theorem~\ref{Theorem: metrization_and_subspaces}. Since $e$ is bijective onto its image, compactness of $e(\opcN)$ implies compactness of $\opcN$, and since $e$ is furthermore nonexpansive, metrization of $e(\opcN)$ implies metrization of $\opcN$ by Lemma~\ref{Lemma: Lipschitz_bijection_reflects_metrization}. Contradiction.
				\item
					Notice that $d$ is equivalent to the discrete metric if and only if for every $x \in X$ there exists $\epsilon \in \RR_{> 0}$ such that $\ball{x}{\epsilon} = \{x\}$, or equivalently, $\st{y \in X}{0 < d(x, y) < \epsilon} = \emptyset$.
					\begin{itemize}
						\item\proven{$(\Rightarrow)$}
							Take any $x \in X$, and let $r \in \RR_{> 0}$ witness local semilocatedness around $x$. Let $A \dfeq \st{y \in X}{0 < d(x, y) < r}$, and define $\epsilon \dfeq \inf\{d(A, x), r\}$; then $\epsilon$ is a nonnegative real number. Since $d(A, x)$ is the strict infimum, if $d(A, x) = 0$, then $x$ is an accumulation point, and $\mtr{X}$ could not be metrized. Thus (by Markov principle) $d(A, x) > 0$, so $\epsilon > 0$. If there is $y \in X$, $0 < d(x, y) < \epsilon$, then $y \in A$, therefore $d(x, y) \geq \epsilon$, a contradiction, so this $\epsilon$ works.
						\item\proven{$(\Leftarrow)$}
							Suppose for every $x \in X$ there is $\epsilon \in \RR_{> 0}$ such that $\ball{x}{\epsilon} = \{x\}$. Then $r \dfeq \epsilon$ witnesses local semilocatedness of $x$ (namely, $A \dfeq \st{y \in X}{0 < d(x, y) < r}$ is empty, so $d_\mtr{X}(A, x) = \infty$). Moreover, we see that the only metrically dense subset of $X$ is $X$ itself, so $X$ is overt, and for any $y \in X$,
							$$\top \iff 0 < d(x, y) \lor d(x, y) < \epsilon \implies x \neq y \lor x = y,$$
							so $X$ has decidable equality. Thus $X$ is metrized by the discrete metric (by Proposition~\ref{Proposition: overt_sets_with_decidable_equality_metrized}), therefore also by $d$ since it is equivalent to it.
					\end{itemize}
			\end{enumerate}
		\end{proof}
		
		The conclusion is that in Russian constructivism most interesting metric spaces are not metrized. Granted, we considered only the case $\opn = \Ros$ (the smallest $\opn$ for which $\NN$ is overt), but if a space is not metrized with respect to $\Ros$, then it is not metrized with respect to any larger $\opn$. This suggests that Type I computability is not an optimal choice for computation with metric spaces.

	\section{Function Realizability or Brouwer's Intuitionism}\label{Section: Brouwer's_intuitionism_model}
		
		In this section we consider a slightly strengthened version of Brouwer's intuitionism. We adopt the following principles:
		\begin{enumerate}
			\item
				\df{Function-Function Choice} $\AC{\NN^\NN, \NN^\NN}$,
			\item
				\df{Continuity Principle}: for every $f\colon \NN^\NN \to \NN$ and $\alpha \in \NN^\NN$ there exists $k \in \NN$ such that $\beta \in \ball[C]{\alpha}{2^{-k}}$ implies $f(\alpha) = f(\beta)$ (in short: every $f\colon \NN^\NN \to \NN$ is \ed-continuous),
			\item
				\df{Fan Principle}: every decidable bar is uniform, \cf Section~\ref{Section: wso}.
		\end{enumerate}
		In the usual setting INT~\cite[5.2]{Bridges_DS_Richman_F_1987:_varieties_of_constructive_mathematics} only the weaker \df{Function-Number Choice} $\AC{\NN^\NN, \NN}$ is used. Kleene's function realizability and the corresponding realizability topos validate not only Function-Function Choice but even \df{Function Choice} $\AC{\NN^\NN}$. In Type II effectivity Function Choice manifests itself as the fact that $\NN^\NN$ has an admissible injective representation.
		
		We show that $\opn = \Ros$, Function-Function Choice, and Continuity Principle together imply that the Baire space $\Baire$ is metrized. It then follows from Theorem~\ref{Theorem: metrization_of_Baire/Cantor_implies_metrization_of_cmss_ctbs} that all \cms[s] are metrized, and from the Fan Principle and Corollary~\ref{Corollary: Fan_Principle_and_compactness_of_Cantor} that the Cantor space $\Cantor$, and more generally any \ctb[], is compact.
		
		We start by proving that $\Baire$ is weakly metrized, \ie that every open subset $U \subseteq \NN^\NN$ is a union of balls. Because the map $q\colon \two^\NN \to \opn$, $q(\alpha) \dfeq (\xsome{n}{\NN}{\alpha_n = 0})$ is surjective, by Function-Function Choice there exists a map $f\colon \NN^\NN \to \two^\NN$ such that
		$$U = \st{\alpha \in \Baire}{\xsome{n}{\NN}{f(\alpha)(n) = 0}}.$$
		By Continuity Principle and Function-Number Choice there exists a modulus of continuity $\mu\colon \NN^\NN \times \NN \to \NN$ such that for every $\alpha \in \NN^\NN$, $\beta \in B(\alpha, 2^{-\mu(\alpha, n)})$ implies $f(\alpha)(n) = f(\beta)(n)$. Define
		$$I \dfeq \st{(\alpha, n) \in \NN^\NN \times \NN}{f(\alpha)(n) = 0},$$
		and observe that
		$$U = \bigcup_{(\alpha,n) \in I} \ball{\alpha}{2^{-\mu(\alpha, n)}}.$$
		We now know that open subsets are unions of open balls, therefore $\NN^\NN$ is overt by Proposition~\ref{Proposition: metric_to_intrinsic_separability_in_weakly_metrized_spaces}. Consequently, $I$ is overt as well. We proved that $\Baire$ is metrized.
		
		We see that metric spaces are far better behaved in Type II effectivity than Type I effectivity, suggesting that the former is better for modeling metric spaces than the latter.
		
		On a side note, one may wonder if Continuity Principle alone implies that \cms[s] are metrized for the case $\opn = \Ros$. This would make metrization of \cms[s] equivalent to the Continuity Principle since the former implies the latter by Theorem~\ref{Theorem: consequences_of_metrization}(\ref{Theorem(consequences_of_metrization)item: continuity_principle}). However, in Russian constructivism the Continuity Principle is validated by the Kreisel-Lacombe-Shoenfield-Ceitin theorem~\cite{Kreisel_G_Lacombe_D_Shoenfield_JR_1959:_partial_recursive_functionals_and_effective_operations, Ceitin_GS_1962:_algorithmic_operators_in_constructive_metric_spaces, Ceitin_GS_1959:_algorithmic_operators_in_constructive_complete_separable_metric_spaces}, but the \cms[s] are generally not metrized, as we showed in Section~\ref{Section: russian_constructivism_model}.

		\section{Gros Topos}\label{Section: gros_topos_model}
		
	   	In this section we review the notions of presheaves, sheaves and gros topos~\cite{Johnstone_PT_1979:_on_a_topological_topos, Mac_Lane_S_Moerdijk_I_1992:_sheaves_in_geometry_and_logic}, and consider synthetic topological properties in gros topos.
	   	
	   	Let $\site$ denote a small full subcategory of $\Top$ with the property that for any space $X$ in $\site$ all its open subsets (and consequently their open embeddings) are in $\site$. It is not strictly necessary to assume that $\site$ has finite products, but it is very convenient, and we shall do so. \df{Presheaves} on $\site$ are (contravariant) functors $\site\op \to \Set$; they form the functor category $\Set^{\site\op}$ with natural transformations as morphisms. The Yoneda functor $\y\colon \site \to \Set^{\site\op}$, given by $\y[X] = \C(\insarg, X)$ and $(\y[f])_Z = \C(Z, f) = f \circ \insarg$, is a categorical embedding (a full and faithful functor, injective on objects) of $\site$ into the category of presheaves.
	   	
	   	Presheaves form a topos. The limits are computed pointwise (for example $(F \times G)(X) = F(X) \times G(X)$, and the terminal object is the constant functor $\one$), and exponentials are given as $G^F(X) = \nat(\y[X] \times F, G)$ (where $\nat(F, G)$ denotes the set of natural transformations between functors $F$ and $G$) with the evaluation map $\ev\colon G^F \times F \to G$, $\ev_X(\eta, x) = \eta_X(\id[X], x)$. The Yoneda functor preserves limits and exponentials that exist in $\site$.
	   	
	   	In any category a morphism $f\colon X \to Y$ is a monomorphism if and only if the diagram
	   	$$\xymatrix@+1em{
	   		X \ar[r]^{\id[X]} \ar[d]_{\id[X]}  &  X \ar[d]^f  \\
	   		X \ar[r]_f  &  Y
	   	}$$
	   	is a pullback. The consequence for our case is that the Yoneda embedding $\y$ preserves monos, and that a natural transformation between presheaves is a mono in $\Set^{\site\op}$ if and only if its every component is an injective map (a mono in $\Set$).
	   	
	   	A \df{sieve} $\sieve \subseteq \ms{X}$ on a space $X$ in $\site$ is a set of maps in $\site$ with the codomain $X$ such that if $f\colon B \to X$ is in $\sieve$ and $g\colon A \to B$ is any map in $\site$, then $f \circ g\colon A \to X$ is also in the sieve $\sieve$. The subobject classifier in $\Set^{\site\op}$ is given as
	   	$$\soc(X) = \text{ the set of all sieves on $X$}$$
	   	while for $f\colon X \to Y$ in $\site$ and a sieve $\sieve \in \soc(Y)$
	   	$$\soc(f)(\sieve) = \st{g \in \ms{X}}{f \circ g \in \sieve}.$$
	   	The truth map $\top\colon \one \to \soc$ picks at each component $X$ the \df{maximal sieve} $\ms{X}$ on $X$, \ie the set of all maps in $\site$ with the codomain $X$. Given any monomorphism $\iota\colon F \to G$, the corresponding characteristic map $\chi^\iota\colon G \to \soc$ at component $X$ is
	   	$$\chi^\iota_X(x) = \st{f \in \ms{X}}{\some{a}{F(\dom(f))}{G(f)(x) = \iota_{\dom(f)}(a)}},$$
	   	\ie the sieve of all maps $f\colon A \to X$ in $\site$ for which $G(f)(x)$ is in the image of $\iota_A$.
	   	$$\xymatrix@+2em{
	   		a \in F(A) \ar[d]_{\iota_A}  &  F(X) \ar[l]_{F(f)} \ar[d]^{\iota_X} \phantom{\owns x}  \\
	   		{}\phantom{a \in} G(A)  &  G(X) \ar[l]^{G(f)} \owns x
	   	}$$
	   	
	   	A presheaf $F\colon \site\op \to \Set$ is a \df{sheaf} on $\site$ when for every $X$ in $\site$, every open cover $\{U_i\}_{i \in I}$ of $X$ and every family of elements $\{f_i \in F(U_i)\}_{i \in I}$ satisfying the \df{gluing condition}
	   	$$F(U_i \cap U_j \hookrightarrow U_i)(f_i) = F(U_i \cap U_j \hookrightarrow U_j)(f_j) \qquad \text{for all $i, j \in I$}$$
	   	there exists a unique element $f \in F(X)$ with the property that $F(U_i \hookrightarrow X)(f) = f_i$ for all $i \in I$.
	   	
	   	Let $\Sh$ denote the full subcategory of sheaves in $\Set^{\site\op}$. For any topological space $X$ the functor $\C(\insarg, X)$ is a sheaf; in particular, the Yoneda embedding restricts to $\y\colon \site \to \Sh$. Limits (but not colimits) and exponentials in $\Sh$ are computed the same as in $\Set^{\site\op}$, so $\y$ still preserves them.
	   	
	   	A \df{closed sieve} on $X$ is a sieve on $X$ with the additional property that for every space $A$ in $\site$, every open cover $\{U_i\}_{i \in I}$ of $A$, and every family of maps $\{f_i\colon U_i \to X\}_{i \in I}$ in the sieve which satisfy the gluing condition that $f_i$ and $f_j$ restrict to the same map on $U_i \cap U_j$ for all $i, j \in I$, the unique map $f\colon A \to X$ with the property that restricted to $U_i$ is $f_i$ for all $i \in I$, is also in the sieve\footnote{This has nothing to do with closedness in the sense of synthetic topology; it is the established terminology, in the sense that a sieve is closed for the operation of gluing maps together.}. The subobject classifier in $\Sh$ is given as $\soc(X) = \text{ the set of closed sieves on $X$}$. The morphism part of $\soc$, the truth map and the characteristic maps are the same as in $\Set^{\site\op}$. The category $\Sh$ is called \df{gros topos} (over $\site$).
	   	
	      While $\top$ is represented as the maximal sieve at each component, $\bot$ is represented as the minimal closed sieve; at component $X$, this is the sieve which contains only the map $\emptyset \hookrightarrow X$ (which every closed sieve must contain since $\emptyset$ is covered by an empty family of open subsets)\footnote{This differs from the situation in presheaves where the minimal sieve is the empty one.}. One may calculate that the negation $\lnot\colon \soc \to \soc$ maps a closed sieve $\sieve \in \soc(X)$ to the closed sieve of all maps in $\site$ with codomain $X$ which have the image disjoint from all maps in $\sieve$, \ie which map into the complement of the union of images of maps in $\sieve$. This means that sieves in the image of $\lnot$ are of the form ``all maps with the image in a certain subspace'', and so can be identified with the subspaces themselves, or alternatively, the subsets of the underlying set of a topological space. Thus, the $\lnot\lnot$-stable truth values can be represented as
	      $$\nnst(X) = \pst(X) \text{ = the power set of the underlying set of the space $X$},$$
	      $$\nnst(f) = f^{-1},$$
	      while the inclusion $\nnst \hookrightarrow \soc$ maps a subset to the closed sieve of all maps with the image in that subset.
	      
	      The natural choice of $\opn = \tst$ in the sheaf topos is to assign the set of all open subsets (the topology) to each space:
	      $$\opn(X) = \optp(X), \qquad \opn(f) = f^{-1},$$
	      and $\opn$ is embedded into $\soc$ the same way as $\nnst$ is, so it factors through the inclusion $\opn \hookrightarrow \nnst$ which just embeds the topologies into the power sets. Hence $\opn \subseteq \nnst$ in gros topos.
	      
	      To argue that this is a reasonable choice of topology we note that the Yoneda embedding $\y$ ``preserves topologies''. Recall that Yoneda lemma (in contravariant form) states (in our case) that for every topological space $X \in \obj{\site}$ and a (pre)sheaf $F$ we have $\nat(\y[X], F) \ism F(X)$. Thus the set of global elements of a (pre)sheaf $F$ is $F(\one)$ (since $F(\one) \ism \nat(\y[\one], F)$, and $\y[\one]$ is the terminal (pre)sheaf). Then by Lemma~\ref{Lemma: stable_subsets_of_a_representable_object} below the set of global elements of the intrinsic topology of $\y[X]$ is precisely the topology of $X$,
	      $$\opn^{(\y[X])}(\one) \ism \tp(\one \times X) \ism \tp(X).$$
	      
	      Consider now the Heyting lattice operations. The negation, restricted to $\nnst$, is just the complementation at each component. The conjunction on $\nnst$ is the intersection; the same on $\opn$ when finite. Because in $\Sh$ we restrict to closed sieves, the disjunction on $\opn$ (but not in general on $\nnst$) is the union.\footnote{That does not mean that $\opn$ has all (internal) disjunctions, though it has all external (\ie set-indexed) ones. However, in the case of finite disjunctions, internal and external view amount to the same thing.} In particular, $\opn$ is a bounded sublattice of $\soc$.
	      
	      Altogether, we may apply Theorem~\ref{Theorem: standard_special_case_of_synthetic_topology}. From it it is obvious that closed truth values are the subfunctor of $\soc$ as follows:
	      $$\cld(X) = \st[2]{\sieve \in \soc(X)}{\bigcup_{f \in \sieve} \im(f) \in \cltp(X)}$$
	      (in words, $\cld(X)$ is the set of all closed sieves $\sieve$ such that the union of images of maps in $\sieve$ is a closed subset of $X$) while the morphism part of $\cld$ is the same as for $\soc$. Also, the $\lnot\lnot$-stable closed truth values can be expressed as the subfunctor of $\nnst$,
	      $$\nnst[\cld](X) = \cltp(X) \text{ = the set of all closed subsets of $X$}$$
	      with the morphism part being the same as for $\nnst$. Finally, the decidable truth values are
	      $$\two(X) = \optp(X) \cap \cltp(X) = \text{the set of clopen subsets of $X$}.$$
	      
	      Note that functors $\tst = \opn$, $\nnst$, $\nnst[\cld]$ and $\two$ can be equivalently represented as continuous maps into familiar spaces --- recall Section~\ref{Section: to_synthetic_topology}. The functor $\nnst$ is isomorphic to $\C(\insarg, \trivtwo)$, the functor $\opn$ to $\C(\insarg, \sier)$, the functor $\nnst[\cld]$ to $\C(\insarg, \twsier)$, and $\two$ to $\C(\insarg, \two)$; in all cases, the isomorphism takes a subset to its characteristic map (mapping the elements of the subset to $1$ and the others to $0$). Thus if $\sier \in \obj{\site}$, then $\opn \ism \y[\sier]$.
	      
			We also remark that powers of these functors simplify when the exponent is representable.
	      \begin{lemma}\label{Lemma: stable_subsets_of_a_representable_object}
	      	For $X, Z \in \obj{\site}$ we have
	      	$$\nnst^{\y[X]}(Z) \ism \pst(Z \times X), \qquad \opn^{\y[X]}(Z) \ism \optp(Z \times X), \qquad \nnst[\cld]^{\y[X]}(Z) \ism \cltp(Z \times X).$$
	      \end{lemma}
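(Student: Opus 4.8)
The plan is to collapse all three isomorphisms into a single application of the Yoneda lemma, using two facts already available: exponentials in $\Sh$ are computed exactly as in $\Set^{\site\op}$ (so $G^F(Z) = \nat(\y[Z] \times F, G)$), and the Yoneda embedding $\y\colon \site \to \Sh$ preserves finite products.

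First I would note that $\nnst$, $\opn$ and $\nnst[\cld]$ are genuinely objects of $\Sh$, not merely of $\Set^{\site\op}$: $\opn \subseteq \soc$ is the chosen subobject, $\nnst \subseteq \soc$ is the equalizer of $\lnot\lnot\colon \soc \to \soc$ with $\id[\soc]$, and $\nnst[\cld] = \cld \cap \nnst$ is an intersection of subobjects of $\soc$; in each case we obtain a subobject of $\soc$ inside the topos $\Sh$, hence a sheaf. Since $\y[X]$ is also a sheaf for $X \in \obj{\site}$, the exponential $G^{\y[X]}$ (for $G$ any of these three functors) is computed in $\Sh$ by the presheaf formula, so $G^{\y[X]}(Z) \ism \nat(\y[Z] \times \y[X], G)$ for every $Z \in \obj{\site}$.

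Next, because $\site$ has finite products and $\y$ preserves them, $\y[Z] \times \y[X] \ism \y[Z \times X]$ naturally, whence $G^{\y[X]}(Z) \ism \nat(\y[Z \times X], G) \ism G(Z \times X)$ by the Yoneda lemma, naturally in $Z$ (and in $X$). Substituting the descriptions of the three functors recorded just above the lemma — $\nnst(W) \ism \pst(W)$, $\opn(W) \ism \optp(W)$, $\nnst[\cld](W) \ism \cltp(W)$ — with $W = Z \times X$ yields precisely the three claimed isomorphisms.

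I do not expect a real obstacle here; the only two points that deserve a sentence of care are (i) confirming that the exponential is formed in $\Sh$ rather than only in presheaves, which is why one first checks both $\y[X]$ and the codomain functor are sheaves, and (ii) keeping track of the naturality of the Yoneda bijection so that the statement is an isomorphism of functors in $Z$ (with evident functoriality in $X$), not merely a pointwise bijection of sets. Both are routine.
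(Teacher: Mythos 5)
Your proof is correct and is essentially identical to the paper's: both compute $F^{\y[X]}(Z) \ism \nat(\y[Z] \times \y[X], F) \ism \nat(\y[(Z\times X)], F) \ism F(Z\times X)$ via the presheaf exponential formula, preservation of products by $\y$, and the Yoneda lemma. Your extra remarks on sheafhood of the codomain and on naturality are fine but not needed beyond what the paper already established.
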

	      \begin{proof}
	      	Let $F$ be one of $\nnst$, $\opn$ or $\nnst[\cld]$. The result follows from the calculation
	      	$$F^{(\y[X])}(Z) \ism \nat(\y[Z] \times \y[X], F) \ism \nat\big(\y[(Z \times X)], F\big) \ism F(Z \times X),$$
	      	using Yoneda lemma for the last isomorphism.
	      \end{proof}
	      
	      \intermission
	      
	      It is known that $\opn$ is a dominance. The proof is basically the same as the proof that stable closed subsets are subspaces which we present here, as we will need it later.
	      
	      \begin{proposition}\label{Proposition: stable_closedness_transitive_in_gros_topos}
	      	Stable closedness is a transitive relation in $\Sh$ (stable closed subobjects of stable closed subobjects are stable closed).
	      \end{proposition}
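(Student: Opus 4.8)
The plan is to verify transitivity directly from the concrete description of the subobject classifier $\soc$ of $\Sh$ and of the subfunctor $\nnst[\cld]$. Recall that $\nnst[\cld](X)\ism\cltp(X)$, so a monomorphism of sheaves $\iota\colon G\hookrightarrow F$ is stable closed exactly when for every $X\in\obj{\site}$ and every $x\in F(X)$ there is a closed subset $C^\iota_x\subseteq X$ with $\chi^\iota_X(x)=\st{f\colon A\to X}{\im f\subseteq C^\iota_x}$; unravelling the formula for $\chi^\iota_X$, this says that $C^\iota_x=\st{p\in X}{F(\ulcorner p\urcorner)(x)\in\im\iota_\one}$ (where $\ulcorner p\urcorner\colon\one\to X$ picks out the point $p$), that this set is closed in $X$, and that moreover $F(f)(x)\in\im\iota_A\iff\im f\subseteq C^\iota_x$ for all $f\colon A\to X$. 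So, given $\kappa\colon H\hookrightarrow G$ and $\iota\colon G\hookrightarrow F$ with $\iota$ stable closed in $F$ and $\kappa$ stable closed in $G$, I must exhibit, for each $X$ and each $x\in F(X)$, a closed subset of $X$ witnessing that $j\dfeq\iota\circ\kappa$ is stable closed. I split this into a ``stability'' part and a ``closedness'' part.

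For the stability part, fix $A\in\obj{\site}$ and $y\in F(A)$. Since $\iota_A$ is injective, $y\in\im j_A$ iff $y\in\im\iota_A$ and $g\dfeq\iota_A^{-1}(y)\in\im\kappa_A$. Using naturality of $\iota$ in the form $F(\ulcorner a\urcorner)\circ\iota_A=\iota_\one\circ G(\ulcorner a\urcorner)$, functoriality $F(f\circ\ulcorner a\urcorner)=F(\ulcorner a\urcorner)\circ F(f)$, and injectivity of $\iota_\one$, the membership $F(\ulcorner a\urcorner)(y)\in\im j_\one=\iota_\one(\im\kappa_\one)$ is equivalent to $G(\ulcorner a\urcorner)(g)\in\im\kappa_\one$; feeding this into the stability of $\iota$ (to recover $y\in\im\iota_A$) and then of $\kappa$ (to recover $g\in\im\kappa_A$) yields $y\in\im j_A\iff\all{a}{A}{F(\ulcorner a\urcorner)(y)\in\im j_\one}$. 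This is a routine diagram chase and I would not write out every line.

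The closedness part is the crux. Fix $X\in\obj{\site}$ and $x\in F(X)$, and put $C\dfeq C^\iota_x$, a closed subset of $X$; then $D\dfeq\st{p\in X}{F(\ulcorner p\urcorner)(x)\in\im j_\one}$ is contained in $C$. The idea is to restrict $x$ along the inclusion $c\colon C\hookrightarrow X$: since $C$ is a closed subspace of the site object $X$, it is again, up to homeomorphism, an object of $\site$ — this is where the standing assumptions on $\site$ enter, and it holds in particular for the site of separable metric spaces relevant to this section — and $\im c\subseteq C=C^\iota_x$, so $F(c)(x)\in\im\iota_C$, giving $h\in G(C)$ with $\iota_C(h)=F(c)(x)$. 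A naturality computation of the kind used above identifies $D$ with $\st{p\in C}{G(\ulcorner p\urcorner)(h)\in\im\kappa_\one}=C^\kappa_h$, which is closed in $C$ because $\kappa$ is stable closed in $G$; and a closed subset of the closed subspace $C\subseteq X$ is closed in $X$. Hence $D$ is closed in $X$, and together with the stability part this shows $\chi^j_X(x)=\st{f\colon A\to X}{\im f\subseteq D}$, so $j$ is stable closed. Carrying this out for all $X$ and $x$ finishes the proof. The one genuinely delicate point is the one just flagged — that the witnessing closed subspace $C$ must again be available as an object of $\site$, so that the hypothesis on $\kappa$ can be invoked over it — while everything else is bookkeeping; and the very same argument, read as ``closed subsets of $C$ are restrictions of closed subsets of $X$'', simultaneously shows that stable closed subobjects are subspaces.
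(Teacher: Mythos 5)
Your proof is correct and follows essentially the same route as the paper's: restrict the element along the closed subspace witnessing the outer stable closed subobject, apply stable closedness of the inner one over that subspace, and finish with transitivity of closedness in classical topology; your pointwise detection of images via maps $\one \to X$ is a harmless reformulation of the paper's direct computation with arbitrary sieve elements. The one assumption you rightly flag --- that the closed witness must again be an object of $\site$ --- is used silently in the paper's proof as well (it needs the inclusion $Y \hookrightarrow Z$ to lie in the sieve and $G(Y)$ to be defined), and it does hold for the site of separable metric spaces used in this section.
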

	      \begin{proof}
	      	Let $\iota\colon F \to G$ and $\kappa\colon G \to H$ be monomorphisms in $\Sh$, classified by $\nnst[\cld]$. We must prove that their composition is classified by $\nnst[\cld]$ as well.
	      	
	      	Take any $Z \in \obj{\site}$ and any $c \in H(Z)$. Since $\chi^\kappa$ maps into $\nnst[\cld]$, there exists a closed subset $Y \subseteq Z$ such that the sieve $\chi^\kappa_Z(c)$ consists of maps which factor through $Y \hookrightarrow Z$. In particular the inclusion $Y \hookrightarrow Z$ is in the sieve as well, so there exists $b \in G(Y)$ such that $\kappa_Y(b) = H(Y \hookrightarrow Z)(c)$.
	      	
	      	Again, since $\chi^\iota$ maps into $\nnst[\cld]$, there exists a closed subset $X \subseteq Y$ such that the sieve $\chi^\iota_Y(b)$ consists of maps which factor through $X \hookrightarrow Y$, so $(X \hookrightarrow Y) \in \chi^\iota_Y(b)$, and there exists $a \in F(X)$ such that $\iota_X(a) = G(X \hookrightarrow Y)(b)$.
	      	
	      	$$\xymatrix@+0em{
	      		&  a \ar@{..}[ld]_\owns  &&&&&&\\
	      		F(X) \ar[ddd]_{\iota_X}  &&&  F(Y) \ar[lll]_{F(X \hookrightarrow Y)} \ar[ddd]_{\iota_Y}  &&&  F(Z) \ar[lll]_{F(Y \hookrightarrow Z)} \ar[ddd]^{\iota_Z}  &\\
	      		&&&&&&&\\
	      		&  \iota_X(a) = G(X \hookrightarrow Y)(b) \ar@{..}[ld]^\owns  &  b \ar@{..}[rd]^\in  &&&  \cld(Y)  &&\\
	      		G(X) \ar[ddd]_{\kappa_X}  &&&  G(Y) \ar[lll]^{G(X \hookrightarrow Y)} \ar[ddd]_{\kappa_Y} \ar[rru]^{\chi^\iota_Y}  &&&  G(Z) \ar[lll]^{G(Y \hookrightarrow Z)} \ar[ddd]^{\kappa_Z}  &\\
	      		&&&&&&&\\
	      		&  \kappa_Y(b) = H(Y \hookrightarrow Z)(c) \ar@{..}[rrd]^\in  &&&&  c \ar@{..}[rd]^\in  &&\\
	      		H(X)  &&&  H(Y) \ar[lll]^{H(X \hookrightarrow Y)}  &&&  H(Z) \ar[lll]^{H(Y \hookrightarrow Z)} \ar[r]_{\chi^\kappa_Z}  &  \nnst[\cld](Z)
	      	}$$
	      	
	      	We claim that $\chi^{\kappa \circ \iota}_Z(c)$ consists precisely of maps which factor through $X \hookrightarrow Z$. For one direction it is sufficient to verify that $(X \hookrightarrow Z) \in \chi^{\kappa \circ \iota}_Z(c)$. This holds because
	      	$$\kappa_X \circ \iota_X(a) = \kappa_X \circ G(X \hookrightarrow Y)(b) = H(X \hookrightarrow Y) \circ \kappa_Y(b) =$$
	      	$$= H(X \hookrightarrow Y) \circ H(Y \hookrightarrow Z)(c) = H\big((Y \hookrightarrow Z) \circ (X \hookrightarrow Y)\big) = H(X \hookrightarrow Z)(c).$$
	      	Conversely, take any map $f\colon A \to Z$ in $\chi^{\kappa \circ \iota}_Z(c)$, \ie there exists $o \in F(A)$ such that $H(f)(c) = \kappa_A \circ \iota_A(o)$. In particular $H(f)(c)$ is in the image of $\kappa_A$ which means that the restriction $\rstr{f}^Y\colon A \to Y$ exists (\ie the image of $f$ is contained in $Y$). We have
	      	$$\kappa_A \circ G(\rstr{f}^Y)(b) = H(\rstr{f}^Y) \circ \kappa_Y(b) =  H(\rstr{f}^Y) \circ H(Y \hookrightarrow Z)(c) =$$
	      	$$= H\big((Y \hookrightarrow Z) \circ \rstr{f}^Y\big)(c) = H(f)(c) = \kappa_A \circ \iota_A(o),$$
	      	and since $\kappa_A$ is injective, $G(\rstr{f}^Y)(b) = \iota_A(o)$. Thus $\rstr{f}^Y \in \chi^\iota_Y(b)$, so the image of $\rstr{f}^Y$, and therefore also the image of $f$, is contained in $X$.
	      	
	      	The statement of the proposition now follows from transitivity of closedness in classical topology: since $X$ is closed in $Y$ which is closed in $Z$, $X$ is also closed in $Z$.
	      \end{proof}
	      
	      \begin{corollary}
	      	Stable closed subobjects in $\Sh$ are subspaces.
	      \end{corollary}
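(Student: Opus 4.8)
The plan is to read the corollary off directly from Proposition~\ref{Proposition: stable_closedness_transitive_in_gros_topos} together with the last item of Theorem~\ref{Theorem: standard_special_case_of_synthetic_topology}, whose hypotheses ($\tst = \opn \subseteq \nnst$ and finite sets overt, so that $\opn$ is a bounded sublattice of $\soc$) have already been verified for $\Sh$ above. That item asserts that $\lnot\lnot$-stable closed subsets are subspaces precisely when stable closedness is a transitive relation, and transitivity of stable closedness in $\Sh$ is exactly the content of Proposition~\ref{Proposition: stable_closedness_transitive_in_gros_topos}. Thus the corollary is immediate, and the body of the proof is essentially a one-line invocation of these two results.

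For completeness I would also spell out the explicit witness that is implicit in this deduction, since it is the construction actually used in the proof of Theorem~\ref{Theorem: standard_special_case_of_synthetic_topology}. Given a $\lnot\lnot$-stable closed subobject $\iota\colon F \to G$ and an open (equivalently, test) subobject $A \hookrightarrow F$, the subobject $C \dfeq F \setimpl[G] A$ of $G$ satisfies $F \cap C = F \cap A = A$, so it suffices to check that $C$ is open in $G$. Here one uses that $F \setminus A$ is a $\lnot\lnot$-stable closed subobject of $F$, hence — by Proposition~\ref{Proposition: stable_closedness_transitive_in_gros_topos} — a $\lnot\lnot$-stable closed subobject of $G$; consequently $G \setminus (F \setminus A)$ is open in $G$, and the chain of equivalences in the proof of Theorem~\ref{Theorem: standard_special_case_of_synthetic_topology} (which is valid because $\opn \subseteq \nnst$) identifies $G \setminus (F \setminus A)$ with $F \setimpl[G] A$. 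This exhibits each open subobject of $F$ as the restriction to $F$ of an open subobject of $G$, which is precisely the subspace condition.

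The only point requiring a little care — and the closest thing to an obstacle — is purely a matter of bookkeeping: Theorem~\ref{Theorem: standard_special_case_of_synthetic_topology} is stated in the internal set-theoretic language (subsets, complements, the operator $\setimpl$), whereas the corollary speaks of subobjects of an object $G$ of $\Sh$ that need not be representable. I would simply note that this is harmless, since the internal language of the topos interprets all of these constructions; and, more to the point, the proof of Proposition~\ref{Proposition: stable_closedness_transitive_in_gros_topos} was carried out for arbitrary monomorphisms $\iota$, $\kappa$ in $\Sh$, not merely for ones with representable domain or codomain. Hence nothing in the argument forces $F$ or $G$ to be of the form $\y[X]$, and the conclusion applies to every $\lnot\lnot$-stable closed subobject in gros topos.
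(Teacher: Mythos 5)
Your proof is correct and is essentially the paper's own argument: the paper also derives the corollary in one line from Proposition~\ref{Proposition: stable_closedness_transitive_in_gros_topos} together with the last item of Theorem~\ref{Theorem: standard_special_case_of_synthetic_topology}, whose hypotheses were checked for $\Sh$ earlier in the section. Your additional two paragraphs merely unpack the witness $F \setimpl[G] A = G \setminus (F \setminus A)$ already contained in the proof of that theorem and record the (correct) observation that the internal-language statement applies to arbitrary subobjects, so they add care but no new route.
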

	      \begin{proof}
	      	By Proposition~\ref{Proposition: stable_closedness_transitive_in_gros_topos} and Theorem~\ref{Theorem: standard_special_case_of_synthetic_topology}.
	      \end{proof}
	      
	      However, in general not all closed subobjects are subspaces in $\Sh$, \ie $\cld$ need not be a codominance. One can see this already on representable objects: the open subobjects of $\y[X]$ are given by open embeddings into $X$ while closed subobjects are given by continuous injective maps into $X$ with closed image which are not necessarily (closed) embeddings.
	      
	      \intermission
	      
	      Our next task is to show that Yoneda embedding preserves overtness and compactness.
	      \begin{lemma}\label{Lemma: quantifiers_on_the_topology_of_representable_objects}
	      	Let $X$ be a topological space from $\site$. The quantifiers $\exists^{\y[X]}, \forall^{\y[X]}\colon \soc^{\y[X]} \to \soc$ of the representable object $\y[X]$ restrict to $\exists^{\y[X]}, \forall^{\y[X]}\colon \opn^{\y[X]} \to \nnst$, given (in view of Lemma~\ref{Lemma: stable_subsets_of_a_representable_object}) as $\exists^{\y[X]}_Z, \forall^{\y[X]}_Z\colon \tp(Z \times X) \to \pst(Z)$,
	      	$$\exists^{\y[X]}_Z(U) = \st{z \in Z}{\xsome{x}{X}{(z, x) \in U}} = p(U)$$
	      	and
	      	$$\forall^{\y[X]}_Z(U) = \st{z \in Z}{\xall{x}{X}{(z, x) \in U}} = p(U^C)^C$$
	      	where $U \in \tp(Z \times X)$ and $p\colon Z \times X \to Z$ is the projection.
	      \end{lemma}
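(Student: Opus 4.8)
The plan is to unwind the categorical constructions of the two quantifiers from Section~\ref{Section: topoi} inside the gros topos, using the Yoneda-lemma identifications $\soc^{\y[X]}(Z) \ism \soc(Z \times X)$ and $\opn^{\y[X]}(Z) \ism \tp(Z \times X)$ of Lemma~\ref{Lemma: stable_subsets_of_a_representable_object}, and then to feed in two elementary facts from classical point-set topology: that the projection $p\colon Z \times X \to Z$ is an open map, and that every open subset of a product locally admits \emph{constant} continuous sections. Throughout, a generalized predicate on $\y[X]$ at stage $Z$ is literally an open subset of $Z \times X$, and restriction along $f\colon W \to Z$ acts by $(f \times \id_X)^{-1}$.

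First I would treat $\exists^{\y[X]}$. Recall it is the characteristic map of the mono part of the epi-mono factorization of the projection $q\colon P \to \opn^{\y[X]}$, where $P$ is the pullback of $\top$ along the evaluation (membership) $\in\colon \opn^{\y[X]} \times \y[X] \to \opn$. Under the identifications, $P(Z)$ consists of pairs $(U, x)$ with $U \in \tp(Z \times X)$ and $x \in \C(Z, X)$ whose graph lies in $U$, so the \emph{presheaf} image of $q$ at $Z$ is the set of those $U \in \tp(Z \times X)$ that admit such a continuous section. Since in a sheaf topos the image is the sheafification of the presheaf image, the formula for characteristic maps gives that $\exists^{\y[X]}_Z(U)$ is the sieve of $f\colon W \to Z$ for which $(f \times \id_X)^{-1}(U)$ \emph{locally} (over some open cover of $W$) admits a continuous section. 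Here is where the point-set facts enter: if $(f(w), x_0) \in U$ then openness of $U$ yields a basic open box $O \times O' \subseteq U$ around $(f(w), x_0)$, whence $w' \mapsto x_0$ is a continuous section of $(f \times \id_X)^{-1}(U)$ over $f^{-1}(O) \ni w$; conversely a section over a neighbourhood of $w$ forces $f(w) \in p(U)$. So $(f \times \id_X)^{-1}(U)$ locally admits a section iff $f(W) \subseteq p(U)$, and since $p$ is an open map, $p(U) \in \tp(Z)$ and the computed sieve is exactly the one corresponding to $p(U)$ under $\opn \hookrightarrow \soc$; in particular it lies in $\opn(Z) \subseteq \nnst(Z)$, giving the claimed formula and the claimed restriction.

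For $\forall^{\y[X]}$ I would use its universal property as right adjoint to weakening rather than its explicit construction. Given $U \in \tp(Z \times X)$, one must identify the largest closed sieve $\sieve$ on $Z$ whose weakening $\pi^{-1}(\hat{\sieve})$ (with $\pi\colon \y[Z \times X] \to \y[Z]$ the first projection) is contained in $\hat U$ as subobjects of $\y[Z \times X]$. Because $U$ is an \emph{open} subspace of $Z \times X$, a map $(g, h)\colon W \to Z \times X$ factors through $\hat U$ precisely when its image lies in $U$; testing at points $W = \one$ shows $\pi^{-1}(\hat{\sieve}) \subseteq \hat U$ forces every $f\colon W \to Z$ in $\sieve$ to satisfy $\{f(w)\} \times X \subseteq U$ for all $w$, i.e.\ $f(W) \subseteq p(U^C)^C$, and this condition conversely suffices. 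Hence the largest such $\sieve$ is the stable one corresponding to the subset $p(U^C)^C \subseteq Z$, so $\forall^{\y[X]}_Z(U) = p(U^C)^C$, which lies in $\nnst(Z) = \pst(Z)$ automatically. Naturality of the two restricted quantifiers in $Z$ — that $f^{-1}$ commutes with $U \mapsto p(U)$ and $U \mapsto p(U^C)^C$ — is the routine part, following from $p \circ (f \times \id_X) = f \circ p$ and the compatibility of $(f \times \id_X)^{-1}$ with complements.

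The step I expect to be the main obstacle is the $\exists$ computation, specifically the passage from the presheaf image of $q$ to its sheafification: one must verify that "$(f \times \id_X)^{-1}(U)$ is locally in the presheaf image" collapses to the clean condition "$f(W) \subseteq p(U)$", and it is exactly here that openness of $U$ (to produce local constant sections) and openness of the projection $p$ are both genuinely needed. Everything else — the Yoneda identifications, the adjointness bookkeeping for $\forall$, and naturality — is formal.
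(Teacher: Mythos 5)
Your proof is correct and follows essentially the same route as the paper's: both unwind the categorical construction of the quantifiers at a representable stage $Z$, arrive at the ``locally admits a continuous section'' description of the existential image, and collapse it to the condition $f(W) \subseteq p(U)$ via constant sections through open product boxes, with openness of the classical projection supplying membership in $\opn(Z) \subseteq \nnst(Z)$. The only cosmetic difference is in $\forall$, where you invoke the adjunction to weakening while the paper directly computes the characteristic map of the transposed truth map; both reduce immediately to the same sieve $\st{f}{\im(f) \times X \subseteq U}$ and hence to $p(U^C)^C$.
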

	      \begin{proof}
				Recall the construction of the quantifiers in a topos from Section~\ref{Section: topoi}.
				
				We start with the existential quantifier. An exercise in using Yoneda lemma shows that the evaluation map $\epsilon\colon \nnst^{\y[X]} \times \y[X] \to \nnst$ is given by
	      	$$\epsilon_Z\colon \pst(Z \times X) \times \C(Z, X) \to \pst(Z),$$
	      	$$\epsilon_Z(A, f) = \st{z \in Z}{(z, f(z)) \in A} = (\id[Z], f)^{-1}(A).$$
	      	Thus the pullback of the truth map along $\epsilon$ at component $Z$ equals
	      	$$\st{(A, f) \in \pst(Z \times X) \times \C(Z, X)}{(\id[Z], f)^{-1}(A) = Z}.$$
	      	We skip the calculation what the image of its projection onto $\nnst^{\y[X]}$ is by recalling how internal statements in $\Sh$ are interpreted externally~\cite{Mac_Lane_S_Moerdijk_I_1992:_sheaves_in_geometry_and_logic}, obtaining
	      	$$\st{A \in \pst(Z \times X)}{\exists (U_i)_{i \in I} \text{ an open cover of } Z.\xall{i}{I}\xsome{f_i}{\C(U_i, X)}{\im{(\id[U_i], f_i)} \subseteq A}}.$$
	      	The (restriction of) the existential quantifier is the characteristic map $\chi\colon \nnst^{\y[X]} \to \soc$ of the inclusion of this set,
	      	$$\chi_Z\colon \pst(Z \times X) \to \soc(Z),$$
	      	\begin{multline*}
	      		\chi_Z(A) = \big\{f \in \ms{Z}\,\big|\,\exists (U_i)_{i \in I} \text{ an open cover of } \dom(f).\\
	      		\xall{i}{I}\xsome{g_i}{\C\big(U_i, X\big)}{\im{(\id[U_i], g_i)} \subseteq (f \times \id[X])^{-1}(U)}\big\} =
	      	\end{multline*}
	      	$$= \st{f \in \ms{Z}}{\exists (U_i)_{i \in I} \text{ an open cover of } \dom(f).\xall{i}{I}\xsome{g_i}{\C\big(U_i, X\big)}{\im{(\rstr{f}_{U_i}, g_i)} \subseteq U}}.$$
	      	We prove that $\chi$ factors through $\nnst$, and is given by the map $\pst(Z \times X) \to \pst(Z)$, $U \mapsto p(U)$. Clearly if $f$ is in the sieve above, its image is contained in $p(U)$. To prove the converse, it is sufficient to verify that the inclusion $p(U) \hookrightarrow Z$ is in the sieve. For any $x \in X$ let $e_x\colon p(U) \to Z \times X$ be the map $e_x(z) \dfeq (z, x)$. Then $\st{e_x^{-1}(U)}{x \in X}$ is an open cover of $p(U)$, and for every $x \in X$ the constant map $g_x(a) \dfeq x$ is such that $\im{(e_x^{-1}(U) \hookrightarrow Z, g_x)} \subseteq U$.
	      	
	      	The universal quantifier is simpler. The transpose of the map $\y[\one] \times \y[X] \to \y[\one] \stackrel{\top}{\longrightarrow} \opn$ is the map $\y[\one] \to \opn^{\y[X]}$, given at component $Z$ as
	      	$$\one \to \tp(Z \times X), \qquad * \mapsto Z \times X.$$
	      	Thus its characteristic map $\chi\colon \opn^{\y[X]} \to \soc$ at $Z$ is
	      	$$\chi_Z\colon \optp(Z \times X) \to \soc(Z),$$
	      	$$\chi_Z(U) = \st{f \in \ms{Z}}{(f \times \id[X])^{-1}(U) = \cod(f) \times X} =$$
	      	$$= \st{f \in \ms{Z}}{\im(f \times \id[X]) \subseteq U} = \st{f \in \ms{Z}}{\im(f) \times X \subseteq U}.$$
	      	From here the result easily follows.
	      \end{proof}
	      
	      \begin{proposition}\label{Proposition: overtness_and_compactness_of_representable_sheaves_in_gros_topos}
	      	\
	      	\begin{enumerate}
	      		\item
	      			All representable objects are overt in $\Sh$.
	      		\item
	      			If $X \in \obj{\site}$ is a compact topological space, then $\y[X]$ is a compact object in $\Sh$.
	      	\end{enumerate}
	      \end{proposition}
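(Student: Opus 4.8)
The plan is to reduce the internal statements ``$\y[X]$ is overt'' and ``$\y[X]$ is compact'' to external, componentwise conditions and then to recognize those conditions as classical facts about projections from products, the quantifiers themselves having already been computed in Lemma~\ref{Lemma: quantifiers_on_the_topology_of_representable_objects}. Recall that $\y[X]$ is overt precisely when $\exists^{\y[X]}$, restricted to the intrinsic topology $\opn^{\y[X]} \hookrightarrow \soc^{\y[X]}$, factors through the inclusion $\opn \hookrightarrow \soc$, and $\y[X]$ is compact precisely when $\forall^{\y[X]}$, restricted to $\opn^{\y[X]}$, does so. Since in a (pre)sheaf topos subobjects are computed componentwise, a morphism of sheaves factors through a given subobject if and only if it does so at every stage $Z \in \obj{\site}$; so it suffices to check, for each $Z$, that the maps $\exists^{\y[X]}_Z, \forall^{\y[X]}_Z$ of Lemma~\ref{Lemma: quantifiers_on_the_topology_of_representable_objects} send $\opn^{\y[X]}(Z) \ism \tp(Z \times X)$ (using Lemma~\ref{Lemma: stable_subsets_of_a_representable_object}) into $\opn(Z) = \optp(Z)$.

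For the first claim, by Lemma~\ref{Lemma: quantifiers_on_the_topology_of_representable_objects} we have $\exists^{\y[X]}_Z(U) = p(U)$ for $U \in \tp(Z \times X)$, where $p\colon Z \times X \to Z$ is the projection. But a projection from a topological product is always an open map, so $p(U) \in \optp(Z)$. Hence $\exists^{\y[X]}$ restricts to a map $\opn^{\y[X]} \to \opn$, which is exactly overtness of $\y[X]$; since $X \in \obj{\site}$ was arbitrary, all representable objects are overt.

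For the second claim, assume $X$ is a compact topological space. By Lemma~\ref{Lemma: quantifiers_on_the_topology_of_representable_objects} we have $\forall^{\y[X]}_Z(U) = p(U^C)^C$ for $U \in \optp(Z \times X)$. Now $U^C$ is a closed subset of $Z \times X$, and by Theorem~\ref{Theorem: characterization_of_classical_compactness} compactness of $X$ implies that the projection $p\colon Z \times X \to Z$ is a closed map; therefore $p(U^C)$ is closed in $Z$, so $\forall^{\y[X]}_Z(U) = p(U^C)^C$ is open in $Z$. Thus $\forall^{\y[X]}$ restricts to $\opn^{\y[X]} \to \opn$, i.e.\ $\y[X]$ is compact.

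The only delicate point is the passage from the internal notions of overtness and compactness to the componentwise conditions above; but this is a routine application of the external interpretation of internal higher-order logic in a sheaf topos, together with the fact that $\opn \subseteq \nnst$ in $\Sh$, which guarantees that the truth values $p(U)$ and $p(U^C)^C$ are genuine subsets of $Z$ and that membership of such a stable truth value in $\opn$ at stage $Z$ is literally openness in $Z$. No genuine obstacle remains once Lemma~\ref{Lemma: quantifiers_on_the_topology_of_representable_objects} is available — essentially all the work sits in that lemma.
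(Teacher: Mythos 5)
Your proof is correct and follows essentially the same route as the paper: both reduce to the componentwise formulas $\exists^{\y[X]}_Z(U) = p(U)$ and $\forall^{\y[X]}_Z(U) = p(U^C)^C$ from Lemma~\ref{Lemma: quantifiers_on_the_topology_of_representable_objects}, then invoke openness of projections for overtness and Theorem~\ref{Theorem: characterization_of_classical_compactness} for compactness. The extra remarks on passing from the internal to the external formulation are fine but not needed beyond what the paper already does.
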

	      \begin{proof}
	      	\begin{enumerate}
	      		\item
				      By Lemma~\ref{Lemma: quantifiers_on_the_topology_of_representable_objects} the existential quantifier at a component $Z$ restricts to $\tp(Z \times X) \to \pst(Z)$, and is given by $U \mapsto p(U)$. We need to see that it further restricts to $\tp(Z \times X) \to \tp(Z)$, but this is true since classically all projections are open maps.
	      		\item
	      			By Lemma~\ref{Lemma: quantifiers_on_the_topology_of_representable_objects} the universal quantifier at a component $Z$ restricts to $\tp(Z \times X) \to \pst(Z)$, given by $U \mapsto p(U^C)^C$. If $X$ is compact, then projections along it are closed maps by Theorem~\ref{Theorem: characterization_of_classical_compactness}, so $p(U^C)^C$ is open.
	      	\end{enumerate}
	      \end{proof}
	      
	      We note however, that representable objects can be compact even if they are not represented by a compact topological space. This is because we ``test compactness just on spaces in $\site$''. For example, if $\site$ contains only discrete spaces, then $\opn = \cld = \nnst = \soc = \two$, so all objects in $\Sh$ are compact.
	      
	      \intermission
	      
	      We have enough preparation to consider metrization in gros topos. Which metric spaces are metrized depends very much on what $\site$ we take. For example, if $\site$ contains only $\emptyset$ and a singleton, then $\Sh$ is equivalent to $\Set$, and we are in the situation of Section~\ref{Section: classical_mathematics_model}. Topological properties of gros topos are nicer when $\site$ contains more diverse spaces. Here we consider the case when $\site$ consists of separable metric spaces\footnote{Separable metric spaces form a proper class rather than a set, so we can't literally take them all. However, we can take its skeleton, \ie one out of every homeomorphism class, which amounts to a set, as the cardinalities of such spaces are bounded above by the cardinality of continuum. When we discuss that a space is in this $\site$, we actually refer to its representative in its homeomorphism class, but we don't bother keeping mentioning this.}. This $\site$ is closed for countable limits and countable coproducts (all of which are calculated the same as in $\Top$).
	      
	      It is known~\cite{Mac_Lane_S_Moerdijk_I_1992:_sheaves_in_geometry_and_logic} that for such $\site$ the objects of natural numbers and real numbers in $\Sh$ are simply $\y[\NN]$ and $\y[\RR]$, respectively. Actually, $\y[\RR]$ is the object of Dedekind reals, but it is easy to see that its strict order is open, therefore in $\Sh$ Dedekind reals match what we defined as real numbers. Similarly, certain subspaces such as internal $\NN_{< n}$ and $\RR_{\geq 0}$ are representable as well.
	      
	      Note that we can study metrization in this $\Sh$ since $\y[\NN]$ and $\y[\emptyset]$ are overt by Proposition~\ref{Proposition: overtness_and_compactness_of_representable_sheaves_in_gros_topos}. Also, observe that the real numbers (and hence all metric spaces) are Hausdorff in $\Sh$ by Corollary~\ref{Corollary: reals_are_Hausdorff}.
			
			\begin{theorem}\label{Theorem: representable_metric_spaces_are_metrized}
				Let $(X, d)$ be a metric space in $\site$. Then $(\y[X], \y[d])$ is a metrized metric space in $\Sh$.
			\end{theorem}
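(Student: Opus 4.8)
The plan is to verify directly the definition of metrization: that the ball map $B\colon \y[X] \times \RR \to \tp(\y[X])$, $(x,r) \mapsto \ball{x}{r}$, is a basis for $\y[X]$ in $\Sh$, i.e.\ that every intrinsically open subobject $U \subseteq \y[X]$ can be written as an overtly indexed union of metric balls. Recall the tools available: $\y[\RR]$ is the real numbers object and its strict order is open (so metric balls are intrinsically open), $\y[X]$ is overt by Proposition~\ref{Proposition: overtness_and_compactness_of_representable_sheaves_in_gros_topos}, and by Lemma~\ref{Lemma: stable_subsets_of_a_representable_object} we have $\opn^{\y[X]}(Z) \ism \optp(Z \times X)$, so that an intrinsically open subobject of $\y[X]$ is, at each stage $Z \in \obj{\site}$, an honest open subset $\hat U \subseteq Z \times X$. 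We may therefore argue at a fixed stage $Z$ with a fixed open $\hat U$.

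First I would establish, internally, the ``weak metrization'' statement $\xall{y}{\y[X]}{\big(y \in U \implies \xsome{r}{\RR_{>0}}{\ball{y}{r} \subseteq U}\big)}$: every point of $U$ is the centre of a ball contained in $U$. By the Kripke--Joyal semantics this is just the classical fact that $Z \times X$ carries the product topology and $\hat U$ is open there. Concretely, given a map $\pi'\colon Z' \to Z$ in $\site$ and a point $y \in \y[X](Z')$, i.e.\ a continuous $y\colon Z' \to X$, such that $(\pi', y)\colon Z' \to Z \times X$ factors through $\hat U$, one covers $Z'$ by the sets $(\pi')^{-1}(W) \cap \ball[Z']{z_0}{\gamma}$, where $W \times \ball[X]{y(z_0)}{\delta} \subseteq \hat U$ witnesses openness of $\hat U$ at $(\pi'(z_0), y(z_0))$ and $\gamma > 0$ is chosen by continuity of $y$ so that $y(\ball[Z']{z_0}{\gamma}) \subseteq \ball[X]{y(z_0)}{\delta/3}$; on each such piece the constant radius $\delta/3$ works.

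Next I would take as index object $I \dfeq \st{(x,r) \in \y[X] \times \RR}{r > 0 \land \ball{x}{r} \subseteq U}$, a subobject of $\y[X] \times \RR \ism \y[X \times \RR]$, and compute it stagewise. A routine unwinding of the forcing clauses (testing $\forall y$ and $\impl$ against the point stages, so that ``$\ball{x}{r} \subseteq U$'' over $Z$ becomes $\st{(z,v) \in Z \times X}{d(x(z),v) < r(z)} \subseteq \hat U$) shows that $I(Z)$ is exactly the set of pairs of continuous maps $(x\colon Z \to X,\ r\colon Z \to \RR)$ with $r$ everywhere positive and that ball bundle contained in $\hat U$ — that is, $I(Z) = \C(Z, \widehat{I})$ where $\widehat{I} \dfeq \st{(z,x,r) \in Z \times X \times \RR}{r > 0 \land \ball{x}{r} \subseteq \hat U_z}$ carries the \emph{subspace} metric from $Z \times X \times \RR$. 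Since a subspace of a separable metric space is separable, $\widehat{I}$ is (up to isometry) an object of $\site$ over $Z$, so $I \ism \y[\widehat{I}]$ is representable and hence overt by Proposition~\ref{Proposition: overtness_and_compactness_of_representable_sheaves_in_gros_topos}. Consequently the inclusion $I \hookrightarrow \y[X] \times \RR$, being a map out of an overt object, is an overt map (Proposition~\ref{Proposition: constructions_of_overt_maps}(1)), so its image is subovert; and $\bigcup_{(x,r) \in I} \ball{x}{r} = U$, the inclusion ``$\subseteq$'' being immediate from the defining condition of $I$ and ``$\supseteq$'' being precisely the statement of the previous paragraph (take $x = y$). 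Thus $B$ is a basis for $\y[X]$, which is the assertion that $(\y[X], \y[d])$ is metrized.

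The step I expect to be the real obstacle is the choice of $I$ in the previous paragraph — specifically, seeing that one should \emph{not} try to make the index set intrinsically open. The naive candidate $\st{(x,r)}{\bball{x}{r} \subseteq U}$ fails: one can build a separable metric space $X$, an open $U$, and a point with $\bball{x_0}{r_0} \subseteq U$ but $\bball{x_0}{r} \not\subseteq U$ for every $r > r_0$ (a sequence of points of $U$ on the ``sphere'' of radius $r_0$ each shadowed arbitrarily closely by a point outside $U$), so this set is not intrinsically open and one cannot invoke ``an open subobject of an overt object is overt''. What rescues the argument is the abundance of overt objects peculiar to the gros topos: every subset of a separable metric space, equipped with the subspace metric, is itself representable and therefore overt — a much richer supply than a realizability topos affords — and it is this that makes $I$ subovert despite its not being open in $\y[X] \times \RR$.
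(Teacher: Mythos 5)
Your overall architecture --- work at a stage $Z$ with $\hat U\in\tp(Z\times X)$, index the covering balls by the ``ball bundle'' $\widehat I=\st{(z,x_0,r_0)}{r_0>0\land\ball{x_0}{r_0}\subseteq\hat U_z}$ (where $\hat U_z$ is the slice of $\hat U$ at $z$), and verify the union by a local constant-radius argument --- is the paper's, and your first paragraph is essentially the paper's verification of its second conjunct. The gap is the subovertness of $I$. Overtness of the \emph{object} $\y[\widehat I]$ (Proposition~\ref{Proposition: overtness_and_compactness_of_representable_sheaves_in_gros_topos}) is a statement at the empty context; but the internal quantifier $\xall{U}{\tp(\y[X])}{\dots}$ forces you to exhibit the index as a subobject of $\y[X]\times\RR$ \emph{in context $Z$}, i.e.\ as the family $\widehat I\to Z$, and to verify ``$I$ is subovert'' internally at stage $Z$. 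Unwinding that statement: for every $Z'\to Z$ and every open $W\subseteq Z'\times X\times\RR$, the truth value of $\xsome{a}{I}{a\in W}$ must be an \emph{open} sieve on $Z'$, which forces (among other things) the projection of $\widehat I\cap W$ to $Z'$ to be open. Since $\widehat I$ is not open, this fails. Concretely: take $X=\NN$ with the discrete metric, $Z=Z'=\RR$, $\hat U=\st{(z,n)}{|z|<2^{-n}}$ (open in $Z\times X$), and $W=\st{(z,x_0,r_0)}{r_0>1}$. For $r_0>1$ one has $\ball{x_0}{r_0}=\NN$, and $\hat U_z=\NN$ only for $z=0$, so $\widehat I\cap W=\{0\}\times\NN\times\RR_{>1}$ and the truth value of $\xsome{a}{I}{a\in W}$ is the non-open sieve generated by $\{0\}\hookrightarrow\RR$. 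So $I$ is not subovert in context; representability buys you overtness of $\y[\widehat I]$ as a bare object, not overtness of the indexing over $Z$, and your own diagnosis of why one ``cannot invoke open-subobject-of-overt'' applies with equal force to your $I$.

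The paper's escape is the route you set aside: keep the index open. It takes $V\dfeq\interior\st{(z,x,r)\in Z\times X\times\RR}{\{z\}\times\ball{x}{r}\subseteq\hat U}$, an element of $\tp(\y[X]\times\y[\RR])(Z)$, whose subovertness in context is then automatic from the internal theorem that an open subset of an overt set is subovert (no stage-chasing needed). The price is showing that the interior still indexes a cover of $U$, and that is exactly where your wiggle-room computation gets upgraded: given $y\in U$ one takes $x=y$ and the continuous, everywhere positive radius $r=\tfrac14\sup\st{s\in\II}{\ball{z}{s}\times\ball{x}{s}\subseteq\hat U}$, a quarter of the truncated distance to the complement of $\hat U$, so that the triple lands in the interior and not merely in $\widehat I$. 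To repair your write-up, replace $I$ by this interior and add that covering argument.
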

			\begin{proof}
				Since the object of real numbers in $\Sh$ is $\y[\RR]$, and because the conditions for the metric space are expressed in the negative fragment of logic (which is preserved by $\y$ since limits are), $\y[d]\colon \y[(X \times X)] \to \y[\RR]$, or equivalently $\y[d]\colon \y[X] \times \y[X] \to \y[\RR]$, is a metric on $\y[X]$.
				
				We phrase the internal metrization of $(\y[X], \y[d])$ in such a way as to be suitable to interpret it externally. It is sufficient to prove
				\begin{align*}
					\xall{U}{\tp(\y[X])}\xsome{V}{\tp(\y[X] \times \y[\RR])}{}&&&&&&&&&&&&
				\end{align*}
				\vspace{-2em}
				\begin{align*}
					\Big(&\all{(x, r, y)}{\y[X] \times \y[\RR] \times \y[X]}{(x, r) \in V \land \y[d](x, y) < r \implies y \in U} \land\\
					&\all{y}{\y[X]}{y \in U \implies \some{(x, r)}{\y[X] \times \y[\RR]}{(x, r) \in V \land \y[d](x, y) < r}}\Big)
				\end{align*}
				since by Proposition~\ref{Proposition: overtness_and_compactness_of_representable_sheaves_in_gros_topos} $\y[X] \times \y[\RR]$ is overt, so an open subset in it is subovert (in fact even overt by Corollary~\ref{Corollary: if_dominance_then_open_subset_of_overt_is_overt_(original)}).
				
				For the proof we have to take an arbitrary $A \in \obj{\site}$, and any $U \in \tp(A \times X)$. Next we need a suitable open cover of $A$, but we claim that the cover with the single member $A$ will do. We pick the following $V \in \tp(A \times X \times \RR)$:
				$$V \dfeq \interior\st{(a, x, r) \in A \times X \times \RR}{\{a\} \times \ball{x}{r} \subseteq U}.$$
				We prove the two conjuncts individually.
				\begin{itemize}
					\item
						Take any $f\colon B \to A$ from $\site$, any $(x, r, y) \in \C(B, X) \times \C(B, \RR) \times \C(B, X)$, and any $g\colon C \to B$ from $\site$. Suppose that
						$$\xall{c}{C}{\big(f(g(c)), x(g(c)), r(g(c))\big) \in V} \quad \text{and} \quad \xall{c}{C}{d\big(x(g(c)), y(g(c))\big) < r(g(c))}$$
						hold; by the definition of $V$ this clearly implies $\xall{c}{C}{\big(f(g(c)), y(g(c))\big) \in U}$.
					\item
						Take any $f\colon B \to A$ from $\site$, any $y \in \C(B, X)$, and any $g\colon C \to B$ from $\site$. Suppose $\xall{c}{C}{\big(f(g(c)), y(g(c))\big) \in U}$ holds. We require a suitable open cover of $C$, and again we take just the whole $C$ while picking $(x, r) \in \C(C, X) \times \C(C, \RR)$ in the following way:
						$$x(c) \dfeq y(g(c)), \qquad r(c) \dfeq \tfrac{1}{4} \sup\st{s \in \intcc{0}{1}}{\ball[A]{f(g(c))}{s} \times \ball[X]{x(c)}{s} \subseteq U}.$$
						The map $r$ is indeed continuous since it is the quarter of the infimum of $1$ and the product distance of the point $\big(f(g(c)), x(c)\big)$ from the complement of $U$ (we bound $s$ by $1$ in case the complement of $U$ is empty); also note that it is always positive (by the supposition above since $U$ is open).
						
						The statement $\xall{c}{C}{\big(f(g(c)), x(c), r(c)\big) \in V}$ holds because we can wiggle $f(g(c))$, $x(c)$ and $r(c)$ by the distance $r(c)$, and still satisfy the condition $\{f(g(c))\} \times \ball{x(c)}{r(c)} \subseteq U$. The statement $\xall{c}{C}{d\big(x(c), y(g(c))\big) < r(c)}$ obviously holds since the distance is zero.
				\end{itemize}
			\end{proof}

	      Let $(U, d_\mtr{U}, u, E)$ be the classical Urysohn space where the map $E\colon P \to \Ury$ witnesses Urysohn properties; here $P = \st{\Utuple[n]{i}{x}{\omega} \in (U \times \RR_{\geq 0})^*}{\all{i, j}{\NN_{< n}}{\omega_i - \omega_j \leq d_\mtr{U}(x_i, x_j) \leq \omega_i + \omega_j}}$ (recall Definition~\ref{Definition: Urysohn_space}). We can write
	      $$(U \times \RR_{\geq 0})^* \ism \coprod_{n \in \NN} (U \times \RR_{\geq 0})^n.$$
	      The Yoneda embedding preserves coproducts in $\site$ which are calculated the same as in $\Top$, as is the case here. Since it preserves the negative fragment of logic as well, in which the other conditions for the Urysohn space can be expressed, we conclude that $(\y[U], \y[d_\mtr{U}])$ is the Urysohn space in $\Sh$, witnessed by $\y[E]$ (after the tedious verification that it is a \cms in $\Sh$). In short, the Urysohn space in $\Sh$ is representable by the Urysohn space in $\site$.
	      
	      \begin{theorem}
	      	\
	      	\begin{enumerate}
	      		\item
	      			All \cms[s] in $\Sh$ are metrized.
	      		\item
	      			All \ctb[s] in $\Sh$ are metrized and compact.
	      	\end{enumerate}
	      \end{theorem}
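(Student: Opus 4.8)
The plan is to assemble the theorem from the two ``embedding'' transfer theorems, Theorem~\ref{Theorem: metrization_of_Urysohn_space_implies_metrization_of_cmss} and Theorem~\ref{Theorem: metrization_of_Hilbert_cube_implies_metrization_of_ctbs}, rather than the quotient route of Theorem~\ref{Theorem: metrization_of_Baire/Cantor_implies_metrization_of_cmss_ctbs}, since $\Sh$ is not known to validate $\ACopn$. Both transfer theorems carry the standing hypothesis that strongly located subsets are subspaces, so I would establish this first. In $\Sh$ we have $\tst = \opn \subseteq \nnst$ and countable sets (in particular $\y[\NN]$) are overt, so $\opn$ is a bounded sublattice of $\soc$ and $\RR$ is Hausdorff by Corollary~\ref{Corollary: reals_are_Hausdorff}; moreover stable closed subobjects in $\Sh$ are subspaces by the corollary following Proposition~\ref{Proposition: stable_closedness_transitive_in_gros_topos} (equivalently, by Theorem~\ref{Theorem: standard_special_case_of_synthetic_topology}, stable closedness being transitive in $\Sh$). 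Feeding these into Proposition~\ref{Proposition: when_strongly_located_subsets_are_subspaces} yields that strongly located subsets are subspaces in $\Sh$.

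Next I would supply metrization of the relevant universal spaces. The preceding discussion identifies the Urysohn space of $\Sh$ with $\y[U]$, where $U$ is the classical Urysohn space, and likewise the Hilbert cube of $\Sh$ is the Yoneda image $\y[\mtr{H}]$ of the classical Hilbert cube $\mtr{H}$ (since $\y$ preserves countable products, the reals object, and the negative fragment of logic in which the remaining conditions live). Both $U$ and $\mtr{H}$ are separable metric spaces, hence objects of $\site$, so Theorem~\ref{Theorem: representable_metric_spaces_are_metrized} shows $\y[U]$ and $\y[\mtr{H}]$ are metrized in $\Sh$. Applying Theorem~\ref{Theorem: metrization_of_Urysohn_space_implies_metrization_of_cmss}, whose hypothesis is now verified, gives that all \cms[s] in $\Sh$ are metrized; and since every \ctb is in particular a \cms (total boundedness implies metric separability), or alternatively by the first item of Theorem~\ref{Theorem: metrization_of_Hilbert_cube_implies_metrization_of_ctbs}, all \ctb[s] in $\Sh$ are metrized as well.

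For compactness of \ctb[s] I would invoke the second item of Theorem~\ref{Theorem: metrization_of_Hilbert_cube_implies_metrization_of_ctbs}: it needs $\RR$ Hausdorff (done) and the Hilbert cube of $\Sh$ compact. The classical Hilbert cube $\mtr{H}$ is a compact topological space lying in $\site$, so $\y[\mtr{H}]$ is a compact object of $\Sh$ by Proposition~\ref{Proposition: overtness_and_compactness_of_representable_sheaves_in_gros_topos}. Hence every \ctb in $\Sh$ is compact, completing both items.

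The only genuinely laborious point — and the main obstacle — is the identification of $\y[U]$ and $\y[\mtr{H}]$ with the \emph{internal} Urysohn space and Hilbert cube of $\Sh$: one must check that $\y$ carries the classical witnesses (completeness, metric separability / total boundedness, and the Urysohn extension map $E$) to internal witnesses of the corresponding $\Sh$-statements. This reduces to the facts, recalled above, that $\y$ preserves finite and countable limits, countable coproducts, exponentials present in $\site$, the reals object, and the purely negative fragment of first-order logic; the residual existential content (density, the extension property) is discharged at each component $Z \in \obj{\site}$ by taking the trivial one-member open cover $\{Z\}$ and writing down the witnessing continuous maps explicitly, exactly as in the proof of Theorem~\ref{Theorem: representable_metric_spaces_are_metrized}. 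Once this bookkeeping is in place, the theorem follows by the assembly above.
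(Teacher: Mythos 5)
Your proposal is correct and follows essentially the same route as the paper: metrization of \cms[s] via the representable (hence metrized) Urysohn space and Theorem~\ref{Theorem: metrization_of_Urysohn_space_implies_metrization_of_cmss}, with Proposition~\ref{Proposition: when_strongly_located_subsets_are_subspaces} supplying the subspace hypothesis, and compactness of \ctb[s] via representability and compactness of the Hilbert cube together with Theorem~\ref{Theorem: metrization_of_Hilbert_cube_implies_metrization_of_ctbs}. The identification of the internal Urysohn space and Hilbert cube with their Yoneda images, which you flag as the laborious point, is exactly the bookkeeping the paper carries out in the discussion preceding the theorem.
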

	      \begin{proof}
	      	\begin{enumerate}
	      		\item
	      			By Theorem~\ref{Theorem: metrization_of_Urysohn_space_implies_metrization_of_cmss}, using Proposition~\ref{Proposition: when_strongly_located_subsets_are_subspaces} and the fact that the Urysohn space is metrized by Theorem~\ref{Theorem: representable_metric_spaces_are_metrized}.
	      		\item
	      			By the previous item $\ctb[s]$ are metrized. In particular, the Hilbert cube is metrized, and since it is representable (because $(\y[\II])^{\y[\NN]} \ism \y[(\II^\NN)]$), it is compact by Proposition~\ref{Proposition: overtness_and_compactness_of_representable_sheaves_in_gros_topos}, as the classical Hilbert cube $\II^\NN$ is compact. We know that $\y[\RR]$ is Hausdorff. We conclude that $\ctb[s]$ are compact by Theorem~\ref{Theorem: metrization_of_Hilbert_cube_implies_metrization_of_ctbs} (and Proposition~\ref{Proposition: when_strongly_located_subsets_are_subspaces}).
	      	\end{enumerate}
	      \end{proof}


   \phantom{\cite{*}}

   \bibliographystyle{plain}
   \addcontentsline{toc}{chapter}{\numberline{}Bibliography}
   \markboth{}{Bibliography}
   
   {
   \raggedright
   \renewcommand{\markboth}[2]{}
   \bibliography{PhD_Davorin_Bibliography}
   }
   
   
	\chapter*{Povzetek v slovenskem jeziku}\addcontentsline{toc}{chapter}{\numberline{}Povzetek v slovenskem jeziku}

	Klasični matematični pristop k obravnavi matematičnih pojmov, kot so grupe, vektorski prostori, topološki prostori, gladke mnogoterosti itd., je obravnavati jih kot \emph{množice} z ustrezno \emph{dodatno strukturo}, morfizmi med njimi pa so preslikave, ki to strukturo ohranjajo. Tak pristop se je izkazal za plodovitega, saj imajo različne matematične panoge skupno osnovo --- \emph{teorijo množic}.
	
	Neprijetna posledica takega pristopa je, da je struktura, ki nas zanima, pravzaprav dodatek in ne intrinzična lastnost množic, ki jih obravnavamo, kar prinese zapletenejše definicije in dokaze. Na primer, v analizi na mnogoterostih je tangentni sveženj matematično zelo kompleksen objekt, pa tudi predstavljati si smerne vektorje kot ekvivalenčne razrede poti ni blizu običajni intuiciji. Splošneje, pri konstrukciji kateregakoli svežnja je potrebno opredeliti gladko strukturo na njegovi domeni in kodomeni ter preveriti, da je projekcija gladka, kar je postopek, ki je pogosto samo računsko zahteven, nima pa resne matematične vsebine, kljub vsemu pa ga je treba opraviti, četudi na gladkost sklepamo že ``po občutku'', kadar uporabljamo standardne metode za konstrukcije.
	
	Filozofija \df{sintetičnega} pristopa k matematiki je obratna klasični: ne izberemo eno osnovo za matematiko, pač pa za posamezno vejo zanjo specializirano v smislu, da so objekti, preslikave, aksiomi, logika in definicije prilagojeni tako, da zajamejo tisto vejo, izbrana struktura postane intrinzična lastnost objektov in da se dokazi poenostavijo. Sintetična diferencialna geometrija (s katero se je sintetični pristop začel) reši zgoraj opisane težave.
   
   V tej disertaciji predstavimo sintetično verzijo \emph{topologije}. Za model sintetične topologije vzamemo kategorijo z dovolj logične strukture (topos, tj.~kategorični model konstruktivne intuicionistične logike višjega reda), v katerem so vsi objekti avtomatsko opremljeni z lastno (intrinzično) topologijo in vsi morfizmi so glede na to topologijo zvezni. To dosežemo z izbiro podobjekta $\opn$ klasifikatorja podobjektov $\soc$ v toposu (tj.~podmnožice v množici resničnostnih vrednosti). Elementi $\opn$ so \df{odprte} resničnostne vrednosti, podmnožice, klasificirane z njimi, pa \df{odprte} podmnožice. Pogosto zahtevamo še dodatne lastnosti za $\opn$, kot naprimer, da je dominanca (kar pomeni, da so končni preseki odprtih množic odprti in da je odprtost tranzitivna lastnost --- podmnožica, odprta v odprti podmnožici, je odprta).
   
   V drugem delu disertacije se osredotočimo na metrične prostore v modelih sintetične topologije. Posebej proučimo, kdaj se metrična in intrinzična topologija ujemata.

	\section*{Sintetična topologija}
	
		Definicijo sintetične topologije posplošimo tako, da začnemo s testnimi resničnostnimi vrednostmi, iz katerih izpeljemo tako odprtost kot zaprtost. Naj bo $\tst \subseteq \soc$ poljubna podmnožica množice resničnostnih vrednosti $\soc$. Definiramo
      $$\opn \dfeq \st{u \in \soc}{(u \land t) \in \tst \text{ za vse } t \in \tst},$$
      $$\cld \dfeq \st{f \in \soc}{(f \impl t) \in \tst \text{ za vse } t \in \tst}.$$
      Elementi množic $\tst$, $\opn$ in $\cld$ se imenujejo \df{testne}, \df{odprte} in \df{zaprte} resničnostne vrednosti. Podobno poimenujemo podmnožice, klasificirane z njimi. Za množico $X$ vpeljemo oznake $\tetp(X)$, $\optp(X)$, $\cltp(X)$ za njene testne, odprte in zaprte podmnožice.
      
      $\opn$ in $\cld$ sta spodnji omejeni polmreži.
      
      \begin{definicija}
      	Množica $X$ je
      	\begin{itemize}
      		\item
      			\df{odkrita}, kadar za vse $U \in \optp(X)$ velja, da je resničnostna vrednost $\xsome{x}{X}{x \in U}$ odprta,
      		\item
      			\df{kompaktna}, kadar za vse $U \in \optp(X)$ velja, da je resničnostna vrednost $\xall{x}{X}{x \in U}$ odprta,
      		\item
      			\df{kondenzirana}, kadar za vse $F \in \cltp(X)$ velja, da je resničnostna vrednost $\xsome{x}{X}{x \in F}$ zaprta,
      		\item
      			\df{Hausdorffova}, kadar je diagonala v $X \times X$ zaprta podmnožica.
      	\end{itemize}
      \end{definicija}
      
      Odkritost, kompaktnost in kondenziranost se posplošijo na podmnožice.
      \begin{definicija}
      	Podmnožica $A \subseteq X$ je
      	\begin{itemize}
      		\item
      			\df{pododkrita} v $X$, kadar za vse $U \in \optp(X)$ velja, da je resničnostna vrednost $\xsome{x}{A}{x \in U}$ odprta,
      		\item
      			\df{podkompaktna} v $X$, kadar za vse $U \in \optp(X)$ velja, da je resničnostna vrednost $\xall{x}{A}{x \in U}$ odprta,
      		\item
      			\df{podkondenzirana} v $X$, kadar za vse $F \in \cltp(X)$ velja, da je resničnostna vrednost $\xsome{x}{A}{x \in F}$ zaprta.
      	\end{itemize}
      \end{definicija}
      
      \begin{trditev}	
      	Množica $X$ je kondenzirana natanko tedaj, ko je za vsako množico $Y$ projekcija $p\colon X \times Y \to Y$ zaprta preslikava.
      \end{trditev}
      
      \begin{trditev}
      	Če je $C \subseteq X$ podkondenzirana in $A \subseteq X$ zaprta, tedaj je $C \cap A$ podkondenzirana.
      \end{trditev}
      
      \begin{trditev}
      	Če je $X$ Hausdorffov in $C \subseteq X$ podkondenzirana, tedaj je $C$ zaprta.
      \end{trditev}
      
      Odkritost, kompaktnost in kondenziranost posplošimo še na preslikave.
      \begin{definicija}
      	Preslikava $f\colon X \to Y$ je
      	\begin{itemize}
      		\item
      			\df{odkrita}, kadar za vse $U \in \optp(Y)$ velja, da je resničnostna vrednost $\xsome{x}{X}{f(x) \in U}$ odprta,
      		\item
      			\df{kompaktna}, kadar za vse $U \in \optp(Y)$ velja, da je resničnostna vrednost $\xall{x}{X}{f(x) \in U}$ odprta,
      		\item
      			\df{kondenzirana}, kadar za vse $F \in \cltp(Y)$ velja, da je resničnostna vrednost $\xsome{x}{X}{f(x) \in F}$ zaprta.
      	\end{itemize}
      \end{definicija}
      
      \begin{trditev}
      	\
      	\begin{enumerate}
      		\item
      			Kompozicija odkrite/kompaktne/kondenzirane preslikave s poljubno preslikavo na levi je odkrita/kompaktna/kondenzirana.
      		\item
      			Končni produkt odkritih/kompaktnih/kondenziranih preslikav je odkrita/kompaktna/kon\-den\-zi\-ra\-na.
      		\item
      			Diagonalna preslikava v povleku preslikave z odprto/zaprto sliko vzdolž odkrite/kondenzirane preslikave je odkrita/kondenzirana. Če $\tst = \opn$, analogno velja za kompaktne preslikave in zaprte slike.
      	\end{enumerate}
      \end{trditev}
      
      Vemo, da so odprte podmnožice podprostori, kadar je $\opn$ dominanca. Za zaprte množice obstaja analogen pojem kodominance.
      
      \begin{definicija}
      	\
      	\begin{itemize}
      		\item
      			Podmnožica $A \subseteq X$ je \df{podprostor} v $X$, kadar, za vsako testno množico $B \in \tetp(A)$ obstaja taka testna množica $C \in \tetp(X)$, da je $B = A \cap C$.
      		\item
      			Odprta podmnožica, ki je obenem podprostor, se imenuje \df{krepko odprta}.
      		\item
      			Zaprta podmnožica, ki je obenem podprostor, se imenuje \df{krepko zaprta}.
      	\end{itemize}
      \end{definicija}
      
      Krepko odprte in krepko zaprte podmnožice so podprostori na kanoničen način.
      \begin{lema}
      	\
      	\begin{enumerate}
      		\item
      			Naj bo $A \subseteq U \subseteq X$. Če je $U$ krepko odprta v $X$ in $A$ testna v $U$, tedaj je $A$ ($= U \cap A$) testna v $X$.
      		\item
      			Naj bo $A \subseteq F \subseteq X$. Če je $F$ krepko zaprta v $X$ in $A$ testna v $F$, tedaj je $F \setimpl[X] A$ testna v $X$.
      	\end{enumerate}
      \end{lema}
      
      \begin{trditev}
      	Podmnožica, odprta/zaprta v krepko odprti/zaprti podmnožici, je odprta/zaprta.
      \end{trditev}
      
      \begin{trditev}
      	\
      	\begin{enumerate}
      		\item
      			Resničnostna vrednost $u \in \soc$ je krepko odprta natanko tedaj, ko zanjo velja pogoj
      			$$\all{p}{\soc}{(u \impl (p \in \tst)) \implies (u \land p) \in \tst}.$$
      		\item
      			Resničnostna vrednost $f \in \soc$ je krepko zaprta natanko tedaj, ko zanjo velja pogoj
      			$$\all{p}{\soc}{(f \impl (p \in \tst)) \implies (f \impl p) \in \tst}.$$
      	\end{enumerate}
      \end{trditev}
      
      \begin{definicija}
      	\
      	\begin{itemize}
      		\item
      			$\opn$ je \df{dominanca} za $\tst$, kadar so vse odprte resničnostne vrednosti krepko odprte.
      		\item
      			$\cld$ je \df{kodominanca} za $\tst$, kadar so vse zaprte resničnostne vrednosti krepko zaprte.
      	\end{itemize}
      \end{definicija}
      
      \begin{lema}
      	Naj bo $U \subseteq X$. Če za vsak $x \in X$ velja, da je resničnostna vrednost $x \in U$ krepko odprta, tedaj je $U$ krepko odprta podmnožica v $X$. Analogno za zaprtost.
      \end{lema}
      
      \begin{trditev}
      	Naslednje trditve so ekvivalentne.
      	\begin{enumerate}
      		\item
      			$\opn$ je dominanca.
      		\item
      			Krepko odprte podmnožice so klasificirane s krepko odprtimi resničnostnimi vrednostmi.
      		\item
      			Krepko odprte podmnožice so klasificirane z neko podmnožico v $\soc$.
      		\item
      			Preslikave so zvezne glede na krepko odprte podmnožice.
      	\end{enumerate}
      	Analogno za zaprtost.
      \end{trditev}
      
      \intermission
      
      V tipičnih modelih sintetične topologije ima običajno $\opn$ dodatne lastnosti.
      \begin{izrek}
      	Predpostavimo, da velja $\tst = \opn \subseteq \nnst$ in da je $\opn$ zaprta za $\lor$. Tedaj velja:
      	\begin{enumerate}
      		\item
      			$\cld = \st{f \in \soc}{\lnot{f} \in \opn}$,
      		\item
      			$\opn = \lnot \cld$,
      		\item
      			$\lnot\lnot \cld = \lnot \opn = \nnst[\cld]$,
      		\item
      			kondenziranost in kompaktnost se ujemata; poleg tega je kondenziranost dovolj preveriti le za stabilne zaprte podmnožice,
      		\item
      			$\cld$ je omejena podmreža v $\soc$,
      		\item
      			$\opn \cap \cld = \two = \opn \cap \nnst[\cld]$,
      		\item
      			$\opn$ je dominanca natanko tedaj, ko je odprtost tranzitivna,
      		\item
      			$\cld$ je kodominanca natanko tedaj, ko je zaprtost tranzitivna, in prav tako so stabilne zaprte podmnožice podprostori natanko tedaj, ko je stabilna zaprtost tranzitivna.
      	\end{enumerate}
      \end{izrek}
      
      \intermission
      
      Topologijo lahko opišemo z bazo.
      \begin{definicija}
      	Preslikava $b\colon \basis \to \tp(X)$ je \df{baza} za $X$, kadar za vsak $U \in \tp(X)$ obstaja množica $I$ in odkrita (indeksirajoča) preslikava $a\colon I \to \basis$, da velja $U = \bigcup\st{b(a(i))}{i \in I}$.
      \end{definicija}
      
      Primer uporabe baze je definicija produktne topologije.
      \begin{definicija}
      	Množica $X \times Y$ ima \df{produktno topologijo}, kadar je preslikava $p\colon \tp(X) \times \tp(Y) \to \tp(X \times Y)$, dana kot $P(U, V) \dfeq U \times V$, baza za $X \times Y$.
      \end{definicija}

	\section*{Realna števila in metrični prostori}
	
		Naš namen je obravnavati metrične prostore v kontekstu sintetične topologije. Posebej želimo uvideti povezavo med metrično in intrinzično topologijo metričnih prostorov. Osnovni pogoj za to obravnavo je, da so števne množice odkrite (v tem in naslednjem razdelku to predpostavimo) in da je relacija stroge urejenosti $<$ na realnih številih odprta (ekvivalentno, da so metrične odprte krogle intrinzično odprte). V ta namen rekonstruiramo realna števila.
		
		\begin{definicija}
			Struktura $(X, <, +, 0, \cdot, 1)$ je \df{proga}, kadar je
			\begin{itemize}
				\item
					$<$ stroga linearna urejenost,
				\item
					$(X, +, 0)$ komutativni monoid,
				\item
					$(X_{> 0}, \cdot, 1)$ komutativni monoid na pozitivnih elementih iz $X$, pri čemer je množenje di\-stri\-bu\-tiv\-no glede na seštevanje,
				\item
					$<$ arhimedsko ureja $X$,
				\item
					za vsak $x \in X$ obstaja $a \in X_{> 0}$, da je $x + a > 0$,
				\item
					$<$ je intrinzično odprta relacija na $X$.
			\end{itemize}
			Preslikava med progama je \df{morfizem prog}, kadar ohranja vso strukturo proge.
		\end{definicija}
		
		Izkaže se:
		\begin{itemize}
			\item
				vsi morfizmi prog so injektivni in kategorija prog je šibka urejenost,
			\item
				množica naravnih števil $\NN$ je začetna proga,
			\item
				množica celih števil $\ZZ$ je začetna med progami, ki so kolobarji,
			\item
				množica racionalnih števil $\QQ$ je začetna med progami, ki so obsegi.
		\end{itemize}
		
		Intuitivno so proge aditivni podmonoidi realnih števil, ki vsebujejo $1$ in so zaprti za množenje pozivnih elementov. Posledično je smiselna sledeča definicija.
		
		\begin{definicija}
			Realna števila $\RR$ so končna proga.
		\end{definicija}
		
		\begin{izrek}
			V toposu z naravnimi števili za vsak $\opn$, glede na katerega so števne množice odkrite, realna števila obstajajo. Primer modela so odprti (dvostranski) Dedekindovi rezi.
		\end{izrek}
		
		Naša konstrukcija realnih števil je torej običajna, le da se omejimo na odprte Dedekindove reze in s tem dosežemo odprtost relacije $<$. V primeru $\opn = \soc$ dobimo standardna Dedekindova realna števila (v standardnih modelih sintetične topologije so sicer vsi Dedekindovi rezi odprti in se torej naša realna števila ujemajo z običajnimi).
		
		Definicija metričnih prostorov in njihovih lastnosti je večidel standardna. Izjemi sta (zaradi odsotnosti aksioma izbire) popolna omejenost in polnost.
		
		\begin{definicija}
			Metrični prostor $(X, d, s, a)$, skupaj z zaporedjema $s\colon \NN \to \one + X$, $a\colon \NN \to \NN$, je \df{popolnoma omejen}, kadar velja
			$$\xall{n}{\NN}\xall{x}{X}\xsome{k}{(s^{-1}(X) \cap \NN_{< a_n})}{d(x, s_k) < 2^{-n}}.$$
		\end{definicija}
		
		\begin{definicija}
			\df{Napolnitev metričnega prostora} $(X, d)$ je končna med gostimi izometrijami z domeno $X$. Prostor je \df{poln}, kadar je identiteta na njem njegova napolnitev.
		\end{definicija}
		
		\begin{izrek}
			Če metrični prostor vsebuje gosto pododkrito podmnožico, tedaj je njegova vložitev v prostor ekvivalenčnih razredov Cauchyjevih družin oziroma v prostor lokacij njegova napolnitev. Če velja $\AC[\opn]{\NN}$, tedaj je vložitev v prostor ekvivalenčnih razredov Cauchyjevih zaporedij njegova napolnitev.
		\end{izrek}

	\section*{Metrizacijski izreki}
	
		Priprava iz prejšnjega razdelka implicira, da so v metričnih prostorih krogle intrinzično odprte in se torej lahko vprašamo, ali tvorijo bazo prostora.
		
		\begin{definicija}
			Za metrični prostor $(X, d)$ rečemo, da je \df{metriziran} (oziroma da je $X$ \df{metriziran} z metriko $d$), kadar je preslikava $B\colon X \times \RR \to \tp(X)$, definirana kot
			$$\ball{a}{r} \dfeq \st{x \in X}{d(a, x) < r},$$
			baza za $X$.
		\end{definicija}
		
		Primer metriziranih prostorov so odkrite množice z odločljivo enakostjo, opremljene z diskretno metriko. Po drugi strani je jasno, da vsi metrični prostori ne morejo biti metrizirani, saj na množici v splošnem obstajajo medsebojno neekvivalentne metrike. V tem razdelku podamo nekaj zadostnih (in potrebnih) pogojev za to, kdaj so določene skupine metričnih prostorov metrizirane. To dosežemo s preslikavami med metričnimi prostori, ki ohranjajo metriziranost. Zanimivi prostori, ki imajo obilo takih preslikav in se jih poslužimo v disertaciji, so sledeči:
		\begin{itemize}
			\item
				poln metrično separabilen ultrametričen \df{Bairov prostor} $\NN^\NN$, tj.~števni produkt prostorov $\NN$, opremljenih z diskretno metriko;
			\item
				Cantorjev prostor $\two^\NN$, ki je poln popolnoma omejen ultrametričen podprostor (še več, retrakt) Bairovega prostora;
			\item
				podprostor (in retrakt) Cantorjevega prostora, definiran kot
				$$\opcN \dfeq \st{\alpha \in \two^\NN}{\all{n}{\NN}{\alpha_n = 0 \implies \alpha_{n+1} = 0}}$$
				(intuitivno je $\opcN$ ``kompaktifikacija prostora $\NN$ z eno točko'', v smislu, da zaporedje $n$ enic, ki se nadaljuje z ničlami, predstavlja $n \in \NN$, medtem ko zaporedje samih enic predstavlja $\infty$);
			\item
				prostora $\RR$ in $\II$, opremljena z evklidsko topologijo;
			\item
				\df{Hilbertova kocka} $\II^\NN$;
			\item
				\df{Urisonov prostor}, tj.~poln metrično separabilen prostor, univerzalen za razširjanje izometrij vanj s končne na metrično separabilno domeno (ob tem pokažemo, da tak prostor obstaja tudi konstruktivno).
		\end{itemize}
      
      Kdaj je diskreten metrični prostor metriziran, vemo. Za obravnavo prostorov s stekališči je uporaben princip \wso[:]
      $$\all{U}{\tp(\opcN)}{\infty \in U \implies U \between \NN}.$$
      Gre za to, da je $\infty$ stekališče $\NN$ v metričnem smislu, \wso pa zagotavlja, da je tudi v intrinzičnem. Okrajšava \wso pomeni (v angleščini) `weakly sequentially open'.
      
      Princip \wso je tesno povezan s sintetično kompaktnostjo in metriziranostjo (predvsem metrično separabilnih ter popolnoma omejenih) metričnih prostorov.
      
      \begin{trditev}
         Princip \wso je neklasičen (tj.~implicira negacijo izključene tretje možnosti).
      \end{trditev}
      
      \begin{definicija}
         Metrični prostor je \df{Lebesgueov}, kadar za vsako njegovo odkrito indeksirano pokritje s kroglami velja, da obstaja $\epsilon \in \RR_{> 0}$, za katerega lahko zmanjšamo radije, pa krogle še vedno pokrivajo prostor.
      \end{definicija}
      
      \begin{izrek}
         Za metrični prostor $\mtr{X} = (X, d)$ veljajo sledeče trditve.
         \begin{enumerate}
            \item
               Če je $X$ odkrit, \wso velja in so zaprte krogle v $\mtr{X}$ s polmeri oblike $2^{-t}$, $t \in \opcN$, podkompaktne, tedaj je $\mtr{X}$ metriziran.
            \item
               Če je $X$ kompakten in \wso velja, je $\mtr{X}$ Lebesgueov.
            \item
               Naj velja $\ACopn$ in \wso[]. Če je $X$ kompakten in $\mtr{X}$ metrično separabilen, tedaj je $\mtr{X}$ popolnoma omejen.
            \item
               Če je $\mtr{X}$ popolnoma omejen, Lebesgueov in metriziran, tedaj je odkrit in kompakten.
         \end{enumerate}
      \end{izrek}
      
      \begin{posledica}
         Naj velja \wso in naj bo $\RR$ Hausdorffov. Sledeči trditvi sta ekvivalentni za popolnoma omejen metrični prostor $\mtr{X} = (X, d, s, a)$.
         \begin{enumerate}
            \item
               $X$ je odkrit in kompakten.
            \item
               $\mtr{X}$ je Lebesgueov in metriziran.
         \end{enumerate}
         Posledično je odkrita kompaktna množica metrizirana s katerokoli popolnoma omejeno metriko na njej (posebej to pomeni, da so vse te metrike medsebojno ekvivalentne).
      \end{posledica}
      
      \begin{izrek}
         Predpostavimo \wso[].
         \begin{enumerate}
            \item
               Vsaka kompaktna preslikava, ki slika v $\RR$, je omejena.
            \item
               Naj velja $\opn \subseteq \nnst$. Če je $X$ posedujoč in $f\colon X \to \RR$ odkrita kompaktna preslikava, tedaj ima $f$ supremum in infimum.
         \end{enumerate}
      \end{izrek}
      
      \begin{posledica}
         Če \wso velja, tedaj so (pod)kompaktni metrični (pod)prostori omejeni.
      \end{posledica}
      
      \begin{trditev}
      	\
      	\begin{enumerate}
      		\item
      			$\opcN$ je Lebesgueov.
      		\item
      			$\two^\NN$ je Lebesgueov natanko tedaj, ko je vsaka pododkrita prečka enakomerna (različica Brou\-wer\-je\-ve\-ga načela za pahljače).
      		\item
      			$\NN^\NN$ ni Lebesgueov.
      	\end{enumerate}
      \end{trditev}
      
      \begin{izrek}
      	Izmed sledečih trditev sta prvi dve ekvivalentni, iz njiju sledi tretja in če je $\opn$ dominanca, so vse tri ekvivalentne.
      	\begin{enumerate}
      		\item
      			$\opcN$ je kompakten in \wso velja.
      		\item
      			$\opcN$ je metriziran.
      		\item
      			Za vsak $U \in \tp(\opcN)$ velja, da če vsebuje $\infty$, tedaj obstaja $\epsilon \in \RR_{> 0}$, da $\ball{\infty}{r} \subseteq U$.
      	\end{enumerate}
      \end{izrek}
      
      \begin{izrek}
      	Spodnji trditvi sta ekvivalentni.
      	\begin{enumerate}
      		\item
      			$\two^\NN$ je kompakten in \wso velja.
      		\item
      			$\two^\NN$ je metriziran in vsaka pododkrita prečka je enakomerna.
      	\end{enumerate}
      \end{izrek}
      
      \begin{izrek}
      	Spodnji trditvi sta ekvivalentni tako za Bairov kot za Cantorjev prostor.
      	\begin{enumerate}
      		\item
      			Bairov/Cantorjev prostor je metriziran.
      		\item
      			Bairov/Cantorjev prostor ima produktno intrinzično topologijo in \wso velja.
      	\end{enumerate}
      \end{izrek}
      
      \intermission
      
      Naš cilj je sedaj karakterizirati metrizacijo polnih metrično separabilnih in polnih popolnoma omejenih metričnih prostorov z metrizacijo enega prostora iz vsake skupine. Spomnimo se dveh skupin preslikav iz klasične topologije, preko katerih je topologija enega prostora porojena s topologijo drugega: kvocientne preslikave in topološke vložitve. V splošnem v sintetični topologiji surjekcije ohranjajo intrinzično topologijo, ne pa nujno metrične, medtem ko za injekcije velja ravno obratno.
      
      \begin{definicija}
      	Preslikava med metričnimi prostori $f\colon \mtr{X} \to \mtr{Y}$ je \df{metrična kvocientna}, kadar je surjektivna in ohranja metrično topologijo v smislu, da za vsak $U \subseteq \mtr{Y}$ velja
      	$$U \text{ metrično odprta v } \mtr{Y} \iff f^{-1}(U) \text{ metrično odprta v } \mtr{X}.$$
      \end{definicija}
      
      \begin{trditev}
      	Če je $f\colon \mtr{X} \to \mtr{Y}$ metrična kvocientna in je $\mtr{X}$ metriziran, potem je metriziran tudi $\mtr{Y}$.
      \end{trditev}
      
      \begin{izrek}
      	Predpostavimo $\ACopn$.
      	\begin{enumerate}
      		\item
      			Za vsak poln metrično separabilen prostor obstaja kvocientna preslikava z retrakta Bairovega prostora nanj.
      		\item
      			Za vsak poln popolnoma omejen metrični prostor obstaja kvocientna preslikava z retrakta Cantorjevega prostora nanj.
      	\end{enumerate}
      \end{izrek}
      
      \begin{izrek}
      	Naj velja $\ACopn$.
      	\begin{enumerate}
      		\item
      			Če je Bairov prostor metriziran, so metrizirani vsi polni metrično separabilni prostori.
      		\item
      			Če je Cantorjev prostor metriziran, so metrizirani vsi polni popolnoma omejeni metrični prostori.
      		\item
      			Če je Cantorjev prostor kompakten, so kompaktni vsi polni popolnoma omejeni metrični prostori.
      	\end{enumerate}
      \end{izrek}
      
      \intermission
      
      Zdaj se posvetimo še vložitvam.
      
      \begin{izrek}
      	Naj bo $A \subseteq X$, $\mtr{X} = (X, d_{\mtr{X}})$ metrični prostor in $\mtr{A} = (A, d_{\mtr{A}})$ njegov metrični podprostor.
      	\begin{enumerate}
      		\item
      			Če je $\mtr{A}$ metriziran, je $A$ podprostor (v intrinzičnem smislu) v $X$.
      		\item
      			Če je $\mtr{X}$ metriziran, $A$ podprostor v $X$ in $A$ vsebuje pododkrito metrično gosto podmnožico, tedaj je $\mtr{A}$ metriziran.
      	\end{enumerate}
      \end{izrek}
      
      \begin{posledica}
      	Metrično separabilen metrični podprostor v metriziranem prostoru je metriziran natanko tedaj, ko je podprostor v intrinzičnem smislu.
      \end{posledica}
      
      \begin{definicija}
      	Podmnožica $A \subseteq X$ je \df{krepko umeščena} v metričnem prostoru $\mtr{X} = (X, d)$, kadar je umeščena v $\mtr{X}$ in velja $A = \st{x \in X}{d(A, x) = 0}$.
      \end{definicija}
      
      \begin{lemma}
      	Naj bo $\mtr{X} = (X, d_{\mtr{X}})$ metrični prostor, $A \subseteq X$ umeščena v $\mtr{X}$ in $\mtr{A} = (A, d_{\mtr{A}})$ poln metrični podprostor v $\mtr{X}$, ki vsebuje pododkrito metrično gosto podmnožico. Tedaj je $A$ krepko umeščena.
      \end{lemma}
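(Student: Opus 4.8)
The plan is to prove the set equality $A = \st{x \in X}{d_\mtr{X}(A, x) = 0}$. The inclusion $\subseteq$ is immediate: if $x \in A$, then $d_\mtr{X}(x, x) = 0$ lies among the distances from points of $A$ to $x$, so the strict infimum $d_\mtr{X}(A, x)$ — which is a genuine real number since $A$ is located — is $\le 0$, hence equal to $0$ by nonnegativity of distances. For the reverse inclusion I fix $x \in X$ with $d_\mtr{X}(A, x) = 0$ and aim to produce a point of $A$ equal to $x$, by exhibiting a Cauchy family in $\mtr{A}$ that ``converges to $x$'' and invoking completeness.

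Concretely, I would pick a set $\lvl \subseteq \RR_{> 0}$ suitable for indexing Cauchy families (subovert in $\RR$, with strict infimum $0$, e.g.\ $\lvl = \st{2^{-n}}{n \in \NN}$) and set $S_p \dfeq \ball[\mtr{X}]{x}{p} \cap D$ for $p \in \lvl$. The first thing to check is that each $S_p$ is an inhabited subovert subset of $A$: since $D \subseteq A$ we have $S_p = D \cap \big(\ball[\mtr{X}]{x}{p} \cap A\big)$, and $\ball[\mtr{X}]{x}{p} \cap A$ is open in $A$ (preimage of an open ball under the inclusion $A \hookrightarrow X$), so $S_p$ is subovert in $A$ by Proposition~\ref{Proposition: intersection_of_subovert_and_open_is_open_(original)}, using that $\opn$ is closed under binary meets (countable sets are overt). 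Inhabitedness combines the two hypotheses on $x$ and on $D$: because $d_\mtr{X}(A, x) = 0 < p$ is a \emph{strict} infimum, there is $a \in A$ with $d_\mtr{X}(a, x) < p$, and then metric density of $D$ in $A$ gives $d' \in D$ with $d_\mtr{X}(a, d') < p - d_\mtr{X}(a, x)$, whence $d' \in S_p$. A routine triangle-inequality estimate, $d_\mtr{A}(s, t) = d_\mtr{X}(s, t) \le d_\mtr{X}(s, x) + d_\mtr{X}(x, t) < p + q$ for $s \in S_p$, $t \in S_q$, shows $S \equ S$, so $S = (S_p)_{p \in \lvl} \in \Cf(\mtr{A})$. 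Since $\mtr{A}$ is complete, Proposition~\ref{Proposition: completion_by_Cauchy_families} together with the definition of completeness gives that the Cauchy family $S$ is represented by some $b \in A$, i.e.\ $\bar{\i}(b) \equ S$; unwinding the definition of $\equ$ and letting $p \to 0$ along $\lvl$ yields $d_\mtr{A}(b, t) \le q$ for all $q \in \lvl$ and $t \in S_q$. Finally, choosing for each $q \in \lvl$ some $t \in S_q$, $d_\mtr{X}(x, b) \le d_\mtr{X}(x, t) + d_\mtr{X}(t, b) < q + q$, and since $\inf \lvl = 0$ we get $d_\mtr{X}(x, b) = 0$, so $x = b \in A$ because $\mtr{X}$ is a metric space.

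The main obstacle is the bookkeeping in the middle step: one must be scrupulous that $S_p$ is subovert \emph{in $A$} (not merely in $X$) so that $S$ really is an admissible Cauchy family of $\mtr{A}$, and that the inhabitedness argument genuinely uses the strict-infimum character of $d_\mtr{X}(A,x) = 0$ rather than plain infimum; this is where the hypothesis ``$D$ subovert metrically dense in $A$'' is spent. The only other point requiring care is extracting the representing element $b$ from completeness and unfolding the partial equivalence relation on $\Cf(\mtr{A})$ correctly — after that, all remaining work is elementary triangle-inequality estimation.
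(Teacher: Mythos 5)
Your proposal is correct and follows essentially the same route as the paper's proof: both take the Cauchy family $S_p = \ball[\mtr{X}]{x}{p} \cap D$, invoke completeness of $\mtr{A}$ to obtain a representing point $b$, and conclude $d_\mtr{X}(x,b)=0$ hence $x=b\in A$. You merely spell out in more detail the inhabitedness and subovertness of the $S_p$, which the paper dismisses with ``it is easily seen''.
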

      
      \begin{trditev}
      	Če je $\RR$ Hausdorffov in so stabilne zaprte množice podprostori, potem so krepko umeščene podmnožice podprostori.
      \end{trditev}
      
      \begin{izrek}
      	\
      	\begin{enumerate}
      		\item
      			Vsak metrično separabilen metrični prostor se vloži v Urisonov prostor kot umeščena podmnožica. Če je poln, je vložitev krepko umeščena.
      		\item
      			Vsak popolnoma omejen metrični prostor se vloži v Hilbertovo kocko kot umeščena podmnožica. Če je poln, je vložitev krepko umeščena.
      	\end{enumerate}
      \end{izrek}
      
      \begin{izrek}
      	Naj velja, da so krepko umeščene podmnožice podprostori.
      	\begin{enumerate}
      		\item
      			Če je Urisonov prostor metriziran, so metrizirani vsi polni metrično separabilni prostori.
      		\item
      			Če je Hilbertova kocka metrizirana, so metrizirani vsi polni popolnoma omejeni metrični prostori.
      		\item
      			Če je Hilbertova kocka kompaktna in $\RR$ Hausdorffov, so vsi polni popolnoma omejeni metrični prostori kompaktni.
      	\end{enumerate}
      \end{izrek}
      
      \intermission
      
      Sledeči izrek povzame teorijo metrizacije polnih metrično separabilnih oziroma popolnoma omejenih metričnih prostorov.
      
      \begin{izrek}
      	Predpostavimo, da so polni metrično separabilni/popolnoma omejeni prostori metrizirani. Tedaj veljajo naslednje trditve.
      	\begin{enumerate}
				\item
					Intrinzična topologija polnih metrično separabilnih/popolnoma omejenih prostorov se izraža z metričnimi kroglami. Natančneje, podmnožica je odprta natanko tedaj, ko je odkrito indeksirana unija krogel.
				\item
					Do topološke ekvivalence natančno lahko množico opremimo z največ eno metriko, glede na katero postane poln metrično separabilen/popolnoma omejen prostor.
				\item
					Polni metrično separabilni/popolnoma omejeni prostori imajo števno bazo.
				\item
					Polni metrično separabilni/popolnoma omejeni prostori so odkriti.
				\item
					Princip \wso[], skupaj s svojimi posledicami, velja.
				\item
					Za poljuben poln metrično separabilen/popolnoma omejen prostor $\mtr{X} = (X, d)$ je vsaka pre\-sli\-ka\-va $f\colon X \to Y$ metrično zvezna in posledično $\epsilon\text{-}\delta$-zvezna, ne glede na izbiro metrike na $Y$.
      	\end{enumerate}
      \end{izrek}

	\section*{Modeli sintetične topologije}
	
		V tem razdelku uporabimo splošno teorijo za štiri konkretne modele sintetične topologije:
		\begin{itemize}
			\item
				klasično matematiko,
			\item
				izračunljivost prvega tipa oziroma ruski konstruktivizem,
			\item
				izračunljivost drugega tipa oziroma močnejša verzija Brouwerjevega intuicionizma,
			\item
				gros topos nad separabilnimi metričnimi prostori.
		\end{itemize}
		
		\intermission
		
		V klasični teoriji množic velja $\soc = \two$. Imamo torej štiri možnosti za $\tst \subseteq \soc$.
		\begin{itemize}
			\item\proven{$\tst = \emptyset$}
				Tedaj $\opn = \cld = \two$. Vse množice so diskretne, Hausdorffove, odkrite, kompaktne in kondenzirane.
			\item\proven{$\tst = \{\bot\}$}
				Tedaj $\opn = \two$, ampak $\cld = \{\top\}$, torej so vse podmnožice odprte, a le celotne množice so zaprte v samih sebi. Vse množice so diskretne, odkrite in kompaktne, samo prazna in enojci so Hausdorffovi in posedujoče množice so kondenzirane.
			\item\proven{$\tst = \{\top\}$}
				Tedaj $\opn = \{\tst\}$ in $\cld = \two$. Samo celotne množice so odprte v sebi, so pa vse množice zaprte. Le prazna množica in enojci so diskretni, posedujoče množice so odkrite, medtem ko so vse množice Hausdorffove, kompaktne in kondenzirane.
			\item\proven{$\tst = \two$}
				Tedaj spet $\opn = \cld = \two$. Vse množice so diskretne, Hausdorffove, odkrite, kompaktne in kondenzirane.
		\end{itemize}
		Za našo teorijo metričnih prostorov pride v poštev le slednja (ki je tudi standardna) izbira $\tst = \opn = \cld = \two$, saj želimo, da so števne množice odkrite in $\tst = \opn$. To je trivialni model naše teorije, saj so vse množice odkrite in imajo odločljivo enakost ter so posledično metrizirane z diskretno metriko.
		
		\intermission
		
		Za izračunljivost prvega tipa veljajo sledeči aksiomi.
		\begin{itemize}
			\item
				Števna izbira $\AC{\NN}$ in splošneje odvisna izbira.
			\item
				\df{Princip Markova}: če v dvojiškem zaporedju niso vsi členi enaki $1$, obstaja člen, ki je enak $0$.
			\item
				\df{Aksiom naštevnosti}: števnih podmnožic $\NN$ je števno mnogo.
		\end{itemize}
		
		V tem modelu vzamemo $\opn = \Ros$. Čeprav \wso velja, nediskretni metrični prostori nasplošno niso metrizirani.
		
		\begin{izrek}
			\wso velja, obenem pa obstaja taka $V \in \tp(\opcN)$, da $\infty \in V$, vendar $\ball{\infty}{r}$ ni vsebovana v $V$ za noben $r \in \RR_{> 0}$.
		\end{izrek}
		
		\begin{izrek}
			Naj metrični prostor $\mtr{X} = (X, d)$ vsebuje pododkrito metrično gosto podmnožico.
			\begin{enumerate}
				\item
					Če $\mtr{X}$ vsebuje stekališče, tedaj ni niti metriziran niti kompakten.
				\item
					Definirajmo, da je $\mtr{X}$ lokalno polumeščen, kadar za vsak $x \in X$ obstaja tak $r \in \RR_{> 0}$, da je razdalja od $x$ do prebodene krogle okrog $x$ s polmerom $r$, tj.
					$$d\big(\st{y \in X}{0 < d(x, y) < r}, x \big),$$
					razširjeno realno število. Tedaj sta sledeči trditvi ekvivalentni.
					\begin{itemize}
						\item
							$\mtr{X}$ je metriziran in lokalno polumeščen.
						\item
							Metrika $d$ je ekvivalentna diskretni metriki.
					\end{itemize}
					Posebej, če drži ena od teh trditev (in s tem obe), ima $X$ odločljivo enakost.
			\end{enumerate}
		\end{izrek}
		Ta izrek velja tudi, če povečamo $\opn$.
		
		\intermission
		
		Za izračunljivost drugega tipa veljajo sledeči aksiomi.
		\begin{itemize}
			\item
				\df{Funkcijsko funkcijska izbira} $\AC{\NN^\NN, \NN^\NN}$.
			\item
				\df{Princip zveznosti}: za vsako $f\colon \NN^\NN \to \NN$ in $\alpha \in \NN^\NN$ obstaja $k \in \NN$, da $\beta \in \ball{\alpha}{2^{-k}}$ implicira $f(\alpha) = f(\beta)$ (na kratko: $f$ je $\epsilon\text{-}\delta$-zvezna).
			\item
				\df{Princip pahljače}: vsaka odločljiva prečka je enakomerna.
		\end{itemize}
		
		Spet vzamemo $\opn = \Ros$.
		
		\begin{izrek}
			Bairov prostor $\NN^\NN$ je metriziran. Posledično, vsi polni metrično separabilni prostori so metrizirani.
		\end{izrek}
		
		\begin{izrek}
			Cantorjev prostor je kompakten. Posledično so vsi polni popolnoma omejeni metrični prostori kompaktni.
		\end{izrek}
		
		\intermission
		
		Gros topos $\Sh$ je topos snopov nad majhno polno podkategorijo $\site$ kategorije topoloških prostorov, zaprto za odprte vložitve in končne produkte. \df{Yonedova vložitev} $\y\colon \site \to \Sh$ je poln, zvest in na objektih injektiven funktor; objekti in morfizmi so \df{predstavljivi}, kadar so v njeni sliki.
		
		Smiselna izbira za $\opn$ je $\C(\insarg, \sier)$, kjer je $\sier$ prostor Sierpińskega.
		
		\begin{lema}
			Stabilna zaprtost je tranzitivna v $\Sh$. Posledično so stabilne zaprte podmnožice podprostori.
		\end{lema}
		
		\begin{izrek}
			\
			\begin{enumerate}
				\item
					Vsi predstavljivi objekti v $\Sh$ so odkriti.
				\item
					Objekti v $\Sh$, predstavljivi s kompaktnim topološkim prostorom iz $\site$, so kompaktni v $\Sh$.
			\end{enumerate}
		\end{izrek}
		
		\begin{izrek}
			Naj bo $\site$ skelet separabilnih metričnih prostorov.
			\begin{enumerate}
				\item
					Za vsak metrični prostor $(X, d)$ iz $\site$ je $(\y[X], \y[d])$ metriziran metrični prostor v $\Sh$.
				\item
					Urisonov prostor v $\Sh$ je predstavljiv (s klasičnim Urisonovim prostorom), torej metriziran. Posledično so vsi polni metrično separabilni prostori v $\Sh$ metrizirani.
				\item
					Hilbertova kocka v $\Sh$ je predstavljiva s klasično Hilbertovo kocko in torej kompaktna. Ker so obenem realna števila Hausdorffova, so vsi popolnoma omejeni metrični prostori v $\Sh$ kompaktni.
			\end{enumerate}
		\end{izrek}   
   
   \chapter*{Izjava o avtorstvu}
   Spodaj podpisani \textsc{Davorin Lešnik} izjavljam, da je disertacija plod lastnega študija in raziskav.

\end{document}